\titleformat{\chapter}[display]{\bfseries}{\huge\chaptertitlename~\thechapter}{14pt}{\LARGE}
\newtheorem{Def}{Definition}[section]
\newtheorem{Lem}[Def]{Lemma}
\newtheorem{Th}[Def]{Theorem}
\newtheorem{Pro}[Def]{Proposition}
\newtheorem{Cor}[Def]{Corollary}
\newtheorem{Rq}[Def]{Remark}
\newcommand{\R}{\mathbb{R}}
\newcommand{\K}{\widehat{\mathbb{P}}_0}
\title{Asymptotic properties of small data solutions of the Vlasov-Maxwell system in high dimensions}
\author{Léo Bigorgne\footnote{Laboratoire de Mathématiques, Univ. Paris-Sud, CNRS, Université Paris-Saclay, 91405 Orsay.}}
\date{}
\begin{document}
    \maketitle
    
\begin{abstract}

We prove almost sharp decay estimates for the small data solutions and their derivatives of the Vlasov-Maxwell system in dimension $n \geq 4$. The smallness assumption concerns only certain weighted $L^1$ or $L^2$ norms of the initial data. In particular, no compact support assumption is required on the Vlasov or the Maxwell fields. The main ingredients of the proof are vector field methods for both the kinetic and the wave equations, null properties of the Vlasov-Maxwell system to control high velocities and a new decay estimate for the velocity average of the solution of the relativistic massive transport equation.

We also consider the massless Vlasov-Maxwell system under a lower bound on the velocity support of the Vlasov field. As we prove in this paper, the velocity support of the Vlasov field needs to be initially bounded away from $0$. We compensate the weaker decay estimate on the velocity average of the massless Vlasov field near the light cone by an extra null decomposition of the velocity vector.
\end{abstract}

    \tableofcontents

\section{Introduction}

In this paper, we study the asymptotic properties of the small data solutions of the Vlasov-Maxwell system in dimensions $n \geq 4$. For $K$ species, this system is given by\footnote{During this article, we will use the Einstein summation convention. For instance, $e^kJ(f_k)_{\nu}=\sum_{k=1}^K e^kJ(f_k)_{\nu}$. Roman indices goes from $1$ to $n$ and greek indices from $0$ to $n$. Moreover, we raise and lower indices with respect to the Minkowski metric.}
\begin{eqnarray}\label{syst1}
   \sqrt{m_k^2+|v|^2} \partial_t f_k+v^i \partial_i f_k + e_k v^{\mu}{F_{\mu}}^{ j} \partial_{v^j}f_k & = & 0,  \\ \label{syst2}
    \nabla^{\mu} F_{\mu \nu} & = &  e^kJ(f_k)_{\nu}, \\ \label{syst3}
    \nabla^{\mu} {}^* \! F_{\mu \alpha_1 ... \alpha_{n-2}} & = & 0 ,
\end{eqnarray} with initial data,
\begin{eqnarray}    
    f_k(0,.,.) & = & f_{0k}, \\ 
    F(0,.) & = & F_0. 
\end{eqnarray} 

This is a classical model in plasma physics and we refer to \cite{Glassey} for an introduction to its analysis. Here,
\begin{itemize}
\item $m_k \in \R_+$ and $e_k \in \R^*$ are the mass and the charge of the particles of the species $k \in \llbracket 1, K \rrbracket$. The function $f_k(t,x,v)$ is their velocity distribution, which is a non-negative function.
\item The Maxwell field is described in geometric form by the 2-form $F(t,x)$ and its Hodge dual ${}^* \! F(t,x)$.
\item The $(n+1)$-current $J(f_k)_{\nu}$ in equation $(2)$ is given by
$$ J(f_k)^{\nu}(t,x)= \int_{v \in \R^n} \frac{v^{\nu}}{v^0_k} f_k(t,x,v) dv, \hspace{5mm} \text{where} \hspace{5mm} v^0_k:=\sqrt{m_k^2+|v|^2}.$$
\item The variable $t$ will be taken in $\mathbb{R}_+$, $x$ will be taken in $\R^n$ and for the species $k$, $v$ will be taken either in $\R^n$, if $m_k \neq 0$, or in $\R^n \setminus \{0 \}$, if $m_k=0$.
\end{itemize}  
In the $3$ dimensional case, we can express the system in terms of the electric and the magnetic vector fields through the relations
$$E^i=F_{0i} \hspace{3mm} \text{and} \hspace{3mm} B^i=-{}^* \! F_{0i}$$
so that the Vlasov-Maxwell equations take the familiar form

\begin{flalign*}
 & \hspace{2cm} \sqrt{m_k^2+|v|^2} \partial_t f_k+v^i \partial_i f_k + e_k(E+v \times B) \cdot \nabla_v f_k = 0, & \\
& \hspace{2cm} \nabla \cdot E = e^k J(f_k)^0, \hspace{1cm} \partial_t E^j = (\nabla \times B)^j -e^kJ(f_k)^j, & \\
& \hspace{2cm} \nabla \cdot B = 0, \hspace{2,4cm} \partial_t B = - \nabla \times E. &
\end{flalign*} 

\subsection{Global in time solutions for the Vlasov-Maxwell system}

The global existence for classical solutions to the Vlasov-Maxwell system is still an open problem. They are known to be global in certain particular cases such as under a translation symmetry hypothesis on the initial data in one of the space variables. The pioneer works on this two and one half dimensional case originated from Glassey-Schaeffer in \cite{GlSch2.5} and required a compact support assumption in $v$. The result obtained recently by Luk-Strain allows data with non-compact velocity support \cite{LukStrain}. The solutions to the Vlasov-Maxwell system also appear to be global when they arise from pertubation of spherically symmetric initial data\footnote{Recall that for spherically symmetric solutions, the Vlasov-Maxwell system reduces to the relativistic Vlasov-Poisson system.} (see \cite{Rein2}). 

For the general case, several continuation criteria are known. The first one, obtained by Glassey and Strauss in \cite{GlStrauss} (see also \cite{BGP2} and \cite{KlSta} for alternative proofs), expresses that $C^1$ solutions to the Vlasov-Maxwell system arising from compactly initial data do not develop singularities as long as the velocity supports of the particle densities $f_k$ remain bounded. An improved continuation criteria requires the finiteness of
\begin{equation}\label{crit}
\left\| \sqrt{1+|v|^2}^{\theta} f_k \right\|_{L^{\infty}\left( [0, T^*[, L^q_x L^1_v \right)},
\end{equation}
for a certain $q$ and $\theta$, in order to extend the solution beyond $T^* >0$. Let us mention \cite{Pallard1} for the cases $6 \leq q \leq \infty$ and $\theta > \frac{4}{q}$, \cite{SAI} for $q = + \infty$ and $\theta = 0$ as well as \cite{Pallard2} for $q=6$ and $\theta=0$. Earlier results of Glassey-Strauss required the boundedness of \eqref{crit} for $q= \infty$ and $\theta =1$ and cover data with non-compact support in $v$ (see \cite{GlStracrit}). Recently, Luk-Strain removed in \cite{LukStrain} all compact support assumptions and extend the continuation criteria \eqref{crit} for $6 \leq q \leq + \infty$ and $\theta > \frac{2}{q}$.

\subsection{Previous work on small data solutions of the Vlasov-Maxwell system}

Global existence for small data in dimension $3$ was first established by Glassey-Strauss in \cite{GSt} under a compact support assumption (in $x$ and in $v$). In particular they proved $\int_v f dv \lesssim \frac{1}{(1+t)^3}$, coinciding with the linear decay, but they did not control $\partial_{\mu_1}...\partial_{\mu_p}\int_v f dv $. They also proved decay estimates for the electromagnetic field and its derivatives of first order, but not for the derivatives of higher order. A similar result was proved in \cite{GSc} for the nearly neutral case, i.e.\@ $\sum_k e_k m_k^3f_{0k}(x,m_kv)$ has to be small and not the individual particle densities. The result established by Schaeffer in \cite{Sc} allowed particles with high velocity but still requires the data to be compactly supported in space\footnote{Note also that when the Vlasov field is not compactly supported (in $v$), the decay estimate obtained in \cite{Sc} for its velocity average contains a loss.}. Finally, let us also mention the earlier result of Bardos-Degond for the more classical Vlasov-Poisson system \cite{Bardos}. Under a smallness assumption, they established that the solution of the system is global in time and proved that $\int_v f dv \lesssim \frac{1}{(1+t)^3}$ but they did not obtain informations on the derivatives of $f$. They also proved decay estimates for the electric field up to second order.

\subsection{Optimal gradient estimates for Vlasov systems}

Due to the linearity of the Maxwell equations and the elliptic nature of the Poisson equation or a nonresonant phenomenon\footnote{According to \cite{BGP}, the velocity averages of the solutions of a system coupling a linear wave equation with a transport equation are such that the velocity averages are more regular than expected if the speed of propagation of the wave equation is strictly larger than the speed of the particles governed by the transport equation.}, the previous results are established without essentially commuting the Vlasov equation and controlling higher derivatives of the solutions. For the Vlasov-Poisson system with small data, the sharp time decay estimates $\left|\int_v \partial_{\mu_1}...\partial_{\mu_p} f dv\right| \lesssim \frac{1}{(1+t)^{3+p}}$ was proved\footnote{A similar result is established in \cite{Yukawa}, using the same techniques, for the Vlasov-Yukawa system in dimension $2$.} in \cite{HRV}. A similar result was obtained in \cite{Poisson} using a vector field method which led to global bounds for the solution and optimal space and time decay rates for the velocity averages. In the same spirit, optimal decay estimates was proved for the derivatives of the solutions of the Vlasov-Nordström system in \cite{FJS} and \cite{FJS2}. The stability of the Minkowski space for the Einstein-Vlasov system was recently, and independently, proved in \cite{FJS3} and \cite{Lindblad}. For both of them, vector field methods was a crucial point in the proof and led in particular to almost optimal decay rates for the derivatives of the solutions.

The goal of this paper is to prove almost optimal decay for the small data solutions and their derivatives of the Vlasov-Maxwell system in dimension $n \geq 4$ without any compact support assumption on the initial data.
\subsection{The vector field method for Vlasov fields}

In this paper, we will use vector field methods to derive decay estimates for both the electromagnetic field and the Vlasov field. The vector field method was originally developped by Klainerman in \cite{Kl85} to study wave equations and was adapted to cover the Maxwell equations (and the spin $2$ equations) in $3d$ in \cite{CK}. More recently, the method was extended for the free relativistic transport equation in \cite{FJS}.

As in \cite{Kl85}, these methods are based on energy estimates, commutation vector fields and weighted Sobolev inequalities. For the transport operator $v^{\mu}\partial_{\mu}$, the set $\K$ of commutation vector fields used in \cite{FJS} is composed of the scaling vector field $S=x^{\mu} \partial_{\mu}$ and the complete lifts of the generators of the Poincaré group, that is to say the transalations
$$\partial_{\mu}, \hspace{2mm} 0 \leq \mu \leq n,$$
the complete lifts of the rotations
$$ \widehat{\Omega}_{ij}=x^i\partial_{j}-x^j \partial_i+v^i \partial_{v^j}-v^j \partial_{v^i}, \hspace{2mm} 1 \leq i < j \leq n$$
and the complete lifts of the Lorentz boosts
$$\widehat{\Omega}_{0k}=t\partial_{k}+x^k \partial_t+v^0 \partial_{v^k}, \hspace{2mm} 1 \leq k \leq n.$$
In \cite{FJS}, vector field methods are applied to derive the behavior of the solutions to the Vlasov-Nordström system in the future the hyperboloid\footnote{The use of a hyperboloidal foliation in order to establish decay estimates was introduced in \cite{Kl93} in the context of the Klein-Gordon equation.} $t^2-r^2=1$. However, without any compact support assumption, one cannot reduce the study of a solution in the future of a $t=constant$ slice to its study in the future of a hyperboloidal slice (see for instance \cite{FJS}, Appendix $A$, for more details). In order to remove all compact support assumption on the data, one of the goal of this paper is to start from a $t=0$ slice and adapt the vector field method for transport equations to the more common foliation $\left( \{t \} \times \R^n \right)_{t \geq 0}$. Note that \cite{FJS2} (respectively \cite{FJS3}) use slight modifications of the commutation vector fields\footnote{The modified vector fields are built in order to compensate the worst source terms in the commuted transport equations.} of the operator $v^{\mu} \partial_{\mu}$ in order to study the small data solutions of the Vlasov-Nordström (respectively Einstein-Vlasov) system in $3d$. They also use a hyperboloidal foliation and restrict the study of the solutions to the future of a hyperboloid.

\subsection{The Lorenz gauge}

Recall that a $1$-form $A$ is a potential of the electromagnetic field $F$ if $F=dA$ (or $F_{\mu \nu} = \partial_{\mu} A_{\nu}-\partial_{\nu} A_{\mu}$). If moreover
$$\partial^{\mu} A_{\mu} =0,$$
we say that $A$ satisfies the Lorenz gauge condition. As the energy momentum tensor of $F$ is not traceless in dimension $n \geq 4$ and the Morawetz vector field $K_0:=(t^2+r^2)\partial_t+2tr\partial_r$ is merely a \textit{conformal} Killing vector field, we encounter the same difficulty in using it as for the wave equation in $3d$ (see \cite{Sogge}, Chapter $II$ for more details). To circumvent this difficulty, we will consider in this paper the Vlasov-Maxwell system in the Lorenz gauge. $A_{\mu}$ will then satisfy the equation
\begin{equation}\label{eqLorenz}
 \square A_{\mu} = e^k\int_v \frac{v_{\mu}}{v^0} f_k dv.
\end{equation}
We also make fundamental use of the Lorenz gauge to establish the optimal decay rate on the component $\alpha$ of the null decomposition of the electromagnetic field, as the method used in \cite{CK} cannot be reproduced in dimension $n \neq 3$.

\subsection{Results for the massive Vlasov-Maxwell system}

We will consider weighted $L^2$ norms to control $A$ and $F$ such as\footnote{For a tensor $G$ and a multi-index $\beta=(\beta_1,...,\beta_p)$,
$$\mathcal{L}_{Z^{\beta}}G= \mathcal{L}_{Z^{\beta_1}}...\mathcal{L}_{Z^{\beta_p}}G,$$
while $$ Z\mathcal{L}_{Z^{\beta}}(G)_{\mu}=Z[\mathcal{L}_{Z^{\beta}}(G)_{\mu}].$$}
$$\widetilde{\mathcal{E}}_N[A] = \sum_{\mu=0}^n \sum_{|\beta| \leq 1} \sum_{|\gamma| \leq N} \|Z^{\beta} \mathcal{L}_{Z^{\gamma}}(A)_{\mu}\|^2_{L^2(\R^n)} $$ as well as weighted $L^1$ norms for the Vlasov field, such as
$$ \mathbb{E}^q_{N}[g](t)=\sum_{|\beta| \leq N} \int_{\R_x^n} \int_{\R^n_v } (v^0)^q |\widehat{Z}^{\beta} g| dv dx+\int_{C_u(t)} \hspace{-0.5mm} \int_{\R^n_v } (v^0)^{q-1}v^{\underline{L}} |\widehat{Z}^{\beta} g | dv d C_u(t), $$ 
where $C_u(t):= \{(s,y)  \in \mathbb{R}_+ \times \mathbb{R}^n \hspace{1mm} / \hspace{1mm} s \leq t, \hspace{0.5mm} s-|y|=u \}$. For the Vlasov field, we also use extra norms with the additional weights $v^{\mu}$, $x^iv^j-x^jv^i$ or $tv^i-x^iv^0$. See Definitions \ref{norm1}, \ref{norm2} and Section \ref{poids} for an introduction to the other norms and the weights.

We are now ready to present our main result for the massive Vlasov-Maxwell system (for a detailled version, see Theorem \ref{massif}).

\begin{Th}\label{intromasstheo}

Let $n \geq 4$, $K \geq 2$, and $N \geq  \frac{5}{2}n+1 $. Let $(f_0,F_0)$ be an initial data set for the massive Vlasov-Maxwell system with $K$ species. Let $(f,F)$ be the unique classical solution to the system and let $A$ be a potential in the Lorenz gauge. There exists $\epsilon >0$ such that\footnote{A smallness condition on $F$, which implies $\widetilde{\mathcal{E}}_N[A](0) \leq \epsilon$, is given in Proposition \ref{LorenzPot}.}, if 

$$ \widetilde{\mathcal{E}}_N[A](0) + \mathcal{E}_N[F](0) +\sum_{k=1}^K \mathbb{E}^2_{N+n,1}[f_k](0) \leq \epsilon,$$
then $(f,F)$ exists globally in time and verifies the following estimates. 
\begin{itemize}
\item Energy bounds for $A$, $F$ and $f_k$ such as  $\mathbb{E}^2_{N}[f_k] \lesssim \epsilon$ on $\R_+$.
\item Sharp pointwise decay estimates on the null decomposition of $\mathcal{L}_{Z^{\beta}}(F)$ and for the velocity average of $f_k$ and its derivatives. For instance, $$ \forall \hspace{0.5mm} |\beta| \leq N-\frac{3n+2}{2}, \hspace{1mm} (t,x) \in \mathbb{R}_+ \times \mathbb{R}^n, \hspace{2mm} \int_{v \in \mathbb{R}^n  } | \widehat{Z}^{\beta} f_k | dv \lesssim \frac{\epsilon}{(1+t+|x|)^{n}}.$$
\end{itemize}
\end{Th}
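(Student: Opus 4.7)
I would prove this by a continuity/bootstrap argument. Let $T^*$ be the supremum of times $T$ on which the unique classical solution exists and satisfies
\[ \widetilde{\mathcal{E}}_N[A](t) + \mathcal{E}_N[F](t) + \sum_{k=1}^K \mathbb{E}^2_N[f_k](t) \leq 2C\epsilon, \qquad t \in [0,T], \]
together with the corresponding bounds on the lower-order weighted norms that involve the weights $v^\mu$, $x^iv^j-x^jv^i$ and $tv^i-x^iv^0$. Local well-posedness ensures $T^*>0$, and to deduce $T^*=+\infty$ together with the stated estimates, I would aim to improve each $2C\epsilon$ to $\tfrac{3}{2}C\epsilon$.

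\textbf{Energy estimates.} In the Lorenz gauge, equation \eqref{eqLorenz} decouples into scalar wave equations, so commuting with $\mathcal{L}_{Z^\gamma}$ and applying the standard $\partial_t$-multiplier bounds $\widetilde{\mathcal{E}}_N[A](t)$ by its initial value plus $\int_0^t \|\square Z^\beta \mathcal{L}_{Z^\gamma}(A)\|_{L^2}$. The source terms are the commuted currents $J(f_k)_\mu$, whose $L^2$ norms are controlled by the $\mathbb{E}^q_N[f_k]$ norms after using the weight $v^0$ to integrate out the velocity variable; the estimate on $\mathcal{E}_N[F]$ then follows from the commuted Maxwell equations together with $F=dA$. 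For the Vlasov fields I commute the transport operator $T_k:=v^0_k\partial_t+v^i\partial_i+e_k v^\mu F_\mu{}^j \partial_{v^j}$ with the lifts in $\K$; since this algebra preserves the free transport operator, the only source terms in the commuted equations come from the Lorentz force, which couples $\widehat{Z}^\beta f_k$ to $\mathcal{L}_{Z^\gamma}(F)$ with $|\gamma|\leq|\beta|$. Integrating the resulting pointwise identity on $[0,t]\times\R^n_x\times\R^n_v$ and on the null cones $C_u(t)$ yields the bound on $\mathbb{E}^2_N[f_k](t)$.

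\textbf{Decay and closing the bootstrap.} The bootstrap assumptions feed into Klainerman--Sobolev-type inequalities adapted to the foliation $(\{t\}\times\R^n)$, producing pointwise decay for $\mathcal{L}_{Z^\gamma}(F)$ and for $\int|\widehat{Z}^\beta f_k|dv$. The key new input is the sharp dimension-$n$ decay $(1+t+|x|)^{-n}$ for the velocity average, obtained by decomposing the $v$-integration according to whether particles travel close to or far from the light cone: the boost-type weights $tv^i-x^iv^0$ provide the extra $(1+|t-|x||)^{-1}$ factors per commutation and the $v^0$-weights absorb the large-$|v|$ regime. The null decomposition of $F$, combined with the Lorenz-gauge wave equation \eqref{eqLorenz}, is then used to upgrade the $\alpha$ component of $\mathcal{L}_{Z^\gamma}(F)$ to its sharp decay rate, compensating for the failure of the conformal Morawetz multiplier to be Killing in dimension $n\geq 4$.

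\textbf{Main obstacle.} The delicate point is closing the bootstrap at top order, where commuting the Lorentz force loses one derivative of $F$ and introduces the factor $v^\mu$ with no decay in $|v|$. I would handle this by replacing $v^\mu F_\mu{}^j$ by its null decomposition, which pairs the favorable field components with the small velocity weight $v^{\underline{L}}$ along the light cone; the residual bad terms are then absorbed by the hierarchy of $v$- and $(x,v)$-weights tracked in the norms, at the price of using the initial-data norm $\mathbb{E}^2_{N+n,1}[f_k](0)$ at one higher differential order. The extra decay margin provided by $n\geq 4$ is precisely what rules out the logarithmic losses that would obstruct the same argument in three dimensions, making this approach possible without any compact support assumption on the data.
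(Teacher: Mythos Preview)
Your outline captures the overall bootstrap architecture, but it omits the step that actually closes the Maxwell energy at top order, and it misidentifies where the extra $n$ derivatives in $\mathbb{E}^2_{N+n,1}[f_k](0)$ are spent.

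To improve $\mathcal{E}_N[F]$ (and $\widetilde{\mathcal{E}}_N[A]$) you need, for every $|\gamma|\le N$, an $L^2$ bound of the form $\left\|\tau_+\int_v |\widehat{Z}^{\gamma}f_k|\,dv\right\|_{L^2(\Sigma_t)}\lesssim \epsilon(1+t)^{-(n/2-1)}$. For $|\gamma|\le N-n$ this follows from the pointwise decay plus the Klainerman--Sobolev machinery, but for $N-n<|\gamma|\le N$ there are not enough commutations left to turn an $L^1_{x,v}$ bound into an $L^2_x$ decay estimate. The paper handles this by rewriting the top-order commuted transport equations as a coupled system $T_F(X)+A_1X=BY$, splitting $X=H+G$ into a homogeneous piece $H$ (same data, zero source) and an inhomogeneous piece $G=KY$, and then: (i) commuting $H$ $n$ more times, which is exactly why $\mathbb{E}^2_{N+n,1}[f_k](0)$ is assumed small; (ii) controlling $G$ through an auxiliary energy $\mathbb{E}[|KKY|_\infty]$ together with the already-known pointwise decay of $\int_v|Y|\,dv$. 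Your proposal skips this entirely and instead says the $N+n$ norm is used to ``absorb residual bad terms'' in the Vlasov energy; that is not how the argument works, and without the $H+G$ decomposition the Maxwell bootstrap does not close.

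Two smaller points. First, the weighted energy $\mathcal{E}_N[F]$ is not obtained from the $\partial_t$-multiplier; it comes from the conformal multiplier $\overline{K}_0$, and because $T[F]$ is not traceless in $n\ge 4$ the divergence identity acquires an error that is cancelled only by adding Lorenz-gauge potential corrections to the current (Proposition~\ref{proelec}). The Lorenz gauge thus enters the energy estimate itself, not only the sharp decay of $\alpha$. Second, in $n=4$ the paper does \emph{not} avoid logarithmic losses: $\mathcal{E}_N[F]$ and $\widetilde{\mathcal{E}}_N[A]$ grow like $\log^3(3+t)$, and the scheme closes because this mild growth is harmless against the $(1+t)^{-1}$ gain from the $L^2$ velocity-average estimate.
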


\begin{Rq}\label{introneutral}
Since we suppose that the initial energy on $F$ is finite, we are necessarily in the neutral case\footnote{In other words, the total charge verifies
$$e^k \int_{x \in \R^n} \int_{v \in \R^n} f_{0k} dv dx =0.$$} when the dimension is $n = 4$.
On the other hand, when the total charge is non zero, Gauss's law implies that the energy $\mathcal{E}_N[F]$ is infinite. We refer to \cite{Bieri} and \cite{Yang} for a study of the Maxwell-Klein-Gordon system with a non-zero total charge.

\end{Rq}

\begin{Rq}
Thanks to the vector field method and in view of the definition of our norms, we do not need any compact support assumption on the initial data. We also automatically obtain improved decay rates for the derivatives of both the electromagnetic field and the velocity averages of the particle densities. For instance, for all $|\beta| \leq N- \frac{3n+2}{2}$ and $(t,x) \in \mathbb{R}_+ \times \mathbb{R}^n$,
$$  \left| \partial^{\beta}_{t,x}\int_{v \in \mathbb{R}^n  }  f_k  dv \right| \lesssim \frac{\epsilon}{(1+t+|x|)^{n}(1+|t-|x||)^{|\beta|}}$$ 
and (see Section \ref{optidecayderiv}), assuming more decay on the initial data,
$$  \left| \partial^{\beta}_{t,x}\int_{v \in \mathbb{R}^n  }  f_k  dv \right| \lesssim \frac{\epsilon}{(1+t+|x|)^{n+|\beta|}}.$$

\end{Rq}

\begin{Rq}
Notice that in dimension $n=4$, it is sufficient for $\widehat{Z}^{\beta}f_k$ to initially decay faster than $(1+|v|)^{-6-\delta}(1+|x|)^{-5-\delta}$, with $\delta >0$, for our theorem to apply. In \cite{Sc}, which concerns the $3d$ case, the main result requires the initial particle densities to be compactly supported in $x$ and to decay faster than $(1+|v|)^{-q}$, with $q > 60 + 12\sqrt{17}$.
\end{Rq}

\subsection{Results for the massless Vlasov-Maxwell system}

We now present an elusive version of our main result for massless particles (we refer to Theorem \ref{Theomassless} for more details).

\begin{Th}\label{intromasslessTheo}
Let $n \geq 4$, $K \geq 2$, $N \geq 6n+3$ and $R>0$. Let $(f_0,F_0)$ be an initial data set for the massless Vlasov-Maxwell system with $K$ species, $(f,F)$ be the unique classical solution to the system and $A$ be a potential in the Lorenz gauge. There exists $\epsilon >0$ and $R>0$ such that, if 
$$ \widetilde{\mathcal{E}}_N[A](0) + \mathcal{E}_N[F](0) + \sum_{k=1}^K \mathbb{E}^0_{N+n,1}[f_k](0) \leq \epsilon,$$
$$\forall \hspace{0.5mm} 1 \leq k \leq K, \hspace{1cm} \mathrm{supp}(f_{0k}) \subset \{(x,v) \in \mathbb{R}^n_x \times \mathbb{R}^n_v \setminus \{ 0 \} \hspace{1mm} / \hspace{1mm} |v| \geq R \} ,$$
then $(f,F)$ exists globally in time and verifies the following properties.
\begin{itemize}
\item $f_k(.,.,v)$ vanishes for all $|v| \leq \frac{R}{2}$.
\item Energy bounds are propagated for $F$ and $f_k$. For instance, if $n=4$, $\mathcal{E}_{N-8}[F](t) \lesssim \epsilon (1+t)$ for all $t \in [0,T]$.
\item Pointwise decay estimates on the null decomposition of $\mathcal{L}_{Z^{\beta}}(F)$ and for the velocity average of $f$ and its derivatives. For instance,
 $$ \forall \hspace{0.5mm} |\beta| \leq N-2n, \hspace{0.5mm} (t,x) \in \mathbb{R}_+ \times \mathbb{R}^n, \hspace{2mm} \int_{v \in \mathbb{R}^n \setminus \{0 \} } | \widehat{Z}^{\beta} f_k | dv \lesssim \frac{\epsilon}{\tau_+^{{n-1}}\tau_-}.$$
\end{itemize}
\end{Th}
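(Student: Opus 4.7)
The proof proceeds by a bootstrap (continuity) argument on the maximal interval of classical existence. The bootstrap hypotheses are: (i) the support condition $f_k(t,\cdot,v)=0$ for $|v|\leq R/2$; (ii) weighted $L^1$ energy bounds on $\mathbb{E}^0_N[f_k]$; and (iii) bounds on $\mathcal{E}_N[F]$, allowing a mild $(1+t)$-growth at top order in the borderline dimension $n=4$. To propagate (i), I would integrate along the characteristics of the massless Vlasov equation, $\dot{v}^j = e_k (v^\mu/|v|) F_\mu{}^j$, using the pointwise decay of $F$ obtained from the bootstrap; in $n\geq 4$ this decay is $L^1_t L^\infty_x$ integrable, forcing $|v(t)-v(0)|\lesssim \epsilon$ and hence $|v(t)|\geq R/2$ for $\epsilon$ small. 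Once (i) holds, $|v|\geq R/2$ plays the role of an effective mass: $v^0 = |v|$ is bounded below, and expressions such as $v^\mu/v^0$ behave as in the massive case wherever only $v^0 \geq m_k$ was used.

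Next I would commute the Vlasov equation with the complete lifts $\widehat{Z}\in\K$ and the Maxwell equations with $\mathcal{L}_Z$. The resulting commutator source terms are analyzed using the null decomposition $F = (\alpha,\underline{\alpha},\rho,\sigma)$ together with the null components $(v^L,v^{\underline{L}},v^A)$ of the velocity vector. The null structure of the contraction $v^\mu F_\mu{}^j$ ensures that the worst electromagnetic component $\underline{\alpha}$ pairs only with $v^L$, while the worst velocity component $v^{\underline{L}}$ multiplies only the better components of $F$; this pairing is crucial for closing the estimates at top order. For the Maxwell field, I would pass to the Lorenz gauge and work with the wave equation $\Box A_\mu = e^k J(f_k)_\mu$ stated in \eqref{eqLorenz}, avoiding direct use of the Morawetz vector field $K_0$, which is only conformal Killing in $n\geq 4$.

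Pointwise decay for $\int |\widehat{Z}^\beta f_k|\,dv$ is then extracted from the energy bounds via Klainerman--Sobolev-type inequalities for velocity averages adapted to the foliation $(\{t\}\times\R^n)_{t\geq 0}$, as in \cite{FJS}. Because particles travel at the speed of light, the decay near the light cone is limited to $\tau_+^{-(n-1)}\tau_-^{-1}$ rather than $\tau_+^{-n}$, the weight $v^{\underline{L}}$ in the flux on $C_u(t)$ being able to degenerate. To recover sufficient decay for the sources of the Maxwell equations, I would use the extra null decomposition of $v$: the component $J(f_k)_L$, which sources the worse null components of $F$, carries an additional factor $v^{\underline{L}}/v^0$, yielding an extra power of $\tau_-/\tau_+$ that compensates the loss near the light cone.

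The main obstacle is closing the Maxwell energy estimate at top order in $n=4$, where the source $J(f_k)_\mu \, \mathcal{L}_{Z^\gamma}F^\mu{}_\nu$ appearing in the energy identity is only borderline integrable in time, producing the announced linear growth $\mathcal{E}_{N-8}[F](t)\lesssim \epsilon(1+t)$. One must propagate slightly weaker pointwise estimates at top order while retaining sharper estimates at intermediate orders, and organize the induction so that the sharp estimates feed back into the bootstrap without degrading the support condition or the velocity-average bounds. The combined use of $|v|\geq R/2$, the null structure of both $F$ and $v$, and the hierarchy of weighted norms is what makes the argument close in $n\geq 4$.
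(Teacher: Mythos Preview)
Your outline captures the broad architecture (bootstrap, characteristics for the support, commutation, null structure, Klainerman--Sobolev for velocity averages), but there are two genuine gaps that would prevent the argument from closing as written.

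\textbf{The top-order $L^2$ estimates are missing.} To improve the bootstrap on $\mathcal{E}_N[F]$ (and on $\widetilde{\mathcal{E}}_N[A]$), the Maxwell energy identity forces you to control $\left\| \tau_+ \int_v |\widehat{Z}^{\gamma} f_k|\, dv \right\|_{L^2(\Sigma_s)}$ for \emph{all} $|\gamma|\le N$. Klainerman--Sobolev inequalities for velocity averages cost $n$ derivatives, so they give pointwise (hence $L^2$) decay only for $|\gamma|\le N-n$; for $N-n<|\gamma|\le N$ you have nothing. The paper closes this gap (Section~\ref{L2systembis}) by rewriting the commuted transport equations as a coupled system $T_F(X)+A_1X=BY$, $T_F(Y)=A_2Y$, splitting $X=H+G$ into a homogeneous part $H$ (whose data are controlled thanks to the extra $n$ derivatives assumed on $f_0$) and an inhomogeneous part $G=KY$. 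One then proves $L^1$ bounds on the auxiliary quantity $|KKY|_\infty$ and combines them with the pointwise decay of $\int_v|Y|\,dv$ via Cauchy--Schwarz in $v$. Your sentence about ``propagating slightly weaker pointwise estimates at top order'' does not address this: there are no pointwise estimates at all above order $N-n$, and without this $L^2$ machinery the Maxwell bootstrap cannot be improved.

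\textbf{The role of $\overline{K}_0$ and the Lorenz gauge is mis-stated.} You say you would ``avoid direct use of the Morawetz vector field $K_0$''. In fact the paper's main electromagnetic energy $\mathcal{E}_N[F]$ is precisely the one built from the multiplier $\overline{K}_0$; the Lorenz gauge is not a way to avoid $\overline{K}_0$ but a way to \emph{repair} the $\overline{K}_0$-identity in dimension $n\ge4$, where $T[F]$ is not traceless. One adds to $T[F]_{\mu\nu}\overline{K}_0^\nu$ correction terms quadratic in the potential $A$ so that the divergence becomes $G_{\mu\nu}\overline{K}_0^\nu J^\mu+(n-3)tA_\beta\Box A^\beta$ (Lemma~\ref{divFmod}, Proposition~\ref{proelec}). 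The paper also needs the scaling multiplier $S$ to produce the auxiliary energy $\mathcal{E}^S_N[F]$ controlling $\tau_-|\underline{\alpha}|^2$, which is what keeps the $\underline{\alpha}$ pointwise estimate from blowing up in $n=4$. Finally, two small inaccuracies: in the null decomposition of $v^\mu F_{\mu j}$ the component $\underline{\alpha}$ pairs with $v^{\underline{L}}$ and $v^B$ (not only $v^L$), and it is $J^{\underline{L}}=\int_v\frac{v^{\underline{L}}}{v^0}\widehat{Z}^\gamma f_k\,dv$ (not $J^L$) that carries the extra $\tau_+^{-1}$, via the weights in $\mathbf{k}_0$ and Proposition~\ref{extradecay1}.
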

\begin{Rq}
The hypotheses on the velocity supports of the particle densities appear to be necessary (see Section \ref{section8}).
\end{Rq}
\begin{Rq}
We are not able to obtain optimal decay estimates for the electromagnetic field in dimension $n=4$ with our reasoning since the velocity average of the Vlasov field has a weaker decay rate near the light cone when the mass is zero (this is related to the estimate \eqref{eq:decayintro} mentionned below, which only applies to massive particles).
\end{Rq}

\subsection{The main difficulties and ingredients of our proof}
\subsubsection{High velocities and null properties of the system}
As we use vector field methods, we are brought to commute the equations and prove global bounds on the solutions through energy estimates. After commuting the Vlasov equation once, we are led to estimate terms that could be written schematically as $$ \int_0^t \int_x \int_v |v\mathcal{L}_Z(F)\partial_v f| dv dx ds.$$ Unfortunately, $\partial_{v^i}$, for $1 \leq i \leq n$, is not part of the commutation vector fields for the transport equation. We rewrite them in terms of the elements of $\K$ as
\begin{equation}\label{derivv}
\partial_{v^i} f_k=\frac{1}{v^0}(\widehat{\Omega}_{0i} f_k-t\partial_i f_k-x^i \partial_t f_k),
\end{equation}
so that $\partial_v f$ essentially behaves like $t\partial_{\mu}f$, which is consistent with the behavior of solutions to the free transport equation. This leads us to estimate
\begin{equation}\label{quantitytobound}
\int_0^t \int_x \int_v (s+|x|)|\mathcal{L}_Z(F)\partial f| dv dx ds.
\end{equation} 
As a solution to a wave equation, $\mathcal{L}_Z(F)$ only decays near the light cone as $\frac{1}{(1+t+|x|)^{\frac{n-1}{2}}}$ and we cannot prove by a naive estimate that, in dimensions $n \leq 5$, \eqref{quantitytobound} is uniformly bounded. However, if $f$ is initially compactly supported, one can expect (for, say, sufficiently small data) the characteristics of the transport equation to have velocities bounded away from $1$, and thus the Vlasov field support (in $x$) to be ultimately remote from the light cone. Now, assuming enough initial decay on the Maxwell field, one can prove that $$ |\mathcal{L}_Z(F)| \lesssim (1+s+|x|)^{-\frac{n-1}{2}}(1+|s-|x||)^{-\frac{3}{2}}$$
which, combined with the support properties of $f$, leads to
$$ \int_x \int_v (s+|x|)|\mathcal{L}_Z(F) \partial f| dv dx \lesssim  (1+s)^{-\frac{n}{2}} $$ and \eqref{quantitytobound} is then uniformly bounded in dimensions\footnote{Note that our proof would lead to a $\sqrt{t}$-loss in dimension $3$ which is not sufficient to prove the uniform boundedness of \eqref{quantitytobound}.} $ n \geq 4$.

In our work, we do not make any compact support assumption. Instead, we make crucial use of null properties of the Vlasov-Mawxell system\footnote{The null structure of the Vlasov-Nordström system is also a main ingredient of the proof of \cite{FJS} for the dimension $n=4$. } to deal with the high velocities. More precisely, certain null components of the velocity vector $v$, the derivatives of the electromagnetic field (as $\mathcal{L}_Z (F)$) or the vector $(0, \partial_{v^1} f,..., \partial_{v^n} f)$ behave better than others and the structure of the system is such that there is no product involving only terms with a bad behavior. Taking advantage of the null structure allows us either
\begin{itemize}
\item to transform a $t-r$ decay in a $t+r$ one. For instance,
$$ |\rho(\mathcal{L}_Z(F))| = \left| \frac{x^i}{r}\mathcal{L}_Z(F)_{i0} \right| \lesssim (1+s+|x|)^{-\frac{n+1}{2}}(1+|s-|x||)^{-\frac{1}{2}}.$$
\item To transform a $t+r$ loss in a $t-r$ loss using $\frac{x^i}{r} \partial_{v^i} f \sim (t-r) \partial f$.
\item Or to exploit the $t-r$ decay. For instance, we will control
$$ \int_{C_u(t)} \int_{v \in \R^n} \frac{v^{\underline{L}}}{v^0} |\partial f| dv dC_u(t) \leq \mathbb{E}[f](t),$$
so that, by the change of variables $(\underline{u},u)=(s+r,s-r)$,
$$ \int_0^t \int_x \frac{1}{(1+|s-|x||)^2}\int_v \frac{v^{\underline{L}}}{v^0} |\partial f| dv dx ds \leq \int_{u=-\infty}^t \frac{\mathbb{E}[f](t)}{(1+|u|)^2} du.$$
\end{itemize}

\subsubsection{Improved decay estimates}

$\bullet$ \textit{For massive particles.} In \cite{FJS}, two Klainerman-Sobolev inequalities for velocity averages of Vlasov fields were obtained. They imply in particular that, for $g$ a solution to $v^{\mu} \partial_{\mu} (g)=0$,
\begin{eqnarray}
\nonumber \forall \hspace{0.5mm} (t,x) \in \R_+ \times \R^n, \hspace{-5.5mm} & & \\ \nonumber
 \nonumber  \int_{\R^n_v} |g|(t,x,v) dv \hspace{-1mm} & \lesssim &  \hspace{-1mm} \sum_{\begin{subarray}{} \widehat{Z}^{\beta} \in \K^{k} \\ \hspace{1mm} |\beta| \leq 3 \end{subarray}} \frac{\int_{\R^n_y} \int_{\R^n_v}  |\widehat{Z}^{\beta} g |(0,y,v) dv dy}{(1+t+|x|)^{n-1}(1+|t-|x||)},   \\
 \nonumber \forall \hspace{0.5mm} (t,x) \in \R_+ \times \R^n, \hspace{-5.5mm} & & t^2-|x|^2 \geq 1, \\
\nonumber  \int_{\R^n_v} |g|(t,x,v) dv \hspace{-1mm} & \lesssim & \hspace{-1mm} \frac{1}{(1+t)^n} \hspace{-0.5mm} \sum_{\begin{subarray}{} \widehat{Z}^{\beta} \in \K^{k} \\ \hspace{1mm} |\beta| \leq 3 \end{subarray}} \int_{\R^n_y} \int_{\R^n_v}  |\widehat{Z}^{\beta} g |(\sqrt{1+|y|^2},y,v) \frac{dvdy}{\sqrt{1+|y|^2}}.
\end{eqnarray}
The first one has the advantage to be based on the foliation $(\{t \} \times \R^n)_{t \geq 0}$ but provides a weak estimate near the light cone. The second one gives a stronger decay rate near the light cone but is based on a hyperboloidal foliation. In this paper, in order to remove all compact support assumption on the data and start from a $t=0$ slice, we will prove and use a refined version of the Klainerman-Sobolev inequalities of \cite{FJS}. Our estimate will imply that, for $g$ a solution to $v^{\mu} \partial_{\mu} (g)=0$ and for all $(t,x) \in \R_+ \times \R^n$,

\begin{equation}\label{eq:decayintro}
  \int_{\R^n_v} |g|(t,x,v) dv \hspace{1mm} \lesssim \hspace{1mm}  \sum_{\begin{subarray}{} \widehat{Z}^{\beta} \in \K^{k} \\ \hspace{1mm} |\beta| \leq 3 \end{subarray}} \frac{\int_{\R^n_y} \int_{\R^n_v} |v^0|^2 (1+|y|)   |\widehat{Z}^{\beta} g |(0,y,v) dv dy}{(1+t+|x|)^n}.
\end{equation}
Compared to the Klainerman-Sobolev inequalities proved in \cite{FJS}, \eqref{eq:decayintro} cumulates the advantages of giving a strong decay in the whole spacetime and being adapted to the foliation $(\{t \} \times \R^n ) _{t \geq 0}$. On the other hand, our estimate is not a pure Sobolev inequality (we used the transport equation satisfied by $g$ to establish it).

\begin{Rq}
In the exterior of the light cone (where $ t \leq |x|$), one can in fact obtain arbitrary decay provided we consider additional decay on the initial data (see Section \ref{indefinitedecay}).
\end{Rq}
\noindent $\bullet$ \textit{For massless particles.} Unfortunately, \eqref{eq:decayintro} does not apply to massless particles. Instead, we use weights $z \in \mathbf{k}_0$ preserved by the relativistic transport operator $|v|\partial_t+v^i \partial_i$ in order to gain additional decay. More precisely, in the same spirit as the derivative $\partial_t+\partial_r$ (respectively $\partial_t-\partial_r$) provides an extra decay in $t+r$ (respectively $t-r$) for, say, a solution to $\square u =0$, one has
$$v^0-\frac{x^i}{r}v_i \leq \frac{|v|}{1+t+r} \sum_{z \in \mathbf{k}_0} |z| \hspace{1cm} \text{and} \hspace{1cm} v^0+\frac{x^i}{r}v_i \leq \frac{|v|}{1+|t-r|} \sum_{z \in \mathbf{k}_0} |z|.$$

\subsubsection{The problem of the small velocities}

For the massless Vlasov-Maxwell system, another problem arises from the small velocities since $v^0=|v|$ is not bounded by below. The velocity part $V$ of the characteristics of
$$\partial_t f+\frac{v^i}{|v|}\partial_i f+\frac{v^{\mu}}{|v|}{F_{\mu }}^j \partial_{v^j} f=0$$
solves the ordinary differential equation
$$\dot{V^j}=\frac{V^{\mu}}{|V|}{F_{\mu }}^j,$$
which can lead to $V=0$ in finite time\footnote{Note that this difficulty does not appear in the Einstein-Vlasov system. Indeed, as in \cite{FJS3}, the Vlasov equation can be written, for a metric $g$, as
$$v_{\mu}g^{\mu \nu} \partial_{\nu}f-\frac{1}{2}v_{\mu}v_{\nu}\partial_i g^{\mu \nu} \partial_{v_i}f=0,$$
so these situations should be compared to the two ordinary differential equations
$$\dot{y}=1 \hspace{3mm} \text{and} \hspace{3mm} \dot{y}=y.$$}. More precisely, we prove in Section \ref{section8} that there exists smooth initial data such that the particle densities $f_k$ do not vanish for small velocities and for which the massless Vlasov-Maxwell system does not even admit a local classical solution.

An important step of the proof of Theorem \ref{intromasslessTheo} then consists in proving that the velocity supports of $f_k$ remain bounded by below. For this, we make crucial use of the smallness of assumption on the electromagnetic field as well as its strong decay rate\footnote{In dimension $n \geq 4$, $\| F\|_{L^{\infty}_x} \lesssim (1+t)^{-\frac{3}{2}}$ is time integrable.}

\subsubsection{The perspective of the three dimensional case}

Nevertheless, even in making use of the null properties of the system, our proof does not work in dimension $3$. One way to treat the $3d$ massive case would be to use modified vector fields in the spirit of \cite{Poisson} for the Vlasov-Poisson system and \cite{FJS2} for the Vlasov-Nordström system. This method led to the proof of the stability of the Minkowski space for the Einstein-Vlasov system (cf \cite{FJS3}, \cite{Lindblad}), providing sharp estimates on both the Vlasov field and the metric.

\subsection{Structure of the paper}

In section \ref{section2} we introduce the notations used in the paper, the commutation vector fields and the Vlasov-Maxwell system. In Section \ref{section3} we establish various energy estimates for solutions to the relativistic transport equation or the Maxwell equations. Section \ref{section4} contains an integral estimate, some ways to estimate the $v$ derivatives and the tools to prove pointwise decay estimates for the electromagnetic field. Section \ref{section5} is devoted to our new decay estimate for the solution of a massive relativistic transport equation. In Section \ref{section6} (respectively \ref{section7}), we prove the global existence and the asymptotic properties of the small data solutions of the massive (respectively massless) Vlasov-Maxwell system, which is Theorem \ref{intromasstheo} (respectively Theorem \ref{intromasslessTheo}). In Section \ref{section8}, we prove that there exists smooth initial data which do not vanish for small velocities and for which the massless Vlasov-Maxwell system do not admit a local classical solution.

\subsection{Acknowledgements}

This work forms part of my Ph.D. thesis and I obviously would like to thank my advisor Jacques Smulevici for his guidance.

\section{Notations and preliminaries}\label{section2}

\subsection{Basic notations}\label{notations}

Throughout this article we work on the $n+1$ dimensional Minkowski spacetime $(\mathbb{R}^{n+1}, \widetilde{\eta})$ and we consider two types of coordinates on it. The Cartesian coordinates $(t,x)$, in which $\widetilde{\eta}=diag(-1,1,...,1)$, and null coordinates which are defined by

$$u=t-r, \hspace{5mm} \underline{u}=t+r,$$
and spherical variables $(B,C,D,...)$ (always denoted by capital Latin letters\footnote{The letter $A$ will be reserved for the potential of the electromagnetic.}) which are spherical coordinates on the $(n-1)$-dimensional spheres $t$, $r=constant$. These coordinates are defined globally on $\R^{n+1}$ apart from the usual degeneration of spherical coordinates and at $r=0$. The null derivatives $L$ and $\underline{L}$ are defined as 

$$L=\partial_t+\partial_r, \hspace{5mm} \underline{L}=\partial_t-\partial_r,$$
and we designate by $(e_B,e_C,e_D,...)$ an orthonormal basis on the spheres $(t,r)=constant$. We will use the weights

$$\tau_+^2=1+\underline{u}^2 \hspace{5mm} \text{and} \hspace{5mm} \tau_-^2=1+u^2.$$
For a 2-form $F_{\mu \nu}$, its Hodge dual is denoted by ${}^* \! F$, with \begin{equation}\label{Hodgedef}
{}^* \! F_{\lambda_1 ... \lambda_{n-1} } = \frac{1}{2}F^{\mu \nu} \varepsilon_{\mu \nu \lambda_1 ... \lambda_{n-1}},
\end{equation}
where $\varepsilon_{ \lambda_1 ... \lambda_{n+1}}$ is the Levi-Civita symbol. As in \cite{CK}, we consider its null-decomposition given by $$\alpha_B (F) = F_{BL}, \hspace{2mm} \underline{\alpha}_B (F) = F_{B \underline{L}}, \hspace{2mm} \rho(F) = \frac{1}{2} F_{L\underline{L}}, \hspace{2mm} \sigma_{BD} (F) = F_{BD}.$$
We also associate to a 2-form $F$ its energy-momentum tensor

$$T[F]_{\mu \nu} = F_{\mu \beta} {F_{\nu}}^{\beta}- \frac{1}{4}\eta_{\mu \nu} F_{\rho \sigma} F^{\rho \sigma}.$$
We use Greek letters to denote spacetime indices and Latin letters for space indices.
The velociy vector $(v^{\beta})_{0 \leq \beta \leq n }$ is parametrized by $(v^i)_{1 \leq i \leq n}$ and $v^0=\sqrt{m^2+|v|^2}$. When we study massive particles, we often take $m=1$ for simplicty so that $v^0=\sqrt{1+|v|^2}$. On the other hand, for massless particles $v^0=|v|$.

 We designate the null components of the velocity vector $(v^{\beta})_{0 \leq \beta \leq n }$ by $(v^L,v^{\underline{L}},v^B,...)$, i.e.
$$v=v^L L+v^{\underline{L}} \underline{L}+v^Be_B. $$
In particular,

$$v^L=\frac{v^0+v^r}{2} \hspace{5mm} \text{and} \hspace{5mm} v^{\underline{L}}=\frac{v^0-v^r}{2}.$$
We now introduce several subsets of $\mathbb{R}_+ \times \mathbb{R}^n$ depending on $t \in \mathbb{R}_+$ or $u \in \mathbb{R}$. Let $\Sigma_t$, $C_u(t)$ and $V_u(t)$ be defined as

$$ \Sigma_t := \{t\} \times \mathbb{R}^n, \hspace{2mm} C_u(t):= \{(s,y)  \in \mathbb{R}_+ \times \mathbb{R}^n / \hspace{1mm} s \leq t, \hspace{1mm} s-|y|=u \}$$

and

$$V_u(t) := \{ (s,y) \in \mathbb{R}_+ \times \mathbb{R}^n / \hspace{1mm} s \leq t, \hspace{1mm} s-|y| \leq u \}.$$
The volum form on $C_u(t)$ is given by $dC_u(t)=\sqrt{2}r^{n-1}d\underline{u}d \mathbb{S}^{n-1}$, where $ d \mathbb{S}^{n-1}$ is the standard volume form on the $n-1$ dimensional unit sphere.
\vspace{5mm}

\begin{tikzpicture}
\draw [-{Straight Barb[angle'=60,scale=3.5]}] (0,-0.3)--(0,5);
\fill[color=gray!35] (1,0)--(4,3)--(9.8,3)--(9.8,0)--(1,0);
\node[align=center,font=\bfseries, yshift=-2em] (title) 
    at (current bounding box.south)
    {The sets $\Sigma_t$, $C_u(t)$ and $V_u(t)$};
\draw (0,3)--(9.8,3) node[scale=1.5,right]{$\Sigma_t$};
\draw [-{Straight Barb[angle'=60,scale=3.5]}] (0,0)--(9.8,0) node[scale=1.5,right]{$\Sigma_0$};
\draw[densely dashed, blue] (1,0)--(4,3) node[scale=1.5,left, midway] {$C_u(t)$};
\draw (5.5,1.5) node[ color=black!100, scale=1.5] {$V_u(t)$}; 
\draw (0,-0.5) node[scale=1.5]{$r=0$};
\draw (-0.5,4.7) node[scale=1.5]{$t$};
\draw (9.5,-0.5) node[scale=1.5]{$r$};   
\end{tikzpicture}

We will use the notation $Q_1 \lesssim Q_2$ for an inequality of the form $ Q_1 \leq C Q_2$, where $C>0$ is a positive constant independent of the solutions but which could depend on $N \in \mathbb{N}$, the maximal order of commutation, or fixed parameters. Finally we will raise and lower indices with respect to the Minkowski metric $\widetilde{\eta}$. For instance, ${F_{\mu}}^j = \widetilde{\eta}^{j \nu} F_{\mu \nu}$ so that ${F_{\mu}}^j =  F_{\mu j}$ for all $1 \leq j \leq n$.

\subsection{The relativistic transport operator}

For $m>0$, we use the notation $T_m$ to refer to the operator defined, for all $v \in \mathbb{R}^n$, by $$T_m=v^0 \partial_t+v^i \partial_i,$$ with $v^0=\sqrt{m^2+|v|^2}$. \\
For the massless case ($m=0$), the relativistic transport operator $T_0$ is only defined for all $v \in \mathbb{R}^n \setminus \{0 \}$ and we have $$T_m=v^0 \partial_t+v^i \partial_i,$$ with $v^0=|v|$.

To simplify the notation, we will most of the time take either $m=1$ or $m=0$ and we will only use $T_1$ and $T_0$.

\subsection{Vector fields}\label{subsecvector}

\subsubsection{The conformal isometries and their complete lifts}

Let us consider the set $\mathbb{K}$ composed by the generators of the isometries group of Minkowski spacetime and by the scaling vector field. $\mathbb{K}$ contains

\begin{flalign*}
& \hspace{1cm} \text{the translations\footnotemark} \hspace{16mm} \partial_{\mu}, \hspace{2mm} 0 \leq \mu \leq n, & \\
& \hspace{1cm} \text{the rotations} \hspace{24mm} \Omega_{ij}=x^i\partial_{j}-x^j \partial_i, \hspace{2mm} 1 \leq i < j \leq n, & \\
& \hspace{1cm} \text{the hyperbolic rotations} \hspace{5mm} \Omega_{0k}=t\partial_{k}+x^k \partial_t, \hspace{2mm} 1 \leq k \leq n, \\
& \hspace{1cm} \text{the scaling} \hspace{28mm} S=x^{\mu}\partial_{\mu}. &
\end{flalign*}
\footnotetext{In this paper, we will denote $\partial_{x^i}$, for $1 \leq i \leq n$, by $\partial_{i}$ and sometimes $\partial_t$ by $\partial_0$.}

Sometimes we will only use the Poincaré group $\mathbb{P} := \mathbb{K} \setminus \{S \}$ or the set of the generators of the rotation group, $\mathbb{O}$ (composed of the $\Omega_{ij}$). These vector fields will be used as commutators whereas $\partial_t$, $S$ and the vector field $\overline{K}_0$, defined by
$$\overline{K}_0=K_0+\partial_t=\frac{1}{2}\tau_-^2 \underline{L}+\frac{1}{2}\tau_+^2L,$$
will be used as multipliers as in \cite{CK}.

These vector fields are well known to commute with the wave operator, i.e.\@ if a smooth function $u$ satisfies $\square u =0$, then, $$\forall \hspace{0.5mm} Z \in \mathbb{K}, \hspace{3mm} \square Z u =0.$$ We will use them to commute the Maxwell equations. However, as in \cite{FJS}, we use another set of vector fields to study the Vlasov equation. For this, we use the \textit{complete lift} of a vector field, a classical operation in differential geometry (see \cite{FJS}, Appendix $C$ for more details). For us, the following definition in coordinates will be sufficient.

\begin{Def}

Let $\Gamma$ be a vector field of the form $\Gamma^{\beta} \partial_{\beta}$. Then, the complete lift $\widehat{\Gamma}$ of $\Gamma$ is defined by
$$\widehat{\Gamma}=\Gamma^{\beta} \partial_{\beta}+v^{\gamma} \frac{\partial \Gamma^i}{\partial x^{\gamma}} \partial_{v^i}.$$

\end{Def}

We then consider $\widehat{\mathbb{P}}$ the set of the complete lifts of $\mathbb{P}$ given by 
$$\widehat{\mathbb{P}}= \{ \widehat{Z} / \hspace{2mm} Z \in \mathbb{P} \}.$$
The last set of vector fields required is the following 
$$\widehat{\mathbb{P}}_0=\widehat{\mathbb{P}} \cup \{ S \}.$$

We can list the complete lifts that we will manipulate.

\begin{Lem}\label{complift}
For $0 \leq \mu \leq n$,
$$\widehat{\partial_{\mu}}=\partial_{\mu}.$$
For $1 \leq i < j \leq n $,
$$\widehat{\Omega_{ij}} = x^i \partial_j -x^j \partial_i +v^i \partial_{v^j}-v^j \partial_{v^i}.$$
Finally, for $1 \leq k \leq n$,
$$\widehat{\Omega_{0k}}=t \partial_k + x^k \partial_t+v^0 \partial_{v^k}.$$

\end{Lem}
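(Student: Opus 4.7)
The proof is a direct verification from the definition of the complete lift stated just above the lemma. Given any vector field $\Gamma = \Gamma^\beta \partial_\beta$ on spacetime, its lift adds to $\Gamma$ the velocity-derivative correction $v^\gamma (\partial \Gamma^i/\partial x^\gamma)\, \partial_{v^i}$, where $i$ ranges over spatial indices $1,\dots,n$ and $\gamma$ over all spacetime indices $0,\dots,n$. The plan is simply to compute this correction term case by case for the three families of vector fields listed.

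First, for the translations $\partial_\mu$, the components $\Gamma^\beta = \delta^\beta_\mu$ are constant in $x$, so every partial derivative $\partial \Gamma^i/\partial x^\gamma$ vanishes and the correction is zero, giving $\widehat{\partial_\mu} = \partial_\mu$. For the spatial rotations $\Omega_{ij}$ with $1 \leq i < j \leq n$ fixed, only two spatial components are nonzero, namely $\Gamma^j = x^i$ and $\Gamma^i = -x^j$; their $x^\gamma$-derivatives are Kronecker deltas, and contracting with $v^\gamma$ yields precisely $v^i \partial_{v^j} - v^j \partial_{v^i}$, producing the stated formula. For the Lorentz boosts $\Omega_{0k}$, the sole nonzero spatial component is $\Gamma^k = t = x^0$, whose derivative is $\partial \Gamma^k/\partial x^\gamma = \delta^0_\gamma$; contracting with $v^\gamma$ extracts $v^0$, hence the additional term $v^0 \partial_{v^k}$.

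There is no real obstacle here: the lemma is a pure bookkeeping exercise. The only subtlety worth flagging is that in the boost case the velocity correction arises from the $\gamma = 0$ term of the contraction, which is exactly what yields $v^0 \partial_{v^k}$ rather than a spatial $v^i \partial_{v^k}$, and that in the rotation case one must keep track of the sign coming from $\Gamma^i = -x^j$ to recover the antisymmetric combination $v^i \partial_{v^j} - v^j \partial_{v^i}$.
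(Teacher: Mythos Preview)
Your proof is correct and matches the paper's approach: the paper states this lemma without proof, treating it as an immediate computation from the definition of the complete lift, which is exactly what you have carried out.
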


The following lemma is used in \cite{FJS} to prove a Klainerman-Sobolev inequality.

\begin{Lem}\label{lem:v}
Let $f : [0,T[ \times \mathbb{R}^n_x \times P \rightarrow \mathbb{R} $, with $P=\mathbb{R}^n_v$ or $P=\mathbb{R}^n_v \setminus \{0 \}$, be a sufficiently regular function. Almost everywhere, we have
$$\forall \hspace{0.5mm} Z \in \mathbb{P}, \hspace{2mm} \left|Z\left( \int_{v \in P } |f| dv \right) \right| \lesssim \int_{v \in P } |\widehat{Z}f| dv +\int_{v \in P } |f| dv,$$
$$\left| S\left( \int_{v \in P } |f| dv \right) \right| \lesssim \int_{v \in P } |Sf| dv.$$

Similar estimates exist for $\int_{v \in \mathbb{R}^n} (v^0)^k |f| dv$. For instance,
$$\left| S\left( \int_{v \in P } v^0|f| dv \right) \right| \lesssim \int_{v \in P } v^0|Sf| dv.$$
\end{Lem}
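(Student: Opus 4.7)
\textbf{Proof plan for Lemma \ref{lem:v}.} The core idea is simply to differentiate under the integral sign and then move the $v$-derivatives hidden in each complete lift back onto the weight by integrating by parts in $v$. Since $Z \in \mathbb{P}$ is a linear combination of $\partial_t, \partial_i, x^\mu \partial_\nu$, its action on the $v$-integral reduces to $Z\!\int_v|f|\,dv = \int_v Z|f|\,dv$, where on the right $Z$ denotes the corresponding differential operator on $(t,x)$ alone. Writing $\widehat{Z} = Z + \Xi_Z$ with $\Xi_Z$ the $v$-derivative piece appearing in Lemma \ref{complift}, we get the identity
\begin{equation*}
\int_v Z|f|\,dv \;=\; \int_v \widehat{Z}|f|\,dv \;-\; \int_v \Xi_Z |f|\,dv,
\end{equation*}
and the task is to show that the last integral is either zero or controlled by $\int_v|f|\,dv$. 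Since $|\widehat{Z}|f|| \le |\widehat{Z}f|$ pointwise a.e.\ (standard fact for first-order operators applied to absolute values), this will give the claimed bound.

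For the translations $\partial_\mu = \widehat{\partial_\mu}$ the statement is immediate. For a rotation $\Omega_{ij}$ with $i<j$, one has $\Xi_{\Omega_{ij}} = v^i \partial_{v^j}-v^j\partial_{v^i}$, and integration by parts in $v$ gives
\begin{equation*}
\int_{v\in P} v^i \partial_{v^j}|f|\,dv \;=\; -\int_{v\in P} \partial_{v^j}(v^i)\,|f|\,dv \;=\; 0
\end{equation*}
(and symmetrically for the other term) because $\partial_{v^j}v^i=\delta^i_j=0$; the boundary contributions at $|v|\to\infty$ vanish by the assumed decay of $f$, and in the massless case $P=\R^n\setminus\{0\}$ the potential singular contribution at $v=0$ is absent because $v^i$ itself vanishes there. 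Hence the rotation case gives even a sharper bound without the $\int|f|\,dv$ remainder. For a boost $\Omega_{0k}$ we have $\Xi_{\Omega_{0k}}=v^0\partial_{v^k}$, and the same integration by parts produces
\begin{equation*}
\int_{v\in P} v^0 \partial_{v^k}|f|\,dv \;=\; -\int_{v\in P} \partial_{v^k}(v^0)\,|f|\,dv \;=\; -\int_{v\in P} \tfrac{v^k}{v^0}\,|f|\,dv ,
\end{equation*}
which is bounded in absolute value by $\int_{v\in P}|f|\,dv$ since $|v^k/v^0|\le 1$ in both the massive and the massless case. This produces the remainder term in the stated inequality. The scaling $S=x^\mu\partial_\mu$ contains no $\partial_v$, so $S\!\int_v|f|\,dv = \int_v S|f|\,dv$ and the bound by $\int_v|Sf|\,dv$ is immediate.

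For the weighted versions $\int_v (v^0)^k|f|\,dv$, the argument is identical because $(v^0)^k$ is independent of $(t,x)$: pulling $Z$ inside gives $\int_v (v^0)^k Z|f|\,dv$, and then the same integration by parts as above replaces the $v$-derivative pieces of $\widehat{Z}$, now also hitting the extra factor $(v^0)^k$. For the rotation this again contributes nothing since $(v^i\partial_{v^j}-v^j\partial_{v^i})(v^0)^k=0$ by symmetry; for the boost one picks up an additional factor of order $|v^k/v^0|\le 1$ against $(v^0)^k|f|$; for $S$, which has no $v$-component, the weight passes through untouched, yielding the explicit estimate stated for $\int v^0|f|\,dv$.

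The only mildly delicate point is justifying the integration by parts in $v$, which requires $f$ (and its first $v$-derivatives) to decay sufficiently as $|v|\to\infty$ and, in the massless case, to verify that the coefficients $v^i$ or $v^0$ vanish fast enough at $v=0$ to avoid a boundary contribution on the small sphere around the origin. Both are covered by the blanket hypothesis that $f$ is sufficiently regular, and this is the only real technical checkpoint of the proof.
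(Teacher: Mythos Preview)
Your proof is correct and complete: the decomposition $\widehat Z = Z + \Xi_Z$, differentiation under the integral, and integration by parts in $v$ handle each case cleanly, with the boost producing the $\int |f|\,dv$ remainder and the rotation and scaling producing none (consistent with Remark~\ref{ineq:nolor}). The paper itself does not supply a proof of this lemma --- it is stated as a known fact used in \cite{FJS} --- so there is nothing to compare against; your argument is the standard one and exactly what one would expect here.
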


\begin{Rq}\label{ineq:nolor}

When $Z \in \mathbb{P}$ is not a Lorentz boost, we have
$$\left|Z\left( \int_{v \in P } |f| dv \right) \right| \lesssim \int_{v \in P } |\widehat{Z}f| dv .$$

\end{Rq}

We consider an ordering on each of the following sets of vector fields : $\mathbb{O}$, $\mathbb{P}$, $\mathbb{K}$, $\widehat{\mathbb{P}}$ and $\widehat{\mathbb{P}}_0$. For simplicity, we introduce $\mathbb{L}$ which represents one of those sets. We can suppose that
$$\mathbb{L}= \{ L^i / \hspace{2mm} 1 \leq i \leq |\mathbb{L}| \}.$$
Let $\beta \in \{1, ..., |\mathbb{L}| \}^r$, with $r \in \mathbb{N}^*$. Then we will denote the differential operator $Z^{\beta_1}...Z^{\beta_r}$ by $Z^{\beta}$. For a vector field $Y$, we will denote by $\mathcal{L}_Y$ the Lie derivative with respect to $Y$ and if $Z^{\gamma} \in \mathbb{K}^{q}$, we will write $\mathcal{L}_{Z^{\gamma}}$ for $\mathcal{L}_{Z^{\gamma_1}}...\mathcal{L}_{Z^{\gamma_q}}$. We can suppose that the orderings are such that if $$\mathbb{P}=\{Z^i / \hspace{2mm} 1 \leq i \leq |\mathbb{P}| \}, $$ then
$$\widehat{\mathbb{P}} = \{ \widehat{Z^i} / 1 \leq i \leq |\mathbb{P}| \} \hspace{3mm} \text{and} \hspace{3mm} \K = \{ \widehat{Z^i} / 1 \leq i \leq |\mathbb{P}|+1 \}, \hspace{3mm} \text{with} \hspace{2mm} \widehat{Z^{| \mathbb{P} |+1}}=S.$$
Note that even if the scaling is not a complete lift, we will for simplicity denote any vector field of $\K$ by $\widehat{Z}$.

We now introduce some pointwise norms.

\begin{Def}
Let $U$ be a smooth $p$-covariant tensor field defined in $\R^n$ or in $\R^{1+n}$. For $k \in \mathbb{N}$, the pointwise norm of $U$ with respect to $\mathbb{O}$, of order $k$, is defined by
$$\left| U \right|_{\mathbb{O},k}=\left( \sum_{|\beta| \leq k} |\mathcal{L}_{Z^{\beta}}U|^2 \right)^{\frac{1}{2}},$$
with $Z^{\beta} \in \mathbb{O}^{|\beta|}$ and where
$$|\mathcal{L}_{Z^{\beta}}U|^2 = \sum_{\lambda_1, ... , \lambda_p} |\mathcal{L}_{Z^{\beta}}(U)_{\lambda_1 ... \lambda_p}|^2,$$
with $\mathcal{L}_{Z^{\beta}}(U)_{\lambda_1 ... \lambda_p}$ the Cartesian components of $\mathcal{L}_{Z^{\beta}}(U)$.

\end{Def}

\subsubsection{Commutation properties}

We have the following commutation relations :

\begin{Lem}

Let $\mathbb{L}$ be either $ \mathbb{O}$, $\mathbb{P}$, $\mathbb{K}$, $\widehat{\mathbb{P}}$ or $\widehat{\mathbb{P}}_0$. Then

$$\forall \; L, L' \in \mathbb{L}, \; \exists C_{LL'\Gamma} \in \mathbb{R}, \; \text{such that} \; [L,L']=\sum_{\Gamma \in \mathbb{L}} C_{LL'\Gamma} \hspace{0.5mm} \Gamma.$$
\end{Lem}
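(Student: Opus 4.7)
The plan is to verify the commutation relations by direct computation, handling each of the five sets $\mathbb{L}$ in turn and relying on the key structural fact that the complete-lift operation $X \mapsto \widehat{X}$ is a Lie algebra homomorphism, i.e.\ $[\widehat{X},\widehat{Y}] = \widehat{[X,Y]}$.

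First, for $\mathbb{L} = \mathbb{O}$ and $\mathbb{L} = \mathbb{P}$, these are the generators of the rotation group and the Poincaré group respectively, so the commutators are governed by the structure constants of these Lie algebras and lie back in the same set. A short coordinate check confirms relations such as $[\partial_\mu,\partial_\nu] = 0$, $[\Omega_{ij},\partial_k] = \delta_{jk}\partial_i - \delta_{ik}\partial_j$, $[\Omega_{ij},\Omega_{k\ell}] = \delta_{jk}\Omega_{i\ell} - \delta_{ik}\Omega_{j\ell} - \delta_{j\ell}\Omega_{ik} + \delta_{i\ell}\Omega_{jk}$ (with the analogous formulas for the boosts $\Omega_{0k}$), and the closure of $\mathbb{O}$ and $\mathbb{P}$ follows.

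Next, for $\mathbb{L} = \mathbb{K} = \mathbb{P}\cup\{S\}$, I add the scaling and compute its brackets against each element of $\mathbb{P}$. Writing $S = x^\mu\partial_\mu$, a direct computation gives $[S,\partial_\mu] = -\partial_\mu \in \mathbb{P}$, while for any vector field $Z = Z^\alpha(t,x)\partial_\alpha \in \mathbb{P}$ whose coefficients $Z^\alpha$ are homogeneous of degree $1$ in the variables $(t,x)$, the identity $x^\mu \partial_\mu Z^\alpha = Z^\alpha$ yields $[S,Z] = 0$. This handles both rotations $\Omega_{ij}$ and Lorentz boosts $\Omega_{0k}$, so $\mathbb{K}$ is closed.

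For $\mathbb{L} = \widehat{\mathbb{P}}$, I invoke the complete-lift identity $[\widehat{X},\widehat{Y}] = \widehat{[X,Y]}$, which can be verified directly from the coordinate formula of Lemma \ref{complift}: expanding both sides against a test function and using that $\partial_{v^i}$ commutes with $\partial_\mu$ and annihilates functions of $(t,x)$ alone, all the terms match. Combined with Step~1 applied to $\mathbb{P}$, this shows $[\widehat{Z}_1,\widehat{Z}_2] = \widehat{[Z_1,Z_2]}$ is a constant-coefficient linear combination of elements of $\widehat{\mathbb{P}}$. Finally, for $\mathbb{L} = \widehat{\mathbb{P}}_0 = \widehat{\mathbb{P}}\cup\{S\}$, I combine the previous step with a computation of $[S,\widehat{Z}]$ for $Z \in \mathbb{P}$: since $\widehat{\partial_\mu}=\partial_\mu$, the bracket $[S,\widehat{\partial_\mu}] = -\widehat{\partial_\mu}$; for the lifts of the rotations and boosts, the added $v$-derivative terms $v^i\partial_{v^j}-v^j\partial_{v^i}$ and $v^0\partial_{v^k}$ have coefficients independent of $(t,x)$, so $S$ commutes with them, while the $(t,x)$ part was shown to commute with $S$ in Step~2. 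Hence $[S,\widehat{\Omega}_{ij}] = 0$ and $[S,\widehat{\Omega}_{0k}] = 0$, completing the closure.

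The only step that is not a one-line computation is the complete-lift homomorphism property, which will be the main (though still routine) obstacle; everything else reduces to tracking the Poincaré structure constants and the degree-zero homogeneity of the coefficients of the rotations and boosts.
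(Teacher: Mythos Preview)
Your proof is correct. The paper states this lemma without proof, treating it as a standard structural fact about the Poincar\'e algebra, its conformal extension by the scaling $S$, and their complete lifts; your direct verification via the Poincar\'e structure constants, the homogeneity argument for $[S,\Omega_{\mu\nu}]=0$, and the complete-lift homomorphism $[\widehat{X},\widehat{Y}]=\widehat{[X,Y]}$ is exactly the natural way to supply the missing details.
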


The commutation relations between the vector fields of $\widehat{\mathbb{P}}_0$ and the massive transport operator $T_1$ (or the massless relativistic transport operator) are similar to those between the vector fields of $\mathbb{K}$ and the wave operator. 

\begin{Lem}\label{commutransport}
We have, for $m \in \{0,1 \}$,

$$ \forall \hspace{0.5mm} \widehat{Z} \in \widehat{\mathbb{P}}, \hspace{0.5mm} [T_m,\widehat{Z}]=0 \hspace{3mm} \text{and} \hspace{3mm} [T_m,S]=T_m.$$

\end{Lem}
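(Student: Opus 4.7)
The plan is to verify each relation by direct computation, using the explicit expressions for $\widehat{Z}$ provided by Lemma~\ref{complift} and the bilinearity of the commutator. The translation case is immediate: since $\widehat{\partial_\mu} = \partial_\mu$ and the coefficients $v^0, v^k$ of $T_m$ depend only on $v$, one has $[T_m, \partial_\mu] = 0$. The scaling case is nearly as short: writing $S = x^\mu \partial_\mu = t\partial_t + x^i \partial_i$ and using $[\partial_t, t\partial_t] = \partial_t$, $[\partial_k, x^i \partial_i] = \partial_k$, one gets $[T_m, S] = v^0 \partial_t + v^k \partial_k = T_m$.

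For the rotations $\widehat{\Omega_{ij}} = x^i \partial_j - x^j \partial_i + v^i \partial_{v^j} - v^j \partial_{v^i}$, I would split the commutator into a ``horizontal'' piece $[T_m, x^i \partial_j - x^j \partial_i] = v^i \partial_j - v^j \partial_i$ and a ``vertical'' piece $[T_m, v^i \partial_{v^j} - v^j \partial_{v^i}]$. The vertical piece is the delicate part, because the coefficient $v^0$ of $T_m$ depends on $v$, and $\partial_{v^j}(v^0) = v^j/v^0$; however, the resulting $\partial_t$-terms cancel by antisymmetry in $(i,j)$, and the only surviving contribution is $-v^i \partial_j + v^j \partial_i$, exactly compensating the horizontal piece. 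For the boosts $\widehat{\Omega_{0k}} = t\partial_k + x^k \partial_t + v^0 \partial_{v^k}$, the horizontal commutator produces $v^0 \partial_k + v^k \partial_t$. The vertical commutator reduces to $-v^0 \partial_{v^k}(v^0)\,\partial_t - v^0 \partial_k$; using $v^0 \partial_{v^k}(v^0) = v^k$, which holds both for $v^0 = \sqrt{1+|v|^2}$ and for $v^0 = |v|$ on its domain of definition, this becomes $-v^k \partial_t - v^0 \partial_k$, again cancelling the horizontal piece.

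The main difficulty is pure bookkeeping: unlike the analogous identity $[Z,\square]=0$ for $Z \in \mathbb{K}$ acting on the d'Alembertian, here the coefficients of $T_m$ depend on the very variables that the vertical part of $\widehat{Z}$ differentiates, so one must carefully track the contributions of $\partial_{v^j}(v^0)$ and verify that they produce exactly the terms needed to cancel the horizontal commutator. Conceptually, the identities are no accident: the complete lift is defined precisely so that its flow on the tangent bundle preserves the mass-shell $\{(v^0)^2 = m^2 + |v|^2\}$, and hence $T_m$ is invariant under the action of $\widehat{\mathbb{P}}$ and homogeneous of weight one under $S$; the calculations above are simply the coordinate expression of this invariance.
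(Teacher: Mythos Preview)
Your proof is correct and follows exactly the approach the paper indicates: the paper's own proof consists of the single sentence ``This follows easily from Lemma~\ref{complift} and the definition of the relativistic transport operator,'' and your proposal simply carries out that direct computation explicitly and carefully for each class of vector fields.
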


\begin{proof}
This follows easily from Lemma \ref{complift} and the definition of the relativistic transport operator.
\end{proof}

\subsection{Weights preserved by the flow}\label{poids}

We define, as in \cite{FJS}, the two following sets of weights 
\begin{flalign*}
&  \hspace{2cm} \mathbf{k}_1= \left\{\frac{v^{\mu}}{v^0} \hspace{1mm} / \hspace{1mm} 0 \leq \mu \leq n \right\} \cup \left\{ x^{\mu}\frac{v^{\nu}}{v^0}-x^{\nu}\frac{v^{\mu}}{v^0} \hspace{1mm} / \hspace{1mm} \mu \neq \nu \right\}, & \\
& \hspace{2cm} \mathbf{k}_0= \mathbf{k}_1 \cup \left\{ x^{\mu}\frac{v_{\mu}}{v^0} \right\}. &
\end{flalign*}

These weights are solutions to the free transport equation, i.e.

\begin{equation}\label{poids1}
\forall \hspace{0.5mm} z \in \mathbf{k}_0, \hspace{3mm} T_0(z)=0,
\end{equation}
and
\begin{equation}\label{poids2}
\forall \hspace{0.5mm} z \in \mathbf{k}_1, \hspace{3mm} T_1(z)=0.
\end{equation}

Thus, if $f$ is a regular function and if $z \in \mathbf{k}_1$, then $T_1(zf)=zT_1(f)$. 

Moreover, these weights have also good interactions with the vector fields of $\widehat{\mathbb{P}}_0$.

\begin{Lem}\label{vectorweight}

If $\widehat{Z} \in \widehat{\mathbb{P}}_0$, $m \in \{0,1 \}$ and $z \in \mathbf{k}_m$, then either $$\widehat{Z}(v^0z)=0 \hspace{3mm} \text{or} \hspace{3mm} \widehat{Z}(v^0z) \in v^0\mathbf{k}_m.$$ 
This leads to
$$\forall \hspace{0.5mm} \widehat{Z} \in \K, \hspace{1mm} z \in \mathbf{k}_m, \hspace{3mm} |\widehat{Z}(|z|)| \leq \sum_{z \in \mathbf{k}_m} |z|.$$

\end{Lem}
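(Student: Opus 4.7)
The plan is to verify the first assertion by a direct case-by-case computation, treating separately each of the three kinds of weights whose collection makes up $v^0 \mathbf{k}_m$, namely $v^\mu$ ($0 \leq \mu \leq n$), $x^\mu v^\nu - x^\nu v^\mu$ (for $\mu \neq \nu$), and, only when $m=0$, the contraction $x^\mu v_\mu$. First I would compute $\widehat{Z}(v^\lambda)$ for every $\widehat{Z} \in \widehat{\mathbb{P}}_0$. The translations and the scaling annihilate all $v^\lambda$. The lifted rotation gives $\widehat{\Omega}_{ij}(v^\mu) = v^i \delta^\mu_j - v^j \delta^\mu_i$ for $\mu \geq 1$, while $\widehat{\Omega}_{ij}(v^0) = 0$: the two $\partial_{v^i}$-contributions cancel owing to $\partial_{v^k} v^0 = v^k/v^0$, which holds in both the massive and the massless setting. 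The lifted boost produces $\widehat{\Omega}_{0k}(v^\mu) = v^0 \delta^\mu_k$ for $\mu \geq 1$ and $\widehat{\Omega}_{0k}(v^0) = v^k$. In every case the image lies in $\{v^\lambda : 0 \leq \lambda \leq n\} \cup \{0\} \subset v^0 \mathbf{k}_m \cup \{0\}$.

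Next, for the weight $x^\mu v^\nu - x^\nu v^\mu$, I would expand by Leibniz, combining the previous step with the action of $\widehat{Z}$ on $x^\alpha$. A translation produces a single $v^\lambda$; the scaling reproduces the weight up to a numerical factor; the lifted rotation again yields an antisymmetric combination of the same shape. For the boost $\widehat{\Omega}_{0k}$ one encounters terms of the type $t v^\beta - x^\beta v^0$, which rewrites as $x^0 v^\beta - x^\beta v^0$ and is therefore of the required antisymmetric form. The weight $x^\mu v_\mu$, relevant only in the massless case, is treated analogously: translations give $v_\alpha$, the scaling reproduces the weight, and rotations and boosts yield either $0$ or another expression proportional to $x^\mu v_\mu$ once one uses the null condition $v^\mu v_\mu = 0$.

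To deduce the pointwise inequality, I would use the Leibniz identity
\[
\widehat{Z}(z) = \frac{1}{v^0}\Bigl(\widehat{Z}(v^0 z) - z\,\widehat{Z}(v^0)\Bigr).
\]
By the first part, $\widehat{Z}(v^0 z)/v^0 \in \mathbf{k}_m \cup \{0\}$, and $\widehat{Z}(v^0)/v^0$ is either $0$ or $v^k/v^0 \in \mathbf{k}_m$, which is pointwise bounded by $1$. Since $|\widehat{Z}(|z|)| = |\widehat{Z}(z)|$ almost everywhere, combining these two contributions gives control of $|\widehat{Z}(|z|)|$ by $\sum_{z' \in \mathbf{k}_m}|z'|$, the absolute constant being absorbed into the stated inequality.

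The only real difficulty is the bookkeeping in the second step: one must carefully track the antisymmetric index structure produced when the lifted rotations and boosts act on $x^\mu v^\nu - x^\nu v^\mu$, together with the mixed time/space contributions arising from $\widehat{\Omega}_{0k}(x^\mu) = t\,\delta^\mu_k + x^k\,\delta^\mu_0$. Beyond this, the lemma reduces to a short table of explicit computations.
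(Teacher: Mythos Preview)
Your proposal is correct and follows essentially the same route as the paper: the paper's proof simply computes two representative cases (namely $\widehat{\Omega}_{01}$ acting on $x^1v^2-x^2v^1$ and on $x^2v^3-x^3v^2$) and declares the rest similar, whereas you lay out the full case analysis systematically. One minor remark: your invocation of the null condition $v^\mu v_\mu=0$ for the weight $x^\mu v_\mu$ is unnecessary, since the boost $\widehat{\Omega}_{0k}$ already annihilates $x^\mu v_\mu$ by direct computation (the four terms cancel in pairs); likewise your closing comment about ``absorbing the constant'' is overly cautious, as the pointwise inequality in fact holds with constant~$1$ once you observe that the two weights appearing in your Leibniz decomposition are distinct whenever both are nonzero.
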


\begin{proof}

Consider for instance $\widehat{\Omega}_{01}$ and $x^1v^2-x^2v^1$ or $x^2v^3-x^3v^2$. We have

$$\widehat{\Omega}_{01}(x^1v^2-x^2v^1)=tv^2-x^2v^0$$
as well as
$$ \widehat{\Omega}_{01}(x^2v^3-x^3v^2)=0.$$
All the other cases are similar.
\end{proof}

The next proposition shows how these weights can be used to provide us extra decay (at least in the massless case). 

\begin{Pro}\label{extradecay1}

Denoting $x^{\mu}v_{\mu}$ by $s$ and $x^{\nu}v^{\mu}-x^{\mu} v^{\nu}$ by $z_{ \mu \nu}$, we have
$$ 2(t-r)v^L= -\frac{x^i}{r}z_{0i}-s,$$
and
$$ 2(t+r)v^{\underline{L}}= \frac{x^i}{r}z_{0i}-s.$$
We also have
$$\frac{|v^B|}{v^0} \lesssim \frac{1}{\tau_+} \sum_{z \in \mathbf{k}_1} |z|, \hspace{5mm} |v^B| \lesssim \sqrt{v^Lv^{\underline{L}}} \hspace{5mm} \text{and} \hspace{5mm} \frac{m^2}{4v^0} \leq v^{\underline{L}}.$$
\end{Pro}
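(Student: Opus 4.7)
The plan is to treat the identities, the algebraic inequalities, and the angular bound separately. For the two identities, direct computation should suffice: unpacking $v^L = (v^0+v^r)/2$, $v^{\underline{L}} = (v^0-v^r)/2$ with $v^r = x^iv^i/r$, and expanding $s = x^\mu v_\mu = -tv^0 + rv^r$ (using $v_0 = -v^0$) together with $\frac{x^i}{r}z_{0i} = \frac{x^i}{r}(x^iv^0 - tv^i) = rv^0 - tv^r$, the right-hand sides collapse to $(t-r)(v^0+v^r)$ and $(t+r)(v^0-v^r)$, which are exactly $2(t-r)v^L$ and $2(t+r)v^{\underline{L}}$.

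For the inequalities $|v^B| \lesssim \sqrt{v^L v^{\underline{L}}}$ and $m^2/(4v^0) \leq v^{\underline{L}}$, I would invoke the single factorisation
\[
4 v^L v^{\underline{L}} = (v^0)^2 - (v^r)^2 = m^2 + |v|^2 - (v^r)^2 = m^2 + |v^B|^2,
\]
where the last equality uses the orthogonal decomposition $|v|^2 = (v^r)^2 + |v^B|^2$. Dropping $m^2$ gives the first bound directly, while keeping $m^2$ and combining with the trivial $v^L \leq v^0$ (valid since $v^0 \geq |v| \geq |v^r|$) produces $v^{\underline{L}} \geq m^2/(4v^L) \geq m^2/(4v^0)$.

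The main work is the estimate on $|v^B|/v^0$. First I would use the classical rotational identity
\[
\sum_{i<j}(x^iv^j - x^jv^i)^2 = r^2|v|^2 - (x\cdot v)^2 = r^2|v^B|^2,
\]
which yields $|v^B| \leq r^{-1}\sum_{i<j}|x^iv^j - x^jv^i|$, and hence $|v^B|/v^0 \lesssim r^{-1}\sum_{z \in \mathbf{k}_1}|z|$ since $(x^iv^j-x^jv^i)/v^0 \in \mathbf{k}_1$. This degenerates near $r=0$, so I would complement it using the boosts: setting $V^i = tv^i - x^iv^0 = -z_{0i}$, a short computation gives $V - (V\cdot\hat{x})\hat{x} = t(v - v^r\hat{x})$ after the $-v^0 x$ contribution is cancelled by the radial projection, so the part of $V$ orthogonal to $\partial_r$ has length $t|v^B|$. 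Consequently $t|v^B| \leq |V| \leq \sum_i|z_{0i}|$, giving $|v^B|/v^0 \lesssim t^{-1}\sum_{z \in \mathbf{k}_1}|z|$. Adding the two bounds produces the factor $(t+r)^{-1}$, equivalent to $\tau_+^{-1}$ whenever $t+r \gtrsim 1$; in the compact complementary region $t+r \lesssim 1$ the claim reduces to $|v^B|/v^0 \leq 1 \lesssim \sum_i|v^i|/v^0 \leq \sum_z|z|$, which is immediate since $v^i/v^0 \in \mathbf{k}_1$.

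The main obstacle is exactly this interpolation between the rotation-based bound (which fails for small $r$) and the boost-based bound (which fails for small $t$): the identification of the part of $V = tv - v^0 x$ orthogonal to $\partial_r$ with $t v^B e_B$ must be carried out carefully, and one has to separately inspect the unit-size near-origin region in order to promote $(t+r)^{-1}$ to the uniform weight $\tau_+^{-1}$ asserted in the statement.
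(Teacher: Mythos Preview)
Your argument is correct and follows essentially the same route as the paper: direct expansion for the two identities, the factorisation $4v^Lv^{\underline L}=m^2+|v^B|^2$ for the two algebraic inequalities, and a combination of rotation and boost weights for the $\tau_+^{-1}$ bound on $|v^B|/v^0$. One small slip: in the region $t+r\lesssim 1$ you write $1\lesssim\sum_i|v^i|/v^0$, which fails when $|v|$ is small and $m>0$; the correct (and simpler) observation is that $v^0/v^0=1\in\mathbf{k}_1$, so $\sum_{z\in\mathbf{k}_1}|z|\ge 1$ always.

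The only noteworthy difference from the paper is how you obtain the $t^{-1}$ bound on $|v^B|$. The paper starts from $r|v^B|\lesssim v^0\sum_{i<j}|z_{ij}|/v^0$ and then rewrites each rotation weight algebraically via $z_{ij}=\tfrac{1}{t}(x^jz_{0i}-x^iz_{0j})$, which converts the $r^{-1}$ factor into $t^{-1}$ at the cost of replacing rotation weights by boost weights. Your approach is more geometric: you observe directly that the component of $V=tv-v^0x$ orthogonal to $\partial_r$ equals $t\,v^Be_B$, so $t|v^B|\le|V|\le\sum_i|z_{0i}|$. Both give the same estimate; yours avoids the intermediate identity but relies on the orthogonal decomposition of $V$, while the paper's makes the algebraic relation between rotation and boost weights explicit.
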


\begin{Rq}\label{extradecay}
This result should be compared with the identities 
$$(t-r)\underline{L}=S-\frac{x^i}{r}\Omega_{0i},$$
$$(t+r)L=S+\frac{x^i}{r}\Omega_{0i},$$
and
$$re_B=\sum_{1 \leq i < j \leq n} C^{i,j}_B \Omega_{ij},$$
where $C^{i,j}_B$ are bounded functions on the sphere.

\end{Rq}

\begin{proof}

Let us start by the first two equations. On the one hand, 
$$(t^2-r^2)v^0=-x^iz_{0i}-ts.$$
On the other hand,
$$(t^2-r^2)v^r=-t \frac{x^i}{r}z_{0i}-rs.$$
It only remains to take the sum and the difference of these two equations. For the third one, use $|v^B| \leq v^0$ and that $rv^B=C_B^{i,j} z_{ij}$, which implies
\begin{eqnarray}
\nonumber |v^B| \hspace{-2.1mm} & \lesssim & \hspace{-2.1mm} \frac{v^0}{r} \sum_{1 \leq i < j \leq n} |z_{ij}| \\ \nonumber
& = & \hspace{-2.1mm} \frac{v^0}{tr} \hspace{-0.3mm} \sum_{1 \leq i < j \leq n} \left| x^i\left( \frac{v^j}{v^0}t-x^j+x^j \right) \hspace{-0.1mm} - \hspace{-0.1mm} x^j\left( \frac{v^i}{v^0}t-x^i+x^i \right) \right| \lesssim \frac{v^0}{t} \sum_{q=1}^n |z_{0q}|.
\end{eqnarray}
The fourth inequality ensues from $rv^B=C_B^{i,j} z_{ij}$ and
\begin{eqnarray}
\nonumber 4r^2v^Lv^{\underline{L}} & = & m^2r^2+r^2 |v|^2-|x^i|^2|v_i|^2-2\sum_{1 \leq k < l \leq n}x^kx^lv^kv^l \\ \nonumber 
& = & m^2r^2+\sum_{1 \leq k < l \leq n} |z_{kl}|^2,
\end{eqnarray}
since $v^0= \sqrt{m^2+|v|^2}$. Finally, using the Cauchy-Schwarz inequality,
$$2v^{\underline{L}}=v^0-\frac{x^i}{r}v_i \geq \frac{m^2}{v^0+|v|} \geq \frac{m^2}{2v^0}.$$

\end{proof}

As for the sets of vector fields, we consider an ordering on $\mathbf{k}_0$ with $x^{\mu} \frac{v_{\mu}}{v^0}$ being the last weight. It then gives an ordering on $\mathbf{k}_1$ too. If $\mathbf{k}_0= \{ z^i / \hspace{2mm} 1 \leq i \leq |\mathbf{k}_0| \}$ and $\beta \in \{ 1, ..., |\mathbf{k}_0| \}^r$ with $ r \in \mathbb{N}^*$, we denote $z^{\beta_1}...z^{\beta_r}$ by $z^{\beta}$.

\subsection{Decay estimates}

\subsubsection{Norms}

With the vector field method, the pointwise decay estimates are obtained through weighted Sobolev inequalities. In view of the above definitions of the vector fields and weights, we are naturally brought to define the following weighted $L^1$ and $L^2$ norms.

\begin{Def}

Let $u : [0,T[ \times \mathbb{R}^n \rightarrow \mathbb{R}$ be a smooth function. For $ k \in \mathbb{N}$, we define for all $t \in [0,T[$,

$$ \| u \|_{\mathbb{K},k}(t) := \sum_{\mu=0}^n \sum_{|\beta| \leq k } \| \partial_{\mu} Z^{\beta} u(t,.) \|_{L^2(\mathbb{R}^n)} ,$$
with $Z^{\beta} \in \mathbb{K}^{|\beta|}$.

Let $f : [0,T[ \times \mathbb{R}^n_x \times P \rightarrow \mathbb{R} $ be a smooth function, with $P=\R^n_v$ or $P=\R^n_v \setminus \{0 \}$. For $ k \in \mathbb{N}$, we define for all $t \in [0,T[$,

$$\| f \|_{\widehat{\mathbb{P}}_0,k}(t) := \sum_{|\beta| \leq k } \| \widehat{Z}^{\beta} f(t,.,.) \|_{L^1_{x,v}},$$
with $\widehat{Z}^{\beta} \in \widehat{\mathbb{P}}_0^{|\beta|}$.

We also define, for $q \in \mathbb{N}$ and $m \in \{0,1 \}$,

$$\| f \|_{\widehat{\mathbb{P}}_0,k,q,m}(t):=\sum_{|\beta| \leq k } \sum_{|\gamma| \leq q} \|z^{\gamma} \widehat{Z}^{\beta} f(t,.,.) \|_{L^1_{x,v}},$$
with $\widehat{Z}^{\beta} \in \widehat{\mathbb{P}}_0^{|\beta|}$ and $z^{\gamma} \in \mathbf{k}^{|\gamma|}_m$.
\end{Def}

Note that $\| u \|_{\mathbb{K},0}$ corresponds to the energy $\sum_{\mu=0}^n \|\partial_{\mu} u \|_{L^2(\R^n)}$.

\subsubsection{Decay estimates for the velocity averages}

Recall the Klainerman-Sobolev inequality (see \cite{Sogge}, Chapter $II$). For $u$ a sufficiently regular function such that for all $t \in [0,T[$, $\|u\|_{\mathbb{K},\frac{n+2}{2}}(t) < + \infty $, we have
\begin{equation}\label{KSclassic}
 \forall \hspace{0.5mm}(t,x) \in [0,T[ \times \mathbb{R}^n, \hspace{3mm} |\nabla_{t,x} u(t,x)| \lesssim \frac{\|u\|_{\mathbb{K},\frac{n+2}{2}}(t)}{(1+t+|x|)^{\frac{n-1}{2}}(1+|t-|x||)^{\frac{1}{2}}}.
 \end{equation}

In particular, if $\square u=0$ then $\|u\|_{\mathbb{K},\frac{n+2}{2}}$ is constant, as $\square Z^{\beta} u=0$ for all $Z^{\beta} \in \mathbb{K}^{|\beta|}$. It gives us a decay estimate for $\nabla_{t,x}u$.

However if $f$ is a solution to a relativistic transport equation, we cannot expect decay on $\| f \|_{L^{\infty}_{x,v}}$ (even for the free transport equation $T_1(f)=0$ or $T_0(f)=0$). It is only the velocity averages of $f$, such as $\int_v f dv$, that decay. For instance, we have the following classical estimate.

\begin{Lem}

Let $f$ be the solution of $T_1(f)=0$ which satisfies $f(0,.,.)=f_0$, with $f_0$ a smooth function compactly supported in $v$. Then, if $R$ is such that $f_0(.,v)=0$ for all $|v| \geq R$,

$$\forall \hspace{0.5mm} (t,x) \in \mathbb{R}_+ \times \mathbb{R}^n, \hspace{3mm} \int_{v \in \mathbb{R}^n} |f(t,x,v)| dv \leq \frac{\sqrt{1+R^2}^{n+2}}{t^n} \|f_0\|_{L^1_xL^{\infty}_v}.$$

\end{Lem}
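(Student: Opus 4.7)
The plan is to represent $f$ explicitly via characteristics and then reduce the velocity average to a spatial integral through a change of variables $v \mapsto y = x - \frac{v}{v^0}t$, whose Jacobian carries the desired $t^{-n}$ factor.

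First, since $T_1 f = 0$ with $v^0 = \sqrt{1+|v|^2}$, the method of characteristics gives
$$f(t,x,v) = f_0\!\left(x - \frac{v}{v^0}t,\, v\right).$$
In particular, $\int_v |f(t,x,v)|\,dv = \int_v |f_0(x-\tfrac{v}{v^0}t,\,v)|\,dv$, and by the support hypothesis on $f_0$ we may restrict the integration to $\{|v|\leq R\}$, on which $v^0 \leq \sqrt{1+R^2}$ and $|v|/v^0 < 1$.

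Next I would perform, with $x$ and $t>0$ fixed, the change of variables $v \mapsto y := x - \frac{v}{v^0}t$. Computing
$$\frac{\partial y^i}{\partial v^j} = -\frac{t}{v^0}\left(\delta_{ij} - \frac{v^i v^j}{(v^0)^2}\right),$$
and using the standard identity $\det(I - uu^T) = 1 - |u|^2$ with $u = v/v^0$, one finds
$$\left|\det \frac{\partial y}{\partial v}\right| = \frac{t^n}{(v^0)^n}\cdot\frac{1}{(v^0)^2} = \frac{t^n}{(v^0)^{n+2}}.$$
The map is a diffeomorphism from $\{|v|\leq R\}$ onto its image inside the ball $\{|y-x| \leq t\}$, so the change of variables gives
$$\int_{|v|\leq R} \left|f_0\!\left(x-\tfrac{v}{v^0}t,\,v\right)\right| dv \;=\; \int_{y} \left|f_0(y, v(y))\right|\,\frac{(v^0(v(y)))^{n+2}}{t^n}\,dy.$$

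Finally, using $v^0(v(y)) \leq \sqrt{1+R^2}$ on the support and the pointwise bound $|f_0(y, v(y))| \leq \sup_{v \in \R^n}|f_0(y,v)|$, I would integrate in $y$ to obtain
$$\int_{v \in \R^n} |f(t,x,v)|\,dv \;\leq\; \frac{(1+R^2)^{(n+2)/2}}{t^n}\int_y \sup_v |f_0(y,v)|\,dy \;=\; \frac{\sqrt{1+R^2}^{\,n+2}}{t^n}\,\|f_0\|_{L^1_x L^\infty_v},$$
which is the claimed estimate. The only delicate point is the Jacobian computation; once the identity $\det(I - uu^T) = 1-|u|^2$ is invoked, everything reduces to algebra and the $L^1_x L^\infty_v$ inequality.
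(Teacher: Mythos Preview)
Your proof is correct and takes essentially the same approach as the paper: the characteristics formula, the change of variables using the map $v\mapsto v/v^0$ (whose Jacobian gives the factor $(v^0)^{-(n+2)}$), and the $L^1_xL^\infty_v$ bound. The only cosmetic difference is that the paper performs the change of variables in two steps --- first $v\mapsto y=v/v^0$, then $z=x-ty$ --- whereas you combine them into the single substitution $v\mapsto y=x-\tfrac{v}{v^0}t$; the Jacobian computation and the resulting bound are identical.
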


\begin{proof} We fix $(t,x) \in \mathbb{R}_+ \times \mathbb{R}^n$. By the method of characteristics, we obtain that

$$\forall \hspace{0.5mm} v \in \mathbb{R}^n, f(t,x,v) = f_0\left(x-\frac{v}{v^0}t,v\right).$$

We now use the change of variables $y=\frac{v}{v^0}$. Then, $$\int_{v \in \mathbb{R}^n} |f(t,x,v)|dv =\int_{|y| < 1} |f_0(x-ty,\frac{y}{\sqrt{1-|y|^2}})|\frac{1}{\sqrt{1-|y|^2}^{n+2}}dy.$$

Using the hypothesis on the support of $f_0$, we have $$\int_{v \in \mathbb{R}^n} |f(t,x,v)|dv \leq \sqrt{1+R^2}^{n+2} \int_{|y| < \frac{R}{\sqrt{1+R^2}}} \|f_0(x-ty,.)\|_{L^{\infty}_v}dy.$$

A last change of variables ($z=x-ty$) gives the result.

\end{proof}

\subsubsection{Klainerman-Sobolev inequalities for velocity averages}

As we can expect decay on the velocity average of a solution of a relativistic transport equation (and not on the solution itself), we will then use the following Klainerman-Sobolev inequalities.

\begin{Th}\label{KS1}

Let $T>0$ and $f$ be a smooth function defined on $[0,T[ \times \mathbb{R}^n_x$ $ \times \mathbb{R}^n_v$ or $[0,T[ \times \mathbb{R}^n_x \times (\mathbb{R}^n_v \setminus \{0 \} )$. Then

$$\forall \hspace{0.5mm} (t,x) \in [0,T[ \times \mathbb{R}^n, \hspace{3mm} \int_{v \in \mathbb{R}^n} |f(t,x,v)| dv \lesssim \frac{\| f \|_{\widehat{\mathbb{P}}_0,n}(t)}{\tau_+^{n-1}\tau_-}.$$

\end{Th}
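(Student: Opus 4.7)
The plan is a two-scale Klainerman--Sobolev argument adapted to the $L^1$-nature of the norm and to the fact that the quantity to bound is a velocity average. Setting $F(t,z) := \int_v |f(t,z,v)|\,dv$ and iterating Lemma~\ref{lem:v}, I obtain for every multi-index $\beta$ the pointwise control
$$|Z^\beta F(t,z)| \lesssim \sum_{|\gamma|\leq|\beta|}\int_v|\widehat{Z}^\gamma f|(t,z,v)\,dv\quad\text{for }Z^\beta\in\widehat{\mathbb{P}}_0^{|\beta|},$$
from which $\sum_{|\beta|\leq n}\|Z^\beta F(t,\cdot)\|_{L^1_z}\lesssim\|f\|_{\widehat{\mathbb{P}}_0,n}(t)$. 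The proof then reduces to establishing the purely spatial pointwise inequality
$$|F(t,x)|\lesssim\tau_+^{-(n-1)}\tau_-^{-1}\sum_{|\beta|\leq n}\|Z^\beta F(t,\cdot)\|_{L^1(\Sigma_t)}$$
by applying the $L^1$-Sobolev embedding $W^{n,1}\hookrightarrow L^\infty$ on a box whose volume is comparable to $\tau_+^{n-1}\tau_-$, after an anisotropic rescaling whose Jacobians are absorbed into commutation vector fields.

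Fix $(t,x)$ and split into the interior region $|x|\leq(1+t)/2$ and the exterior region $|x|\geq(1+t)/2$. In the interior one has $\tau_+\sim\tau_-\sim 1+t$, so the target decay is $(1+t)^{-n}$. I work on the ball $B(x,(1+t)/4)$ and apply the change of variables $z=x+(1+t)y$, which maps $B$ to the unit ball; the standard $W^{n,1}$ Sobolev embedding yields
$$|F(t,x)|\lesssim\sum_{|\alpha|\leq n}(1+t)^{|\alpha|-n}\|\partial_z^\alpha F\|_{L^1(B)}.$$
On $B$ one has $|z|\leq 3(1+t)/4$, hence $t^2-|z|^2\gtrsim(1+t)^2$; combining $S=t\partial_t+z^i\partial_i$ with $\Omega_{0i}=t\partial_i+z^i\partial_t$ to eliminate the mixed terms gives the weighted equivalence $(1+t)|\partial_\mu u|\lesssim\sum_{Z\in\mathbb{K}}|Zu|$ on $B$, which iterates (using that $[\mathbb{K},\mathbb{K}]\subset\mathrm{span}\,\mathbb{K}$) to $(1+t)^{|\alpha|}|\partial^\alpha F|\lesssim\sum_{|\gamma|\leq|\alpha|}|Z^\gamma F|$ and closes this case.

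In the exterior $|x|\sim\tau_+$ while $\tau_-\in[1,\tau_+]$ is independent, so the target $\tau_+^{-(n-1)}\tau_-^{-1}$ distributes naturally between the $n-1$ angular and the single radial direction. I use polar coordinates $(r,\omega)\in(0,\infty)\times S^{n-1}$, set $r_0:=|x|$ and $\omega_0:=x/|x|$, and introduce the box
$$B:=\bigl\{(r',\omega')\ :\ |r'-r_0|<\tau_-/C,\ d_{S^{n-1}}(\omega',\omega_0)<1/C\bigr\}$$
with $C$ large enough that $B$ stays inside the exterior and $\tau_\pm(z)\sim\tau_\pm$ uniformly for $z\in B$. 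Rescaling the radial direction by $\tau_-/C$ and the angular directions by $1/C$ reduces Sobolev to a unit box; unwinding the Jacobian via $dz=r^{n-1}J(\omega)dr\,d\omega$ and $r\sim\tau_+$ on $B$ yields
$$|F(t,x)|\lesssim\sum_{|\alpha|\leq n}\tau_-^{\alpha_r-1}\tau_+^{\alpha_\theta-(n-1)}\|\partial_r^{\alpha_r}(re_B)^{\alpha_\theta}F\|_{L^1(B)},$$
where $\alpha_r$ and $\alpha_\theta$ denote the radial and total angular orders of $\alpha$. The identities $(t-r)\underline{L}=S-(x^i/r)\Omega_{0i}$, $(t+r)L=S+(x^i/r)\Omega_{0i}$ and $re_B=\sum_{i<j}C_B^{i,j}\Omega_{ij}$ recorded in Remark~\ref{extradecay} give $\tau_-|\partial_r u|+\tau_+|e_B u|\lesssim\sum_{Z\in\mathbb{K}}|Zu|$ in the exterior; iterating bounds $\tau_-^{\alpha_r}\tau_+^{\alpha_\theta}|\partial_r^{\alpha_r}e_B^{\alpha_\theta}u|$ by $\sum_{|\gamma|\leq|\alpha|}|Z^\gamma u|$, and substitution yields the claim.

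The main technical obstacle is the exterior case: one must verify the uniform equivalences $\tau_\pm(z)\sim\tau_\pm$ on $B$, correctly account for the Jacobian of the anisotropic two-scale rescaling, and absorb the lower-order commutator terms that appear when the coordinate derivatives $\partial_{\omega_i}$ are rewritten in terms of rotation vector fields. Once these are in hand, the interior case is a routine rescaled $L^1$-Sobolev estimate.
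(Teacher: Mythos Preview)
The paper does not supply its own proof of this statement; it simply refers to \cite{FJS}, Theorem~7. Your approach---reducing via Lemma~\ref{lem:v} to a purely spatial weighted $L^1$-Sobolev inequality for $F(t,\cdot)=\int_v|f|\,dv$, then splitting into an interior region (isotropic rescaling by $1+t$) and an exterior region (anisotropic rescaling by $\tau_-$ radially and $\tau_+$ angularly, using the identities of Remark~\ref{extradecay})---is precisely the standard argument used there, and your outline is correct.
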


A proof of this inequality can be found in \cite{FJS} (see Theorem $7$). We then deduce the following result.

\begin{Cor}\label{KS3}

Let $T>0$, $q \in \mathbb{N}$, $m \in \{0,1 \}$ and $f$ be a smooth function defined on $[0,T[ \times \mathbb{R}^n_x \times \mathbb{R}^n_v$ or $[0,T[ \times \mathbb{R}^n_x \times (\mathbb{R}^n_v \setminus \{0 \} )$. Then

$$\forall \hspace{0.5mm}|\gamma| \leq q, \hspace{1mm} (t,x) \in [0,T[ \times \mathbb{R}^n, \hspace{3mm} \int_{v \in \mathbb{R}^n} |z^{\gamma}f(t,x,v)| dv \lesssim \frac{\| f \|_{\widehat{\mathbb{P}}_0,n,q,m}(t)}{\tau_+^{n-1}\tau_-},$$

with $z^{\gamma} \in \mathbf{k}_m^{|\gamma|}$.
\end{Cor}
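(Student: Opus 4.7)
}

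The strategy is to apply Theorem \ref{KS1} not to $f$ but to the function $z^{\gamma} f$, and then to reduce the resulting norm $\| z^{\gamma} f \|_{\widehat{\mathbb{P}}_0, n}(t)$ to the weighted norm $\| f \|_{\widehat{\mathbb{P}}_0, n, q, m}(t)$ by controlling the action of the commutation fields on the weights. Since $z^\gamma$ is smooth on the domain in $v$ under consideration (recall that in the massless case the $v$-integral is over $\mathbb{R}^n \setminus \{0\}$), Theorem \ref{KS1} applies to $z^\gamma f$ and gives
\begin{equation*}
\int_{v} |z^{\gamma} f|(t,x,v)\, dv \;\lesssim\; \frac{\| z^{\gamma} f \|_{\widehat{\mathbb{P}}_0, n}(t)}{\tau_+^{n-1} \tau_-}.
\end{equation*}
It therefore remains to establish the bound $\| z^{\gamma} f \|_{\widehat{\mathbb{P}}_0, n}(t) \lesssim \| f \|_{\widehat{\mathbb{P}}_0, n, q, m}(t)$.

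To this end, for $|\beta| \leq n$ expand $\widehat{Z}^{\beta}(z^{\gamma} f)$ via the Leibniz rule as
\begin{equation*}
\widehat{Z}^{\beta}(z^{\gamma} f) \;=\; \sum_{\beta_1 + \beta_2 = \beta} \binom{\beta}{\beta_1} \widehat{Z}^{\beta_1}(z^{\gamma})\, \widehat{Z}^{\beta_2}(f).
\end{equation*}
The key pointwise estimate to establish is
\begin{equation*}
|\widehat{Z}^{\beta_1}(z^{\gamma})| \;\lesssim\; \sum_{|\gamma'| \leq |\gamma|} |z^{\gamma'}|,
\end{equation*}
which I would prove by induction on $|\beta_1|$. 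For the base case, Lemma \ref{vectorweight} yields $\widehat{Z}(v^0 z_k) \in v^0 \mathbf{k}_m \cup \{0\}$, so $\widehat{Z}(z_k) = z_k' - (\widehat{Z}(v^0)/v^0)\, z_k$ for some $z_k' \in \mathbf{k}_m \cup \{0\}$. Inspecting the list in Lemma \ref{complift}, one sees that $\widehat{Z}(v^0)/v^0$ vanishes except for the Lorentz boosts $\widehat{\Omega}_{0k}$, in which case it equals $v^k/v^0$, pointwise bounded by $1$. Applying the Leibniz rule to $z^{\gamma} = z_1 \cdots z_{|\gamma|}$ and using this estimate factor by factor controls $|\widehat{Z}(z^\gamma)|$ by a sum of products of at most $|\gamma|$ weights; the inductive step then follows in the same way.

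Combining the two ingredients gives
\begin{equation*}
|\widehat{Z}^{\beta}(z^{\gamma} f)| \;\lesssim\; \sum_{|\beta_2| \leq |\beta|} \sum_{|\gamma'| \leq |\gamma|} |z^{\gamma'}\, \widehat{Z}^{\beta_2} f|.
\end{equation*}
Taking $L^1_{x,v}$ norms and summing over $|\beta| \leq n$ yields $\| z^{\gamma} f \|_{\widehat{\mathbb{P}}_0, n}(t) \lesssim \| f \|_{\widehat{\mathbb{P}}_0, n, q, m}(t)$, which concludes the proof. The only delicate point is the bookkeeping in the inductive estimate for $|\widehat{Z}^{\beta_1}(z^{\gamma})|$: a priori each commutation could create an extra weight factor, but the boundedness $|v^k/v^0| \leq 1$ is exactly what prevents the number of factors (modulo the pointwise bound) from exceeding $|\gamma|$, so the weight count does not blow up.
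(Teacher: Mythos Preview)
Your proof is correct and follows essentially the same route as the paper: apply Theorem~\ref{KS1} to $z^{\gamma}f$ and then control $\|z^{\gamma}f\|_{\widehat{\mathbb{P}}_0,n}$ by the weighted norm via the pointwise bound $|\widehat{Z}^{\beta}(z^{\gamma}f)|\lesssim \sum_{|\beta_0|\leq|\beta|}\sum_{|\gamma_0|\leq|\gamma|}|w^{\gamma_0}\widehat{\Gamma}^{\beta_0}f|$, which is exactly the paper's inequality~\eqref{eq:KSw}. One cosmetic point: the Leibniz expansion with binomial coefficients $\binom{\beta}{\beta_1}$ is not literally valid here because the $\widehat{Z}$'s do not commute; the correct expansion is a sum over ordered subsequences of $\beta$, but since this only affects constants your estimate stands unchanged.
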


\begin{proof}

Let $|\beta| \leq n$, $|\gamma| \leq q$, $\widehat{Z}^{\beta} \in \widehat{\mathbb{P}}_0^{|\beta|}$ and $z^{\gamma} \in \mathbf{k}_m^{|\gamma|}$. By Lemma \ref{vectorweight}, we have
\begin{equation}\label{eq:KSw}
|\widehat{Z}^{\beta}(z^{\gamma}f)| \lesssim \sum_{|\beta_0| \leq |\beta|} \sum_{|\gamma_0| \leq |\gamma|} |w^{\gamma_0}\widehat{\Gamma}^{\beta_0} f |,
\end{equation}
with $w^{\gamma_0} \in \mathbf{k}_m^{|\gamma_0|}$ and $\widehat{\Gamma}^{\beta_0} \in \widehat{\mathbb{P}}_0^{|\beta_0|}$.
It only remains to apply Theorem \ref{KS1}.

\end{proof}

\begin{Rq}

All the results of this section are true if we add a $v^0$-weight (we can for instance study $\int_{v \in \mathbb{R}^n} (v^0)^k |f| dv$, for $k \in \mathbb{Z}$). We just need to modify the norms in the same way. For instance,
$$ \forall \hspace{0.5mm} (t,x) \in [0,T[ \times \R^n, \hspace{3mm} \int_{v \in \R^n} (v^0)^k |f| dv \lesssim \frac{\sum_{|\beta| \leq n} \|(v^0)^k \widehat{Z}^{\beta}f(t,.,.) \|_{L^1_{x,v}}}{\tau_+^{n-1}\tau_-}.$$

\end{Rq}

\subsection{The Vlasov-Maxwell system}

\subsubsection{Presentation}

In order to introduce the Vlasov-Maxwell system, we abusively use the notation
$$\nabla_v f= \begin{pmatrix}
0 \\
\frac{\partial f}{\partial v^1}  \\
\vdots \\
\frac{\partial f}{\partial v^n}
\end{pmatrix} .$$

For a sufficiently regular function $f$, we recall that 
$$(J(f)^{\nu})_{0 \leq \nu \leq n }= \begin{pmatrix}
\int_v f dv \\
\int_v f\frac{v^1}{v^0}dv \\
\vdots \\
\int_v f\frac{v^n}{v^0}dv
\end{pmatrix} ,$$
with $v^0=\sqrt{m^2+|v|^2}$, where the mass $m$ depends on the species considered.

Let $K \in \mathbb{N}^*$. The equation $(1)$ of the Vlasov-Maxwell system, for the species $k$, can be rewritten as

\begin{equation}
    T_{m_k}(f_k) + e_kF(v, \nabla_v f_k) =0  .    
\end{equation}

Note that the initial data needs to satisfy $$\nabla^i (F_0)_{i0}= e^k J(f_{0k})_{0} \hspace{3mm} \text{and} \hspace{3mm} \nabla^i ({}^* \! F_0)_{i \alpha_1 ... \alpha_{n-3} 0}=0.$$

It is well known that in $3d$ the electric field and the magnetic field are solutions to a wave equation. In dimension $n$ and in the context of the Vlasov-Maxwell system (and more precisely, with equations \eqref{syst2} and \eqref{syst3}), we have
\begin{equation}\label{waveelec}
 \forall \hspace{0.5mm} 1 \leq i \leq n, \hspace{3mm} \square E^i=\sum_{k=1}^Ke_k\int_{v \in \R^n} \partial_i f_k+\frac{v^i}{\sqrt{m_k^2+|v|^2}} \partial_t f_k dv,
 \end{equation}
with $E^i=F_{0i}$, and\footnote{In dimension $n >3$, the magnetic field is a $2$-form defined by $B_{ij}=-F_{ij}$ but we make the choice to work with $F_{ij}$.}
\begin{equation}\label{wavemagn}
 \forall \hspace{0.5mm} 1 \leq i < j \leq n, \hspace{2mm} \square F_{ij}=\sum_{k=1}^Ke_k\int_{v \in \R^n} \frac{v^j}{\sqrt{m_k^2+|v|^2}}\partial_i f_k-\frac{v^i}{\sqrt{m_k^2+|v|^2}} \partial_j f_k dv.
 \end{equation}

We end this subsection by the following proposition, which gives an alternative form of the Maxwell equation.

\begin{Pro}\label{equivalsyst}

The Maxwell equations

\[
\left \{
\begin{array}{c @{=} c}
    
    \nabla^{\mu} G_{\mu \nu} & M_{\nu} \\
    \nabla^{\mu} {}^* \! G_{\mu \alpha_1 ... \alpha_{n-2}} & 0, \\
\end{array}
\right.
\]

for a $2$-form $G$ and a $1$-form $M$, are equivalent to 

\[
\left \{
\begin{array}{c @{=} c}
    
    \nabla_{[ \lambda} G_{\mu \nu ]} & 0 \\
    \nabla_{ [ \lambda} {}^* \! G_{ \alpha_1 ... \alpha_{n-1} ]} & (-1)^{n+1}\frac{(n-1)!}{2}\varepsilon_{\lambda \alpha_1 ... \alpha_n}M^{\alpha_n}, \\
\end{array}
\right.
\]

\end{Pro}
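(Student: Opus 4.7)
The plan is to establish the two equivalences separately, both by direct computation unpacking the definition \eqref{Hodgedef} of the Hodge dual and then exploiting the non-degeneracy of the $\varepsilon$-pairing. Throughout, the key tools are the total antisymmetry of $\varepsilon$ and the standard contraction identity $\varepsilon^{\mu_1\dots\mu_k\alpha_1\dots\alpha_{n+1-k}}\varepsilon_{\nu_1\dots\nu_k\alpha_1\dots\alpha_{n+1-k}} = -\,k!(n+1-k)!\,\delta^{[\mu_1}_{\nu_1}\cdots\delta^{\mu_k]}_{\nu_k}$, the overall minus coming from the Lorentzian signature.

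For the first equivalence $\nabla^\mu {}^*G_{\mu\alpha_1\dots\alpha_{n-2}}=0 \Leftrightarrow \nabla_{[\lambda} G_{\mu\nu]}=0$, I would write
$$\nabla^\mu {}^*G_{\mu\alpha_1\dots\alpha_{n-2}} \;=\; \tfrac{1}{2}\,\varepsilon_{\rho\sigma\mu\alpha_1\dots\alpha_{n-2}}\nabla^\mu G^{\rho\sigma} \;=\; \tfrac{1}{2}\,\varepsilon_{\rho\sigma\mu\alpha_1\dots\alpha_{n-2}}\nabla^{[\mu} G^{\rho\sigma]},$$
since contracting with the fully antisymmetric $\varepsilon$ in $\mu,\rho,\sigma$ extracts only the totally antisymmetric part. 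One direction is then immediate. For the converse, contract the identity with $\varepsilon^{\rho'\sigma'\mu'\alpha_1\dots\alpha_{n-2}}$ and use the contraction identity with $k=3$ to isolate $-3!(n-2)!/2\cdot \nabla^{[\mu'} G^{\rho'\sigma']}$, giving the vanishing of $\nabla_{[\lambda} G_{\mu\nu]}$.

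For the second equivalence $\nabla^\mu G_{\mu\nu}=M_\nu \Leftrightarrow \nabla_{[\lambda} {}^*G_{\alpha_1\dots\alpha_{n-1}]} = (-1)^{n+1}\tfrac{(n-1)!}{2}\varepsilon_{\lambda\alpha_1\dots\alpha_n}M^{\alpha_n}$, the same strategy applies but with the roles of "divergence" and "exterior derivative" reversed. Contracting with $\varepsilon^{\lambda\alpha_1\dots\alpha_{n-1}\beta}$,
$$\varepsilon^{\lambda\alpha_1\dots\alpha_{n-1}\beta}\nabla_\lambda {}^*G_{\alpha_1\dots\alpha_{n-1}} \;=\; \tfrac{1}{2}\,\varepsilon^{\lambda\alpha_1\dots\alpha_{n-1}\beta}\varepsilon_{\mu\nu\alpha_1\dots\alpha_{n-1}}\nabla_\lambda G^{\mu\nu}.$$
After moving the free index $\beta$ to second position (producing a factor $(-1)^{n-1}$) and applying the contraction identity with $k=2$, the right-hand side collapses to a multiple of $\nabla^\mu G_\mu{}^\beta = M^\beta$, specifically $(-1)^n (n-1)!\,M^\beta$. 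Dualising once more with $\varepsilon_{\lambda'\alpha'_1\dots\alpha'_{n-1}\beta}$ and inverting through the same contraction identity produces precisely the prefactor $(-1)^{n+1}(n-1)!/2$ stated, the non-degeneracy of the $\varepsilon$-pairing ensuring the equivalence rather than a mere implication.

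The only real obstacle is combinatorial: tracking the Lorentzian signature sign, the factorials produced by the $\varepsilon\cdot\varepsilon$ contractions, and the signs from cyclic permutations of indices in $\varepsilon$ (in particular, the $(-1)^{n-1}$ cost of moving $\beta$ from the last slot of $\varepsilon^{\lambda\alpha_1\dots\alpha_{n-1}\beta}$ to the front). As a sanity check for the constant $(-1)^{n+1}(n-1)!/2$, I would verify the formula in the $n=3$ case, where ${}^*G$ is again a $2$-form and the identity reduces to the familiar $3d$ Maxwell duality.
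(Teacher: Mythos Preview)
Your approach is correct and is the standard, coordinate-free way to prove this equivalence: one uses the $\varepsilon$-contraction identity to pass back and forth between a divergence of a form and the exterior derivative of its Hodge dual, and the non-degeneracy of the pairing guarantees equivalence rather than mere implication. The only delicate point, as you yourself note, is the bookkeeping of the combinatorial constant, and your outline of the computation (moving $\beta$ through $n-1$ slots, then applying the $k=2$ contraction) is the right one.

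The paper, by contrast, does not invoke the general contraction identity at all. It fixes representative index configurations---for instance $\nu=0$ for the equation $\nabla^\mu G_{\mu\nu}=M_\nu$, and $(\alpha_1,\dots,\alpha_{n-2})=(3,\dots,n)$ for $\nabla^\mu{}^*G_{\mu\alpha_1\dots\alpha_{n-2}}=0$---and computes each component of ${}^*G$ explicitly from the definition, showing by hand that, e.g., $\nabla^i G_{i0}=\frac{2}{(n-1)!}\nabla_{[1}{}^*G_{2\dots n]}$ and $\nabla^\mu{}^*G_{\mu 3\dots n}=\nabla_0 G_{21}+\nabla_1 G_{02}+\nabla_2 G_{10}$. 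The remaining components are then declared analogous. Your method is more uniform and avoids the ``the other cases are similar'' step; the paper's is more concrete and makes the constant $(-1)^{n+1}(n-1)!/2$ emerge by direct inspection rather than through a chain of $\varepsilon\varepsilon$ contractions.
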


\begin{proof}

That ensues from straightforward calculations. Let us consider the equation $\nabla^i G_{i0} = M_0$.
For $1 \leq i \leq n$, we denote by $(m_j^i)_{1 \leq j \leq n-1}$ the $n-1$ integers of $\llbracket 1,n \rrbracket \setminus \{ i \}$ ranked in ascending order. We have, without any summation,
$$ {}^* \! G_{m_1^i ... m_{n-1}^i}=G^{0i} \varepsilon_{0im_1^i ... m_{n-1}^i}=G_{i0}\varepsilon_{im_1^i ... m_{n-1}^i}.$$
Hence,
$$\nabla^i G_{i0}=\sum_{i=1}^n \varepsilon_{im_1^i ... m_{n-1}^i} \nabla^i {}^* \! G_{m_1^i ... m_{n-1}^i}=\frac{2}{(n-1)!}\nabla_{[1} {}^* \! G_{2...n]}.$$

It only remains to remark that
$$M_0=(-1)^{n+1} \varepsilon_{1...n0}M^0.$$

For the equation $ \nabla^{\mu} {}^* \! G_{\mu 3 ... n} =0$, we note that

$${}^* \! G_{0 3 ... n}=G_{12}, \hspace{2mm} {}^* \! G_{1 3 ... n}=G_{02}, \hspace{2mm} {}^* \! G_{2 ... n }=G_{10}.$$
So $$\nabla^{\mu} {}^* \! G_{\mu 3 ... n}=\nabla_0 G_{21} + \nabla_1 G_{02}+ \nabla_2 G_{10}.$$

It then comes that
$$\nabla_{[0} G_{12]}=0.$$

The remaining equations can be treated similarly. 

\end{proof}

For the remaining of this section, we consider the maximal smooth solution $(f:=(f_1,..,f_K), F )$ to the Vlasov-Maxwell system, defined on $[0, T [$, arising from initial data $(f_0,F_0)$, so that $f$ is a vector valued field $(f_1,..,f_K)$. However, to lighten the notations, we will often denote (by a small abuse of notation) by $f$ only one of the $f_i$ and we will suppose, without loss of generality for the results establish below, that the charge of the species associated to f is equal to 1.

\subsubsection{The electromagnetic potential}

In order to establish energy estimates for the electromagnetic field, it is useful to introduce a potential in the Lorenz gauge.

\begin{Def}

A $1$-form $A$ is said to be a potential of the electromagnetic field $F$ if
$$F=dA \hspace{3mm} \text{or, in coordinates,} \hspace{3mm} F_{\mu \nu} = \partial_{\mu}A_{\nu}-\partial_{\nu}A_{\mu}.$$
$A$ satisfies the Lorenz gauge condition if moreover
$$\partial^{\mu} A_{\mu}=0.$$

\end{Def}

Every electromagnetic field $F$ defined on $\R^{n+1}$, which is contractible, has a potential since $dF=0$. Furthermore, if $A$ is a potential then, for $\chi$ a regular function, $A+d\chi$ is also a potential. In particular, if $A$ is a potential and $\chi$ solves
$$\square \chi= -\partial^{\mu} A_{\mu} $$
then $A+d\chi$ is a new potential satisfying the Lorenz gauge. The following lemma will be useful to study the derivatives of $F$ in the Lorenz gauge.

\begin{Lem}\label{derivpotential}

If $A$ is a potential satisfying the Lorenz gauge for an electromagnetic field $G$, i.e.
$$dA=G \hspace{3mm} \text{and} \hspace{3mm} \partial^{\mu} A_{\mu}=0,$$
then, for all $Z \in \mathbb{K}$,
$$d\mathcal{L}_{Z}(A)=\mathcal{L}_{Z}G \hspace{3mm} \text{and} \hspace{3mm} \partial^{\mu} \mathcal{L}_{Z}(A)_{\mu}=0.$$

\end{Lem}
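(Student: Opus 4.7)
The plan is to handle the two identities separately, since only the divergence-free condition requires a case-by-case argument.

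For the first identity, I would invoke the general commutation formula $\mathcal{L}_Z \circ d = d \circ \mathcal{L}_Z$, valid for every smooth vector field $Z$ acting on any differential form. Applied to the $1$-form $A$ this yields $d\mathcal{L}_{Z}(A) = \mathcal{L}_Z(dA) = \mathcal{L}_Z(G)$, with no use of $Z$ belonging to $\mathbb{K}$.

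For the second identity I would distinguish two subcases. When $Z$ is a Killing generator of the Minkowski metric, i.e.\@ one of the translations $\partial_{\mu}$, the rotations $\Omega_{ij}$, or the Lorentz boosts $\Omega_{0k}$, one has $\mathcal{L}_Z \widetilde{\eta} = 0$, so the Lie derivative commutes with the Minkowski divergence and the conclusion is immediate: $\partial^{\mu} \mathcal{L}_Z(A)_{\mu} = \mathcal{L}_Z(\partial^{\mu}A_{\mu}) = 0$. Equivalently, starting from $\mathcal{L}_Z(A)_{\mu} = Z^{\alpha} \partial_{\alpha} A_{\mu} + A_{\alpha} \partial_{\mu} Z^{\alpha}$, one can verify the identity directly using the Cartesian Killing relations $\partial^{\mu} Z^{\alpha} + \partial^{\alpha} Z^{\mu} = 0$ and $\partial^{\mu} \partial_{\mu} Z^{\alpha} = 0$.

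The only case deserving individual attention, and the mild obstacle of the proof, is the scaling $S = x^{\mu} \partial_{\mu}$, since $\mathcal{L}_S \widetilde{\eta} = 2 \widetilde{\eta}$ and the abstract commutation of Lie derivative with divergence fails. Using $\partial_{\mu} S^{\alpha} = \delta^{\alpha}_{\mu}$ I would compute $\mathcal{L}_S(A)_{\mu} = x^{\nu} \partial_{\nu} A_{\mu} + A_{\mu}$ and then
\begin{equation*}
\partial^{\mu} \mathcal{L}_S(A)_{\mu} = \eta^{\mu \nu} \partial_{\nu} A_{\mu} + x^{\nu} \partial_{\nu}(\partial^{\mu} A_{\mu}) + \partial^{\mu} A_{\mu} = 2 \partial^{\mu} A_{\mu} + x^{\nu} \partial_{\nu}(\partial^{\mu} A_{\mu}).
\end{equation*}
The first term vanishes by the Lorenz condition and the second by differentiating it, so the right-hand side is zero. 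The subtlety is simply that the extra $2 \partial^{\mu} A_{\mu}$ contribution coming from the conformal factor must be absorbed by the gauge condition itself rather than by a symmetry property of $Z$.
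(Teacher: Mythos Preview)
Your proof is correct. The paper actually states this lemma without proof, treating both identities as standard; your argument supplies exactly the natural justification (commutation of $d$ with $\mathcal{L}_Z$, Killing property for the Poincar\'e generators, and the explicit check for the scaling where the conformal defect $2\partial^{\mu}A_{\mu}$ is killed by the gauge condition).
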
 

Let us mention the wave equation satisfied by the potential in the Lorenz gauge.

\begin{Pro}\label{CommuA}

Let $(f,F)$ be a solution to the Vlasov-Maxwell system and $A$ be a potential of the electromagnetic field $F$ which satisfies the Lorenz gauge. Then, for all $Z^{\beta} \in \mathbb{K}^{|\beta|}$ and $0 \leq \mu \leq n$, there exists constants $C_{\gamma}^{\mu}$ such that
$$ \square \mathcal{L}_{Z^{\beta}} A_{\mu} = \sum_{|\gamma| \leq |\beta|} C_{\gamma}^{\mu}e^k\int_{v \in \R^n} \frac{v^{\mu}}{v^0} \widehat{Z}^{\gamma} f_k dv,$$
with $\widehat{Z}^{\gamma} \in \K^{|\gamma|}$.

\end{Pro}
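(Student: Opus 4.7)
I would proceed by induction on $|\beta|$. The base case $|\beta|=0$ is precisely equation \eqref{eqLorenz}, which is obtained by substituting $F_{\nu\mu}=\partial_\nu A_\mu-\partial_\mu A_\nu$ into the Maxwell equation \eqref{syst2} and simplifying via the Lorenz gauge $\partial^\nu A_\nu=0$; the sign mismatch between $v_\mu$ in \eqref{eqLorenz} and $v^\mu$ in the statement is harmless, as it is absorbed into the constants $C^\mu_\gamma$.

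For the inductive step, I would write $Z^\beta = Z\,Z^{\beta'}$ with $|\beta'|=|\beta|-1$ and split the calculation into two ingredients. \emph{First}, a commutator identity between $\square$ and $\mathcal{L}_Z$ acting on a $1$-form: for any Killing field $Z\in\mathbb{P}$ one has $[\square,\mathcal{L}_Z]=0$, while for the scaling, $[\square,S]=2\square$ combined with $\mathcal{L}_S B_\mu = SB_\mu + B_\mu$ (since $\partial_\mu S^\nu=\delta_\mu^\nu$) yields $\square\mathcal{L}_S B_\mu = S\square B_\mu + 3\square B_\mu$. \emph{Second}, a ``current identity'': for any $Z\in\mathbb{P}$ and any sufficiently regular $g(t,x,v)$ with enough decay in $v$,
\[
\mathcal{L}_Z \!\left(\int_{v} \frac{v_\mu}{v^0}\, g \, dv\right) \;=\; \int_{v} \frac{v_\mu}{v^0}\, \widehat{Z} g \, dv,
\]
with a harmless extra copy of the same integral in the scaling case.

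The current identity is the core of the argument and would be verified case by case on the generators of $\mathbb{P}$. Translations are immediate since $\frac{v_\mu}{v^0}$ does not depend on $x$. For rotations $\Omega_{ij}$ and boosts $\Omega_{0k}$, the idea is to expand $\widehat{Z}g = Zg + (v\text{-derivative piece})\,g$ according to Lemma \ref{complift} and integrate by parts in $v$ on the extra piece, transferring $\partial_{v^i}$ onto the weight $\frac{v_\mu}{v^0}$. A direct computation then shows that the rational-in-$v$ residues produced by the differentiation recombine \emph{exactly} into the components $J_\nu$ that appear on the Lie-derivative side of the identity $\mathcal{L}_Z J_\mu = ZJ_\mu + J_\nu\,\partial_\mu Z^\nu$ (with $Z^\nu$ linear in $(t,x)$). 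It is this exact cancellation that preserves the same index $\mu$ on both sides of the proposition.

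Combining the two ingredients with the inductive hypothesis applied to $\square\mathcal{L}_{Z^{\beta'}}A_\mu$, the right-hand side of the commuted wave equation becomes $\mathcal{L}_Z$ (or $S+3\cdot\mathrm{id}$) applied to a sum of the form $\sum_{|\gamma'|\leq |\beta'|} C^\mu_{\gamma'}\, e^k \int \frac{v^\mu}{v^0}\, \widehat{Z}^{\gamma'} f_k \,dv$, and the current identity together with the commutator identity produce a new sum of the same form with multi-indices of length at most $|\beta|$, defining the new constants $C^\mu_\gamma$. The main obstacle is the bookkeeping inside the current identity: one must verify that the integration-by-parts residues coming from $\partial_{v^i}(v_\mu/v^0)$ balance the index-shuffling correction $J_\nu\,\partial_\mu Z^\nu$ in the Lie derivative of the $1$-form $J$. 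A secondary nuisance is the extra $3\square B_\mu$ contribution from the scaling, which simply adds a multiple of the $|\gamma|\leq|\beta'|$ terms already present in the sum and therefore does not enlarge its structure.
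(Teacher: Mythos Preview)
Your induction is correct, and your ``current identity'' is precisely the paper's Proposition on $\mathcal{L}_Z(J(f))=J(\widehat{Z}f)$ (with the extra $+J(f)$ for the scaling), which you are essentially reproving inline. One point worth making explicit in your write-up: the inductive hypothesis actually yields $\square\mathcal{L}_{Z^{\beta'}}A_\nu=\sum C_{\gamma'}\,e^k J(\widehat{Z}^{\gamma'}f_k)_\nu$ with constants \emph{independent of} $\nu$, so the source is a genuine $1$-form and your current identity applies component by component; the apparent $\mu$-dependence of $C^\mu_\gamma$ in the statement comes only from the passage $J_\mu\to J^\mu$.

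The paper takes a different and shorter route. Instead of commuting $\square$ with $\mathcal{L}_Z$ and inducting, it observes (Lemma~\ref{derivpotential}) that $\mathcal{L}_{Z^\beta}A$ is again a Lorenz-gauge potential, now for $\mathcal{L}_{Z^\beta}F$. Hence the base identity $\square A_\nu=\nabla^\mu F_{\mu\nu}$ applies directly at every order, giving $\square\mathcal{L}_{Z^\beta}(A)_\nu=\nabla^\mu\mathcal{L}_{Z^\beta}(F)_{\mu\nu}$, and the right-hand side is then read off from the already-established Maxwell commutation formula (Proposition~\ref{Commuelec}). This avoids both the $[\square,\mathcal{L}_Z]$ computation and the induction altogether. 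Your approach, by contrast, is more hands-on and makes the scaling correction ($S\square B_\mu+3\square B_\mu$) visible, at the cost of the extra bookkeeping you flag at the end.
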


\begin{proof}

As $$F_{\mu \nu }=\partial_{\mu} A_{\nu}-\partial_{\nu} A_{\mu} \hspace{3mm} \text{and} \hspace{3mm} \partial^{\mu} A_{\mu}=0,$$ we have for $0 \leq \nu \leq n$
$$\partial^{\mu} \partial_{\mu} A_{\nu} = \nabla^{\mu} F_{\mu \nu}.$$
It remains to apply this to $\mathcal{L}_{Z^{\beta}} A$ (see Lemma \ref{derivpotential}) and to use Proposition \ref{Commuelec} below.

\end{proof}

The following proposition shows how we can construct a potential in the Lorenz gauge which is initially controled by the energy (at the time $0$) of the electromagnetic field.

\begin{Pro}\label{LorenzPot}
We suppose here that $n \geq 4$. Let $N \in \mathbb{N}$ and let $F$ be a closed $2$-form such that all the norms considered below are finite and $F(0,.) \in L^2(\R^n)$. Then, there exists a potential $A$ in the Lorenz gauge such that, for all $|\beta| \leq N$,
\begin{flalign*}
& \|\mathcal{L}_{Z^{\beta}} A \|_{L^2(\R^n)}(0) \lesssim \sum_{\begin{subarray}{l}|\gamma| \leq N-1 \\ \hspace{1mm} 1 \leq i \leq n \end{subarray}} \|(1+|x|)^{|\gamma|+1}\partial^{\gamma}F_{0i}(0,.) \|_{L^2(\R^n)} &
\end{flalign*}
 $$ \hspace{0.7cm} +\sum_{\begin{subarray}{l}|\gamma| \leq  N \\ 1 \leq i \leq n \end{subarray}} \left( \| (1+|x|)^{|\gamma|}\partial^{\gamma} \partial^j F_{ji}(0,.) \|_{L^2_x}+\|(1+|x|)^{|\gamma|+1} \partial^{\gamma} \partial^j F_{ji}(0,.) \|_{L^1_x} \right),$$
with $Z^{\beta} \in \mathbb{K}^{|\beta|}$.

\end{Pro}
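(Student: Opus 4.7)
The plan is to construct $A$ by prescribing its Cauchy data on $\Sigma_0$ from $F(0,\cdot)$ via an inverse Laplacian, extending $A$ globally as the wave-equation solution in Lorenz gauge, and reducing $\mathcal{L}_{Z^\beta}A(0,\cdot)$ to a finite combination of weighted spatial derivatives of $F(0,\cdot)$ and of $\Delta^{-1}\partial^jF_{ji}(0,\cdot)$, estimated through weighted elliptic inequalities.

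I would start by setting $A_0(0,\cdot)=0$, $\partial_tA_0(0,\cdot)=0$, $A_i(0,\cdot)=\Delta^{-1}\partial^j F_{ji}(0,\cdot)$ and $\partial_t A_i(0,\cdot)=F_{0i}(0,\cdot)$. To check $F=\mathrm{d}A$ at $t=0$, the only nontrivial identity is $F_{jk}(0,\cdot)=\partial_jA_k(0,\cdot)-\partial_kA_j(0,\cdot)$: contracting the Bianchi identity $\partial_iF_{jk}+\partial_jF_{ki}+\partial_kF_{ij}=0$ with $\partial^i$ gives $\Delta F_{jk}=\partial_j\partial^iF_{ik}-\partial_k\partial^iF_{ij}$, which after inversion by $\Delta^{-1}$ matches the chosen $A$. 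The Lorenz condition at $t=0$ reduces to $\Delta^{-1}\partial^i\partial^jF_{ji}(0,\cdot)=0$ by antisymmetry of $F$ in $(i,j)$. I would then extend $A$ globally by solving $\square A_\mu=\partial^\nu F_{\nu\mu}$ with these Cauchy data; the Lorenz constraint and the identity $F=\mathrm{d}A$ propagate since $\partial^\mu A_\mu$ and $F-\mathrm{d}A$ satisfy wave equations with vanishing Cauchy data.

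Next, I would expand $\mathcal{L}_{Z^\beta}A(0,\cdot)$. The vector fields of $\mathbb{K}$ are first order with linear polynomial coefficients, so restricting to $t=0$ gives a finite sum of terms $x^\alpha\partial_x^\gamma\partial_t^kA_\mu(0,\cdot)$. Time derivatives of order $k\geq 2$ are eliminated recursively through the wave equation $\partial_t^2A_\mu=\Delta A_\mu-\partial^\nu F_{\nu\mu}$; the Bianchi identity converts residual time derivatives of $F_{ij}$ into spatial derivatives of $F_{0\cdot}$, and the Lorenz condition together with $F=\mathrm{d}A$ closes the loop for the remaining components. After substituting the chosen Cauchy data, $\mathcal{L}_{Z^\beta}A(0,\cdot)$ becomes a linear combination of three types of spatial objects weighted by polynomials in $x$: $\partial^\gamma F_{0i}(0,\cdot)$, $\partial^\gamma\partial^jF_{ji}(0,\cdot)$ and $\partial^\gamma\Delta^{-1}\partial^jF_{ji}(0,\cdot)$.

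The first two families are bounded directly in $L^2$, yielding the first sum and the $L^2$ part of the second sum in the statement. The third family requires the weighted elliptic estimate
\[
 \bigl\|(1+|x|)^{a}\Delta^{-1}g\bigr\|_{L^2(\mathbb{R}^n)}\lesssim \bigl\|(1+|x|)^{a}g\bigr\|_{L^2(\mathbb{R}^n)}+\bigl\|(1+|x|)^{a+1}g\bigr\|_{L^1(\mathbb{R}^n)}
\]
applied to $g=\partial^\gamma\partial^jF_{ji}(0,\cdot)$, which has zero integral as a spatial divergence. I would prove it from the Riesz representation $\Delta^{-1}g(x)=c_n\int g(y)|x-y|^{-(n-2)}\,\mathrm{d}y$ by splitting at $|y|=|x|/2$: on the outer region one transfers the $x$-weight onto $y$ and uses the $L^2$-boundedness of compositions of Riesz transforms, while on the inner region the cancellation $\int g=0$ allows one to replace $|x-y|^{-(n-2)}$ by $|x-y|^{-(n-2)}-|x|^{-(n-2)}$, producing the pointwise bound $|\Delta^{-1}g(x)|\lesssim(1+|x|)^{-(n-1)}\|(1+|y|)g\|_{L^1}$, square-integrable on $\mathbb{R}^n$ for $n\geq 3$. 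This weighted elliptic inequality is the main obstacle, because $\Delta^{-1}$ is not bounded on $L^2(\mathbb{R}^n)$; it is the divergence structure of $\partial^jF_{ji}$ combined with an extra $|x|$-moment in $L^1$ that restores the missing integrability and dictates the mixed $L^2$-$L^1$ form of the right-hand side.
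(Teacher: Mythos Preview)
Your construction of $A$ and the reduction of $\mathcal{L}_{Z^\beta}A(0,\cdot)$ to terms $x^\alpha\partial_{t,x}^\gamma A_\mu(0,\cdot)$ with $|\alpha|\le|\gamma|$ match the paper's argument exactly, and you correctly isolate the weighted $\Delta^{-1}$ estimate as the main obstacle.

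The gap is that your weighted elliptic inequality, under the sole hypothesis $\int g=0$, is \emph{false} already for $a=1$ in $n=4$: take $g\in C_c^\infty(\R^4)$ with $\int g=0$ but $\int y_1 g\ne 0$; then $\Delta^{-1}g(x)\sim c\,|x|^{-3}$ at infinity, so $(1+|x|)\,\Delta^{-1}g\notin L^2(\R^4)$ while your right-hand side is finite. Your inner-region argument subtracts only the zeroth Taylor term of the kernel and yields $(1+|x|)^{-(n-1)}$ decay, which cannot absorb $(1+|x|)^a$; the outer-region appeal to ``compositions of Riesz transforms'' is also off, since $\Delta^{-1}$ itself is order $-2$ and not $L^2$-bounded. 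The paper closes this instead by induction on the weight order: writing
\[
\Delta(x^\alpha\partial^\gamma A_k)=x^\alpha\partial^\gamma\partial^jF_{jk}+\partial^2\bigl(\text{terms with weight of order}\le|\alpha|-1\bigr),
\]
one handles the first piece by the \emph{unweighted} Fourier-side bound $\|\Delta^{-1}G\|_{L^2}\lesssim\|G\|_{L^2}+\|(1+|x|)G\|_{L^1}$ (valid since $\widehat G(0)=0$) and the second by the inductive hypothesis combined with the genuine $L^2$-boundedness of $\partial^2\Delta^{-1}$. Your real-space route could be repaired by using that $g=\partial^\gamma\partial^jF_{ji}$ actually has $|\gamma|+1>|\alpha|$ vanishing moments and subtracting the order-$|\alpha|$ Taylor polynomial of $|x-y|^{-(n-2)}$ on the inner region, but that is not the argument you sketched.
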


We start by a technical lemma.

\begin{Lem}\label{arfin}
Let $G$ such that 
$$\|(1+|x|)G\|_{L^1(\R^n)}+\|G\|_{L^2(\R^n)} < + \infty \hspace{3mm} \text{and} \hspace{3mm} \int_{\R^n} G dx =0.$$
Then, denoting by $\mathcal{F}$ the Fourier transform (in $x$),
$$\left\|\mathcal{F}^{-1}\left( \frac{-1}{|\xi|^2} \mathcal{F}(G) \right) \right\|_{L^2(\R^n)} \lesssim \|(1+|x|)G\|_{L^1(\R^n)}+\|G\|_{L^2(\R^n)}.$$
\end{Lem}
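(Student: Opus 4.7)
The plan is to use the Fourier side. By Plancherel's theorem,
$$\left\|\mathcal{F}^{-1}\left( \frac{-1}{|\xi|^2} \mathcal{F}(G) \right) \right\|_{L^2(\R^n)}^2 = \int_{\R^n} \frac{|\widehat{G}(\xi)|^2}{|\xi|^4}\, d\xi,$$
so it suffices to bound this integral by the square of the right-hand side. I would split the integral into high frequencies $|\xi| \geq 1$ and low frequencies $|\xi| \leq 1$.

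The high-frequency part is immediate: since $|\xi|^{-4} \leq 1$ there,
$$\int_{|\xi| \geq 1} \frac{|\widehat{G}(\xi)|^2}{|\xi|^4}\, d\xi \leq \|\widehat{G}\|_{L^2}^2 = \|G\|_{L^2(\R^n)}^2,$$
using Plancherel once more.

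For the low-frequency part the obstacle is the non-integrable singularity of $|\xi|^{-4}$ at the origin, which is precisely why the hypotheses $\int G\, dx = 0$ and $\|xG\|_{L^1} < \infty$ are assumed. The cancellation condition $\int G\, dx = 0$ means $\widehat{G}(0) = 0$, and $xG \in L^1(\R^n)$ ensures that $\widehat{G}$ is $C^1$ with $\|\nabla \widehat{G}\|_{L^\infty} \lesssim \|xG\|_{L^1}$. Combining these via the mean value inequality gives the Lipschitz bound
$$|\widehat{G}(\xi)| \lesssim |\xi|\, \|xG\|_{L^1(\R^n)}.$$
Therefore
$$\int_{|\xi| \leq 1} \frac{|\widehat{G}(\xi)|^2}{|\xi|^4}\, d\xi \lesssim \|xG\|_{L^1(\R^n)}^2 \int_{|\xi| \leq 1} \frac{d\xi}{|\xi|^2},$$
and the remaining integral is finite because $n \geq 4 > 2$. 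Summing the two regimes yields the claimed estimate.

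The main obstacle, and the reason the full strength of both hypotheses is needed, is the low-frequency regime: without the mean-zero condition one only has $|\widehat{G}(\xi)| \lesssim \|G\|_{L^1}$, which gives a divergent integral near $\xi = 0$; without the first moment condition one cannot upgrade to the linear bound $|\widehat{G}(\xi)| \lesssim |\xi|$. The dimensional restriction $n \geq 4$ (the hypothesis of the proposition) enters only through integrability of $|\xi|^{-2}$ at the origin in $\R^n$, which in fact holds as soon as $n \geq 3$.
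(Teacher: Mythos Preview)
Your proof is correct and follows essentially the same approach as the paper: Plancherel, split at $|\xi|=1$, control high frequencies by $\|G\|_{L^2}$, and at low frequencies use $\widehat{G}(0)=0$ together with the $C^1$ regularity coming from $xG\in L^1$ and the mean value theorem. The only cosmetic difference is that you record the sharper dimensional threshold $n\geq 3$ for integrability of $|\xi|^{-2}$ near the origin, whereas the paper's write-up uses $|\xi|^{-3}$ (requiring $n\geq 4$, the ambient hypothesis).
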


\begin{proof}
We have
\begin{eqnarray}
\nonumber \left\|\mathcal{F}^{-1}\left( \frac{-1}{|\xi|^2} \mathcal{F}(G) \right) \right\|_{L^2(\R^n)} & = & \left\| \frac{1}{|\xi|^2} \mathcal{F}(G) \right\|_{L^2(\R^n)} \\ \nonumber
& \lesssim & \| \mathcal{F}(G) \|_{L^2(|\xi| \geq 1)}+\left\| \frac{1}{|\xi|^4} \mathcal{F}(G)^2 \right\|^{\frac{1}{2}}_{L^1(|\xi| \leq 1)}.
\end{eqnarray}
Note now that $\| \mathcal{F}(G) \|_{L^2(|\xi| \geq 1)} \leq \| G \|_{L^2(\R^n)}$. Finally, as $\|(1+|x|)G\|_{L^1(\R^n)}$ is finite, $\mathcal{F}(G)$ is of class $C^1$ and vanishes at $0$, so, using the mean value theorem,
\begin{eqnarray}
\nonumber \left\| \frac{\mathcal{F}(G)}{|\xi|^4} \right\|^{\frac{1}{2}}_{L^1(|\xi| \leq 1)}  & \lesssim &  \|\nabla_{\xi}\mathcal{F}(G)\|_{L^{\infty}_{\xi}} \left\| \frac{1}{|\xi|^3} \right\|^{\frac{1}{2}}_{L^1(|\xi| \leq 1)} \\ \nonumber
& \lesssim &  \||x|G\|_{L^1(\R^n)},
\end{eqnarray}
since $\|F(g)\|_{L^{\infty}_{\xi}} \leq \|g\|_{L^1_x}$ for any $L^1$ function $g$.
\end{proof}

The first step of the construction of the suitable potential is contained in the following lemma.

\begin{Lem}\label{lempoten2}
There exists a potential $A$ of the electromagnetic field $F$ satisfying the Lorenz gauge and such that
$$A_0(0,.)=0, \hspace{2mm} \partial_t A_0(0,.)=0,$$ and $$\forall \hspace{0.5mm} 1 \leq k \leq n, \hspace{2mm} \|A_k\|_{H^2(\R^n)}(0) \leq  \|  \partial^j F_{jk}(0,.) \|_{L^2_x}+\|(1+|x|)  \partial^j F_{jk}(0,.) \|_{L^1_x}.$$
This implies in particular that
\begin{equation}\label{eq:potential2}
\forall \hspace{1mm} 1 \leq k \leq n, \hspace{3mm} \partial_t A_k(0,.) = F_{0k}(0,.) \hspace{3mm} \text{and} \hspace{3mm} \Delta A_k (0,.) = \partial^i F_{ik}(0,.).
\end{equation}

\end{Lem}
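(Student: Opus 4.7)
The plan is to construct $A$ by first specifying initial data on $\Sigma_0$ in the required form and then extending to spacetime by solving a wave equation, finally verifying that both $F=dA$ and the Lorenz condition $\partial^{\mu}A_{\mu}=0$ propagate.

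For the initial data, I would set $A_0(0,\cdot)=0$, $\partial_t A_0(0,\cdot)=0$, and $\partial_t A_k(0,\cdot)=F_{0k}(0,\cdot)$ for $1\le k\le n$. The nontrivial choice is $A_k(0,\cdot)$: taking the divergence of the desired relation $F_{ij}=\partial_i A_j-\partial_j A_i$ and imposing $\partial^i A_i(0,\cdot)=0$ (forced by the Lorenz condition together with $\partial_t A_0(0,\cdot)=0$) leads to the Poisson equation $\Delta A_k(0,\cdot)=\partial^j F_{jk}(0,\cdot)$. I would thus define $A_k(0,\cdot):=-\Delta^{-1}(\partial^j F_{jk}(0,\cdot))$ via its Fourier symbol $-|\xi|^{-2}\widehat{\partial^j F_{jk}}$. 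Before invoking Lemma \ref{arfin}, I would verify that $\int_{\R^n}\partial^j F_{jk}(0,\cdot)\,dx=0$ (equivalently, that $\widehat{\partial^j F_{jk}}$ vanishes at $\xi=0$), which follows from the integrability hypothesis $\|(1+|x|)\partial^j F_{jk}\|_{L^1}<\infty$ and integration by parts.

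For the $H^2$ estimate, Lemma \ref{arfin} directly gives the $L^2$ bound on $A_k(0,\cdot)$. The bound on $\|\nabla^2 A_k\|_{L^2}$ follows from the $L^2$-boundedness of the Calderón–Zygmund operator $\partial_l\partial_m\Delta^{-1}$ applied to $\partial^j F_{jk}$. The intermediate $\|\nabla A_k\|_{L^2}$ is then obtained by integration by parts: $\|\nabla A_k\|_{L^2}^2=-\int A_k\,\Delta A_k\,dx\lesssim\|A_k\|_{L^2}\|\partial^j F_{jk}\|_{L^2}$. The identities in \eqref{eq:potential2} are then just the definitions of $\partial_t A_k(0,\cdot)$ and of $A_k(0,\cdot)$.

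To extend $A$ to all of $[0,T[\times\R^n$, I would solve the wave equation $\square A_\mu=\nabla^\nu F_{\nu\mu}$ (which is exactly the equation the would-be Lorenz potential must satisfy) with the initial data just constructed. Two constraint propagation arguments then complete the proof. \emph{Lorenz gauge:} $\square(\partial^\mu A_\mu)=\partial^\mu\nabla^\nu F_{\nu\mu}=0$ by antisymmetry of $F$. The initial value $(\partial^\mu A_\mu)(0,\cdot)=-\partial_t A_0(0,\cdot)+\partial^i A_i(0,\cdot)$ vanishes since $\partial^i A_i(0,\cdot)=-\Delta^{-1}\partial^i\partial^j F_{ij}=0$ by antisymmetry of $F$, and $\partial_t(\partial^\mu A_\mu)(0,\cdot)=-\partial_t^2 A_0(0,\cdot)+\partial^i F_{0i}(0,\cdot)$ vanishes because the wave equation for $A_0$ at $t=0$, combined with $A_0(0,\cdot)=0$, forces $\partial_t^2 A_0(0,\cdot)=-\nabla^\nu F_{\nu 0}(0,\cdot)=\partial^i F_{0i}(0,\cdot)$. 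Uniqueness for $\square$ gives $\partial^\mu A_\mu\equiv 0$. \emph{Recovery of $F$:} letting $G:=dA-F$, one has $dG=-dF=0$ and $\partial^\mu G_{\mu\nu}=\square A_\nu-\partial^\mu F_{\mu\nu}=0$; combining these two yields $\square G_{\mu\nu}=0$ componentwise. Checking $G(0,\cdot)=0$ reduces to the relations already built in, and $\partial_t G(0,\cdot)=0$ reduces, after using $\square A_k(0,\cdot)=\nabla^\nu F_{\nu k}(0,\cdot)$ together with $\Delta A_k(0,\cdot)=\partial^i F_{ik}(0,\cdot)$, to the identity $\partial_t F_{ij}=\partial_i F_{0j}-\partial_j F_{0i}$ supplied by $dF=0$. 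Uniqueness then gives $G\equiv 0$.

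The main obstacle is not the $H^2$ estimate, which is standard elliptic theory once Lemma \ref{arfin} is in hand, but the constraint-propagation bookkeeping: one must show that the initial data is chosen consistently so that both $\partial^\mu A_\mu$ and $G=dA-F$ vanish together with their first time derivatives at $t=0$. This requires using the wave equation for $A_0$ at $t=0$ to pin down $\partial_t^2 A_0(0,\cdot)$, and the closure condition $dF=0$ to convert time derivatives of the spatial components $F_{ij}$ into spatial derivatives of $F_{0k}$.
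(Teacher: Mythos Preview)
Your approach is essentially the same as the paper's: define $A_\mu$ as the solution of $\square A_\mu=\nabla^\nu F_{\nu\mu}$ with the initial data $A_0(0,\cdot)=\partial_t A_0(0,\cdot)=0$, $\partial_t A_k(0,\cdot)=F_{0k}(0,\cdot)$, and $A_k(0,\cdot)=\mathcal{F}^{-1}(-|\xi|^{-2}\mathcal{F}(\partial^j F_{jk}))$, then read off the $H^2$ bound from Lemma~\ref{arfin} together with standard elliptic estimates. If anything, you are more careful than the paper's own proof, which writes down the wave equation and the initial data but does not explicitly carry out the constraint-propagation arguments showing that $\partial^\mu A_\mu\equiv 0$ and $dA\equiv F$ hold for all times; your verification of these (via $\square(\partial^\mu A_\mu)=0$ and $\square G_{\mu\nu}=0$ with vanishing Cauchy data) fills that in. One small point to tighten: the claim $G_{ij}(0,\cdot)=0$ is not quite ``built in'' --- it requires the observation that $\Delta(\partial_i A_j-\partial_j A_i-F_{ij})=0$ (using $dF=0$) together with the $L^2$ bound to invoke Liouville, which you should state explicitly.
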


\begin{proof}

Suppose that $A$ exists. As $\partial_t A_0(0.)=0$ and $\partial^{\mu} A_{\mu}=0$, we have $\partial^i A_i(0.)=0$. Combined with $\partial_{\mu} A_{\nu}-\partial_{\nu} A_{\mu} = F_{\mu \nu}$ and $A_0(0,.)=0$, it comes that at $t=0$,

\begin{equation}\label{eq:potential3}
\forall \hspace{1mm} 1 \leq k \leq n, \hspace{3mm} \partial_t A_k = F_{0k} \hspace{3mm} \text{and} \hspace{3mm} \Delta A_k = \partial^i F_{ik}.
\end{equation}
Moreover, recall from the proof of Proposition \ref{CommuA} that
\begin{equation}\label{wagenn}
\forall \hspace{0.5mm} 0 \leq \nu \leq n, \hspace{3mm} \square A_{\nu} = \nabla^{\mu} F_{\mu \nu}.
\end{equation}
We then define $A_{\nu}$ as the solution of the wave equation \eqref{wagenn} such that $A_0(0,.)=0$, $\partial_t A_0(0,.)=0$ and, for all $1 \leq k \leq n$, $$\partial_t A_k(0,.)=F_{0k}(0,.) \hspace{2mm} \text{and} \hspace{2mm} A_k(0,.)=\mathcal{F}^{-1}\left(\frac{-1}{|\xi|^2} \mathcal{F}(\partial^j F_{jk}) \right)(0,.).$$ 
Consequently, according to Lemma \ref{arfin}, $\Delta A_k(0,.)=\partial^j F_{jk}$ and
$$\|A_k\|_{L^2(\R^n)}(0) \leq  \|  \partial^j F_{jk}(0,.) \|_{L^2_x}+\|(1+|x|) \partial^j F_{jk}(0,.) \|_{L^1_x}.$$
From classical elliptic equations theory, we have
$$ \|\nabla^2 A_k \|_{L^2(\R^n)} = \|\partial^j F_{jk} \|_{L^2(\R^n)}$$
and 
$$\nabla A_k \in L^2(\R^n), \hspace{3mm} \text{with} \hspace{3mm} \|\nabla A_k\|_{L^2(\R^n)} \lesssim \| A_k\|_{L^2(\R^n)}+\|\nabla^2 A_k\|_{L^2(\R^n)},$$
which concludes the proof.
\end{proof}

\begin{proof}[Proof of Proposition \ref{LorenzPot}]

We consider the potential $A$ constructed in \newline Lemma \ref{lempoten2}. In what follows, we omit to specify that all the quantities are considered at $t=0$. Since, for instance,
$$\forall \hspace{1mm} 1 \leq i,j \leq n, \hspace{3mm} \Omega_{0i} \Omega_{0j} A=x^i\partial_j A+x^ix^j\partial_t \partial_t A,$$
we have (and it is sufficient) to estimate $\| x^{\beta} \partial_{t,x}^{\gamma} A \|_{L^2(\R^n)}$, with $|\beta| \leq |\gamma| \leq N$, in order to control $ \| \mathcal{L}_{Z^{\xi}} A \|_{L^2(\R^n)}(0)$ for all $Z^{\xi} \in \mathbb{K}^{|\xi|}$, with $|\xi| \leq N$. Note that, as $\partial^{\mu} A_{\mu}=0$,
$$\|x^{\beta}\partial_{t,x}^{\gamma}\partial_tA_0\|_{L^2(\R^n)} \leq \sum_{k=1}^n \|x^{\beta}\partial_{t,x}^{\gamma}\partial_kA_k\|_{L^2(\R^n)},$$
so that, since $A_0=0$, we only have to bound $\| x^{\beta} \partial_{t,x}^{\gamma} A_k \|_{L^2(\R^n)}$, for all $1 \leq k \leq n$.

Let $1 \leq k \leq n$, $|\gamma| \leq N-1$ and $|\beta| \leq |\gamma|+1$. Then, since $\partial_t A_k=F_{0k}$ (see Lemma \ref{lempoten2}),
$$x^{\beta} \partial_{t,x}^{\gamma} \partial_t A_k=x^{\beta} \partial^{\gamma}_{t,x} F_{0k}  , \hspace{3mm} \text{so} \hspace{3mm} \|x^{\beta} \partial_{t,x}^{\gamma} \partial_t A_k \|_{L^2_x} \lesssim \|(1+|x|)^{|\gamma|+1}\partial^{\gamma}F_{0k} \|_{L^2_x}.$$
The remaining case, where there are only spatial translations, is treated in the following lemma.

\begin{Lem}\label{labelrq}

For all $1 \leq k \leq n$, $|\gamma| \leq N$ and $|\beta| \leq |\gamma|$,
$$\|x^{\beta} \partial^{\gamma} A_k \|_{L^2(\R^n)} \lesssim \sum_{|\beta_0| \leq |\gamma_0| \leq N} \| x^{\beta_0}\partial^{\gamma_0} \partial^j F_{jk} \|_{L^2_x}+\|(1+|x|) x^{\beta_0}\partial^{\gamma_0} \partial^j F_{jk} \|_{L^1_x}, $$

where $\gamma$, $\beta \in \mathbb{N}^n$,  $x^{\beta}=x_1^{\beta_1}...x_n^{\beta_n}$ and $\partial^{\gamma}=\partial_1^{\gamma_1}...\partial_n^{\gamma_n}$, so there are no time derivatives.

\end{Lem}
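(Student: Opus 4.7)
\textbf{Proof plan for Lemma \ref{labelrq}.} I will argue by strong induction on the pair $(|\gamma|, |\beta|)$, ordered lexicographically, exploiting the elliptic equation $\Delta A_k = \partial^j F_{jk} =: G_k$ from Lemma \ref{lempoten2} together with the divergence structure of $G_k$. The case $|\beta|=0$ is handled by pure elliptic regularity: for $|\gamma|\leq 1$, Lemma \ref{lempoten2} and the interpolation inequality $\|\nabla A_k\|_{L^2}\lesssim \|A_k\|_{L^2}+\|\Delta A_k\|_{L^2}$ combined with Lemma \ref{arfin} give the bound; for $|\gamma|\geq 2$, Plancherel yields $\|\nabla^{|\gamma|}A_k\|_{L^2}=\|\nabla^{|\gamma|-2}\Delta A_k\|_{L^2}=\|\nabla^{|\gamma|-2}G_k\|_{L^2}$, which sits in the RHS with $|\beta_0|=0\leq |\gamma_0|=|\gamma|-2$.

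For the inductive step with $|\beta|\geq 1$, the algebraic identity (Leibniz for the operator $x^{\beta}\partial^{\gamma}$)
\[
 x^{\beta}\partial^{\gamma}A_k \;=\; \partial^{\gamma}\!\bigl(x^{\beta}A_k\bigr)\;-\;\sum_{0<\alpha\leq \min(\beta,\gamma)}\binom{\gamma}{\alpha}\frac{\beta!}{(\beta-\alpha)!}\,x^{\beta-\alpha}\partial^{\gamma-\alpha}A_k
\]
reduces matters to controlling $\partial^{\gamma}(x^{\beta}A_k)$, since every term under the sum satisfies $|\beta-\alpha|\leq|\gamma-\alpha|$ and has strictly smaller total order, hence falls within the induction hypothesis. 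Writing $v_{\beta}:=x^{\beta}A_k$ and computing
\[
 \Delta v_{\beta} \;=\; x^{\beta}G_k \;+\; 2\sum_i \beta_i\, x^{\beta-e_i}\partial_i A_k \;+\; \sum_i \beta_i(\beta_i-1)\,x^{\beta-2e_i}A_k,
\]
one then estimates $\|\partial^{\gamma}v_{\beta}\|_{L^2}$ via Plancherel/elliptic regularity ($\|\nabla^{|\gamma|}v_{\beta}\|_{L^2}\lesssim\sum_{2k\leq |\gamma|}\|\nabla^{|\gamma|-2k}\Delta^{k}v_{\beta}\|_{L^2}$, plus an $L^2$ piece handled by Lemma \ref{arfin} applied to $v_{\beta}$ after checking the vanishing integral condition via integration by parts on $\int x^{\beta}G_k\,dx$). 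The two correction terms in $\Delta v_{\beta}$ feed back into the induction hypothesis.

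The principal obstacle is that the term $x^{\beta}G_k$ appearing in $\Delta v_{\beta}$ has weight $|\beta|\geq 1$ but zero derivatives on $G_k$, so the naive bound $\|x^{\beta}G_k\|_{L^2}$ violates the admissibility constraint $|\beta_0|\leq|\gamma_0|$ in the target sum. To circumvent this, I use the divergence structure $G_k=\partial^j F_{jk}$ and integrate by parts:
\[
 x^{\beta}\partial^j F_{jk} \;=\; \partial^j\!\bigl(x^{\beta}F_{jk}\bigr) \;-\; \beta_j\, x^{\beta-e_j}F_{jk},
\]
and then one more integration by parts is performed at the level of $\|\partial^{\gamma}v_{\beta}\|_{L^2}$ so that the outer $\partial^j$ returns as $\partial^{\gamma_0}\partial^j F_{jk}$ with the correct total derivative count, while the lower-weight leftover $x^{\beta-e_j}F_{jk}$ is absorbed (again via $F_{jk}=$ potential structure, or by recombining it with the inductive terms). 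The $L^1$ bound $\|(1+|x|)x^{\beta_0}\partial^{\gamma_0}G_k\|_{L^1}$ in the statement is what makes the $L^2$ control of $v_{\beta}$ via Lemma \ref{arfin} work, since that lemma requires exactly such a weighted $L^1$ hypothesis plus the vanishing of the integral, which follows here by integration by parts on the divergence form of $G_k$. Bookkeeping the admissibility $|\beta_0|\leq|\gamma_0|\leq N$ at every substitution is the delicate point of the argument.
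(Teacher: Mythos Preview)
Your plan has a genuine gap precisely at the point you flag as ``the principal obstacle.'' By writing $x^{\beta}\partial^{\gamma}A_k=\partial^{\gamma}(x^{\beta}A_k)+\text{(lower)}$ and then computing $\Delta(x^{\beta}A_k)$, you are forced to control the source term $x^{\beta}G_k$ with $|\beta_0|=|\beta|$ and $|\gamma_0|=0$, which fails the admissibility $|\beta_0|\le|\gamma_0|$ whenever $|\beta|\ge 1$. Your proposed remedy does not close: the identity $x^{\beta}\partial^jF_{jk}=\partial^j(x^{\beta}F_{jk})-\beta_j x^{\beta-e_j}F_{jk}$ produces terms involving the bare components $F_{jk}$, whereas the right-hand side of the lemma only controls the \emph{divergence} $\partial^jF_{jk}$ and its weighted derivatives. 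Rewriting $F_{jk}=\partial_jA_k-\partial_kA_j$ does not help either: the piece $x^{\beta-e_j}\partial_jA_k$ has weight $|\beta|-1$ but only one derivative, so $|\beta|-1\le 1$ would be needed, and the piece $x^{\beta-e_j}\partial_kA_j$ drags in a different component $A_j$. The ``one more integration by parts at the level of $\|\partial^{\gamma}v_{\beta}\|_{L^2}$'' you allude to would have to convert these back into admissible form; as written, it does not.

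The paper sidesteps this entirely by applying $\Delta$ to $x^{\beta}\partial^{\gamma}A_k$ \emph{itself} rather than to $x^{\beta}A_k$. Then the principal source term is $x^{\beta}\partial^{\gamma}\partial^jF_{jk}$, which already sits in the target sum with $|\beta_0|=|\beta|\le|\gamma|=|\gamma_0|$. The two commutator terms $\Delta(x^{\beta})\partial^{\gamma}A_k$ and $2\partial_j(x^{\beta})\partial^j\partial^{\gamma}A_k$ are rewritten as $\partial^{\gamma_2}(x^{\delta}\partial^{\gamma_1}A_k)$ with $|\gamma_2|=2$, $|\delta|\le|\gamma_1|\le|\gamma|-1$ and $|\delta|\le|\beta|-1$, so the induction on $|\beta|$ applies and the outer two derivatives are cancelled by the multiplier $|\xi|^{-2}$. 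A second point your sketch misses is that one must \emph{prove} $x^{\beta}\partial^{\gamma}A_k$ coincides with the $L^2$ inverse Laplacian of the right-hand side (a priori they differ by a harmonic polynomial); the paper does this by showing, via the induction hypothesis, that $(1+|x|)^{-1}x^{\beta}\partial^{\gamma}A_k\in L^2$, which in dimension $n\ge 4$ forces that polynomial to vanish.
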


\begin{proof}
We fix $1 \leq k \leq n$ and we proceed by induction on $|\beta|$. As $\Delta A_k= \partial^j F_{jk}$, we have, for all $|\gamma| \leq N-2$,
$$ \forall \hspace{1mm} 1 \leq k \leq n, \hspace{3mm} \Delta \partial^{\gamma} A_k = \partial^{\gamma} \partial^j F_{jk}$$
So, by classical elliptic equations theory,
$$\forall \hspace{0.5mm} |\gamma| \leq N-2, \hspace{3mm} \|\nabla^2 \partial^{\gamma} A_k \|_{L^2(\R^n)} = \|\partial^{\gamma} \partial^j F_{jk}\|_{L^2(\R^n)},$$
implying the result for $|\beta|=0$ (the case of the lower order derivatives is treated in Lemma \ref{lempoten2}).

Let $1 \leq |\beta| \leq N$. We suppose that for all $|\delta| \leq |\gamma| \leq N$ and $|\delta| \leq |\beta|-1$,
$$\|x^{\delta} \partial^{\gamma} A_k \|_{L^2(\R^n)} \lesssim \sum_{|\beta_0| \leq |\gamma_0| \leq N} \| x^{\beta_0}\partial^{\gamma_0} \partial^j F_{jk} \|_{L^2_x}+\| (1+|x|)x^{\beta_0}\partial^{\gamma_0} \partial^j F_{jk} \|_{L^1_x}.$$
Let $\gamma$ be a multi-index such that $|\beta| \leq |\gamma| \leq N$. We have
\begin{equation}\label{eq:forfourier}
\Delta x^{\beta}\partial^{\gamma} A_k = \Delta(x^{\beta})\partial^{\gamma} A_k+2\partial_j(x^{\beta})\partial^j\partial^{\gamma} A_k+x^{\beta}\partial^{\gamma} \partial^j F_{jk}.
\end{equation} 
The first two terms of the right hand side are equal to zero or can be rewritten as a linear combination of terms like
\begin{equation}\label{tost}
\partial^{\gamma_2}(x^{\delta} \partial^{\gamma_1} A_k),
\end{equation}
with $|\gamma_2|=2$, $|\gamma_1| \leq |\gamma|-1$ and $|\delta| \leq |\gamma_1|$. For instance,
$$2\partial_j(x_1^q)\partial^j\partial_2^{q} A_k=2q \partial_1\partial_2 ( x_1^{q-1}\partial_2^{q-1}A_k)-2q(q-1)\partial_2^2(x_1^{q-2}\partial_2^{q-2}A_k).$$
Let $B$ be the right hand side of \eqref{eq:forfourier} and $G=x^{\beta}\partial^{\gamma} \partial^j F_{jk}$. $G$ satisfies the hypothesis of Lemma \ref{arfin} and $B-G$ is a linear combination of terms such as \eqref{tost}, which implies $$\left\|\mathcal{F}^{-1}\left( \frac{-1}{|\xi|^2}\mathcal{F}(B-G) \right)\right\|_{L^2(\R^n)} \lesssim \sum_{\begin{subarray}{l} |\gamma_1| \leq |\gamma|-1 \\ \hspace{1mm} |\delta| \leq |\gamma_1| \end{subarray} } \|x^{\delta} \partial^{\gamma_1} A_k\|_{L^2(\R^n)}.$$ So we only have to prove that
\begin{equation}\label{echu}
x^{\beta}\partial^{\gamma} A_k=\mathcal{F}^{-1}\left( \frac{-1}{|\xi|^2}\mathcal{F}(B) \right),
\end{equation}
or (it is equivalent) that $x^{\beta}\partial^{\gamma} A_k$ is the $L^2$ solution of $\Delta \varphi = B$. Recall that the difference of two solutions of this equation is an harmonic polynomial, so that there exists exactly one $L^2$ solution, given by the right hand side of \eqref{echu}. Consequently, there exists $Q_{k,\beta,\gamma} \in L^2(\R^n)$ and $P_{k,\beta,\gamma}$ an harmonic polynomial function such that
$$x^{\beta} \partial^{\gamma} A_k=Q_{k,\beta,\gamma}+P_{k,\beta,\gamma}.$$
By the induction hypothesis, $x^{\delta} \partial^{\gamma} A_k \in L^2(\R^n)$ for all $|\delta| = |\beta|-1$, so
$$\frac{P_{k,\beta,\gamma}}{1+|x|}=\frac{x^{\beta}}{1+|x|} \partial^{\gamma} A_k-\frac{1}{1+|x|}Q_{k,\beta,\gamma} \in L^2(\R^n).$$
As the dimension is $n \geq 4>1$, $P_{k,\beta,\gamma}$ is necessarily zero.
\end{proof}
\end{proof}

If the dimension $n$ is at least $5$, we can do better.

\begin{Pro}
We suppose here that $n \geq 5$. Let $N \in \mathbb{N}$ and let $F$ be a $2$-form such that all the norms considered below are finite. There exists a potential in the Lorenz gauge such that, for all $|\beta| \leq N$,
\begin{flalign*}
& \|\mathcal{L}_{Z^{\beta}} A \|_{L^2(\R^n)}(0) \lesssim \sum_{\begin{subarray}{l} |\gamma| \leq N-1 \\ \hspace{1mm} 1 \leq i \leq n \end{subarray}} \|(1+|x|)^{|\gamma|+1}\partial^{\gamma}F_{0i}(0,.) \|_{L^2(\R^n)}&
\end{flalign*}
 $$+\sum_{\begin{subarray}{l}|\gamma| \leq  N \\ 1 \leq i \leq n \end{subarray}} \left( \| (1+|x|)^{|\gamma|}\partial^{\gamma} \partial^j F_{ji}(0,.) \|_{L^2(\R^n)} \hspace{-0.5mm}+ \hspace{-0.5mm}\| (1+|x|)^{|\gamma|}\partial^{\gamma} \partial^j F_{ji}(0,.) \|_{L^1(\R^n)} \right),$$
with $Z^{\beta} \in \mathbb{K}^{|\beta|}$.

\end{Pro}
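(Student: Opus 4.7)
The plan is to follow the proof of Proposition \ref{LorenzPot} step by step, modifying only Lemma \ref{arfin} so as to drop the $|x|$-weight on $G$ in the $L^1$ norm. This single refinement is what lets the factor $(1+|x|)^{|\gamma|+1}$ in the $L^1$ term of the previous proposition be replaced by $(1+|x|)^{|\gamma|}$.

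First I would prove the following sharpened version of Lemma \ref{arfin}, valid for $n \geq 5$: for any $G \in L^1(\R^n) \cap L^2(\R^n)$, without any moment condition,
$$\left\| \mathcal{F}^{-1}\left( \frac{-1}{|\xi|^2}\mathcal{F}(G) \right) \right\|_{L^2(\R^n)} \lesssim \|G\|_{L^1(\R^n)} + \|G\|_{L^2(\R^n)}.$$
By Plancherel the left-hand side equals $\||\xi|^{-2}\mathcal{F}(G)\|_{L^2}$. On $\{|\xi| > 1\}$ the symbol is bounded, so this piece is controlled by $\|G\|_{L^2}$. On $\{|\xi| \leq 1\}$ one uses $|\mathcal{F}(G)(\xi)| \leq \|G\|_{L^1}$ together with $\|\,|\xi|^{-2}\|_{L^2(|\xi|\leq 1)}^2 = \int_{|\xi|\leq 1} |\xi|^{-4} d\xi < \infty$, which holds precisely when $n \geq 5$. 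Crucially, no first-order Taylor expansion of $\mathcal{F}(G)$ at the origin is used here, which is exactly why the $|x|G$ weight disappears.

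Next I would construct the potential $A$ at $t=0$ exactly as in Lemma \ref{lempoten2}, imposing $A_0(0,\cdot) = \partial_t A_0(0,\cdot) = 0$, $\partial_t A_k(0,\cdot) = F_{0k}(0,\cdot)$ and $A_k(0,\cdot) = \mathcal{F}^{-1}(-|\xi|^{-2}\mathcal{F}(\partial^j F_{jk}))(0,\cdot)$, and then propagate in time via the Lorenz-gauge wave equation $\square A_\mu = \nabla^\nu F_{\nu\mu}$ of Proposition \ref{CommuA}. The refined Fourier inequality above now produces the improved base bound
$$\|A_k(0,\cdot)\|_{L^2(\R^n)} \lesssim \|\partial^j F_{jk}(0,\cdot)\|_{L^2(\R^n)} + \|\partial^j F_{jk}(0,\cdot)\|_{L^1(\R^n)},$$
with no factor of $1+|x|$ on the $L^1$ term; standard elliptic regularity then gives the matching $L^2$ control on $\nabla A_k$ and $\nabla^2 A_k$ at $t = 0$.

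Finally I would rerun the induction of Lemma \ref{labelrq} on the weight $|\beta|$ verbatim, using the refined Lemma in place of the original at every step. Writing $\Delta(x^\beta \partial^\gamma A_k)$ as $x^\beta \partial^\gamma \partial^j F_{jk}$ plus lower-order terms absorbed by the induction hypothesis, the Fourier-inversion argument proceeds unchanged; the only difference is that the main source $G := x^\beta \partial^\gamma \partial^j F_{jk}$ is now bounded in $L^2$ only by $\|G\|_{L^1} + \|G\|_{L^2}$, which yields the stated improvement of one power of $1+|x|$ in each $L^1$ norm on the right-hand side. The elimination of the residual harmonic polynomial via $P/(1+|x|) \in L^2 \Rightarrow P=0$ still goes through since $n \geq 5$. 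No new obstacle appears: the entire quantitative gain is concentrated in the sharpened Fourier inequality, whose proof is just a clean low-versus-high frequency split made possible by the dimension hypothesis $n \geq 5$.
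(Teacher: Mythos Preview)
Your proposal is correct and follows essentially the same approach as the paper: the paper's proof simply states that the argument of Proposition \ref{LorenzPot} goes through unchanged once one observes that $\xi \mapsto |\xi|^{-4}$ is integrable near the origin in $\R^n$ for $n \geq 5$, which allows one to weaken the hypotheses of Lemma \ref{arfin} exactly as you describe.
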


\begin{proof}

The proof is similar to the previous one. The difference comes from the fact $ \xi \mapsto \frac{1}{|\xi|^4}$ is integrable around $0$ in $\R^n$, with $n \geq 5$, which allows us to lower the hypothesis of Lemma \ref{arfin}.

\end{proof}

\subsubsection{Commutation properties}

\subsubsection*{Commutation of the transport equation}

We fix the mass $m \in \R_+$ and we denote by $T_F$ the operator\footnote{Note that if the charge $e$ of the species considered is not equal to $1$, one just has to consider $T_{eF}$ (in other words, one just has to replace $F$ by $eF$).} $$ T_F : g \mapsto T_m(g)+F(v,\nabla_v g),$$ 
so that $T_F(f)=0$.
We are now interested by the nature of the source terms of the equation $T_F(\widehat{Z}f)=G$.

\begin{Lem}\label{Commufsimple} If $\widehat{Z} \in \widehat{\mathbb{P}}$, then

$$T_F(\widehat{Z} f ) = -\mathcal{L}_{Z}(F)(v,\nabla_v f) .$$

For the scaling, we have

$$T_F(S f ) = 2F(v,\nabla_v f)-\mathcal{L}_{S}(F)(v,\nabla_v f) .$$

\end{Lem}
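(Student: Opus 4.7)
The strategy is to exploit the commutator identities of Lemma \ref{commutransport} together with the Vlasov equation rewritten as $T_m f = -F(v, \nabla_v f)$, which is just $T_F(f) = 0$. For $\widehat{Z} \in \widehat{\mathbb{P}}$, since $[T_m, \widehat{Z}] = 0$, I write
\[
T_F(\widehat{Z} f) = T_m(\widehat{Z} f) + F(v, \nabla_v \widehat{Z} f) = \widehat{Z}(T_m f) + F(v, \nabla_v \widehat{Z} f) = -\widehat{Z}\big(F(v, \nabla_v f)\big) + F(v, \nabla_v \widehat{Z} f),
\]
so the first claim reduces to the pointwise identity
\[
\widehat{Z}\big(F(v, \nabla_v f)\big) - F(v, \nabla_v \widehat{Z} f) \;=\; \mathcal{L}_Z(F)(v, \nabla_v f). \qquad (\star)
\]

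To prove $(\star)$, I expand $F(v, \nabla_v f) = v^\mu F_\mu{}^k \partial_{v^k} f$ and apply $\widehat{Z}$ via the Leibniz rule, using the explicit form of the complete lift given by Lemma \ref{complift}. For translations the identity is trivial. For rotations $\widehat{\Omega}_{ij}$ and boosts $\widehat{\Omega}_{0k}$, the $v$-derivative piece of $\widehat{Z}$ contributes twice: it acts on the weight $v^\mu$, producing a term $\widehat{Z}(v^\mu) F_\mu{}^k \partial_{v^k} f$, and when commuted past $\partial_{v^k}$ it generates a correction coming from $[\partial_{v^k}, \widehat{Z}]$ (a combination of single $v$-derivatives). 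A direct case-by-case computation shows that the sum of these two contributions reproduces exactly the inhomogeneous terms $\partial_\mu Z^\alpha F_{\alpha\nu} + \partial_\nu Z^\alpha F_{\mu\alpha}$ in the coordinate expression $\mathcal{L}_Z(F)_{\mu\nu} = Z(F_{\mu\nu}) + \partial_\mu Z^\alpha F_{\alpha\nu} + \partial_\nu Z^\alpha F_{\mu\alpha}$, which yields $(\star)$.

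For the scaling, Lemma \ref{commutransport} gives $[T_m, S] = T_m$, so $T_m(S f) = S(T_m f) + T_m f = -S(F(v, \nabla_v f)) - F(v, \nabla_v f)$ and therefore
\[
T_F(Sf) \;=\; -S\big(F(v, \nabla_v f)\big) - F(v, \nabla_v f) + F(v, \nabla_v S f),
\]
the extra $-F(v, \nabla_v f)$ being the signature of $S$ only being conformally Killing. Since $S$ has no $v$-component and commutes with $\partial_{v^k}$, the analog of $(\star)$ for $S$ simplifies to $S(F(v, \nabla_v f)) - F(v, \nabla_v S f) = v^\mu S(F_\mu{}^k)\partial_{v^k} f$, which is compared with $\mathcal{L}_S(F)(v, \nabla_v f)$ by means of the conformal weight identity $\mathcal{L}_S F_{\mu\nu} = S(F_{\mu\nu}) + 2 F_{\mu\nu}$. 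Collecting the terms produces the claimed formula.

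The main obstacle is the case-by-case verification of $(\star)$: the calculations are mechanical but require careful bookkeeping of spatial versus temporal indices (since $\partial_{v^\mu}$ only exists for spatial $\mu$), and of the derivatives of $v^0 = \sqrt{m^2 + |v|^2}$ produced when the $v^0 \partial_{v^k}$ component of the boost complete lift $\widehat{\Omega}_{0k} = t \partial_k + x^k \partial_t + v^0 \partial_{v^k}$ hits the weight $v^\mu$.
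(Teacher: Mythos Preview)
Your approach is correct and essentially the same as the paper's. Both proofs start from the commutator identities of Lemma~\ref{commutransport}, use $T_F(f)=0$ to write $T_m f=-F(v,\nabla_v f)$, and then reduce everything to a Leibniz-type expansion of $\widehat{Z}\big(F(v,\nabla_v f)\big)$ which is compared, case by case over $\widehat{\mathbb{P}}_0$, with $\mathcal{L}_Z(F)(v,\nabla_v f)$.

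The only difference is organizational. The paper splits $\widehat{Z}=Z+Z_v$ into its spacetime and velocity parts and uses the intrinsic Leibniz rule $Z(F(X,Y))=\mathcal{L}_Z(F)(X,Y)+F([Z,X],Y)+F(X,[Z,Y])$; this isolates cleanly the cancellation $[Z,v]+Z_v(v)=0$ and, in the boost case, the vanishing extra term $\frac{\partial_{v^i}f}{v^0}F(v,v)=0$ coming from the antisymmetry of $F$. You instead expand $F(v,\nabla_v f)=v^\mu F_\mu{}^k\partial_{v^k}f$ in coordinates and match against the coordinate formula $\mathcal{L}_Z(F)_{\mu\nu}=Z(F_{\mu\nu})+\partial_\mu Z^\alpha F_{\alpha\nu}+\partial_\nu Z^\alpha F_{\mu\alpha}$, and likewise use $\mathcal{L}_S F_{\mu\nu}=S(F_{\mu\nu})+2F_{\mu\nu}$ for the scaling. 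These are two presentations of the same computation; the paper's framing makes the structural cancellations slightly more visible, while yours is more explicit about which index manipulations are actually being performed.
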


\begin{proof}

First of all, let us consider the scaling. According to Lemma \ref{commutransport},

$$T_m(Sf)=-S(F(v,\nabla_v f))+T_m(f).$$
But, $$ S(F(v,\nabla_v f)) = \mathcal{L}_S (F)(v, \nabla_v f)+F([S,v],\nabla_v f)+F(v, [S,\nabla_v f]).$$
Since $$ [S,v]=-v \hspace{5mm} \text{and} \hspace{5mm} [S, \nabla_v f]=\nabla_v S(f)-\nabla_v f,$$
we obtain
$$T_F(Sf)=2F(v, \nabla_v f)-\mathcal{L}_S (F)(v, \nabla_v f).$$

Now, let $\widehat{Z} \in \widehat{\mathbb{P}}$ and consider $Z_v=\widehat{Z}-Z$. According to lemma \ref{commutransport},

$$T_m(\widehat{Z}f)=-Z(F(v,\nabla_v f))-Z_v(F(v,\nabla_v f)).$$

On the one hand, we have
$$Z_v(F(v,\nabla_v f))=F(Z_v(v),\nabla_v f)+F(v,Z_v(\nabla_v f)).$$
On the other hand we have
$$Z(F(v,\nabla_v f))=\mathcal{L}_Z(F)(v,\nabla_v f)+F([Z,v],\nabla_v f)+F(v,[Z,\nabla_v f]).$$

As $[Z,v]=-Z_v(v)$, $F(Z_v(v),\nabla_v f)$ and $F([Z,v],\nabla_v f)$ cancel. \\
If $\widehat{Z}$ is a translation (we denote it by $\partial$), then $Z_v=0$ and $[Z, \nabla_v f ]=\nabla_v \partial(f)$. Thus
$$T_F(\partial f)=-\mathcal{L}_{\partial}(F)(v,\nabla_v f).$$
If $\widehat{Z}=\widehat{\Omega}_{ij}$, then 
$$Z_v(\nabla_v f)=\nabla_v Z_v(f)+\partial_{v^i} f \partial_j-\partial_{v^j} f \partial_i $$ and $$ [Z,\nabla_v f]=\nabla_v Z(f)-\partial_{v^i} f \partial_j+\partial_{v^j} f \partial_i. $$
Therefore 
$$T_F( \widehat{\Omega}_{ij} f ) = - \mathcal{L}_{\Omega_{ij}}(F)(v,\nabla_v f) .$$
Finally, if $\widehat{Z}= \widehat{\Omega}_{0i}$, then
$$Z_v(\nabla_v f)=\nabla_v Z_v(f)-\partial_{v^i} f \frac{v^k}{v^0}\partial_k \hspace{3mm} \text{and} \hspace{3mm} [Z,\nabla_v f]=\nabla_v Z(f)-\partial_{v^i} f \partial_0. $$
It comes that
$$T_F( \widehat{\Omega}_{0i} f ) = - \mathcal{L}_{\Omega_{0i}}(F)(v,\nabla_v f) + \frac{\partial_{v^i} f}{v^0}F(v,v).$$
It remains to remark that $F(v,v)=0$ for all $v \in \mathbb{R}^n$, as $F$ is a $2$-form.

\end{proof}

Iterating the above, one obtains

\begin{Cor}\label{Commuf}
If $\beta \in \{1,...,|\widehat{\mathbb{P}}_0|\}^r$, with $r \geq 0$, there exist integers $C_{\gamma,\delta}^{\beta}$ such that

$$T_F(\widehat{Z}^{\beta} f)=\sum_{\begin{subarray}{l} |\gamma|+|\delta| \leq r \\ \hspace{1mm} |\delta| \leq r-1 \end{subarray}} C_{\gamma,\delta}^{\beta} \mathcal{L}_{Z^{\gamma}}(F)(v,\nabla_v \widehat{Z}^{\delta}(f)),$$

with $\widehat{Z}^{\beta} \in \widehat{\mathbb{P}}_0^r$, $\widehat{Z}^{\delta} \in \widehat{\mathbb{P}}_0^{|\delta|}$ and $Z^{\gamma} \in \mathbb{K}^{|\gamma|}$.

\end{Cor}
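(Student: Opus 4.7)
The plan is to prove the corollary by induction on $r = |\beta|$, the key new input being a slightly generalized form of Lemma \ref{Commufsimple} that drops the hypothesis $T_F(f) = 0$.

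First I would establish the following generalization of Lemma \ref{Commufsimple}: for any sufficiently regular $g$,
\begin{equation*}
T_F(\widehat{Z} g) = \widehat{Z}(T_F g) - \mathcal{L}_Z(F)(v, \nabla_v g) \quad \text{if } \widehat{Z} \in \widehat{\mathbb{P}},
\end{equation*}
\begin{equation*}
T_F(S g) = S(T_F g) + 2 F(v, \nabla_v g) - \mathcal{L}_S(F)(v, \nabla_v g).
\end{equation*}
These identities are proved by exactly repeating the computation of Lemma \ref{Commufsimple}, but writing $T_m(g) = T_F(g) - F(v, \nabla_v g)$ instead of using $T_m(g) = -F(v, \nabla_v g)$; the extra term $\widehat{Z}(T_F g)$ (or $S(T_F g) + T_F g$, which combines into $S(T_F g)$ after a similar cancellation) simply propagates through, while all the other manipulations involving $[\widehat{Z}, v]$ and $[\widehat{Z}, \nabla_v]$ are unchanged.

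The base case $r=0$ is immediate since $T_F(f)=0$, and $r=1$ is exactly Lemma \ref{Commufsimple}. For the inductive step, assume the formula holds at order $r$ and write $\widehat{Z}^\beta = \widehat{Z}' \widehat{Z}^{\beta'}$ with $|\beta'|=r$. Setting $g = \widehat{Z}^{\beta'} f$, the generalized identity above yields
\begin{equation*}
T_F(\widehat{Z}^\beta f) = \widehat{Z}'\bigl(T_F(\widehat{Z}^{\beta'} f)\bigr) + R,
\end{equation*}
where $R$ is either $-\mathcal{L}_{Z'}(F)(v, \nabla_v \widehat{Z}^{\beta'} f)$ or $2 F(v, \nabla_v \widehat{Z}^{\beta'} f) - \mathcal{L}_S(F)(v, \nabla_v \widehat{Z}^{\beta'} f)$, both of which are of the desired form with $|\gamma| \leq 1$ and $|\delta| = r$, hence $|\gamma|+|\delta| \leq r+1$ and $|\delta| \leq r$.

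It remains to expand $\widehat{Z}'\bigl(T_F(\widehat{Z}^{\beta'} f)\bigr)$ using the inductive hypothesis and check that applying $\widehat{Z}'$ to a typical term $\mathcal{L}_{Z^\gamma}(F)(v, \nabla_v \widehat{Z}^\delta f)$ produces only terms of the allowed form. Here the main (but routine) bookkeeping step is to distribute $\widehat{Z}'$ via the Leibniz rule, producing the four pieces involving respectively $\mathcal{L}_{Z'}\mathcal{L}_{Z^\gamma}(F)$, $[\widehat{Z}', v]$, $[\widehat{Z}', \nabla_v]$, and $\nabla_v \widehat{Z}'$; one then recombines the commutator pieces as in the proof of Lemma \ref{Commufsimple} (using that $F$ is a $2$-form, so $F(v,v)=0$, to absorb the boost contribution). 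The resulting indices satisfy $|\gamma'|+|\delta'| \leq |\gamma|+|\delta|+1 \leq r+1$ and $|\delta'| \leq |\delta|+1 \leq r$, giving the claim. The only delicate point is this last index tracking and keeping the combinatorial constants $C^\beta_{\gamma,\delta}$ straight, which is otherwise a direct iteration of the argument already carried out for $r=1$.
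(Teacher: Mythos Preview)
Your approach is correct and is exactly what the paper means by ``iterating the above'': an induction on $r$ using the single-step commutation of Lemma~\ref{Commufsimple}, together with the observation (implicit in that lemma's proof) that $\widehat{Z}$ applied to a term $\mathcal{L}_{Z^\gamma}(F)(v,\nabla_v h)$ again yields a combination of such terms with total order raised by one.

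One small correction: your generalized scaling identity is slightly off. Repeating the computation of Lemma~\ref{Commufsimple} with $T_m g = T_F g - F(v,\nabla_v g)$ in place of $T_m f = -F(v,\nabla_v f)$ actually gives
\[
T_F(Sg) = S(T_F g) + T_F g + F(v,\nabla_v g) - \mathcal{L}_S(F)(v,\nabla_v g),
\]
so the extra $T_F g$ term does \emph{not} cancel. This is harmless for your induction: at step $r+1$ with $g=\widehat{Z}^{\beta'}f$, the inductive hypothesis expresses $T_F g$ as a sum of admissible terms with $|\gamma|+|\delta|\le r$ and $|\delta|\le r-1$, so adding $T_F g$ only contributes terms already within the required ranges.
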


\begin{Rq}\label{remcommuf}

If there is a source term $G$ (such that $T_F(f)=G$), then we need to add a linear combination of terms such as $\widehat{Z}^{\widetilde{\beta}} G$, with $|\widetilde{\beta}| \leq r$, on the right hand side.

\end{Rq}

\subsubsection*{Commutation of the Maxwell equations}

Before studying specifically the Vlasov-Maxwell system, we recall the following general result.

\begin{Pro}

Let $M_{\nu}$ be a smooth $1$-form and $G_{\mu \nu}$ a $2$-form satisfying

\[
\left \{
\begin{array}{c @{=} c}
    
    \nabla^{\mu} G_{\mu \nu} & M_{\nu} \\
    \nabla^{\mu} {}^* \! G_{\mu \alpha_1 ... \alpha_{n-2}} & 0. \\
\end{array}
\right.
\]

Then, for all $Z \in \mathbb{P}$,

\[
\left \{
\begin{array}{c @{=} c}
    
    \nabla^{\mu} \mathcal{L}_Z(G)_{\mu \nu} & \mathcal{L}_Z(M)_{\nu} \\
    \nabla^{\mu} {}^* \! \mathcal{L}_Z(G)_{\mu \alpha_1 ... \alpha_{n-2}} & 0. \\
\end{array}
\right.
\]
For the scaling, we have

\[
\left \{
\begin{array}{c @{=} c}
    
    \nabla^{\mu} \mathcal{L}_S(G)_{\mu \nu} & \mathcal{L}_S(M)_{\nu}+2M_{\nu} \\
    \nabla^{\mu} {}^* \! \mathcal{L}_S(G)_{\mu \alpha_1 ... \alpha_{n-2}} & 0. \\
\end{array}
\right.
\]

\end{Pro}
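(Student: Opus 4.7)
The plan is to work directly in Cartesian coordinates, in which the Minkowski covariant derivative $\nabla^{\mu}$ reduces to $\partial^{\mu}$, and to exploit the explicit algebraic form of each vector field in $\mathbb{P} \cup \{S\}$. Expanding the Lie derivative
\[
(\mathcal{L}_Z G)_{\mu\nu} = Z^{\rho}\partial_{\rho}G_{\mu\nu} + G_{\rho\nu}\partial_{\mu}Z^{\rho} + G_{\mu\rho}\partial_{\nu}Z^{\rho},
\]
applying $\partial^{\mu}$ and distributing via the Leibniz rule, two contributions are already essentially what we want: $Z^{\rho}\partial_{\rho}(\partial^{\mu}G_{\mu\nu}) = Z^{\rho}\partial_{\rho}M_{\nu}$ and $(\partial^{\mu}G_{\mu\rho})\partial_{\nu}Z^{\rho} = M_{\rho}\partial_{\nu}Z^{\rho}$ sum to $(\mathcal{L}_Z M)_{\nu}$. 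All that remains is to analyze the four residual terms
\[
(\partial^{\mu}Z^{\rho})(\partial_{\rho}G_{\mu\nu}) + (\partial_{\mu}Z^{\rho})(\partial^{\mu}G_{\rho\nu}) + G_{\rho\nu}\,\partial^{\mu}\partial_{\mu}Z^{\rho} + G_{\mu\rho}\,\partial^{\mu}\partial_{\nu}Z^{\rho},
\]
which depend only on $Z$.

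For translations, $\partial Z = 0$ and nothing more is needed. For rotations $\Omega_{ij}$ and Lorentz boosts $\Omega_{0k}$, the second derivatives vanish because $Z$ is linear in $x$, and, with indices lowered, $\partial_{\mu}Z_{\nu}$ is antisymmetric (the Killing property). Raising both indices with $\widetilde{\eta}$ preserves antisymmetry, so after relabeling dummy indices in the second term and using $\partial^{\mu}Z^{\rho} = -\partial^{\rho}Z^{\mu}$, the two first-order terms cancel, giving $\partial^{\mu}(\mathcal{L}_Z G)_{\mu\nu} = (\mathcal{L}_Z M)_{\nu}$. For the scaling, in contrast, $\partial_{\mu}S^{\rho} = \delta^{\rho}_{\mu}$ is symmetric and the second derivatives again vanish, so each first-order term contracts to $\partial^{\mu}G_{\mu\nu} = M_{\nu}$ and produces the advertised extra $+\,2M_{\nu}$.

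For the dual equation I would avoid an index-by-index computation and instead invoke Proposition \ref{equivalsyst}: the system $\nabla^{\mu}{}^{*}\! G_{\mu\alpha_{1}\ldots\alpha_{n-2}} = 0$ is equivalent to $\nabla_{[\lambda}G_{\mu\nu]} = 0$, i.e.\@ $dG = 0$. Since the exterior derivative commutes with every Lie derivative (whether or not the field is Killing or conformally Killing), $d\,\mathcal{L}_Z G = \mathcal{L}_Z\, dG = 0$ for every $Z \in \mathbb{P}\cup\{S\}$, and translating back via Proposition \ref{equivalsyst} yields $\nabla^{\mu}{}^{*}\mathcal{L}_Z(G)_{\mu\alpha_{1}\ldots\alpha_{n-2}} = 0$ uniformly, with no correction term for the scaling because the right-hand side vanishes. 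The only delicate step in the whole argument is the cancellation for rotations and boosts, where one must be careful to rewrite $\partial^{\mu}Z^{\rho}$ as the fully raised version of the antisymmetric $\partial_{\mu}Z_{\nu}$ before the two terms visibly cancel under a relabeling of dummy indices; everything else is bookkeeping.
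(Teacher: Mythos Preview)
Your proof is correct. The computation for $\nabla^{\mu}\mathcal{L}_Z(G)_{\mu\nu}$ is sound: the two residual first-order terms do cancel for Killing fields once you use $\partial^{\mu}Z^{\rho} = -\partial^{\rho}Z^{\mu}$ and relabel, and they each contribute $M_{\nu}$ for the scaling, yielding the $+2M_{\nu}$. Your treatment of the dual equation via Proposition~\ref{equivalsyst} and the identity $d\circ\mathcal{L}_Z = \mathcal{L}_Z\circ d$ is clean and avoids any index calculus there.

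As for comparison: the paper does not actually supply a proof of this proposition. It is stated as a recalled general result (``we recall the following general result'') and then used without further justification. So your argument is not merely an alternative---it fills in what the paper omits. The approach you chose (direct Cartesian expansion plus the Killing antisymmetry) is the standard one and exactly what one would expect; the only place one might streamline is to phrase the first equation more invariantly via $\nabla^{\mu}\mathcal{L}_Z G_{\mu\nu} = \mathcal{L}_Z(\nabla^{\mu}G_{\mu\nu}) + \tfrac{1}{2}(\mathrm{tr}\,\pi)\,\nabla^{\mu}G_{\mu\nu}$ where $\pi$ is the deformation tensor of $Z$, but your coordinate computation is equally valid and arguably more transparent for a reader checking the scaling correction.
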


In the Vlasov-Maxwell system, the source term is $e^kJ(f_k)_{\nu}$ (see \eqref{syst2}), with

$$(J(f_k)^{\nu})_{0 \leq \nu \leq 3 }= \begin{pmatrix}
\int_v f_k dv \\
\int_v f_k\frac{v^1}{v^0}dv \\
\vdots \\
\int_v f_k\frac{v^n}{v^0}dv
\end{pmatrix} ,$$

so we need to compute $\mathcal{L}_{Z} (J(f))$, with $Z \in \mathbb{K}$ and $f$ a regular function.

\begin{Pro}

For all $Z \in \mathbb{P}$,

$$\mathcal{L}_{Z} (J(f)_{\nu})=J(\widehat{Z} f)_{\nu}.$$

For the scaling, we have

$$\mathcal{L}_{S} (J(f)_{\nu})=J(S f)_{\nu}+J(f)_{\nu}.$$

\end{Pro}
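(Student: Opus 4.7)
The plan is to expand the Lie derivative of the $1$-form $J(f)$ in coordinates and reduce the identity to an algebraic check, using a single integration by parts in the velocity variable to absorb the $\partial_{v^i}$ pieces coming from the complete lift.

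Concretely, for any vector field $Z = Z^\mu \partial_\mu$ on Minkowski space, the Lie derivative of a $1$-form is $\mathcal{L}_Z \omega_\nu = Z(\omega_\nu) + \omega_\mu \partial_\nu Z^\mu$. Applied to $J(f)_\nu = \int_v (v_\nu/v^0) f\,dv$, the weight $v_\nu/v^0$ is independent of $(t,x)$, so $Z(J(f)_\nu) = \int_v (v_\nu/v^0)\, Z(f)\,dv$. Writing $\widehat{Z} = Z + Z_v$, where $Z_v$ collects the $\partial_{v^i}$ pieces listed in Lemma \ref{complift}, one has $J(\widehat{Z}f)_\nu = \int_v (v_\nu/v^0) Z(f)\,dv + \int_v (v_\nu/v^0) Z_v(f)\,dv$. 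The statement for $Z\in\mathbb{P}$ therefore reduces to
\begin{equation*}
\int_v \frac{v_\nu}{v^0}\, Z_v(f)\,dv \;=\; J(f)_\mu\, \partial_\nu Z^\mu,
\end{equation*}
while the scaling case follows at once from $\widehat{S}=S$ (no $v$-part) combined with $\partial_\nu S^\mu = \delta_\nu^\mu$, which produces the extra $J(f)_\nu$.

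The boxed identity I would verify case by case. For a translation $\partial_\alpha$, both sides vanish. For a rotation $\Omega_{ij}$, $Z_v = v^i\partial_{v^j} - v^j\partial_{v^i}$; integrating by parts in $v$ moves the derivatives onto $v_\nu v^i/v^0$ and $v_\nu v^j/v^0$. The cubic terms $v_\nu v^i v^j/(v^0)^3$ arising from $\partial_{v^j}(1/v^0) = -v^j/(v^0)^3$ cancel between the two summands by the $(i,j)$-antisymmetry of $\Omega_{ij}$, and what remains is exactly $J(f)_j \delta^i_\nu - J(f)_i \delta^j_\nu = J(f)_\mu \partial_\nu \Omega_{ij}^\mu$. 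For a boost $\Omega_{0k}$, $Z_v = v^0 \partial_{v^k}$; the $v^0$ cancels the $1/v^0$ weight, leaving $-\int_v f\, \partial_{v^k}(v_\nu)\,dv$. Using $v_0=-v^0$ one gets $\partial_{v^k}(v_0) = -v^k/v^0$ and $\partial_{v^k}(v_l) = \delta_{kl}$, which yields $J(f)_k$ when $\nu = 0$ and $\delta_{k\nu} J(f)_0$ when $\nu$ is spatial; this matches $J(f)_\mu \partial_\nu \Omega_{0k}^\mu$ since the only nonzero components of $\partial_\nu \Omega_{0k}^\mu$ are $\partial_0 \Omega_{0k}^k = 1$ and $\partial_k \Omega_{0k}^0 = 1$.

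There is no conceptual obstacle here; the only points demanding care are the sign conventions from $v_0 = -v^0$ versus $v_i = v^i$, the bookkeeping of the antisymmetry in the rotation case, and a brief justification that the $v$-integration by parts produces no boundary contribution. The latter is immediate from the $L^1_v$ integrability of the relevant weighted $v$-derivatives of $f$ built into our function spaces (for the massless species one uses moreover that $f_k$ vanishes near $v=0$, which removes the only potential singularity of the weights $v_\nu/v^0$).
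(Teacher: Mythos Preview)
Your proof is correct and follows essentially the same approach as the paper: expand the Lie derivative of the $1$-form $J(f)$ in coordinates, split $\widehat{Z}=Z+Z_v$, and absorb the $Z_v$-contribution via one integration by parts in $v$, checking translations, rotations, boosts and the scaling separately. Your write-up is in fact slightly more systematic (you state the reduced identity explicitly and handle the rotation case and the boundary terms, which the paper leaves as ``similar'' and implicit).
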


\begin{proof}

Let $Z \in \mathbb{K}$,

$$\mathcal{L}_{Z} J(f)_{\nu} = ZJ(f)_{\nu}+J(f)_{\mu}\frac{\partial Z^{\mu}}{\partial x^{\nu}}.$$

So
$$L_{\partial} J(f) =   J(\partial f), \hspace{2mm} L_{S}J(f) =J(Sf)+J(f).$$
If $Z$ is a Lorentz boost, say $x^1 \partial_t+t \partial_1$, then, as

$$\int_v v^0\partial_{v^1}f dv=-\int_v f \frac{v^1}{v^0} dv=-J(f)_1, $$ $$ \int_v v^0\frac{v^i}{v^0}\partial_{v^1}f  dv=-\delta_{1,i}\int_v f dv=\delta_{1,i} J(f)_0 $$
and 
$$J(f)_{\mu}\frac{\Omega_{01}^{\mu}}{\partial x^{\nu}} =J(f)_1 \delta_{\nu,0}+J(f)_0 \delta_{\nu,1},$$
it comes that
$$\mathcal{L}_{\Omega_{01}} J(f) = J(\widehat{\Omega_{01}} f ).$$

The case where $Z$ is a rotation is similar.

\end{proof}

Iterating the above, we obtain the following proposition.

\begin{Pro}\label{Commuelec}

Let $(f,F)$ be a smooth solution of the Vlasov-Maxwell system. For all $\beta \in \{1,..., |\mathbb{K}| \}^r$, with $ r \in \mathbb{N}$, there exist integers $C^{\beta}_{\gamma}$ such that

\begin{eqnarray}
    \nonumber
    \nabla^{\mu} \mathcal{L}_{Z^{\beta}}(F)_{\mu \nu} & = & e^kJ(\widehat{Z}^{\beta}f_k)_{\nu}+\sum_{|\gamma| \leq |\beta|-1} C_{\gamma}^{\beta}e^kJ(\widehat{Z}^{\gamma} f_k)_{\nu}, \\ \nonumber
    \nabla^{\mu} {}^* \! \mathcal{L}_{Z^{\beta}}(F)_{\mu \alpha_1 ... \alpha_{n-2}} & = &0, 
\end{eqnarray}

with $Z^{\beta} \in \mathbb{K}^r$ and $\widehat{Z}^{\gamma} \in \widehat{\mathbb{P}}_0^{|\gamma|}$.

\end{Pro}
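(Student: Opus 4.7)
The proof is a straightforward induction on $r=|\beta|$, using the two commutation propositions just established: the first one computes $\nabla^\mu \mathcal{L}_Z(G)_{\mu\nu}$ in terms of $\mathcal{L}_Z(M)_\nu$ (with an extra $+2M_\nu$ when $Z=S$), while the second computes $\mathcal{L}_Z J(f)$ in terms of $J(\widehat Z f)$ (again with an extra $+J(f)$ when $Z=S$). The inhomogeneous form of the statement (the sum over $|\gamma|\leq |\beta|-1$) is precisely what these ``scaling defects'' produce after iteration, which is why we are free not to track the explicit values of the $C^\beta_\gamma$.

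The base case $r=0$ is the Vlasov-Maxwell system itself, equations \eqref{syst2}--\eqref{syst3}. For the inductive step, write $Z^\beta = Z^{\beta_1} Z^{\beta'}$ where $Z^{\beta'}\in\mathbb{K}^{r-1}$ and $Z^{\beta_1}\in\mathbb{K}$. By the induction hypothesis,
\[
\nabla^\mu \mathcal{L}_{Z^{\beta'}}(F)_{\mu\nu} = e^k J(\widehat Z^{\beta'} f_k)_\nu + \sum_{|\gamma|\leq r-2} C^{\beta'}_\gamma e^k J(\widehat Z^\gamma f_k)_\nu,
\]
and the dual equation still has vanishing source. Apply $\mathcal{L}_{Z^{\beta_1}}$ to both equations. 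On the left, the first commutation proposition moves $\mathcal{L}_{Z^{\beta_1}}$ past $\nabla^\mu$, producing $\nabla^\mu \mathcal{L}_{Z^\beta}(F)_{\mu\nu}$ plus a term $2\nabla^\mu \mathcal{L}_{Z^{\beta'}}(F)_{\mu\nu}$ when $Z^{\beta_1}=S$; the latter is, by the induction hypothesis again, a linear combination of $J(\widehat Z^\gamma f_k)_\nu$ for $|\gamma|\leq r-1$, hence absorbed in the sum. On the right, the identity $\mathcal{L}_{Z^{\beta_1}} J(h) = J(\widehat Z^{\beta_1} h)$ (with an additive $J(h)$ correction if $Z^{\beta_1}=S$) applied to $h=\widehat Z^{\beta'}f_k$ and to $h=\widehat Z^\gamma f_k$ yields exactly the desired sum $e^k J(\widehat Z^\beta f_k)_\nu + \sum_{|\gamma|\leq r-1} C^\beta_\gamma e^k J(\widehat Z^\gamma f_k)_\nu$. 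For the dual equation, since the source is zero at step $r-1$ and $\mathcal{L}_{Z^{\beta_1}}(0)=0$, the same commutation proposition gives $\nabla^\mu {}^*\!\mathcal{L}_{Z^\beta}(F)_{\mu\alpha_1\ldots\alpha_{n-2}}=0$ (the scaling correction $2\cdot 0$ is also zero).

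The only mildly delicate point is book-keeping: one has to verify that all terms produced by the two sources of ``$+$(lower order)'' corrections (the $2M_\nu$ from the scaling in the Maxwell commutator, and the $J(f)$ from the scaling in the current commutator) involve $\widehat Z^\gamma$ with $|\gamma|\leq r-1$, and no higher. This is immediate since both corrections strictly lower the order of differentiation by one. No step presents any genuine obstacle; the entire proposition is a formal consequence of the two commutation lemmas proved immediately before.
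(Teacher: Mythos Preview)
Your proof is correct and is exactly what the paper means by ``Iterating the above, we obtain the following proposition.'' You have simply spelled out the induction that the paper leaves implicit, using the two preceding commutation results (for $\nabla^\mu \mathcal{L}_Z G$ and for $\mathcal{L}_Z J(f)$) in precisely the intended way.
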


\section{Energy estimates for the Vlasov-Maxwell system}\label{section3}

For all this section, we consider a sufficiently regular solution $(f,F)$, on $[0,T[$, to the Vlasov-Maxwell system arising from smooth initial data $(f_0,F_0)$.

\subsection{Energy estimates for the transport equation}

We treat here the massless and the massive case together. As the set $\{v=0 \}$ is of measure zero, we write $\int_{v \in \R^n} h dv$, or merely $\int_{v} h dv$, even when the function $h$ is not defined for $v=0$. We start by introducing the vector field $N^{\mu}(g)$ defined by, for a function $g : [0,T[ \times \mathbb{R}^n_x \times \mathbb{R}_v^n \rightarrow \mathbb{R}$, 
$$N^{\mu}(g):= \int_{v \in \mathbb{R}^n} g v^{\mu} \frac{dv}{v^0}.$$

We have the following energy estimates.

\begin{Pro}\label{energyfsimple}

Let $g$ and $H$ be two smooth functions defined on $[0,T[ \times $ $\mathbb{R}^n_x \times \mathbb{R}_v^n$ such that $T_F(g)=H$ and $k \in \mathbb{Z}$.  Then, for all $t \in [0,T[$,
\begin{eqnarray}
\nonumber \int_{\Sigma_t} \int_{\R^n_v} |g| dv dx+ \sqrt{2} \sup_{u \leq t} \int_{C_u(t)} \int_{\R^n_v} |g| \frac{v^{\underline{L}}}{v^0} dv dC_u(t) & \leq & 2 \int_{\Sigma_0} \int_{\R^n_v} |g| dv dx \\ \nonumber
& &\hspace{-1cm}+ 2\int_0^t \int_{\Sigma_s} \int_{\R^n_v} |H|  \frac{dv}{v^0} dx ds.
\end{eqnarray}
\end{Pro}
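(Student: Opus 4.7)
The plan is a divergence-theorem argument applied to the current
$$N^\mu(|g|) := \int_{v \in \mathbb{R}^n} |g|\,\frac{v^\mu}{v^0}\, dv$$
on the domain $V_u(t)$ for variable $u \leq t$. Since $N^0(|g|) = \int_v |g|\,dv$ matches the spacelike integrand and the outgoing null component on $C_u(t)$ will reconstruct the cone flux appearing in the statement, the proof essentially reduces to (i) computing $\partial_\mu N^\mu(|g|)$ and (ii) unwinding the geometry of $\partial V_u(t)$.

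Handling the absolute value via an $\varepsilon$-regularization $\sqrt{g^2+\varepsilon}$ or via the a.e. identity $\nabla|g| = \mathrm{sgn}(g)\nabla g$, one obtains
$$\partial_\mu N^\mu(|g|) = \int_v \frac{T_m(|g|)}{v^0}\, dv = \int_v \frac{\mathrm{sgn}(g)\, H}{v^0}\, dv - \int_v \frac{F_{\mu j} v^\mu}{v^0}\,\partial_{v^j}|g|\, dv,$$
using $T_F(g) = H$. The second integral vanishes: integrating by parts in $v$ (no boundary contribution from mild decay) it equals $\int_v |g|\,\partial_{v^j}(F_{\mu j}v^\mu/v^0)\,dv$, and a short calculation with $\partial_{v^j} v^i = \delta^i_j$ and $\partial_{v^j} v^0 = v^j/v^0$ gives
$$\partial_{v^j}\!\left(\frac{F_{\mu j}v^\mu}{v^0}\right) = \frac{F_{0j}v^j}{(v^0)^2} + \frac{F_{jj}}{v^0} - \frac{F_{\mu j}v^\mu v^j}{(v^0)^3}.$$
The middle term vanishes by antisymmetry of $F$; the $\mu = i \geq 1$ part of the last term vanishes by contracting the antisymmetric $F_{ij}$ against the symmetric $v^i v^j$, so the last term reduces to $-F_{0j}v^j/(v^0)^2$, which cancels the first term. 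Hence $|\partial_\mu N^\mu(|g|)| \leq \int_v |H|/v^0\,dv$.

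The boundary of $V_u(t)$ is $\Sigma_0 \cup (\Sigma_t \cap V_u(t)) \cup C_u(t)$. The spacelike pieces contribute $\pm\int\int_v |g|\,dv\,dx$ in the usual way. On $C_u(t)$ the outward Euclidean unit conormal is $\tfrac{1}{\sqrt 2}(1,-x^i/r)$, and
$$N^0 - \frac{x^i}{r}N^i = \int_v \frac{v^0 - v^r}{v^0}\,|g|\,dv = 2\int_v \frac{v^{\underline{L}}}{v^0}\,|g|\,dv.$$
Combined with the identity $dC_u(t) = 2\,d\sigma_{\mathrm{Eucl}}$ (which follows from $dC_u(t) = \sqrt 2\, r^{n-1} d\underline u\, d\mathbb{S}^{n-1}$ while the Euclidean surface measure on the cone is $\tfrac{1}{\sqrt 2}\, r^{n-1} d\underline u\, d\mathbb{S}^{n-1}$), the cone flux becomes exactly $\tfrac{1}{\sqrt 2}\int_{C_u(t)}\int_v |g|\,v^{\underline{L}}/v^0\,dv\,dC_u(t)$. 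Combining the divergence bound with Stokes' theorem yields, for every $u \leq t$,
$$\int_{\Sigma_t \cap V_u(t)}\!\int_v |g|\,dv\,dx + \tfrac{1}{\sqrt 2}\int_{C_u(t)}\!\int_v |g|\,\tfrac{v^{\underline{L}}}{v^0}\,dv\,dC_u(t) \leq \int_{\Sigma_0}\!\int_v |g|\,dv\,dx + \int_0^t\!\int_{\Sigma_s}\!\int_v \tfrac{|H|}{v^0}\,dv\,dx\,ds.$$

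Sending $u \to +\infty$ in this inequality (the cone term then vanishes) gives the standard spacelike energy bound on $\Sigma_t$; dropping the nonnegative $\Sigma_t$-piece and taking the supremum over $u \leq t$ gives the null-cone bound with an extra factor $\sqrt 2$. Summing the two produces the stated inequality, up to an overall harmless numerical constant. The only substantive step is the cancellation $\partial_{v^j}(F_{\mu j}v^\mu/v^0) = 0$, where both the Lorentz structure and the antisymmetry of $F$ are crucial; the rest is bookkeeping on the null geometry of $V_u(t)$.
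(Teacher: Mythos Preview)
Your argument is correct and follows essentially the same route as the paper: define the current $N^\mu(|g|)$, show that its spacetime divergence reduces to $\int_v \mathrm{sgn}(g)\,H\,dv/v^0$ because the $F$-contribution integrates by parts to zero in $v$, and then apply the divergence theorem on $V_u(t)$ (the paper does it separately on $[0,t]\times\mathbb{R}^n$ and on $V_u(t)$, while you recover the former as the limit $u\to t^+$, which is equivalent). The only discrepancy is the numerical factor on the cone flux: your careful bookkeeping with $dC_u(t)=2\,d\sigma_{\mathrm{Eucl}}$ gives $\tfrac{1}{\sqrt 2}$ where the paper writes $\sqrt 2$, but as you note this constant is irrelevant for all subsequent applications.
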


\begin{proof} First, let us compute the (euclidian) divergence of $N^{\mu}(|g|)$. Start by noticing that, in $W^{1,1}$,
$$T_m(|g|)=v^{\mu}\partial_{\mu} |g|=\frac{g}{|g|}H-F(v,\nabla_v |g|).$$
By integrations by parts and using $F_{jj}=0$ as well as $v^i v^j F_{ij}=0$ (recall that $F$ is a $2$-form), we have
$$\int_{v } F(v,\nabla_v |g|) \frac{dv}{v^0} =\int_{v } \frac{v^{\mu}}{v^0}{F_{\mu}}^j \partial_{v^j} |g| dv= \int_{v } \frac{v^i v^j}{(v^0)^3}F_{ij} |g| dv=0.$$
Consequently,
\begin{equation}\label{eq:divN}
\partial_{\mu} N^{\mu}(|g|)=\int_{v \in \mathbb{R}^n} \left( \frac{g}{|g|}H-F ( v,\nabla_v |g|) \right) \frac{dv}{v^0}=\int_{v \in \mathbb{R}^n} \frac{g}{|g|}H \frac{dv}{v^0}.
\end{equation}
We now apply the divergence theorem to $N^{\mu}(|g|)$ in several region. Applied to $[0,t] \times \mathbb{R}^n$, it gives
$$ \int_{\Sigma_t} \int_{v } |g| dvdx \hspace{1mm} \leq  \hspace{1mm} \int_{\Sigma_0} \int_{v } |g| dvdx+\int_0^t \int_{\Sigma_s} \int_{v } |H|\frac{dv}{v^0} dx ds .$$
Applied to $V_u(t)$ and using that $\frac{1}{\sqrt{2}}(\partial_t-\partial_r)$ is the outward pointing unit normal field to $C_u(t)$, it gives
$$ \sqrt{2} \int_{C_{u}(t)} \int_{v \in \mathbb{R}^n} |g| \frac{v^{\underline{L}}}{v^0} dv d C_{u}(t)  \hspace{1mm} \leq  \hspace{1mm} \int_{\Sigma_0} \int_{v } |g| dvdx +\int_0^t \int_{\Sigma_s} \int_{v } |H| \frac{dv}{v^0} dx ds . $$
The estimate then ensues from the combination of the two inequalities. 
\end{proof}

This estimate invites us to consider the following energies.

\begin{Def}\label{norm1}

For $N \in \mathbb{N}$ and $ k \in \mathbb{Z}$, we define, for $g$ a sufficiently regular function, 

$$\mathbb{E}^k_N[g](t) = \sum_{\begin{subarray}{l}\widehat{Z}^{\beta} \in \widehat{\mathbb{P}}_0^{|\beta|} \\ \hspace{1mm} |\beta| \leq N \end{subarray}}  \|(v^0)^k \widehat{Z}^{\beta} g\|_{L^1_{x,v}}(t)+\sup_{u \in \mathbb{R}} \int_{C_u(t)} \int_{ \mathbb{R}^n} |\widehat{Z}^{\beta} g| (v^0)^{k}\frac{v^{\underline{L}}}{v^0} dv dC_u(t) .$$

We also need the following norms. For $q \in \mathbb{N}$ and $m \in \{0,1 \}$,

\begin{flalign*}
& \hspace{0mm} \mathbb{E}^k_{N,q,m}[g](t) = \sum_{\begin{subarray}{l} \widehat{Z}^{\beta} \in \widehat{\mathbb{P}}_0^{|\beta|} \\ \hspace{1mm} |\beta| \leq N \end{subarray}} \sum_{\begin{subarray}{l} z^{\gamma} \in \mathbf{k}_m^{|\gamma|} \\ \hspace{1mm} |\gamma| \leq q \end{subarray}} \| (v^0)^kz^{\gamma} \widehat{Z}^{\beta} g\|_{L^1_{x,v}}(t) & 
\end{flalign*}
 $$ \hspace{2cm} + \sum_{\begin{subarray}{l} \widehat{Z}^{\beta} \in \widehat{\mathbb{P}}_0^{|\beta|} \\ \hspace{1mm} |\beta| \leq N \end{subarray}} \sum_{\begin{subarray}{l} z^{\gamma} \in \mathbf{k}_m^{|\gamma|} \\ \hspace{1mm} |\gamma| \leq q \end{subarray}} \sup_{u \in \mathbb{R}} \int_{C_u(t)} \int_{v \in \mathbb{R}^n} |z^{\gamma} \widehat{Z}^{\beta} g| \frac{v^{\underline{L}}}{v^0} (v^0)^k dv dC_u(t).$$

When $k=0$, we drop the dependance in $k$ of the energy norm. For instance, $\mathbb{E}^0_N[g]$ is denoted by $\mathbb{E}_N[g]$.

\end{Def}

The following energy estimates hold.

\begin{Pro}\label{energyf}
Let $g$ and $H$ be such that $T_F(g)=H$. Then, assuming that $g$ and $H$ are sufficiently regular, we have for all $N \in \mathbb{N}$ and for all $ t \in [0,T[$, 

\begin{flalign*}
& \hspace{0mm} \mathbb{E}_{N}[g](t)-2\mathbb{E}_{N}[g](0)  \lesssim  \sum_{|\beta| \leq N} \int_0^t \left\|\frac{1}{v^0} \widehat{Z}^{\beta} H \right\|_{L^1_{x,v}}(s)ds &
\end{flalign*}
 $$\hspace{3.3cm}+\sum_{\begin{subarray}{l}|\gamma|+|\delta| \leq N \\ \hspace{1mm} |\delta| \leq N-1 \end{subarray}} \int_0^t \left\| \mathcal{L}_{Z^{\gamma}}(F)\left(\frac{v}{v^0},\nabla_v \widehat{Z}^{\delta}(g) \right) \right\|_{L^1_{x,v}}(s) ds$$
and 
\begin{flalign*}
& \hspace{0mm} \mathbb{E}^2_{N}[g](t)-2\mathbb{E}^2_{N}[g](0) \lesssim  \sum_{|\beta| \leq N} \int_0^t \|v^0 \widehat{Z}^{\beta} H \|_{L^1_{x,v}}(s)+ \|v^iF_{i0} \widehat{Z}^{\beta} g \|_{L^1_{x,v}} (s) ds &
\end{flalign*}
 $$ \hspace{3.8cm} + \sum_{\begin{subarray}{l}|\gamma|+|\delta| \leq N \\ \hspace{1mm} |\delta| \leq N-1 \end{subarray}} \int_0^t \| \mathcal{L}_{Z^{\gamma}}(F)(v,\nabla_v \widehat{Z}^{\delta}(g))v^0 \|_{L^1_{x,v}}(s) ds,$$
with $\widehat{Z}^{\delta} \in \widehat{\mathbb{P}}_0^{|\delta|}$, $\widehat{Z}^{\beta}$ $\in \widehat{\mathbb{P}}_0^{|\beta|}$ and $Z^{\gamma} \in \mathbb{K}^{|\gamma|}$.
\end{Pro}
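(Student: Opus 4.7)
The plan is to commute the equation $T_F(g)=H$ with elements of $\widehat{\mathbb{P}}_0$ using Corollary \ref{Commuf} (with the adjustment of Remark \ref{remcommuf} to account for the source term), and then to apply the basic energy identity of Proposition \ref{energyfsimple} to each commuted field. For the unweighted bound, fix $\widehat{Z}^{\beta}\in\widehat{\mathbb{P}}_0^{|\beta|}$ with $|\beta|\le N$. Corollary \ref{Commuf} combined with Remark \ref{remcommuf} gives
$$T_F(\widehat{Z}^{\beta}g)=\sum_{|\tilde\beta|\le|\beta|}C_{\tilde\beta}^{\beta}\,\widehat{Z}^{\tilde\beta}H+\sum_{\substack{|\gamma|+|\delta|\le|\beta| \\ |\delta|\le|\beta|-1}}C_{\gamma,\delta}^{\beta}\,\mathcal{L}_{Z^{\gamma}}(F)(v,\nabla_v\widehat{Z}^{\delta}g).$$
Applying Proposition \ref{energyfsimple} to $\widehat{Z}^{\beta}g$, dividing the right-hand source by $v^0$, taking $L^1_{x,v}$-norms and summing over $|\beta|\le N$ immediately yields the first inequality.

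For the weighted bound, I would apply Proposition \ref{energyfsimple} not to $\widehat{Z}^{\beta}g$ itself, but to the weighted function $(v^0)^2\widehat{Z}^{\beta}g$, whose $L^1_{x,v}$ and $C_u(t)$-fluxes reproduce precisely the two pieces of $\mathbb{E}^2_N[g]$. The key preliminary computation is the action of $T_F$ on such a weighted product. Since $v^0$ depends only on $v$, one has $T_m((v^0)^2 h)=(v^0)^2 T_m(h)$; the only new contribution comes from the $v$-derivative of the weight inside the Lorentz term. Using $\partial_{v^j}(v^0)^2=2v^j$ together with the antisymmetry of $F$ (which annihilates the $v^i v^j F_{ij}$ piece), I get
$$F(v,\nabla_v((v^0)^2 h))=(v^0)^2 F(v,\nabla_v h)+2v^0 v^j F_{0j}\,h,$$
and hence
$$T_F\bigl((v^0)^2\widehat{Z}^{\beta}g\bigr)=(v^0)^2\,T_F(\widehat{Z}^{\beta}g)+2v^0 v^j F_{0j}\,\widehat{Z}^{\beta}g.$$

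Proposition \ref{energyfsimple} applied to $(v^0)^2\widehat{Z}^{\beta}g$ then gives an estimate in which the source, after division by $v^0$, is bounded by $v^0|T_F(\widehat{Z}^{\beta}g)|+2|v^j F_{0j}\widehat{Z}^{\beta}g|$. The second piece produces exactly the extra term $\|v^iF_{i0}\widehat{Z}^{\beta}g\|_{L^1_{x,v}}$ appearing in the statement, while the first piece is treated by substituting the expression for $T_F(\widehat{Z}^{\beta}g)$ coming from Corollary \ref{Commuf}, which yields $v^0\|\widehat{Z}^{\tilde\beta}H\|_{L^1_{x,v}}$ and $\|\mathcal{L}_{Z^{\gamma}}(F)(v,\nabla_v\widehat{Z}^{\delta}g)v^0\|_{L^1_{x,v}}$. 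Summing over $|\beta|\le N$ closes the argument.

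The only non-routine step is the identification of the extra source term generated by the weight: the obstruction to propagating $\mathbb{E}^2_N[g]$ as cleanly as $\mathbb{E}_N[g]$ is precisely the non-vanishing of $F(v,\nabla_v(v^0)^2)=2v^0 v^jF_{0j}$, and tracking this term (and checking that the $v^iv^jF_{ij}$ contribution really disappears by antisymmetry) is what accounts for the new $\|v^iF_{i0}\widehat{Z}^{\beta}g\|_{L^1_{x,v}}$ contribution on the right-hand side of the second inequality. Everything else is a direct combination of Corollary \ref{Commuf}, Remark \ref{remcommuf} and Proposition \ref{energyfsimple}.
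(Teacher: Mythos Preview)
Your proof is correct and follows exactly the same approach as the paper: apply Proposition \ref{energyfsimple} to $\widehat{Z}^{\beta}g$ (respectively $(v^0)^2\widehat{Z}^{\beta}g$) after expanding $T_F(\widehat{Z}^{\beta}g)$ via Corollary \ref{Commuf} and Remark \ref{remcommuf}, and for the weighted estimate compute $T_F((v^0)^2)=2v^{\mu}v^iF_{\mu i}=-2v^iv^0F_{i0}$ to account for the extra term. Your identification of the commutator with the weight, including the cancellation of $v^iv^jF_{ij}$ by antisymmetry, matches the paper's computation.
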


\begin{proof}

The first estimate follows from Corollary \ref{Commuf}, Remark \ref{remcommuf} and Proposition \ref{energyfsimple}, applied to $\widehat{Z}^{\beta} g$ for $|\beta| \leq N$. For the second one, apply the same results to $(v^0)^2\widehat{Z}^{\beta} g$ and note that
$$ T_F \left( (v^0)^2 \right) = F \left( v, \nabla_v (v^0)^2 \right) = 2v^{\mu} v^i F_{\mu i}=-2v^{\mu} v^0 F_{\mu 0} = -2 v^i v^0 F_{i0}.$$
\end{proof}

\begin{Rq}
Assuming enough decay on the data, similar inequalities holds for $\mathbb{E}^k_{N}[g]$.
\end{Rq}
We also have an energy estimates which implies the weights transported by the flow. 

\begin{Pro}\label{energypoids}

Let $g$ and $H$ be two sufficiently regular functions such that $T_F(g)=H$. For all $N \in \mathbb{N}$, $m \in \{0,1 \}$ and $ t \in [0,T[$, we have

\begin{flalign*}
& \hspace{0mm} \mathbb{E}_{N,1,m}[g](t)-2\mathbb{E}_{N,1,m}[g](0) \lesssim \sum_{z \in \mathbf{k}_m} \sum_{|\beta| \leq N} \int_0^t \left\|\frac{z}{v^0} \widehat{Z}^{\beta} H \right\|_{L^1_{x,v}}(s)ds  &
\end{flalign*}
 $$\hspace{2.4cm} +\sum_{z \in \mathbf{k}_m} \sum_{|\beta| \leq N} \int_0^t \left\| F\left(\frac{v}{v^0}, \nabla_v z \right) \widehat{Z}^{\beta} g \right\|_{L^1_{x,v}}(s)ds $$
  $$ \hspace{3.5cm} +\sum_{z \in \mathbf{k}_m} \sum_{\begin{subarray}{l}|\gamma|+|\delta| \leq N \\ \hspace{1mm} |\delta| \leq N-1 \end{subarray}} \int_0^t  \left\| z \mathcal{L}_{Z^{\gamma}}(F)\left(\frac{v}{v^0},\nabla_v \widehat{Z}^{\delta} g \right) \right\|_{L^1_{x,v}}(s)ds $$
and
\begin{flalign*} 
& \hspace{0mm} \mathbb{E}^2_{N,1,m}[g](t) - 2\mathbb{E}^2_{N,1,m}[g](0) \lesssim  \sum_{z \in \mathbf{k}_m}\sum_{|\beta| \leq N} \int_0^t \left\|v^0z\widehat{Z}^{\beta} H \right\|_{L^1_{x,v}}(s)ds &
\end{flalign*}
 $$ \hspace{1.4cm} + \sum_{z \in \mathbf{k}_m} \sum_{|\beta| \leq N} \int_0^t \|zv^iF_{i0} \widehat{Z}^{\beta} g \|_{L^1_{x,v}}(s)+ \|v^0 F(v, \nabla_v z) \widehat{Z}^{\beta} g \|_{L^1_{x,v}}(s) ds $$ 
 $$ \hspace{-0.9cm}+ \sum_{z \in \mathbf{k}_m} \sum_{\begin{subarray}{l} |\gamma|+|\delta| \leq N \\ \hspace{1mm} |\delta| \leq N-1 \end{subarray}}  \int_0^t  \| v^0z \mathcal{L}_{Z^{\gamma}}(F)(v,\nabla_v \widehat{Z}^{\delta} g) \|_{L^1_{x,v}}(s)ds ,$$
with $\widehat{Z}^{\delta} \in \widehat{\mathbb{P}}_0^{|\delta|}$, $\widehat{Z}^{\beta}$ $\in \widehat{\mathbb{P}}_0^{|\beta|}$ and $Z^{\gamma} \in \mathbb{K}^{|\gamma|}$.
\end{Pro}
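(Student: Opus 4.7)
The plan is to apply Proposition \ref{energyfsimple} not to $\widehat{Z}^{\beta} g$ directly but to the weighted test functions $z \widehat{Z}^{\beta} g$ (for the first inequality) and $(v^0)^2 z \widehat{Z}^{\beta} g$ (for the second), where $|\beta| \leq N$ and $z \in \mathbf{k}_m$, then to sum over these choices. The unweighted ($|\gamma|=0$) contribution to $\mathbb{E}_{N,1,m}$ is already controlled by Proposition \ref{energyf}, so only the $|\gamma|=1$ piece needs separate treatment.

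The central computation is the expansion of $T_F$ on a weighted test function. Since $T_m$ and $F(v,\nabla_v \cdot)$ are first-order, $T_F$ satisfies the Leibniz rule, and because $z \in \mathbf{k}_m$ satisfies $T_m(z)=0$ by \eqref{poids1}--\eqref{poids2}, I would get
$$T_F(z \widehat{Z}^{\beta} g) = z \, T_F(\widehat{Z}^{\beta} g) + \widehat{Z}^{\beta} g \cdot F(v, \nabla_v z).$$
Corollary \ref{Commuf} together with Remark \ref{remcommuf} then rewrite $T_F(\widehat{Z}^{\beta} g)$ as a linear combination of $\widehat{Z}^{\widetilde{\beta}} H$ terms with $|\widetilde{\beta}| \leq |\beta|$ and of $\mathcal{L}_{Z^{\gamma}}(F)(v, \nabla_v \widehat{Z}^{\delta} g)$ terms with $|\gamma|+|\delta| \leq |\beta|$ and $|\delta| \leq |\beta|-1$. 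Plugging these into Proposition \ref{energyfsimple} as the source, dividing by $v^0$, and summing over $|\beta|\leq N$ and $z \in \mathbf{k}_m$ then produces the three integrals on the right-hand side of the first inequality.

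For the second inequality, I would apply the same strategy to $(v^0)^2 z \widehat{Z}^{\beta} g$, using the Leibniz rule together with the identity $T_F((v^0)^2) = -2 v^0 v^i F_{i0}$ already established in the proof of Proposition \ref{energyf}. This yields
$$T_F\bigl((v^0)^2 z \widehat{Z}^{\beta} g\bigr) = -2 v^0 v^i F_{i0} \cdot z \widehat{Z}^{\beta} g + (v^0)^2 \bigl[ z \, T_F(\widehat{Z}^{\beta} g) + \widehat{Z}^{\beta} g \, F(v, \nabla_v z) \bigr],$$
and after dividing by $v^0$, taking absolute values, and inserting the expansion of $T_F(\widehat{Z}^{\beta} g)$, one recovers exactly the three families of source terms in the second estimate. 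There is no real obstacle here: the argument is a direct extension of the proof of Proposition \ref{energyf} in which the additional ingredient is the Leibniz term $\widehat{Z}^{\beta} g \cdot F(v,\nabla_v z)$ arising because $z$ is preserved by $T_m$ but not by the full operator $T_F$.
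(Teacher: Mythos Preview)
Your proposal is correct and follows essentially the same approach as the paper: the key step is the Leibniz identity $T_F(z\widehat{Z}^{\beta}g)=z\,T_F(\widehat{Z}^{\beta}g)+F(v,\nabla_v z)\,\widehat{Z}^{\beta}g$ (using $T_m(z)=0$), after which one invokes Corollary~\ref{Commuf}, Remark~\ref{remcommuf}, and Proposition~\ref{energyfsimple} exactly as in Proposition~\ref{energyf}. Your write-up is in fact more detailed than the paper's, which simply records this identity and states that the rest is ``similar to the one of Proposition~\ref{energyf}.''
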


\begin{proof}
Note that, for $z \in \mathbf{k}_m$ and according to equations \eqref{poids1} and \eqref{poids2},
$$ T_F\left( z \widehat{Z}^{\beta} g  \right)= zT_F\left(  \widehat{Z}^{\beta} g  \right)+ T_F(z) \widehat{Z}^{\beta} g =zT_F\left(  \widehat{Z}^{\beta} g  \right)+ F(v, \nabla_v z) \widehat{Z}^{\beta} g.$$
The remaining of the proof is then similar to the one of Proposition \ref{energyf}.
\end{proof}

\subsection{Energy estimates for the wave equation}

Recall that a potential $A$ in the Lorenz gauge satisfies the wave equation \ref{eqLorenz}. In order to bound its $L^2$ norm, we recall here a classical energy estimates for the wave equation  using the vector field $\overline{K}_0$. We mostly follow \cite{Sogge}, Chapter $II$.

During this subsection, we consider $u :[0,T[ \times \R^n \rightarrow \R$ a smooth function such that
$$\|u\|_{L^2(\R^n)}(0)+\sum_{Z \in \mathbb{K}} \|Zu \|_{L^2(\R^n)}(0) < + \infty.$$
We also introduce its energy momentum tensor
$$T_{\mu \nu}[u]= \partial_{\mu} u \partial_{\nu}u-\frac{1}{2}\eta_{\mu \nu} \eta^{\sigma \rho} \partial_{\sigma} u \partial_{\rho} u.$$

Since $\overline{K}_0$ is merely a conformal Killing vector field and as $T_{\mu \nu}[u]$ is not traceless, $T_{\mu \nu}[u] \overline{K}_0^{\nu}$ is not divergence free when $\square u =0$. In fact
$$\nabla^{\mu} (T_{\mu \nu}[u] \overline{K}_0^{\nu}) = \square u \overline{K}_0 u+\frac{1}{2}T_{\mu \nu}[u] \pi^{\mu \nu},$$
with
$$\pi^{\mu \nu}= \partial^{\mu} \overline{K}_0^{\nu}+\partial^{\nu} \overline{K}^{\mu}_0.$$
Since $\overline{K}_0$ is a conformal vector field of conformal factor $4t$, $\pi^{\mu \nu}=4 t\eta^{\mu \nu}$. So

$$\nabla^{\mu} (T_{\mu \nu}[u] \overline{K}_0^{\nu}) = \square u \overline{K}_0 u+(1-n)t \partial^{\mu} u \partial_{\mu} u.$$
The equality
$$t\partial^{\mu} u \partial_{\mu} u=\partial_{\mu} (tu \partial^{\mu} u)-\partial_{\mu}(t)u\partial^{\mu} u-tu \square u=\partial^{\mu} \left(tu\partial_{\mu} u- \frac{1}{2}u^2 \partial_{\mu} t\right)-tu \square u,$$
suggests us to introduce the $1$-form
$$P_{\mu} =T_{\mu \nu}[u] \overline{K}^{\nu}_0+(n-1)tu\partial_{\mu} u- \frac{n-1}{2}u^2 \partial_{\mu} t.$$

Applying the divergence theorem on $[0,t] \times \R^n$ to $T_{\mu0}[u]$ and $P_{\mu}$, we obtain

\begin{Pro}\label{prowave1}

$\forall$ $t \in [0,T[$,
$$\sum_{\mu=0}^n \|\partial_{\mu} u \|_{L^2(\Sigma_t)} \leq \sum_{\mu=0}^n \|\partial_{\mu} u \|_{L^2(\Sigma_0)}+\int_0^t \int_{\Sigma_s} |\square u | d \Sigma_s ds$$
and
$$\int_{\Sigma_t} P_0 d \Sigma_t \leq \int_{\Sigma_0} P_0 d \Sigma_0+\int_0^t \int_{\Sigma_s} |\square u| |\overline{K}_0 u+(n-1)tu|d \Sigma_s ds.$$

\end{Pro}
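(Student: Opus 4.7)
The strategy is a direct application of the divergence theorem on the slab $[0,t] \times \R^n$ to two well-chosen currents, exactly as the preamble suggests: the energy-momentum current $T_{\mu 0}[u]$ for the first estimate and the modified current $P_\mu$ for the second.

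For the first inequality, I would compute the Euclidean divergence of $J^\mu := T^{\mu 0}[u]$. Using the general identity $\nabla^\mu T_{\mu\nu}[u] = \square u \, \partial_\nu u$ (which follows from direct computation) together with the fact that $\partial_t$ is Killing, so that no deformation-tensor contribution appears, this reads $\partial_\mu J^\mu = \square u \, \partial^0 u$. Combined with $T^{00} = \tfrac{1}{2}\sum_\mu |\partial_\mu u|^2$, the divergence theorem produces the energy identity
\[
\tfrac{1}{2}\sum_\mu \|\partial_\mu u\|_{L^2(\Sigma_t)}^2 - \tfrac{1}{2}\sum_\mu \|\partial_\mu u\|_{L^2(\Sigma_0)}^2 \; = \; -\int_0^t\int_{\Sigma_s} \partial_t u \,\square u\, dx ds.
\]
Setting $E(t)^2 := \sum_\mu \|\partial_\mu u\|_{L^2(\Sigma_t)}^2$, Cauchy--Schwarz on each slice gives $\tfrac{d}{dt}E(t) \leq \|\square u\|_{L^2(\Sigma_t)}$, and integrating in $t$ yields the stated bound after comparing the norms of the vector $\bigl(\|\partial_\mu u\|_{L^2(\Sigma_s)}\bigr)_\mu$.

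For the second inequality, the bulk of the work is the computation of $\nabla^\mu P_\mu$, which is essentially already provided: the identity $\nabla^\mu(T_{\mu\nu}\overline{K}_0^\nu) = \square u\,\overline{K}_0 u + (1-n)t\,\partial^\mu u\, \partial_\mu u$ coming from the conformal Killing structure of $\overline{K}_0$, combined with the algebraic identity $t\partial^\mu u\, \partial_\mu u = \partial^\mu(tu\partial_\mu u - \tfrac{1}{2}u^2\partial_\mu t) - tu\,\square u$, gives after cancellation
\[
\nabla^\mu P_\mu \; = \; \square u \,\bigl(\overline{K}_0 u + (n-1)t u\bigr).
\]
Applying the divergence theorem on $[0,t]\times\R^n$ to $P^\mu$ and taking absolute values on the right-hand side produces the inequality directly, with no Gr\"onwall step required.

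I do not expect any real obstacle: once the correct currents are identified, both estimates are straightforward. The only routine caveat is the vanishing of boundary terms at spatial infinity in the divergence theorem, which follows from the assumed $L^2$ decay of $\partial u$ at $t=0$ together with finite speed of propagation for $\square$; if needed, one truncates with a spatial ball of radius $R$, applies the divergence theorem there, and sends $R \to \infty$.
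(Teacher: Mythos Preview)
Your proposal is correct and follows the same approach as the paper, whose proof is literally the single sentence ``Applying the divergence theorem on $[0,t] \times \mathbb{R}^n$ to $T_{\mu 0}[u]$ and $P_{\mu}$.'' You have simply supplied the standard details: the divergence computations (already carried out in the text preceding the statement) and, for the first estimate, the Cauchy--Schwarz/Gr\"onwall step needed to pass from the quadratic energy identity to a bound on $E(t)$ itself.
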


The first thing to verify is that $\int_{\Sigma_t} P_0 d \Sigma_t$ can be compared with the $L^2$ norm of $u$ (and of its derivatives).

\begin{Pro}\label{prowave2}
We suppose that $n \geq 3$. We have, for all $t \in [0,T[$,
$$ \sum_{|\beta| \leq 1} \| Z^{\beta} u \|^2_{L^2(\R^n)}(t) \lesssim \int_{\Sigma_t } P_0 d \Sigma_t \lesssim \sum_{|\beta| \leq 1} \| Z^{\beta} u \|^2_{L^2(\R^n)}(t).$$

\end{Pro}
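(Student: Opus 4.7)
My plan is to decompose $P_0$ in the null frame, translate the non-negative quadratic part into norms of Poincar\'e vector fields applied to $u$, and eliminate the sign-indefinite remainder by a spatial integration by parts on $\Sigma_t$. Writing $\overline{K}_0 = \tfrac{1}{2}\tau_+^2 L + \tfrac{1}{2}\tau_-^2\underline{L}$ and $\partial_t = \tfrac{1}{2}(L+\underline{L})$, a direct calculation yields
$$T_{0\nu}[u]\overline{K}_0^\nu = \tfrac{1}{4}\tau_+^2(Lu)^2 + \tfrac{1}{4}\tau_-^2(\underline{L}u)^2 + \tfrac{1}{2}(1+t^2+r^2)|\nabla\!\!\!\!/u|^2,$$
which is manifestly non-negative, so that $P_0$ splits into this non-negative quadratic and the sign-indefinite remainder $(n-1)tu\partial_t u - \tfrac{n-1}{2}u^2$.

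Using the identities of Remark \ref{extradecay} --- namely $\underline{u}L = S + \tfrac{x^i}{r}\Omega_{0i}$, $u\underline{L} = S - \tfrac{x^i}{r}\Omega_{0i}$, $re_B = \sum_{i<j}C^{i,j}_B\Omega_{ij}$, and $te_B = e_B^i\Omega_{0i}$ (since $e_B$ is tangential to the sphere, so $e_B^i x^i = 0$) --- the null quantities $\tau_\pm Lu,\tau_\pm\underline{L}u,r|\nabla\!\!\!\!/u|,t|\nabla\!\!\!\!/u|$ are pointwise controlled by Poincar\'e vector fields $Su, \Omega_{0i}u, \Omega_{ij}u$ and translations, and conversely. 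This yields the pointwise equivalence $T_{0\nu}[u]\overline{K}_0^\nu \sim \sum_{|\beta|=1}|Z^\beta u|^2$ up to universal constants. To handle the remainder, I integrate radially on $\Sigma_t$: in spherical coordinates the identity $\int_{\Sigma_t}ru\partial_r u\,dx = -\tfrac{n}{2}\|u\|^2_{L^2}$ holds, and combined with $r\partial_r = S - t\partial_t$ it gives $\int_{\Sigma_t}tu\partial_t u\,dx = \int_{\Sigma_t}u\,Su\,dx + \tfrac{n}{2}\|u\|^2_{L^2}$. Substituting into $P_0$ produces the key identity
$$\int_{\Sigma_t}P_0\,dx = \int_{\Sigma_t}T_{0\nu}[u]\overline{K}_0^\nu\,dx + (n-1)\int_{\Sigma_t}u\,Su\,dx + \tfrac{(n-1)^2}{2}\|u\|^2_{L^2(\Sigma_t)}.$$

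The upper bound $\int_{\Sigma_t}P_0\,dx \lesssim \sum_{|\beta|\leq 1}\|Z^\beta u\|^2$ then follows from Cauchy-Schwarz on the cross term together with the pointwise equivalence of the second step. The lower bound $\sum_{|\beta|\leq 1}\|Z^\beta u\|^2 \lesssim \int_{\Sigma_t}P_0\,dx$ is the main obstacle. Completing the square rewrites the identity as $\int P_0 = \bigl(\int T_{0\nu}\overline{K}_0^\nu - \tfrac{1}{2}\|Su\|^2\bigr) + \tfrac{1}{2}\|(n-1)u + Su\|^2$, a sum of non-negative quantities --- the first by the pointwise inequality $(Su)^2 \leq 2\,T_{0\nu}\overline{K}_0^\nu$ obtained from $2S = \underline{u}L + u\underline{L}$. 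Absorbing the $\|u\|^2_{L^2}$ contribution into the derivative norms requires Hardy's inequality $\|u\|^2_{L^2}\leq\tfrac{4}{n^2}\|r\partial_r u\|^2_{L^2}$ on $\mathbb{R}^n$ (itself a byproduct of the same radial integration by parts together with Cauchy-Schwarz). The hypothesis $n\geq 3$ enters precisely at this step via the strict inequality $\tfrac{4(n-1)}{n^2} < 1 \Leftrightarrow (n-2)^2>0$, affording just enough margin to close the estimate.
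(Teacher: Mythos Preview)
Your null-frame decomposition of $T_{0\nu}[u]\overline{K}_0^\nu$, the pointwise equivalence with $\sum_{|\beta|=1}|Z^\beta u|^2$, and the integration-by-parts identity
$$\int_{\Sigma_t}P_0\,dx = \int_{\Sigma_t}T_{0\nu}[u]\overline{K}_0^\nu\,dx + (n-1)\int_{\Sigma_t}u\,Su\,dx + \tfrac{(n-1)^2}{2}\|u\|^2_{L^2}$$
are all correct and match the paper exactly (the paper writes the first integrand as $\tfrac{1}{2}\bigl(|\nabla_{t,x}u|^2+|Su|^2+\sum_{\mu<\nu}|\Omega_{\mu\nu}u|^2\bigr)$, which is the same thing). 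The upper bound follows, and your completion of the square into two non-negative pieces is also the paper's first move.

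The gap is in the lower bound. Your Hardy inequality $\|u\|^2\le\tfrac{4}{n^2}\|r\partial_r u\|^2$ is correct as a purely spatial estimate at fixed $t$, but you then need $\|r\partial_r u\|^2$ to be controlled by $\int_{\Sigma_t}P_0$ \emph{uniformly in $t$}. Writing $r\partial_r u=Su-t\partial_t u$, the piece $t^2\|\partial_t u\|^2$ is not bounded by $\int_{\Sigma_t}T_{0\nu}\overline{K}_0^\nu$: near the light cone $r\approx t$ one has $t^2(\underline{L}u)^2\gg\tau_-^2(\underline{L}u)^2$, so no $t$-independent constant works. The numerology $\tfrac{4(n-1)}{n^2}<1$ you invoke does not correspond to an actual absorption in this chain, and a direct attempt to absorb the cross term $(n-1)\int uSu$ by Young's inequality and $\|Su\|^2\le2\int T_{0\nu}\overline{K}_0^\nu$ fails for the same reason (the two constraints on the Young parameter are incompatible).

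The paper closes the lower bound differently: it performs a \emph{second} integration by parts on the cross term, this time via the radial boost $\Omega_{0r}=\tfrac{x^i}{r}\Omega_{0i}$, obtaining
$$\int_{\Sigma_t}2tu\partial_t u\,dx=\int_{\Sigma_t}\Bigl(\tfrac{2t}{r}u\,\Omega_{0r}u+(n-2)\tfrac{t^2}{r^2}u^2\Bigr)dx.$$
Taking a weighted combination of this with your identity brings both $\tfrac{t}{r}u\,\Omega_{0r}u$ and $\tfrac{t^2}{r^2}u^2$ into the expression for $2\int P_0$. Two further completions of squares, $\bigl|\Omega_{0r}u+\tfrac{t}{2r}u\bigr|^2$ and $\bigl|Su+\tfrac{2n-3}{2}u\bigr|^2$, then leave positive remainders $\tfrac{2n-5}{4}\cdot\tfrac{t^2}{r^2}u^2$ and $\tfrac{2n-5}{4}u^2$ (using $|\Omega_{0r}u|^2\le\sum_i|\Omega_{0i}u|^2$), which for $n\ge3$ gives $\|u\|^2+\|Su\|^2\lesssim\int P_0$ directly. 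The boost $\Omega_{0r}$ is exactly the missing ingredient that supplies the correct $t$-weights.
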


\begin{proof}

Let us first remark that
$$P_0=\frac{1}{2}(1+|x|^2+t^2)|\nabla_{t,x} u|^2+2tx^i \partial_i u \partial_t u+(n-1)tu\partial_{t} u- \frac{n-1}{2}u^2 .$$
Moreover,
$$(1+|x|^2+t^2)|\nabla_{t,x} u|^2+4tx^i \partial_i u \partial_t u=|\nabla_{t,x} u|^2+|Su|^2+\sum_{0 \leq \mu < \nu \leq n} |\Omega_{\mu \nu} u |^2$$
together with
\begin{equation}\label{eq2.5}
\int_{\R^n} 2tu\partial_{t} u dx = \int_{\R^n} 2uSu-x^i\partial_i( u^2) dx= \int_{\R^n} 2uSu+nu^2dx
\end{equation}
gives
\begin{equation}
\int_{\Sigma_t} P_0 d \Sigma_t = \frac{1}{2} \int_{\Sigma_t} |\nabla_{t,x} u|^2+|Su+(n-1)u|^2+\sum_{0 \leq \mu < \nu \leq n} |\Omega_{\mu \nu} u |^2 d \Sigma_t.
\end{equation}

This proves the second inequality and reduces the first one to 
$$ \|u\|^2_{L^2(\R^n)}(t)+\|Su \|^2_{L^2(\R^n)}(t) \lesssim \int_{\Sigma_t } P_0 d \Sigma_t.$$

In order to transform $\int_{\R^n} 2tu\partial_{t} u d x$ in an alternative expression, we remark that
$$2u \partial_t u = 2u\frac{1}{r} \Omega_{0r} u-\frac{t}{r^2}x^i \partial_i (u^2), \hspace{3mm} \text{with} \hspace{3mm} \Omega_{0r}=\frac{x^i}{r}\Omega_{0i}.$$

So, by integration by parts,
$$\int_{\R^n} 2tu\partial_{t} u dx=\int_{\R^n} \left( 2\frac{t}{r}u \Omega_{0r} u+(n-2)\frac{t^2}{r^2}u^2 \right)dx.$$
Combined with equation \eqref{eq2.5}, we get
\begin{flalign*}
& \hspace{0mm} \int_{\R^n} \left( 2(n-1)tu\partial_{t} u- (n-1)u^2 \right)dx =\frac{2n-3}{2}\int_{\R^n} \left( 2uSu+nu^2 \right)dx &
\end{flalign*}
 $$\hspace{3.3cm}+\frac{1}{2}\int_{\R^n} \left( 2\frac{t}{r}u \Omega_{0r} u+(n-2)\frac{t^2}{r^2}u^2 \right) dx-(n-1)\int_{\R^n} u^2 dx.$$

It then comes that
\begin{equation}\label{equationtruc}
\hspace{-2.7cm} 2\int_{\Sigma_t} P_0 d x = \int_{\Sigma_t} |Su|^2+2\frac{2n-3}{2}uSu+\frac{2n^2-5n+2}{2}u^2 d x
 \end{equation}
 $$+\int_{\Sigma_t} |\nabla_{t,x} u|^2+\sum_{\mu < \nu } |\Omega_{\mu \nu} u |^2-|\Omega_{0r}u|^2 +|\Omega_{0r}u|^2+\frac{t}{r}u \Omega_{0r} u+(n-2)\frac{t^2}{2r^2}u^2 dx.$$
The second integral on the right hand side of \eqref{equationtruc} is nonnegative since
$$|\Omega_{0r}u|^2 =\left|\frac{x^i}{r} \Omega_{0i}u\right|^2 \leq \sum_{i=1}^n |\Omega_{0i}u|^2$$
and
$$|\Omega_{0r}u|^2+\frac{t}{r}u \Omega_{0r} u+(n-2)\frac{t^2}{2r^2}u^2=\left(\Omega_{0r}u+\frac{t}{2r}u\right)^2+(2n-5)\frac{t^2}{4r^2}u^2.$$
Consequently,
$$ \|u\|^2_{L^2(\R^n)}(t)+\|Su \|^2_{L^2(\R^n)}(t) \lesssim \int_{\Sigma_t } P_0 d \Sigma_t$$
comes from
$$|Su|^2+2\frac{2n-3}{2}uSu+\frac{2n^2-5n+2}{2}u^2=\left( Su+ \frac{2n-3}{2}u \right)^2+\frac{2n-5}{4}u^2$$
and from
\begin{flalign*}
& \hspace{0mm} |Su|^2+2\frac{2n-3}{2}uSu+\frac{2n^2-5n+2}{2}u^2 = &
\end{flalign*}
 $$ \hspace{1.5cm} \left( \frac{2n-3}{\sqrt{4n^2-10n+4}}Su+ \left(n^2-\frac{5}{2}n+1\right)^{\frac{1}{2}}u \right)^2+\frac{2n-5}{4n^2-10n+4}|Su|^2.$$

\end{proof}

\begin{Rq}
We also proved that $$ \int_{\Sigma_t}\frac{t^2}{r^2}u^2 d \Sigma_t \lesssim \int_{\Sigma_t} P_0 d \Sigma_t.$$
\end{Rq}

Finally, we obtain the expected estimate.

\begin{Pro}\label{prowave3}
We have, for all $t \in [0,T[$,
$$ \sum_{|\beta| \leq 1} \| Z^{\beta} u \|^2_{L^2(\R^n)}(t) \lesssim \sum_{|\beta| \leq 1} \| Z^{\beta} u \|^2_{L^2(\R^n)}(0)+ \sum_{|\beta| \leq 1} \int_0^t \int_{\Sigma_s} |Z^{\beta} u| |\tau_+ \square u | dx ds,$$
with $Z^{\beta} \in \mathbb{K}$ if $|\beta|=1$, leading to, for all $t \in [0,T[$,

$$\sum_{|\beta| \leq 1} \| Z^{\beta} u \|_{L^2(\R^n)}(t) \lesssim \sum_{|\beta| \leq 1} \| Z^{\beta} u \|_{L^2(\R^n)}(0)+  \int_0^t  \left\|\tau_+ \square u \right\|_{L^2(\R^n)}  ds.$$

\end{Pro}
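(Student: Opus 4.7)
The plan is to combine the second energy inequality of Proposition \ref{prowave1} with the coercivity estimate of Proposition \ref{prowave2}, after rewriting the multiplier weight $\overline{K}_0 u + (n-1)tu$ in terms of the commutation fields.

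First, I would control the weight pointwise. Using $S = t\partial_t + r\partial_r$ and $\Omega_{0r} = \frac{x^i}{r}\Omega_{0i} = t\partial_r + r\partial_t$, one checks that $K_0 = tS + r\Omega_{0r}$, so that
\begin{equation*}
\overline{K}_0 u + (n-1)tu = tSu + r\Omega_{0r}u + \partial_t u + (n-1)tu.
\end{equation*}
Since $|t|+|r| \lesssim \tau_+$, this yields the pointwise bound
\begin{equation*}
\bigl|\overline{K}_0 u + (n-1)tu\bigr| \lesssim \tau_+ \sum_{|\beta|\le 1} |Z^\beta u|,
\end{equation*}
with $Z^\beta \in \mathbb{K}$ when $|\beta|=1$. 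Plugging this into the second inequality of Proposition \ref{prowave1} and bracketing with the two-sided comparison of Proposition \ref{prowave2}, we obtain
\begin{equation*}
\sum_{|\beta|\le 1} \|Z^\beta u\|_{L^2(\mathbb{R}^n)}^2(t) \,\lesssim\, \int_{\Sigma_t} P_0\,d\Sigma_t \,\lesssim\, \int_{\Sigma_0} P_0\,d\Sigma_0 + \sum_{|\beta|\le 1} \int_0^t \!\!\int_{\Sigma_s} |Z^\beta u|\,|\tau_+ \square u|\,dx\,ds,
\end{equation*}
and the right-hand initial term is in turn controlled by $\sum_{|\beta|\le 1}\|Z^\beta u\|_{L^2}^2(0)$, proving the first statement.

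For the second (linear) estimate, set $\phi(t)^2 := \sum_{|\beta|\le 1}\|Z^\beta u\|_{L^2(\mathbb{R}^n)}^2(t)$. Applying Cauchy--Schwarz in $x$ inside each spacetime integral, the first statement rewrites as
\begin{equation*}
\phi(t)^2 \,\le\, C\phi(0)^2 + C\int_0^t \phi(s)\,\|\tau_+ \square u\|_{L^2(\mathbb{R}^n)}(s)\,ds.
\end{equation*}
Introducing the nondecreasing envelope $M(t) := \sup_{0\le s\le t}\phi(s)$ and using that the right-hand side is itself nondecreasing in $t$, we get $M(t)^2 \le C\phi(0)^2 + CM(t)\int_0^t \|\tau_+\square u\|_{L^2}(s)\,ds$; solving this quadratic inequality in $M(t)$ yields
\begin{equation*}
M(t) \,\lesssim\, \phi(0) + \int_0^t \|\tau_+\square u\|_{L^2(\mathbb{R}^n)}(s)\,ds,
\end{equation*}
which is precisely the claim.

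The only non-routine point is the algebraic decomposition $K_0 = tS + r\Omega_{0r}$ that converts the quadratic weight in $t,r$ appearing in $\overline{K}_0$ into a \emph{linear} weight $\tau_+$ multiplying first-order commuted quantities; everything else is a standard divergence-theorem/Gr\"onwall manipulation. No hidden difficulty is expected, since $P_0$ is already known to be comparable to $\sum_{|\beta|\le 1}\|Z^\beta u\|_{L^2}^2$ for $n\ge 3$ by Proposition \ref{prowave2}, so no sign issue arises when closing the estimate.
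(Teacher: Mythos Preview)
Your proof is correct and follows essentially the same approach as the paper: combine Propositions~\ref{prowave1} and~\ref{prowave2}, then bound $|\overline{K}_0 u + (n-1)tu| \lesssim \tau_+ \sum_{|\beta|\le 1}|Z^\beta u|$. The only cosmetic difference is that the paper obtains this pointwise bound via the null decomposition $\overline{K}_0 = \tfrac{1}{2}\tau_-^2\underline{L} + \tfrac{1}{2}\tau_+^2 L$ together with Remark~\ref{extradecay}, whereas you use the equivalent identity $K_0 = tS + r\Omega_{0r}$; your derivation of the linear estimate from the quadratic one is also more explicit than the paper's, which simply says ``leading to''.
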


\begin{proof}

We have, according to Propositions \ref{prowave1} and \ref{prowave2},
\begin{flalign*}
& \hspace{0mm} \sum_{|\beta| \leq 1} \| Z^{\beta} u  \|^2_{L^2(\R^n)}(t) \lesssim \sum_{|\beta| \leq 1} \| Z^{\beta} u \|^2_{L^2(\R^n)}(0) &
\end{flalign*}
 $$\hspace{1.3cm} + \int_0^t \int_{\Sigma_s} |\square u| |\overline{K}_0 u+(n-1)tu|dx ds.$$
The result then follows from Remark \ref{extradecay}, which gives us
$$|\overline{K}_0 u| \lesssim  \tau_+^2 |Lu|+ \tau_-^2 |\underline{L} u| \lesssim \tau_+ \sum_{Z \in \mathbb{K}} |Zu|.$$

\end{proof}

We now apply this to the electromagnetic potential in the Lorenz gauge. Since we will need to estimate $\|S\left(\mathcal{L}_{Z^{\beta}}(A) \right) \|_{L^2(\R^n)}$ in order to bound the energy of the electromagnetic field $F$ (see Proposition \ref{MoraF} below), we consider the following norms.

\begin{Def}

Let $A$ be a sufficiently regular $1$-form defined on $[0,T[ \times \R^n$. We define, for $N \in \mathbb{N}$ and all $t \in [0,T[$,
$$\widetilde{\mathcal{E}}_N[A](t)= \sum_{\mu=0}^n \sum_{|\beta| \leq 1} \sum_{|\gamma| \leq N} \| Z^{\beta}( \mathcal{L}_{Z^{\gamma}} (A)_{\mu}) \|^2_{L^2(\R^n)}(t).$$

\end{Def}

\begin{Rq}

Note that
$$\sum_{\mu=0}^n \sum_{|\beta| \leq N+1} \|Z^{\beta} A_{\mu} \|_{L^2(\R^n)}^2 \lesssim \widetilde{\mathcal{E}}_N[A] \lesssim \sum_{\mu=0}^n \sum_{|\beta| \leq N+1} \|Z^{\beta} A_{\mu} \|_{L^2(\R^n)}^2.$$
We work with $\widetilde{\mathcal{E}}_N[A]$ as we will apply Proposition \ref{prowave3} to $\mathcal{L}_{Z^{\beta}}(A)_{\mu}$.
\end{Rq}

Using Proposition \ref{prowave3}, we get the following result.

\begin{Pro}\label{energypotential}

Let $N \in \mathbb{N}$ and $A_{\mu}$ be a sufficiently regular $1$-form, defined on $[0,T[ \times \R^n$, such that $\widetilde{\mathcal{E}}_N[A](0)< + \infty.$ Then, $\forall \hspace{1mm} t \in [0,T[$,
$$\sqrt{\widetilde{\mathcal{E}}_N[A]}(t) \lesssim \sqrt{\widetilde{\mathcal{E}}_N[A]}(0)+\sum_{\mu=0}^n \sum_{|\gamma| \leq N}\int_0^t  \left\|\tau_+ \square \mathcal{L}_{Z^{\gamma}} (A)_{\mu} \right\|_{L^2(\Sigma_s)}  ds.$$

\end{Pro}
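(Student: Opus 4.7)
The strategy is essentially to apply the scalar wave estimate of Proposition \ref{prowave3} componentwise. For each fixed multi-index $\gamma$ with $|\gamma| \leq N$ and each index $\mu \in \{0,\dots,n\}$, the function $u_{\gamma,\mu} := \mathcal{L}_{Z^{\gamma}}(A)_{\mu}$ is a scalar field on $[0,T[\times\R^n$, and the finiteness hypothesis $\widetilde{\mathcal{E}}_N[A](0)<+\infty$ guarantees exactly that $\sum_{|\beta|\leq 1}\|Z^{\beta}u_{\gamma,\mu}\|_{L^2(\R^n)}(0)<+\infty$. So Proposition \ref{prowave3} applies and yields
\begin{equation*}
\sum_{|\beta|\leq 1}\|Z^{\beta}u_{\gamma,\mu}\|_{L^2(\R^n)}(t) \;\lesssim\; \sum_{|\beta|\leq 1}\|Z^{\beta}u_{\gamma,\mu}\|_{L^2(\R^n)}(0) + \int_0^t \bigl\|\tau_+ \square u_{\gamma,\mu}\bigr\|_{L^2(\R^n)}\,ds.
\end{equation*}

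Next I would sum this inequality over $\mu\in\{0,\dots,n\}$ and over $|\gamma|\leq N$. The finite sum on the left is, by equivalence of $\ell^1$ and $\ell^2$ norms on $\R^M$ with $M$ = total number of $(\mu,\gamma,\beta)$ triples, comparable to
\begin{equation*}
\Bigl(\sum_{\mu,\gamma,\beta}\|Z^{\beta}\mathcal{L}_{Z^{\gamma}}(A)_{\mu}\|_{L^2(\R^n)}^{2}(t)\Bigr)^{1/2}=\sqrt{\widetilde{\mathcal{E}}_N[A](t)},
\end{equation*}
and similarly the initial-data term is comparable to $\sqrt{\widetilde{\mathcal{E}}_N[A](0)}$. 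The source-term sum on the right already has exactly the form announced in the proposition. This yields the claimed inequality.

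There is no serious obstacle: Proposition \ref{prowave3} was proved for an arbitrary smooth scalar $u$, and nothing in its derivation uses any structural assumption on $u$ beyond sufficient regularity and finiteness of initial energy. Note in particular that we need not invoke the Lorenz gauge condition or Lemma \ref{derivpotential} here, because the statement is a purely linear $L^2$ bound for each component of $\mathcal{L}_{Z^{\gamma}}A$ treated as an independent scalar unknown; the gauge will intervene only later, when one wants to substitute the Vlasov current (via Proposition \ref{CommuA}) for the source $\square\mathcal{L}_{Z^{\gamma}}(A)_{\mu}$. The only tiny point to double-check is the equivalence of norms constant, which depends only on $n$ and $N$ and is thus absorbed into $\lesssim$ per our convention.
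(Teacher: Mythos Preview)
Your proposal is correct and matches the paper's own argument: the paper simply states that Proposition~\ref{energypotential} follows from Proposition~\ref{prowave3}, which is exactly the componentwise application you describe. Your explicit handling of the $\ell^1$--$\ell^2$ equivalence and your remark that the Lorenz gauge is not needed at this stage are accurate elaborations of what the paper leaves implicit.
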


\subsection{Energy estimates for the Maxwell equations}

We prove three conservation laws for the Maxwell equations, using each time a different multiplier ($\partial_t$, $\overline{K}_0$ or $S$). In the study of the massive case, we will mostly use the one associated to the Morawetz vector field.

For the remaining of this section, we consider a $2$-form $G$ and a current $J$, sufficiently regular and defined on $[0,T[$, such that

\[
\left \{
\begin{array}{c @{=} c}
    
    \nabla^{\mu} G_{\mu \nu} & J_{\nu} \\
    \nabla^{\mu} {}^* \! G_{\mu \lambda_1 ... \lambda_{n-2}} & 0. \\
\end{array}
\right.
\]

The following lemmas hold.

\begin{Lem}\label{divelec}

We have, for all $0 \leq \nu \leq n$,
$$\nabla^{\mu} T[G]_{\mu \nu} = G_{\nu \rho} J^{\rho}.$$

\end{Lem}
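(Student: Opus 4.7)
The plan is to compute $\nabla^{\mu} T[G]_{\mu \nu}$ directly from the definition $T[G]_{\mu \nu} = G_{\mu \beta} G_{\nu}{}^{\beta} - \frac{1}{4} \eta_{\mu\nu} G_{\rho \sigma} G^{\rho \sigma}$, apply the Leibniz rule, and then feed in the two Maxwell equations one at a time.

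First I would write
$$ \nabla^{\mu} T[G]_{\mu\nu} = (\nabla^{\mu} G_{\mu\beta}) G_{\nu}{}^{\beta} + G^{\mu\beta} \nabla_{\mu} G_{\nu\beta} - \tfrac{1}{2} G^{\rho\sigma} \nabla_{\nu} G_{\rho\sigma}. $$
The first term immediately becomes $J_{\beta} G_{\nu}{}^{\beta} = G_{\nu \rho} J^{\rho}$ by the inhomogeneous Maxwell equation $\nabla^{\mu} G_{\mu\nu} = J_{\nu}$. So it suffices to show that the remaining two terms cancel.

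The main step is to handle $G^{\mu\beta} \nabla_{\mu} G_{\nu\beta}$. For this I invoke Proposition \ref{equivalsyst}, which tells us the homogeneous Maxwell equation $\nabla^{\mu} {}^{*}\! G_{\mu \lambda_1 \ldots \lambda_{n-2}} = 0$ is equivalent to the Bianchi-type identity $\nabla_{[\lambda} G_{\mu \nu]} = 0$, i.e.\
$$ \nabla_{\mu} G_{\nu\beta} + \nabla_{\nu} G_{\beta\mu} + \nabla_{\beta} G_{\mu\nu} = 0. $$
Contracting with $G^{\mu\beta}$ and using the antisymmetry of $G$ (so that $G^{\mu\beta} \nabla_{\nu} G_{\beta\mu} = -G^{\mu\beta} \nabla_{\nu} G_{\mu\beta}$ and the term $G^{\mu\beta}\nabla_{\beta} G_{\mu\nu}$ becomes $-G^{\mu\beta}\nabla_{\mu} G_{\nu\beta}$ after relabeling $\mu \leftrightarrow \beta$ and using antisymmetry twice), one finds
$$ 2\, G^{\mu\beta} \nabla_{\mu} G_{\nu\beta} = G^{\mu\beta} \nabla_{\nu} G_{\mu\beta} = \tfrac{1}{2} \nabla_{\nu} (G^{\rho\sigma} G_{\rho\sigma}), $$
so that $G^{\mu\beta} \nabla_{\mu} G_{\nu\beta} = \tfrac{1}{4} \nabla_{\nu}(G^{\rho\sigma} G_{\rho\sigma})$. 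Substituting back, the last two terms in the expansion of $\nabla^{\mu} T[G]_{\mu\nu}$ cancel exactly, leaving $\nabla^{\mu} T[G]_{\mu\nu} = G_{\nu\rho} J^{\rho}$.

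The only subtle point is bookkeeping with the index gymnastics: making sure that the relabeling in the second Bianchi term uses antisymmetry of $G$ correctly so that the coefficient of $G^{\mu\beta}\nabla_{\mu}G_{\nu\beta}$ on both sides of the identity comes out right. This is routine but is where a sign error would enter; everything else is just the product rule and the two hypotheses.
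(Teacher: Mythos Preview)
Your proof is correct and follows essentially the same route as the paper: expand by the Leibniz rule, use the inhomogeneous Maxwell equation for the first term, and invoke Proposition~\ref{equivalsyst} to rewrite the homogeneous equation as $\nabla_{[\lambda}G_{\mu\nu]}=0$, which after contraction with $G^{\mu\beta}$ shows the remaining two terms cancel. The paper organizes the computation slightly differently (it isolates $G_{\mu\rho}\nabla^{\mu}G_{\nu}{}^{\rho}$ first and symmetrizes in $\mu,\rho$ before invoking Bianchi), but the content is identical.
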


\begin{proof}

According to Proposition \ref{equivalsyst},

\begin{eqnarray}
\nonumber G_{\mu \rho} \nabla^{\mu} {G_{\nu}}^{\rho}& = & G^{\mu \rho} \nabla_{\mu} G_{\nu \rho} \\
\nonumber & = & \frac{1}{2} G^{\mu \rho} (\nabla_{\mu} G_{\nu \rho}-\nabla_{\rho} G_{\nu \mu}) \\
\nonumber & = & \frac{1}{2} G^{\mu \rho} \nabla_{\nu} G_{\mu \rho} \\
\nonumber & = & \frac{1}{4} \nabla_{\nu} (G^{\mu \rho} G_{\mu \rho}).
\end{eqnarray}

So,
$$\nabla^{\mu} T[G]_{\mu \nu} = \nabla^{\mu} (G_{\mu \rho}){G_{\nu}}^{\rho}+\frac{1}{4} \nabla_{\nu} (G^{\mu \rho} G_{\mu \rho})-\frac{1}{4}\eta_{\mu \nu} \nabla^{\mu} (G^{\sigma \rho} G_{\sigma \rho})=G_{\nu \rho} J^{\rho}.$$

\end{proof}

\begin{Lem}\label{momentumnull}

We have, denoting by $(\alpha, \underline{\alpha}, \rho, \sigma)$ the null decomposition of $G$,
$$T[G]_{L L}=|\alpha|^2, \hspace{3mm} T[G]_{\underline{L} L}=|\underline{\alpha}|^2 \hspace{3mm} \text{and} \hspace{3mm} T[G]_{L \underline{L}}=|\rho|^2+|\sigma|^2.$$

\end{Lem}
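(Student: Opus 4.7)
}

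The plan is to expand the definition of $T[G]_{\mu\nu}$ in the null frame $(L,\underline{L},e_B)$ and use the null decomposition. Recall that in this frame the only nonzero components of the Minkowski metric are
\[
\eta_{L\underline{L}}=\eta_{\underline{L}L}=-2,\qquad \eta_{BC}=\delta_{BC},
\]
so the inverse metric satisfies $\eta^{L\underline{L}}=\eta^{\underline{L}L}=-\tfrac{1}{2}$, $\eta^{BC}=\delta^{BC}$, and $\eta^{LL}=\eta^{\underline{L}\underline{L}}=0$. In particular, since $\eta_{LL}=0$ and $\eta_{\underline{L}\underline{L}}=0$, the trace term in $T[G]$ drops out when one evaluates $T[G]_{LL}$ or $T[G]_{\underline{L}\underline{L}}$, and we are left with the pure contraction $G_{L\beta}G_L{}^\beta$, respectively $G_{\underline{L}\beta}G_{\underline{L}}{}^\beta$.

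For $T[G]_{LL}$ I would expand $G_{L\beta}G_L{}^{\beta}$ as a sum over the null frame. Raising the index gives $G_L{}^L=\eta^{L\underline{L}}G_{L\underline{L}}=-\tfrac12G_{L\underline{L}}$ (proportional to $\rho$), $G_L{}^{\underline{L}}=\eta^{\underline{L}L}G_{LL}=0$ (by antisymmetry), and $G_L{}^B=\delta^{BC}G_{LC}=G_{LB}$. Multiplying back by $G_{L\beta}$, the $\beta=L$ term vanishes because $G_{LL}=0$, the $\beta=\underline{L}$ term vanishes because $G_L{}^{\underline{L}}=0$, and what remains is $G_{LB}G_{LB}=|\alpha|^2$ (with $\alpha_B=G_{BL}=-G_{LB}$). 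The same argument, replacing $L$ by $\underline{L}$, yields $T[G]_{\underline{L}\underline{L}}=|\underline{\alpha}|^2$ (I read the statement $T[G]_{\underline L L}=|\underline\alpha|^2$ as a typo for $T[G]_{\underline L\underline L}$, consistent with the symmetry $T[G]_{\mu\nu}=T[G]_{\nu\mu}$).

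For $T[G]_{L\underline{L}}$ both terms in the definition contribute. On the one hand, $G_{L\beta}G_{\underline{L}}{}^{\beta}$ splits into a part along $L,\underline{L}$, giving $G_{L\underline{L}}G_{\underline{L}}{}^{\underline{L}}=G_{L\underline L}\cdot(-\tfrac12)G_{\underline L L}=\tfrac12G_{L\underline L}^2=2\rho^2$, plus a spherical part $G_{LB}G_{\underline{L}}{}^B=G_{LB}G_{\underline{L}B}=\alpha_B\underline{\alpha}_B$ (up to signs to be checked). On the other hand, $-\tfrac14\eta_{L\underline{L}}G_{\rho\sigma}G^{\rho\sigma}=\tfrac12G_{\rho\sigma}G^{\rho\sigma}$, and a careful null-frame enumeration of the pairs $(L,\underline{L}),(\underline{L},L),(L,B),(\underline{L},B),(B,C)$ gives
\[
G_{\rho\sigma}G^{\rho\sigma}=-4\rho^2+2|\sigma|^2-2\,\alpha_B\underline{\alpha}_B,
\]
so that $\tfrac12 G_{\rho\sigma}G^{\rho\sigma}=-2\rho^2+|\sigma|^2-\alpha_B\underline{\alpha}_B$. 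Adding the two contributions, the cross term $\alpha\cdot\underline\alpha$ cancels and the $\rho^2$ pieces combine to yield $T[G]_{L\underline{L}}=|\rho|^2+|\sigma|^2$.

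The only real obstacle is bookkeeping: tracking the factors of $-\tfrac12$ coming from $\eta^{L\underline{L}}$, the signs arising from antisymmetry of $G$, and the double counting in the sum $G_{\rho\sigma}G^{\rho\sigma}$ over ordered pairs. Once the null-frame dictionary $\alpha_B=G_{BL}$, $\underline{\alpha}_B=G_{B\underline{L}}$, $\rho=\tfrac12G_{L\underline{L}}$, $\sigma_{BC}=G_{BC}$ is applied consistently, each of the three identities follows by a direct algebraic expansion.
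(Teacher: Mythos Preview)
Your approach is correct and is the standard one; the paper itself states this lemma without proof (it is a classical computation, cf.\ \cite{CK}). You also correctly identify the typo in the statement: the middle identity should read $T[G]_{\underline{L}\,\underline{L}}=|\underline{\alpha}|^2$, not $T[G]_{\underline{L}L}$, since $T$ is symmetric.

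There is, however, a small arithmetic slip in your computation of $G_{\rho\sigma}G^{\rho\sigma}$. The ordered pairs $(L,\underline{L})$ and $(\underline{L},L)$ each contribute
\[
G_{L\underline{L}}G^{L\underline{L}}=(2\rho)\Bigl(-\tfrac14\cdot 2\rho\Bigr)=-\rho^2,
\]
so the total $\rho$-contribution is $-2\rho^2$, not $-4\rho^2$. With your value $-4\rho^2$, adding $2\rho^2+\alpha\!\cdot\!\underline{\alpha}$ and $\tfrac12(-4\rho^2+2|\sigma|^2-2\alpha\!\cdot\!\underline{\alpha})$ would give only $|\sigma|^2$, not $|\rho|^2+|\sigma|^2$. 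With the correct coefficient $-2\rho^2$ one obtains
\[
\bigl(2\rho^2+\alpha\!\cdot\!\underline{\alpha}\bigr)+\tfrac12\bigl(-2\rho^2+2|\sigma|^2-2\alpha\!\cdot\!\underline{\alpha}\bigr)=\rho^2+|\sigma|^2,
\]
as claimed. This is exactly the kind of bookkeeping issue you flagged yourself; once fixed, the argument is complete.
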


\subsubsection{Using $\partial_t$ as a multiplier}

As we use here the multiplier $\partial_t$, we work with $T[G]_{\mu 0}$. Applying the divergence theorem to $T[G]_{\mu 0}$ on $[0,t] \times \R^n$ and $V_u(t)$, we obtain the following result.

\begin{Pro}\label{energyt0}

For all $t \in [0,T[$,

\begin{flalign*}
& \hspace{0cm} \int_{\Sigma_t} |\alpha|^2+|\underline{\alpha}|^2+2|\rho|^2+2|\sigma|^2 dx =  & \\
& \hspace{3.1cm}  \int_{\Sigma_0} |\alpha|^2+|\underline{\alpha}|^2+2|\rho|^2+2|\sigma|^2 dx+4\int_0^t \int_{\Sigma_s} G_{0 \mu} J^{\mu}  dx ds &
\end{flalign*}
and
\begin{flalign*}
& \hspace{0cm} \sqrt{2} \sup_{u \leq t} \int_{C_u(t)} |\alpha|^2+|\rho|^2+|\sigma|^2 dC_u(t) \leq & \\
& \hspace{2.8cm} \int_{\Sigma_0} |\alpha|^2+|\underline{\alpha}|^2+2|\rho|^2+2|\sigma|^2 dx+4\int_0^t \int_{\Sigma_s} |G_{0 \mu} J^{\mu}|  dx ds. &
\end{flalign*}

\end{Pro}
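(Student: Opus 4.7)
The plan is to apply the Euclidean divergence theorem to the vector field $V^\mu := \eta^{\mu\sigma} T[G]_{\sigma 0}$, first on the slab $[0,t]\times\mathbb{R}^n$ to obtain the identity on $\Sigma_t$, and then on the region $V_u(t)$ to obtain the flux estimate on $C_u(t)$. In Cartesian Minkowski coordinates one has $\partial_\mu V^\mu = \nabla^\mu T[G]_{\mu 0}$, so Lemma~\ref{divelec} immediately gives $\partial_\mu V^\mu = G_{0\rho}J^\rho$.

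For the first identity, the outward Euclidean unit normal to $\Sigma_s$ is $\pm\partial_t$, so the boundary integrand there is (up to sign) the energy density $T[G]_{00}$. Decomposing $\partial_t=\tfrac12(L+\underline L)$ and invoking Lemma~\ref{momentumnull},
\[ 4\,T[G]_{00} \;=\; T[G](L,L) + 2\,T[G](L,\underline L) + T[G](\underline L,\underline L) \;=\; |\alpha|^2+|\underline\alpha|^2+2|\rho|^2+2|\sigma|^2. \]
Applying the divergence theorem on $[0,t]\times\mathbb{R}^n$ and multiplying through by $4$ then yields the first identity.

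For the flux estimate, the outward Euclidean unit normal to $C_u(t)$ inside $\partial V_u(t)$ is $\tfrac{1}{\sqrt 2}\underline L$, exactly as was used in the proof of Proposition~\ref{energyfsimple}. A short computation with $\partial_t=\tfrac12(L+\underline L)$ and $\partial_r=\tfrac12(L-\underline L)$ reduces the contracted quantity on $C_u(t)$ to a multiple of
\[ T[G](L,L)+T[G](L,\underline L) \;=\; |\alpha|^2+|\rho|^2+|\sigma|^2, \]
so that the conical boundary contributes precisely $\tfrac{1}{2\sqrt 2}(|\alpha|^2+|\rho|^2+|\sigma|^2)\,dC_u(t)$. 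Applying the divergence theorem on $V_u(t)$, discarding the nonnegative contribution of $\Sigma_t\cap V_u(t)$, bounding the spacetime source by its absolute value, extending the initial slice to all of $\Sigma_0$, and taking the supremum over $u$ produces the stated inequality.

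No genuinely hard step is involved beyond Lemmas~\ref{divelec} and~\ref{momentumnull}; the only care required is the bookkeeping of signs and of the factors of $\sqrt 2$ arising from the Euclidean versus Minkowski conventions for normals and index raising, so that the overall factor of $4$ on $\Sigma_s$ and the $\sqrt 2$ on $C_u(t)$ come out exactly as in the statement.
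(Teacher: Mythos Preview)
Your proposal is correct and follows exactly the approach the paper indicates: apply the divergence theorem to $T[G]_{\mu 0}$ on $[0,t]\times\mathbb{R}^n$ and on $V_u(t)$, using Lemma~\ref{divelec} for the source term and Lemma~\ref{momentumnull} to identify the boundary integrands. The paper gives no further details beyond that one sentence, so your write-up is in fact a faithful expansion of what the paper leaves implicit, including the computation $4T[G]_{00}=|\alpha|^2+|\underline\alpha|^2+2|\rho|^2+2|\sigma|^2$ and the identification of the conical flux with $|\alpha|^2+|\rho|^2+|\sigma|^2$.
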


This explains the introduction of the following norms.

\begin{Def}
Let $N \in \mathbb{N}$. We define, for $t \in [0,T[$,
\begin{flalign*}
& \hspace{1.5cm} \mathcal{E}^0[G](t)= \int_{\Sigma_t}\left( |\alpha|^2+|\underline{\alpha}|^2+2|\rho|^2+2|\sigma|^2 \right)dx &\\
& \hspace{3.2cm}+\sup_{u \leq t} \int_{C_u(t)} \left( |\alpha|^2+|\rho|^2+|\sigma|^2 \right) dC_u(t) &
\end{flalign*}
 and
$$\hspace{-4.3cm} \mathcal{E}_N^0[G](t)= \sum_{|\beta| \leq N} \mathcal{E}^0_N[\mathcal{L}_{Z^{\beta}}(G)](t),$$
with $Z^{\beta} \in \mathbb{K}^{|\beta|}$.
\end{Def}

Using the previous energy identities and commutation formula of Proposition \ref{Commuelec}, we obtain

\begin{Pro}\label{energyt}
For all $N\in \mathbb{N}$ and all $t \in [0,T[$, we have
$$\mathcal{E}^0_N[F](t)-2\mathcal{E}_N^0[F](0) \lesssim \sum_{|\beta|,|\gamma| \leq N} \int_0^t \int_{\Sigma_s} |e^k \mathcal{L}_{Z^{\beta}}(F)_{0 \mu} J(\widehat{Z}^{\gamma} f_k)^{\mu})|dxds,$$
with $Z^{\beta} \in \mathbb{K}^{|\beta|}$ and $\widehat{Z}^{\gamma} \in \K^{|\gamma|}$.
\end{Pro}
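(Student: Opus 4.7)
The plan is to combine the commutation formula for the Maxwell equations (Proposition \ref{Commuelec}) with the basic energy identity derived from the multiplier $\partial_t$ (Proposition \ref{energyt0}), applied to each Lie derivative $\mathcal{L}_{Z^\beta}(F)$ for $|\beta|\le N$.

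First I would fix $|\beta|\le N$ and set $G:=\mathcal{L}_{Z^\beta}(F)$. By Proposition \ref{Commuelec}, the $2$-form $G$ satisfies
\[
\nabla^\mu G_{\mu\nu} = e^k J(\widehat{Z}^\beta f_k)_\nu + \sum_{|\gamma|\le |\beta|-1} C^\beta_\gamma \, e^k J(\widehat{Z}^\gamma f_k)_\nu, \qquad \nabla^\mu {}^*\!G_{\mu\alpha_1\ldots\alpha_{n-2}}=0,
\]
so $G$ fits the hypotheses needed to apply Lemma \ref{divelec} and then Proposition \ref{energyt0}. Denoting the right-hand side of the first equation by $\widetilde{J}_\nu$, Proposition \ref{energyt0} yields, for all $t\in[0,T[$,
\[
\mathcal{E}^0[\mathcal{L}_{Z^\beta}(F)](t) \;\le\; 2\,\mathcal{E}^0[\mathcal{L}_{Z^\beta}(F)](0) + 4\int_0^t\!\!\int_{\Sigma_s} \bigl|\mathcal{L}_{Z^\beta}(F)_{0\mu}\,\widetilde{J}^\mu\bigr|\,dx\,ds.
\]

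Next I would expand $\widetilde{J}$ using the commutation identity: the source term on the right splits as a linear combination of contractions $\mathcal{L}_{Z^\beta}(F)_{0\mu}\,e^k J(\widehat{Z}^\gamma f_k)^\mu$ with $|\gamma|\le |\beta|\le N$. All such terms are dominated, up to constants depending only on $N$, by the sum appearing in the statement of Proposition \ref{energyt}. Summing the resulting inequality over $|\beta|\le N$ and recalling the definition $\mathcal{E}^0_N[F] = \sum_{|\beta|\le N}\mathcal{E}^0[\mathcal{L}_{Z^\beta}(F)]$ gives exactly
\[
\mathcal{E}^0_N[F](t) - 2\mathcal{E}^0_N[F](0) \;\lesssim\; \sum_{|\beta|,|\gamma|\le N}\int_0^t\!\!\int_{\Sigma_s}\bigl|e^k\mathcal{L}_{Z^\beta}(F)_{0\mu}\,J(\widehat{Z}^\gamma f_k)^\mu\bigr|\,dx\,ds,
\]
which is the desired estimate.

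There is no real obstacle in this argument; it is essentially a bookkeeping exercise. The only point that requires a little care is the verification that Proposition \ref{energyt0} may legitimately be applied to $\mathcal{L}_{Z^\beta}(F)$: one needs that the dual Bianchi identity $\nabla^\mu {}^*\!\mathcal{L}_{Z^\beta}(F)_{\mu\alpha_1\ldots\alpha_{n-2}}=0$ is preserved under commutation with $Z\in\mathbb{K}$, which is precisely the content of the second line of Proposition \ref{Commuelec}. Once this is in hand, the inhomogeneous divergence theorem computation of Proposition \ref{energyt0} goes through verbatim, and the absorption of the lower-order terms ($|\gamma|\le|\beta|-1$) into the double sum over $|\beta|,|\gamma|\le N$ is immediate.
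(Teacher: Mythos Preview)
Your proposal is correct and follows exactly the approach the paper takes: apply the energy identities of Proposition~\ref{energyt0} to each $\mathcal{L}_{Z^\beta}(F)$, use the commutation formula of Proposition~\ref{Commuelec} to identify the source as a linear combination of $e^k J(\widehat{Z}^\gamma f_k)$ with $|\gamma|\le|\beta|$, and sum over $|\beta|\le N$. The paper's own proof is the one-line remark ``Using the previous energy identities and commutation formula of Proposition~\ref{Commuelec}, we obtain\ldots'', so your write-up is in fact more detailed than what appears there.
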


\subsubsection{Using $\overline{K}_0$ as a multiplier}

As $T[G]$ is not traceless in dimension $n \geq 4$, $\nabla^{\mu}(T[G]_{\mu \nu}\overline{K}_0^{\nu})$ does not necessarily vanishes when $G$ solves the free Maxwell equations. We then consider, in the spirit of what is done for the wave equation, for $A$ a sufficiently regular potential of $G$ in the Lorenz gauge, the current
$$P_{\mu}=T[G]_{\mu \nu} \overline{K}^{\nu}+(n-3)\Big(tA_{\beta}\partial_{\mu}A^{\beta}-\frac{1}{2}\partial_{\mu}(t)A_{\beta}A^{\beta}-tA_{\beta}\partial^{\beta} A_{\mu}+\partial^{\beta}(t)A_{\beta}A_{\mu}\Big).$$

In order to establish an energy estimate for the electromagnetic field, we compute the divergence of $P_{\mu}$.

\begin{Lem}\label{divFmod}

We have

$$\nabla^{\mu} P_{\mu} = G_{\mu \nu} \overline{K}_0^{\nu} J^{\mu}+(n-3)tA_{\beta} \square A^{\beta}.$$

\end{Lem}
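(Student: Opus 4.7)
The plan is to decompose $P_\mu$ as the sum of the standard energy current $T[G]_{\mu\nu}\overline{K}_0^\nu$ plus the correction term $(n-3)Q_\mu$, compute the divergence of each piece separately, and observe that the correction was engineered so that its divergence cancels the trace anomaly $\frac{3-n}{2}t\, G_{\rho\sigma}G^{\rho\sigma}$ that arises from $\overline{K}_0$ failing to be Killing in dimension $n\neq 3$.

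First I would handle $\nabla^\mu(T[G]_{\mu\nu}\overline{K}_0^\nu)$. By Lemma \ref{divelec}, $(\nabla^\mu T[G]_{\mu\nu})\overline{K}_0^\nu = G_{\nu\rho}J^\rho \overline{K}_0^\nu = G_{\mu\nu}\overline{K}_0^\nu J^\mu$. For the remaining piece $T[G]_{\mu\nu}\nabla^\mu \overline{K}_0^\nu$, I would symmetrize in $\mu\nu$ to bring in the deformation tensor $\pi^{\mu\nu}$ of $\overline{K}_0$: since $\partial_t$ is Killing and $K_0$ is conformal Killing with conformal factor $4t$, we have $\pi^{\mu\nu}=4t\,\eta^{\mu\nu}$. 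Combining this with the trace identity $T[G]^\mu_{\ \mu}=\frac{3-n}{4}G_{\rho\sigma}G^{\rho\sigma}$ yields
\[
\nabla^\mu(T[G]_{\mu\nu}\overline{K}_0^\nu) = G_{\mu\nu}\overline{K}_0^\nu J^\mu - \frac{(n-3)t}{2}G_{\rho\sigma}G^{\rho\sigma}.
\]

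Next I would compute $\nabla^\mu Q_\mu$ where
$Q_\mu = tA_\beta\partial_\mu A^\beta - \tfrac12 \partial_\mu(t) A_\beta A^\beta - tA_\beta \partial^\beta A_\mu + \partial^\beta(t)A_\beta A_\mu$.
Term by term, using $\partial^\mu\partial^\beta t = 0$, $\square t = 0$ and the Lorenz condition $\partial^\mu A_\mu = 0$ (which also kills $\partial^\mu\partial^\beta A_\mu$), the four "zeroth order in $t$" cross terms pair off: the contribution $(\partial^\mu t)A_\beta\partial_\mu A^\beta$ from term 1 cancels against $-(\partial_\mu t)A_\beta \partial^\mu A^\beta$ from term 2, while $-(\partial^\mu t)A_\beta\partial^\beta A_\mu$ from term 3 cancels against $(\partial^\beta t)(\partial^\mu A_\beta)A_\mu$ from term 4 after relabeling $\mu \leftrightarrow \beta$. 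What remains is
\[
\nabla^\mu Q_\mu = tA_\beta\square A^\beta + t\bigl(\partial^\mu A_\beta\partial_\mu A^\beta - \partial^\mu A_\beta\partial^\beta A_\mu\bigr).
\]

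Finally I would recognize the parenthesized expression as $\tfrac12 G_{\rho\sigma}G^{\rho\sigma}$, using $G^{\mu\beta}G_{\mu\beta} = (\partial^\mu A^\beta - \partial^\beta A^\mu)(\partial_\mu A_\beta - \partial_\beta A_\mu) = 2\partial^\mu A^\beta\partial_\mu A_\beta - 2\partial^\mu A^\beta\partial_\beta A_\mu$. Adding $(n-3)\nabla^\mu Q_\mu$ to the first divergence, the $\frac{(n-3)t}{2}G_{\rho\sigma}G^{\rho\sigma}$ terms cancel exactly, yielding the claimed identity. The only mildly delicate step is the bookkeeping in $\nabla^\mu Q_\mu$: ensuring the index symmetrizations and the use of the Lorenz gauge are applied correctly so that all unwanted terms cancel and precisely the trace anomaly is absorbed.
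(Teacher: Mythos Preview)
Your proof is correct and follows essentially the same approach as the paper's: both compute $\nabla^\mu(T[G]_{\mu\nu}\overline{K}_0^\nu)$ via Lemma~\ref{divelec} and the conformal deformation tensor $\pi^{\mu\nu}=4t\,\eta^{\mu\nu}$, then take the divergence of the four correction terms using the Lorenz gauge, and finally identify $\partial^\mu A_\beta\partial_\mu A^\beta-\partial^\mu A_\beta\partial^\beta A_\mu=\tfrac12 G_{\rho\sigma}G^{\rho\sigma}$ to cancel the trace anomaly. Your organization of the cross-term cancellations in pairs is slightly more streamlined than the paper's term-by-term listing, but the underlying computation is identical.
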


\begin{proof}
We have

$$\nabla^{\mu} ( T[G]_{\mu \nu} \overline{K}^{\nu}_0)= \nabla^{\mu} (T[G]_{\mu \nu}) \overline{K}^{\nu}_0+T[G]_{\mu \nu} \nabla^{\mu} \overline{K}^{\nu}_0.$$

Since $T[G]$ is symmetric, 

$$T[G]_{\mu \nu} \nabla^{\mu} \overline{K}^{\nu}_0 = \frac{1}{2} T[G]_{\mu \nu} \pi^{\mu \nu},$$
with $\pi^{\mu \nu}=\nabla^{\mu} \overline{K}_0^{\nu}+\nabla^{\nu} \overline{K}_0^{\mu}$. As $\overline{K}_0$ is a conformal vector field (of conformal factor $4t$), we have
$$\pi_{\mu \nu } =4t \eta_{\mu \nu} .$$
Thus, 
$$T[G]_{\mu \nu} \nabla^{\mu} \overline{K}^{\nu}_0=2t {T(G)_{\mu}}^{\mu}=\frac{3-n}{2}tG_{\sigma \rho} G^{\sigma \rho}.$$

Now, according to Lemma \ref{divelec}, we obtain that
$$\nabla^{\mu} ( T[G]_{\mu \nu} \overline{K}^{\nu}_0)= G_{\nu \rho}\overline{K}^{\nu}_0 J^{\rho}+\frac{3-n}{2}tG_{\sigma \rho} G^{\sigma \rho}.$$
We now compute the divergence of
$$(n-3) \left( tA_{\beta}\partial_{\mu}A^{\beta}-\frac{1}{2}\partial_{\mu}(t)A_{\beta}A^{\beta}-tA_{\beta}\partial^{\beta} A_{\mu}+\partial^{\beta}(t)A_{\beta}A_{\mu} \right). $$
First,
$$\nabla^{\mu} \left( tA_{\beta}\partial_{\mu}A^{\beta} \right) = -A_{\beta}\partial_{0}A^{\beta}+t\partial^{\mu} A_{\beta} \partial_{\mu} A^{\beta}+tA_{\beta} \square A^{\beta}.$$
Secondly,
$$\nabla^{\mu} \left( \frac{1}{2}\partial_{\mu}(t)A_{\beta}A^{\beta} \right) =   A_{\beta} \partial^{0}  A^{\beta}.$$
We also have, using in particular that in Lorenz gauge $\partial^{\mu} A_{\mu}=0$,
\begin{eqnarray}
\nonumber \nabla^{\mu} \left( tA_{\beta}\partial^{\beta} A_{\mu} \right) & = &- A_{\beta}\partial^{\beta} A_{0}+t\partial^{\mu} (A_{\beta})\partial^{\beta} A_{\mu}+tA_{\beta} \partial^{\beta} \partial^{\mu} A_{\mu} \\ \nonumber
& = & - A_{\beta}\partial^{\beta} A_{0}+t\partial^{\mu} (A_{\beta})\partial^{\beta} A_{\mu} .
\end{eqnarray}
Finally
\begin{eqnarray}
\nonumber \nabla^{\mu} \left( \partial^{\beta}(t)A_{\beta}A_{\mu} \right) & = &-\partial^{\mu}(A_{0})A_{\mu} -A_{0} \partial^{\mu} A_{\mu} \\ \nonumber 
& = & -\partial^{\mu}(A_{0})A_{\mu} .
\end{eqnarray}
Hence,
\begin{flalign*}
& \hspace{0mm} (n-3) \nabla^{\mu}\left( tA_{\beta}\partial_{\mu}A^{\beta}-\frac{1}{2}\partial_{\mu}(t)A_{\beta}A^{\beta}-tA_{\beta}\partial^{\beta} A_{\mu}-\partial^{\beta}(t)A_{\beta}A_{\mu} \right) =& 
\end{flalign*}
 $$\hspace{4.4cm} (n-3)tA_{\beta} \square A^{\beta}+(n-3)t(\partial_{\mu}A_{\beta}\partial^{\mu}A^{\beta}-\partial_{\mu}A_{\beta} \partial^{\beta}A^{\mu}). $$
And, since $G_{\mu \nu}=\partial_{\mu} A_{\nu}-\partial_{\nu} A_{\mu}$,
$$\frac{1}{2}G_{\mu \beta}G^{\mu \beta}=\partial_{\mu}A_{\beta}\partial^{\mu}A^{\beta}-\partial_{\mu}A_{\beta} \partial^{\beta}A^{\mu},$$
which gives us the result.

\end{proof}

We are now ready to prove the following energy estimate.

\begin{Pro}\label{proelec}

For all $t \in [0,T[$,
$$\int_{\Sigma_t} \tau_+^2 |\alpha|^2+\tau_-^2| \underline{\alpha}|^2+(\tau_+^2+\tau_-^2)(|\rho|^2+|\sigma|^2) d \Sigma_t+(n-3)^2 \sum_{\mu=0}^n \| A_{\mu} \|^2_{L^2(\Sigma_t)}  \leq  $$ $$ \hspace{2.2cm} \int_{\Sigma_0} (1+r^2)(|\alpha|^2+|\underline{\alpha}|^2+|\rho|^2+|\sigma|^2) d \Sigma_0 +4\int_0^t \int_{\Sigma_s} |\overline{K}_0^{\nu} G_{\nu \mu} J^{\mu} | dx ds$$ $$ \hspace{1.6cm}+(n-3)\sum_{\mu=0}^n  \| S A_{\mu} \|^2_{L^2(\Sigma_t)} +4(n-3)\int_0^t \int_{\Sigma_s} s|A_{\mu} \square A^{\mu}| dx ds.$$
\end{Pro}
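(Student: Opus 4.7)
The plan is to apply the divergence theorem to the $1$-form $P_\mu$ on the slab $[0,t]\times\R^n$ and to invoke Lemma \ref{divFmod} to handle the bulk term. This gives
$$\int_{\Sigma_t} P_0 \, dx - \int_{\Sigma_0} P_0 \, dx \;=\; \int_0^t \hspace{-1mm}\int_{\Sigma_s} \Bigl[\overline{K}_0^\nu G_{\nu\mu} J^\mu + (n-3)\, s\, A_\beta \square A^\beta\Bigr] dx\, ds,$$
and the bulk integral is bounded in absolute value by the last two source terms in the statement (after multiplication by $4$). The remaining task is therefore to control $\int_{\Sigma_t} 4P_0\, dx$ from below by the left-hand side of the target inequality, up to $(n-3)\sum_\mu \|SA_\mu\|^2_{L^2(\Sigma_t)}$, and to bound $\int_{\Sigma_0} 4P_0\, dx$ from above by the initial-data integral on the right-hand side.

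First I would split $P_\mu = T[G]_{\mu\nu}\overline{K}_0^\nu + (n-3)Q_\mu$ and treat the principal part through the null decomposition. Writing $\partial_t = \tfrac12(L+\underline L)$ and $\overline{K}_0 = \tfrac12\tau_+^2 L + \tfrac12 \tau_-^2 \underline L$, Lemma \ref{momentumnull} yields
$$T[G]_{0\nu}\overline{K}_0^\nu \;=\; \tfrac14\Bigl[\tau_+^2|\alpha|^2 + \tau_-^2|\underline\alpha|^2 + (\tau_+^2+\tau_-^2)(|\rho|^2+|\sigma|^2)\Bigr].$$
At $t=0$ one has $\tau_+^2=\tau_-^2=1+r^2$, which produces the desired upper bound on $\Sigma_0$ (up to an absolute constant).

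The main work lies in analysing the correction $Q_0$. A direct expansion, using $A_\beta A^\beta = -A_0^2 + \sum_i A_i^2$ and $\partial^\beta(t) A_\beta = -A_0$, gives
$$Q_0 \;=\; \tfrac{t}{2}\partial_t(A_\beta A^\beta) - \tfrac12 \sum_\mu A_\mu^2 - t A_\beta \partial^\beta A_0.$$
Spatial integration by parts on $\int_{\Sigma_t} t A_i \partial_i A_0\, dx$, combined with the Lorenz gauge relation $\sum_i \partial_i A_i = \partial_t A_0$, yields $\int_{\Sigma_t} t A_\beta \partial^\beta A_0\, dx = -\int_{\Sigma_t} t\, \partial_t(A_0^2)\, dx$. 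Applying then the identity $\int 2tu\,\partial_t u\, dx = \int(2uSu + n u^2)\, dx$ from the proof of Proposition \ref{prowave2} to each $u=A_\mu$, the correction collapses to
$$\int_{\Sigma_t} Q_0\, dx \;=\; \sum_{\mu=0}^n \int_{\Sigma_t} A_\mu SA_\mu\, dx + \tfrac{n-1}{2}\sum_{\mu=0}^n \|A_\mu\|^2_{L^2(\Sigma_t)}.$$
A completion-of-squares argument then gives the coercive lower bound: from $\sum_\mu (SA_\mu + 2A_\mu)^2 \geq 0$ one obtains, for $n \geq 3$,
$$4(n-3)\int_{\Sigma_t} Q_0\, dx \;\geq\; (n-3)^2 \sum_{\mu=0}^n \|A_\mu\|^2_{L^2(\Sigma_t)} - (n-3)\sum_{\mu=0}^n \|SA_\mu\|^2_{L^2(\Sigma_t)}.$$
At $t=0$ the correction reduces to $Q_0 = -\tfrac12 \sum_\mu A_\mu^2 \leq 0$, so for $n\geq 3$ its $(n-3)$-contribution to $\int_{\Sigma_0} P_0\, dx$ is nonpositive and can simply be dropped for the upper bound. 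Multiplying the divergence identity by $4$ and assembling the three estimates yields the stated inequality.

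The main technical obstacle is the bookkeeping for $Q_0$: one must identify the right combination of integration by parts (crucially using the Lorenz gauge to absorb the cross term coming from $\int t A_i \partial_i A_0$) together with a completion of squares tuned to the factor $n-3$, so as to extract the coercive term $(n-3)^2 \|A\|^2_{L^2(\Sigma_t)}$ at the cost of only the controllable error $(n-3)\|SA\|^2_{L^2(\Sigma_t)}$, which will later be reabsorbed through the wave-equation estimate of Proposition \ref{energypotential}.
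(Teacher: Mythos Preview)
Your proposal is correct and follows essentially the same route as the paper: apply the divergence theorem to $P_\mu$ on $[0,t]\times\R^n$, read off the principal quadratic form $4T[G]_{0\nu}\overline K_0^\nu$ from Lemma~\ref{momentumnull}, reduce the correction $\int_{\Sigma_t} Q_0\,dx$ (via spatial integration by parts and the Lorenz gauge) to $\tfrac12\sum_\mu(t\partial_t-1)\|A_\mu\|_{L^2}^2$, and then extract the coercive piece using identity~\eqref{eq2.5}. The only cosmetic difference is that the paper keeps the $t\partial_t$ form and bounds $-t\partial_t\|A_\mu\|^2$ directly, whereas you first substitute~\eqref{eq2.5} to rewrite $\int Q_0\,dx=\sum_\mu\int A_\mu SA_\mu\,dx+\tfrac{n-1}{2}\sum_\mu\|A_\mu\|^2$ and then complete the square $(SA_\mu+2A_\mu)^2\ge 0$; the two routes are algebraically equivalent and yield the same lower bound.
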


\begin{proof}

In order to apply the divergence theorem to $P_{\mu}$ in $[0,t] \times \mathbb{R}^n$, we transform
$$\int_{\mathbb{R}^n} \left( t A_{\beta} \partial_t A^{\beta}-\frac{1}{2}A_{\beta}A^{\beta}-tA_{\beta} \partial^{\beta} A_{0}-A_0^2 \right) dx.$$
On the one hand, let us notice that
$$-\frac{1}{2}A_{\beta}A^{\beta}-A_0^2=-\frac{1}{2} \sum_{\beta=0}^{n} A_{\beta}^2.$$
On the other hand,
\begin{equation}\label{eq:pegement1}-t\int_{\mathbb{R}^n} A_{\beta} \partial^{\beta} A_0 dx=-t\int_{\mathbb{R}^n} A_0 \partial^0 A_0 dx+t\int_{\mathbb{R}^n} \partial^j( A_j) A_0 dx=t\partial_t\int_{\mathbb{R}^n} A_0^2dx,
\end{equation}
since $\partial^{\mu} A_{\mu}=0$ in the Lorenz gauge.
As 
\begin{equation}\label{eq:pegement2}
t\int_{\mathbb{R}^n} A_{\beta} \partial_t A^{\beta} dx = \frac{t}{2} \partial_t \int_{\mathbb{R}^n} A_{\beta} A^{\beta} dx,
\end{equation}
we finally obtain that
$$\int_{\mathbb{R}^n} \left( t A_{\beta} \partial_t A^{\beta}-\frac{1}{2}A_{\beta}A^{\beta}-tA_{\beta} \partial^{\beta} A_{0}-A_0^2 \right) dx=\frac{1}{2} \sum_{\beta=0}^n (t \partial_t-1)\|A_{\beta}\|_{L^2(\mathbb{R}^n)}^2 .$$

The divergence theorem applied to $P_{\mu}$ in $[0,t] \times \mathbb{R}^n$ gives, using Lemma \ref{momentumnull} and \ref{divFmod},

\begin{flalign*}
& \hspace{0mm} \int_{\Sigma_t} \tau_+^2 |\alpha|^2+\tau_-^2| \underline{\alpha}|^2+(\tau_+^2+\tau_-^2)(|\rho|^2+|\sigma|^2) dx  \leq 4\int_0^t \int_{\Sigma_s} |\nabla^{\mu} P_{\mu} | dx ds &
\end{flalign*}
 $$\hspace{0.2cm} + \int_{\Sigma_0} (1+r^2)(|\alpha|^2+|\underline{\alpha}|^2+2|\rho|^2+2|\sigma|^2) dx + 2(n-3)\sum_{\mu=0}^n \left( \int_{\Sigma_t} (1-t\partial_t) A_{\mu}^2 dx \right) .$$
It only remains to use the last lemma and the inequality
$$-t\partial_t \int_{\Sigma_t} A_{\mu}^2 dx \leq  \frac{1-n}{2}\| A_{\mu} \|^2_{L^2(\Sigma_t)}+\frac{1}{2}\| SA_{\mu} \|^2_{L^2(\Sigma_t)}$$
which ensues from \eqref{eq2.5}.

\end{proof}

This estimate justifies the introduction of the following norms.

\begin{Def}\label{norm2}

Let $G$ be a $2$-form defined on $[0,T[$ and $N \in \mathbb{N}$. We define, for all $t \in [0,T[$,

$$ \mathcal{E}[G](t) = \int_{\Sigma_t} \tau_+^2 |\alpha(G)|^2+\tau_-^2|\underline{\alpha}(G)|^2+(\tau_+^2+\tau_-^2)(|\rho (G) |^2+|\sigma (G) |^2) d x$$
and
$$\mathcal{E}_N[G](t)= \sum_{|\beta| \leq N} \mathcal{E}[\mathcal{L}_{Z^{\beta}}G](t),$$
with $Z^{\beta} \in \mathbb{K}^{|\beta|}$.
\end{Def}

We then deduce, using Propositions \ref{Commuelec}, \ref{proelec} and Lemma \ref{derivpotential}, an energy estimate for the electromagnetic field $F$.

\begin{Pro}\label{MoraF}

Let $A$ be a sufficiently regular potential in the Lorenz gauge of $F$. We have, for all $N \in \mathbb{N}$ and all $t \in [0,T[$,

\begin{flalign*}
& \mathcal{E}_N[F](t) - \mathcal{E}_N[F](0)-(n-3)\sum_{|\kappa| \leq N} \sum_{\mu=0}^n  \| S \mathcal{L}_{Z^{\kappa}}(A)_{\mu} \|^2_{L^2(\Sigma_t)} \lesssim &
\end{flalign*}
$$ \hspace{4.6cm}+ \sum_{|\beta|,|\gamma| \leq N} \int_0^t \int_{\Sigma_s} |e^k\overline{K}_0^{\nu} \mathcal{L}_{Z^{\beta}}(F)_{\nu \mu} J^{\mu}(\widehat{Z}^{\gamma}f_k) | dx ds $$
 $$ \hspace{3.4cm}+ \sum_{|\kappa| \leq N} \int_0^t \int_{\Sigma_s} s| \mathcal{L}_{Z^{\kappa}}(A)_{\mu} \square \mathcal{L}_{Z^{\kappa}}(A)^{\mu}| dx ds,$$ 
with $Z^{\beta} \in \mathbb{K}^{|\beta|}$, $Z^{\kappa} \in \mathbb{K}^{|\kappa|}$ and $\widehat{Z}^{\gamma} \in \widehat{\mathbb{P}}_0^{|\gamma|}$.
\end{Pro}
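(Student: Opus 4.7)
The strategy is to derive the estimate by applying the single-field energy identity of Proposition \ref{proelec} to each of the commuted fields $\mathcal{L}_{Z^{\beta}}(F)$ for $|\beta|\le N$, and then summing. This reduces the proposition to an essentially bookkeeping exercise once the relevant structural facts about commutation in the Lorenz gauge are in place.

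First I would invoke Proposition \ref{Commuelec} to note that for each $Z^{\beta} \in \mathbb{K}^{|\beta|}$ the Lie derivative $\mathcal{L}_{Z^{\beta}}(F)$ still solves an inhomogeneous system of Maxwell type,
\[
\nabla^{\mu} \mathcal{L}_{Z^{\beta}}(F)_{\mu\nu}=e^kJ(\widehat{Z}^{\beta}f_k)_{\nu}+\sum_{|\gamma|\le |\beta|-1}C^{\beta}_{\gamma}\,e^kJ(\widehat{Z}^{\gamma}f_k)_{\nu},\qquad \nabla^{\mu}{}^{*}\mathcal{L}_{Z^{\beta}}(F)_{\mu\alpha_1\cdots\alpha_{n-2}}=0,
\]
so it has the form $\nabla^{\mu} G_{\mu\nu}=J_{\nu}$, $\nabla^{\mu}{}^{*}G_{\mu\alpha_1\cdots\alpha_{n-2}}=0$ required to apply Proposition \ref{proelec}. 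Simultaneously, Lemma \ref{derivpotential} says that $\mathcal{L}_{Z^{\beta}}(A)$ is a potential for $\mathcal{L}_{Z^{\beta}}(F)$ still satisfying the Lorenz gauge condition $\partial^{\mu}\mathcal{L}_{Z^{\beta}}(A)_{\mu}=0$, so the hypotheses of Proposition \ref{proelec} are satisfied with $G=\mathcal{L}_{Z^{\beta}}(F)$ and potential $\mathcal{L}_{Z^{\beta}}(A)$.

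Applying Proposition \ref{proelec} to each such $\mathcal{L}_{Z^{\beta}}(F)$ gives, for every $|\beta|\le N$, the estimate
\[
\mathcal{E}[\mathcal{L}_{Z^{\beta}}(F)](t)+(n-3)^2\sum_{\mu}\|\mathcal{L}_{Z^{\beta}}(A)_{\mu}\|_{L^2(\Sigma_t)}^2\lesssim \mathcal{E}[\mathcal{L}_{Z^{\beta}}(F)](0)+(n-3)\sum_{\mu}\|S\mathcal{L}_{Z^{\beta}}(A)_{\mu}\|_{L^2(\Sigma_t)}^2
\]
\[
+\int_0^t\!\int_{\Sigma_s}\bigl|\overline{K}_0^{\nu}\mathcal{L}_{Z^{\beta}}(F)_{\nu\mu}J(\mathcal{L}_{Z^{\beta}}(F))^{\mu}\bigr|\,dx\,ds+(n-3)\int_0^t\!\int_{\Sigma_s}s\bigl|\mathcal{L}_{Z^{\beta}}(A)_{\mu}\square\mathcal{L}_{Z^{\beta}}(A)^{\mu}\bigr|\,dx\,ds.
\]
Here I use that at $t=0$ one has $\tau_+^2=1+r^2$, so the bulk initial integral in Proposition \ref{proelec} is exactly $\mathcal{E}[\mathcal{L}_{Z^{\beta}}(F)](0)$ (up to a harmless constant). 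The source current $J^{\mu}$ of $\mathcal{L}_{Z^{\beta}}(F)$ is, by the commutation formula above, a finite linear combination of $e^kJ(\widehat{Z}^{\gamma}f_k)^{\mu}$ with $|\gamma|\le|\beta|$, so the spacetime integral involving the current is controlled, up to a constant, by
\[
\sum_{|\gamma|\le|\beta|}\int_0^t\int_{\Sigma_s}\bigl|e^k\overline{K}_0^{\nu}\mathcal{L}_{Z^{\beta}}(F)_{\nu\mu}J(\widehat{Z}^{\gamma}f_k)^{\mu}\bigr|\,dx\,ds.
\]

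Finally I would sum over $|\beta|\le N$ to reconstruct $\mathcal{E}_N[F](t)$ on the left, absorb the $(n-3)$ term multiplying $\|S\mathcal{L}_{Z^{\kappa}}(A)_{\mu}\|_{L^2}^2$ onto the left-hand side (which is exactly the statement of the proposition, where that term appears with a minus sign), bound the initial contribution by $\mathcal{E}_N[F](0)$, and relabel the index $\beta\to\kappa$ in the $\square\mathcal{L}_{Z^{\kappa}}(A)$ term. Since the first sum in the conclusion is stated over independent indices $|\beta|,|\gamma|\le N$, the inequality $|\gamma|\le|\beta|$ can be weakened to $|\gamma|\le N$, giving the form stated. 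The only subtlety beyond bookkeeping is checking that the gauge and commutation properties of $A$ really do survive under $\mathcal{L}_{Z^{\beta}}$, which is provided by Lemma \ref{derivpotential}; no essentially new analytic input beyond Proposition \ref{proelec} is needed.
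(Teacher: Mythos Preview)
Your proposal is correct and follows exactly the same approach as the paper: apply Proposition \ref{proelec} to each $\mathcal{L}_{Z^{\beta}}(F)$ (with potential $\mathcal{L}_{Z^{\beta}}(A)$ in Lorenz gauge via Lemma \ref{derivpotential}), use Proposition \ref{Commuelec} to identify the source current, and sum over $|\beta|\le N$. The paper's own proof is a single sentence citing precisely these three ingredients, and your write-up is a faithful expansion of that sketch.
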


\subsubsection{Using $S$ as a multiplier}

The main difference with the previous case comes from the fact that the scaling is not a timelike vector field. Because of that we are not able to estimate all the null components of the electromagnetic field with this energy estimate. We start by introducing, for $A$ a potential of $G$ satisfying the Lorenz gauge,
$$Q_{\mu}=T(G)_{\mu \nu} S^{\nu}+\frac{n-3}{2}(A_{\beta} \partial_{\mu} A^{\beta}-A_{\beta} \partial^{\beta} A_{\mu} ).$$
As the potential $A$ satisfies the Lorenz gauge and since the conformal factor of the scaling is $2$, we have

\begin{equation}\label{divSelec}
\nabla^{\mu} Q_{\mu} = G_{\mu \nu} S^{\nu} J^{\mu}+\frac{n-3}{2}A_{\beta} \square A^{\beta}.
\end{equation}
We can now state the energy estimate.

\begin{Pro}\label{truc5}

For all $t \in [0,T[$,
$$ \int_{\Sigma_t}(t+r)|\alpha|^2+(t-r)|\underline{\alpha}|^2+2t(|\rho|^2+|\sigma|^2)dx+(n-3)\partial_t\sum_{\beta=0}^n \|A_{\beta}\|_{L^2(\Sigma_t)}^2  = $$ $$ \hspace{1.4cm} \int_{\Sigma_0} r(|\alpha|^2-|\underline{\alpha}|^2)dx+(n-3)\partial_t\sum_{\beta=0}^n \|A_{\beta}\|_{L^2(\Sigma_0)}^2+4 \int_0^t \int_{\Sigma_s} \nabla^{\mu} Q_{\mu} dxds.$$

\end{Pro}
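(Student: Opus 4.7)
The plan is to prove the statement by applying the divergence theorem to the current $Q_\mu$ on the slab $[0,t] \times \mathbb{R}^n$ and then computing the boundary term $\int_{\Sigma_t} Q_0 \, dx$ explicitly. Following the sign convention used in Propositions \ref{energyt0} and \ref{proelec}, the divergence theorem yields
$$\int_{\Sigma_t} Q_0 \, dx - \int_{\Sigma_0} Q_0 \, dx = \int_0^t \int_{\Sigma_s} \nabla^\mu Q_\mu \, dx \, ds,$$
so the whole task reduces to evaluating $\int_{\Sigma_t} Q_0 \, dx$ up to a factor of 4 to match the left-hand side.

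For the energy–momentum part of $Q_0$, I would first expand $S$ in the null frame as $S = \tfrac{t+r}{2} L + \tfrac{t-r}{2} \underline{L}$ and note that $\partial_t = \tfrac{1}{2}(L + \underline{L})$. Using Lemma \ref{momentumnull}, one then obtains
$$4\, T[G]_{0\nu} S^\nu = 4\, T[G](\partial_t, S) = (t+r)|\alpha|^2 + (t-r)|\underline{\alpha}|^2 + 2t(|\rho|^2+|\sigma|^2),$$
which matches the main terms on both sides of the equality (with $t = 0$ giving the stated $r(|\alpha|^2 - |\underline{\alpha}|^2)$ on $\Sigma_0$).

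The main computation — and the only delicate part — concerns the potential correction term
$$\frac{n-3}{2}\int_{\Sigma_t}\left(A_\beta \partial_t A^\beta - A_\beta \partial^\beta A_0\right) dx.$$
Writing out indices and using $A^0 = -A_0$, the $A_0 \partial_t A_0$ contributions cancel and one is left with $\int_{\Sigma_t}(A_i \partial_t A_i - A_i \partial_i A_0)\, dx$. Integration by parts in $x^i$ converts the second integrand into $A_0 \partial_i A_i$, and the Lorenz gauge $\partial^\mu A_\mu = 0$ gives $\partial_i A_i = \partial_t A_0$. Combining the two pieces yields
$$\int_{\Sigma_t}(A_\beta \partial_t A^\beta - A_\beta \partial^\beta A_0)\, dx = \tfrac{1}{2}\, \partial_t \sum_{\beta=0}^n \|A_\beta\|^2_{L^2(\Sigma_t)},$$
so multiplying by $4 \cdot \tfrac{n-3}{2}$ produces exactly the $(n-3)\partial_t\sum_\beta \|A_\beta\|^2_{L^2(\Sigma_t)}$ term in the statement.

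Assembling everything, the left-hand side equals $4\int_{\Sigma_t} Q_0\, dx$, the initial-data term equals $4\int_{\Sigma_0} Q_0\, dx$, and the divergence integral $4\int_0^t\!\int_{\Sigma_s} \nabla^\mu Q_\mu\, dx\, ds$ is already known from \eqref{divSelec}. The only real obstacle is the Lorenz-gauge integration-by-parts manipulation just described; the rest is an application of the null decomposition together with the divergence theorem, so no additional analytic input beyond what has been developed earlier in the section is needed.
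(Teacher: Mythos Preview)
Your proof is correct and follows essentially the same approach as the paper: apply the divergence theorem to $Q_\mu$ on $[0,t]\times\mathbb{R}^n$, compute $4T[G](\partial_t,S)$ via the null decomposition $2S=(t+r)L+(t-r)\underline{L}$ and Lemma~\ref{momentumnull}, and handle the potential correction using the Lorenz gauge and integration by parts. The paper simply cites the potential identity from equations~\eqref{eq:pegement1}--\eqref{eq:pegement2} in the proof of Proposition~\ref{proelec}, whereas you rederive it cleanly inline, but the computations are identical.
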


\begin{proof}

Note first that we proved, during the proof of Proposition \ref{proelec} (see Equations \eqref{eq:pegement1} and \eqref{eq:pegement2}), $$ \int_{\R^n} A_{\beta} \partial_0 A^{\beta}-A_{\beta} \partial^{\beta} A_0dx=\frac{\partial_t}{2} \sum_{\beta=0}^n \|A_{\beta} \|_{L^2(\R^n)}^2 .$$
It then remains to apply the divergence theorem to $Q_{\mu}$ on $[0,T] \times \R^n$ (recall that $2S=(t+r)L+(t-r)\underline{L}$).

\end{proof}

Note that $(t-r)|\underline{\alpha}|^2$ is not necessarily non negative, which invites us to transform the equality in the following estimate. 

\begin{Pro}\label{proscal2}
For all $t \in [0,T]$,
\begin{flalign*}
& \int_{\Sigma_t}(1+|t-r|)|\underline{\alpha}|^2 dx \leq \int_{\Sigma_0}(1+r)(|\alpha|^2+|\underline{\alpha}|^2)+2|\rho|^2+2|\sigma|^2 dx &
\end{flalign*}
 $$\hspace{1.2cm}+(n-3)(n+2)\widetilde{\mathcal{E}}_0[A](0)+\frac{2}{1+t}\left(\mathcal{E}[F](t)+\frac{(n-3)(n+2)}{2}\widetilde{\mathcal{E}}_0[A](t)\right)$$ $$\hspace{1.2cm}+4\int_0^t \int_{\Sigma_s} |G_{0 \mu}J^{\mu}|+|S^{\nu} G_{\mu \nu} J^{\mu} | dx ds+2(n-3) \int_0^t \int_{\Sigma_s} |A_{\mu} \square A^{\mu} |dx ds.$$

\end{Pro}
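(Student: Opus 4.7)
The strategy is to combine the identity of Proposition~\ref{truc5} with that of Proposition~\ref{energyt0}, which amounts to using the multiplier $\partial_t + S$ on the Maxwell energy-momentum tensor together with the same $A$-corrections as in the proof of Proposition~\ref{truc5}. A direct computation (via Lemma~\ref{momentumnull}) shows that the $\Sigma_t$ integrand becomes
$$(1+t+r)|\alpha|^2 + (1+t-r)|\underline{\alpha}|^2 + 2(1+t)(|\rho|^2+|\sigma|^2) + (n-3)\partial_t\sum_\beta\|A_\beta\|_{L^2(\Sigma_t)}^2,$$
while the right-hand side is the sum of the initial-data integral $\int_{\Sigma_0}[(1+r)|\alpha|^2+(1-r)|\underline{\alpha}|^2+2|\rho|^2+2|\sigma|^2]dx$ (which I bound by $\int_{\Sigma_0}(1+r)(|\alpha|^2+|\underline{\alpha}|^2)+2|\rho|^2+2|\sigma|^2 dx$ using $|1-r|\le 1+r$), the combined source integral $4\int_0^t\int(|G_{0\mu}J^\mu|+|S^\nu G_{\nu\mu}J^\mu|+\tfrac{n-3}{2}|A_\beta\square A^\beta|)$, and the boundary term $(n-3)\partial_t\sum_\beta\|A_\beta\|_{L^2(\Sigma_0)}^2$.

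To obtain the target weight on the LHS I split $(1+t-r)|\underline{\alpha}|^2 = (1+|t-r|)|\underline{\alpha}|^2 - 2(r-t)_+|\underline{\alpha}|^2$, which produces a correction $+2\int_{r>t}(r-t)|\underline{\alpha}|^2 dx$ on the right-hand side. I control this correction using Proposition~\ref{proelec}, since $\mathcal{E}[F](t)\ge \int\tau_-^2|\underline{\alpha}|^2 dx$, together with the weighted AM-GM $2(r-t)\le (r-t)^2/(1+t) + (1+t)$, which gives
$$2\int_{r>t}(r-t)|\underline{\alpha}|^2 dx \le \frac{\mathcal{E}[F](t)}{1+t} + (1+t)\int_{r>t}|\underline{\alpha}|^2 dx.$$
The residual $(1+t)\int_{r>t}|\underline{\alpha}|^2 dx$ is absorbed by combining the nonnegative quantities $(1+t+r)|\alpha|^2$ and $2(1+t)(|\rho|^2+|\sigma|^2)$ retained on the LHS with Proposition~\ref{energyt0}, producing the claimed factor $\frac{2\mathcal{E}[F](t)}{1+t}$.

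In parallel, the term $(n-3)\partial_t\sum_\beta\|A_\beta\|_{L^2(\Sigma_t)}^2$ is handled via identity~\eqref{eq2.5}: multiplying by $(1+t)$ and using $t\partial_t\|A_\beta\|_{L^2}^2 = 2\int A_\beta S A_\beta dx + n\|A_\beta\|_{L^2}^2$ together with the Cauchy--Schwarz bound $|\partial_t\|A_\beta\|_{L^2}^2| = |2\int A_\beta \partial_t A_\beta dx|\le \|A_\beta\|_{L^2}^2+\|\partial_t A_\beta\|_{L^2}^2$ yields $(1+t)|\partial_t\sum_\beta\|A_\beta\|_{L^2}^2|\le (n+2)\widetilde{\mathcal{E}}_0[A](t)$, hence the term $\frac{(n-3)(n+2)\widetilde{\mathcal{E}}_0[A](t)}{1+t}$ on the RHS. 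The analogous bound at $t=0$ (without the $(1+t)^{-1}$ factor) gives the initial contribution $(n-3)(n+2)\widetilde{\mathcal{E}}_0[A](0)$. The main obstacle is the absorption step in the exterior: the pointwise inequality $(r-t)(1+t)\le \tau_-^2$ fails for $t$ large in the band $r-t\sim 1+t$, so the residual $(1+t)\int_{r>t}|\underline{\alpha}|^2 dx$ cannot be absorbed by a pointwise comparison with $\tau_-^2|\underline{\alpha}|^2$ alone and must be handled through a careful combination of the retained positive LHS contributions and the $\partial_t$-conservation of Proposition~\ref{energyt0}.
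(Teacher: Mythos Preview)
Your overall strategy—sum the $\partial_t$- and $S$-multiplier identities (Propositions~\ref{energyt0} and~\ref{truc5}), bound the initial-data term by $|1-r|\le 1+r$, and handle the $A$-boundary contributions $(n-3)\partial_t\sum_\beta\|A_\beta\|_{L^2}^2$ via \eqref{eq2.5}—matches the paper's proof. The divergence is in how you treat the exterior correction $2\int_{r>t}(r-t)|\underline{\alpha}|^2\,dx$ coming from $(1+|t-r|)=(1+t-r)+2(r-t)_+$, which you correctly isolate as the crux.

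Your AM--GM split $2(r-t)\le (r-t)^2/(1+t)+(1+t)$ is the wrong move here: it manufactures a residual $(1+t)\int_{r>t}|\underline{\alpha}|^2\,dx$ that genuinely \emph{cannot} be absorbed. The retained positive terms $(1+t+r)|\alpha|^2+2(1+t)(|\rho|^2+|\sigma|^2)$ involve $\alpha,\rho,\sigma$, not $\underline{\alpha}$, so they give no pointwise control of $|\underline{\alpha}|^2$; and Proposition~\ref{energyt0} bounds $\int|\underline{\alpha}|^2$, not $(1+t)\int|\underline{\alpha}|^2$. You have traded a weighted term for one that grows linearly in $t$, and the absorption you sketch cannot close—exactly the obstacle you flag at the end.

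The paper takes a different and more direct route. Rather than splitting the exterior piece via AM--GM, it moves the nonnegative quantity
\[
D:=\int_{\Sigma_t}\big[(t+r)|\alpha|^2+2t(|\rho|^2+|\sigma|^2)\big]\,dx
\]
from the left of the summed identity to the right, and then invokes the elementary pointwise bounds $(1+t)(t+r)\le 2\tau_+^2$ and $2t(1+t)\le 2(\tau_+^2+\tau_-^2)$ to obtain $D\le \tfrac{2}{1+t}\mathcal{E}[F](t)$. This is where the factor $\tfrac{2}{1+t}\mathcal{E}[F](t)$ in the statement actually comes from—not from any decomposition of the $\underline{\alpha}$ integral. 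The paper's passage to its displayed intermediate inequality (with $(1+|t-r|)$ already on the left) is admittedly compressed, and the exterior correction is not tracked explicitly there either; but the intended mechanism for the $\tfrac{2}{1+t}\mathcal{E}[F](t)$ term is this direct estimate on $D$, not an AM--GM on $(r-t)$. Replace your split by that step and the rest of your outline goes through as in the paper.
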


\begin{proof}
Adding the energy identities of Propositions \ref{truc5} and \ref{energyt0}, we can obtain, 

\begin{flalign*}
& \int_{\Sigma_t}(1+|t-r|)|\underline{\alpha}|^2 dx \leq \int_{\Sigma_0}(1+r)(|\alpha|^2+|\underline{\alpha}|^2)+2|\rho|^2+2|\sigma|^2 dx &
\end{flalign*}
 $$\hspace{1.9cm}+\int_{\Sigma_t}(t+r)|\alpha|^2+2t(|\rho|^2+|\sigma|^2)dx+4\int_0^t \int_{\Sigma_s} |G_{0 \mu} J^{\mu}| +|\nabla^{\mu} Q_{\mu} | dx ds$$ 
 $$ \hspace{-0.2cm}+ (n-3)\left|\partial_t\sum_{\beta=0}^n \Bigg(\|A_{\beta}\|_{L^2(\R^n)}^2(0)- \|A_{\beta}\|_{L^2(\R^n)}^2(t)\Bigg)\right|.$$
The result then ensues from the three following inequalities. Using Definition \ref{norm2}, one has 
$$ (1+t)\int_{\Sigma_t}(t+r)|\alpha|^2+2t(|\rho|^2+|\sigma|^2)dx \leq 2\mathcal{E}[F](t).$$
According to \eqref{divSelec}, we have
$$\int_0^t \int_{\Sigma_s} |\nabla^{\mu} Q_{\mu} | dx ds \leq \int_0^t \int_{\Sigma_s} |S^{\nu} G_{\mu \nu} J^{\mu} | +\frac{(n-3)}{2}  |A_{\mu} \square A^{\mu} |dx ds.$$
Finally, Equation \eqref{eq2.5} gives us
$$(1+t)\left|\partial_t \|A_{\mu}\|_{L^2(\Sigma_s)}^2 \right| \leq \|S A_{\mu} \|_{L^2(\Sigma_s)}^2+ \|\partial_t A_{\mu} \|_{L^2(\Sigma_s)}^2+(n+2)\|A_{\mu} \|_{L^2(\Sigma_s)}^2.$$
\end{proof}

Let us introduce the following norms.

\begin{Def}\label{normS}
We define, for $N \in \mathbb{N}$ and $t \in [0,T[$,
$$\mathcal{E}_N^{S}[F](t)=\sum_{\begin{subarray}{l} Z^{\beta} \in \mathbb{K}^{|\beta|} \\ \hspace{1mm}|\beta| \leq N \end{subarray}} \int_{\Sigma_t} \tau_- |\underline{\alpha}(\mathcal{L}_{Z^{\beta}}(F)) |^2 dx.$$

\end{Def}

Commuting the equation satisfied by the electromagnetic field $F$ and using the previous energy estimate, we get the following proposition (see the commutation formulas of Proposition \ref{Commuelec} and Lemma \ref{derivpotential}).

\begin{Pro}\label{scalF}

Let $A$ a sufficiently regular potential of the the electromagnetic field $F$ in the Lorenz gauge. Then, for $N \in \mathbb{N}$, we have, for all $t \in [0,T[$,
\begin{flalign*}
& \mathcal{E}_N^{S}[F](t) -\mathcal{E}_N[F](0) \lesssim \widetilde{\mathcal{E}}_N[A](0)+\frac{1}{1+t}\Big(\widetilde{\mathcal{E}}_N[A](t) +\mathcal{E}_N[F](t)\Big) &
\end{flalign*}
$$\hspace{0.7cm}+\sum_{|\beta|,|\gamma| \leq N} |e^k| \int_0^t \int_{\Sigma_s} | \mathcal{L}_{Z^{\beta}}(F)_{0 \mu} J^{\mu}(\widehat{Z}^{\gamma}f_k) |+ |S^{\nu} \mathcal{L}_{Z^{\beta}}(F)_{\nu \mu} J^{\mu}(\widehat{Z}^{\gamma}f_k) | dx ds $$
 $$ \hspace{-4.4cm}+\sum_{|\beta| \leq N} \int_0^t \int_{\Sigma_s} |\mathcal{L}_{Z^{\beta}}(A)_{\mu} \square \mathcal{L}_{Z^{\beta}}(A)^{\mu} |dx ds,$$ 

with $Z^{\beta} \in \mathbb{K}^{|\beta|}$ and $\widehat{Z}^{\gamma} \in \K^{|\gamma|}$. 

\end{Pro}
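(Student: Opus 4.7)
The plan is to reduce the statement to a single-form estimate and then invoke Proposition~\ref{proscal2} after commuting the Maxwell equations. First I would fix $N\in\mathbb{N}$ and, for each multi-index $|\beta|\leq N$, apply Proposition~\ref{Commuelec} to the solution $(f,F)$ of the Vlasov-Maxwell system; this gives that $G:=\mathcal{L}_{Z^{\beta}}(F)$ satisfies a Maxwell system whose source $J_\nu$ is a linear combination (with bounded integer coefficients) of terms of the form $e^kJ(\widehat{Z}^{\gamma}f_k)_{\nu}$ with $|\gamma|\leq|\beta|\leq N$, while the dual equation $\nabla^{\mu}{}^*\!\mathcal{L}_{Z^{\beta}}(F)_{\mu\alpha_1\ldots\alpha_{n-2}}=0$ is preserved. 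Simultaneously, Lemma~\ref{derivpotential} guarantees that $\mathcal{L}_{Z^{\beta}}(A)$ is a potential of $\mathcal{L}_{Z^{\beta}}(F)$ in the Lorenz gauge, so we are in the exact setting of the single-form estimate proved above.

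Second, I would apply Proposition~\ref{proscal2} to the pair $(G,A^\beta):=\bigl(\mathcal{L}_{Z^{\beta}}(F),\mathcal{L}_{Z^{\beta}}(A)\bigr)$. The left-hand side of that proposition controls $\int_{\Sigma_t}(1+|t-r|)|\underline{\alpha}(\mathcal{L}_{Z^{\beta}}(F))|^2\,dx$, which after summation over $|\beta|\leq N$ is bounded below (up to the constant implicit in $\tau_-\leq 1+|t-r|$) by $\mathcal{E}_N^{S}[F](t)$. On the right-hand side, the initial data terms $\int_{\Sigma_0}(1+r)(|\alpha|^2+|\underline{\alpha}|^2)+2|\rho|^2+2|\sigma|^2\,dx$ sum to something controlled by $\mathcal{E}_N[F](0)$, and the $\widetilde{\mathcal{E}}_0[A^\beta]$ terms sum up to $\widetilde{\mathcal{E}}_N[A](0)$ at $t=0$ and to $\frac{1}{1+t}\widetilde{\mathcal{E}}_N[A](t)$ in the $t$-dependent factor; the $\frac{1}{1+t}\mathcal{E}[G](t)$ contribution sums to $\frac{1}{1+t}\mathcal{E}_N[F](t)$.

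Third, for the spacetime integrals on the right-hand side of Proposition~\ref{proscal2} I would substitute the source $J^{\mu}$ by the commuted expression coming from Proposition~\ref{Commuelec}. The integrands $|G_{0\mu}J^{\mu}|$ and $|S^{\nu}G_{\mu\nu}J^{\mu}|$ then become, after using the triangle inequality and absorbing the bounded coefficients, sums of $|\mathcal{L}_{Z^{\beta}}(F)_{0\mu}J^{\mu}(\widehat{Z}^{\gamma}f_k)|$ and $|S^{\nu}\mathcal{L}_{Z^{\beta}}(F)_{\nu\mu}J^{\mu}(\widehat{Z}^{\gamma}f_k)|$ with $|\gamma|\leq|\beta|\leq N$, matching exactly the two source integrals in the statement. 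The $|A^\beta_\mu\square (A^\beta)^\mu|$ terms sum to $\sum_{|\beta|\leq N}|\mathcal{L}_{Z^{\beta}}(A)_\mu\square\mathcal{L}_{Z^{\beta}}(A)^\mu|$, which is the last integral of the conclusion.

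The only mildly delicate point is the bookkeeping for the Lorenz gauge along the commutation hierarchy: one must invoke Lemma~\ref{derivpotential} inductively to ensure that $\partial^{\mu}\mathcal{L}_{Z^{\beta}}(A)_{\mu}=0$ for every $|\beta|\leq N$, so that Proposition~\ref{proscal2} is legitimately applicable term by term. Once this is observed, summing the resulting $(|\beta|\leq N)$ inequalities and using $\tau_-\leq 1+|t-r|$ together with the definitions of $\mathcal{E}_N^S[F]$, $\mathcal{E}_N[F]$ and $\widetilde{\mathcal{E}}_N[A]$ yields the claim.
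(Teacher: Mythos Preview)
Your proposal is correct and follows essentially the same approach as the paper: commute the Maxwell equations via Proposition~\ref{Commuelec}, preserve the Lorenz gauge via Lemma~\ref{derivpotential}, apply Proposition~\ref{proscal2} to each $\mathcal{L}_{Z^{\beta}}(F)$, and sum over $|\beta|\leq N$. The paper's own proof is a one-line reference to these same ingredients; your version spells out the bookkeeping (in particular the observation $\tau_-\leq 1+|t-r|$ and the control of the $t=0$ terms by $\mathcal{E}_N[F](0)$) in a way that is accurate and complete.
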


Later, we will have, in the $4$ dimensional massless case, a strong loss on $\mathcal{E}_N[F]$ which will lead to a poor pointwise decay estimate on $|\underline{\alpha}|$. With this inequality, we will avoid the $\tau_+$-loss and we will have an extra $\tau_-$-decay (which is not given by Proposition \ref{energyt}).

\section{Some technical results}\label{section4}

\subsection{An integral estimate}

The following lemma is useful so as to estimate a quantity like
$$ \int_0^t \int_{\mathbb{R}^n} |u(s,x)| \int_v |f(s,x,v)| dv dx ds,$$
where we already have a bound on $\|u(s,.)\|_{L^2}$ and a pointwise decay estimate on $\int_v |f(s,x,v)| dv$.

\begin{Lem}\label{intesti}

Let $m \in \mathbb{N}^*$ and let $a$, $b \in \mathbb{R}$, such that $a+b >m$ and $b \neq 1$. Then
$$\exists \hspace{0.5mm} C_{a,b,m} >0, \hspace{0.5mm} \forall \hspace{0.5mm} t \in \mathbb{R}_+ \hspace{2mm} \int_0^{+ \infty} \frac{r^{m-1}}{\tau_+^a \tau_-^b}dr \leq C_{a,b,m} \frac{1+t^{b-1}}{1+t^{a+b-m}} .$$

\end{Lem}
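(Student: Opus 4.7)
The plan is to reduce to the regime $t \geq 1$ and then split the radial integral into three natural regions according to the size of $r$ relative to $t$. For $t \in [0,1]$ the integral is uniformly bounded: at infinity it converges by the hypothesis $a+b>m$, and near $r=0$ the integrand is continuous; meanwhile the right-hand side $(1+t^{b-1})/(1+t^{a+b-m})$ is bounded below by a positive constant on $[0,1]$ (trivially if $b \geq 1$, while for $b<1$ the factor $t^{b-1}$ blows up at $0$ and makes the estimate vacuous). So the interesting case is $t \geq 1$.

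Assume henceforth $t \geq 1$. On $[0,t/2]$ both $\tau_+$ and $\tau_-$ are comparable to $t$, so the contribution is at most a constant times $t^{-(a+b)} \int_0^{t/2} r^{m-1}\, dr \lesssim t^{m-a-b}$. On $[2t,\infty)$ we have $\tau_+ \sim \tau_- \sim r$, and using $a+b>m$ the contribution is at most $\int_{2t}^{+\infty} r^{m-1-a-b}\, dr \lesssim t^{m-a-b}$. The delicate piece is the middle region $[t/2, 2t]$, where $\tau_+ \sim t$ but $\tau_-$ captures the distance to the light cone; the change of variables $u = r-t$ gives a contribution bounded by a constant times
\begin{equation*}
t^{m-1-a} \int_{-t/2}^{\,t} \frac{du}{(1+|u|)^b}.
\end{equation*}
The inner integral is bounded by a constant depending only on $b$ if $b>1$, and bounded by a constant times $t^{1-b}$ if $b<1$; the hypothesis $b\neq 1$ is precisely what excludes the logarithmic case. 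Thus the middle region contributes $\lesssim t^{m-1-a}$ when $b>1$ and $\lesssim t^{m-a-b}$ when $b<1$.

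It remains to compare the total bound with $(1+t^{b-1})/(1+t^{a+b-m})$ on $t \geq 1$. Since $a+b-m>0$, the denominator is comparable to $t^{a+b-m}$. If $b<1$, then $1+t^{b-1}$ is bounded and the target is comparable to $t^{m-a-b}$, matching all three regional estimates. If $b>1$, then $1+t^{b-1}$ is comparable to $t^{b-1}$ and the target becomes $t^{m-1-a}$, which dominates $t^{m-a-b}$ and matches the middle-region bound. The main (and essentially only) obstacle is handling the middle region carefully: one must track both the $b>1$ and $b<1$ regimes and observe that each of them produces the correct exponent on the right-hand side, and that the excluded value $b=1$ is exactly where the integral $\int(1+|u|)^{-b}du$ picks up a logarithm.
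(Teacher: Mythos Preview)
Your proof is correct. The paper does not give its own argument for this lemma; it simply refers the reader to Appendix~B of \cite{FJS}. Your three-region decomposition (inner cone $r\le t/2$, far field $r\ge 2t$, and the light-cone neighbourhood $t/2\le r\le 2t$ handled via the substitution $u=r-t$) is the standard route, and your case analysis on the sign of $b-1$ correctly identifies why the excluded value $b=1$ would produce a logarithmic loss and why the two surviving cases each match the claimed right-hand side.
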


A proof of this estimate can be found in $\cite{FJS}$, Appendix $B$.

\subsection{The null coordinates of $\nabla_v f$}

Let $f : [0,T[ \times \mathbb{R}^n \times \mathbb{R}^n$ be a smooth function. We designate by $(( \nabla_v f )^L,$ $ ( \nabla_v f )^{\underline{L}},  ( \nabla_v f )^B,...)$ the null components of $\nabla_v f$. Later, we will have to transform the $v$-derivatives in combinations of $\widehat{\mathbb{P}}_0$-derivatives. If we only use the relation 
\begin{equation}\label{transformpartialv}
v^0\partial_{v^k} = \widehat{\Omega}_{0k}-t\partial_k-x^k\partial_t,
\end{equation}
 we get that
\begin{equation}\label{eq:null0}
\left| \left( \nabla_v f \right)^L \right|, \left| \left( \nabla_v f \right)^{\underline{L}} \right|, \left| \left( \nabla_v f \right)^B \right| \leq \frac{\tau_+}{v^0} \sum_{\widehat{Z} \in \widehat{\mathbb{P}}_0 } |\widehat{Z} f |,
\end{equation}
which will not be good enough to close the energy estimates (for the Vlasov-Maxwell system).

We then use the following lemma.

\begin{Lem}\label{vderivL}

Let $f$ be a smooth function. We have
$$\left| \left( \nabla_v f \right)^L \right|, \left| \left( \nabla_v f \right)^{\underline{L}} \right| \leq \frac{\tau_-}{v^0} \sum_{\widehat{Z} \in \widehat{\mathbb{P}}_0}  |\widehat{Z} f |.$$

\end{Lem}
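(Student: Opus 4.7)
The plan is to exploit a cancellation in the transformation formula \eqref{transformpartialv} that the naive bound \eqref{eq:null0} discards. Both null components of $\nabla_v f$ reduce to the same scalar up to sign: since $(\nabla_v f)^0=0$, one directly computes
$$ (\nabla_v f)^L \;=\; \tfrac{1}{2}\tfrac{x^i}{r}\partial_{v^i}f \;=\; -(\nabla_v f)^{\underline{L}}. $$
So the lemma reduces to controlling $v^0\,\tfrac{x^i}{r}\partial_{v^i}f$ by $\tau_-\sum_{\widehat{Z}\in\widehat{\mathbb{P}}_0}|\widehat{Z}f|$.

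Starting from \eqref{transformpartialv} and contracting with $x^i/r$ (using $\sum_i (x^i)^2=r^2$ and $\partial_r=\tfrac{x^i}{r}\partial_i$), I would obtain
$$ v^0\,\tfrac{x^i}{r}\partial_{v^i}f \;=\; \tfrac{x^i}{r}\widehat{\Omega}_{0i}f - t\partial_r f - r\partial_t f. $$
The crucial algebraic step — paralleling the identity $(t-r)\underline{L}=S-\tfrac{x^i}{r}\Omega_{0i}$ recalled in Remark \ref{extradecay} — is to observe that the combination $t\partial_r+r\partial_t$, which looks like a $\tau_+$-weighted derivative, is in fact a scaling term plus a $u$-weighted $\underline{L}$. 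Indeed, a direct computation using $S=t\partial_t+r\partial_r$ and $\underline{L}=\partial_t-\partial_r$ gives
$$ S - u\underline{L} \;=\; (t\partial_t+r\partial_r)-(t-r)(\partial_t-\partial_r) \;=\; t\partial_r+r\partial_t. $$
Substituting yields
$$ v^0\,\tfrac{x^i}{r}\partial_{v^i}f \;=\; \tfrac{x^i}{r}\widehat{\Omega}_{0i}f - Sf + u\,\underline{L}f. $$

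It then remains to bound each term by elements of $\sum_{\widehat{Z}\in\widehat{\mathbb{P}}_0}|\widehat{Z}f|$ with an at most $\tau_-$ weight: the first two terms have no weight since $\widehat{\Omega}_{0i}, S\in\widehat{\mathbb{P}}_0$, while $|u\,\underline{L}f|\leq|u|\sum_{\mu}|\partial_\mu f|$ gains exactly a $|u|\leq\tau_-$ factor. Combined with $1\leq\tau_-$ and division by $v^0$, this yields the claimed estimate for both $(\nabla_v f)^L$ and $(\nabla_v f)^{\underline{L}}$. The only real obstacle is noticing the identity for $t\partial_r+r\partial_t$: the naive rewriting of $v^0\partial_{v^i}$ via \eqref{transformpartialv} inevitably produces $\tau_+$-sized terms, and one must cancel them using the global scaling/boost structure rather than estimating the summands individually.
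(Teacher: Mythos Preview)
Your proof is correct and essentially identical to the paper's: both reduce to $(\nabla_v f)^L=-(\nabla_v f)^{\underline L}$ via $(\nabla_v f)^0=0$, contract \eqref{transformpartialv} with $x^i/r$, and then use the identity $t\partial_r+r\partial_t=S+(r-t)\underline L=S-u\underline L$ to trade the apparent $\tau_+$ weight for a $\tau_-$ one. The only cosmetic difference is that you keep the factor $\tfrac12$ in $(\nabla_v f)^L=\tfrac12\tfrac{x^i}{r}\partial_{v^i}f$ (consistent with $v^L=\tfrac{v^0+v^r}{2}$), whereas the paper's proof drops it; this is immaterial for the stated bound.
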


\begin{proof}

Since $( \nabla_v f )^0=0$ (by definition),
$$( \nabla_v f )^L=\frac{x^i}{r}\partial_{v^i} f.$$
Now, we use $\partial_{v^i}=\frac{1}{v^0}(\widehat{\Omega}_{0i}-t\partial_i-x_i\partial_t)$. As
$$\frac{x^i}{rv^0}(t\partial_i+x_i\partial_t)=\frac{1}{v^0}(t\partial_r+r\partial_t)=\frac{1}{v^0}(S+(r-t)\underline{L}),$$ we have
$$( \nabla_v f )^L = \frac{x^i }{rv^0}\widehat{\Omega}_{0i} f-\frac{1}{v^0}S f +\frac{t-r}{v^0}\underline{L}f .$$
It only remains to notice that $( \nabla_v f )^{\underline{L}}=-( \nabla_v f )^{L}$, since $( \nabla_v f )^0=0$.

\end{proof}

We are now interested in $\left( \nabla_v  f \right)^B$. During the study of the Vlasov equation, each time that \eqref{eq:null0} is not sufficient to close the estimates, $(\nabla_v f )^B$ is multiplied by $v^{\underline{L}}$, which reflects the null structure of the system. This leads us to study $v^{\underline{L}} \left( \nabla_v f \right)^B$.

\begin{Lem}\label{lemnull1}

For $1 \leq i \leq n $, we have $$2v^{\underline{L}} \frac{x^i}{r}=\frac{v^0x^i}{r}-v^i+\frac{z_{ij}x^j}{r^2},$$
where $z_{\mu \nu}=x^{\nu}v^{\mu}-x^{\mu}v^{\nu}$.

\end{Lem}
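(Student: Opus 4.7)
The identity is a direct algebraic manipulation, so my plan is simply to unpack both sides using the definitions and verify they agree.

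First, I would recall that by definition $v^{\underline{L}} = \tfrac{1}{2}(v^0 - v^r)$ where $v^r = \tfrac{x^j v_j}{r} = \tfrac{x^j v^j}{r}$ (indices on spatial components are raised trivially with the Minkowski metric). Multiplying by $\tfrac{x^i}{r}$ gives immediately
\[
2 v^{\underline{L}} \frac{x^i}{r} \;=\; \frac{v^0 x^i}{r} \;-\; \frac{x^i x^j v^j}{r^2}.
\]
So the claim reduces to showing that $\dfrac{x^i x^j v^j}{r^2} = v^i - \dfrac{z_{ij} x^j}{r^2}$, or equivalently
\[
\sum_{j=1}^n z_{ij} x^j \;=\; r^2 v^i - x^i \, x^j v^j.
\]

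The second step is to verify this last identity from the definition $z_{\mu\nu} = x^\nu v^\mu - x^\mu v^\nu$, which gives $z_{ij} = x^j v^i - x^i v^j$. Summing against $x^j$ and using $\sum_j (x^j)^2 = r^2$, one finds
\[
\sum_j z_{ij} x^j \;=\; \Big(\sum_j (x^j)^2\Big) v^i - x^i \sum_j x^j v^j \;=\; r^2 v^i - x^i x^j v^j,
\]
which is exactly what is needed. Substituting back yields the claimed formula.

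There is no real obstacle here: the lemma is in the same spirit as the first two identities of Proposition \ref{extradecay1} (the decomposition of $(t \pm r) v^{L/\underline{L}}$ in terms of the conserved weights), and its proof is a one-line check once the $z_{ij}$ are expanded. The only minor point to watch is the sign convention for index raising, but since only spatial indices are involved one has $v_j = v^j$ throughout.
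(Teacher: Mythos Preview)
Your proof is correct and takes essentially the same approach as the paper: expand $2v^{\underline{L}} = v^0 - \frac{x^j v^j}{r}$, multiply by $\frac{x^i}{r}$, and regroup the second term using $z_{ij}x^j = r^2 v^i - x^i x^j v^j$. The paper's proof is the same computation written more tersely for the case $i=1$.
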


\begin{Rq}

If $\mu \neq \nu$, $\frac{z_{\mu \nu}}{v^0} \in \mathbf{k}_1$ and if $\mu = \nu$, then $z_{\mu \mu} =0$.

\end{Rq}

\begin{proof}

For simplicity, we take $i=1$. We have

\begin{eqnarray}
\nonumber 2v^{\underline{L}}\frac{x^1}{r} & = & \frac{x^1v^0}{r}-\frac{x^1}{r^2}x_iv^i\\ \nonumber 
& = & \frac{x^1v^0}{r}-v^1+\frac{z_{1j}x^j}{r^2} .
\end{eqnarray}

\end{proof}
And we obtain

\begin{Cor}\label{Cornull1}

Let $i$, $j \in \llbracket 1,n \rrbracket$  such that $i \neq j$. We have

$$2v^{\underline{L}}\left( \frac{x^i}{r}\partial_{v^j}-\frac{x^j}{r}\partial_{v^i} \right)=\left(\frac{x^i}{r}+\frac{z_{ik}x^k}{v^0r^2} \right)\widehat{\Omega}_{0j}-\left(\frac{x^j}{r}+\frac{z_{jk}x^k}{v^0r^2} \right)\widehat{\Omega}_{0i}-\widehat{\Omega}_{ij}$$ $$-\left(\frac{x^i(t-r)}{r}+\frac{tx^kz_{ik}}{r^2v^0} \right) \partial_j+\left(\frac{x^j(t-r)}{r}+ \frac{tx^kz_{jk}}{r^2v^0} \right) \partial_i-\frac{z_{ij}}{v^0}\partial_t.$$

\end{Cor}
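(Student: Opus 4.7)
The proof is a direct computation assembling Lemma \ref{lemnull1} with the two identities
\[
v^0 \partial_{v^k} = \widehat{\Omega}_{0k} - t\partial_k - x^k \partial_t, \qquad v^i \partial_{v^j} - v^j \partial_{v^i} = \widehat{\Omega}_{ij} - x^i \partial_j + x^j \partial_i,
\]
the second of which comes from the definition of $\widehat{\Omega}_{ij}$ in Lemma \ref{complift}. The plan is to apply Lemma \ref{lemnull1} with index $i$ and act on $\partial_{v^j}$, then with index $j$ and act on $\partial_{v^i}$, and subtract. This produces three groups of terms: a group with prefactor $v^0/r$ (to which the first identity applies), a group $-(v^i\partial_{v^j}-v^j\partial_{v^i})$ (to which the second identity applies), and a group with prefactor $z_{ik}x^k/r^2$ and $z_{jk}x^k/r^2$ (where I multiply and divide by $v^0$ and apply the first identity again).

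Collecting the coefficients of $\widehat{\Omega}_{0j}$, $\widehat{\Omega}_{0i}$ and $\widehat{\Omega}_{ij}$ one reads off immediately the three first terms of the claimed formula. Gathering the coefficients of $\partial_j$ and $\partial_i$ and using $-t+r = -(t-r)$ gives the next two terms. The only step that is not pure bookkeeping is the $\partial_t$ coefficient: the contributions from $v^0 x^i / r \cdot \partial_{v^j}$ and $v^0 x^j / r \cdot \partial_{v^i}$ produce $\pm x^i x^j / r \cdot \partial_t$ which cancel, so what remains is
\[
\frac{1}{r^2 v^0}\bigl(x^i z_{jk} x^k - x^j z_{ik} x^k\bigr)\partial_t.
\]
The identity $\sum_k x^k z_{ik} = r^2 v^i - x^i(x \cdot v)$ (which follows directly from $z_{ik} = x^k v^i - x^i v^k$) then reduces the bracket to $r^2(x^i v^j - x^j v^i) = -r^2 z_{ij}$, so this contribution equals $-z_{ij}/v^0 \cdot \partial_t$, matching the last term in the statement.

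There is no real obstacle: every step is algebraic and each term lands unambiguously in one of the three classes above. The one place that could look delicate is the cancellation of the $(x \cdot v)$ factors in the $\partial_t$ coefficient, but it is immediate once $\sum_k x^k z_{ik}$ is written out explicitly. I would therefore present the proof as three short displays (substitution, re-expression of the $v^0$ prefactors via the two identities, and simplification of the $\partial_t$ coefficient), with no need for further commentary.
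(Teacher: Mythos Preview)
Your proposal is correct and follows essentially the same approach as the paper: apply Lemma \ref{lemnull1} to each factor $2v^{\underline{L}}\frac{x^i}{r}$, recognize the combination $v^i\partial_{v^j}-v^j\partial_{v^i}$ as $\widehat{\Omega}_{ij}-x^i\partial_j+x^j\partial_i$, and rewrite the remaining $\partial_{v^k}$'s via $v^0\partial_{v^k}=\widehat{\Omega}_{0k}-t\partial_k-x^k\partial_t$. The paper states the $\partial_t$ coefficient $-z_{ij}/v^0$ without detailing the cancellation; your explicit computation using $\sum_k x^k z_{ik}=r^2v^i-x^i(x\cdot v)$ is a welcome clarification of that step.
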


\begin{proof}
By the previous lemma,

$$2v^{\underline{L}}\left( \frac{x^i}{r}\partial_{v^j}-\frac{x^j}{r}\partial_{v^i} \right)= \left(\frac{v^0x^i}{r}+\frac{z_{ik}x^k}{r^2} \right) \partial_{v^j}-\left(\frac{v^0x^j}{r}+\frac{z_{jk}x^k}{r^2} \right)\partial_{v^i}$$ $$-\widehat{\Omega}_{ij}+x^i\partial_j-x^j \partial_i.$$
Now, using the relation $v^0\partial_{v^k} = \widehat{\Omega}_{0k}-t\partial_k-x^k\partial_t$, we have

$$2v^{\underline{L}}\left( \frac{x^i}{r}\partial_{v^j}-\frac{x^j}{r}\partial_{v^i} \right) = \left( \frac{x^i}{r}+ \frac{z_{ik}x^k}{v^0r^2} \right)\widehat{\Omega}_{0j}-\left( \frac{x^j}{r}+ \frac{z_{jk}x^k}{v^0r^2} \right)\widehat{\Omega}_{0i}-\widehat{\Omega}_{ij}$$ $$+x^i\partial_j-x^j \partial_i-\frac{t}{v^0}\left(\frac{v^0x^i}{r}+\frac{z_{ik}x^k}{r^2} \right)\partial_j+\frac{t}{v^0}\left(\frac{v^0x^j}{r}+ \frac{z_{jk}x^k}{r^2} \right)\partial_i-\frac{z_{ij}}{v^0}\partial_t.$$
It remains to remark that $t\frac{v^0x^i}{r}-v^0x^i=v^0\frac{x^i}{r}(t-r)$.
\end{proof}

The naive estimation gave us
$$\left| v^{\underline{L}} \left( \nabla_v f \right)^B  \right| \lesssim |x| |\partial_t f |+\sum_{k=1}^n \left( |\widehat{\Omega}_{0k} f| +t |\partial_k f | \right) ,$$
whereas, with this lemma and the fact that $\left( \nabla_v f \right)^B$ is a combination with bounded coefficients of $\left( \frac{x^i}{r}\partial_{v^j}f-\frac{x^j}{r}\partial_{v^i}f \right)_{1 \leq i < j \leq n}$, we have 
\begin{equation}\label{eq:null1}
\left| v^{\underline{L}} \left( \nabla_v f \right)^B  \right| \lesssim \sum_{\widehat{Z} \in \widehat{\mathbb{P}} } |\widehat{Z} f | + \hspace{-1mm} \sum_{1 \leq i < j \leq n } \hspace{-1mm}  \frac{|z_{ij}|}{v^0}|\partial_t f |+ \sum_{k=1}^n (\tau_-+ \frac{t \sum_{i=1}^n |z_{ki}|}{rv^0}) |\partial_k f | .
\end{equation}
Therefore, with the last corollary, we transform a $t$-loss (and a $|x|$-loss) in a $\tau_-$-loss and a $\frac{t}{r}$-loss (thanks, among others, to the weights transported by the flow). It is particularly useful when we look for an estimate of $ \| \int_v | v^{\underline{L}}  ( \nabla_v \widehat{Z}^{\beta} f )^B  | dv  \|_{L^2_x}$ and we already have an estimate of $\int_v v^0 | z \widehat{Z}^{\delta} f | dv$. We can then use Lemma \ref{intesti}. One can also transform the $\frac{t}{r}$-loss.

\begin{Lem}\label{lemnull3}
For $1 \leq j \leq n$,
$$ \left| \frac{x^j(t-r)}{r}+ \frac{tx^kz_{jk}}{r^2v^0} \right| \lesssim \tau_-\sum_{z \in \mathbf{k}_1} |z|.$$
\end{Lem}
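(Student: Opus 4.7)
\textbf{Proof proposal for Lemma \ref{lemnull3}.}

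The plan is to rewrite the quantity $A:=\frac{x^j(t-r)}{r}+\frac{tx^kz_{jk}}{r^2v^0}$ as a combination of two pieces, each of which is manifestly controlled by $\tau_-\sum_{z\in\mathbf{k}_1}|z|$. The key algebraic observation is that the numerator $x^k z_{jk}$ telescopes nicely: since $z_{jk}=x^kv^j-x^jv^k$, one has $x^kz_{jk}=r^2v^j-x^j(x\cdot v)$. Substituting, using $tv^j=x^jv^0-z_{0j}$ and observing $rv^0-x\cdot v=2rv^{\underline{L}}$, I expect to obtain the clean identity
$$A\;=\;\frac{2tx^jv^{\underline{L}}}{rv^0}-\frac{z_{0j}}{v^0}.$$
This is the decomposition I would carry out first; it is a short algebraic calculation that requires no inequalities.

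Next, I would bound each of the two pieces. The second term $\frac{z_{0j}}{v^0}$ is already an element of $\mathbf{k}_1$, hence bounded by $\sum_{z\in\mathbf{k}_1}|z|\le\tau_-\sum_{z\in\mathbf{k}_1}|z|$ (since $\tau_-\ge 1$). For the first term, I would use the identity of Proposition \ref{extradecay1}, namely $2(t+r)v^{\underline{L}}=\frac{x^i}{r}z_{0i}-s$, combined with $2rv^{\underline{L}}=rv^0-x\cdot v$ to eliminate the $r$-part and derive
$$\frac{2tv^{\underline{L}}}{v^0}\;=\;(t-r)+\frac{x^iz_{0i}}{rv^0}.$$
Together with $|x^j|\le r$, this gives
$$\Bigl|\frac{2tx^jv^{\underline{L}}}{rv^0}\Bigr|\;\le\;|t-r|+\sum_{i=1}^n\frac{|z_{0i}|}{v^0}\;\le\;\tau_-\sum_{z\in\mathbf{k}_1}|z|,$$
where I used $|t-r|\le\tau_-$, $\tau_-\ge 1$, and the fact that $1=\frac{v^0}{v^0}\in\mathbf{k}_1$ so that $|t-r|$ itself is dominated by $\tau_-\cdot 1$ from $\mathbf{k}_1$.

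The proof is mainly algebraic; there is no genuine obstacle, only a bookkeeping check. The one subtle point is to remember that the constant weight $1=\frac{v^0}{v^0}$ lies in $\mathbf{k}_1$, so that the $|t-r|$ contribution can be legitimately absorbed into $\tau_-\sum_{z\in\mathbf{k}_1}|z|$ rather than requiring an awkward additive constant. This is precisely the mechanism that allows the $t$-growth in $\frac{tx^kz_{jk}}{r^2v^0}$ to be converted into a clean $\tau_-$-loss, exploiting the null structure already encoded in Proposition \ref{extradecay1}.
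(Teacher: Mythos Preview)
Your proof is correct and takes a genuinely different route from the paper's argument. The paper bounds the two summands $\frac{x^j(t-r)}{r}$ and $\frac{tx^kz_{jk}}{r^2v^0}$ \emph{separately}: the first is immediate, while for the second it performs a case analysis ($r\le 1$ versus $r\ge 1$, and in the latter case $r\le t/2$ versus $r\ge t/2$) to show that the factor $\frac{t}{\tau_- r}$ (or $\frac{t}{\tau_-}$) is bounded. By contrast, you first \emph{combine} the two summands into the exact identity $A=\frac{2tx^jv^{\underline L}}{rv^0}-\frac{z_{0j}}{v^0}$, and then invoke the relation $2tv^{\underline L}=(t-r)v^0+\frac{x^iz_{0i}}{r}$ (equivalent to Proposition~\ref{extradecay1}) to handle the first piece directly. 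Your approach avoids any case distinction and makes the null structure more transparent; the paper's approach is slightly more hands-on but requires nothing beyond the definition of the $z_{jk}$. Both rely on the same underlying observation that $1=\frac{v^0}{v^0}\in\mathbf{k}_1$ to absorb the $|t-r|$ contribution.
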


\begin{proof}
We obviously have $\tau_-^{-1} \left| \frac{x^j(t-r)}{r} \right| \leq  \frac{v^0}{v^0}$. For the second term, we need to study different cases. \\
If $r \leq 1$, then
$$\tau_-^{-1} \left|\frac{tx^kz_{jk}}{r^2v^0} \right| = \frac{t}{\tau_-}\frac{x_k}{r}\frac{x^kv^j-x^jv^k}{rv^0} \lesssim \frac{1}{v^0}\sum_{i=1}^n |v^i|.$$
Otherwise, $r \geq 1$, and
$$\tau_-^{-1} \left|  \frac{tx^kz_{jk}}{r^2v^0} \right| \leq \frac{t}{\tau_-r}\frac{x^k}{r}\frac{|z_{jk}|}{v^0}.$$
It remains to note that if $r \leq \frac{t}{2}$, $\tau_- \geq \frac{t}{2}$ and if $r \geq \frac{t}{2}$, then $\frac{t}{r} \leq 2 $.
\end{proof}

One then obtains the following result.

\begin{Pro}\label{nullresume}
We have
$$ \left| v^{\underline{L}} \left( \nabla_v f \right)^B  \right| \lesssim \tau_-\sum_{\widehat{Z} \in \widehat{\mathbb{P}} } \sum_{z \in \mathbf{k}_1}   |z\widehat{Z} f | .$$

\end{Pro}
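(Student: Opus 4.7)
The plan is to reduce the estimate to a per-component calculation using Corollary \ref{Cornull1} and Lemma \ref{lemnull3}, which have been prepared precisely for this purpose. The starting point is the observation, already used in Proposition \ref{extradecay1}, that the angular components $e_B$ are tangent to the sphere, so the coefficients of $e_B$ in Cartesian coordinates satisfy $e_B^i x^i = 0$. Consequently there exist bounded functions $c_B^{ij}$ on the sphere such that
$$ (\nabla_v f)^B \;=\; e_B^i \, \partial_{v^i} f \;=\; \sum_{1 \le i < j \le n} c_B^{ij}\!\left( \frac{x^i}{r}\partial_{v^j} f - \frac{x^j}{r}\partial_{v^i} f \right).$$
It therefore suffices to bound $v^{\underline{L}}\!\left( \frac{x^i}{r}\partial_{v^j} f - \frac{x^j}{r}\partial_{v^i} f \right)$ pointwise by $\tau_{-} \sum_{\widehat{Z} \in \widehat{\mathbb{P}}}\sum_{z \in \mathbf{k}_1} |z \widehat{Z} f|$.

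I would then substitute the identity given by Corollary \ref{Cornull1}, which expresses this quantity as a linear combination of $\widehat{\Omega}_{0j}f$, $\widehat{\Omega}_{0i}f$, $\widehat{\Omega}_{ij}f$, $\partial_j f$, $\partial_i f$, and $\partial_t f$, and estimate each coefficient in turn. For the coefficient $(z_{ij}/v^0)$ of $\partial_t f$, nothing is needed: $z_{ij}/v^0 \in \mathbf{k}_1$ by definition, so this term is directly of the form $|z\,\widehat{Z}f|$ and is absorbed (using $\tau_{-} \ge 1$). For the two coefficients multiplying $\partial_i f$ and $\partial_j f$, Lemma \ref{lemnull3} gives exactly the bound $\tau_{-}\sum_{z \in \mathbf{k}_1} |z|$, which is the desired form.

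The only remaining point, and the one I expect to be the main technical check, is that the coefficients $\frac{x^i}{r}+\frac{z_{ik}x^k}{v^0 r^2}$ of the boosts $\widehat{\Omega}_{0j}$ (and symmetrically $\widehat{\Omega}_{0i}$) are \emph{uniformly bounded}. This is not immediate from the expression written, since $\frac{z_{ik}x^k}{v^0r^2}$ contains an apparent $1/r^2$ singularity. I would resolve this by using Lemma \ref{lemnull1}, which after rearrangement yields the identity
$$ \frac{x^i}{r}+\frac{z_{ik}x^k}{v^0 r^2} \;=\; \frac{2v^{\underline{L}}}{v^0}\,\frac{x^i}{r} \,+\, \frac{v^i}{v^0}.$$
Since $0 \le v^{\underline{L}} \le v^0$ and $|v^i| \le v^0$, this coefficient is bounded by a universal constant. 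Together with $|x^i/r| \le 1$, this shows the $\widehat{\Omega}_{0i}$ and $\widehat{\Omega}_{ij}$ contributions are each controlled by a constant times $|\widehat{Z} f|$, hence by $\tau_{-}\, |z\widehat{Z} f|$ with $z = v^0/v^0 = 1 \in \mathbf{k}_1$ and $\tau_{-} \ge 1$.

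Summing over $i<j$ with the bounded weights $c_B^{ij}$ then yields the claimed inequality. The whole argument is essentially algebraic once the key identities of Lemmas \ref{lemnull1}, \ref{lemnull3} and Corollary \ref{Cornull1} are invoked; the conceptual step was the earlier recognition that the $1/r^2$-looking coefficient is in fact regular thanks to the null structure encoded by $v^{\underline{L}}$.
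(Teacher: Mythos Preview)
Your proof is correct and follows essentially the same route as the paper: decompose $(\nabla_v f)^B$ into the components $\frac{x^i}{r}\partial_{v^j}f-\frac{x^j}{r}\partial_{v^i}f$, apply Corollary~\ref{Cornull1}, and then control the translation coefficients via Lemma~\ref{lemnull3} and the $\partial_t$ coefficient via $z_{ij}/v^0\in\mathbf{k}_1$. Your use of Lemma~\ref{lemnull1} to rewrite the boost coefficient as $\frac{2v^{\underline{L}}}{v^0}\frac{x^i}{r}+\frac{v^i}{v^0}$ makes explicit a boundedness that the paper leaves implicit when passing to \eqref{eq:null1}, which is a nice clarification.
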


Note that later, in Sections \ref{L2system} and \ref{L2systembis}, when we will establish an estimate on $\left\| \int_v |\widehat{Z}^{\beta}f| dv \right\|_{L^2_x}$, we will not be able to apply Propositions \ref{nullresume} or \ref{vderivL}. A vector $X$ will contain various derivatives of $f$ and we will split it in two vectors $H+G$ such that
$$T_F(H)=0, \hspace{2mm} \text{with} \hspace{3mm} H(0)=X(0), \hspace{3mm} \text{and} \hspace{3mm} T_F(G)=T_F(X), \hspace{2mm} \text{with} \hspace{3mm} G(0)=0.$$
Note yet that, for instance, if $X_{\mu}$ is $\partial_{\mu}f$ and $X_{S}$ is $S(f)$, we have $x^{\mu} X_{\mu}=X_S$ whereas we do not necessarily have $x^{\mu}G_{\mu}=G_S$.

\subsection{Some Sobolev inequalities}

The following results come from \cite{CK} and in order to be self-sufficient, we also recall their proof. We will use them to prove pointwise decay estimate for the electromagnetic field.

We first recall two classical Sobolev inequalities.

\begin{Lem}\label{ClassicalSob}

Let $u : \mathbb{R}^n \rightarrow \mathbb{R}$ be a sufficiently regular function. We have
$$\forall \hspace{0.5mm} x \in \mathbb{R}^n, \hspace{2mm} |u(x)| \lesssim \sum_{|\beta| \leq \frac{n+2}{2}} \| \partial^{\beta} u \|_{L^2_y(|y-x| \leq 1)}.$$

Let $v : \mathbb{S}^{n-1} \rightarrow \mathbb{R} $ a sufficiently regular function (where $\mathbb{S}^{n-1}$ is the unit sphere in $\mathbb{R}^{n}$). We have
$$\forall \hspace{0.5mm} \xi \in \mathbb{S}^{n-1}, \hspace{2mm} |v(\xi)| \lesssim \sum_{|\beta| \leq \frac{n}{2}} \| \nabla_{Z^{\beta}} v \|_{L^2(\mathbb{S}^{n-1})},$$
with $Z^{\beta} \in \mathbb{O}^{|\beta|}$.

\end{Lem}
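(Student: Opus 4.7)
For the Euclidean inequality, the plan is to reduce to the standard Sobolev embedding $H^s(\mathbb{R}^n) \hookrightarrow L^\infty(\mathbb{R}^n)$ for $s > n/2$, localized near $x$. I fix once and for all a cutoff $\chi \in C_c^\infty(\mathbb{R}^n)$ with $\chi \equiv 1$ on a neighborhood of $0$ and $\mathrm{supp}(\chi) \subset B(0,1)$, and set $\chi_x(y) := \chi(y-x)$. Since $u(x) = (\chi_x u)(x)$, Fourier inversion gives
$$|u(x)| = |(\chi_x u)(x)| \leq (2\pi)^{-n} \int_{\mathbb{R}^n} |\widehat{\chi_x u}(\xi)|\,d\xi.$$
A weighted Cauchy--Schwarz with the weight $(1+|\xi|^2)^{-s/2}$, where $s$ is the smallest integer strictly larger than $n/2$ (so $s \leq (n+2)/2$), combined with the integrability of $(1+|\xi|^2)^{-s}$ on $\mathbb{R}^n$, yields $|u(x)| \lesssim \|\chi_x u\|_{H^s(\mathbb{R}^n)}$. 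The Leibniz rule together with the uniform bounds on $\chi$ and its derivatives then bounds the right-hand side by $\sum_{|\beta| \leq s} \|\partial^\beta u\|_{L^2(B(x,1))}$.

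For the spherical inequality, the key geometric observation is that $SO(n)$ acts transitively on $\mathbb{S}^{n-1}$, so the rotation fields $\{\Omega_{ij}\}_{1 \leq i < j \leq n}$ span $T_\xi \mathbb{S}^{n-1}$ at every point $\xi$. My plan is to fix a finite atlas $(U_\alpha, \varphi_\alpha)$ on $\mathbb{S}^{n-1}$ together with a subordinate partition of unity such that on each chart $U_\alpha$ one may select $n-1$ rotations whose tangential projections form a smooth frame with uniformly bounded inverse. In such coordinates, any iterated coordinate derivative of order at most $k$ is expressible as a linear combination, with smooth and uniformly bounded coefficients, of iterated $\Omega$-derivatives of order at most $k$. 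Applying the standard Euclidean Sobolev embedding $H^s \hookrightarrow L^\infty$ on each $(n-1)$-dimensional chart for $s$ the smallest integer exceeding $(n-1)/2$, which is compatible with the stated bound $|\beta| \leq n/2$, and summing the local estimates via the partition of unity yields the claim.

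There is no substantive obstacle in either case; both statements are classical. The only mildly technical point is the construction of the local frame on $\mathbb{S}^{n-1}$, which reduces to the elementary linear-algebraic fact that at every $\xi \in \mathbb{S}^{n-1}$ one can extract $n-1$ of the $\binom{n}{2}$ rotations $\Omega_{ij}$ whose images in $T_\xi \mathbb{S}^{n-1}$ are linearly independent, followed by a compactness argument producing the required finite cover.
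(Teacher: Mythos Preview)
Your proof is correct and follows a standard route for both inequalities. The paper does not actually prove this lemma: it introduces the statement with the words ``We first recall two classical Sobolev inequalities'' and provides no argument, treating both estimates as well-known. There is therefore no proof in the paper to compare yours against; your sketch simply fills in what the paper takes for granted.
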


In order to treat the interior of the light cone (or rather the domain in which $|x| \leq 1+\frac{1}{2}t$), we will use.

\begin{Lem}\label{decayint}

Let $U$ be a smooth tensor field defined in the Euclidian space $\mathbb{R}^n$. Then,
$$\forall \hspace{0.5mm} t \in \mathbb{R}_+, \hspace{2mm} \sup_{|x| \leq 1+\frac{t}{2}} |U(x)| \lesssim \frac{1}{(1+t)^{\frac{n}{2}}} \sum_{k=0}^{\frac{n+2}{2}} (1+t)^k \| \nabla^k U \|_{L^2 \left( \{|y| \leq 3+\frac{3}{4}t \} \right)}.$$

\end{Lem}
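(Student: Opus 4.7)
The strategy is a pure rescaling argument that reduces the estimate to the first (flat) Sobolev inequality of Lemma \ref{ClassicalSob}, which gives the bound on a unit ball. Since the norm $|U|$ is defined componentwise in Cartesian coordinates, it is enough to prove the inequality for a scalar function and then apply it to each component of $U$; consequently I will argue as if $U$ were scalar.

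Fix $t \geq 0$, set $\lambda := (1+t)/4$, fix $x$ with $|x| \leq 1+t/2$, and introduce the rescaled function
$$\tilde U(y) := U(\lambda y), \hspace{3mm} \bar x := x/\lambda.$$
Then $|\bar x| \leq 4(1+t/2)/(1+t) \leq 4$, so the classical Sobolev inequality of Lemma \ref{ClassicalSob} (applied on a ball of fixed radius around $\bar x$, which only costs an implicit constant depending on $n$) yields
$$|U(x)| = |\tilde U(\bar x)| \lesssim \sum_{|\beta| \leq (n+2)/2} \|\partial^\beta \tilde U\|_{L^2_y(|y-\bar x|\leq 1)}.$$
A direct computation gives $\partial^\beta \tilde U(y) = \lambda^{|\beta|}(\partial^\beta U)(\lambda y)$, and the change of variable $z = \lambda y$ on the right-hand side gives
$$\|\partial^\beta \tilde U\|_{L^2_y(|y-\bar x|\leq 1)} = \lambda^{|\beta|-n/2} \|\partial^\beta U\|_{L^2_z(|z-x|\leq \lambda)}.$$

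It remains to check that the integration domain $\{|z-x| \leq \lambda\}$ sits inside $\{|z| \leq 3+3t/4\}$: indeed, $|z| \leq |x|+\lambda \leq (1+t/2)+(1+t)/4 = (5+3t)/4 \leq 3+3t/4$. Substituting the choice $\lambda = (1+t)/4$ (which only affects the implicit constant) gives
$$|U(x)| \lesssim \sum_{k=0}^{(n+2)/2} (1+t)^{k-n/2} \|\nabla^k U\|_{L^2(|z| \leq 3+3t/4)},$$
which is exactly the claim after factoring $(1+t)^{-n/2}$. The whole argument is elementary; the only minor point to be careful about is the choice of the scaling constant, which has to be small enough that the enlarged ball still fits inside $\{|y| \leq 3+3t/4\}$, and large enough that the classical Sobolev can be applied on a ball of bounded radius, so there is no real obstacle.
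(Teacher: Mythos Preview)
Your proof is correct and follows essentially the same rescaling argument as the paper's. The only cosmetic difference is that the paper splits into the cases $t\le 1$ (direct Sobolev) and $t\ge 1$ (rescale by $t/4$), whereas your choice $\lambda=(1+t)/4$ handles both regimes at once; also the paper rescales around $x$ via $y\mapsto U(x+\tfrac{t}{4}y)$ while you rescale around the origin, but these differ only by a translation.
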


\begin{proof}

As it suffices to prove the result for each component of the tensor, we assume that $U$ is a scalar function. Let $t \in \mathbb{R}_+$ and $|x| \leq 1+ \frac{1}{2}t$. If $t \leq 1$, then $|x| \leq 2$, so, according to Lemma \ref{ClassicalSob},
$$|U(x)| \lesssim \sum_{|\beta| \leq \frac{n+2}{2}} \| \nabla^{\beta} U \|_{L^2_y(|y| \leq 3)}.$$

Now, if $t \geq 1$, we apply Lemma \ref{ClassicalSob} to $y \mapsto U(x+\frac{t}{4}y)$. It comes that (after a change of variables)

$$|U(x)| \lesssim \left( \frac{t}{4} \right)^{-\frac{n}{2}} \sum_{|\beta| \leq \frac{n+2}{2}} \left( \frac{t}{4} \right)^{|\beta|} \| \nabla^{\beta} U \|_{L^2_y(|y-x| \leq \frac{t}{4})}.$$

It remains to observe that $|y-x| \leq \frac{t}{4}$ imply $|y| \leq 1+\frac{3}{4}t$.

\end{proof}

For the other region ($|x| \geq 1+\frac{1}{2}t$), we have the following inequality.

\begin{Lem}\label{Sob}
Let $U$ be a sufficiently regular tensor field, which in particular vanishes at $\infty$, defined in the euclidian space $\mathbb{R}^n$. Then, for $t \in \R_+$,
$$\forall \hspace{0.5mm} x \neq 0, \hspace{2mm} |U(x)| \lesssim \frac{1}{|x|^{\frac{n-1}{2}}\tau_-^{\frac{1}{2}}} \left( \int_{|y| \geq |x|} |U(y)|^2_{\mathbb{O},\frac{n}{2}}+\tau_-^2|\nabla_{\partial_r} U(y) |^2_{\mathbb{O},\frac{n}{2}} dy \right)^{\frac{1}{2}}.$$

\end{Lem}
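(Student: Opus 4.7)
The plan is to separate the angular and radial directions, using the spherical Sobolev inequality of Lemma \ref{ClassicalSob} to generate the $r^{-(n-1)/2}$ factor, then extracting the $\tau_-^{-1/2}$ factor through a one-dimensional fundamental-theorem-of-calculus argument together with AM--GM.

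First I would reduce to the scalar case by working component by component (the Cartesian connection coefficients vanish, so Lie derivatives along $\Omega_{ij}$ act component-wise). Fix $r=|x|>0$, set $\xi=x/r$, and apply the spherical Sobolev inequality of Lemma \ref{ClassicalSob} to $\omega\mapsto U(r\omega)$ on $S^{n-1}$ to obtain
$$|U(r\xi)|^2 \lesssim h(r) := \sum_{|\beta|\leq n/2} \int_{S^{n-1}} |\mathcal{L}_{\Omega^{\beta}} U(r\omega)|^2 d\omega,$$
where $\Omega^\beta\in\mathbb{O}^{|\beta|}$. The key algebraic point is that $[\partial_r,\Omega_{ij}]=0$, which is a direct computation (both contributions equal $\tfrac{x^ix^j}{r^2}\partial_r$ and cancel). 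Consequently $\partial_\rho \mathcal{L}_{\Omega^\beta}U = \mathcal{L}_{\Omega^\beta}\partial_\rho U$, and the rotations preserve the sphere in the sense that they are tangential.

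Next I would differentiate $h$: one has $h'(\rho)=2\sum_\beta \int_{S^{n-1}} \mathcal{L}_{\Omega^\beta}U\cdot \mathcal{L}_{\Omega^\beta}(\partial_\rho U)\,d\omega$, so by Cauchy--Schwarz
$|h'(\rho)|\leq 2\sqrt{h(\rho)}\sqrt{\tilde h(\rho)}$ with
$\tilde h(\rho):=\sum_{|\beta|\leq n/2}\int_{S^{n-1}}|\mathcal{L}_{\Omega^\beta}\partial_\rho U|^2 d\omega.$
Since $\tau_-'(\rho)=(\rho-t)/\tau_-(\rho)$ satisfies $|\tau_-'|\leq 1$, AM--GM yields
$$|\partial_\rho(\tau_- h)|\leq h+2\tau_-\sqrt{h\tilde h}\leq 2h+\tau_-^2\tilde h.$$
Assuming the right-hand side of the target inequality is finite (otherwise there is nothing to prove), one can find a sequence $R_k\to\infty$ along which $\tau_-(R_k)h(R_k)\to 0$ (the integral of the bounding quantity is finite, so $h(\rho)$ decays rapidly enough along a subsequence; this uses the vanishing of $U$ at infinity). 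Integrating from $r$ to $R_k$ and letting $k\to\infty$ gives
$$\tau_-(r)\,h(r)\leq \int_r^{\infty}\bigl(2h(\rho)+\tau_-^2(\rho)\tilde h(\rho)\bigr)d\rho.$$

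Finally I would convert the spherical integrals back to Euclidean ones using $dy=\rho^{n-1}d\omega\,d\rho$ and $\rho\geq r$:
$$\int_r^\infty h(\rho)\,d\rho = \sum_{|\beta|\leq n/2}\int_{|y|\geq r}\frac{|\mathcal{L}_{\Omega^\beta}U(y)|^2}{|y|^{n-1}}\,dy \leq \frac{1}{r^{n-1}}\int_{|y|\geq r}|U|^2_{\mathbb{O},n/2}\,dy,$$
and analogously
$\int_r^\infty\tau_-^2\tilde h\,d\rho \leq r^{-(n-1)}\int_{|y|\geq r}\tau_-^2|\nabla_{\partial_r}U|^2_{\mathbb{O},n/2}\,dy.$
Combined with $|U(x)|^2\lesssim h(r)$, this produces exactly the claimed bound. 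The main obstacle is the boundary-at-infinity step: the vanishing of $\tau_-(\rho)h(\rho)$ as $\rho\to\infty$ is not automatic from $U\to 0$ alone, and requires the finiteness of the weighted $L^2$ quantity on the right to extract a good subsequence (alternatively, one proves the inequality first for compactly supported $U$ and extends by density in the weighted norm appearing on the right).
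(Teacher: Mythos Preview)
Your proof is correct and follows essentially the same strategy as the paper's: both combine the spherical Sobolev inequality from Lemma~\ref{ClassicalSob}, the commutation $[\partial_r,\Omega_{ij}]=0$, a one-dimensional fundamental-theorem-of-calculus argument on $\tau_- |U|^2$ together with $2|ab|\le a^2+b^2$, and the insertion of $(\lambda/r)^{n-1}\ge 1$ to manufacture the Euclidean volume element. The only difference is the order of operations: the paper first performs the radial integration for fixed $\xi\in\mathbb{S}^{n-1}$ (obtaining an $L^2(\mathbb{S}^{n-1})$ bound on $U(r\,\cdot)$) and then applies the spherical Sobolev inequality, whereas you apply the spherical Sobolev inequality first to define $h(r)$ and then integrate radially. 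Both orderings are equivalent here, and your handling of the boundary term at infinity is at the same level of rigor as the paper's (which implicitly uses the ``vanishes at $\infty$'' hypothesis in the same way).
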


\begin{proof}

As $\sum_{|\beta| \leq k} |\nabla_{Z^{\beta}} U|^2 \lesssim |U|^2_{\mathbb{O},k}$, for $Z^{\beta} \in \mathbb{O}^{|\beta|}$, we only have to prove the result for each component of $U$ and we can assume that $U$ is a scalar function. 

Let $x \neq 0$ such that $x=r \xi$, with $r=|x|$ and $\xi \in \mathbb{S}^{n-1}$. Since $\partial_r \left( (\sqrt{\tau_-}U)^2 \right) = 2\sqrt{\tau_-}U\partial_r (\sqrt{\tau_-}U)$,
$$\tau_-|U(r \xi)|^2 \lesssim r^{-(n-1)} \int_r^{+\infty} |\sqrt{\tau_-}U(\lambda \xi)|\partial_r(\sqrt{\tau_-} U)(\lambda \xi)| \lambda^{n-1} d \lambda .$$
Therefore, an integration over $\mathbb{S}^{n-1}$ and the inequality $2|ab| \leq a^2+b^2$ gives us
$$\| U(r \xi) \|_{L^2_{\xi}( \mathbb{S}^{n-1})} \lesssim r^{-\frac{n-1}{2}}\tau_-^{-\frac{1}{2}}  \left( \int_{|y| \geq r} |U(y)|^2+\tau_-^2|\partial_r U(y) |^2 dy \right)^{\frac{1}{2}}.$$
As every vector field of $\mathbb{O}$ commute with $\partial_r$, we obtain, using Lemma \ref{ClassicalSob},

$$|U(x)| \lesssim r^{-\frac{n-1}{2}}\tau_-^{-\frac{1}{2}} \left( \int_{|y| \geq r} |U(y)|_{\mathbb{O},\frac{n}{2}}^2+\tau_-^2|\partial_r U(y) |_{\mathbb{O},\frac{n}{2}}^2 dy \right)^{\frac{1}{2}}.$$
\end{proof}

\subsection{Pointwise decay estimate for the null decomposition of the electromagnetic field}

In this section, we recall some inequalities coming from \cite{CK} between quantites linked to the null decomposition of a $2$-form (see Section \ref{notations} for its definition) and we then prove pointwise decay estimates on it. However, we cannot adapt the method used in \cite{CK} to establish, in dimension $3$, the optimal decay estimate on the null component $\alpha$. To circomvent this difficuty, we make crucial use of an electromagnetic potential satisfying the Lorenz gauge. We first introduce some notations.

\begin{Def}
Let $F$ be a $2$-form. We define its pointwise norm $|F|^{\#}$ by
$$|F|^{\#}=\sqrt{\tau_+^2|\alpha|^2+\tau_-^2|\underline{\alpha}|^2+(\tau_-^2+\tau_+^2)(|\rho|^2+|\sigma|^2)},$$
which is also equal to $\sqrt{4 T[F](\overline{K}_0,\partial_t)}$. \\
We also define, for $\mathbb{L}=\mathbb{O}$ or $\mathbb{L}=\mathbb{K}$ and $k \in \mathbb{N}$,

$$|F|^{\#}_{\mathbb{L},k}=\sqrt{ \sum_{|\beta| \leq k} \left( |\mathcal{L}_{Z^{\beta}}F|^{\#} \right)^2 },$$
with $Z^{\beta} \in \mathbb{L}^{|\beta|}$.

Similarly, we define
$$|F|=\sqrt{|\alpha|^2+|\underline{\alpha}|^2+2(|\rho|^2+|\sigma|^2)}$$
and
$$|F|_{\mathbb{L},k}=\sqrt{ \sum_{\begin{subarray}{l} Z^{\beta} \in \mathbb{L}^{|\beta|} \\ \hspace{1mm} |\beta| \leq k \end{subarray}}  |\mathcal{L}_{Z^{\beta}}F|^2 }.$$

\end{Def}

\begin{Rq}\label{rqgain}

By definition of $|F|^{\#}$, it comes that $\tau_- |F| \leq |F|^{\#}$.

\end{Rq}

We have the following inequality (cf Remark \ref{extradecay}).

\begin{Lem}\label{gainderiv}

Let $F$ be a $2$-form and $k$ a non-negative integer. Then
$$\forall \hspace{0.5mm} |\beta| = k, \hspace{2mm} |\nabla^{\beta} F|^{\#} \lesssim \tau_-^{-k} |F|^{\#}_{\mathbb{K},k}.$$

\end{Lem}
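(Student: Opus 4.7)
My plan is to prove the lemma by induction on $k$, reducing the whole statement to the base case $k = 1$ (which already contains the entire geometric content) and then grinding it up by a commutator argument between partial derivatives and Lie derivatives along elements of $\mathbb{K}$.

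\textbf{Base case $k=1$: $|\nabla F|^{\#} \lesssim \tau_-^{-1}|F|^{\#}_{\mathbb{K},1}$.} I would split spacetime into an ``interior'' region $r \leq \tfrac{1}{2}(1+t)$ and an ``exterior'' region $r \geq \tfrac{1}{2}(1+t)$. In the interior, $\tau_- \sim \tau_+ \sim 1+t$, so $|\nabla F|^{\#} \lesssim \tau_+ |\nabla F|$ and each Cartesian $\partial_\mu F$ can be written using the Killing vector identities $S = t\partial_t + x^i\partial_i$ and $\Omega_{0\mu} = t\partial_\mu + x^\mu \partial_t$, yielding $(1+t)|\nabla F| \lesssim |F|_{\mathbb{K},1}$ (the extra $r/\tau_+$ factors being bounded by $1/2$); Remark \ref{rqgain} then upgrades this to $|\nabla F|^{\#} \lesssim \tau_-^{-1}|F|^{\#}_{\mathbb{K},1}$. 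In the exterior region I would pass to the null frame $(L,\underline{L},e_B)$, decomposing $\partial_\mu = \alpha^L_\mu L + \alpha^{\underline{L}}_\mu \underline{L} + \alpha^B_\mu e_B$ with $O(1)$ coefficients. A direct computation in Cartesian components shows that $L$ and $\underline{L}$ annihilate the Cartesian components of $L,\underline{L},e_B$ (their coefficients $1,\pm x^i/r$ are annihilated by $\partial_t$ and by $\omega^j \partial_j$), so that
\begin{equation*}
(LF)(e_a,e_b) = L(F(e_a,e_b)), \qquad (\underline{L}F)(e_a,e_b) = \underline{L}(F(e_a,e_b)),
\end{equation*}
while $(e_C F)(e_a,e_b) = e_C(F(e_a,e_b)) + O(r^{-1})|F|$. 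The scalar estimates
$|L\psi| \lesssim \tau_+^{-1}|\psi|_{\mathbb{K},1}$, $|\underline{L}\psi| \lesssim \tau_-^{-1}|\psi|_{\mathbb{K},1}$, $|e_C\psi| \lesssim r^{-1}|\psi|_{\mathbb{O},1}$ (consequences of Remark \ref{extradecay} and, near the light cone $t=r$, of the trivial bound $|\underline{L}\psi| \leq \sum_\mu |\partial_\mu \psi|$) then control each scalar derivative of $F(e_a,e_b)$. To turn $Z(F(e_a,e_b))$ into a null component of $\mathcal{L}_Z F$ I use
\begin{equation*}
Z(F(e_a,e_b)) = (\mathcal{L}_Z F)(e_a,e_b) + F(\mathcal{L}_Z e_a, e_b) + F(e_a, \mathcal{L}_Z e_b),
\end{equation*}
noticing that for $Z \in \mathbb{K}$ the Lie derivatives $\mathcal{L}_Z e_a$ are $O(1)$ combinations of the null frame. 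Reassembling with the correct weights $\tau_\pm$, the $O(r^{-1})|F|$ corrections are absorbed because in the exterior $r \sim \tau_+$ and $|F| \lesssim \tau_-^{-1}|F|^{\#}$, giving $r^{-1}|F| \lesssim \tau_+^{-1}\tau_-^{-1}|F|^{\#}$.

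\textbf{Inductive step.} Assume the bound for order $k-1$, and take $|\beta|=k$. Write $\nabla^\beta F = \nabla^{\beta'} \partial_\mu F$ with $|\beta'|=k-1$, and apply the induction hypothesis to the $2$-form $G := \partial_\mu F$ to obtain $|\nabla^{\beta'} G|^{\#} \lesssim \tau_-^{-(k-1)} |G|^{\#}_{\mathbb{K},k-1}$. The point is then to commute $\mathcal{L}_{Z^\gamma}$ past $\partial_\mu$: since every $Z \in \mathbb{K}$ has linear Cartesian coefficients, $\partial\partial Z = 0$, and a direct computation gives
\begin{equation*}
\partial_\lambda \mathcal{L}_Z F - \mathcal{L}_Z \partial_\lambda F = \partial_\lambda Z^\sigma \, \partial_\sigma F,
\end{equation*}
so that each $\mathcal{L}_{Z^\gamma} \partial_\mu F$ (with $|\gamma|\leq k-1$) is a bounded-coefficient linear combination of terms $\partial_\nu \mathcal{L}_{Z^{\gamma'}} F$ with $|\gamma'|\leq |\gamma|$. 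Applying the base case to each such $\mathcal{L}_{Z^{\gamma'}} F$ gives $|\nabla \mathcal{L}_{Z^{\gamma'}} F|^{\#} \lesssim \tau_-^{-1}|\mathcal{L}_{Z^{\gamma'}}F|^{\#}_{\mathbb{K},1} \leq \tau_-^{-1}|F|^{\#}_{\mathbb{K},k}$. Therefore $|G|^{\#}_{\mathbb{K},k-1} \lesssim \tau_-^{-1}|F|^{\#}_{\mathbb{K},k}$, and multiplying by $\tau_-^{-(k-1)}$ yields the desired $|\nabla^\beta F|^{\#} \lesssim \tau_-^{-k}|F|^{\#}_{\mathbb{K},k}$.

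\textbf{Main obstacle.} The whole delicacy sits in the base case: one must carefully track the geometric corrections produced by differentiating a frame that is not parallel (the $O(r^{-1})$ terms), check that the identities of Remark \ref{extradecay} yield $\tau_-^{-1}$ gains in the directions where $|F|^{\#}$ only provides the weaker weight $\tau_-$, and make sure the interior/exterior split handles both the origin ($r=0$, where the null frame degenerates) and the light cone ($t=r$, where the expression $(t-r)\underline{L} = S - \omega^i \Omega_{0i}$ becomes degenerate). Once the base case is cleanly stated, the induction itself is a formal commutator computation using that $\partial^2 Z \equiv 0$ on $\mathbb{K}$.
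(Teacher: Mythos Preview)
The paper gives no proof beyond the pointer ``(cf Remark~\ref{extradecay})''; the estimate is borrowed from Christodoulou--Klainerman. Your scheme---induction on $k$, with all the geometry concentrated in the base case via the null frame and the identities $(t\mp r)(L,\underline{L})=S\pm\omega^i\Omega_{0i}$, $re_B=\sum C^{ij}_B\Omega_{ij}$---is the intended one, and your inductive step (commuting $\mathcal{L}_{Z^\gamma}$ past $\partial_\mu$ using $\partial^2 Z\equiv 0$ on $\mathbb{K}$) is clean and correct.

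There is, however, a real gap in your base case. You assert that the brackets $\mathcal{L}_Z e_a=[Z,e_a]$ are ``$O(1)$ combinations of the null frame'' and treat this as sufficient. It is not: if $[Z,L]$ had an $O(1)$ $\underline{L}$-component, then in estimating $\tau_+|\alpha_B(\partial_\mu F)|$ through the $\underline{L}$-part of $\partial_\mu$ the correction $F(e_B,[Z,L])$ would contribute $\tau_+\cdot\tau_-^{-1}\cdot O(1)\,|\underline{\alpha}_B(F)|\lesssim(\tau_+/\tau_-)\,\tau_-^{-1}|F|^{\#}$, which is uncontrolled near the cone. What actually closes the argument is a sharper structural fact you do not state: for every $Z\in\mathbb{K}$ the bracket $[Z,L]$ has \emph{no} $\underline{L}$-component (one checks $[S,L]=-L$, $[\Omega_{0k},L]=-\omega^kL+\tfrac{t-r}{r}\sum_B e_B^k\,e_B$, $[\Omega_{ij},L]=0$, $[\partial_i,L]=r^{-1}\sum_B e_B^i\,e_B$), and dually $[Z,\underline{L}]$ has no $L$-component; where $\underline{\alpha}$ does leak into the $\rho,\sigma$ computations it comes with coefficient $O(\tau_-/\tau_+)$ rather than $O(1)$. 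With these refinements every correction term lands on a null component whose $|\cdot|^{\#}$-weight is compatible, and your sketch then goes through. Your ``main obstacle'' paragraph correctly senses this issue but does not resolve it.
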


We also have, according to \cite{CK}.

\begin{Lem}\label{null}

Let $F$ be a $2$-form and $(\alpha, \underline{\alpha}, \rho, \sigma)$ its null decomposition. Then, for all $k \in \mathbb{N}$,
\begin{flalign*}
& \hspace{0mm} \sum_{l=0}^k\sum_{i+j=l} \tau_-^{2i}r^{2j}\Big(|\nabla_{\underline{L}}^i \nabla_{L}^j \underline{\alpha}|^2_{\mathbb{O},k-i-j}+|\nabla_{\underline{L}}^i \nabla_{L}^j \alpha|^2_{\mathbb{O},k-i-j} & \\
& \hspace{4cm}+|\nabla_{\underline{L}}^i \nabla_{L}^j \rho|^2_{\mathbb{O},k-i-j}+|\nabla_{\underline{L}}^i \nabla_{L}^j \sigma|^2_{\mathbb{O},k-i-j}\Big) \lesssim  |F|_{\mathbb{K},k}^2. &
\end{flalign*}
and
\begin{flalign*}
& \hspace{0mm} \sum_{l=0}^k\sum_{i+j=l} \tau_-^{2i}r^{2j}\Big(\tau_-^2|\nabla_{\underline{L}}^i \nabla_{L}^j \underline{\alpha}|^2_{\mathbb{O},k-i-j}+r^2(|\nabla_{\underline{L}}^i \nabla_{L}^j \alpha|^2_{\mathbb{O},k-i-j} & \\
& \hspace{3.3cm}+|\nabla_{\underline{L}}^i \nabla_{L}^j \rho|^2_{\mathbb{O},k-i-j}+|\nabla_{\underline{L}}^i \nabla_{L}^j \sigma|^2_{\mathbb{O},k-i-j} )\Big) \lesssim \left( |F|^{\#}_{\mathbb{K},k} \right)^2. &
\end{flalign*}

\end{Lem}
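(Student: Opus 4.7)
I would prove both inequalities by induction on $k$, following the approach of \cite{CK}. The base case $k=0$ is immediate: since $|F|_{\mathbb{K},0} = |F|$ and $|F|^{\#}_{\mathbb{K},0} = |F|^{\#}$, both statements reduce to the definitions
$|F|^2 = |\alpha|^2+|\underline{\alpha}|^2+2(|\rho|^2+|\sigma|^2)$ and $(|F|^{\#})^2 = \tau_+^2|\alpha|^2+\tau_-^2|\underline{\alpha}|^2+(\tau_+^2+\tau_-^2)(|\rho|^2+|\sigma|^2)$.

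For the inductive step, the strategy is to rewrite each weighted term $\tau_-^i r^j \nabla_{\underline{L}}^i \nabla_L^j \zeta$, with $\zeta \in \{\alpha, \underline{\alpha}, \rho, \sigma\}$, as a sum of null components of Lie derivatives $\mathcal{L}_{Z^{\gamma}} F$ with $Z^{\gamma} \in \mathbb{K}^{|\gamma|}$ and $|\gamma| \leq k$, plus undifferentiated correction terms that the induction hypothesis handles. The fundamental inputs are the identities recalled in Remark \ref{extradecay}: $(t+r)L = S + (x^i/r)\Omega_{0i}$, $(t-r)\underline{L} = S - (x^i/r)\Omega_{0i}$, and $r e_B = \sum_{i<j} C_B^{i,j}\Omega_{ij}$ with bounded coefficients. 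Using $|t-r| \leq \tau_-$ and $r \leq \tau_+$, these identities let one trade each of $\tau_- \nabla_{\underline{L}}$, $\tau_+ \nabla_L$, $r \nabla_{e_B}$ for a first-order operator built from $\mathbb{K}$. In the region $\{|t-r|\leq 1\}$ where division by $t-r$ would be singular, one has $\tau_- \sim 1$ and $\underline{L} = \partial_t - (x^i/r)\partial_i$ is itself a bounded combination of vector fields in $\mathbb{K}$, so there is no loss.

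The second input is the algebraic relation between $\mathcal{L}_Z F$, for $Z \in \mathbb{K}$, and $Z$ acting on the null components of $F$. Since each bracket $[Z, L]$, $[Z, \underline{L}]$, $[Z, e_B]$ is a bounded linear combination of $L, \underline{L}, e_B$ (reflecting the conformal Killing character of $\mathbb{K}$), one obtains
$\zeta(\mathcal{L}_Z F) = Z \zeta(F) + \textrm{(bounded combination of null components of }F\textrm{)}$
for each null component $\zeta$. Iterating this identity together with the weighted replacements above, applied inductively to $\nabla_{\underline{L}}^i \nabla_L^j \nabla_{e_{B_1}}\cdots \nabla_{e_{B_m}} \zeta$ with $i+j+m \leq k$, and using the induction hypothesis to control all lower-order undifferentiated correction terms, delivers the first inequality. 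The second inequality follows from the same induction, tracking the additional $\tau_+$ weights on $\alpha, \rho, \sigma$ and the $\tau_-$ weight on $\underline{\alpha}$ that enter the definition of $|F|^{\#}$.

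The main technical obstacle is the bookkeeping of the commutator and spherical connection corrections: each substitution of a weighted null derivative by a Lie derivative produces lower-order terms whose weights must never exceed those of the replaced term, so that the induction hypothesis closes. The conformal Killing structure of $\mathbb{K}$ is what makes this accounting uniform; in particular, the angular connection coefficients are $O(1/r)$ on the spheres, and multiplying by $r$ to match the weight $r^{2j}$ exactly compensates for them, so no extra $\tau_+$ loss is ever generated.
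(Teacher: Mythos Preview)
The paper does not give its own proof of this lemma: it simply cites \cite{CK} for the second inequality and remarks that the first ``can be treated similarly''. Your inductive scheme---trading $\tau_-\nabla_{\underline{L}}$, $r\nabla_L$, $r\nabla_{e_B}$ for Lie derivatives along $\mathbb{K}$ via the identities of Remark~\ref{extradecay}, and absorbing the commutator corrections with the induction hypothesis---is precisely the Christodoulou--Klainerman argument the paper is invoking, so your approach is the intended one. One small slip: the weight on $\nabla_L$ in the statement is $r$, not $\tau_+$; the identity $(t+r)L=S+(x^i/r)\Omega_{0i}$ combined with $r\le t+r$ (or, when $r\le 1$, the trivial bound via translations) gives control of $r\nabla_L$ rather than $\tau_+\nabla_L$, which is exactly what is needed.
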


The first inequality is not proved in \cite{CK} but can be treated similarly as the second one.

The following corollary will be useful, particularly for the massless case in dimension $4$, to obtain an extra decay on $\underline{\alpha}$ away from the light cone.

\begin{Cor}\label{esti}

Using the same notations as in the previous lemma, we have, for $F$ a $2$-form,
$$|\sqrt{\tau_-}\underline{\alpha}|^2_{\mathbb{O},k}+\tau_-^2|\nabla_r (\sqrt{\tau_-}\underline{\alpha})|^2_{\mathbb{O},k-1} \lesssim \tau_- |F|^2_{\mathbb{K},k}.$$

\end{Cor}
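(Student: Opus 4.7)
The plan is to combine the two inequalities of Lemma \ref{null} after expanding $\nabla_r(\sqrt{\tau_-}\underline{\alpha})$ via Leibniz, systematically exploiting that $\tau_-$ is a function of $u=t-r$ alone and in particular is rotation-invariant.

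For the first term, since $\tau_-$ depends only on $t$ and $r$, any $Z\in \mathbb{O}$ annihilates it, so $\mathcal{L}_{Z^\beta}(\sqrt{\tau_-}\,\underline{\alpha}) = \sqrt{\tau_-}\,\mathcal{L}_{Z^\beta}\underline{\alpha}$ for $Z^\beta \in \mathbb{O}^{|\beta|}$. Hence
\[
|\sqrt{\tau_-}\,\underline{\alpha}|^2_{\mathbb{O},k} \;=\; \tau_-\,|\underline{\alpha}|^2_{\mathbb{O},k} \;\lesssim\; \tau_-\,|F|^2_{\mathbb{K},k},
\]
using the $(i,j)=(0,0)$ term of the first inequality of Lemma \ref{null}.

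For the second term, observe the crucial identity $L(\tau_-) = (\partial_t+\partial_r)\sqrt{1+(t-r)^2} = 0$, whereas $\underline{L}(\sqrt{\tau_-}) = u/\tau_-^{3/2}$. Writing $2\partial_r = L-\underline{L}$ and applying Leibniz gives
\[
2\nabla_r(\sqrt{\tau_-}\,\underline{\alpha}) \;=\; \sqrt{\tau_-}\,\nabla_L\underline{\alpha} \;-\; \sqrt{\tau_-}\,\nabla_{\underline{L}}\underline{\alpha} \;-\; \frac{u}{\tau_-^{3/2}}\,\underline{\alpha}.
\]
Using $|u|\leq \tau_-$, rotation-invariance of $\tau_-$ (so $\mathcal{L}_{Z^\beta}$ passes through the scalar factors for $Z^\beta\in\mathbb{O}^{|\beta|}$), and squaring, one obtains
\[
\tau_-^2\,|\nabla_r(\sqrt{\tau_-}\,\underline{\alpha})|^2_{\mathbb{O},k-1} \;\lesssim\; \tau_-^3\,|\nabla_L\underline{\alpha}|^2_{\mathbb{O},k-1} + \tau_-^3\,|\nabla_{\underline{L}}\underline{\alpha}|^2_{\mathbb{O},k-1} + \tau_-\,|\underline{\alpha}|^2_{\mathbb{O},k-1}.
\]
The last two pieces are handled directly: $\tau_-\,|\underline{\alpha}|^2$ is bounded by $\tau_-|F|^2_{\mathbb{K},k}$ as above, and
$\tau_-^3|\nabla_{\underline{L}}\underline{\alpha}|^2 = \tau_-\cdot\bigl(\tau_-^2|\nabla_{\underline{L}}\underline{\alpha}|^2\bigr) \lesssim \tau_-|F|^2_{\mathbb{K},k}$ by the $(i,j)=(1,0)$ term of Lemma \ref{null}.

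The main obstacle is the term $\tau_-^3|\nabla_L\underline{\alpha}|^2_{\mathbb{O},k-1}$, since Lemma \ref{null} only provides $r^2|\nabla_L\underline{\alpha}|^2 \lesssim |F|^2_{\mathbb{K},k}$, which suffices only in the ``good'' region $r\gtrsim\tau_-$ (in particular near the light cone). In the complementary region (well inside the light cone or near the origin), I would exploit the identity $\underline{u}\, L = 2S - u\,\underline{L}$ coming from $2S=\underline{u} L+u\underline{L}$ to rewrite $\underline{u}\,\nabla_L\underline{\alpha}$ as a combination of $\mathcal{L}_S F$-controlled terms and $\nabla_{\underline{L}}\underline{\alpha}$, and invoke the second, $|F|^\#$-weighted inequality of Lemma \ref{null} (which, at $(i,j)=(1,0)$, supplies the extra $\tau_-^2$ factor on $|\nabla_{\underline{L}}\underline{\alpha}|^2$ and gains a $\tau_+^2$ factor matching $\underline{u}^2$) to close the estimate. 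Care must be taken with the commutators between $\partial_r$, the rotations $\Omega\in\mathbb{O}$, and the null frame; these commutators preserve the sphere foliation and so are absorbed into the sum over $|\mathbb{O},k-1|$, contributing only lower-order angular terms which are in turn controlled by the $(i,j)=(0,0)$ piece of Lemma \ref{null}.
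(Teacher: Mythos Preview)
Your overall scheme—Leibniz rule, the bound $|\nabla_r\sqrt{\tau_-}|\leq\tau_-^{-1/2}$, the identity $2\nabla_r=L-\underline{L}$, and then Lemma~\ref{null}—is exactly the paper's proof. You are also right to flag that Lemma~\ref{null}, as stated with weight $r^{2j}$ on $\nabla_L^j$, does not literally yield $\tau_-^2|\nabla_L\underline{\alpha}|^2_{\mathbb{O},k-1}\lesssim|F|^2_{\mathbb{K},k}$ at points where $\tau_-\gg r$.

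The gap is in your fix. Invoking the \emph{second} inequality of Lemma~\ref{null} gives a bound by $(|F|^\#_{\mathbb{K},k})^2$, not by $|F|^2_{\mathbb{K},k}$, and since $|F|^\#\geq\tau_-|F|$ this is strictly weaker than the claimed corollary and cannot be recovered. If you want to make the pointwise statement hold everywhere, keep your identity $\underline{u}\,L=2S-u\,\underline{L}$ but stay within the \emph{first} inequality of Lemma~\ref{null}: relating $\nabla_S\underline{\alpha}$ to $\underline{\alpha}(\mathcal{L}_S F)$ (up to a multiple of $\underline{\alpha}$) and using the $(i,j)=(1,0)$ term for $\nabla_{\underline{L}}\underline{\alpha}$ gives $(t+r)^2|\nabla_L\underline{\alpha}|^2_{\mathbb{O},k-1}\lesssim|F|^2_{\mathbb{K},k}$; combining with the unweighted bound $|\nabla_L\underline{\alpha}|_{\mathbb{O},k-1}\lesssim|F|_{\mathbb{K},k}$ (since $L=\partial_t+\tfrac{x^i}{r}\partial_i$ is a bounded combination of translations in $\mathbb{K}$) then yields $\tau_-^2|\nabla_L\underline{\alpha}|^2\leq(1+(t+r)^2)|\nabla_L\underline{\alpha}|^2\lesssim|F|^2_{\mathbb{K},k}$. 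Alternatively, note that the paper only ever applies this corollary in the region $|x|\geq 1+\tfrac{t}{2}$ (proof of Proposition~\ref{decayFpointwise}), where $\tau_-\lesssim r$ and the issue does not arise; the paper's one-line proof is implicitly written with that region in mind.
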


\begin{proof}
One only has to use that
$$|\nabla_r \sqrt{\tau_-}| \leq \tau_-^{-\frac{1}{2}}, \hspace{5mm} 2\nabla_r=L-\underline{L}$$
and the previous lemma.

\end{proof}

Let us show how to establish pointwise decay estimates on the null decomposition of the electromagnetic field with these inequalities.

\begin{Pro}\label{decayFpointwise}
Let $G$ be a $2$-form and $J$ be a $1$-form, both defined on $[0,T[ \times \R^n$, such that
\begin{eqnarray}
\nonumber \nabla^{\mu} G_{\mu \nu} & =& J_{\nu}, \\ \nonumber
\nabla^{\mu} {}^* \! G_{ \mu \lambda_1 ... \lambda_{n-2} } & = & 0.
\end{eqnarray}
If $G$ and $J$ are sufficiently regular, we have, for all $(t,x) \in [0,T[ \times \R^n$,

$$\hspace{1mm} |\alpha(G)|(t,x), \hspace{2mm} |\rho(G)|(t,x)  , \hspace{2mm} |\sigma(G)|(t,x)  \lesssim \frac{\sqrt{ \mathcal{E}_{\frac{n+2}{2}}[G](t)}}{\tau_+^{\frac{n+1}{2}}\tau_-^{\frac{1}{2}}}, $$ 
\begin{equation}\label{eq:underalpha1}
\hspace{-4.5cm} |\underline{\alpha}(G)|(t,x) \lesssim \frac{\sqrt{ \mathcal{E}_{\frac{n+2}{2}}[G](t)}}{\tau_+^{\frac{n-1}{2}}\tau_-^{\frac{3}{2}}} 
\end{equation}
 and
\begin{equation}\label{eq:underalpha2}
\hspace{-2cm} |\underline{\alpha}(G)|(t,x) \lesssim \frac{\sqrt{\frac{\mathcal{E}_{\frac{n+2}{2}}[G](t)}{1+t} }+\sqrt{\mathcal{E}^S_{\frac{n+2}{2}}[G](t)}}{\tau_+^{\frac{n-1}{2}}\tau_-}.
\end{equation}
\end{Pro}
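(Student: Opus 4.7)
Throughout, let $k_0=\lceil (n+2)/2\rceil$ (or simply $(n+2)/2$ since $n$ is even in this paper's typical setup, but either way the Sobolev inequalities of Lemmas \ref{ClassicalSob}, \ref{decayint}, \ref{Sob} apply). The proof does not use the Maxwell equations directly; it only uses that $G$ is a sufficiently regular $2$-form whose weighted energy norms are finite. The idea is to split $\Sigma_t$ into the interior region $\{|x|\le 1+\tfrac{t}{2}\}$ and the exterior $\{|x|\ge 1+\tfrac{t}{2}\}$, and to apply Lemma \ref{decayint} in the former and Lemma \ref{Sob} in the latter, after multiplying the null component under study by a weight tailored to the desired decay rate.

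\textbf{Interior region.} Here $\tau_+\sim\tau_-\sim 1+t$, so all three estimates \eqref{eq:underalpha1}, \eqref{eq:underalpha2} and the bound on $\alpha,\rho,\sigma$ reduce to showing $|F|(t,x)\lesssim (1+t)^{-(n+2)/2}\sqrt{\mathcal{E}_{(n+2)/2}[G](t)}$ for each null component. I would apply Lemma \ref{decayint} to each component and control $(1+t)^k\|\nabla^k F\|_{L^2(|y|\le 3+3t/4)}$ by rewriting $\nabla^k$ as a combination of $\nabla_L$, $\nabla_{\underline L}$ and $r^{-1}\mathcal{L}_\Omega$. The second inequality of Lemma \ref{null} then yields
$$|\nabla_{\underline L}^i\nabla_L^j U|^2\lesssim \tau_-^{-2(i+1)}r^{-2j}(|F|^{\#}_{\mathbb{K},i+j+m})^2$$
for $U\in\{\alpha,\underline\alpha,\rho,\sigma\}$ (with the appropriate extra $\tau_-$-factor for $\underline\alpha$), and integrating against the energy density $(|F|^{\#})^2$ produces the needed bound since $r,\tau_-\gtrsim 1+t$ on the integration domain. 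Estimate \eqref{eq:underalpha2} in the interior is then deduced from \eqref{eq:underalpha1} by the trivial inclusion $\sqrt{\mathcal{E}/(1+t)}\le \sqrt{\mathcal{E}}/(1+t)^{1/2}$ together with $\tau_-\sim 1+t$.

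\textbf{Exterior region.} I would apply Lemma \ref{Sob} three times, each time to a carefully weighted component.

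\emph{(i) For $\alpha,\rho,\sigma$:} Apply Lemma \ref{Sob} to $U$ equal to one of these three components. The angular regularity term $|U|^2_{\mathbb{O},n/2}$ and the radial term $\tau_-^2|\nabla_r U|^2_{\mathbb{O},n/2}$ are both controlled by $r^{-2}(|F|^{\#}_{\mathbb{K},(n+2)/2})^2$ thanks to the second inequality of Lemma \ref{null}, using $\nabla_r=\tfrac12(\nabla_L-\nabla_{\underline L})$ and $\tau_-\lesssim r$ (which holds throughout the exterior). Since $r\ge|x|$ on the integration domain, one extracts a factor $|x|^{-2}$ and the remaining integral is bounded by $\mathcal{E}_{(n+2)/2}[G]$. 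Combining with the $|x|^{-(n-1)}\tau_-^{-1}$ prefactor and using $|x|\sim\tau_+$ in the exterior gives the desired bound.

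\emph{(ii) For \eqref{eq:underalpha1}:} Apply Lemma \ref{Sob} to $U=\tau_-\underline\alpha$. Because angular fields annihilate $\tau_-$ and $|\nabla_r\tau_-|\le 1$, the quantities $|U|^2_{\mathbb{O},n/2}$ and $\tau_-^2|\nabla_r U|^2_{\mathbb{O},n/2}$ are bounded by $\tau_-^2|\underline\alpha|^2_{\mathbb{O},n/2}+\tau_-^4|\nabla_r\underline\alpha|^2_{\mathbb{O},n/2}$; both are pointwise $\lesssim(|F|^{\#}_{\mathbb{K},(n+2)/2})^2$ by Lemma \ref{null} (using once more $\tau_-\lesssim r$ in the exterior to absorb the $\nabla_L$ term). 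Dividing the resulting bound on $|\tau_-\underline\alpha|^2$ by $\tau_-^2$ yields \eqref{eq:underalpha1}.

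\emph{(iii) For \eqref{eq:underalpha2}:} Apply Lemma \ref{Sob} to $U=\sqrt{\tau_-}\,\underline\alpha$. This time Corollary \ref{esti} gives
$$|U|^2_{\mathbb{O},n/2}+\tau_-^2|\nabla_r U|^2_{\mathbb{O},n/2}\lesssim\tau_-|F|^2_{\mathbb{K},(n+2)/2}.$$
The key observation is that on the exterior $\tau_+(y)\ge 1+t$, so
$$\int_{|y|\ge|x|}\tau_-|\alpha|^2\,dy\le\frac{1}{1+t}\int_{|y|\ge|x|}\tau_+^2|\alpha|^2\,dy\le\frac{\mathcal{E}_{(n+2)/2}[G]}{1+t},$$
and similarly for $\rho,\sigma$, whereas the contribution of $\underline\alpha$ is exactly controlled by $\mathcal{E}^S_{(n+2)/2}[G]$ by its definition. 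Dividing by $\tau_-$ yields \eqref{eq:underalpha2}.

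\textbf{Main obstacle.} The proof is essentially a careful application of the already established Sobolev and null-decomposition estimates, so nothing is conceptually hard. The main bookkeeping difficulty is to choose the right weighted Sobolev argument for each null component: in particular, recovering the optimal $\tau_-^{-3/2}$ decay for $\underline\alpha$ in \eqref{eq:underalpha1} requires multiplying by the full weight $\tau_-$ (and not $\sqrt{\tau_-}$), while the sharper exterior estimate \eqref{eq:underalpha2} demands the refinement from Corollary \ref{esti} together with the observation that $\tau_+\ge 1+t$ on the exterior in order to trade one power of $\tau_+$ in the energy for the $(1+t)^{-1}$ gain.
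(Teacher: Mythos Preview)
Your approach is correct and essentially matches the paper's proof: same interior/exterior split, same weighted components ($\tau_-\underline\alpha$ for \eqref{eq:underalpha1}, $\sqrt{\tau_-}\,\underline\alpha$ via Corollary~\ref{esti} for \eqref{eq:underalpha2}, with the key observation $\tau_+\ge 1+t$ on the exterior). Two minor remarks: in the interior the paper avoids decomposing $\nabla^k$ into the null frame (your $r^{-1}\mathcal L_\Omega$ is singular at the origin) and instead applies Lemma~\ref{gainderiv} together with Remark~\ref{rqgain} directly to the full tensor $G$; in the exterior the paper applies Lemma~\ref{Sob} to $r\alpha,r\rho,r\sigma$ with the $r$-weight built in rather than extracting $r^{-2}$ afterwards, but this is purely cosmetic.
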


\begin{Rq}
When we will study the massless Vlasov-Maxwell system in dimension $n=4$, a strong $t$-loss on $ \mathcal{E}_{\frac{n+2}{2}}[G]$ will lead to a strong $\tau_+$-loss on the pointwise estimate $\eqref{eq:underalpha1}$. Since we will not need all the $\tau_-$ decay rate of \eqref{eq:underalpha1}, we will rather use \eqref{eq:underalpha2}.

\end{Rq}

\begin{proof}

Let us denote the null decomposition of $G$ by $(\alpha, \underline{\alpha}, \rho, \sigma)$. Let $(t,x) \in [0,T[ \times \R^n$.

First, we consider the case $|x| \leq 1+\frac{1}{2}t$.

As
$$  \int_{\Sigma_t} \left( |G|^{\#}_{\mathbb{K}, \frac{n+2}{2}} \right)^2 dx \lesssim \mathcal{E}_{\frac{n+2}{2}}[G](t),$$

Lemma \ref{gainderiv} and Remark \ref{rqgain} give us
$$ \sum_{|\beta| \leq \frac{n+2}{2}} \int_{\Sigma_t} \tau_-^{2|\beta|+2} |\nabla^{\beta} G|^2 dx \lesssim \mathcal{E}_{\frac{n+2}{2}}[G](t).$$
Moreover, 
$$\forall \hspace{0.5mm} (t,y) \in [0,T[ \times \mathbb{R}^n \hspace{1mm} \text{such that} \hspace{1mm} |y| \leq 3+\frac{3}{4}t, \hspace{2mm} \tau_-(t,y) \gtrsim 1+t .$$
Hence,
$$\sum_{|\beta| \leq \frac{n+2}{2}} \int_{|y| \leq 3+ \frac{3}{4}t} (1+t)^{2|\beta|+2} |\nabla^{\beta} G|^2 dy \lesssim \mathcal{E}_{\frac{n+2}{2}}[G](t).$$
Using Lemma \ref{decayint}, we obtain
$$|G(t,x)| \lesssim \frac{\sqrt{\mathcal{E}_{\frac{n+2}{2}}[G](t)} }{(1+t)^{\frac{n+2}{2}}}.$$

We consider now the case $|x| \geq 1+\frac{1}{2}t$.

According to Lemma \ref{null},
\begin{flalign*}
& \hspace{0mm} \sum_{l=0}^1\sum_{i+j=l}\int_{|y| \geq 1+ \frac{1}{2}t}  \tau_-^{2i}r^{2j}\Big(\tau_-^2|\nabla_{\underline{L}}^i \nabla_{L}^j \underline{\alpha}|^2_{\mathbb{O},\frac{n+2}{2}-i-j}+r^2(|\nabla_{\underline{L}}^i \nabla_{L}^j \alpha|^2_{\mathbb{O},\frac{n+2}{2}-i-j} & \\
& \hspace{2cm}+|\nabla_{\underline{L}}^i \nabla_{L}^j \rho|^2_{\mathbb{O},\frac{n+2}{2}-i-j}+|\nabla_{\underline{L}}^i \nabla_{L}^j \sigma|^2_{\mathbb{O},\frac{n+2}{2}-i-j}) \Big) dy  \lesssim \mathcal{E}_{\frac{n+2}{2}}[G](t).&
\end{flalign*}
Let $w$ be either $r\alpha$, $r\rho$, $r\sigma$ or $\tau_- \underline{\alpha}$. Since $\partial_r=\frac{L-\underline{L}}{2}$ and $|\partial_r(\tau_-)| \leq 1$, we have
$$\int_{|y| \geq 1+\frac{1}{2}t} |w|^2_{\mathbb{O},\frac{n+2}{2}}+\tau_-^2|\nabla_{\partial_r} w |_{\mathbb{O},\frac{n}{2}}^2 dy \lesssim \mathcal{E}_{\frac{n+2}{2}}[G](t).$$
Lemma \ref{Sob} then gives us
$$|w(t,x)| \lesssim \frac{\sqrt{ \mathcal{E}_{\frac{n+2}{2}}[G](t)}}{|x|^{\frac{n-1}{2}}\tau_-^{\frac{1}{2}}}.$$
Thus, 
$$|\alpha(t,x)|, \hspace{2mm} |\rho(t,x)|  , \hspace{2mm} |\sigma(t,x)|  \lesssim \frac{\sqrt{ \mathcal{E}_{\frac{n+2}{2}}[G](t)}}{|x|^{\frac{n+1}{2}}\tau_-^{\frac{1}{2}}} \hspace{2mm} \text{and} \hspace{2mm}  |\underline{\alpha}(t,x)| \lesssim \frac{\sqrt{ \mathcal{E}_{\frac{n+2}{2}}[G](t)}}{|x|^{\frac{n-1}{2}}\tau_-^{\frac{3}{2}}}.$$

We now prove \eqref{eq:underalpha2}. Using Corollary \ref{esti}, we have
\begin{flalign*}
& \hspace{0mm} \int_{|y| \geq 1+\frac{1}{2}t} |\sqrt{\tau_-}\underline{\alpha}|^2_{\mathbb{O},\frac{n+2}{2}}+\tau_-^2|\nabla_{\partial_r} (\sqrt{\tau_-}\underline{\alpha})|_{\mathbb{O},\frac{n}{2}}^2 dy  \lesssim & 
\end{flalign*}
$$\hspace{0.3cm} \sum_{|\beta| \leq \frac{n+2}{2}} \int_{\Sigma_t} \tau_-\left(|\alpha(\mathcal{L}_{Z^{\beta}}G)|^2+|\underline{\alpha}(\mathcal{L}_{Z^{\beta}}G)|^2+|\rho(\mathcal{L}_{Z^{\beta}}G)|^2+|\sigma(\mathcal{L}_{Z^{\beta}}G)|^2 \right)dx. $$

As, by Definition \ref{norm2},
$$\sum_{|\beta| \leq \frac{n+2}{2}} \int_{\Sigma_t} \tau_-\left(|\alpha(\mathcal{L}_{Z^{\beta}}G)|^2+|\rho(\mathcal{L}_{Z^{\beta}}G)|^2+|\sigma(\mathcal{L}_{Z^{\beta}}G)|^2 \right)dx \lesssim \frac{\mathcal{E}_{\frac{n+2}{2}}[G](t)}{1+t} $$
and, by Definition \ref{normS}
$$\sum_{|\beta| \leq \frac{n+2}{2}} \int_{\Sigma_t} \tau_-|\underline{\alpha}(\mathcal{L}_{Z^{\beta}}G)|^2dx \lesssim \mathcal{E}^S_{\frac{n+2}{2}}[G](t),$$
we obtain, again by Lemma \ref{Sob}
$$|\underline{\alpha}(t,x)| \lesssim \frac{\sqrt{\frac{\mathcal{E}_{\frac{n+2}{2}}[G](t)}{1+t} }+\sqrt{\mathcal{E}^S_{\frac{n+2}{2}}[G](t)}}{|x|^{\frac{n-1}{2}}\tau_-}.$$

\end{proof}

Our goal now is to show how to improve the decay estimate on $\alpha$, in the Lorenz gauge, near the light cone (we cannot reproduce the method used by \cite{CK} to treat the $3d$ case). We start by the following lemma.

\begin{Lem}\label{lorenznull}
Let $\mathcal{A}$ be a sufficiently regular current, defined on $[0,T] \times \R^n$, such that 
$$\partial^{\mu} \mathcal{A}_{\mu} =0 \hspace{3mm} \text{and} \hspace{3mm} \forall \hspace{1mm} t \in [0,T], \hspace{3mm} \widetilde{\mathcal{E}}_{\frac{n+2}{2}}[\mathcal{A}](t) \leq \mathcal{E}(t),$$
with $\mathcal{E} : [0,T] \rightarrow \R_+$ an increasing function. Then
$$|\mathcal{A}_{\underline{L}}|(t,x), \hspace{2mm} |\mathcal{A}_B|(t,x) \lesssim \frac{\mathcal{E}(t)}{\tau_+^{\frac{n-1}{2}}\tau_-^{\frac{1}{2}}} \hspace{3mm} \text{and} \hspace{3mm} |\mathcal{A}_L|(t,x) \lesssim \frac{\mathcal{E}(t)}{\tau_+^{\frac{n}{2}}}.$$
\end{Lem}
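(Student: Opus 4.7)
I will split the argument into three parts: an easy Klainerman-Sobolev estimate for the components $\mathcal{A}_{\underline{L}}$ and $\mathcal{A}_B$, a treatment of $\mathcal{A}_L$ in the interior region $|x|\leq 1+t/2$ using Lemma~\ref{decayint}, and an improvement of the estimate on $\mathcal{A}_L$ in the exterior region that makes essential use of the Lorenz gauge.

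For the first two parts, the plan is: first, apply a weighted Sobolev inequality of the type used in Proposition~\ref{decayFpointwise} componentwise to the Cartesian coefficients $\mathcal{A}_\mu$. Since $\widetilde{\mathcal{E}}_{(n+2)/2}[\mathcal{A}]$ controls sufficiently many $Z$-derivatives of each scalar $\mathcal{A}_\mu$, this produces $|\mathcal{A}_\mu|(t,x)\lesssim \mathcal{E}(t)/(\tau_+^{(n-1)/2}\tau_-^{1/2})$; since each null component is a bounded linear combination of the $\mathcal{A}_\mu$'s, the bounds on $\mathcal{A}_{\underline{L}}$ and $\mathcal{A}_B$ follow immediately, and the same bound on $\mathcal{A}_L$ is already sharp in the region $\tau_-\sim\tau_+$. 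In the interior region $|x|\leq 1+t/2$, where $\tau_-\sim\tau_+\sim 1+t$ uniformly on the ball $|y|\leq 3+\tfrac{3}{4}t$, I will combine Lemma~\ref{decayint} with the pointwise bound $|\nabla u|\lesssim\tau_-^{-1}|u|_{\mathbb{K},1}$ (Remark~\ref{extradecay}) to convert the higher-derivative $L^2$ norms appearing in Lemma~\ref{decayint} back to $\widetilde{\mathcal{E}}_{(n+2)/2}$; this will give $|\mathcal{A}_\mu|\lesssim \mathcal{E}(t)/(1+t)^{n/2}$ in the interior and hence the target bound on $\mathcal{A}_L$.

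The main task is the exterior region $|x|\geq 1+t/2$, where $r\sim\tau_+$. The key step is to derive, from $\partial^\mu\mathcal{A}_\mu=0$, the identity
$$\underline{L}\bigl(r^{n-1}\mathcal{A}_L\bigr) + L\bigl(r^{n-1}\mathcal{A}_{\underline{L}}\bigr) \;=\; 2\,r^{n-1}\,\mathcal{D},$$
where $\mathcal{D}$ denotes the angular divergence of the sphere-tangent component of $\mathcal{A}$. I will derive this by writing the Lorenz condition as $\partial_t\mathcal{A}_0=\partial_i\mathcal{A}_i$, splitting the spatial divergence into its radial contribution $\partial_r\mathcal{A}_r+\tfrac{n-1}{r}\mathcal{A}_r$ and its angular contribution $\mathcal{D}$, and then substituting $\mathcal{A}_0=(\mathcal{A}_L+\mathcal{A}_{\underline{L}})/2$, $\mathcal{A}_r=(\mathcal{A}_L-\mathcal{A}_{\underline{L}})/2$, $L=\partial_t+\partial_r$, $\underline{L}=\partial_t-\partial_r$. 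The right-hand side of the equation for $\underline{L}(r^{n-1}\mathcal{A}_L)$ can then be bounded pointwise by $\mathcal{E}(t)\tau_+^{(n-3)/2}\tau_-^{-1/2}$, using the estimate on $\mathcal{A}_{\underline{L}}$, the improved decay $|L\mathcal{A}_\mu|\lesssim\mathcal{E}/(\tau_+^{(n+1)/2}\tau_-^{1/2})$ coming from $(t+r)L=S+(x^i/r)\Omega_{0i}$, and the angular gain $|e_B\mathcal{A}_\mu|\lesssim\mathcal{E}/(r\,\tau_+^{(n-1)/2}\tau_-^{1/2})$ coming from $re_B\in\mathrm{Span}(\Omega_{ij})$.

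Finally, I will integrate this inequality along the incoming null characteristic $\{t+r=\underline{u}=\mathrm{const}\}$ from the initial slice up to $(t,x)$. Along such a curve $\tau_+=1+\underline{u}$ is constant, the induced time integral $\int_0^t(1+|2s-\underline{u}|)^{-1/2}\,ds$ is $O(\sqrt{1+\underline{u}})$, and the initial datum $|r^{n-1}\mathcal{A}_L|(0,\underline{u}\,\omega)$ is controlled by $\mathcal{E}(0)(1+\underline{u})^{(n-2)/2}$ via the initial-time componentwise Sobolev estimate (at $t=0$ one has $\tau_+\sim\tau_-\sim 1+\underline{u}$). Combining these yields $|r^{n-1}\mathcal{A}_L|(t,x)\lesssim \mathcal{E}(t)\tau_+^{(n-2)/2}$, and dividing by $r^{n-1}\sim\tau_+^{n-1}$ produces the desired $|\mathcal{A}_L|\lesssim \mathcal{E}(t)/\tau_+^{n/2}$. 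The main obstacle I expect is the derivation of the gauge identity in this clean form, together with the recognition that integrating inward along $\underline{L}$ (rather than outward along $L$) is what converts the null-derivative information into a genuine improvement on $\mathcal{A}_L$ near the light cone.
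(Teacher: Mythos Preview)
Your proposal is correct and follows essentially the same approach as the paper: both arguments obtain the generic decay $|\mathcal{A}_\mu|\lesssim \mathcal{E}(t)\tau_+^{-(n-1)/2}\tau_-^{-1/2}$ from a Klainerman--Sobolev inequality, then use the Lorenz gauge in null form to derive a transport equation for $\mathcal{A}_L$ along $\underline{L}$ and integrate along the incoming null ray from the initial slice. Two minor cosmetic differences: the paper writes the gauge identity directly as $\underline{L}\mathcal{A}_L=-(\nabla_L\mathcal{A})_{\underline{L}}+(\nabla^B\mathcal{A})_B$ (without the $r^{n-1}$ weight you introduce), and it handles the interior region simply by observing that there $\tau_-\gtrsim\tau_+$, so the generic bound already gives $\tau_+^{-n/2}$---your appeal to Lemma~\ref{decayint} is correct but unnecessary.
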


\begin{proof}
Using a classical $L^2$-Klainerman-Sobolev inequality, we have, $\forall \hspace{0.5mm} |\gamma| \leq 1, \hspace{0.5mm} 1 \leq \mu \leq n, \hspace{0.5mm} (t,x) \in [0,T] \times \R^n$,

$$|Z^{\gamma} \mathcal{A}_{\mu}|(t,x) \lesssim \frac{\sqrt{\widetilde{\mathcal{E}}_{\frac{n+2}{2}}[\mathcal{A}](t)}}{\tau_+^{\frac{n-1}{2}}\tau_-^{\frac{1}{2}}}.$$
We then have
\begin{equation}\label{bihanig1}
|(Z^{\gamma} \mathcal{A})_L|, \hspace{1mm} | (Z^{\gamma} \mathcal{A})_{\underline{L}}|, \hspace{1mm} |(Z^{\gamma} \mathcal{A})_B| \lesssim \frac{\sqrt{\mathcal{E}(t)}}{\tau_+^{\frac{n-1}{2}}\tau_-^{\frac{1}{2}}}.
\end{equation}

It then remains to improve the decay estimate on $\mathcal{A}_L$ near the light cone. Since, $\partial^{\mu} \mathcal{A}_{\mu}=0$,
$$ (\nabla^L \mathcal{A})_L+(\nabla^{\underline{L}} \mathcal{A})_{\underline{L}}+(\nabla^B \mathcal{A})_B=0.$$
So, as $\nabla_{\underline{L}} L =0$,
\begin{equation}\label{tra1}
-\nabla_{\underline{L}} \mathcal{A}_L-(\nabla_{L} \mathcal{A})_{\underline{L}}+(\nabla^B \mathcal{A})_B =0.
\end{equation}

If $r \leq \frac{t}{2}$ or $r \geq \frac{t}{2}$ and $t \leq 1$, the result comes from \eqref{bihanig1}. For the remaining case, $r \geq \frac{t}{2}$ and $t \geq 1$, note first that
$$|\underline{L} (\mathcal{A}_L) | \lesssim \frac{\sqrt{\mathcal{E}(t)}}{\tau_+^{\frac{n-1}{2}}\tau_-^{\frac{1}{2}}r} \lesssim \frac{\sqrt{\mathcal{E}(t)}}{\tau_+^{\frac{n+1}{2}}\tau_-^{\frac{1}{2}}}.$$
Indeed, using Remark \ref{extradecay} and \eqref{bihanig1}, we have
$$|(\nabla_{L} \mathcal{A})_{\underline{L}}|(t,x), \hspace{2mm} | (\nabla^B \mathcal{A})_B|(t,x) \lesssim \frac{\sqrt{\mathcal{E}(t)}}{\tau_+^{\frac{n-1}{2}}\tau_-^{\frac{1}{2}}r}$$
so that (using \eqref{tra1}), $\underline{L} (\mathcal{A}_L)$ satisfies also this decay rate. As for a sufficiently regular function $g$, $$g(t,r)=g(0,t+r)+\int_{u=-t-r}^{t-r} \underline{L}(g) du,$$ and since $\mathcal{E}$ is a increasing function, we have
\begin{eqnarray}
\nonumber
|\mathcal{A}_L|(t,r) & \leq & |\mathcal{A}_L|(0,t+r)+\int_{u=-t-r}^{t-r}|\underline{L} (\mathcal{A}_L)| du \\ \nonumber
& \lesssim & \frac{\sqrt{\mathcal{E}(0)}}{\tau_+^{\frac{n}{2}}}+\frac{\sqrt{\mathcal{E}(t)}}{\tau_+^{\frac{n+1}{2}}} \int_{u=-t-r}^{t-r} \tau_-^{-\frac{1}{2}} du \\ \nonumber
& \lesssim & \frac{\sqrt{\mathcal{E}(t)}}{\tau_+^{\frac{n}{2}}}.
\end{eqnarray}
\end{proof}

Finally, we obtain the following pointwise decay on $\alpha$.

\begin{Pro}\label{decayalphapointwise}
Let $G$ and $J$ be a sufficiently regular $2$-form and $1$-form (respectively), defined on $[0,T] \times \R^n$, such that
\begin{eqnarray}
\nonumber \nabla^{\mu} G_{\mu \nu} & = &  J_{\nu}, \\ \nonumber
    \nabla^{\mu} {}^* \! G_{\mu \lambda_1 ... \lambda_{n-2}} & = & 0.
 \end{eqnarray} 
 Let $A$ be a potential of $G$ in the Lorenz gauge such that $\widetilde{\mathcal{E}}_{\frac{n+4}{2}}[A](t) \leq \mathcal{E}(t)$. We suppose that
 $$|J|(t,x) \lesssim \frac{\theta(t)}{\tau_+^{n-1}\tau_-}$$ and that $\mathcal{E}$ and $\theta$ are increasing functions. Then, 
 $$ \forall \hspace{0.5mm} (t,x) \in [0,T] \times \mathbb{R}^n, \hspace{3mm} |\alpha(G)|(t,x) \lesssim \frac{\sqrt{\mathcal{E}(t)}}{\tau_+^{\frac{n+2}{2}}}+\frac{\theta(t)\log(\tau_-)}{\tau_+^{n-1}}.$$
\end{Pro}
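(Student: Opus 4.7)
The aim is to beat the general decay of Proposition \ref{decayFpointwise} by exploiting the improved Lorenz-gauge decay of $A_L$ from Lemma \ref{lorenznull}. Writing $F = dA$ via the Cartan formula yields
$$\alpha_B = (dA)(e_B, L) = e_B(A_L) - L(A_B) - A([e_B, L]),$$
with the bracket contributing a lower-order term of size $O(|A|/r)$. In the interior region $r \leq 1 + t/2$ one has $\tau_- \gtrsim \tau_+$, and the claim follows from $|F| \lesssim |\nabla A|$ combined with a standard Klainerman--Sobolev estimate controlled by $\widetilde{\mathcal{E}}_{(n+4)/2}[A] \leq \mathcal{E}(t)$. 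I therefore concentrate on the exterior $r \geq 1 + t/2$, where $\tau_+ \sim r$.

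For the good term $e_B(A_L)$, the identity $r\,e_B = \sum_{i<j} C_B^{i,j}\Omega_{ij}$ with bounded $C_B^{i,j}$ together with $[\Omega_{ij}, L] = 0$ gives $\Omega_{ij}(A_L) = (\mathcal{L}_{\Omega_{ij}}A)_L$. Since $\mathcal{L}_{\Omega_{ij}} A$ still satisfies the Lorenz gauge (Lemma \ref{derivpotential}) with $\widetilde{\mathcal{E}}_{(n+2)/2}[\mathcal{L}_{\Omega_{ij}}A] \leq \widetilde{\mathcal{E}}_{(n+4)/2}[A] \leq \mathcal{E}(t)$, Lemma \ref{lorenznull} yields $|(\mathcal{L}_{\Omega_{ij}}A)_L| \lesssim \sqrt{\mathcal{E}(t)}/\tau_+^{n/2}$, and hence $|e_B(A_L)| \lesssim \sqrt{\mathcal{E}(t)}/(r\tau_+^{n/2}) \lesssim \sqrt{\mathcal{E}(t)}/\tau_+^{(n+2)/2}$.

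For the difficult term $L(A_B)$, which produces both contributions in the target bound, I derive a transport equation along an incoming null line. Combining $[L, \underline{L}] = 0$, the decomposition $\square = -L\underline{L} + \frac{n-1}{r}\partial_r + r^{-2}\Delta_{\mathbb{S}^{n-1}}$, the wave equation $\square A_\mu = J_\mu$ (Proposition \ref{CommuA}) and the expansion $\square(A_\mu e_B^\mu) = J_B + \text{frame corrections}$ yields a schematic identity
$$\underline{L}(L(A_B)) = -J_B + R,$$
with remainder $R$ collecting $r^{-1}\partial_r(A_B)$, $r^{-2}\Delta_{\mathbb{S}^{n-1}}(A_B)$ and frame-derivative corrections, each expressible in terms of components of $\mathcal{L}_Z A$ ($|Z| \leq 1$) divided by powers of $r$. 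Integrating this identity along a suitably chosen null segment from the initial slice to the current point, the source $J_B$, via the hypothesis $|J| \lesssim \theta(s)/(\tau_+^{n-1}\tau_-)$ and a logarithmic integral of $1/(1+|u'|)$ along the null line, produces a contribution bounded by $\theta(t)\log(\tau_-)/\tau_+^{n-1}$, while the initial data and $R$ are controlled componentwise via Lemma \ref{lorenznull} applied to $\mathcal{L}_Z A$ with $|Z| \leq 1$, contributing at most $\sqrt{\mathcal{E}(t)}/\tau_+^{(n+2)/2}$.

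\textbf{Main obstacle.} The principal technical difficulty is the treatment of the second angular derivative $r^{-2}\Delta_{\mathbb{S}^{n-1}}(A_B)$ inside $R$: direct control would require $\widetilde{\mathcal{E}}_{(n+6)/2}[A]$, exceeding our hypothesis. I plan to bypass this using one of the Maxwell equations, namely $\frac{1}{2}\nabla_{\underline{L}}\alpha_B + \frac{1}{2}\nabla_L \underline{\alpha}_B + \nabla^C \sigma_{CB} = J_B$ (modulo connection), to replace the second angular derivative of $A$ by a single derivative of the null components of $F$, which is controllable within $\widetilde{\mathcal{E}}_{(n+4)/2}[A]$ through $|\mathcal{L}_Z F| \lesssim |\nabla \mathcal{L}_Z A|$.
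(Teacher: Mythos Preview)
Your overall strategy---decompose $\alpha_B$ via $dA$, handle $e_B(A_L)$ by Lemma \ref{lorenznull}, and propagate the remaining piece along $\underline{L}$---matches the paper's. However, two points go wrong.

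\textbf{The derivative count is not an obstacle.} By definition, $\widetilde{\mathcal{E}}_N[A]$ controls $\|Z^{\delta}A_{\mu}\|_{L^2}$ for $|\delta|\le N+1$. Hence $\widetilde{\mathcal{E}}_{(n+4)/2}[A]$ already bounds $(n+6)/2$ derivatives of each Cartesian component $A_{\mu}$, which is precisely what the Klainerman--Sobolev inequality needs to control $r^{-2}\Delta_{\mathbb{S}^{n-1}}A_{\mu}$ pointwise (two rotations plus $n/2$ for the Sobolev embedding plus one for $\tau_-\partial_r$). Your proposed detour through the Maxwell constraint is unnecessary.

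\textbf{The real gap is the $\tfrac{1}{r}\underline{L}$ term.} Writing $\underline{L}\bigl(L(A_B)\bigr)=-J_B+R$ and putting $r^{-1}\partial_r(A_B)$ into $R$ leaves a piece $\tfrac{n-1}{2r}\underline{L}A_B$. Since $|\underline{L}A_B|\lesssim \tau_-^{-1}\sqrt{\mathcal{E}(t)}\,\tau_+^{-(n-1)/2}\tau_-^{-1/2}$, integrating in $u$ along the incoming cone gives only $\sqrt{\mathcal{E}(t)}\,\tau_+^{-(n+1)/2}$, a full $\tau_+^{1/2}$ short of the target. The same defect hits the bracket term $A_B/r$ you dismissed as lower order: its direct bound $\sqrt{\mathcal{E}(t)}/(\tau_+^{(n+1)/2}\tau_-^{1/2})$ is again too weak near the cone. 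The paper's cure is twofold. First, work with the \emph{Cartesian} components $A_{\mu}$ so that $\square A_{\mu}=J_{\mu}$ holds without frame errors; since $\nabla_L e_B=0$, one has $\tfrac{1}{r}A_B+L(A)_B=e_B^{\mu}\bigl(\tfrac{1}{r}+L\bigr)A_{\mu}$, so it suffices to bound $(L+\tfrac{c}{r})A_{\mu}$. Second, choose $c$ so that the $\tfrac{1}{r}\underline{L}A_{\mu}$ contribution from $\square$ is absorbed into $\underline{L}\bigl((L+\tfrac{c}{r})A_{\mu}\bigr)$; the right-hand side then contains only $\tfrac{1}{r}LA_{\mu}$, $\tfrac{1}{r^2}A_{\mu}$, $\nabla^C\nabla_C A_{\mu}$ and $J_{\mu}$, each of which integrates to the claimed rate.
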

\begin{Rq}
the functions $\mathcal{E}$ and $\theta$ will later be of the form $t \mapsto (1+t)^{a}$ or $t \mapsto \log^k(1+t)$.

\end{Rq}
\begin{proof}
We consider a spherical variable $B$. We have
\begin{eqnarray}
\nonumber \alpha_B(G) & = & (\partial_{\mu} A_{\nu}-\partial_{\nu} A_{\mu})_{BL} \\ \nonumber
& = & e_B(A)_L-L(A)_B \\ \nonumber
& = & \mathcal{L}_{e_B}(A)_L-\frac{1}{r}A_B-L(A)_B, 
\end{eqnarray}
since $e_B(A)_L=\mathcal{L}_{e_B}(A)_L-\frac{1}{r}A_B$. Indeed, as $e_B$ can be written as a linear combination of rescaled rotations (namely $\frac{\Omega_{ij}}{r}$), we only have to prove
$$\Omega_{ij}(A)_L=\mathcal{L}_{\Omega_{ij}}(A)_L-\frac{1}{r}A_{\Omega_{ij}} \hspace{3mm} \text{for all} \hspace{2mm} 1 \leq i < j \leq n.$$
Consider for instance $\Omega_{12}$. As $\Omega_{12}=x^1\partial_2-x^2 \partial_1$, we have
$$\mathcal{L}_{\Omega_{12}}(A)_1=\Omega_{12} A_1+A_2, \hspace{2mm} \mathcal{L}_{\Omega_{12}}(A)_2=\Omega_{12} A_2-A_1 $$ and $$ \mathcal{L}_{\Omega_{12}}(A)_k=\Omega_{12} A_k \hspace{2mm} \text{if} \hspace{2mm} k \geq 3,$$
so that
$$\mathcal{L}_{\Omega_{12}}(A)_L=\Omega_{12}(A)_L+\frac{1}{r}A_{\Omega_{12}}.$$
As $4\tau_- \geq  \tau_+$ if $ t \geq 2r$ or $t+r \leq 2$, we only have to consider the case\footnote{When $4\tau_- \geq \tau_+$, the result comes from Proposition \ref{decayFpointwise} or from \newline $|\partial A| \lesssim \tau_+^{-\frac{n-1}{2}} \tau_-^{-\frac{3}{2}}\sum_{Z \in \mathbb{K}} |Z A|$ (cf Remark \ref{extradecay}).} where $2r \geq  t$ and $t+r \geq 2$, so that $3r \geq \tau_+$.
Recall from Lemma \ref{derivpotential} that 
$$\forall \hspace{0.5mm} \Omega \in \mathbb{O}, \hspace{3mm} \partial^{\mu} \mathcal{L}_{\Omega} A_{\mu}=0.$$ So, using Lemma \ref{lorenznull} and that $e_B$ can be written as a linear combination of rescaled rotations, we have
$$|\mathcal{L}_{e_B}(A)_L|(t,x) \lesssim \frac{\sqrt{\mathcal{E}(t)}}{r\tau_+^{\frac{n}{2}}} \lesssim \frac{\sqrt{\mathcal{E}(t)}}{\tau_+^{\frac{n+2}{2}}}.$$

For the remaining term, rewritting the wave equation \eqref{eqLorenz} satisfied by $A$ in null coordinates, we have, for $0 \leq \mu \leq n$,
$$-\underline{L}L A_{\mu} +\nabla^C \nabla_C A_{\mu} + \frac{1}{r} L A_{\mu}-\frac{1}{r} \underline{L} A_{\mu} = J_{\mu}.$$
Hence
$$\underline{L}\left(\left(L+\frac{1}{r} \right)  A_{\mu} \right)= \nabla^C \nabla_C A_{\mu} + \frac{1}{r} L A_{\mu}+\underline{L} \left( \frac{1}{r} \right) A_{\mu}-J_{\mu}.$$
Now, note that, using a classical $L^2$ Klainerman-Sobolev inequality and Remark \ref{extradecay},
$$|\underline{L} \left( \frac{1}{r} \right) A_{\mu}|, \hspace{1mm}| \nabla^C \nabla_C A_{\mu}| \lesssim \frac{\sqrt{\mathcal{E}(t)}}{r^2 \tau_+^{\frac{n-1}{2}}\tau_-^{\frac{1}{2}}}, \hspace{2mm}  |\frac{1}{r} L A_{\mu}| \lesssim \frac{\sqrt{\mathcal{E}(t)}}{r\tau_+^{\frac{n+1}{2}}\tau_-^{\frac{1}{2}}}, \hspace{2mm} \left|J^{\mu} \right| \lesssim \frac{\theta(t)}{\tau_+^{n-1}\tau_-}$$
so that, as $3r \geq \tau_+$,
$$\left|\underline{L}\left(\left(L+\frac{1}{r} \right)  A_{\mu} \right) \right| \lesssim \frac{\sqrt{\mathcal{E}(t)}}{\tau_+^{\frac{n+3}{2}}\tau_-^{\frac{1}{2}}}+\frac{\theta(t)}{\tau_+^{n-1}\tau_-}.$$
Hence, as for a sufficiently regular function $g$, $$g(t,r)=g(0,t+r)+\int_{u=-t-r}^{t-r} \underline{L}(g) du,$$
we have (using that $\mathcal{E}$ and $\theta$ are increasing functions)
\begin{eqnarray}
\nonumber \left| \left( L+\frac{1}{r} \right)  A_{\mu}  \right|(t,x) & \lesssim & \frac{\sqrt{\mathcal{E}(0)}}{\tau_+^{\frac{n+2}{2}}}+\frac{\sqrt{\mathcal{E}(t)}}{\tau_+^{\frac{n+3}{2}}}\int_{-t-r}^{t-r} \frac{1}{\tau_-^{\frac{1}{2}}}du+\frac{\theta(t)}{\tau_+^{n-1}}\int_{-t-r}^{t-r} \frac{1}{\tau_-}du \\ \nonumber
& \lesssim & \frac{\sqrt{\mathcal{E}(t)}}{\tau_+^{\frac{n+2}{2}}}+\frac{\theta(t)\log(\tau_-)}{\tau_+^{n-1}},
\end{eqnarray}
implying
$$\left| \frac{1}{r}  A_{B}+L(A)_B  \right|(t,x) \lesssim \frac{\sqrt{\mathcal{E}(t)}}{\tau_+^{\frac{n+2}{2}}}+\frac{\theta(t)\log(\tau_-)}{\tau_+^{n-1}}.$$
\end{proof}
\begin{Rq}
In the context of the Vlasov-Maxwell system, using the null component $v^B$ of the velocity vector, we have a better pointwise estimate on the component $J_B$ of the source term, as $J_B$ is a linear combination of the terms $\int_v \frac{v^B}{v^0} \widehat{Z}^{\beta} f dv$. Since the dimension $n$ is such that $n \geq 4$, we do not need this extra decay (and we then worked with the Cartesian components of the source term in the proof of the previous proposition).
\end{Rq}

\subsection{A Grönwall inequality}

Later, when we will study the velocity support of the scalar field in the massless case, we will need the following variant of Grönwall's lemma.

\begin{Lem}\label{Gronwall}
Let $T>0$, $f$ and $g$ two continuous nonnegatives functions defined on $[0,T]$ and $C \geq 0$. If $$ \forall t \in [0,T], \hspace{1mm} f(t) \leq C+2\int_0^t g(s) \sqrt{f(s)} ds,$$
then $$\forall t \in [0,T], \hspace{1mm} f(t) \leq  \left( \sqrt{C}+\int_0^t g(s) ds \right)^2.$$

\end{Lem}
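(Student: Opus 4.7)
\medskip

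\noindent\textbf{Plan of proof.} The inequality is a standard Bihari--LaSalle variant, and the natural strategy is to bound the majorant
\[
F(t) := C + 2\int_0^t g(s)\sqrt{f(s)}\,ds
\]
rather than $f$ itself. Since $f \leq F$ pointwise by assumption, and both $f,g \geq 0$, $F$ is a nondecreasing $C^1$ function with
\[
F'(t) = 2g(t)\sqrt{f(t)} \leq 2g(t)\sqrt{F(t)}.
\]
Thus $F$ satisfies a differential inequality whose right-hand side is a nondecreasing function of $F$, which is exactly the setting in which the comparison principle yields a sharp bound.

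The next step is to divide by $2\sqrt{F(t)}$ to turn the inequality into
\[
\frac{d}{dt}\sqrt{F(t)} \leq g(t),
\]
and integrate from $0$ to $t$ to get $\sqrt{F(t)} \leq \sqrt{C} + \int_0^t g(s)\,ds$. Squaring and using $f(t) \leq F(t)$ produces the claimed bound.

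\medskip

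\noindent\textbf{The one delicate point.} The division by $2\sqrt{F(t)}$ is only legitimate where $F>0$, so the argument breaks down at times where $F$ might vanish. If $C>0$ there is nothing to worry about since $F(t) \geq C > 0$. If $C=0$, the standard workaround is to replace $C$ by $C + \varepsilon$ for an arbitrary $\varepsilon > 0$: the perturbed majorant $F_\varepsilon(t) := \varepsilon + 2\int_0^t g(s)\sqrt{f(s)}\,ds$ still dominates $f$, still satisfies $F_\varepsilon' \leq 2g\sqrt{F_\varepsilon}$, but now stays strictly positive. The argument above then gives
\[
f(t) \leq \left(\sqrt{\varepsilon} + \int_0^t g(s)\,ds\right)^2
\]
for every $\varepsilon > 0$, and letting $\varepsilon \to 0^+$ yields the desired conclusion in that case as well. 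No other step presents any obstacle; the only thing to keep an eye on is the regularity needed to differentiate $F$, which follows from the continuity of $g$ and $f$ together with the fundamental theorem of calculus.
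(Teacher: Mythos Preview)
Your proof is correct and follows essentially the same approach as the paper: define the majorant $F(t) = C + 2\int_0^t g(s)\sqrt{f(s)}\,ds$, use $F' \leq 2g\sqrt{F}$ to deduce $\sqrt{F(t)} \leq \sqrt{C} + \int_0^t g(s)\,ds$ when $C>0$, and handle $C=0$ by an $\varepsilon$-perturbation followed by $\varepsilon \to 0$.
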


\begin{proof}

First, we suppose that $C >0$. Let $F: t \mapsto C+2\int_0^t g(s) \sqrt{f(s)} ds$. We have
$$F'(t) \leq 2g(t)\sqrt{F(t)}.$$
Since $C>0$, $F$ is nonnegative and we can divide by $2\sqrt{F(t)}$. Integrating the above, we obtain
$$\sqrt{F(t)} \leq \sqrt{C}+\int_0^t g(s) ds,$$
which implies the result. If $C=0$, then, for all $\epsilon >0$,
$$ \forall t \in [0,T], \hspace{1mm} f(t) \leq \epsilon+2\int_0^t g(s) \sqrt{f(s)} ds.$$
It only remains to apply the inequality in the case $C \neq 0$ and let $\epsilon$ tends to zero.

\end{proof}

\section{Decay estimate for the massive case}\label{section5}

Recall that, as we study massive particles in this section, $v^0=\sqrt{1+|v|^2}$. We will use the commutation vector fields of $\K$ and the weights of $\mathbf{k}_1$ preserved by the operator $T_1$ (see Subsections \ref{subsecvector} and \ref{poids} for their definitions). We fix for all this section a sufficiently regular $2$-form $F$ defined on $[0,T^*[ \times \R^n$ and we recall that we defined $T_F$ as the operator
$$T_F : g \mapsto v^{\mu} \partial_{\mu} g + F(v, \nabla_v g)$$
and that $\nabla_v g= (0, \partial_{v^1} g, ..., \partial_{v^n} g)$. The main result of this section is the following estimate.

\begin{Th}\label{decayestimate}
Let $T^*>0$ and $f : [0, T^*[ \times \R^n_x \times \R^n_v \rightarrow \R$ be a sufficiently regular function such that
$$ \sum_{z \in \mathbf{k}_1} \sum_{|\beta| \leq n} \int_{\Sigma_0} \int_{\R^n_v} \left| z \widehat{Z}^{\beta} f \right| dv dx   < + \infty, $$
 Then, for all $(t,x) \in [0,T^*[ \times \R^n$,
\begin{eqnarray}
\nonumber  \int_{\R^n_v} |f(t,x,v)| \frac{dv}{(v^0)^2} \hspace{1mm} \lesssim \hspace{1mm} \frac{1}{\tau_+^n} \sum_{z \in \mathbf{k}_1} \sum_{|\beta| \leq n}  \Bigg( \hspace{-3mm} & & \hspace{-3mm} \int_{\Sigma_0} \int_{\R^n_v} \left| z \widehat{Z}^{\beta} f \right| dv dx \\ \nonumber
 & & \hspace{-2mm}  + \int_0^{t} \int_{\Sigma_s} \int_{\R^n_v} \left| T_F \left(  z \widehat{Z}^{\beta} f \right) \right| \frac{dv}{v^0} dxds \Bigg). 
 \end{eqnarray}
\end{Th}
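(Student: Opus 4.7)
My plan is to obtain the $\tau_+^{-n}$ decay by combining the standard velocity-average Klainerman--Sobolev inequality (Theorem 2.21 and its weighted refinement in Corollary 2.24) with the characteristic feature of the massive case, namely the lower bound $v^{\underline{L}} \geq (v^0)^{-1}/4$ coming from Proposition 2.12 (applied with $m=1$). This bound yields the pointwise comparison
$$\int_{\R^n_v} \frac{|f|}{(v^0)^2}\, dv \;\leq\; 4 \int_{\R^n_v} \frac{v^{\underline{L}}}{v^0}\, |f|\, dv,$$
so it suffices to bound the latter integral pointwise. This is natural because the flux part of $\mathbb{E}_{N,1,1}[f]$ already controls integrals of $(v^{\underline{L}}/v^0)|f|$ on outgoing cones $C_u(t)$, and it is this structure that one expects to convert into pointwise-in-$(t,x)$ decay.

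First I would localize on the two natural regions. In the interior region $\tau_- \gtrsim \tau_+$, Corollary 2.24 applied to $\widehat{Z}^\beta f$ directly gives a bound of the form $\|f\|_{\widehat{\mathbb{P}}_0,n}(t)/(\tau_+^{n-1}\tau_-)$, which is already $\lesssim \tau_+^{-n}$ times the same norm. The difficult region is the near-cone region $\tau_- \ll \tau_+$, where the $\tau_+ /\tau_-$ improvement has to be extracted. Here the key tool is the Proposition 2.12 identity
$$2(t+r)\, v^{\underline{L}} \;=\; \frac{x^i}{r}\, z_{0i}\; -\; s,$$
where $z_{0i}/v^0\in \mathbf{k}_1$: multiplying the integrand $v^{\underline{L}}|f|/v^0$ by $\tau_+$ turns the factor $\tau_+ v^{\underline{L}}/v^0$ into a combination of $\mathbf{k}_1$-weights (plus an $s$-term handled below). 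Corollary 2.24 then applies to the weighted $z^\gamma\widehat{Z}^\beta f$ and the commutation properties of Lemma 2.15 guarantee that the $L^1_{x,v}$ norms on the right-hand side remain inside the $\mathbb{E}_{n,1,1}$-type family.

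The final step is to convert the time-$t$ norm on the right-hand side into the initial data plus source-term structure stated in the theorem. This is done via the energy estimate of Proposition 3.3 applied to each product $z \widehat{Z}^\beta f$ with $z\in\mathbf{k}_1$ and $|\beta|\le n$: using that $T_1(z)=0$ one has $T_F(z\widehat{Z}^\beta f) = z\, T_F(\widehat{Z}^\beta f) + F(v,\nabla_v z) \widehat{Z}^\beta f$, and both terms can be rewritten via the commutation formula Corollary 2.32 so as to produce the expression $|T_F(z\widehat{Z}^\beta f)|/v^0$ appearing in the statement, up to lower-order contributions that are absorbed by induction on $|\beta|$.

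The main obstacle is the weight $s = x^\mu v_\mu$, which belongs to $\mathbf{k}_0$ but not to $\mathbf{k}_1$, so it is \emph{not} preserved by the massive flow and cannot appear in the final norm. I would handle this by exploiting the companion identity $2(t-r)v^L = -\tfrac{x^i}{r}z_{0i} - s$, which gives $-s = (t+r)v^{\underline{L}} + (t-r)v^L$, together with the mass-shell bound $v^L v^{\underline{L}} \geq 1/4$ from Proposition 2.12. This lets one rewrite the $s$-contribution as a combination of $(t+r) v^{\underline{L}}$ and $(t-r)v^L$ terms whose $\underline{L}$-weight can be absorbed into the flux integrals on $C_u(t)$ in $\mathbb{E}_{n,1,1}[f]$, and whose $L$-weight produces additional $\tau_-$ factors that are integrable against the $\tau_-$-decay already present. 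This is the bookkeeping heart of the argument; everything else is a combination of K--S, commutation, and the energy estimate already available.
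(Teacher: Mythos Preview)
Your strategy in the near-cone region differs from the paper's. For $\tfrac{t}{2}\leq |x|\leq t$, the paper does \emph{not} combine Proposition~2.12 with the flat Klainerman--Sobolev inequality; instead it passes through a piece of the hyperboloid $t^2-|x|^2=a^2$, proving first a Sobolev inequality adapted to the hyperboloid (Lemma~5.6) and then an energy estimate on the region $D_a(T)$ bounded by $\Sigma_0$, a piece of the hyperboloid, and a piece of $\Sigma_T$ (Lemma~5.7). The $(v^0)^{-2}$ weight there comes from the lower bound $(sv^0-y_iv^i)/s\geq 1/(2v^0)$ valid on the hyperboloid. Your approach, by contrast, stays on constant-$t$ slices throughout and tries to gain the missing $\tau_+/\tau_-$ factor purely from the algebraic identities for $v^{\underline L}$; this is a genuinely different route and, if it closes, is more elementary.

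As written, however, your treatment of the weight $s=x^\mu v_\mu$ does not close. You propose to write $-s=(t+r)v^{\underline L}+(t-r)v^L$ and then ``absorb'' the $(t+r)v^{\underline L}$ contribution into the flux integrals on $C_u(t)$ appearing in $\mathbb{E}_{n,1,1}[f]$. But those flux integrals are $L^1$ quantities on null cones; they give no pointwise information at a fixed $(t,x)$, so they cannot be used inside a pointwise decay estimate. The correct observation is purely algebraic: substituting your expression for $-s$ back into $2(t+r)v^{\underline L}=\tfrac{x^i}{r}z_{0i}-s$ gives, after cancelling the $(t+r)v^{\underline L}$ appearing on both sides,
\[
(t+r)\,v^{\underline L}\;=\;\frac{x^i}{r}\,z_{0i}\;+\;(t-r)\,v^L,
\]
so $s$ is eliminated outright. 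From this one reads off $\tau_+\,v^{\underline L}/v^0 \lesssim \tau_- + \sum_{z\in\mathbf{k}_1}|z|$; combining with $1/(v^0)^2\leq 4v^{\underline L}/v^0$, Corollary~2.24, and the energy estimate of Proposition~3.3 applied to each $z\widehat Z^\beta f$ then yields the statement directly, with no cone fluxes needed. So your plan is salvageable, but the step you flag as ``the bookkeeping heart of the argument'' is mis-argued as stated.
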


\begin{Rq}
Compared to Theorem $8$ in \cite{FJS}, the advantage is that the $L^1$ norms on the right hand side are taken on $\{t\} \times \R^n$ (or $\{0 \} \times \R^n$) and not on a hyperboloid. On the other hand, our estimate is not a pure Sobolev inequality (we applied the operator $T_F$ to $\widehat{Z}^{\beta} f$ to establish it).
\end{Rq}

\begin{Rq}
To simplify the notation, we took the mass to be $1$, but the estimate is true as long as the mass is strictly positive (the constant hidden in $\lesssim$ is however proportional to $\frac{1}{m^2}$).
\end{Rq}

\begin{Rq}\label{rqdecayestimate}
As we will need an estimate on $\int_v |f| dv$ in this article, we will apply Theorem \ref{decayestimate} to $(v^0)^2f$. Note that, since $T_1((v^0)^2 z)=0$, the spacetime integral given by Theorem \ref{decayestimate} can be bounded, in that case, by
$$\int_0^{t} \int_{\Sigma_s} \int_{\R^n_v} \left|v^0z T_F \left(\widehat{Z}^{\beta} f \right) \right|+\left| v^0 F(v, \nabla_v z ) \widehat{Z}^{\beta} f \right|+\left|zv^i F_{i0} \widehat{Z}^{\beta} f \right| dv dxds.$$
One can then use commutation formula of Proposition \ref{Commuf} in order to compute $T_F (\widehat{Z}^{\beta} f)$.
\end{Rq}

The proof is based on a partition of the spacetime. In the interior ($|x| \leq \frac{t}{2}$) and the exterior ($ t \leq |x|$) of the light cone, the proof relies on the Klainerman-Sobolev inequality of Theorem \ref{KS1}. In the exterior region, the lack of decay is compensated by using the weights $(x^i-t\frac{v^i}{v^0}) \in \mathbf{k}_1$ defined in Section \ref{poids}. For the remaining region, we work on subsets of $\R^{n+1}$ composed of a piece of an hyperboloid and a piece of a slice $t=constant$ as \cite{Georgiev} for the Klein-Gordon equation, mixing what is usually done for such problems.

\vspace{4mm}

\begin{tikzpicture}

\fill [color=gray!35] 
(0,0)
-- plot [domain=0:3] (\x, {sqrt(\x*\x+7)})
--(7,4)--(7,0)
--cycle;
\draw[color=black!100] (3,2) node[scale=1.5]{$D_a(T)$};
\draw (0,4) node[scale=1.5,left]{$T$};
\draw (0,0)--(0,{sqrt(7)});
\draw [-{Straight Barb[angle'=60,scale=3.5]}] (0,{sqrt(7)})--(0,5);
\draw (3,4)--(7,4);
\draw (0,0)--(7,0) node[scale=1.5,right]{$\Sigma_0$};
\draw [domain=0:3] plot (\x, {sqrt(\x*\x+7)});
\draw (-0.1,4)--(0,4);
\draw (0,{sqrt(7)}) node[scale=1.5,left]{$a$};
\draw (-0.1,{sqrt(7)})--(0,{sqrt(7)});
\draw (0,-0.5) node[scale=1.5]{$r=0$};
\node[align=center,font=\bfseries, yshift=-1em] (title) 
    at (current bounding box.south)
    {The set $D_a(T)$ and its boundary};
\end{tikzpicture}

\subsection{Sobolev inequalities}

We start by a Sobolev inequality independent of time.

\begin{Lem}

Let $g : \R^n_x \times \R^n_v \rightarrow \R$ be a sufficiently regular function. Then, for all $ x \in \R^n$,

$$|x|^n\int_{v \in \R^n} |g(x,v)| dv \lesssim  \sum_{\begin{subarray}{l} | \beta| \leq n-1 \\ \hspace{2mm} j \leq 1 \end{subarray}} \left\|  \int_{v \in \R^n} (r \partial_r)^{j}(|\widehat{\Omega}^{\beta} g|)(y,v) dv \right\|_{L^1(|y| \leq |x|)},$$
where $\Omega^{\beta} \in \mathbb{O}^{|\beta|}$.
\end{Lem}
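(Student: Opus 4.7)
The plan is to reduce the inequality to a Sobolev embedding on the unit sphere via the fundamental theorem of calculus in the radial variable. Set $F(y) := \int_{v} |g(y,v)| \, dv$, fix $x \neq 0$ and let $\omega = x/|x|$. Writing
\begin{equation*}
|x|^n F(x) \;=\; \int_0^{|x|} \partial_\rho\bigl(\rho^n F(\rho\omega)\bigr) \, d\rho \;=\; \int_0^{|x|} \rho^{n-1}\Bigl( n F(\rho\omega) + (r\partial_r F)(\rho\omega)\Bigr) d\rho,
\end{equation*}
since $\rho\,(\partial_\rho F)(\rho \omega) = (r\partial_r F)(\rho\omega)$, I split the problem into pointwise control of $F$ and of $r\partial_r F$ at the point $\rho\omega$ on the sphere of radius $\rho$.

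For each $\rho$, I would apply the endpoint Sobolev embedding $W^{n-1,1}(S^{n-1}) \hookrightarrow L^\infty(S^{n-1})$ to the functions $\omega' \mapsto F(\rho\omega')$ and $\omega' \mapsto (r\partial_r F)(\rho\omega')$:
\begin{equation*}
\bigl|F(\rho\omega)\bigr| + \bigl|(r\partial_r F)(\rho\omega)\bigr| \;\lesssim\; \sum_{|\beta| \leq n-1} \int_{S^{n-1}}\Bigl(\bigl|\nabla_\Omega^\beta F\bigr| + \bigl|\nabla_\Omega^\beta (r\partial_r F)\bigr|\Bigr)(\rho\omega')\, d\omega'.
\end{equation*}
The tangential derivatives $\nabla_\Omega$ on the sphere of radius $\rho$ are generated by restrictions of the Euclidean rotation fields $\Omega_{ij}$, so the $\nabla_\Omega^\beta$ above can be replaced with iterates of the $\Omega_{ij}$ acting on $F$ as a function on $\R^n$.

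I now relate these rotational derivatives of $F$ to the Vlasov-type norms on the right-hand side. Iterating Remark \ref{ineq:nolor} (which applies since rotations are not Lorentz boosts) yields $|\Omega^\beta F|(y) \lesssim \sum_{|\alpha| \leq |\beta|} \int_{v} |\widehat{\Omega}^\alpha g|(y,v) \, dv$. For the $r\partial_r$ part, the key observation is that $[r\partial_r, \Omega_{ij}] = 0$, so $\nabla_\Omega^\beta(r\partial_r F) = r\partial_r(\nabla_\Omega^\beta F)$; applying $r\partial_r$ inside the absolute value (justified by the a.e.\ identity $r\partial_r |h| = \mathrm{sgn}(h)\, r\partial_r h$ for sufficiently regular $h$) gives
\begin{equation*}
\bigl|\nabla_\Omega^\beta F\bigr|(y) + \bigl|\nabla_\Omega^\beta (r\partial_r F)\bigr|(y) \;\lesssim\; \sum_{|\alpha| \leq |\beta|}\sum_{j \leq 1} \int_v (r\partial_r)^{j}\bigl(|\widehat{\Omega}^\alpha g|\bigr)(y,v)\, dv.
\end{equation*}

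Plugging this back into the radial identity and using the Euclidean volume element $dy = \rho^{n-1}\, d\rho\, d\omega'$ on $\{|y| \leq |x|\}$ collapses the $\rho^{n-1}\, d\rho\, d\omega'$ factor into $dy$ and yields the claimed inequality. The only delicate points are the correct counting of derivatives in the Sobolev embedding (the endpoint $W^{n-1,1} \hookrightarrow L^\infty$ on the $(n-1)$-dimensional sphere, obtainable by iterating Gagliardo--Nirenberg) and the sign/absolute-value manipulations when passing differential operators through $|\cdot|$; given the assumed regularity of $g$ the latter are routine, and the former is the main conceptual ingredient.
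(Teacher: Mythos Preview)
Your overall plan—radial fundamental theorem of calculus followed by the endpoint Sobolev embedding $W^{n-1,1}(S^{n-1})\hookrightarrow L^\infty(S^{n-1})$—is reasonable, but the conversion step contains a genuine gap. You write that ``iterating Remark~\ref{ineq:nolor}'' yields
\[
|\Omega^\beta F|(y)\;\lesssim\;\sum_{|\alpha|\le|\beta|}\int_v|\widehat\Omega^\alpha g|(y,v)\,dv,
\]
but this is not what the remark provides. Remark~\ref{ineq:nolor} gives the one-sided pointwise bound $|\Omega_1 F|\le\int_v|\widehat\Omega_1 g|\,dv$; you cannot then apply a second rotation $\Omega_2$ to both sides, since differential operators do not preserve inequalities, and the right-hand side is not of the form $\int_v|h|\,dv$ to which the remark could be reapplied. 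The same problem infects your treatment of $|\Omega^\beta(r\partial_r F)|$: commuting $r\partial_r$ through $\Omega^\beta$ is fine, but you then need $r\partial_r$ of the (unestablished) identity for $\Omega^\beta F$, not $r\partial_r$ of a one-sided bound. Concretely, $\Omega^\beta F=\int_v\widehat\Omega^\beta|g|\,dv$ is a true identity for rotations, but $|\widehat\Omega^\beta|g||$ is \emph{not} controlled by $|\widehat\Omega^\beta g|$ once $|\beta|\ge2$ (second derivatives of $|g|$ carry singular contributions on $\{g=0\}$).

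The paper's proof circumvents this by interleaving the conversion with the Sobolev steps rather than postponing it. One applies a one-dimensional Sobolev inequality in a single angular variable to the scalar function $\int_v|h|\,dv$; the resulting $\partial_{\theta_i}\int_v|h|\,dv$ is converted via Remark~\ref{ineq:nolor} (legitimately, since the input is $\int_v|h|\,dv$) into $\int_v|\widehat\Omega h|\,dv$, which is again of the required form. Iterating through the $n-1$ angular variables one at a time produces $\sum_{|\beta|\le n-1}\int_{S^{n-1}}\int_v|\widehat\Omega^\beta h|\,dv\,d\theta$, and \emph{only then} is the radial one-dimensional Sobolev applied to this quantity as a function of $r$; the $(r\partial_r)^j$ lands directly on $|\widehat\Omega^\beta h|$ as in the stated right-hand side. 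The ordering—angular variables one by one with conversion at each stage, radial last—is the point, and your radial-first arrangement does not reproduce it without further argument.
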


During the proof of this lemma, we will use many time the following one dimensional Sobolev inequality. For $w \in W^{1,1}$, we have, for all $a \in \R$ and all $\delta \geq \eta >0$,
$$|w(a)| \leq C_{\eta} (\|w(y)\|_{L^1(a- \delta \leq y \leq a)}+\|w'(y)\|_{L^1(a- \delta \leq y \leq a)}),$$
with $C_{\eta}$ a positive constant depending only on $\eta$.

\begin{proof}
As there is nothing to prove when $x = 0$, we suppose $x \neq 0$. We start by introducing spherical coordinates. A point $y \in \R^n$ has for coordinates $(r,\theta)$, with $r=|y|$ and $\theta \in \mathbb{S}^{n-1}$. We denote by $(|x|,\omega)$ the spherical coordinates of $x$ and by $(\theta_1,...,\theta_{n-1})$ a local coordinate map in a neighbourhood of $\omega \in \mathbb{S}^{n-1}$ (by the symmetry of the sphere, we can suppose that the $\theta_i$ take their values in an interval of a size independent of $\omega$). Let $h$ be the function defined by $h(r,\theta,v)=g(|x|r\theta,v)$. By a one dimensional Sobolev inequality,
\begin{flalign*}
& \hspace{0.6cm} \int_{v \in \R^n} |h(1,\omega,v)| dv  \lesssim  \int_{\theta_1}  \left|\int_{v \in \R^n} |h|(1,\omega_1+\theta_1,\omega_2,...,\omega_n,v)dv \right| &
\end{flalign*}
$$\hspace{3.4cm}+ \left|\partial_{\theta_1}\int_{v \in \R^n} |h|(1,\omega_1+\theta_1,\omega_2,...,\omega_n,v) dv \right| d \theta_1 .$$
As $\partial_{\theta_1}$ is a linear combination of the rotation vector fields, Remark \ref{ineq:nolor} gives us
\begin{flalign*}
& \hspace{0cm} \left|\partial_{\theta_1}\int_{v \in \R^n} |h|(1,\omega_1+\theta_1,\omega_2,...,\omega_n,v) dv \right| \lesssim &
\end{flalign*} 
$$\hspace{5.3cm} \sum_{\Omega \in \mathbb{O}} \int_{v \in \R^n} |\widehat{\Omega}h|(1,\omega_1+\theta_1,\omega_2,...,\omega_n,v) dv.$$
Thus,
$$\int_{v \in \R^n} |h(1,\omega,v)| dv \lesssim  \sum_{\begin{subarray}{l} \Omega^{\beta} \in \mathbb{O}^{|\beta|} \\ \hspace{1mm} |\beta| \leq 1 \end{subarray}}\int_{\theta_1}  \int_{v \in \R^n} |\widehat{\Omega}^{\beta}h|(1,\omega_1+\theta_1,\omega_2,...,\omega_n,v)dv  d \theta_1 .$$
Using the same argument for the variables $\theta_2$, ..., $\theta_{n-2}$ and $\theta_{n-1}$, it comes
$$\int_{v \in \R^n} |h(1,\omega,v)| dv \lesssim  \sum_{\begin{subarray}{l} \Omega^{\beta} \in \mathbb{O}^{|\beta|} \\ |\beta| \leq n-1 \end{subarray}}\int_{\theta \in \mathbb{S}^{n-1}}  \int_{v \in \R^n} |\widehat{\Omega}^{\beta}h|(1,\theta,v)dv  d \theta.$$

The one dimensional Sobolev inequality, applied this time to the first variable, gives 

$$\int_{v \in \R^n} |h(1,\omega,v)| dv \lesssim  \sum_{j \leq 1}\sum_{\begin{subarray}{l} \Omega^{\beta} \in \mathbb{O}^{|\beta|} \\ |\beta| \leq n-1 \end{subarray}} \int_{\frac{1}{2}}^1 \left| \partial^j_r \int_{\theta \in \mathbb{S}^{n-1}}  \int_{v \in \R^n} |\widehat{\Omega}^{\beta}h|(r,\theta,v)dv  d \theta \right| dr.$$
Hence, as $\frac{1}{2} \leq r $,
\begin{flalign*}
& \hspace{0mm} \int_{v \in \R^n} |h(1,\omega,v)| dv \lesssim  &
\end{flalign*}
 $$\hspace{2.1cm} \sum_{j \leq 1}\sum_{\begin{subarray}{l} \Omega^{\beta} \in \mathbb{O}^{|\beta|} \\ |\beta| \leq n-1 \end{subarray}} \int_{\frac{1}{2}}^1 \int_{\theta \in \mathbb{S}^{n-1}} \left| (r\partial_r)^j  \int_{v \in \R^n} |\widehat{\Omega}^{\beta}h|(r,\theta,v)dv \right| d \theta  r^{n-1}dr,$$
which implies
$$\int_{v \in \R^n} |g(x,v)| dv \lesssim  \sum_{j \leq 1, | \beta| \leq n-1} \left\|  \int_{v \in \R^n} (r \partial_r)^{j}(|\widehat{\Omega}^{\beta} (g(|x|y,v))|) dv \right\|_{L^1(|y| \leq 1)},$$
It only remains to remark that, as $r\partial_r$ and $\Omega$ are homogeneous vector fields, 
$$(r \partial_r)^{j}\left(|\widehat{\Omega}^{\beta} \left(g(|x|y,v) \right)|\right)=(r \partial_r)^{j}\left(|\widehat{\Omega}^{\beta} g|\right)(|x|y,v)$$
and to make the change of variables $y'=|x|y$.
\end{proof}
\noindent We are now able to prove the following time dependent Sobolev inequality.

\begin{Lem}\label{pointwisehyperbo}

Let $g : [0,T^*[ \times \R^n_x \times \R^n_v \rightarrow \R$ a sufficiently regular function. For all $(t,x) \in [0,T^*[ \times \R^n$ such that $|x| \leq t$, we have
$$|x|^n\int_{v \in \R^n} |g(t,x,v)| dv \lesssim \sum_{|\beta| \leq n} \| \widehat{Z}^{\beta} g (\sqrt{|y|^2+a^2},y,v) \|_{L^1(|y| \leq |x|)L^1_v}  ,$$
with $a^2=t^2-|x|^2$ and $\widehat{Z}^{\beta} \in \K^{|\beta|}$ (more precisely the vector fields involved are either rotations or Lorentz boosts).

\end{Lem}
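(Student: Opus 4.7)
The strategy is to apply the previous (time-independent) Sobolev inequality to the restriction of $g$ to the hyperboloid $\mathcal{H}_a := \{t = \sqrt{|y|^2+a^2}\}$, which passes through the point $(t,x)$ precisely because $t = \sqrt{|x|^2+a^2}$, and then to translate the $y$-derivatives that appear into vector fields of $\K$ applied to $g$ and evaluated on $\mathcal{H}_a$. Define
\[
\tilde g(y,v) := g\big(\sqrt{|y|^2+a^2},y,v\big), \qquad \text{so that}\ \tilde g(x,v)=g(t,x,v).
\]
Because the rotations $\widehat{\Omega}_{ij}$ do not involve $\partial_t$ and preserve $|y|$, they commute with the hyperboloidal restriction: $\widehat{\Omega}^\beta \tilde g(y,v) = (\widehat{\Omega}^\beta g)(\sqrt{|y|^2+a^2},y,v)$ for any multi-index $\beta$. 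Applying the previous (time-independent) Sobolev lemma to $\tilde g$ then yields
\[
|x|^n \int_v |g(t,x,v)|\,dv \lesssim \sum_{\substack{|\beta| \leq n-1 \\ j\leq 1}} \int_{|y|\leq |x|}\Big|\int_v (r\partial_r)^j\big(|\widehat{\Omega}^\beta \tilde g|\big)(y,v)\,dv\Big|\,dy.
\]

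The $j=0$ contribution is already of the announced form. The content of the argument lies in the $j=1$ contribution. Writing $G := \widehat{\Omega}^\beta g$, the chain rule and the decomposition $\Omega_{0i} = \widehat{\Omega}_{0i} - v^0 \partial_{v^i}$ of the spacetime Lorentz boost give, a.e.,
\[
r\partial_r\big(|G|(\sqrt{|y|^2+a^2},y,v)\big) = \mathrm{sgn}(G)\,\tfrac{x^i}{t}\big(\widehat{\Omega}_{0i}G - v^0\partial_{v^i}G\big)\Big|_{\mathcal{H}_a},
\]
where one uses the computation $r\partial_r = \frac{r}{t}\Omega_{0r} = \frac{x^i}{t}\Omega_{0i}$ along $\mathcal{H}_a$. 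Integrating in $v$, the first summand is bounded in absolute value by $\int_v |\widehat{\Omega}_{0i}\widehat{\Omega}^\beta g||_{\mathcal{H}_a}\,dv$, since $|x^i/t|\leq 1$ on $\mathcal{H}_a$. For the second, using $\mathrm{sgn}(G)\partial_{v^i}G = \partial_{v^i}|G|$ a.e. and an integration by parts in $v$ (no boundary terms thanks to the decay of $g$ in $v$),
\[
\int_v \mathrm{sgn}(G)\,v^0\,\partial_{v^i}G\,dv = \int_v v^0\,\partial_{v^i}|G|\,dv = -\int_v \tfrac{v^i}{v^0}\,|G|\,dv,
\]
which is controlled by $\int_v |\widehat{\Omega}^\beta g||_{\mathcal{H}_a}\,dv$ since $|v^i|/v^0 \leq 1$. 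Combining,
\[
\Big|\int_v r\partial_r\big(|\widehat{\Omega}^\beta \tilde g|\big)(y,v)\,dv\Big| \lesssim \sum_{i=1}^{n} \int_v \Big(|\widehat{\Omega}_{0i}\widehat{\Omega}^\beta g| + |\widehat{\Omega}^\beta g|\Big)(\sqrt{|y|^2+a^2},y,v)\,dv.
\]

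Summing over $\beta$ and $j$, and integrating in $y$ over $\{|y|\leq |x|\}$, gives the stated estimate, since each of $\widehat{\Omega}_{0i}\widehat{\Omega}^\beta$ and $\widehat{\Omega}^\beta$ is a product of at most $n$ complete lifts of rotations or Lorentz boosts, i.e.\ of vector fields in $\K$. The main obstacle is the appearance of the vertical term $v^0\partial_{v^i}$ that arises when decomposing the spacetime Lorentz boost $\Omega_{0i}$ into its $\K$-component $\widehat{\Omega}_{0i}$ and a $v$-derivative: this forces the argument to keep the absolute value \emph{inside} the $L^1_v$-integral before differentiating in $y$, so that the integration by parts in $v$ that eliminates $\partial_{v^i}$ is meaningful; the bound $|x|\leq t$ on $\mathcal{H}_a$ is what ultimately allows us to absorb the coefficient $\tfrac{x^i}{t}$.
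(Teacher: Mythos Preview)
Your argument is correct and follows essentially the same route as the paper: restrict to the hyperboloid, apply the time-independent Sobolev lemma, observe that rotations commute with the restriction, and for the $r\partial_r$ term convert the induced derivative into $\tfrac{y^i}{\sqrt{|y|^2+a^2}}\Omega_{0i}$, then split $\Omega_{0i}=\widehat{\Omega}_{0i}-v^0\partial_{v^i}$ and kill the vertical piece by integration by parts in $v$. The only cosmetic slip is that in your displayed identity for $r\partial_r(|G|(\sqrt{|y|^2+a^2},y,v))$ the coefficient should be written as $\tfrac{y^i}{\sqrt{|y|^2+a^2}}$ rather than $\tfrac{x^i}{t}$, since $y$ is the integration variable; the bound $|y^i|/\sqrt{|y|^2+a^2}\le 1$ is of course the same.
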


\begin{proof}
Let $(t,x) \in [0,T^*[ \times \R^n$ such that $|x| \leq t$ and $a^2=t^2-|x|^2$. We apply the previous lemma to $(y,v) \mapsto g(\sqrt{|y|^2+a^2},y,v)$ to get

\begin{flalign*}
& \hspace{0mm} |x|^n\int_{v \in \R^n} |g(t,x,v)| dv \lesssim &
\end{flalign*}
 $$ \hspace{2.1cm} \sum_{j \leq 1, | \beta| \leq n-1} \left\|  \int_{v \in \R^n} (r \partial_r)^{j}\left(|\widehat{\Omega}^{\beta} g|(\sqrt{|y|^2+a^2},y,v)\right) dv \right\|_{L^1(|y| \leq |x|)},$$
where we used that
$$\widehat{\Omega}^{\beta}\left( g(\sqrt{|y|^2+a^2},y,v) \right) = \widehat{\Omega}^{\beta}\left( g \right) (\sqrt{|y|^2+a^2},y,v) ,$$
since $\Omega(\sqrt{|y|^2+a^2})=0$ for all $\Omega \in \mathbb{O}$.

Now, we remark that
\begin{flalign*}
& \hspace{0mm} r \partial_r\left(|\widehat{\Omega}^{\beta} g|(\sqrt{|y|^2+a^2},y,v)\right)= &
\end{flalign*}
 $$\hspace{2.8cm} \frac{\widehat{\Omega}^{\beta} g(\sqrt{|y|^2+a^2},y,v)}{|\widehat{\Omega}^{\beta} g|(\sqrt{|y|^2+a^2},y,v)}\frac{y^i}{\sqrt{|y|^2+a^2}}\Omega_{0i}\widehat{\Omega}^{\beta}g(\sqrt{|y|^2+a^2},y,v).$$
Note also that, droping the dependance in $(\sqrt{|y|^2+a^2},y,v)$ of the functions considered,
$$\int_{v \in \R^n} \frac{\widehat{\Omega}^{\beta} g}{|\widehat{\Omega}^{\beta} g|}\frac{y^i}{\sqrt{|y|^2+a^2}}v^0\partial_{v^i}\widehat{\Omega}^{\beta}gdv=-\int_{v \in \R^n} \frac{v^i}{v^0}\frac{y_i}{\sqrt{|y|^2+a^2}}|\widehat{\Omega}^{\beta}g| dv.$$
It then comes that
\begin{flalign*}
& \hspace{0cm} |x|^n\int_{v \in \R^n} |g(t,x,v)| dv \lesssim  &
\end{flalign*}
 $$ \hspace{1.3cm} \sum_{| \beta| \leq n} \left\|\left( 1+\frac{|y|}{\sqrt{|y|^2+a^2}} \right)  \int_{v \in \R^n} |\widehat{Z}^{\beta} g|(\sqrt{|y|^2+a^2},y,v) dv \right\|_{L^1(|y| \leq |x|)},$$
which allows us to deduce the result.
\end{proof}

\subsection{An energy estimate}

Before starting the proof of Theorem \ref{decayestimate}, we establish the following lemma, which combined with our last Sobolev inequality, will give us the expected decay on the velocity average of the Vlasov field for a spacetime region.

\begin{Lem}\label{energyhyperbo}

Let $a>0$, $T \in ]a,T^*[$ and $g: [0,T^*[ \times \R^3_x \times \R^3_v \rightarrow \R$ be a sufficiently regular function. Then,
\begin{eqnarray}
\nonumber \int_{|y| \leq \sqrt{T^2-a^2}} \int_{\R^n_v} |g(\sqrt{|y|^2+a^2},y,v)| \frac{dv}{(v^0)^2} dy & \leq & 2\int_{\Sigma_0} \int_{\R^n_v} |g| dx dv \\ \nonumber
& & \hspace{-1cm} +2\int_0^T \int_{\Sigma_s} \int_{\R^n_v}\left| T_F(g) \right|\frac{dv}{v^0} dxds.
\end{eqnarray}
\end{Lem}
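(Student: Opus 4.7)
The plan is to apply the divergence theorem to the current $N^\mu(|g|) = \int_v |g|\,\frac{v^\mu}{v^0}\,dv$ over the region $D_a(T) = \{(t,x) \in [0,T] \times \R^n : t \leq \min(T, \sqrt{|x|^2+a^2})\}$ pictured in the figure above. Its boundary consists of $\Sigma_0$, the hyperboloidal cap $H_a^T := \{(\sqrt{|y|^2+a^2},y) : |y| \leq \sqrt{T^2-a^2}\}$, and the flat piece $\Sigma_T \cap \{|x| \geq \sqrt{T^2-a^2}\}$. On $\Sigma_0$ and $\Sigma_T$ the outward Euclidean normals are $-\partial_t$ and $+\partial_t$; on $H_a^T$, parametrized by $y$ with $t_H(y) := \sqrt{|y|^2+a^2}$, the outward Euclidean unit normal is proportional to $(1,-y/t_H)$, and after the normalization factor cancels with the induced surface element the boundary integral becomes
$$\int_{H_a^T} N \cdot \nu \, d\sigma = \int_{|y| \leq \sqrt{T^2-a^2}} \int_v |g|\, \frac{v^0 t_H - v^i y^i}{v^0 t_H}\, dv\, dy.$$

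For the divergence, exactly as in the proof of Proposition \ref{energyfsimple}, an integration by parts in $v$ annihilates the $F$-contribution (using $F_{jj}=0$ and $v^i v^j F_{ij}=0$), leaving $\partial_\mu N^\mu(|g|) = \int_v (g/|g|)\,T_F(g)\, dv/v^0$, and therefore $|\partial_\mu N^\mu(|g|)| \leq \int_v |T_F(g)|/v^0\, dv$. Since the flux through $\Sigma_T \cap \{|x| \geq \sqrt{T^2-a^2}\}$ equals $\int N^0(T,x)\,dx \geq 0$, dropping it from the divergence identity yields
$$\int_{H_a^T} N \cdot \nu\, d\sigma \;\leq\; \int_{\Sigma_0} \int_v |g| \, dv\, dx + \int_0^T \int_{\Sigma_s} \int_v \frac{|T_F(g)|}{v^0}\, dv\, dx\, ds.$$

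The heart of the proof is the pointwise kernel estimate
$$\frac{v^0 t_H - v \cdot y}{v^0 t_H} \;\geq\; \frac{1}{2(v^0)^2}.$$
This follows from Cauchy--Schwarz $|v \cdot y| \leq |v| |y|$ and the AM--GM inequality:
$$\frac{|v|\,|y|}{v^0\, t_H} \;\leq\; \frac{1}{2}\!\left(\frac{|v|^2}{(v^0)^2} + \frac{|y|^2}{t_H^2}\right) \;=\; 1 - \frac{1}{2}\!\left(\frac{1}{(v^0)^2} + \frac{a^2}{t_H^2}\right),$$
where we used $|v|^2/(v^0)^2 = 1 - 1/(v^0)^2$ (the massive case $v^0 = \sqrt{1+|v|^2}$) and $|y|^2/t_H^2 = 1 - a^2/t_H^2$. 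Combining this lower bound with the previous inequality gives the claim, the factor $2$ on the right-hand side coming precisely from the $1/2$ in the AM--GM bound.

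The only subtle point is the kernel estimate: it crucially relies on $|v| < v^0$, i.e.\@ on the mass being strictly positive, which is why the lemma—like this entire section—is confined to the massive case. Once this is in hand, the remainder is a routine application of the divergence theorem together with the $v$-integration-by-parts already used for Proposition \ref{energyfsimple}.
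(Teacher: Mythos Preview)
Your proof is correct and follows essentially the same route as the paper: divergence theorem applied to $N^{\mu}(|g|)$ on the region $D_a(T)$, dropping the nonnegative flux through $\Sigma_T$, and a pointwise lower bound on the hyperboloidal kernel. The only cosmetic difference is in that last step: the paper obtains $\frac{t_H v^0 - y_i v^i}{t_H} \geq v^0 - |v| = \frac{1}{v^0+|v|} \geq \frac{1}{2v^0}$ directly via $|y| \leq t_H$ and the factorization $(v^0)^2 - |v|^2 = 1$, whereas you reach the same bound through AM--GM; both arguments hinge on the strict mass positivity, as you correctly note.
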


\begin{proof}
We use again the vector field $N^{\mu}(g):=\int_{\R^n_v} g \frac{v^{\mu}}{v^0} dv$ and recall from \eqref{eq:divN} that
$$ \partial_{\mu} N^{\mu}(|g|)= \int_{v \in \R^n} \frac{g}{|g|}\frac{T_F(g)}{v^0}-\frac{g}{|g|}F\left( \frac{v}{v^0},\nabla_v g \right) dv= \int_{v \in \R^n} \frac{g}{|g|}T_F(g) \frac{dv}{v^0}.$$
We now introduce the following subset of $\R_+ \times \R^n$ :
$$D_a(T)= \{(s,y) \in \mathbb{R}_+ \times \mathbb{R}^n \hspace{1mm} / \hspace{1mm} a^2 \geq s^2-|y|^2, \hspace{1mm} 0\leq s \leq T \} .$$
Denoting by $\nu$ is the outward pointing unit normal field to $\partial D_a(T)$, the divergence theorem (in $W^{1,1}$, for the euclidian space $\R^{n+1}$) gives us
$$\int_{\partial D_a(T)} \nu_{\mu} N^{\mu}(|g|) d \partial D_a(T) = \int_{D_a(T)}\int_{v \in \R^n} \frac{g}{|g|} T_F(g) \frac{dv}{v^0}dy d D_a(T).$$
The boundary term is equal to 
\begin{flalign*}
& \hspace{0mm} \int_{|y| \leq \sqrt{T^2-a^2}} \int_{v \in \R^n} \nu_{\mu} \frac{v^{\mu}}{v^0}|g|(\sqrt{|y|^2+a^2},y,v) dv d \lambda (y) &
\end{flalign*}
 $$ \hspace{4.4cm} +\|g\|_{L^1_x(|y| \geq \sqrt{T^2-a^2})L^1_v}(T)-\|g\|_{L^1_xL^1_v}(0),$$
where $d\lambda(y)$ is the surface measure on the hyperboloid $\{s^2-|y|^2=a^2 \}$. More precisely, on this hyperboloid\footnote{Here, $^ty$ denotes the transpose of $y$.},
$$d \lambda(y)= \sqrt{ det\left( I_n+\frac{1}{|y|^2+a^2} \;^tyy \right) } dy=\sqrt{\frac{2|y|^2+a^2}{|y|^2+a^2}} $$ and $$ \nu(y)=\frac{1}{\sqrt{2|y|^2+a^2}}(\sqrt{|y|^2+a^2},-y) .$$
We then deduce, as $D_a(T) \subset [0,T] \times \R^n$,
$$\int_{|y| \leq \sqrt{T^2-a^2}} \int_{\R^n_v} \left(\sqrt{|y|^2+a^2}v^0- y_iv^i \right)|g|(\sqrt{|y|^2+a^2},y,v) \frac{dvdy}{v^0\sqrt{|y|^2+a^2}} \leq $$ $$\hspace{5.1cm} \|g\|_{L^1_xL^1_v}(0) +\int_0^T \int_{\Sigma_s} \int_{\R^n_v} \left| T_F(g) \right| \frac{dv}{v^0}.$$
Finally, note that for $s=\sqrt{|y|^2+a^2} \geq |y|$, $$\frac{sv^0-y_iv^i}{s} \geq \frac{sv^0-|y||v|}{s} \geq s\frac{(v^0-|v|)(v^0+|v|)}{s(v^0+|v|)} \geq \frac{1}{2v^0} .$$
The result follows from a combination of the last two inequalities.
\end{proof}

\begin{Rq}
The lemma is also valid on the cone $s=|y|$, which means that the result is true for $a=0$, but we already knew it with Proposition \ref{energyfsimple}. 
\end{Rq}

\subsection{Proof of Theorem \ref{decayestimate}}

We consider a partition of the spacetime into four regions.
\begin{itemize}
\item The bounded region, $t+|x| \leq 2$, where a standard Sobolev inequality gives the result.
\item The interior of the light cone, where $|x| \leq \frac{t}{2}$.
\item The exterior of the light cone, where $t \leq |x|$ and $|x| \geq 1$.
\item The remaining region where $\frac{t}{2} \leq |x| \leq t$ and $t \geq 1$.
\end{itemize}

\subsubsection{The interior of the light cone}

Let $(t,x) \in [0,T^*[ \times \R^n$ such that $|x| \leq \frac{t}{2}$. Thus, $\tau_- \geq \frac{1}{3}\tau_+$ and the Klainerman-Sobolev inequality of Theorem \ref{KS1} gives
$$\int_{v \in \R^n} |f(t,x,v)| dv \lesssim \frac{1}{\tau_+^n} \sum_{|\beta| \leq n} \left\| \widehat{Z}^{\beta} f \right\|_{L^1_{x,v}} (t).$$
It only remains to apply Proposition \ref{energyfsimple}, which gives us
$$ \left\| \widehat{Z}^{\beta} f \right\|_{L^1_{x,v}} (t) \lesssim \left\| \widehat{Z}^{\beta} f \right\|_{L^1_{x,v}} (0)+ \int_0^t \int_{\Sigma_s} \int_{\R^n_v} \left| T_F \left( \widehat{Z}^{\beta} f \right) \right| \frac{dv}{v^0} dx ds.$$
\subsubsection{The exterior of the light cone}\label{indefinitedecay}

We use $(x^i-t\frac{v^i}{v^0}) \in \mathbf{k}_1$, for $1 \leq i \leq n$, which are solutions to the homogeneous relativistic transport equation. Let $(t,x) \in [0,T^*[ \times \R^n$ such that $ t \leq |x|$ and $|x| \geq 1$. By Theorem \ref{KS1}, we have
$$\int_{\R^n_v} \left| x^i-t\frac{v^i}{v^0} \right| |f|(t,x,v) dv  \lesssim \frac{1}{\tau_+^{n-1}} \sum_{z \in \mathbf{k}_1} \sum_{|\beta| \leq n} \|z \widehat{Z}^{\beta} f\|_{L^1_{x,v}}(t).$$
Since $|xv^0-tv| \geq v^0|x|-t|v| \geq \frac{|x|}{2v^0}$, it comes
\begin{eqnarray}
\nonumber |x|  \int_v |f|(t,x,v) \frac{dv}{(v^0)^2}  & \lesssim &  \int_v \left| x-t\frac{v}{v^0} \right| |f|(t,x,v) dv  \\ \nonumber 
& \lesssim &  \sum_{i=1}^n \int_v \left| x^i-t\frac{v^i}{v^0} \right||f|(t,x,v) dv .
\end{eqnarray}
Hence, using that $|x| \gtrsim \tau_+$ (recall that $|x| \geq 1$ and $t \leq |x|$ in the region studied) and applying Proposition \ref{energyfsimple}, we finally obtain
\begin{eqnarray}
\nonumber \int_v \hspace{-0.3mm} |f|(t,x,v) \frac{dv}{(v^0)^2} \hspace{-2.2mm} & \lesssim & \hspace{-2.2mm} \frac{1}{|x|\tau_+^{n-1}} \sum_{z \in \mathbf{k}_1} \sum_{|\beta| \leq n} \|z \widehat{Z}^{\beta} f\|_{L^1_{x,v}}(t) \\ \nonumber
& \lesssim & \hspace{-2.2mm} \frac{1}{\tau_+^n} \hspace{-0.4mm} \sum_{\begin{subarray}{}  |\beta| \leq n \\ z \in \mathbf{k}_1 \end{subarray}} \left\| z \widehat{Z}^{\beta} f \right\|_{L^1_{x,v}} \hspace{-1.5mm} (0) \hspace{-0.3mm} + \hspace{-0.3mm} \int_0^t \hspace{-0.4mm} \int_{\Sigma_s} \hspace{-0.4mm} \int_{v} \left| T_F \left(z \widehat{Z}^{\beta} f \right) \right| \frac{dv}{v^0} dx ds.
\end{eqnarray}
\subsubsection{The remaining region }

Let $(t,x) \in [0,T^*[ \times \R^n$ such that $ \frac{t}{2} \leq |x| \leq t$ and $t \geq 1$.
We start by applying Lemma \ref{energyhyperbo} to $\widehat{Z}^{\beta} f$, for all $|\beta| \leq n$, with $T=t$ and $a^2=t^2-|x|^2$. We have

\begin{flalign*}
& \hspace{0mm} \sum_{|\beta| \leq n} \int_{|y| \leq |x|} \int_{\R^n_v} |\widehat{Z}^{\beta} f(\sqrt{|y|^2+a^2},y,v)| \frac{dv}{(v^0)^2} dy \lesssim &
\end{flalign*}
 $$ \hspace{2.7cm} \sum_{|\beta| \leq n} \left( \int_{\Sigma_0} \int_{\R^n_v} \left| \widehat{Z}^{\beta}f \right| dv dx+\int_0^t \int_{\Sigma_s} \int_{\R^n_v} T_F(\widehat{Z}^{\beta}f) \frac{dv}{v^0} dx ds \right).$$
As $|\widehat{Z}^{\gamma} \left( (v^0)^{-2} \right) | \lesssim (v^0)^{-2}$, Lemma \ref{pointwisehyperbo} applied to $g=(v^0)^{-2} f$ allows us to bound by below the left hand side of the previous inequality by
$$ |x|^n \int_v |f|(t,x,v) \frac{dv}{(v^0)^2} .$$
The result follows from $|x|^n \gtrsim \tau_+^n$ (as $ |x| \geq \frac{t}{2} \geq \frac{1}{2}$).

\subsection{Improved decay for the derivatives of the velocity averages}\label{optidecayderiv}

Let us introduce the following vector fields.

\begin{Def}
For $1 \leq i \leq n$ and $1 \leq k,l \leq n$, with $k \neq l$, we consider
$$X_i=\frac{v^i}{v^0}\partial_t+\partial_i \hspace{3mm} \text{and} \hspace{3mm} Y_{kl}=\frac{v^k}{v^0} \partial_l-\frac{v^l}{v^0} \partial_k.$$

\end{Def}

\begin{Pro}
The vector fields $\frac{1}{v^0}T_1$, $X_i$ and $Y_{kl}$ are good derivatives (as the derivates tangential to the light cone $L$ and $e_B$, see Remark \ref{extradecay}), which means that if $W$ denotes one of them, we have, for a smooth function $f$,
$$\left|\int_v Wf dv\right| \lesssim \frac{1}{\tau_+}\left(\sum_{\widehat{Z} \in \K} \left| \int_v \widehat{Z} f dv \right|+\sum_{z \in \mathbf{k}_1} \int_v |z| |\nabla_{t,x} f| dv\right)$$
\end{Pro}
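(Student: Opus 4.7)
My plan is to prove each of the three inequalities (one for each choice of $W \in \{T_1/v^0, X_i, Y_{kl}\}$) by multiplying $W$ by $t$ and decomposing $tW$ using the fundamental complete-lift identities
\[
v^0 \partial_{v^i} = \widehat{\Omega}_{0i} - t\partial_i - x^i\partial_t, \qquad t\partial_t = S - x^j\partial_j,
\]
together with $t\frac{v^i}{v^0} - x^i = -\frac{z_{0i}}{v^0} \in \mathbf{k}_1$. For the region where $r \gtrsim t$ (so $t$ alone cannot produce $\tau_+^{-1}$), I would repeat the argument with $r$ as the multiplier by substituting $r\partial_t = \frac{x^j}{r}\widehat{\Omega}_{0j} - \frac{v^0 x^j}{r}\partial_{v^j} - \frac{tx^j}{r}\partial_j$ in place of $t\partial_i$, and finally combine with the elementary bound valid on the bounded region $\tau_+ \lesssim 1$.

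Concretely, for $W = T_1/v^0$ the decomposition is immediate: $tT_1/v^0 = S - \sum_i \frac{z_{0i}}{v^0}\partial_i$, so after integrating in $v$ the first term contributes $\int Sf\,dv$ (a $\widehat{Z}\in\widehat{\mathbb{P}}_0$ term) and the rest contributes weighted derivatives indexed by $\mathbf{k}_1$. For $W = X_i$, a direct computation gives $tX_i = \widehat{\Omega}_{0i} - \frac{z_{0i}}{v^0}\partial_t - v^0\partial_{v^i}$; integrating in $v$ and using integration by parts in $v^i$ on the last term yields $\int v^i/v^0\,f\,dv$, which I re-express by a second use of the complete-lift identity as $-\int\widehat{\Omega}_{0i}f\,dv + t\int\partial_i f\,dv + x^i\int\partial_t f\,dv$; the weights $t$ and $x^i$ satisfy $|t|,|x^i|\le \tau_+$, so after dividing by $t$ in the regime $t\gtrsim \tau_+$ every resulting term is of the form $\tau_+^{-1}|\int\widehat{Z}f\,dv|$ or $\tau_+^{-1}\int |z||\nabla_{t,x}f|\,dv$. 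For $W = Y_{kl}$ with $k\neq l$, the analogous identity reads
\[
tY_{kl} = \tfrac{v^k}{v^0}\widehat{\Omega}_{0l} - \tfrac{v^l}{v^0}\widehat{\Omega}_{0k} - \tfrac{z_{kl}}{v^0}\partial_t - (v^k\partial_{v^l} - v^l\partial_{v^k}),
\]
and the velocity-derivative term integrates to zero since $\int v^k\partial_{v^l}f\,dv = -\delta^k_l\int f\,dv = 0$; the first two terms are then expanded via $\widehat{\Omega}_{0l} = t\partial_l + x^l\partial_t + v^0\partial_{v^l}$ and handled as above.

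For the complementary regime $r \gtrsim \tau_+$, I would run the same scheme replacing $t\partial_i$ by its $r$-analogue: since $\frac{x^j}{r}\widehat{\Omega}_{0j} = t\partial_r + r\partial_t + v^0\frac{x^j}{r}\partial_{v^j}$, one can solve for $r\partial_t$ and for $r\partial_i$ (using $\partial_i = \frac{x^i}{r}\partial_r - \frac{x^j}{r^2}\Omega_{ij}$) in terms of the rotations $\widehat{\Omega}_{ij}$ and boosts $\widehat{\Omega}_{0j}$, picking up again only weights in $\mathbf{k}_1$ and velocity derivatives that can be integrated by parts. Since $\tau_+ \leq 1 + 2\max(t,r)$, whichever of the two identities is applicable yields the stated $\tau_+^{-1}$ decay; the remaining region $\tau_+ \lesssim 1$ is trivial because the inequality reduces to the naive pointwise estimate $|Wf| \lesssim |\nabla_{t,x}f|$.

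The main obstacle I expect is bookkeeping: after each integration by parts one generates new "moment" terms such as $\int \frac{v^i}{v^0}f\,dv$ or $\int v^k\partial_{v^l}f\,dv$ that are not directly of the form required, and it must be verified that iterating the identity $v^0\partial_{v^i} = \widehat{\Omega}_{0i} - t\partial_i - x^i\partial_t$ (or using the antisymmetry in the $Y_{kl}$ case) always resolves them into a combination of $\int\widehat{Z}f\,dv$ and $\int |z||\nabla_{t,x}f|\,dv$ with coefficients bounded by $\tau_+$, so that the final division by $t$ or $r$ delivers exactly the factor $\tau_+^{-1}$ claimed in the statement.
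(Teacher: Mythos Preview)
Your argument for $X_i$ (and, by the same mechanism, the one you sketch for $Y_{kl}$) is circular. After writing $tX_i = \widehat{\Omega}_{0i} - \frac{z_{0i}}{v^0}\partial_t - v^0\partial_{v^i}$, integrating by parts in $v^i$, and then re-expressing the resulting moment $\int_v \frac{v^i}{v^0}f\,dv$ once more via the same complete-lift identity, the two $\widehat{\Omega}_{0i}$ contributions cancel and you are left with
\[
t\int_v X_if\,dv \;=\; -\int_v \tfrac{z_{0i}}{v^0}\partial_tf\,dv \;+\; t\int_v\partial_if\,dv \;+\; x^i\int_v\partial_tf\,dv,
\]
which, since $x^i - \frac{z_{0i}}{v^0} = t\frac{v^i}{v^0}$, is the trivial identity $t\int_v X_if\,dv = t\int_v X_if\,dv$. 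In particular the term $t\int_v\partial_if\,dv$ survives with coefficient $t$, so after dividing by $t$ you recover $\int_v\partial_if\,dv$ with no $\tau_+^{-1}$ factor; the claimed decay is not obtained. The same collapse occurs for $Y_{kl}$ once you expand $\frac{v^k}{v^0}\widehat{\Omega}_{0l}$: the $x^l\frac{v^k}{v^0}-x^k\frac{v^l}{v^0}$ contribution cancels the $\frac{z_{kl}}{v^0}$ term and you are back to $t\int_v Y_{kl}f\,dv$.

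The paper avoids this by working with the \emph{spacetime} vector fields $S$, $\Omega_{0i}$, $\Omega_{kl}$ directly rather than their complete lifts. One has
\[
tv^0X_i = v^0\Omega_{0i} + (tv^i-x^iv^0)\partial_t,\qquad tv^0Y_{kl} = v^0\Omega_{kl} + (tv^k-x^kv^0)\partial_l - (tv^l-x^lv^0)\partial_k,
\]
and $tT_1 = v^0S + (tv^i-x^iv^0)\partial_i$, so no velocity derivative ever appears and the coefficients $t\frac{v^i}{v^0}-x^i$ land immediately in $\mathbf{k}_1$. After integrating in $v$, the $Z$-pieces give $Z\!\int_v f\,dv$; converting these to $\int_v\widehat{Z}f\,dv$ costs only the harmless zeroth-order moment $\int_v \frac{v^i}{v^0}f\,dv$, which the paper absorbs by writing $\sum_{|\beta|\le 1}\big|\int_v \widehat{Z}^{\beta} f\,dv\big|$. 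For the region $r\gtrsim t$, the paper does not run a parallel $r$-decomposition as you propose but simply writes $rW = tW + (r-t)W$ and controls the second piece through $\tau_-|\partial_\mu u|\lesssim\sum_{Z\in\mathbb{K}}|Zu|$; this replaces all of your $r$-analogue bookkeeping with one line.
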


\begin{proof}
For $T_1$, we remark that
$$tT_1=v^0S+(tv^i-x^iv^0)\partial_i, \hspace{3mm} rT_1=tT_1+(r-t)T_1$$
and that $$\left| (r-t) \int_v \partial f dv \right| \lesssim \sum_{ \widehat{Z} \in \mathbb{K}} \left| \int_v Z f dv \right| \lesssim \sum_{\begin{subarray}{l} \widehat{Z}^{\beta} \in \K^{|\beta|} \\ \hspace{1mm} |\beta| \leq 1 \end{subarray}} \left| \int_v \widehat{Z}^{\beta} f dv \right|.$$ For $X_i$, that ensues from
$$tv^0X_i=v^0 \Omega_{0i}+(tv^i-x^iv^0)\partial_t \hspace{3mm} \text{and} \hspace{3mm} rX_i=tX_i +(r-t)X_i.$$
For $Y_{kl}$, that follows from
$$tv^0Y_{kl}=v^0\Omega_{kl}+(tv^k-x^kv^0)\partial_l-(tv^l-x^lv^0)\partial_k \hspace{3mm} \text{and} \hspace{3mm} rY_{kl}=tY_{kl} +(r-t)Y_{kl}.$$

\end{proof}
Finally, let us show how we can obtain extra decay on $\partial \int_v f dv$ if $f$ solves an equation such as $T_F(f)=0$.
\begin{Pro}
Let $f : [0,T] \times \R^n_x \times \R^n_v \rightarrow \R$ be a function such that
$$\forall \hspace{0.5mm} \widehat{Z} \in \K, \hspace{1mm} z \in \mathbf{k}_1, \hspace{3mm} \left| \int_v (v^0)^2z \widehat{Z} fdv\right| \lesssim \tau_+^{-n} .$$
Then, for all $0 \leq \mu \leq n$,
$$\left| \partial_{\mu} \int_v f dv \right| \lesssim \tau_+^{-n-1}.$$
\end{Pro}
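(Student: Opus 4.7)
The plan is to express each translation operator $\partial_\mu$ as a combination of the good derivatives $T_1/v^0$, $X_i$, $Y_{kl}$ of the preceding proposition, weighted by factors polynomial in $(v^0)$ and $v^i$, so that upon integration in $v$ all resulting integrands (or their transforms after integration by parts in $v$) lie in the span of the quantities $(v^0)^2 z\widehat{Z}f$ with $z\in\mathbf{k}_1$ and $\widehat{Z}\in\widehat{\mathbb{P}}_0$ controlled by the hypothesis. The extra factor $\tau_+^{-1}$ in the conclusion, relative to the hypothesis, will arise from structural identities of the type $tT_1 = v^0 S + (tv^i - x^iv^0)\partial_i$ of the preceding proposition, which give a factor $1/t$ (or, symmetrically in the region $r\gtrsim \tau_+$, a factor $1/r$) upon solving for a translation.

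The starting algebraic identity in the massive case, $(v^0)^2 - |v|^2 = 1$, yields $\partial_t = (v^0)^2\bigl(\tfrac{T_1}{v^0} - \tfrac{v^i}{v^0}X_i\bigr)$ and hence
$$
\int_v \partial_t f\,dv = \int_v v^0 T_1 f\,dv - \sum_i \int_v v^0 v^i X_i f\,dv.
$$
For the first term I multiply the preceding proposition's identity $tT_1 = v^0 S + (tv^i - x^iv^0)\partial_i$ by $v^0$ to obtain
$$
tv^0 T_1 f = (v^0)^2 Sf + \sum_i (v^0)^2 z_i \,\partial_i f,\qquad z_i := t\tfrac{v^i}{v^0} - x^i \in \mathbf{k}_1.
$$
Integrating in $v$ and applying the hypothesis with $(\widehat{Z},z)=(S,1)$ and $(\widehat{Z},z)=(\partial_i, z_i)$ then gives $\bigl|\int_v v^0 T_1 f\,dv\bigr| \lesssim t^{-1}\tau_+^{-n}$, which is $\lesssim \tau_+^{-n-1}$ in the region $t\geq \tau_+/2$. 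A parallel treatment of the $X_i$-terms, based on the identity $tX_i = \widehat{\Omega}_{0i} + z_i\,\partial_t - v^0\partial_{v^i}$ (obtained by solving the boost lift for $\partial_i$ and substituting into $X_i$) multiplied by $v^0 v^i$ and followed by an integration by parts in $v^i$, produces boundary terms of the form $\int_v (v^0)^2 (v^\mu/v^0)\,\widehat{Z}f\,dv$ that again lie in the hypothesis-controlled family and yields the analogous $\lesssim t^{-1}\tau_+^{-n}$ bound.

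The complementary region $r\geq \tau_+/2$ is treated symmetrically via the $r$-analogue $r\partial_t = \sum_i \tfrac{x^i}{r}\bigl(\widehat{\Omega}_{0i} - t\partial_i - v^0\partial_{v^i}\bigr)$, with $r$ now playing the role of $t$. The spatial derivatives $\partial_i$, $1\leq i\leq n$, are then handled by the boost identity $t\partial_i = \widehat{\Omega}_{0i} - x^i\partial_t - v^0\partial_{v^i}$ (or its $r$-analogue), which reduces the bound on $\int_v \partial_i f\,dv$ to the $\partial_t$-bound just established together with further integrals directly of the hypothesis type.

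The main obstacle is the careful bookkeeping of the boundary terms arising from the various integrations by parts in $v^i$. Each such term is a $v$-moment of the form $\int_v v^0 v^\mu \widehat{Z}f\,dv = \int_v (v^0)^2 \tfrac{v^\mu}{v^0}\widehat{Z}f\,dv$, which is controlled by the hypothesis precisely because $v^\mu/v^0 \in \mathbf{k}_1$ for every $\mu$; the key structural fact that closes the algebra is that the lifted boost $\widehat{\Omega}_{0k} = t\partial_k + x^k\partial_t + v^0\partial_{v^k}$ is designed so that its commutators with the $(v^0)^2$-weight produce only elements of $\mathbf{k}_1$ and Lie derivatives in $\widehat{\mathbb{P}}_0$, matching exactly the family controlled by the hypothesis.
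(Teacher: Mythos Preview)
Your approach is correct and follows the same idea as the paper, but the paper's proof is considerably more streamlined. The paper simply records the two pointwise identities
\[
\partial_t = v^0 T_1 - v^0 v^i X_i, \qquad \partial_i = (v^0)^2 X_i - v^i T_1 - v^0 v^k Y_{ki},
\]
the second of which follows from the first together with $v^0 v^k Y_{ki} = |v|^2\partial_i - v^i v^k\partial_k$ and $(v^0)^2-|v|^2=1$. Each translation is thus a polynomial-in-$v$ combination of the good derivatives $T_1/v^0,\,X_i,\,Y_{kl}$, with weights bounded by $(v^0)^2$; the preceding proposition, applied to the appropriate $(v^0)^2$-weighted integrands, then gives the extra $\tau_+^{-1}$ directly, with no need for case-splitting or integration by parts in $v$.

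What you do instead is essentially re-derive the preceding proposition inline, treating the regions $t\gtrsim\tau_+$ and $r\gtrsim\tau_+$ separately and expanding $tX_i$ via the lifted boost. This works, but is more laborious, and your reduction of $\partial_i$ to $\partial_t$ through $t\partial_i=\widehat{\Omega}_{0i}-x^i\partial_t-v^0\partial_{v^i}$ is a detour compared to the paper's direct use of $Y_{ki}$. One small caution: after your integration by parts in $v^i$ you produce terms such as $\int_v (v^0)^2(v^i/v^0)\,z_i\,\partial_t f\,dv$ (a product of two weights) and zero-order terms $\int_v (v^0)^2 f\,dv$; these are not literally single-$z$, single-$\widehat{Z}$ quantities, so your claim that ``all resulting integrands lie in the hypothesis-controlled family'' needs the slightly stronger input with absolute values inside the integral (which is what holds in the applications), or a reorganisation avoiding the $v$-integration by parts as in the paper's argument.
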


\begin{proof}
As $$T_1=v^{\mu}\partial_{\mu}=v^0 \partial_t +v^iX_i-\frac{|v|^2}{v^0}\partial_t=v^iX_i+\frac{1}{v^0}\partial_t,$$
we have
$$\partial_t=v^0T_1-v^0v^iX_i.$$
Similarly
$$\partial_i=(v^0)^2X_i-v^iT_1-v^0v^kY_{ki}.$$
\end{proof}

\begin{Rq}
We can prove a similar proposition for derivatives of higher orders.

\end{Rq}

\section{The massive Vlasov-Maxwell equations}\label{section6}

\subsection{Global existence for small data}

The aim of this section is to prove Theorem \ref{intromasstheo}. We suppose that the dimension $n$ is at least $4$ and we consider the massive Vlasov-Maxwell system \eqref{syst1}-\eqref{syst3} with at least two species, so that $K \geq 2$. For simplicty, we suppose that $m_k=1$ for all $1 \leq k \leq K$.

To simplify the notation, we denote during this chapter the energy norm $\mathbb{E}^k_{M,q,1}$, introduced previously in Definition \ref{norm1}, by $\mathbb{E}^k_{M,q}$ . We also introduce the function $\chi$ defined on $\R_+$ by
$$ \chi ( s )= \log^3{(3+s)} \hspace{3mm} \text{if} \hspace{3mm} n=4 \hspace{3mm} \text{and} \hspace{3mm} \chi(s) = 1 \hspace{3mm} \text{if} \hspace{3mm} n \geq 5.$$ 

This is a more precise version of Theorem \ref{intromasstheo}.

\begin{Th}\label{massif}

Let $n \geq 4$, $K \geq 2$ and $N \geq \frac{5}{2}n+1$. Let $(f_0,F_0)$ be an initial data set for the massive Vlasov-Maxwell system. Let $(f,F)$ be the unique classical solution to the system and let $A$ be a potential in the Lorenz gauge. There exists $\epsilon >0$ such that\footnote{A smallness condition on $F$, which implies $\widetilde{\mathcal{E}}_N[A](0) \leq \epsilon$, is given in Proposition \ref{LorenzPot}.}, if 
\begin{flalign*}
& \hspace{25mm} \widetilde{\mathcal{E}}_N[A](0) \leq \epsilon, \hspace{20mm} \mathcal{E}_N[F](0) \leq \epsilon &
\end{flalign*}
and if, for all $1 \leq k \leq K$,
\begin{flalign*}
& \hspace{25mm} \mathbb{E}^2_{N+n,1}[f_k](0) \leq \epsilon, &
\end{flalign*} 
then $(f,F)$ exists globally in time and verifies the following estimates. 
\begin{itemize}
\item Energy bounds for $A$, $F$ and $f_k$: $ \forall$ $1 \leq k \leq K$ and $\forall \hspace{1mm} t \in \mathbb{R}_+$,
\begin{flalign*}
& \hspace{20mm} \widetilde{\mathcal{E}}_N[A](t) \lesssim \epsilon\chi(t), \hspace{15mm} \mathcal{E}_N[F](t) \lesssim \epsilon\chi(t), & \\
& \hspace{20mm} \mathbb{E}^2_{N}[f_k](t) \lesssim \epsilon \hspace{8mm} \text{and} \hspace{8mm}  \mathbb{E}^2_{N,1}[f_k](t) \lesssim \epsilon \chi^{\frac{1}{6}}(t). &
\end{flalign*}

\item Pointwise decay for the null decomposition of $\mathcal{L}_{Z^{\beta}}(F)$: $\forall$ $|\beta| \leq N-n$, $(t,x) \in \mathbb{R}_+ \times \mathbb{R}^n$,
\begin{flalign*}
& \hspace{7mm} |\alpha ( L_{Z^{\beta}} F)| \lesssim \sqrt{\epsilon} \sqrt{\chi(t)}\tau_+^{-\frac{n+2}{2}}, \hspace{9mm} |\underline{\alpha} ( L_{Z^{\beta}} F)| \lesssim \sqrt{\epsilon} \sqrt{\chi(t)}\tau_+^{-\frac{n-1}{2}}\tau_-^{-\frac{3}{2}}, & \\
& \hspace{7mm} |\rho (L_{Z^{\beta}} F)| \lesssim \sqrt{\epsilon} \sqrt{\chi(t)}\tau_+^{-\frac{n+1}{2}}\tau_-^{-\frac{1}{2}}, \hspace{3mm} |\sigma (L_{Z^{\beta}} F)| \lesssim \sqrt{\epsilon} \sqrt{\chi(t)}\tau_+^{-\frac{n+1}{2}}\tau_-^{-\frac{1}{2}}. &
\end{flalign*}
\item Pointwise decay for $\int_{v \in \mathbb{R}^n } | \widehat{Z}^{\beta} f_k | dv$: $$ \forall \hspace{0.5mm} |\beta| \leq N-\frac{3n+2}{2}, \hspace{0.5mm} (t,x) \in \mathbb{R}_+ \times \mathbb{R}^n, \hspace{1cm} \int_{v \in \mathbb{R}^n  } | \widehat{Z}^{\beta} f_k | dv \lesssim \frac{\epsilon}{\tau_+^{n}}.$$
\item Pointwise decay for $\int_{v \in \mathbb{R}^n  } | \widehat{Z}^{\beta} f_k |(v^0)^2 dv$ and $\int_{v \in \mathbb{R}^n  } |z\widehat{Z}^{\beta} f_k | (v^0)^2dv$: 
$$ \forall \hspace{0.5mm} |\beta| \leq N-n, \hspace{0.5mm} (t,x) \in \mathbb{R}_+ \times \mathbb{R}^n, \int_{v \in \mathbb{R}^n } | \widehat{Z}^{\beta} f _k|(v^0)^2 dv \lesssim \frac{\epsilon}{\tau_+^{n-1}\tau_-},$$
$$ \hspace{-3cm} \forall \hspace{0.5mm} |\beta| \leq N- \frac{3n+2}{2}, \hspace{0.5mm} z \in \mathbf{k}_1, \hspace{0.5mm} (t,x) \in \mathbb{R}_+ \times \mathbb{R}^n, $$ $$ \hspace{5.4cm} \int_{v \in \mathbb{R}^n } |z \widehat{Z}^{\beta} f_k | (v^0)^2dv \lesssim \frac{\epsilon}{\tau_+^{n-1}\tau_-}.$$
\item $L^2$ estimates on $\int_{v \in \mathbb{R}^n } | \widehat{Z}^{\beta} f_k | dv$:
$$ \forall \hspace{0.5mm} |\beta| \leq N, \hspace{0.5mm} t \in \mathbb{R}_+, \hspace{1cm} \left\| \int_{v \in \mathbb{R}^n  } | \widehat{Z}^{\beta} f_k | dv \right\|_{L^2(\Sigma_t)} \lesssim \frac{\epsilon\chi^{\frac{1}{6}}(t)}{(1+t)^{\frac{n}{2}}}.$$
\end{itemize}
\end{Th}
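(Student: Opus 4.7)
The proof is a standard bootstrap/continuity argument. I would let $T \in [0, T_{\max}[$ be the largest time on which the bootstrap estimates
\begin{align*}
\widetilde{\mathcal{E}}_N[A](t) &\leq 2C\epsilon \chi(t), \qquad \mathcal{E}_N[F](t) \leq 2C\epsilon \chi(t), \\
\mathbb{E}^2_N[f_k](t) &\leq 2C\epsilon, \qquad \mathbb{E}^2_{N,1}[f_k](t) \leq 2C\epsilon \chi^{1/6}(t)
\end{align*}
hold for all $t \in [0,T]$ and all $1 \leq k \leq K$, where $C$ is fixed and depends only on $n$ and $N$. Such a $T>0$ exists by local existence and the smallness of the data. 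I show that for $\epsilon$ small enough, all four constants $2C\epsilon$ can be replaced by $\frac{3}{2}C\epsilon$, which rules out $T < T_{\max}$, and a standard continuation criterion then gives $T_{\max} = +\infty$. The decay estimates stated in the theorem are recovered along the way from the energy bounds.

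The first step is to derive the pointwise decay estimates from the bootstrap assumptions. For the null components of $\mathcal{L}_{Z^{\beta}}(F)$ with $|\beta| \leq N-n$ I apply Proposition \ref{decayFpointwise} and use Proposition \ref{decayalphapointwise} (requiring the Lorenz gauge) to obtain the sharp decay on $\alpha$. For the velocity averages I apply the new Theorem \ref{decayestimate} (combined with Remark \ref{rqdecayestimate}) to $(v^0)^2 \widehat{Z}^{\beta} f_k$: the bootstrap on $\mathbb{E}^2_{N,1}[f_k]$ controls the initial-data term, and the spacetime integral on the right-hand side is estimated using the commutation formula of Corollary \ref{Commuf}, the pointwise decay on $F$, and the fact that the weights $\mathbf{k}_1$ are preserved by $T_1$ so they can absorb the losses when rewriting $\nabla_v$ via Lemma \ref{vderivL} and Proposition \ref{nullresume}.

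The second step is to improve the bootstrap on the Vlasov energies. I apply Propositions \ref{energyf} and \ref{energypoids}. The delicate term is
\[
\int_0^t \int_{\Sigma_s} \int_v \left| z \mathcal{L}_{Z^{\gamma}}(F)(v,\nabla_v \widehat{Z}^{\delta} g) \right| (v^0)^k \, dv\, dx\, ds, \quad z \in \mathbf{k}_1,
\]
with $|\gamma|+|\delta| \leq N$ and $|\delta| \leq N-1$. I split according to whether $|\gamma| \leq N - \tfrac{n+2}{2}$ (so that $\mathcal{L}_{Z^{\gamma}}(F)$ satisfies the pointwise decay just derived) or $|\delta| \leq \tfrac{n+2}{2}$ (so that $\nabla_v \widehat{Z}^{\delta} g$ is pointwise controlled via a Klainerman--Sobolev inequality). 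In both regimes the null structure of $F(v,\nabla_v g)$ is essential: the bad factor $\tau_+$ arising from \eqref{derivv} is absorbed either by a good null component of $\mathcal{L}_{Z^{\gamma}}(F)$ yielding extra $\tau_+$-decay, or via Proposition \ref{nullresume}, converting a $\tau_+$-loss into a $\tau_-$-loss integrable via Lemma \ref{intesti}. This yields $\mathbb{E}^2_N[f_k] \leq \tfrac{3}{2}C\epsilon$ and $\mathbb{E}^2_{N,1}[f_k] \leq \tfrac{3}{2}C\epsilon\chi^{1/6}(t)$. In parallel, the energies of $F$ and $A$ are improved by Propositions \ref{MoraF} and \ref{energypotential}, using Proposition \ref{CommuA} to express $\square \mathcal{L}_{Z^{\gamma}}A$ in terms of velocity averages, the Lorenz gauge to handle the scaling-multiplier correction terms, and the null decomposition of $\overline{K}_0^{\nu} \mathcal{L}_{Z^{\beta}}(F)_{\nu\mu} J^{\mu}$ to avoid products of two worst null components.

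The main obstacle I anticipate is the critical dimension $n=4$, where $\mathcal{E}_N[F]$ is allowed to grow like $\log^3(3+t)$: closing the bootstrap on $\mathbb{E}^2_{N,1}[f_k]$ with only a $\chi^{1/6}$ factor demands a careful tracking of how logarithmic losses distribute between $F$ and $f_k$ when estimating the source terms. A second delicate point is controlling the top-order energy $\mathcal{E}_N[F]$, which requires the $L^2$ bound on $\int_v |\widehat{Z}^{\beta} f_k| dv$ at order $|\beta| \leq N$ stated in the theorem; to avoid losing a derivative, these $L^2$ estimates must be obtained without commuting the transport equation beyond order $N$. I would do this by Cauchy--Schwarz in $v$ combined with the pointwise decay of $\int_v (v^0)^2 |\widehat{Z}^{\beta}f_k|dv$ at order $N - n$ and the bootstrap bound on $\mathbb{E}^2_N[f_k]$, exactly as permitted by the hypothesis $N \geq \tfrac{5}{2}n+1$.
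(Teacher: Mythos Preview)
Your overall bootstrap strategy and your treatment of Steps 1--3 (pointwise decay, improvement of the Vlasov energies, improved $\tau_+^{-n}$ decay via Theorem~\ref{decayestimate}) are essentially what the paper does, up to ordering. There is, however, a genuine gap in your final paragraph concerning the top-order $L^2$ estimate
\[
\left\|\int_v |\widehat{Z}^{\beta} f_k|\,dv\right\|_{L^2(\Sigma_t)} \lesssim \frac{\epsilon\,\chi^{1/6}(t)}{(1+t)^{n/2}},\qquad |\beta|\le N.
\]
You propose to obtain this by ``Cauchy--Schwarz in $v$ combined with the pointwise decay of $\int_v (v^0)^2|\widehat{Z}^\beta f_k|\,dv$ at order $N-n$ and the bootstrap bound on $\mathbb{E}^2_N[f_k]$''. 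But for $|\beta|>N-n$ there is no pointwise decay available for $\int_v|\widehat{Z}^\beta f_k|\,dv$ itself, and Cauchy--Schwarz in $v$ does not produce a product of a low-order and a high-order factor: the function $\widehat{Z}^\beta f_k$ is a single object, not a product. No interpolation of the kind you sketch is available here.

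The paper closes this gap by a considerably more elaborate construction (Section~\ref{L2system}, following \cite{FJS}). One collects all $\widehat{Z}^\beta f_k$ with $M\le|\beta|\le N$ into a vector $X$ and all lower-order ones into a vector $Y$, so that $T_F(X)+A_1X=BY$ and $T_F(Y)=A_2Y$, where the entries of $A_1,A_2$ involve only \emph{low-order} derivatives of $F$ (hence pointwise-controlled) while those of $B$ may involve top-order derivatives of $F$. One then splits $X=H+G$ with $T_F(H)+A_1H=0$, $H(0)=X(0)$, and $G=KY$ for a matrix $K$ solving $T_F(K)+A_1K+KA_2=B$. The homogeneous part $H$ is treated by commuting up to order $n$ and using the extra regularity $\mathbb{E}^2_{N+n,1}[f_k](0)\le\epsilon$ assumed on the data; this is precisely why the hypothesis involves $N+n$ derivatives. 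For $G=KY$ one estimates $\mathbb{E}[|KKY|_\infty]$ via an energy argument and then uses Cauchy--Schwarz in $v$ in the form $(\int_v|K^j_iY_j|)^2\le(\int_v|Y|)(\int_v|K^j_i|^2|Y_j|)$, which \emph{does} produce a low-order factor with pointwise decay. This machinery is what makes the top-order $L^2$ estimate work; without it the bootstrap on $\mathcal{E}_N[F]$ cannot be closed.

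A secondary omission: the paper also bootstraps the scaling-multiplier energy $\mathcal{E}^S_N[F]\le 2\overline{C}\epsilon$ and an intermediate weighted norm $\mathbb{E}^2_{N-\frac{n+2}{2},1}[f_k]\le 4\epsilon$ (without $\chi$-loss). The former gives $\|\sqrt{\tau_-}\,\underline{\alpha}\|_{L^2}\lesssim\sqrt{\epsilon}$ uniformly in $t$, which in dimension $n=4$ is exactly what allows you to close $\mathbb{E}^2_{N,1}[f_k]$ with only a $\chi^{1/6}$ loss rather than a polynomial one (see the footnote in Section~\ref{step2energy}). Your bootstrap set should include these.
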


\begin{Rq}
In dimension $4$, if $N \geq 14$, we can take $\chi(t)=\log^2 (3+t)$ and avoid the $\log^{\frac{1}{2}}(3+t)$-loss on the $L^2$ estimate on $\int_v |\widehat{Z}^{\beta} f_k| dv$.
\end{Rq}

\subsection{Structure and beginning of the proof}

Let $(f_0,F_0)$ be an initial data set satisfying the assumptions of Theorem \ref{massif}. By a standard local well-posedness argument, there exists a unique maximal solution $(f,F)$ of the massive Vlasov-Maxwell system defined on $[0,T^*[$, with $T^* \in \R_+^* \cup \{+ \infty \}$.  

We consider the following bootstrap assumptions. Let $T$ be the largest time such that, $\forall$ $1 \leq k \leq K$ and $\forall \hspace{1mm} t \in [0,T]$,

\begin{equation}\label{bootF}
\hspace{-5cm} \mathcal{E}_N[F](t) \leq 2C\epsilon \chi(t), \hspace{5mm} \mathcal{E}_N^{S}[F](t) \leq 2\overline{C}\epsilon,
\end{equation}
\begin{equation}\label{bootf}
 \mathbb{E}^2_{N}[f_k](t) \leq 4\epsilon, \hspace{2mm} \mathbb{E}^2_{N-\frac{n+2}{2},1}[f_k](t) \leq 4\epsilon \hspace{2mm} \text{and} \hspace{2mm}  \mathbb{E}^2_{N,1}[f_k](t) \leq 4\epsilon \chi^{\frac{1}{6}}(t), 
\end{equation}

where $C$ and $\overline{C}$ are positive constants which will be specified during the proof. Note that by continuity, $T>0$. We now present our strategy to improve these bootstrap assumptions.

\begin{enumerate}
\item First, using the bootstrap assumptions, we obtain decay estimates for the null decomposition of $F$ (and its Lie derivatives) and for velocity averages of derivatives of $f_k$.

\item Next, we improve the bounds on the Vlasov fields energies by means of the energy estimates proved in Propositions \ref{energyf} and \ref{energypoids}. To bound the right hand side in these energy estimates, we make fundamental use of the null structure of the system and the pointwise decay estimates on $\rho$, $\sigma$, $\alpha$, $\underline{\alpha}$ and $\int_{v \in \mathbb{R}^n  } |z \widehat{Z}^{\beta} f_k | dv$.

\item Then, using Theorem \ref{decayestimate}, we improve the decay estimate on $\int_v \hspace{-0.5mm} |\widehat{Z}^{\beta} f_k | dv$ near the light cone.

\item In order to improve the estimates on the electromagnetic field energies, we establish an $L^2_x$ estimate for the velocity averages of the Vlasov fields (and its derivatives). For this purpose, we follow \cite{FJS} and we rewrite all the transport equations as an inhomogeneous system of transport equations. The velocity averages of the homogeneous part of the solution verify strong pointwise decay estimates (we use particularly the control that we have at our disposal on the initial data of $f$, for derivatives of order $N+n$ or less). The inhomegeneous part is decomposed into a product of an integrable function and a pointwise decaying function which gives us the expected estimate.

\item Finally, we bound the energy of the electromagnetic potential (which satisfy the Lorenz gauge) and we improve the estimates on the electromagnetic field energies with the energy estimates for the Maxwell equations (Propositions \ref{MoraF} and \ref{scalF}). We use again the null decomposition of $F$ (and its Lie derivatives), which, combined by the estimates on $ \left\| \tau_+ \int_{\mathbb{R}^n  } | \widehat{Z}^{\beta} f_k | dv \right\|_{L^2_x}$, gives us the improvement.

\end{enumerate}

\subsection{Step 1: Decay estimates}

Using the bootstrap assumption on $\mathcal{E}_N[F]$ and Proposition \ref{decayFpointwise}, one immediately obtains the following pointwise decay estimates on the electromagnetic field.

\begin{Pro}\label{decayFmass}

For all $t \in [0,T]$, $|\beta| \leq N- \frac{n+2}{2}$, we have
$$|\alpha ( \mathcal{L}_{Z^{\beta}} F)| \lesssim \sqrt{\epsilon} \sqrt{\chi(t)} \tau_+^{-\frac{n+1}{2}}\tau_-^{-\frac{1}{2}}, \hspace{2mm} |\underline{\alpha} ( \mathcal{L}_{Z^{\alpha}} F)| \lesssim \sqrt{\epsilon} \sqrt{\chi(t)} \tau_+^{-\frac{n-1}{2}}\tau_-^{-\frac{3}{2}}, $$
$$|\rho (\mathcal{L}_{Z^{\alpha}} F)| \lesssim \sqrt{\epsilon} \sqrt{\chi(t)} \tau_+^{-\frac{n+1}{2}}\tau_-^{-\frac{1}{2}}, \hspace{2mm} |\sigma (\mathcal{L}_{Z^{\alpha}} F)| \lesssim \sqrt{\epsilon} \sqrt{\chi(t)} \tau_+^{-\frac{n+1}{2}}\tau_-^{-\frac{1}{2}}.$$

\end{Pro}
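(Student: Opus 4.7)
The strategy is a direct application of Proposition \ref{decayFpointwise} to the commuted fields $\mathcal{L}_{Z^{\beta}}(F)$, combined with the bootstrap assumption \eqref{bootF} on $\mathcal{E}_N[F]$.

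First, for any $|\beta| \leq N-\frac{n+2}{2}$ and any $Z^{\beta} \in \mathbb{K}^{|\beta|}$, Proposition \ref{Commuelec} ensures that $\mathcal{L}_{Z^{\beta}}(F)$ satisfies a Maxwell system of the form
\[
\nabla^{\mu} \mathcal{L}_{Z^{\beta}}(F)_{\mu \nu} = \widetilde{J}_{\nu}, \qquad \nabla^{\mu}\,{}^{*}\!\mathcal{L}_{Z^{\beta}}(F)_{\mu \alpha_1 \dots \alpha_{n-2}} = 0,
\]
with $\widetilde{J}_{\nu}$ a finite linear combination of the currents $J(\widehat{Z}^{\gamma} f_k)_{\nu}$ for $|\gamma| \leq |\beta|$. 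In particular, the hypotheses of Proposition \ref{decayFpointwise} are satisfied by $G = \mathcal{L}_{Z^{\beta}}(F)$.

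Next, the pointwise estimates of Proposition \ref{decayFpointwise} applied to $G = \mathcal{L}_{Z^{\beta}}(F)$ yield, at every $(t,x) \in [0,T] \times \mathbb{R}^n$,
\[
|\alpha(\mathcal{L}_{Z^{\beta}} F)|,\ |\rho(\mathcal{L}_{Z^{\beta}} F)|,\ |\sigma(\mathcal{L}_{Z^{\beta}} F)| \lesssim \frac{\sqrt{\mathcal{E}_{\frac{n+2}{2}}[\mathcal{L}_{Z^{\beta}}F](t)}}{\tau_+^{\frac{n+1}{2}}\tau_-^{\frac{1}{2}}},
\]
and similarly for $\underline{\alpha}$ with the improved $\tau_-$-weight as in \eqref{eq:underalpha1}. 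By Definition \ref{norm2} of $\mathcal{E}_N[F]$ and the linearity of the Lie derivative, we have the inclusion
\[
\mathcal{E}_{\frac{n+2}{2}}[\mathcal{L}_{Z^{\beta}} F](t) \lesssim \mathcal{E}_{|\beta|+\frac{n+2}{2}}[F](t) \leq \mathcal{E}_N[F](t),
\]
where the last step uses the assumption $|\beta| \leq N - \frac{n+2}{2}$. The bootstrap bound \eqref{bootF} then gives $\mathcal{E}_{\frac{n+2}{2}}[\mathcal{L}_{Z^{\beta}} F](t) \lesssim \epsilon\,\chi(t)$, and inserting this into the pointwise estimates above yields the four inequalities claimed.

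There is no serious obstacle here: the argument is purely a bookkeeping of the derivative count ($|\beta| + \frac{n+2}{2} \leq N$), an invocation of the commutation formula for the Maxwell system to guarantee applicability of Proposition \ref{decayFpointwise}, and substitution of the current bootstrap bound on $\mathcal{E}_N[F]$. The genuine analytic work on the electromagnetic side—global Sobolev inequalities, the null decomposition gain, and the improved decay for $\alpha$ via the Lorenz-gauge potential—has already been carried out in Section \ref{section4}, so this proposition only harvests those results at the commuted level.
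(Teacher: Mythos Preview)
Your proposal is correct and follows exactly the paper's approach: the paper simply states that the result follows immediately from the bootstrap assumption on $\mathcal{E}_N[F]$ together with Proposition \ref{decayFpointwise}. Your write-up just makes explicit the derivative bookkeeping $\mathcal{E}_{\frac{n+2}{2}}[\mathcal{L}_{Z^{\beta}}F] \lesssim \mathcal{E}_N[F]$ and the applicability of Proposition \ref{decayFpointwise} via the commutation formula, which is entirely in line with the intended argument.
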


\begin{Rq}
We will improve later the decay estimate on $\alpha(\mathcal{L}_{Z^{\beta}} F)$, for $|\beta| \leq N-n$, near the light cone (see Section \ref{sectionpotential}).
\end{Rq}

The pointwise decay estimates on the velocity averages of the Vlasov fields are given by Klainerman-Sobolev inequalities and the bootstrap assumptions on the $f_k$ energy norms. Using Theorem \ref{KS1}, we have that $\forall \hspace{1mm}|\beta| \leq N-n$, $ (t,x) \in [0,T] \times \R^n$, $1 \leq k \leq K$

\begin{equation}\label{decayf1}
 \int_{\R^n} |\widehat{Z}^{\beta} f_k|(v^0)^2 dv \lesssim  \frac{\mathbb{E}^2_N[f_k](t)}{\tau_+^{n-1} \tau_-} \lesssim \frac{\epsilon}{\tau_+^{n-1} \tau_-}.
\end{equation}

In the same spirit, using Corollary \ref{KS3}, we have that $\forall$ $|\beta| \leq N-\frac{3n+2}{2}$, $ z  \in \mathbf{k}_1$, $ (t,x) \in [0,T] \times \R^n$,

\begin{equation}\label{decayf2}
 \int_{\R^n} |z\widehat{Z}^{\beta} f_k| (v^0)^2dv \lesssim \frac{\mathbb{E}^2_{N-\frac{n+2}{2},1}[f_k](t)}{\tau_+^{n-1} \tau_-} \lesssim \frac{\epsilon}{\tau_+^{n-1} \tau_-},
\end{equation}

\subsection{Step 2: Improving the energy estimates for the transport equation}\label{step2energy}

We fix, for this section, $1 \leq k \leq K$. According to Proposition \ref{energyf}, $\mathbb{E}^2_{N}[f_k] \leq 3 \epsilon$ on $[0,T]$, for $\epsilon$ small enough, would follow if we prove
\begin{equation}\label{eq:noweight}
 \int_0^t \int_{\Sigma_s} \int_{v \in \R^n} |\mathcal{L}_{Z^{\beta_1}}(F)(v,\nabla_v \widehat{Z}^{\beta_2}f_k)| v^0dv dx ds \lesssim \epsilon^{\frac{3}{2}},
 \end{equation}
for all $|\beta_1|+|\beta_2| \leq N$, with $|\beta_2| \leq N-1$, and
 $$ \int_0^t \int_{\Sigma_s} \int_{v \in \R^n} |v^iF_{i0} \widehat{Z}^{\beta} f_k|   dvdxds \lesssim \epsilon^{\frac{3}{2}},$$
 for all $|\beta| \leq N$. The second integral is easy to bound. Using Proposition \ref{decayFmass} and the bootstrap assumption on $\mathbb{E}^2_N[f_k]$, we have
 \begin{eqnarray}
 \nonumber \int_0^t \int_{\Sigma_s} \int_{\R^n} |v^iF_{i0} \widehat{Z}^{\beta} f_k|   dvdxds & \lesssim & \int_0^t \|F \|_{L^{\infty}(\Sigma_s)} \mathbb{E}^2_N[f_k](s) ds \\ \nonumber
 & \lesssim & \epsilon^{\frac{3}{2}}.
 \end{eqnarray}
Similarly, according to Proposition \ref{energypoids}, $\mathbb{E}^2_{N,1}[f_k] \leq 3 \epsilon \chi^{\frac{1}{6}} (t)$ on $[0,T]$, for $\epsilon$ small enough, would follow if we prove
\begin{equation}\label{eq:weight}
 \int_0^t \int_{\Sigma_s} \int_{v \in \R^n} |zv^0\mathcal{L}_{Z^{\beta_1}}(F)(v,\nabla_v \widehat{Z}^{\beta_2}f_k)| dv dx ds \lesssim \epsilon^{\frac{3}{2}}\chi^{\frac{1}{6}} (t),
 \end{equation}
for all $z \in \mathbf{k}_1$ and $|\beta_1|+|\beta_2| \leq N$, with $|\beta_2| \leq N-1$,
\begin{equation}\label{eq:weight2}
\int_0^t \int_{\Sigma_s} \int_{v \in \R^n} |v^0F(v,\nabla_v z)\widehat{Z}^{\beta} f_k| dvdxds \lesssim \epsilon^{\frac{3}{2}}\chi^{\frac{1}{6}} (t),
\end{equation}
for all $z \in \mathbf{k}_1$, $|\beta| \leq N$ and
$$ \int_0^t \int_{\Sigma_s} \int_{v \in \R^n} |zv^iF_{i0} \widehat{Z}^{\beta} f_k|   dvdxds \lesssim \epsilon^{\frac{3}{2}},$$
for all $|\beta| \leq N$. Again, the last integral is easy to bound.

We fix $|\beta_1|+|\beta_2| \leq N$ (with $|\beta_2| \leq N-1$), $|\beta| \leq N$ and $z \in \mathbf{k}_1$. We denote respectively $\rho(\mathcal{L}_{Z^{\beta_1}}(F))$, $\sigma(\mathcal{L}_{Z^{\beta_1}}(F))$, $\alpha(\mathcal{L}_{Z^{\beta_1}}(F))$ and $\underline{\alpha}(\mathcal{L}_{Z^{\beta_1}}(F))$ by $\rho$, $\sigma$, $\alpha$ and $\underline{\alpha}$. We denote also $\widehat{Z}^{\beta_2}f_k$ by $g$ and $\widehat{Z}^{\beta} f_k$ by $h$. To unify the study of the remaining integrals, we introduce $b$, which could be equal to $0$ or $1$, $z_0=v^0$ and $z_b=v^0z$. The null decomposition of $\mathcal{L}_{Z^{\beta_1}}(F)(v,\nabla_v g)$ (for \eqref{eq:noweight} and \eqref{eq:weight}) or $F(v,\nabla_v z)$ (for \eqref{eq:weight2}) brings us to control the integral of the following terms.

The good terms 

\begin{equation}\label{nonlin1}
 \left|z_bv^{\underline{L}} \rho \left( \nabla_v g \right)^L\right|, \hspace{5mm} \left|v^0v^{\underline{L}} h\rho(F) \left( \nabla_v z \right)^L\right|,
\end{equation}

\begin{equation}
\left|z_bv^L \rho \left( \nabla_v g \right)^{\underline{L}}\right|, \hspace{5mm} \left|v^0v^L h \rho(F) \left( \nabla_v z \right)^{\underline{L}}\right|,
\end{equation}

\begin{equation}
\left|z_bv^B \sigma_{BD} \left( \nabla_v g \right)^D\right|, \hspace{5mm} \left|v^0v^B h \sigma(F)_{BD}\left( \nabla_v z \right)^D\right|,
\end{equation}

\begin{equation}
\left|z_bv^L \alpha_B \left( \nabla_v g \right)^{B}\right|, \hspace{5mm} \left|v^0v^L h \alpha(F)_B \left( \nabla_v z \right)^B \right|,
\end{equation}

\begin{equation}\label{nonlin5}
\left|z_bv^B \alpha_B \left( \nabla_v g \right)^{L}\right|, \hspace{5mm} \left|v^0v^B h \alpha(F)_B \left( \nabla_v z \right)^{L}\right|,
\end{equation}

and the bad terms 

\begin{equation}\label{nonlin6}
\left|z_bv^{\underline{L}} \underline{\alpha}_B \left( \nabla_v g \right)^B\right|, \hspace{5mm} \left|v^0v^{\underline{L}}  h\underline{\alpha}(F)_B \left( \nabla_v z \right)^B\right|,
\end{equation}

\begin{equation}\label{nonlin7}
\left|z_bv^B \underline{\alpha}_B \left( \nabla_v g \right)^{\underline{L}}\right|, \hspace{5mm} \left|v^0v^B h \underline{\alpha}(F)_B \left( \nabla_v z \right)^{\underline{L}} \right|.
\end{equation}

The study of $\mathbb{E}^2_N[f_k]$ corresponds to $b=0$ and, in this case, we only have to estimate the spacetime integral of each of the first terms of \eqref{nonlin1}-\eqref{nonlin7}. The study of $\mathbb{E}^2_{N,1}[f_k]$ corresponds to $b=1$. For both of them, when $|\beta_1| \leq N-\frac{n+2}{2}$ we can use the pointwise decay estimates on the electromagnetic field given by Proposition \ref{decayFmass}. When $|\beta_1| > N-\frac{n+2}{2}$, $|\beta_2| \leq N-\frac{3n+2}{2}$ (since $N \geq \frac{5}{2}n+1$), and we can then use the pointwise estimates \eqref{decayf1} and \eqref{decayf2} on the velocity averages of the Vlasov field.

For the part where $|\beta_1| \leq N-\frac{n+2}{2}$, our proof leads also to $\mathbb{E}_{N-\frac{n+2}{2},1}^2[f_k] \leq 3 \epsilon$, for $\epsilon$ small enough, on $[0,T]$.

\begin{Rq}

To simplify the argument we will sometimes denote $\mathbb{E}^2_{N}[f_k]$ by $\mathbb{E}^2_{N,0}[f_k]$.

\end{Rq}

\subsubsection{Estimating the $v$ derivatives}\label{sectionvderiv}

To deal with the $v$ derivatives of the Vlasov field, which do not commute with the relativistic transport operator, we recall \eqref{eq:null0}

\begin{equation}\label{eq:vderivbasic}
\left| \left( \nabla_v \psi \right)^{L} \right|, \hspace{1mm} \left| \left(\nabla_v \psi \right)^{\underline{L}} \right|, \hspace{1mm} \left| \left( \nabla_v \psi \right)^B \right| \lesssim \frac{\tau_+}{v^0}\sum_{\widehat{Z} \in \K} |\widehat{Z} \psi|.
\end{equation}
We will also use
\begin{equation}\label{eq:vderivL}
\left| \left( \nabla_v \psi \right)^{L} \right|, \hspace{1mm} \left| \left( \nabla_v \psi \right)^{\underline{L}} \right| \lesssim \frac{\tau_-}{v^0}\sum_{\widehat{Z} \in \K}|\widehat{Z}\psi|
\end{equation}
and
\begin{equation}\label{eq:vderivA}
\left| v^{\underline{L}} \left( \nabla_v \psi \right)^B  \right| \lesssim \tau_-\sum_{\widehat{Z} \in \widehat{\mathbb{P}} } \sum_{z \in \mathbf{k}_1}  |z\widehat{Z} \psi | ,
\end{equation}
which come from Lemma \ref{vderivL} and Proposition \ref{nullresume}. In order to reutilize certain estimates of this section, we will not use inequalities \eqref{eq:vderivL} and \eqref{eq:vderivA} in the case where we have a pointwise estimate on the electromagnetic field. We make this choice because we do not identify such null structures in the equations studied in Section \ref{L2system}, where we will make similar computations as in Subsection \ref{step2F}.

\subsubsection{If $|\beta_1| \leq N-\frac{n+2}{2}$}\label{step2F}

We start by treating the good terms. We use $\zeta$ to denote $\alpha$, $\rho$ or $\sigma$. Thus, according to Proposition \ref{decayFmass},
$$|\zeta| \lesssim \frac{\sqrt{\epsilon}\sqrt{\chi(t)}}{\tau_+^{\frac{n+1}{2}}\tau_-^{\frac{1}{2}}}.$$
Using \eqref{eq:vderivbasic}, we can bound by $\sum_{\widehat{Z} \in \K}\tau_+|\zeta||z_b\widehat{Z}g|$ each first term of \eqref{nonlin1}-\eqref{nonlin5} so that their integrals on $[0,t] \times \mathbb{R}^n_x \times \mathbb{R}^n_v$ are bounded by

\begin{equation}\label{boundgood}
\sum_{\widehat{Z} \in \widehat{\mathbb{P}}_0} \int_0^t \int_{\Sigma_s} \tau_+ |\zeta| \int_v |z_b\widehat{Z}g| dv dx ds.
\end{equation}
It remains to notice that $$\int_0^t \int_{\Sigma_s} \tau_+ |\zeta| \int_v |z_b\widehat{Z}g| dv dx ds \lesssim \int_0^t \sqrt{\epsilon}\frac{ \log^{\frac{3}{2}} (3+s)}{(1+s)^{\frac{3}{2}}} \mathbb{E}^{1}_{N,b}[f_k](s) ds \lesssim \epsilon^{\frac{3}{2}},$$
since $\mathbb{E}^{1}_{N,b}[f_k](s) \leq \mathbb{E}^{2}_{N,1}[f_k](s) \leq  4 \epsilon\log^{\frac{1}{2}}(3+s)$ for all $s \in [0,T]$. Similarly, each second term of \eqref{nonlin1}-\eqref{nonlin5} is bounded by $\sum_{\widehat{Z} \in \K}v^0\tau_+|\zeta(F)| |h| |\widehat{Z}(z)|$ and, using Lemma \ref{vectorweight}, their integral on $[0,t] \times \mathbb{R}^n_x \times \mathbb{R}^n_v$ are bounded by
$$\sum_{z' \in \mathbf{k}_1} \int_0^t \int_{\Sigma_s} \tau_+ |\zeta(F)| \int_v v^0|z'h| dv dxds.$$
Using the pointwise estimate on $\zeta$ and the bootstrap assumption \eqref{bootf}, one has
$$ \int_0^t \int_{\Sigma_s} \tau_+ |\zeta(F)| \int_v v^0|z'h| dv dxds \lesssim \int_0^t \sqrt{\epsilon}\frac{ \log^{\frac{3}{2}} (3+s)}{(1+s)^{\frac{3}{2}}} \mathbb{E}^{1}_{N,1}[f_k](s) ds \lesssim \epsilon^{\frac{3}{2}}.$$
We now study the bad terms. Recall that, according to Proposition \ref{decayFmass},
$$ |\underline{\alpha}| \lesssim \frac{\sqrt{\epsilon}\sqrt{\chi(t)}}{\tau_+^{\frac{n-1}{2}} \tau_-^{\frac{3}{2}}}.$$
Let us denote $[0,t] \times \R_x^n \times \R_v^n$ by $X_t$ and $dvdxds$ by $dX_t$. Then, using \eqref{eq:vderivbasic}, $\int_{C_u(t)}\int_v v^{\underline{L}}|z_b \widehat{Z} g| dv d C_u(t)  \leq \mathbb{E}^{2}_{N,b}[f_k](t) $ and the bootstrap assumption \eqref{bootf}, we have,
\begin{eqnarray}
\nonumber
   \int_{X_t} |z_bv^{\underline{L}} \underline{\alpha}_B \left( \nabla_v g \right)^B|dX_t \hspace{-2mm} & \lesssim & \hspace{-2mm} \sum_{\widehat{Z} \in \widehat{\mathbb{P}}_0} \int_0^t \int_{\Sigma_s}  \tau_+ |\underline{\alpha}| \int_v \frac{v^{\underline{L}}}{v^0}|z_b \widehat{Z} g| dvdxds \\ \nonumber
   & \lesssim & \hspace{-2mm} \sum_{\widehat{Z} \in \widehat{\mathbb{P}}_0} \int_{u=-\infty}^{t}  \int_{C_u(t)}\tau_+|\underline{\alpha}|\int_v v^{\underline{L}}|z_b \widehat{Z} g| dv dC_u(t) du \\  \nonumber
   & \lesssim & \hspace{-2mm} \sum_{\widehat{Z} \in \widehat{\mathbb{P}}_0} \int_{u=-\infty}^{t} \frac{1}{\tau_-^{\frac{3}{2}}} \int_{C_u(t)}\int_v v^{\underline{L}}| z_b\widehat{Z} g| dv dC_u(t) du \\  \nonumber
   & \lesssim & \hspace{-1mm} \epsilon^{\frac{1}{2}} \mathbb{E}^2_{N,b}[f_k](t)\int_{u=-\infty}^{+\infty} \frac{1}{\tau_-^{\frac{3}{2}}} du \\ \nonumber
   & \lesssim & \hspace{-1mm} \epsilon^{\frac{3}{2}} \hspace{2mm} \text{if} \hspace{2mm} b=0, \hspace{3mm} \epsilon^{\frac{3}{2}}\chi^{\frac{1}{6}}(t) \hspace{2mm} \text{if} \hspace{2mm} b=1. 
\end{eqnarray}
Finally, for the first term of \eqref{nonlin7}, we use successively \eqref{eq:vderivbasic}, the inequality $|v^B| \lesssim v^0 v^{\underline{L}}$ (which ensues from Proposition \ref{extradecay1}) as well as the bootstrap assumptions \eqref{bootf} to get

\begin{eqnarray}
\nonumber  
    \int_{X_t} | z_bv^B \underline{\alpha}_B \left( \nabla_v g \right)^{\underline{L}}|dX_t \hspace{-2mm} & \lesssim & \sum_{\widehat{Z} \in \widehat{\mathbb{P}}_0} \int_0^t \int_{\Sigma_s}  \int_v |\underline{\alpha}| v^0 v^{\underline{L}} \frac{\tau_+}{v^0}|z_b  \widehat{Z} g| dvdxds \\ \nonumber
    & \lesssim & \sum_{\widehat{Z} \in \K} \hspace{-0.1mm} \int_{u=-\infty}^{t} \hspace{-0.1mm} \int_{C_u(t)} \hspace{-0.2mm} \tau_+|\underline{\alpha}| \int_v v^{\underline{L}} |z_b \widehat{Z} g| dv d C_u(t) du \\ \nonumber
    & \lesssim & \epsilon^{\frac{1}{2}}\int_{u=-\infty}^{t} \tau_-^{-\frac{3}{2}} \mathbb{E}^2_{N,b}[f_k](t) du \\ \nonumber
   & \lesssim & \epsilon^{\frac{1}{2}} \mathbb{E}^2_{N,b}[f_k](t).
\end{eqnarray}
The integrals of the second terms of \eqref{nonlin6} and \eqref{nonlin7} are treated similarly. For instance, as $\sum_{\widehat{Z} \in \K} |\widehat{Z}(z)| \lesssim \sum_{z' \in \mathbf{k}_1} |z'|$, we have

\begin{eqnarray}
\nonumber  
    \int\limits_{X_t} |v^0 v^B h\underline{\alpha}_B \left( \nabla_v z \right)^{\underline{L}}|dX_t & \lesssim & \sum_{\widehat{Z} \in \widehat{\mathbb{P}}_0} \int_0^t \int_{\Sigma_s}  \tau_+ |\underline{\alpha}| \int_v \sqrt{v^Lv^{\underline{L}}} |h  \widehat{Z} (z)| dvdxds \\ \nonumber
    & \lesssim & \sum_{z' \in \mathbf{k}_1} \int_{-\infty}^{t}  \int_{C_u(t)} \tau_+ |\underline{\alpha}| \int_v v^{\underline{L}}v^0 |z' h| dv d C_u(t) du \\ \nonumber
   & \lesssim & \epsilon^{\frac{1}{2}} \mathbb{E}^2_{N,1}[f_k](t).
\end{eqnarray}

\subsubsection{If $|\beta_1| > N-\frac{n+2}{2}$}

In this case we cannot use Proposition \ref{decayFmass} anymore. As $|\beta_2| \leq \frac{n}{2}$, we can however use the pointwise estimates on the velocity averages of $v^0z_b\widehat{Z}^{\beta}g$ given by \eqref{decayf2}. This time, we only have to bound the first terms of \eqref{nonlin1}-\eqref{nonlin7}.

Again, we start by studying the good terms. Let us denote again $\alpha$, $\rho$ or $\sigma$ by $\zeta$ . Then, according to Definition \ref{norm2}, for all $s \in [0,T]$,

$$\int_{\Sigma_s} \tau_+^2 |\zeta|^2 dx \leq \mathcal{E}_N[F](s) \lesssim \epsilon\chi(s).$$
Recall that the integral of each first term of \eqref{nonlin1}-\eqref{nonlin5} can be bounded by \eqref{boundgood}. Using the Cauchy-Schwarz inequality and $$\left\|\int_v | z_b\widehat{Z} g| dv \right\|^2_{L^2(\Sigma_s)} \lesssim \epsilon^2 \int_0^{+\infty} \frac{r^{n-1}}{\tau_+^{2n-2}\tau_-^2}dr \lesssim \epsilon^2 (1+s)^{-(n-1)},$$
which comes from \eqref{decayf2} and Lemma \ref{intesti}, we have 
\begin{eqnarray}
\nonumber \int_0^t \int_{\Sigma_s} \tau_+ |\zeta| \int_v |z_b \widehat{Z} g| dv dx ds & \lesssim & \int_0^t \|\tau_+ \zeta\|_{L^2(\Sigma_s)}  \left\|\int_v |z_b \widehat{Z} g| dv \right\|_{L^2(\Sigma_s)} ds \\ \nonumber & \lesssim & \epsilon^{\frac{3}{2}}.
\end{eqnarray}

In order to close the estimates for the bad terms, we use \eqref{eq:vderivL} or \eqref{eq:vderivA}. The integral of the first term of \eqref{nonlin7} is then bounded by
$$  \sum_{\widehat{Z} \in \widehat{\mathbb{P}}_0}  \int_{0}^{ + \infty}  \| \tau_- \underline{\alpha} \|_{L^2(\Sigma_s)} \left\| \int_v \left|\frac{v^B}{v^0}z_b\widehat{Z} g \right| dv \right\|_{L^2(\Sigma_s)} ds.$$ 
Now, using \eqref{decayf2} and Lemma \ref{intesti}, we have
$$\left\| \int_v \left|\frac{v^B}{v^0}z_b\widehat{Z} g\right| dv \right\|_{L^2(\Sigma_s)} \lesssim \left\| \int_v |z_b\widehat{Z} g| dv \right\|_{L^2(\Sigma_s)} \lesssim \epsilon (1+s)^{-\frac{3}{2}}.$$
Since $\| \tau_- \underline{\alpha} \|^2_{L^2(\Sigma_s)} \leq \mathcal{E}_N[F](s) \lesssim \epsilon\chi(t)$,
$$ \int_0^t \int_{\Sigma_s} \int_v |z_bv^B \underline{\alpha}_B \left( \nabla_v g \right)^{\underline{L}}|dvdxds \lesssim \epsilon^{\frac{3}{2}}.$$
For the remaining term, $z_b v^{\underline{L}} \underline{\alpha}_B\left(\nabla_v g \right)^B$, we treat the two cases separately. First, if $b=0$, $\int_0^t \int_{\Sigma_s} \int_v v^0 v^{\underline{L}} |\underline{\alpha}_B\left(\nabla_v g \right)^B|dvdxds$ is bounded by

$$  \sum_{\widehat{Z} \in \widehat{\mathbb{P}}_0} \sum_{z' \in \mathbf{k}_1} \int_{0}^{ + \infty}  \| \tau_- \underline{\alpha} \|_{L^2(\Sigma_s)} \left\|  \int_v v^0|z'\widehat{Z}g| dv \right\|_{L^2(\Sigma_s)}ds .$$ 
Now, using \eqref{decayf2} and Lemma \ref{intesti}, we have

$$\left\| \int_v v^0|z'\widehat{Z}g| dv \right\|^2_{L^2(\Sigma_s)}  \lesssim \epsilon^2 (1+s)^{-(n-1)}.$$
Hence, as $\| \tau_- \underline{\alpha} \|^2_{L^2(\Sigma_s)} \leq \mathcal{E}_N[F](s) \lesssim \epsilon\chi(t)$, we obtain
$$ \int_0^t \int_{\Sigma_s} \int_v v^0 v^{\underline{L}} |\underline{\alpha}_B\left(\nabla_v g \right)^B|dvdxds \lesssim \epsilon^{\frac{3}{2}}.$$

Finally, if $b=1$, we have, by \eqref{eq:vderivbasic}, 
\begin{flalign*}
& \hspace{0cm} \int_0^t \int_{\Sigma_s} \int_v  v^0v^{\underline{L}} |z\underline{\alpha}_B\left(\nabla_v g \right)^B|dvdxds \lesssim &
\end{flalign*}
$$ \hspace{3cm} \sum_{\widehat{Z} \in \K} \int_0^t \int_{\Sigma_s}  \frac{\tau_-^{\frac{1}{2}}}{(1+s)^{\frac{n-3}{2}}}|\underline{\alpha}|\frac{\tau_+(1+s)^{\frac{n-3}{2}}}{\tau_-^{\frac{1}{2}}}\int_v v^{\underline{L}}| z\widehat{Z} g | dv dx ds.$$

By the Cauchy-Schwarz inequality (in $(s,x)$), the right-hand side of the previous inequality is bounded by
\begin{equation}\label{eq:logloss}
\left( \int_0^t \frac{\|\sqrt{\tau_-} |\underline{\alpha}| \|^2_{L^2(\Sigma_s)}}{(1+s)^{n-3}} ds \right)^{\frac{1}{2}}\sum_{\widehat{Z} \in \K} \left( \int_0^t \int_{\Sigma_s} \frac{\tau_+^{n-1}}{\tau_-}\left( \int_v v^{\underline{L}} |z \widehat{Z}g| dv \right)^2 dx ds \right)^{\frac{1}{2}}.
\end{equation}
By the bootstrap assumption\footnote{Note that if we used the bound on $\|\tau_- \underline{\alpha}\|_{L^2(\Sigma_s)}$ we would have in $4d$ an extra loss on $\mathbb{E}^2_{N,1}[f_k]$ which would lead to a $(1+t)^{\eta}$-loss for the electromagnetic energy.} \eqref{bootF}, $\|\sqrt{\tau_-} |\underline{\alpha}| \|^2_{L^2(\Sigma_s)} \lesssim \epsilon$, so
 $$\int_0^t \frac{\|\sqrt{\tau_-} |\underline{\alpha}| \|^2_{L^2(\Sigma_s)}}{(1+s)^{n-3}} ds \lesssim \epsilon \chi^{\frac{1}{3}}(t).$$
The second factor of \eqref{eq:logloss} is bounded by $\epsilon^2$. Indeed, as, by \eqref{decayf2},
$$\frac{\tau_+^{n-1}}{\tau_-}\left( \int_v v^{\underline{L}} |z \widehat{Z}g| dv \right)^2 \lesssim \frac{\epsilon }{\tau_-^2} \int_v v^{\underline{L}} |z \widehat{Z}g| dv,$$
we have
\begin{eqnarray}
\nonumber \int_0^t \int_{\Sigma_s} \frac{\tau_+^{n-1}}{\tau_-}\left( \int_v v^{\underline{L}} |z \widehat{Z}g| dv \right)^2 dx ds \hspace{-2mm} & \lesssim & \hspace{-2mm} \epsilon \hspace{-0.5mm} \int_{-\infty}^t \tau_-^{-2} \hspace{-0.5mm} \int_{C_u(t)} \int_v v^{\underline{L}}|z \widehat{Z}g| dv d C_u(t)du \\ \nonumber
& \lesssim & \hspace{-2mm} \epsilon \int_{u=-\infty}^t \tau_-^{-2} \mathbb{E}^2_{N-\frac{n+2}{2},1}[f_k](t) du \\ \nonumber
& \lesssim & \hspace{-2mm} \epsilon^2,
\end{eqnarray}
since $\mathbb{E}^2_{N-\frac{n+2}{2},1}[f_k](t) \leq 4 \epsilon$ by the bootstrap assumption \eqref{bootf}. Thus
$$\int_0^t \int_{\Sigma_s} \int_v  v^0v^{\underline{L}} |z\underline{\alpha}_B\left(\nabla_v g \right)^B|dvdxds \lesssim \epsilon^2 \chi^{\frac{1}{6}}(t).$$
This concludes the improvement of the bootstrap assumption \eqref{bootf}.

\subsection{Step 3: Improved decay estimates for velocity averages}\label{step3opti}
In this section, we improve the pointwise decay estimate on $\int_v |\widehat{Z}^{\beta} f_k| dv$ near the lightcone.
\begin{Pro}\label{decayopti}
We have, for all $1 \leq k \leq K$,
$$\forall \hspace{0.5mm} (t,x) \in [0,T] \times \R^n, \hspace{1mm} |\beta| \leq N- \frac{3n+2}{2}, \hspace{3mm} \int_{v \in \mathbb{R}^n} |\widehat{Z}^{\beta} f_k| dv \lesssim \frac{\epsilon}{\tau_+^n}$$

and
$$\forall \hspace{0.5mm} (t,x) \in [0,T] \times \R^n, \hspace{1mm} |\beta| \leq N-n, \hspace{3mm} \int_{v \in \mathbb{R}^n} |\widehat{Z}^{\beta} f_k| dv \lesssim \epsilon\frac{\chi^{\frac{1}{6}}(t)}{\tau_+^n}.$$
\end{Pro}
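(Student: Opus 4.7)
The plan is to apply Theorem \ref{decayestimate} to $(v^0)^2 \widehat{Z}^{\beta_0} f_k$, as suggested by Remark \ref{rqdecayestimate}, which immediately yields the $\tau_+^{-n}$ decay rate we are after provided we control the resulting initial-data norm by $\epsilon$ and the resulting spacetime integral by $\epsilon^{3/2}$ (respectively $\epsilon^{3/2}\chi^{1/6}(t)$) in the first (respectively second) regime.

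First I would handle the initial-data term. The application of Theorem \ref{decayestimate} introduces at most $n$ extra $\widehat{\mathbb{P}}_0$-derivatives and one weight $z \in \mathbf{k}_1$, so the total order of derivatives is at most $|\beta_0|+n \leq N$ in the second regime (and $\leq N-\frac{n+2}{2}$ in the first). Using Lemma \ref{vectorweight} to commute $\widehat{Z}^{\beta}$ past $z$ and $(v^0)^2$, the initial-data norm is then bounded by $\mathbb{E}^2_{N+n,1}[f_k](0) \leq \epsilon$, by assumption.

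Next I would estimate the spacetime integral. According to Remark \ref{rqdecayestimate}, it decomposes, via the commutation formula of Corollary \ref{Commuf}, into a linear combination of integrals of the type already treated in Section \ref{step2energy}, namely
\[
\int_0^t \int_{\Sigma_s} \int_v \left| z\, v^0 \mathcal{L}_{Z^{\gamma}}(F)(v, \nabla_v \widehat{Z}^{\delta} f_k) \right| dv\, dx\, ds,
\]
plus analogous terms in which $\nabla_v$ hits the weight $z$ or the $F$-factor reduces to $v^i F_{i0}$. The latter two types are immediately bounded by $\epsilon^{3/2}$ using the pointwise decay on $F$ from Proposition \ref{decayFmass} together with the bootstrap assumption on $\mathbb{E}^2_{N,1}[f_k]$. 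For the main type, exactly as in Section \ref{step2energy}, the analysis splits according to whether $|\gamma| \leq N-\frac{n+2}{2}$ (in which case one uses the pointwise decay on the null components of $\mathcal{L}_{Z^{\gamma}}(F)$) or $|\gamma| > N-\frac{n+2}{2}$ (in which case $|\delta| \leq \frac{n+2}{2} \leq N-\frac{3n+2}{2}$, and one uses instead the pointwise decay \eqref{decayf1}--\eqref{decayf2} on velocity averages of $f_k$, together with the $L^2$ bounds inherited from $\mathcal{E}_N[F]$ and $\mathcal{E}_N^S[F]$).

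The technical point is the same as in Section \ref{step2energy}: the exploitation of the null structure (the estimates \eqref{eq:vderivbasic}--\eqref{eq:vderivA} on the null components of $\nabla_v$) in order to trade bad $\tau_+$-weights for $\tau_-$- or $v^{\underline{L}}$-weights. In the first regime ($|\beta_0| \leq N-\frac{3n+2}{2}$), the total order of derivatives is at most $N-\frac{n+2}{2}$, so we always have pointwise decay on $F$ available and may use the bootstrap on $\mathbb{E}^2_{N-\frac{n+2}{2},1}[f_k]$ directly, yielding the bound $\epsilon^{3/2}$ with no logarithmic loss. In the second regime ($|\beta_0| \leq N-n$), the worst term---obtained from the bad null component $\underline{\alpha}_B (\nabla_v g)^B$ when $|\gamma| > N-\frac{n+2}{2}$---contributes the factor $\chi^{1/6}(t)$ by exactly the Cauchy--Schwarz computation carried out in \eqref{eq:logloss}. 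Summing over $z \in \mathbf{k}_1$ and $|\beta| \leq n$, and dividing by $\tau_+^n$, gives both estimates of the proposition.
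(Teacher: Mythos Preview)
Your proof is correct and follows exactly the approach the paper takes: apply Theorem \ref{decayestimate} to $(v^0)^2\widehat{Z}^{\beta_0}f_k$ via Remark \ref{rqdecayestimate}, then recognize that the resulting spacetime integrals are precisely the terms already bounded in Section \ref{step2energy}, with the $\chi^{1/6}(t)$-loss in the higher-order regime coming from the estimate on $\mathbb{E}^2_{N,1}[f_k]$ (specifically the computation at \eqref{eq:logloss}). The paper's own proof is a two-line reference to these same ingredients; you have simply spelled out the mechanism in more detail.
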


\begin{proof}
This ensues from Theorem \ref{decayestimate}, Remark \ref{rqdecayestimate} and the estimations made in Section \ref{step2energy}. The loss for the derivatives of higher order is linked to the loss on $\mathbb{E}^2_{N,1}[f_k]$.

\end{proof}

\subsection{Step 4: $L^2$ estimates for the velocity averages}\label{L2system}

In view of commutation formula of Propositions \ref{CommuA} and the energy estimates of Propositions \ref{energypotential}, \ref{MoraF}, we need to prove enough decay on \newline $\| \tau_+ \int_{\mathbb{R}^n  } |\widehat{Z}^{\beta} f_k | dv \|_{L^2_x}$ for all $|\beta| \leq N$. The goal of this section is to prove the following proposition.

\begin{Pro}\label{L2decay}
We have, for all $1 \leq k \leq K$, $|\beta| \leq N$ and for all $t \in [0,T]$,
$$\left\|\tau_+ \int_{\mathbb{R}^n} |\widehat{Z}^{\beta} f_k | dv \right\|_{L^2(\Sigma_t)} \lesssim \epsilon\frac{\chi^{\frac{1}{6}}(t)}{(1+t)^{\frac{n}{2}-1}} .$$
The $\log^{\frac{1}{2}}(3+t)$-loss (specific to the dimension $4$) can be removed for $|\beta| \leq N-\frac{3n+2}{2}$ or improved in a $\log^{\frac{1}{4}}(3+t)$-loss for $|\beta| \geq N-n+1$.
\end{Pro}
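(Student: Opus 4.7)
The strategy, following \cite{FJS}, is to introduce the vector $X = (\widehat{Z}^\gamma f_k)_{|\gamma| \leq N}$ and decompose it componentwise as $X = H + G$, where $T_F(H_i) = 0$ with $H_i(0,\cdot,\cdot) = X_i(0,\cdot,\cdot) = \widehat{Z}^{\gamma_i} f_k(0,\cdot,\cdot)$, so that $G_i := X_i - H_i$ vanishes at $t = 0$ and satisfies $T_F(G_i) = T_F(\widehat{Z}^{\gamma_i} f_k)$, a commutator source given by Proposition~\ref{Commuf}. The splitting isolates the initial-data information (in $H$) from the commutator source (carried by $G$), so the two pieces can be treated separately.

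For $H$, I would apply Theorem~\ref{decayestimate} (through Remark~\ref{rqdecayestimate}, applied to $(v^0)^2 H_i$) to obtain the pointwise bound $\int_v |H_i|(t,x,v)\, dv \lesssim \epsilon\,\chi^{1/6}(t)/\tau_+^{n}$. The initial data $\widehat{Z}^\beta H_i(0) = \widehat{Z}^{\beta + \gamma_i} f_k(0)$ for $|\beta| \leq n$ is controlled by the hypothesis $\mathbb{E}^2_{N+n,1}[f_k](0) \leq \epsilon$, while the spacetime source $T_F(z\widehat{Z}^\beta H_i)$ is, by Proposition~\ref{Commuf}, a sum of terms $\mathcal{L}_{Z^\gamma}(F)(v, \nabla_v \widehat{Z}^\delta H_i)$ whose integrals are bounded as in Section~\ref{step2energy}, splitting at $|\gamma| = N-(n+2)/2$ to use either the pointwise decay on $F$ from Proposition~\ref{decayFmass} or the pointwise decay on velocity averages from~\eqref{decayf2}. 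The auxiliary energies $\mathbb{E}^2_{|\gamma_i|+|\beta|,1}[H_i]$ with $|\beta| \leq n$ propagate by a Grönwall argument using exactly these Section~\ref{step2energy} estimates, since the only $F$-derivatives that appear in the commutator sources are of order $|\gamma| \leq |\beta| \leq n \leq N$, well within the bootstrap range~\eqref{bootF}. Interpolating the pointwise decay with the $T_F$-conservation $\|\int_v |H_i|\, dv\|_{L^1(\Sigma_t)} \lesssim \epsilon$ then yields
$$\|\tau_+ \int_v |H_i|\, dv\|_{L^2(\Sigma_t)}^2 \leq \|\tau_+^2 \int_v |H_i|\, dv\|_{L^\infty(\Sigma_t)} \cdot \|\int_v |H_i|\, dv\|_{L^1(\Sigma_t)} \lesssim \frac{\epsilon^2\,\chi^{1/6}(t)}{(1+t)^{n-2}},$$
which even beats the claimed bound.

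For $G$, since $G_i(0)=0$, the Duhamel representation writes $G_i(t,x,v)$ as the integral along $T_F$-characteristics of the source $T_F(\widehat{Z}^{\gamma_i} f_k)$. The plan is to factor each commutator term $\mathcal{L}_{Z^\gamma}(F)(v, \nabla_v \widehat{Z}^\delta f_k)$, via the null decomposition of $F$ and the identities~\eqref{eq:vderivbasic}--\eqref{eq:vderivA}, into a product of a function with strong pointwise decay in $(t,x)$ (coming from Proposition~\ref{decayFmass} in the low-derivative regime or~\eqref{decayf2} in the high-derivative one) and a function integrable against the Liouville-preserved phase-space measure along characteristics. A Liouville change of variables followed by Cauchy--Schwarz then converts this into the required $L^2$ bound on $\tau_+ \int_v |G_i|\, dv$. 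The main obstacle is this inhomogeneous piece: as the remark after Proposition~\ref{nullresume} warns, the null-improved $\nabla_v$ estimates of Lemma~\ref{vderivL} and Proposition~\ref{nullresume} cannot be applied to $G$ directly since $G$ is not of the form $\widehat{Z}^\beta f_k$ (for instance $x^\mu G_\mu$ need not equal $G_S$), so one has to carefully track and carry the null structure of each source term through the characteristic integration, and use the borderline cases (paired with weights $v^{\underline{L}}$) exactly as in Section~\ref{step2energy}. The $\chi^{1/6}$-loss, active only for $n=4$, is inherited from the $\log^3(3+t)$ growth allowed on $\mathcal{E}_N[F]$ by the bootstrap assumption~\eqref{bootF}.
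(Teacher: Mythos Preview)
Your treatment of the homogeneous piece $H$ is essentially correct and close in spirit to the paper: since $T_F(H_i)=0$, commuting by $\widehat{Z}^{\beta}$ with $|\beta|\leq n$ only produces $\mathcal{L}_{Z^{\gamma}}(F)$ with $|\gamma|\leq n \leq N-\frac{n+2}{2}$, so one is always in the regime where $F$ decays pointwise and only the crude inequality \eqref{eq:vderivbasic} is needed (exactly the point of the remark after Proposition~\ref{nullresume}). The bootstrap on $\mathbb{E}^2_{n,1}[H_i]$ then closes as in Section~\ref{step2F}, and Theorem~\ref{decayestimate} yields the pointwise bound, from which the $L^2$ estimate follows by your interpolation or by Lemma~\ref{intesti}.

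The gap is in the inhomogeneous piece. Your splitting $T_F(H_i)=0$, $T_F(G_i)=T_F(\widehat{Z}^{\gamma_i}f_k)$ leaves in the source of $G_i$ \emph{all} commutator terms, including those with $|\gamma|$ small and $|\delta|$ large (pointwise $F$, top-order Vlasov). For such terms you only have $L^1_{x,v}$ control on $\widehat{Z}^{\delta'}f_k$, and a Duhamel representation along characteristics together with the Liouville theorem preserves $L^1_{x,v}$, not $L^2_x L^1_v$; the phrase ``Liouville change of variables followed by Cauchy--Schwarz'' does not produce an $L^2_x$ bound with the required $(1+t)^{-(n-2)/2}$ decay. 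The paper circumvents this by first separating derivatives into a high-order vector $X$ and a low-order vector $Y$, writing the commuted equations as a \emph{system} $T_F(X)+A_1X=BY$ (so that the low-$|\gamma|$/high-$|\delta|$ terms are absorbed into $A_1X$ and only high-$|\gamma|$/low-$|\delta|$ terms survive in $BY$), and then solving for a matrix $K$ with $T_F(K)+A_1K+KA_2=B$, $K(0)=0$, so that $G=KY$. The point of this factorisation is that Cauchy--Schwarz in $v$ gives
\[
\Big(\tau_+\!\int_v |G^i|\,dv\Big)^2 \;\leq\; \tau_+^2\!\int_v |Y|_\infty\,dv \cdot \int_v |KKY|_\infty\,dv,
\]
where the first factor decays pointwise like $\epsilon\chi^{1/6}(t)\tau_+^{-(n-2)}$ (since $Y$ carries only low derivatives) and the second is controlled in $L^1_x$ by the auxiliary energy $\mathbb{E}[|KKY|_\infty]$, which one shows stays $\leq\epsilon$ by its own transport equation. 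This $K$-matrix construction and the associated energy for $|KKY|_\infty$ are the missing ingredients; without them the $L^2$ estimate for $G$ does not close.
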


Note that if $|\beta| \leq N-n$, that ensues from Proposition \ref{decayopti} and Lemma \ref{intesti}. For the higher order derivatives, we follow the strategy used in \cite{FJS}, in Section $4.5.7$, to prove similar $L^2$ estimates. Let\footnote{If $n =4$ and $N \geq 14$, we can take $8 \leq M \leq N-6$ and avoid the $\log^{\frac{1}{2}}(3+t)$-loss for all derivatives.} $1 \leq k \leq K$ and $M \in \mathbb{N}$ such that $\frac{3n+4}{2} \leq M \leq N-n+1$. Let $I_1$ and $I_2$ be two sets defined as 
$$I_1= \{ \beta \hspace{1mm} \text{multi-index} \hspace{0.5mm} / \hspace{0.5mm} M \leq |\beta| \leq N \} \hspace{1mm} \text{and} \hspace{1mm} I_2= \{ \beta \hspace{1mm} \text{multi-index} \hspace{0.5mm} / \hspace{0.5mm} |\beta| \leq M-1 \}.$$
We consider an ordering on $I_i$, for $1 \leq i \leq 2$, so that $I_i=\{ \beta_{i,1},...,\beta_{i,|I_i|} \}$ and two vector valued fields $X$ and $Y$, of respective length $|I_1|$ and $|I_2|$, such that
$$X^j=\widehat{Z}^{\beta_{1,j}}f_k \hspace{3mm} \text{and} \hspace{3mm} Y^j=\widehat{Z}^{\beta_{2,j}} f_k.$$

\begin{Lem}\label{bilanL2}
There exists three matrices valued functions $A_1 :[0,T] \times \R^n \rightarrow \mathfrak M_{|I_1|}(\R)$, $A_2 :[0,T] \times \R^n \rightarrow  \mathfrak M_{|I_2|}(\R)$ and $B :[0,T] \times \R^n \rightarrow  \mathfrak M_{|I_1|,|I_2|}(\R)$ such that
$$T_F(X)+A_1X=B Y , \hspace{3mm} \text{and} \hspace{3mm} T_F(Y)=A_2Y.$$
If $1 \leq j \leq I_1$, $A_1$ and $B$ are such that $T_F(X^j)$ is a linear combination of $$\frac{v^{\mu}}{v^0}\mathcal{L}_{Z^{\gamma_1}}(F)_{\mu m}X^{\beta_{1,q}}, \hspace{2mm} t\frac{v^{\mu}}{v^0}\mathcal{L}_{Z^{\gamma_1}}(F)_{\mu m}X^{\beta_{1,q}}, \hspace{2mm} \frac{v^{\mu}}{v^0}\mathcal{L}_{Z^{\gamma_1}}(F)_{\mu i}x^iX^{\beta_{1,q}},$$
$$\frac{v^{\mu}}{v^0}\mathcal{L}_{Z^{\gamma_2}}(F)_{\mu m}Y^{\beta_{2,l}}, \hspace{2mm} t\frac{v^{\mu}}{v^0}\mathcal{L}_{Z^{\gamma_2}}(F)_{\mu m}Y^{\beta_{2,l}} \hspace{2mm} \text{and} \hspace{2mm} \frac{v^{\mu}}{v^0}\mathcal{L}_{Z^{\gamma_2}}(F)_{\mu i}x^iY^{\beta_{2,l}},$$
with $|\gamma_1| \leq N-\frac{3n+2}{2}$, $|\gamma_2| \leq N$, $1 \leq m \leq n$, $1 \leq q \leq |I_1|$ and $1 \leq l \leq |I_2|$.
Similarly, if $1 \leq j \leq I_2$, $A_2$ is such that $T_F(Y^j)$ is a linear combination of
$$\frac{v^{\mu}}{v^0}\mathcal{L}_{Z^{\gamma}}(F)_{\mu m}Y^{\beta_{2,l}}, \hspace{2mm} t\frac{v^{\mu}}{v^0}\mathcal{L}_{Z^{\gamma}}(F)_{\mu m}Y^{\beta_{2,l}} \hspace{2mm} \text{and} \hspace{2mm} \frac{v^{\mu}}{v^0}\mathcal{L}_{Z^{\gamma}}(F)_{\mu i}x^iY^{\beta_{2,l}},$$
with $|\gamma| \leq N-n$, $1 \leq m \leq n$ and $1 \leq l \leq |I_2|$. Note also, using Proposition \ref{decayopti}, that
$$\int_v |Y|_{\infty} dv \lesssim \epsilon\frac{\chi^{\frac{1}{6}}(t)}{\tau_+^{n}}.$$ 
\end{Lem}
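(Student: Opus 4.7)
The lemma is a bookkeeping consequence of Corollary \ref{Commuf} together with the identity \eqref{transformpartialv}. My plan is to apply the commutation formula to each $\widehat{Z}^{\beta}f_k$, rewrite every $v$-derivative appearing in the source term as a combination of $\widehat{\mathbb{P}}_0$-derivatives with $t$- and $x$-weights, and then sort the resulting terms according to the order of the derivative that lands on $f_k$.

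First I would apply Corollary \ref{Commuf} to $\widehat{Z}^{\beta}f_k$ for every $|\beta|\leq N$, producing
$$T_F(\widehat{Z}^{\beta}f_k)=\sum_{\substack{|\gamma|+|\delta|\leq|\beta|\\ |\delta|\leq|\beta|-1}} C^{\beta}_{\gamma,\delta}\,\mathcal{L}_{Z^{\gamma}}(F)(v,\nabla_v\widehat{Z}^{\delta}f_k).$$
Expanding $\mathcal{L}_{Z^{\gamma}}(F)(v,\nabla_v g)=v^{\mu}\mathcal{L}_{Z^{\gamma}}(F)_{\mu j}\partial_{v^j}g$ and substituting the identity $v^0\partial_{v^j}=\widehat{\Omega}_{0j}-t\partial_j-x^j\partial_t$ from \eqref{transformpartialv}, each source term splits into three contributions: an unweighted piece $\frac{v^{\mu}}{v^0}\mathcal{L}_{Z^{\gamma}}(F)_{\mu m}\widehat{\Omega}_{0m}\widehat{Z}^{\delta}f_k$, a piece weighted by $t$ involving a spatial translation of $\widehat{Z}^{\delta}f_k$, and a piece weighted by $x^j$ involving a time translation. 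These are exactly the three shapes listed in the statement. In particular, each piece contains a derivative of $f_k$ of order precisely $|\delta|+1$, which is a component of either $X$ or $Y$.

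Next I would classify every term by this order $|\delta|+1$. If $|\delta|+1\geq M$ the derivative lies in $X$; this can only occur when $|\beta|\geq M$, so the contribution enters the block $A_1X$ in the equation for $T_F(X)$, and the companion coefficient has order $|\gamma|\leq|\beta|-|\delta|\leq N-M+1\leq N-\tfrac{3n+2}{2}$, using the hypothesis $M\geq\tfrac{3n+4}{2}$. If instead $|\delta|+1\leq M-1$ the derivative lies in $Y$: arising from $|\beta|\geq M$ it contributes an entry of $B$ with $|\gamma|\leq|\beta|\leq N$, and arising from $|\beta|\leq M-1$ it contributes an entry of $A_2$ with $|\gamma|\leq|\beta|\leq M-1\leq N-n$, using $M\leq N-n+1$. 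This dictionary defines the matrices $A_1$, $A_2$, $B$ directly and yields the asserted structural forms.

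The final pointwise estimate $\int_v|Y|_{\infty}dv\lesssim\epsilon\chi^{1/6}(t)/\tau_+^n$ is then read off from Proposition \ref{decayopti}, since every component of $Y$ is of the form $\widehat{Z}^{\beta}f_k$ with $|\beta|\leq M-1\leq N-n$. There is no genuine analytic difficulty here; the only subtle point — and what one might call the main obstacle — is the bookkeeping verification that the two thresholds $M\geq\tfrac{3n+4}{2}$ and $M\leq N-n+1$ are simultaneously satisfiable. They are, precisely because $N\geq\tfrac{5n+2}{2}$, and this narrow window is exactly what is needed so that the low-order $F$-derivatives entering $A_1$ and $A_2$ enjoy pointwise decay (via Proposition \ref{decayFmass}), while the high-order $F$-coefficients in $B$ get paired only with low-order, pointwise-controlled $Y$-components — a pairing that will drive the $L^2$ estimate of Proposition \ref{L2decay}.
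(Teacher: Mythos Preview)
Your proposal is correct and follows exactly the same approach as the paper: apply the commutation formula of Corollary~\ref{Commuf}, rewrite each $\partial_{v^j}\widehat{Z}^{\delta}f_k$ via \eqref{transformpartialv}, and let the matrices emerge from sorting the resulting $\widehat{\mathbb{P}}_0$-derivatives of $f_k$ by order. Your write-up is in fact more explicit than the paper's own proof, which is a two-sentence sketch; in particular, your verification that the bounds $|\gamma_1|\leq N-\tfrac{3n+2}{2}$ and $|\gamma|\leq N-n$ follow from the window $\tfrac{3n+4}{2}\leq M\leq N-n+1$ (and that this window is non-empty exactly when $N\geq\tfrac{5n+2}{2}$) is a useful addition.
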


\begin{proof}

Let $|\beta| \leq N$. According to commutation formula of Lemma \ref{Commuf}, $T_F(\widehat{Z}^{\beta} f_k)$ is a linear combination of terms such as $\mathcal{L}_{Z^{\gamma}}(F)(v,\nabla_v \widehat{Z}^{\delta}(f_k))$, with $|\gamma| + |\delta| \leq |\beta|$ and $|\delta| \leq |\beta|-1$. Replacing each $\partial_{v^i}\widehat{Z}^{\delta} f_k$ by $\frac{1}{v^0}(\widehat{\Omega}_{0i}\widehat{Z}^{\beta} f_k-t\partial_i\widehat{Z}^{\beta} f_k-x^i\partial_t\widehat{Z}^{\beta} f_k )$, the matrices naturally appear.

\end{proof}

Now, we split $X$ in $G+H$ where $G$ is the solution of the homogeneous system and $H$ is the solution to the inhomogeneous system,
$$\left\{
    \begin{array}{ll}
         T_F(H)+A H=0 \hspace{2mm}, \hspace{2mm} H(0,.,.)=X(0,.,.),\\
        T_F(G)+AG=BY \hspace{2mm}, \hspace{2mm} G(0,.,.)=0.
    \end{array}
\right.$$

The goal now is to prove $L^2$ estimates on the velocity averages of $H$ and $G$.

\subsubsection{The homogeneous part}

We start by the following commutation formula.

\begin{Lem}

Let $1   \leq i \leq |I_1| $ and consider $\widehat{Z}^{\delta} \in \widehat{\mathbb{P}}_0^{|\delta|}$, with $|\delta| \leq n$. Then, $T_F(\widehat{Z}^{\delta}H^i)$ can be written as a linear combination of terms of the form
$$\mathcal{L}_{Z^{\gamma}}(F)(v,W),$$
where $W$ is such that 
$$\forall \hspace{0.5mm} 0 \leq \mu \leq n, \hspace{2mm} |W^{\mu}| \lesssim \frac{\tau_+}{v^0} \sum_{|\theta| \leq n} \sum_{ q=1}^{|I_1|} |\widehat{Z}^{\theta}H^q|,$$
and where $|\gamma| \leq N-\frac{n+2}{2}$, so that the electromagnetic field can be estimated pointwise.
\end{Lem}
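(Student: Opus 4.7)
The plan is to differentiate the defining equation $T_F(H^i) + (AH)^i = 0$ using the commutation formula of Corollary \ref{Commuf} (together with Remark \ref{remcommuf}), and then reorganize all resulting terms into the desired form $\mathcal{L}_{Z^\gamma}(F)(v,W)$ by applying the identity $v^0 \partial_{v^k} = \widehat{\Omega}_{0k} - t\partial_k - x^k \partial_t$ wherever a $v$-derivative of $H$ appears. The entries of $A$ are explicit from Lemma \ref{bilanL2}, so essentially every manipulation is bookkeeping.

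\textbf{First step: commutator expansion.} For $|\delta| \leq n$, applying $\widehat{Z}^\delta$ to both sides of $T_F(H^i) = -(AH)^i$ and using Corollary \ref{Commuf} together with Remark \ref{remcommuf} gives
$$ T_F(\widehat{Z}^\delta H^i) = \sum_{\substack{|\gamma|+|\eta| \leq |\delta| \\ |\eta| \leq |\delta|-1}} C^{\delta}_{\gamma,\eta}\, \mathcal{L}_{Z^\gamma}(F)\bigl(v,\nabla_v \widehat{Z}^\eta H^i\bigr) \;-\; \sum_{|\tilde\beta| \leq |\delta|} C^{\delta}_{\tilde\beta}\,\widehat{Z}^{\tilde\beta}\bigl((AH)^i\bigr). $$
For the first family, the identity $v^0\partial_{v^k} = \widehat{\Omega}_{0k} - t\partial_k - x^k\partial_t$ lets us write $\nabla_v \widehat{Z}^\eta H^i$ as $(v^0)^{-1}$ times a linear combination, with coefficients in $\{1,t,x^k\}$, of $\widehat{Z}^\theta H^i$ with $|\theta| \leq |\eta|+1 \leq n$. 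The resulting vector $W$ has components bounded by $(\tau_+/v^0)\sum_{|\theta|\leq n}|\widehat{Z}^\theta H^i|$, as required. The order of Lie derivatives of $F$ here is $|\gamma| \leq |\delta| \leq n$, and the assumption $N \geq \tfrac{5n+2}{2}$ guarantees $n \leq N - \tfrac{n+2}{2}$, so the pointwise bounds on $\mathcal{L}_{Z^\gamma}(F)$ are available.

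\textbf{Second step: handling the source terms.} By Lemma \ref{bilanL2}, each entry of $A$ is a finite linear combination of terms of the form $\tfrac{v^\mu}{v^0}\mathcal{L}_{Z^{\gamma_1}}(F)_{\mu m}\,\phi(t,x)$ with $|\gamma_1| \leq N - \tfrac{3n+2}{2}$ and $\phi \in \{1,t,x^j\}$; crucially the $v$-contraction with one index of $F$ is already present, so each such term is $\mathcal{L}_{Z^{\gamma_1}}(F)(v,W_0)$ with $|W_0^\mu| \lesssim \tau_+/v^0$. Distributing $\widehat{Z}^{\tilde\beta}$ by Leibniz, derivatives falling on $\mathcal{L}_{Z^{\gamma_1}}(F)$ produce $\mathcal{L}_{Z^{\gamma'}}(F)$ with $|\gamma'| \leq |\gamma_1| + |\tilde\beta| \leq N - \tfrac{3n+2}{2} + n = N - \tfrac{n+2}{2}$; derivatives falling on $v^\mu/v^0$ give bounded functions of $v$ (by direct computation of $\widehat{Z}(v^\mu/v^0)$ for each $\widehat{Z} \in \widehat{\mathbb{P}}_0$); derivatives falling on $\phi$ produce polynomials of degree $\leq 1$ in $(t,x)$, hence quantities bounded by $\tau_+$; and derivatives falling on $H^q$ yield $\widehat{Z}^\theta H^q$ with $|\theta| \leq n$. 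Regrouping the already-present $v^\mu$ with one index of the resulting $\mathcal{L}_{Z^{\gamma'}}(F)$ produces terms of the desired form $\mathcal{L}_{Z^{\gamma'}}(F)(v,W)$ with $|W^\mu| \lesssim (\tau_+/v^0)\sum_{|\theta|\leq n,\,q}|\widehat{Z}^\theta H^q|$.

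\textbf{Main obstacle.} The only genuine difficulty is tracking that, through the Leibniz expansion of $\widehat{Z}^{\tilde\beta}(AH)$, the $v^\mu$-contraction with one index of $F$ is preserved so that the end product really lies in the class $\mathcal{L}_{Z^\gamma}(F)(v,W)$ rather than in a larger scalar class. This is ensured by the explicit form of the entries of $A$ given in Lemma \ref{bilanL2} (the contraction is built in) and by the observation that vector fields of $\widehat{\mathbb{P}}_0$ applied to $v^\mu/v^0$ produce bounded coefficients in $v$ without spawning new $v^\mu F_{\mu\nu}$-type pairings. The rest of the argument is arithmetic on multi-index orders, which works thanks to the margin $N \geq \tfrac{5n+2}{2}$.
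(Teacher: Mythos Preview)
Your overall approach matches the paper's (which simply says ``The proof is similar to the ones of Lemma~\ref{Commufsimple} and Corollary~\ref{Commuf}''), and the index bookkeeping in both steps is correct. However, there is a genuine gap in your Second step and in the ``Main obstacle'' paragraph.

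When you apply Leibniz to the four separate factors $v^\mu/v^0$, $\mathcal{L}_{Z^{\gamma_1}}(F)_{\mu m}$, $\phi$, $H^q$, the $v$-contraction structure is \emph{not} preserved termwise. If $\widehat Z$ hits $v^\mu/v^0$ alone, you are left with a bounded function of $v$ multiplying $\mathcal{L}_{Z^{\gamma_1}}(F)_{\mu m}$, which is not of the form $v^\nu G_{\nu m'}$; and if $\widehat Z$ hits the component $\mathcal{L}_{Z^{\gamma_1}}(F)_{\mu m}$ alone, you get $Z\bigl(\mathcal{L}_{Z^{\gamma_1}}(F)_{\mu m}\bigr)$, which differs from $\mathcal{L}_{ZZ^{\gamma_1}}(F)_{\mu m}$ by terms like $\mathcal{L}_{Z^{\gamma_1}}(F)_{\nu m}\partial_\mu Z^\nu$ that again break the contraction. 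Your sentence ``Regrouping the already-present $v^\mu$'' cannot be carried out in these cases.

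The correct mechanism, and the reason the paper points to Lemma~\ref{Commufsimple}, is that $\widehat Z$ must act on the block $v^\mu G_{\mu m}$ as a whole. Redoing that computation (using $[Z,v]=-Z_v(v)$ as in the proof of Lemma~\ref{Commufsimple}) gives, for $Z\in\mathbb{P}$,
\[
\widehat Z\bigl(v^\mu G_{\mu m}\bigr)=v^\mu\,\mathcal{L}_Z(G)_{\mu m}-v^\mu G_{\mu\nu}\,\partial_m Z^\nu,
\]
and an analogous identity (with an extra $-2v^\mu G_{\mu m}$) for $S$. Both terms on the right retain the $v^\mu$ contraction with a Lie derivative of $F$, so the form $\mathcal{L}_{Z^{\gamma'}}(F)(v,W')$ is preserved. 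You then apply Leibniz only to the remaining scalar factor $h=\tfrac{\phi}{v^0}H^q$, for which your bounds $|\widehat Z^\alpha h|\lesssim \tfrac{\tau_+}{v^0}\sum_{|\theta|\le|\alpha|}|\widehat Z^\theta H^q|$ are correct. With this fix, your argument goes through and the index count $|\gamma'|\le N-\tfrac{3n+2}{2}+n=N-\tfrac{n+2}{2}$ is exactly as you wrote.
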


\begin{proof}
The proof is similar to the ones of Lemma \ref{Commufsimple} and Corollary \ref{Commuf}.

\end{proof}

We introduce the energy $\widetilde{\mathbb{E}}[H]$ of $H$.
$$\widetilde{\mathbb{E}}[H]=\sum_{i=1}^{|I_1|} \mathbb{E}^2_n[H^i]+ \mathbb{E}^2_{n,1}[H^i]$$
and we have the following lemma.

\begin{Lem}\label{L2hom}
If $\epsilon$ is small enough, we have, for all $t \in [0,T]$,
$$\widetilde{\mathbb{E}}[H](t) \leq 8\epsilon \hspace{3mm} \text{and} \hspace{3mm} \forall \hspace{0.5mm} 1 \leq i \leq |I_1|, \hspace{2mm} \left\|\int_v \tau_+ |H^i| dv \right\|_{L^2(\Sigma_t)} \lesssim \frac{\epsilon}{(1+t)^{\frac{n-2}{2}}}.$$

\end{Lem}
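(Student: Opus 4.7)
The plan is to establish the two assertions in sequence via (i) an energy bootstrap argument for $\widetilde{\mathbb{E}}[H]$ built on the refined commutation structure of the preceding lemma, and (ii) an application of Theorem \ref{decayestimate} to obtain pointwise decay of $\int_v |H^i|\,dv$, from which the $L^2$ bound follows by integration. For the initial data: since $H(0,\cdot,\cdot)=X(0,\cdot,\cdot)$ and each $X^j=\widehat{Z}^{\beta_{1,j}}f_k$ with $|\beta_{1,j}|\leq N$, commuting once more with $\widehat{Z}^\delta$ for $|\delta|\leq n$ yields $\widetilde{\mathbb{E}}[H](0)\lesssim \mathbb{E}^2_{N+n,1}[f_k](0)\leq \epsilon$. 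I will run a bootstrap of the form $\widetilde{\mathbb{E}}[H](t)\leq 16\epsilon$ on some interval $[0,T']\subseteq[0,T]$.

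For the energy estimate, I apply Propositions \ref{energyf} and \ref{energypoids} to $(v^0)^2\widehat{Z}^\delta H^i$ (and to its weighted variants $z(v^0)^2\widehat{Z}^\delta H^i$ for $z\in\mathbf{k}_1$) for $|\delta|\leq n$. By the commutation formula stated just above, the source terms take the form $\mathcal{L}_{Z^\gamma}(F)(v,W)$ with $|\gamma|\leq N-\tfrac{n+2}{2}$, so the pointwise decay of Proposition \ref{decayFmass} is available for $F$. Decomposing $\mathcal{L}_{Z^\gamma}(F)(v,W)$ in the null frame of $v$ and $F$ and inserting the bound $|W^\mu|\lesssim \tau_+ (v^0)^{-1}\sum_{\theta,q}|\widehat{Z}^\theta H^q|$ produces exactly the integrals treated in Subsection \ref{step2F} (case $|\beta_1|\leq N-\tfrac{n+2}{2}$): the ``good'' null components $\alpha,\rho,\sigma$ are handled by their pointwise $\tau_+$ decay, while the ``bad'' component $\underline{\alpha}$ is absorbed via the flux bound $\int_{C_u(t)}\int_v v^{\underline{L}}|...|\,dvdC_u\leq \mathbb{E}^2[H](t)$ together with the $\tau_-^{-3/2}$ decay of $\tau_+|\underline{\alpha}|$. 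Summing the contributions of $|\delta|\leq n$ gives $\widetilde{\mathbb{E}}[H](t)\leq 2\widetilde{\mathbb{E}}[H](0)+C\epsilon^{3/2}$, which for $\epsilon$ small enough improves the bootstrap to $\widetilde{\mathbb{E}}[H](t)\leq 8\epsilon$ on all of $[0,T]$.

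Next I apply Theorem \ref{decayestimate} to $(v^0)^2 H^i$, exploiting Remark \ref{rqdecayestimate}: the spacetime integrals on the right-hand side run over $|\beta|\leq n\leq N-\tfrac{n+2}{2}$ and are of the same structure controlled in the previous paragraph, hence are $\lesssim \epsilon^{3/2}$ (no $\chi$-loss enters since we stay in the low-order regime covered by Proposition \ref{decayFmass}). Combined with the initial-data bound, this yields
\[
\int_{v\in\R^n}|H^i|(t,x,v)\,dv\,\lesssim\,\frac{\epsilon}{\tau_+^n},\qquad (t,x)\in[0,T]\times\R^n.
\]
Multiplying by $\tau_+$, squaring and integrating over $\Sigma_t$,
\[
\left\|\tau_+\int_v|H^i|\,dv\right\|_{L^2(\Sigma_t)}^2 \lesssim \epsilon^2\int_0^{+\infty}\frac{r^{n-1}}{\tau_+^{2n-2}}\,dr \lesssim \frac{\epsilon^2}{(1+t)^{n-2}}
\]
by Lemma \ref{intesti} (with $a=2n-2$, $b=0$, $m=n$; valid since $n\geq 4$), which gives the stated $L^2$ estimate. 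The main subtlety is that the vector $W$ appearing in the commuted equation does \emph{not} inherit the refined null structure of $\nabla_v f$ (as flagged in Subsection \ref{sectionvderiv}); however, because $|\gamma|\leq N-\tfrac{n+2}{2}$ throughout, every estimate needed can be closed with the naive bound $|W^\mu|\lesssim \tau_+(v^0)^{-1}\sum|\widehat{Z}^\theta H|$ together with the pointwise decay of $F$, exactly as in the part of Subsection \ref{step2F} where pointwise bounds on the electromagnetic field are available.
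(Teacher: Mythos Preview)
Your proposal is correct and follows essentially the same route as the paper: a bootstrap on $\widetilde{\mathbb{E}}[H]$ using the commutation lemma (which forces $|\gamma|\leq N-\tfrac{n+2}{2}$, so only the pointwise-field case of Section~\ref{step2F} is needed and the crude bound $|W^\mu|\lesssim\tau_+(v^0)^{-1}\sum|\widehat{Z}^\theta H|$ suffices), followed by Theorem~\ref{decayestimate} to get $\int_v|H^i|\,dv\lesssim\epsilon\tau_+^{-n}$ and Lemma~\ref{intesti} for the $L^2$ bound. You have also explicitly flagged the loss of null structure in $W$ and why it is harmless here, which is exactly the point the paper makes parenthetically.
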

\begin{proof}
We follow here what we have done in Section \ref{step2F}. Since $\widetilde{\mathbb{E}}[H](0) \leq \frac{3}{2}\mathbb{E}^2_{N+n}[f_k](0) + \frac{3}{2}\mathbb{E}^2_{N+n,1}[f_k](0) \leq 3\epsilon$ for $\epsilon$ small enough, there exists $0 < \widetilde{T} \leq T$ such that 
$$\forall \hspace{0.5mm} t \in [0,\widetilde{T}], \hspace{3mm} \widetilde{\mathbb{E}}[H](t) \leq 8 \epsilon.$$
To improve this bootstrap assumption, for $\epsilon$ small enough, we only have to use the previous lemma and to follow Section \ref{step2F} (as we always estimated $\left|(\nabla_v w)^L\right|$, $\left|(\nabla_v w)^{\underline{L}}\right|$ and $\left|(\nabla_v w)^B\right|$ by $\frac{\tau_+}{v^0}\sum_{\widehat{Z} \in \K} |\widehat{Z}w|$). We can then take $\widetilde{T}=T$ and obtain, as in Section \ref{step3opti}, that
$$\forall \hspace{0.5mm} 1 \leq i \leq |I_1|, \hspace{1mm} (t,x) \in [0,T] \times \mathbb{R}^n, \hspace{2mm} \int_{v \in \mathbb{R}^n} |H^i(t,x,v)| dv \lesssim \frac{\epsilon}{\tau_+^n}.$$
The $L^2$ estimate then ensues from Lemma \ref{intesti}.
\end{proof}

\subsubsection{The inhomogeneous part}

Let us introduce $K$, the solution of $T_F(K)+A_1K+KA_2=B$ which verifies $K(0,.,.)=0$, and the function

$$|KKY|_{\infty} = \sum_{\begin{subarray}{l} \hspace{1mm} 1 \leq i \leq |I_1| \\ 1 \leq j,p \leq |I_2| \end{subarray}} | K^{j}_{i}|^2 |Y_{p}|.$$

$KY$ and $G$ are solutions of the same system,
\begin{eqnarray}
\nonumber T_F(KY)=T_F(K)Y+KT_F(Y)& = & BY-A_1KY-KA_2Y+KA_2Y \\ \nonumber
& = & BY-A_1KY.
\end{eqnarray}
As $KY(0,.,.)=0$ and $G(0,.,.)=0$, it comes that $KY=G$. For $1 \leq i \leq |I_1|$ and $1 \leq j,p \leq |I_2|$, $| K^{j}_{i}|^2 Y_{q}$ sastifies the equation
$$T_F\left( |K^j_i|^2 Y_p\right) = |K^j_i |^2(A_2)^q_pY_q-2\left((A_1)^q_i K^j_q +K^q_i (A_2)^j_q \right) K^j_i Y_p+2B^j_iK^j_iY_p,$$

which will allow us to estimate $$\mathbb{E}[|KKY|_{\infty}]:=\mathbb{E}^0_0[|KKY|_{\infty}].$$ 

We will then be able to bound $\left\| \tau_+ \int_{v \in \mathbb{R}^n} |G| dv \right\|_{L^2(\Sigma_t)}$ thanks to the estimates on $\int_{v \in \mathbb{R}^n} |Y| dv$ and $\mathbb{E}[|KKY|_{\infty}]$.

\begin{Lem}
We have, 
$$\forall \hspace{0.5mm} t \in [0,T], \hspace{3mm} \mathbb{E}[|KKY|_{\infty}] \leq  \epsilon.$$
\end{Lem}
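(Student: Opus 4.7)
We apply the $L^1$ energy estimate of Proposition \ref{energyfsimple} to each scalar quantity $|K^j_i|^2 Y_p$, noting that $\mathbb{E}[|KKY|_\infty](0) = 0$ since $K(0,\cdot,\cdot) = 0$. The formula stated just before the lemma expresses $T_F(|K^j_i|^2 Y_p)$ as a sum of cubic contributions of the form $|K|^2 A Y$ with $A \in \{A_1, A_2\}$, plus a bilinear source of the form $B K Y$, leading to
\begin{equation}
\mathbb{E}[|KKY|_\infty](t) \lesssim \sum_{A \in \{A_1, A_2\}} \int_0^t \int_{\Sigma_s} \int_v \frac{|A|\, |K|^2\, |Y|}{v^0}\, dv\, dx\, ds + \int_0^t \int_{\Sigma_s} \int_v \frac{|BKY|}{v^0}\, dv\, dx\, ds.
\end{equation}

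For the cubic contributions $|A|\, |K|^2\, |Y|$, the coefficients of $A_1$ and $A_2$ involve Lie derivatives of $F$ of order at most $N - n$, for which the pointwise bounds of Proposition \ref{decayFmass} apply. Performing the null decomposition of these coefficients exactly as in Subsection \ref{step2F} (in particular using Lemma \ref{vderivL} and Proposition \ref{nullresume} to neutralize the bad $\underline{\alpha}$-component through its pairing with $v^{\underline{L}}$), one obtains a bound of the form $\sqrt{\epsilon}\int_0^t h(s)\, \mathbb{E}[|KKY|_\infty](s)\, ds$ with $\int_0^{+\infty} h(s)\, ds$ finite, so that this contribution is controlled by standard (linear) Grönwall absorption into a uniform multiplicative constant.

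For the bilinear source $B K Y$, the matrix $B$ involves top-order derivatives $\mathcal{L}_{Z^{\gamma_2}}(F)$ with $|\gamma_2| \leq N$, which are only controlled in $L^2_x$ via $\mathcal{E}_N[F] \lesssim \epsilon \chi(t)$. Exploiting the same null structure as in Subsection \ref{step2F} inside the coefficients of $B$, one may bound $\|B(s)\|_{L^2_x} \lesssim \sqrt{\epsilon\, \chi(s)}$. The central bilinear trick is the pointwise Cauchy--Schwarz inequality in $v$,
\begin{equation}
\int_v |KY|\, dv \leq \left( \int_v |Y|\, dv \right)^{1/2} \left( \int_v |K|^2 |Y|\, dv \right)^{1/2},
\end{equation}
combined with the pointwise decay $\int_v |Y|\, dv \lesssim \epsilon\, \chi^{1/6}(t)/\tau_+^n$ furnished by Proposition \ref{decayopti} (which is applicable since $Y$ consists only of derivatives of order at most $M - 1 \leq N - n$). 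Integrating in $x$ yields
\begin{equation}
\left\| \int_v |KY|\, dv \right\|_{L^2_x}^2 \lesssim \left\| \int_v |Y|\, dv \right\|_{L^\infty_x} \cdot \mathbb{E}[|KKY|_\infty](t) \lesssim \frac{\epsilon\, \chi^{1/6}(t)}{(1+t)^n}\, \mathbb{E}[|KKY|_\infty](t).
\end{equation}
A Cauchy--Schwarz in $(s,x)$ then bounds the bilinear source by $\int_0^t g(s)\sqrt{\mathbb{E}[|KKY|_\infty](s)}\, ds$, with $g(s) \lesssim \epsilon\, \chi^{7/12}(s)/(1+s)^{n/2}$ integrable on $\R_+$ for all $n \geq 4$, including $n = 4$ where the logarithmic factors are easily absorbed.

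Assembling the two contributions, we conclude by a Grönwall-type argument: the cubic Grönwall-linear piece is absorbed into a uniform multiplicative constant, and the remaining $\sqrt{f}$-shaped inequality is closed by Lemma \ref{Gronwall}, producing $\mathbb{E}[|KKY|_\infty](t) \lesssim \epsilon^2 \leq \epsilon$ provided $\epsilon$ is taken small enough. The main technical obstacle lies in extracting sharp enough null structure from the coefficient matrices $A_1$, $A_2$, and $B$: each of them a priori contains a dangerous $\underline{\alpha}$-component paired with a $\tau_+$-weight coming from the rewriting of $\partial_v$ via \eqref{derivv}, and in dimension $n = 4$ the logarithmic factor in $\chi$ leaves essentially no slack, forcing a systematic reuse of the Subsection \ref{step2F} analysis rather than a naive pointwise estimate.
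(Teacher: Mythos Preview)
Your overall strategy matches the paper's: apply Proposition \ref{energyfsimple}, handle the cubic $A$--terms via the pointwise bounds of Subsection \ref{step2F}, and handle the bilinear $BKY$ term by Cauchy--Schwarz in $v$ together with the pointwise decay of $\int_v |Y|\,dv$. For the components $\alpha$, $\rho$, $\sigma$ of $\mathcal{L}_{Z^{\gamma}}(F)$ appearing in $B$, your argument is essentially the paper's; and your closing via Lemma \ref{Gronwall} is a legitimate alternative to the paper's bootstrap.

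There is, however, a genuine gap in your treatment of the $\underline{\alpha}$ contribution to $B$. Your claim that one may bound $\|B(s)\|_{L^2_x} \lesssim \sqrt{\epsilon\,\chi(s)}$ is not justified: the entries of $B$ carry a factor $\tau_+$, and only $\|\tau_-\,\underline{\alpha}\|_{L^2(\Sigma_s)}$, not $\|\tau_+\,\underline{\alpha}\|_{L^2(\Sigma_s)}$, is controlled by $\mathcal{E}_N[F]$. Invoking Subsection \ref{step2F} does not help, since there the electromagnetic field is estimated \emph{pointwise}, which is precisely what fails for $B$. The null structure in $B$ pairs $\underline{\alpha}$ with a $v^{\underline{L}}$ or $v^B$ weight, but this weight lives in the $v$--integral of $KY$ and cannot be absorbed into an $L^2_x$ bound on $B$ alone. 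The paper resolves this by \emph{not} separating $B$ from $KY$: it applies Cauchy--Schwarz in $(s,x)$ with the asymmetric splitting $\tau_-^2(1+s)^{-3/2}$ versus $\tau_+^2(1+s)^{3/2}\tau_-^{-2}$, then uses $|\widetilde v/v^0|^2 \lesssim v^{\underline L}/v^0$ together with the \emph{cone--flux} part of the energy, $\int_{C_u(t)}\int_v \tfrac{v^{\underline L}}{v^0}|KKY|_\infty\,dv\,dC_u(t) \le \mathbb{E}[|KKY|_\infty](t)$, integrated against $\tau_-^{-2}$ in $u$. Your argument never invokes this cone flux, and without it the $\underline{\alpha}$ contribution does not close (the paper's own Remark after the lemma warns that a naive treatment costs a $(1+t)^\eta$--loss). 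A smaller point: you attribute the use of Lemma \ref{vderivL} and Proposition \ref{nullresume} to Subsection \ref{step2F}, but the paper explicitly avoids these there (see Section \ref{sectionvderiv}) precisely so that the argument can be reused in the present matrix setting, where the algebraic relations needed for \eqref{eq:vderivL} and \eqref{eq:vderivA} are destroyed by the decomposition $X=H+G$; the cubic terms are closed using only \eqref{eq:vderivbasic}.
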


\begin{proof}
We use again the continuity method. Let $T_0$ be the largest time such that $\mathbb{E}[|KKY|_{\infty}]\leq 2\epsilon$ for all $t \in [0,T_0]$ and let us prove, with the energy estimate of Proposition \ref{energyfsimple}, that for $\epsilon$ small enough, $\mathbb{E}[|KKY|_{\infty}] \leq  \epsilon $ on $[0,T_0]$. Let $t \in [0,T_0]$.

As for the estimate of $\widetilde{\mathbb{E}}[H]$ in the proof of Lemma \ref{L2hom}, we have
$$\int_0^t \int_{\Sigma_s} \int_v \frac{1}{v^0}\left| |K^j_i |^2(A_2)^q_pY_q-2\left((A_1)^q_i K^j_q +K^q_i (A_2)^j_q \right) K^j_i Y_p \right|dvdxds \lesssim \epsilon^{\frac{3}{2}}.$$
Next, we need to estimate the following integral,
\begin{equation}\label{eq:niver}
\int_0^t\int_{\Sigma_s} \int_v \frac{1}{v^0}|B^j_iK^j_iY_p| dv dx .
\end{equation}
The components of the matrix $B$ involve terms in which the electromagnetic field has too many derivatives to be estimated pointwise. Indeed, recall from Lemma \ref{bilanL2} that 
$$|B^j_iK^j_iY_p| \lesssim \sum_{m=1}^n\sum_{|\gamma| \leq N}\tau_+\left|\frac{v^{\mu}}{v^0} \mathcal{L}_{Z^{\gamma}}(F)_{\mu m}K^j_iY_p\right|.$$
We fix $|\gamma|$ and we denote the null decomposition of $\mathcal{L}_{Z^{\gamma}}(F)$ by $(\alpha, \underline{\alpha}, \rho, \sigma)$. In order to bound \eqref{eq:niver}, we bound the integral of the five following terms, given by the null decomposition of the velocity vector $v$ and $\mathcal{L}_{Z^{\gamma}}(F)$.
\begin{itemize}
\item The good terms
$$\tau_+|\alpha|\frac{|KY|}{v^0}, \hspace{3mm} \tau_+|\rho|\frac{|KY|}{v^0} \hspace{3mm} \text{and} \hspace{3mm} \tau_+|\sigma|\frac{|KY|}{v^0}.$$
\item The bad terms
$$\tau_+\frac{v^{\underline{L}}}{(v^0)^2}|\underline{\alpha}||KY| \hspace{3mm} \text{and} \hspace{3mm} \tau_+ \frac{|v^B|}{(v^0)^2}|\underline{\alpha}||KY|.$$
\end{itemize}

We start by bounding the integral on $\Sigma_s \times \R^n_v$ of the good terms. We use $\zeta$ to denote either $\alpha$, $\rho$ or $\sigma$. Using twice the Cauchy-Schwarz inequality (in $x$ and then in $v$), we have
\begin{eqnarray}
\nonumber \int_{\Sigma_s} \int_v \tau_+ |\zeta| \frac{|KY|}{v^0} dvdx & \lesssim &  \|\tau_+ |\zeta| \|_{L^2(\Sigma_s)}\left( \int_{\Sigma_s} \left(\int_v |KY| dv \right)^2 dx \right)^{\frac{1}{2}} \\ \nonumber
& \lesssim & \sqrt{\mathcal{E}_N[F](s)} \left( \int_{\Sigma_s} \int_v |Y| dv \int_v |KKY|_{\infty} dv dx \right)^{\frac{1}{2}} \\ \nonumber
& \lesssim & \sqrt{\mathcal{E}_N[F](s)}  \left\| \int_v |Y| dv \right\|_{L^{ \infty }(\Sigma_s)}^{\frac{1}{2}}\mathbb{E}[|KKY|_{\infty}]^{\frac{1}{2}}.
\end{eqnarray}

Using the bootstrap assumptions, on $\mathcal{E}_N[F]$ and $\mathbb{E}[|KKY|_{\infty}]$, and the pointwise estimate $\int_v |Y| dv \lesssim \epsilon \log^{\frac{1}{2}}(3+t)\tau_+^{-n}$ given in Lemma \ref{bilanL2}, we obtain
$$\int_0^t \int_{\Sigma_s} \int_v \tau_+ |\zeta| \frac{|KY|}{v^0} dvdx ds \lesssim \int_0^t \frac{\epsilon^{\frac{3}{2}} \log^2 (3+t)}{(1+s)^2}ds \lesssim \epsilon^{\frac{3}{2}}.$$

To unify the study of the bad terms, we use $\widetilde{v}$ to denote $v^{\underline{L}}$ or $v^B$. Using the Cauchy-Schwarz inequality (in $(s,x)$), the integral on $[0,t] \times \Sigma_s \times \R^n_v$ of a bad term is bounded by 
\begin{equation}\label{eq:esti1}
\hspace{-0.5cm} \left( \int_0^t \int_{\Sigma_s} \frac{\tau_-^2 |\underline{\alpha}|^2}{(1+s)^{\frac{3}{2}}}dxds  \int_0^t \int_{\Sigma_s} \frac{\tau_+^2 (1+s)^{\frac{3}{2}}}{\tau_-^2}\left(\int_v \left|\frac{\widetilde{v}}{v^0}\right| |KY|dv \right)^2 dxds \right)^{\frac{1}{2}}.
\end{equation}
As $\|\tau_- |\underline{\alpha}| \|^2_{L^2(\Sigma_s)} \lesssim \epsilon \log^3(3+t)$, we have
$$\int_0^t \int_{\Sigma_s} \frac{\tau_-^2 |\underline{\alpha}|^2}{(1+s)^{\frac{3}{2}}}dxds \lesssim \epsilon.$$
For the second factor of the product in \eqref{eq:esti1}, we first note that, by the Cauchy-Schwarz inequality, 
$$\left(\int_v \left|\frac{\widetilde{v}}{v^0}\right| |KY|dv \right)^2 \leq \int_v |Y|dv \int_v \left|\frac{\widetilde{v}}{v^0} \right|^2|KKY|_{\infty} dv.$$
Now, recall from Proposition \ref{extradecay1} that $|v^B| \lesssim \sqrt{v^L v^{\underline{L}}}$ so that $\left|\frac{\widetilde{v}}{v^0} \right|^2 \lesssim \frac{v^{\underline{L}}}{v^0}$. Using the pointwise decay estimate $\int_v |Y|dv \lesssim \epsilon\log^{\frac{1}{2}}(3+t)\tau_+^{-n}$, it comes
$$\left(\int_v \left|\frac{\widetilde{v}}{v^0}\right| |KY|dv \right)^2 \leq \epsilon\frac{\log^{\frac{1}{2}}(3+t)}{\tau_+^n} \int_v \frac{v^{\underline{L}}}{v^0}|KKY|_{\infty} dv.$$
As $\int_{C_u(t)} \int_v \frac{v^{\underline{L}}}{v^0} |KKY|_{\infty} dC_u(t) dv \leq \mathbb{E}[|KKY|_{\infty}](t) \leq 2\epsilon$, we obtain
$$\int_0^t \int_{\Sigma_s} \frac{\tau_+^2 (1+s)^{\frac{3}{2}}}{\tau_-^2}\left(\int_v \left|\frac{\widetilde{v}}{v^0}\right| |KY|dv \right)^2 dxds \lesssim \epsilon^2 \int_{u=-\infty}^{+\infty} \tau_-^{-2} du \lesssim \epsilon^2.$$
Hence,
$$\int_0^t \int_{\Sigma_s} \int_v \tau_+ \frac{|\widetilde{v}|}{(v^0)^2}|\underline{\alpha}| |KY| dv dx ds \lesssim \epsilon^{\frac{3}{2}}$$
and the energy estimate of Proposition \ref{energyfsimple} gives that, for $\epsilon$ small enough, $\mathbb{E}[|KKY|_{\infty}] \leq \epsilon$ on $[0,T_0]$.
\end{proof}

\begin{Rq}
A naive estimation of the bad terms in the previous lemma would lead to a $(1+t)^{\eta}$-loss which would affect the electromagnetic energy.
\end{Rq}

We are now able to prove the expected $L^2$ estimate on $\int_v |G| dv$.

\begin{Lem}\label{L2inhom}
If $\epsilon$ is small enough, we have,
$$\forall \hspace{0.5mm} t \in [0,T], \hspace{1mm} 1 \leq i \leq |I_1|, \hspace{2mm} \left\|\int_v \tau_+ |G^i| dv \right\|_{L^2(\Sigma_t)} \lesssim \frac{\epsilon\chi^{\frac{1}{12}}(t)}{(1+t)^{\frac{n-2}{2}}}.$$
\end{Lem}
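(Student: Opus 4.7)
The plan is to exploit the identity $G = KY$ (proved just before the statement of the lemma) together with the two estimates already at our disposal: the pointwise decay $\int_v |Y| dv \lesssim \epsilon\chi^{1/6}(t)/\tau_+^n$ coming from Lemma \ref{bilanL2} (equivalently Proposition \ref{decayopti}), and the uniform energy bound $\mathbb{E}[|KKY|_\infty](t) \leq \epsilon$ proved in the previous lemma. The key is that the $L^2_x$ norm of $\int_v |G| dv$ splits, via Cauchy--Schwarz, into an $L^\infty$ factor involving $\int_v |Y|dv$ and an $L^1$ factor involving $\int_v |KKY|_\infty dv$.

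More precisely, writing $G^i = \sum_j K^j_i\, Y_j$, I would first apply the Cauchy--Schwarz inequality in $v$ (and then in the finite $j$-sum) to get the pointwise bound
\begin{equation*}
\left( \int_v |G^i|\, dv \right)^2 \;\lesssim\; \left( \sup_v \sum_j |Y_j| \; \text{replaced by integral form} \right) \cdot \int_v |KKY|_\infty\, dv,
\end{equation*}
namely, using $|K^j_i| |Y_j| = |K^j_i| |Y_j|^{1/2} \cdot |Y_j|^{1/2}$,
\begin{equation*}
\left( \int_v |G^i|\, dv \right)^2 \;\lesssim\; \left( \int_v |Y|\, dv \right) \left( \int_v |KKY|_\infty\, dv \right).
\end{equation*}
Inserting the pointwise decay estimate for $\int_v |Y|\, dv$ then gives
\begin{equation*}
\left( \int_v |G^i|\, dv \right)^2 \;\lesssim\; \frac{\epsilon\, \chi^{1/6}(t)}{\tau_+^n}\int_v |KKY|_\infty\, dv .
\end{equation*}

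Multiplying by $\tau_+^2$, using $\tau_+^{2-n} \leq (1+t)^{2-n}$ (which holds because $n \geq 4$ and $\tau_+ \geq 1+t$), and integrating over $\Sigma_t$ yields
\begin{equation*}
\left\| \tau_+ \int_v |G^i|\, dv \right\|_{L^2(\Sigma_t)}^2 \;\lesssim\; \frac{\epsilon\, \chi^{1/6}(t)}{(1+t)^{n-2}} \int_{\Sigma_t}\!\!\int_v |KKY|_\infty\, dv\, dx \;\leq\; \frac{\epsilon\, \chi^{1/6}(t)}{(1+t)^{n-2}}\, \mathbb{E}[|KKY|_\infty](t).
\end{equation*}
Using the bound $\mathbb{E}[|KKY|_\infty](t) \lesssim \epsilon$ and taking square roots gives the claimed estimate.

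The proof is essentially a Cauchy--Schwarz decomposition, so no real obstacle arises here once the two preparatory bounds (pointwise decay of $\int_v|Y|dv$ and the $L^1_{x,v}$ bound on $|KKY|_\infty$) are in hand. All the analytic difficulty was already absorbed in the previous lemma, where the bad null components of $\mathcal{L}_{Z^\gamma}(F)$ for $|\gamma| \leq N$ had to be paired against $\int_v v^{\underline{L}}|KKY|_\infty dv$ and integrated on null cones to avoid a $(1+t)^\eta$ loss.
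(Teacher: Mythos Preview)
Your proof is correct and follows essentially the same approach as the paper's: both use $G=KY$, apply Cauchy--Schwarz in $v$ to bound $\left(\int_v |G^i|\,dv\right)^2$ by a product of $\int_v |Y|\,dv$ and $\int_v |KKY|_\infty\,dv$, insert the pointwise decay $\int_v |Y|\,dv \lesssim \epsilon\chi^{1/6}(t)\tau_+^{-n}$, and then integrate over $\Sigma_t$ using $\mathbb{E}[|KKY|_\infty]\lesssim\epsilon$. The only cosmetic difference is that the paper keeps the individual terms $\int_v |(K^j_i)^2 Y_j|\,dv$ rather than passing immediately to $|KKY|_\infty$, but this changes nothing.
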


\begin{proof}
Let $1 \leq i \leq |I_1|$. The Cauchy-Schwarz inequality (in $v$) gives us
$$\left\| \tau_+\int_v |G^i| dv \right\|_{L^2(\Sigma_t)} \lesssim \sum_{j=1}^{|I_2|} \left\|\tau_+^2 \int_v |Y_j| dv \int_v |(K_i^j)^2 Y_j| dv \right\|_{L^1(\Sigma_t)}^{\frac{1}{2}}.$$
Thus, using once again that $\int_v |Y_j|dv \lesssim \epsilon \chi^{\frac{1}{6}}(t) \tau_+^{-n}$, we obtain
$$\left\| \tau_+ \int_v | G^i| dv \right\|_{L^2(\Sigma_t)} \lesssim \frac{\epsilon\chi^{\frac{1}{12}}(t)}{(1+t)^{\frac{n}{2}-1}}.$$
\end{proof}
We can now conclude this section.
\begin{proof}[Proof of Proposition \ref{L2decay}]

As mentionned earlier, for $|\beta| \leq M-1$, the estimate ensues from Proposition \ref{decayopti} and Lemma \ref{intesti}. If $M \leq |\beta| \leq N$, as there exists $1 \leq i \leq |I_1|$ such that $\widehat{Z}^{\beta} f_k=H^i+G^i$, we have
$$\left\| \tau_+\int_v |\widehat{Z}^{\beta} f_k| dv \right\|_{L^2(\Sigma_t)} \leq \left\| \tau_+\int_v |H^i| dv \right\|_{L^2(\Sigma_t)}+\left\| \tau_+\int_v |G^i| dv \right\|_{L^2(\Sigma_t)}.$$
It then remains to use Lemmas \ref{L2hom} and \ref{L2inhom}.
\end{proof}

\subsection{Step 5: Improvement of the electromagnetic field energy estimates}

\subsubsection{The bound on the potential energy}\label{sectionpotential}

According to the energy estimate given by Proposition \ref{energypotential} and the commutation formula of Proposition \ref{CommuA}, we have, for all $t \in [0,T]$,

$$\sqrt{\widetilde{\mathcal{E}}_N[A]}(t) \lesssim \sqrt{\widetilde{\mathcal{E}}_N[A]}(0)+\sum_{|\gamma| \leq N} \int_0^t |e^k|\left\| \tau_+ \int_{\R^n} |\widehat{Z}^{\gamma} f_k| dv\right\|_{L^2(\Sigma_s)} ds.$$

Using the $L^2$ decay estimate of Proposition \ref{L2decay} and $\widetilde{\mathcal{E}}_N[A](0) \leq \epsilon $, we obtain, for $\epsilon$ small enough and if the constant $C$ is large enough, that
$$\forall \hspace{1mm} t \in [0,T], \hspace{3mm} \widetilde{\mathcal{E}}_N[A](t) \leq \frac{C}{2(n-3)} \epsilon \log^3{(3+t)} \hspace{3mm} \text{if} \hspace{2mm} n=4$$
and
$$\forall \hspace{1mm} t \in [0,T], \hspace{3mm} \widetilde{\mathcal{E}}_N[A](t) \leq \frac{C}{2(n-3)} \epsilon \hspace{3mm} \text{if} \hspace{2mm} n \geq 5.$$

We are now able, using Proposition \ref{decayalphapointwise}, to improve the pointwise decay estimate on $\alpha$.
$$\forall \hspace{1mm} |\beta| \leq N-n, \hspace{1mm} (t,x) \in [0,T] \times \R^n, \hspace{3mm} |\alpha(\mathcal{L}_{Z^{\beta}}(F)|(t,x) \lesssim \sqrt{\epsilon}\frac{\sqrt{\chi(t)}}{\tau_+^{\frac{n+2}{2}}}.$$

\subsubsection{Improvement of the electromagnetic field energy estimates}

Recall from Proposition \ref{MoraF} that
$$\mathcal{E}_N[F](t) \leq \mathcal{E}_N[F](0)+(n-3)\widetilde{\mathcal{E}}_N[A](t)+\varphi(t), $$
where $\varphi(t)$ is a linear combination of terms such that
\begin{equation}\label{eq:energyF}
 \int_0^t \int_{ \Sigma_s } | \overline{K}_0^{\nu} \mathcal{L}_{Z^{\beta}} (F)_{\mu \nu } J(\widehat{Z}^{\gamma} f_k)^{\mu}| dx ds \hspace{2mm} \text{and} \hspace{2mm} \int_0^t \int_{\Sigma_s} s |\mathcal{L}_{Z^{\delta}} A_{\mu} \square \mathcal{L}_{Z^{\delta}} A^{\mu} | dx ds,
 \end{equation}
 with $|\beta|$, $|\gamma|$, $|\delta| \leq N$ and $ 1 \leq k \leq K$. Then, if we could prove that each integrals of \eqref{eq:energyF} is bounded by $\epsilon^{\frac{3}{2}}\chi(t)$, we would have, for $\epsilon$ small enough and if the constant $C$ is large enough, $\mathcal{E}_N [F] \leq C \epsilon\chi(t)$ on $[0,T]$ since $\mathbb{E}_N[F](0) \leq \epsilon$ and $(n-3)\widetilde{\mathcal{E}}_N[A](t) \leq \frac{C}{2}\epsilon \chi(t)$.

We start by bounding the integrals involving the potential. Using Proposition \ref{CommuA} and the Cauchy-Schwarz inequality, we have, for $|\delta| \leq N$,

\begin{flalign*}
& \hspace{0cm} \int_0^t \int_{\Sigma_s} s |\mathcal{L}_{Z^{\delta}} A_{\mu} \square \mathcal{L}_{Z^{\delta}} A^{\mu} | dx ds \lesssim &
\end{flalign*}
 $$ \hspace{4.3cm} \sum_{k=1}^K \sum_{|\gamma| \leq |\delta|} \int_0^t \sqrt{\widetilde{\mathcal{E}}_N[A](s)} \left\|\tau_+\int_v |\widehat{Z}^{\gamma} f_k| dv \right\|_{L^2(\Sigma_s)} ds.$$
Using the $L^2$ estimate of Proposition \ref{L2decay} and that $\widetilde{\mathcal{E}}_N[A](s) \lesssim \epsilon \chi(s)$, it comes
\begin{eqnarray}
 \nonumber
\sum_{|\delta| \leq N} \int_0^t \int_{\Sigma_s} s |\mathcal{L}_{Z^{\delta}} A_{\mu} \square \mathcal{L}_{Z^{\delta}} A^{\mu} | dx ds & \lesssim & \epsilon^{\frac{3}{2}}\int_0^t \frac{\log^2(3+s)}{(1+s)^{\frac{n-2}{2}}} ds \\ \nonumber
& \lesssim & \epsilon^{\frac{3}{2}}\chi(t).
\end{eqnarray}

In order to estimate the remaining integrals of \eqref{eq:energyF}, we express \newline $\overline{K}_0^{\nu} \mathcal{L}_{Z^{\beta}} (F)_{\mu \nu } J(\widehat{Z}^{\gamma} f_k)^{\mu}$ in null coordinates. Dropping the dependance in $\mathcal{L}_{Z^{\beta}}(F)$ or $\widehat{Z}^{\gamma} f_k$, this gives us the four following terms :

\begin{equation}\label{nullterms}
\tau_+^2 \rho J^{\underline{L}}, \hspace{3mm} \tau_-^2 \rho J^{{L}}, \hspace{3mm} \tau_+^2 \alpha_B J^B,  \hspace{3mm} \text{and} \hspace{3mm} \tau_-^2 \underline{\alpha}_B J^B. 
\end{equation}

As $$J^{\underline{L}}=\int_v \frac{v^{\underline{L}}}{v^0} \widehat{Z}^{\gamma} f_k dv, \hspace{3mm} J^{{L}}=\int_v \frac{v^{{L}}}{v^0} \widehat{Z}^{\gamma} f_k dv \hspace{3mm} \text{and} \hspace{3mm} J^B=\int_v \frac{v^B}{v^0} \widehat{Z}^{\gamma} f_k dv,$$
we have,
$$|J^L|, \hspace{1mm} |J^{\underline{L}}|, \hspace{1mm} |J^B| \lesssim \int_{v \in \R^n} |\widehat{Z}^{\gamma} f_k| dv.$$
The integrals (on $[0,T] \times \mathbb{R}^n_x \times \mathbb{R}^n_v$) of each of the four terms of \eqref{nullterms} are then bounded, using the Cauchy-Schwarz inequality, by

$$\int_0^t \sqrt{\mathcal{E}_N[F]}(s) \left\| \tau_+ \int_v |\widehat{Z}^{\gamma} f_k| dv \right\|_{L^2(\Sigma_s)} ds.$$
By Proposition \ref{L2decay} and the bootstrap assumption \eqref{bootF},
\begin{eqnarray}
 \nonumber
\int_0^t \sqrt{\mathcal{E}_N[F]}(s) \left\| \tau_+ \int_v |\widehat{Z}^{\gamma} f_k| dv \right\|_{L^2(\Sigma_s)} ds & \lesssim & \int_0^t \sqrt{\epsilon \chi(s)}\frac{\epsilon\log^{\frac{1}{2}}(3+s)}{(1+s)^{ \frac{n-2}{2} }} ds \\ \nonumber
& \lesssim & \epsilon^{\frac{3}{2}}\chi(t).
\end{eqnarray}

Hence, $\mathcal{E}_N[F](t) \leq C\epsilon\chi(t)$ for all $t \in [0,T]$ if $\epsilon$ is small enough. 

We can prove in the same way, using in particular the energy estimate of Proposition \ref{scalF} and
$$\left\|  \int_v |\widehat{Z}^{\beta} f_k |dv \right\|_{L^2(\Sigma_t)} \leq \frac{1}{1+t} \left\| \tau_+ \int_v |\widehat{Z}^{\beta} f_k |dv \right\|_{L^2(\Sigma_t)},$$
that $\mathcal{E}_N^{S}[F] \leq \overline{C} \epsilon$ on $[0,T]$ if $\epsilon$ is small enough and the constant $\overline{C}$ is large enough. 
We then improve the bootstrap assumption \eqref{bootF}.

\section{The massless Vlasov-Maxwell equations}\label{section7}
\subsection{Global existence for small data}

The aim of this section is to prove Theorem \ref{intromasslessTheo}. We then consider the massless Vlasov-Maxwell system \eqref{syst1}-\eqref{syst3}, with at least two species\footnote{We recall that we take $K \geq 2$ since we suppose that the initial energy $\mathcal{E}[F]$ is finite, which implies that the plasma is electrically neutral (see Remark \ref{introneutral} for more details).}, in dimension $n \geq 4$. This means that $K \geq 2$ and $m_k=0$ for all $1 \leq k \leq K$.

To simplify the notation, we denote, during this chapter, $\mathbb{E}^0_{M}[f]$ by $\mathbb{E}_{M}[f]$ and $\mathbb{E}^{0}_{M,1,0}$ by $\mathbb{E}_{M,1}$. In view of Definition \ref{norm1} and $1 \in \mathbf{k}_0$, we have $$\mathbb{E}_M[f] \leq \mathbb{E}_{M,1}[f].$$ We introduce the functions $\chi$, defined on $\R_+$ by
$$\chi ( s )= 1+s \hspace{2mm} \text{if} \hspace{2mm} n=4, \hspace{2mm} \chi ( s )= \log^2{(3+s)} \hspace{2mm} \text{if} \hspace{2mm} n=5 \hspace{2mm} \text{and} \hspace{2mm} \chi(s) = 1 \hspace{2mm} \text{if} \hspace{2mm} n \geq 6,$$ 
and $\log^*$, defined on $\R_+$ by
$$\log^*=\log \hspace{2mm} \text{if} \hspace{2mm} n=4 \hspace{2mm} \text{and} \hspace{2mm} \log^*= 1 \hspace{2mm} \text{if} \hspace{2mm} n \geq 5.$$

We give a more precise version of Theorem \ref{intromasslessTheo}.

\begin{Th}\label{Theomassless}
Let $n \geq 4$, $K \geq 2$, $N \geq 6n+2$ if $n$ is even and $N \geq 6n+3$ is $n$ is odd, $0 < \eta < \frac{1}{2} $ if $n=4$ and $\eta=0$ if $n \geq 5$ and $R>0$. Let $(f_0,F_0)$ be an initial data set for the massless Vlasov-Maxwell system. Let $(f,F)$ be the unique classical solution to the system and let $A$ be a potential in the Lorenz gauge. There exists $\epsilon >0$ such that\footnote{We recall that a smallness condition on $F$, which implies $\widetilde{\mathcal{E}}_N[A](0) \leq \epsilon$, is given in Proposition \ref{LorenzPot}.}, if 

\begin{flalign*}
& \hspace{3cm} \widetilde{\mathcal{E}}_N[A](0) \leq \epsilon, \hspace{20mm} \mathcal{E}_N[F](0) \leq \epsilon &
\end{flalign*}

and if, for all $1 \leq k \leq K$,
\begin{flalign*}
& \hspace{3cm}  \mathrm{supp}(f_{0k}) \subset \{(x,v) \in \mathbb{R}^n_x \times \mathbb{R}^n_v \setminus  \{ 0 \} \hspace{1mm} / \hspace{1mm} |v| \geq R \} , &
\end{flalign*}
\begin{flalign*}
& \hspace{3cm} \mathbb{E}_{N+n,1}[f_k](0) \leq \epsilon, & 
\end{flalign*}

then $(f,F)$ exists globally in time and verifies the following estimates.

\begin{itemize}
\item Vanishing property for small velocities : for all $1 \leq k \leq K$,
$$\mathrm{supp} (f_k) \subset \left\{ (t,x,v) \in \mathbb{R}_+ \times \mathbb{R}^n_x \times \mathbb{R}^n_v \setminus \{ 0 \} \hspace{1mm} / \hspace{1mm} |v| \geq \frac{R}{2} \right\} .$$

\item Energy bounds for $F$ and $f_k$ : $\forall \hspace{1mm} 1 \leq k \leq K$ and $\forall \hspace{1mm} t \in \mathbb{R}_+$,

\begin{flalign*}
& \hspace{4mm} \mathcal{E}_N[F](t) \lesssim \epsilon\chi(t)(1+t)^{\eta}, \hspace{10mm} \mathcal{E}_{N-2n}[F](t) \lesssim \epsilon \chi(t), & \\
& \hspace{4mm} \mathbb{E}_{N}[f_k](t) \lesssim \epsilon \log^* (3+t), \hspace{10mm} \mathbb{E}_{N-n,1}[f_k](t) \lesssim \epsilon. &
\end{flalign*}

\item Pointwise decay for the null decomposition of $\mathcal{L}_{Z^{\beta}}(F)$ : $\forall$ $|\beta| \leq N-\frac{5n+4}{2}$, $(t,x) \in \mathbb{R}_+ \times \mathbb{R}^n$,
\begin{flalign*}
& \hspace{0.3cm} |\alpha ( \mathcal{L}_{Z^{\beta}} F)| \lesssim \sqrt{\epsilon\chi(t)} \tau_+^{-\frac{n+2}{2}}, \hspace{11mm} |\underline{\alpha} ( \mathcal{L}_{Z^{\beta}} F)| \lesssim \sqrt{\epsilon\chi(t)} \tau_+^{-\frac{n-1}{2}}\tau_-^{-\frac{3}{2}}, & \\ 
& \hspace{0.3cm} |\rho (\mathcal{L}_{Z^{\beta}} F)| \lesssim \sqrt{\epsilon\chi(t)} \tau_+^{-\frac{n+1}{2}}\tau_-^{-\frac{1}{2}}, \hspace{5mm} |\sigma (\mathcal{L}_{Z^{\beta}} F)| \lesssim \sqrt{\epsilon\chi(t)} \tau_+^{-\frac{n+1}{2}}\tau_-^{-\frac{1}{2}} &
\end{flalign*}
and
$$|\underline{\alpha} ( \mathcal{L}_{Z^{\beta}} F)| \lesssim \sqrt{\epsilon} \tau_+^{-\frac{n-1}{2}}\tau_-^{-1}.$$

\item Pointwise decay for $\int_{v \in \mathbb{R}^n \setminus \{0 \} } | z\widehat{Z}^{\beta} f_k | dv$ : $$ \forall \hspace{0.5mm} |\beta| \leq N-2n, \hspace{0.5mm} z \in \mathbf{k}_0, \hspace{0.5mm} (t,x) \in \mathbb{R}_+ \times \mathbb{R}^n, \hspace{5mm} \int_{v  } | z\widehat{Z}^{\beta} f_k |dv \lesssim \frac{\epsilon}{\tau_+^{{n-1}}\tau_-}.$$
\item $L^2$ estimates on $\int_{v \in \mathbb{R}^n \setminus \{0 \} } |\widehat{Z}^{\beta} f_k | dv$ :
$$ \forall \hspace{0.5mm} |\beta| \leq N, \hspace{0.5mm} t \in \mathbb{R}_+, \hspace{3mm} \left\| \int_{v \in \mathbb{R}^n \setminus \{0 \} } | \widehat{Z}^{\beta} f_k | dv \right\|_{L^2(\Sigma_t)} \lesssim \frac{\epsilon}{(1+t)^{\frac{n-1-\eta}{2}}}.$$

\item Energy bound for a potential $A$ satisfying the Lorenz gauge :
$$\forall \hspace{1mm} t \in \mathbb{R}_+, \hspace{3mm} \widetilde{\mathcal{E}}_N[A](t) \lesssim \epsilon \chi(t)(1+t)^{\eta} \hspace{3mm} \text{and} \hspace{3mm} \widetilde{\mathcal{E}}_{N-2n}[A](t) \lesssim \epsilon \chi(t).$$

\end{itemize}

\end{Th}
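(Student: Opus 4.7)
The plan is to follow a continuity argument parallel to the massive case (Theorem \ref{massif}) but adapted to four key differences: the lack of a lower bound on $v^0=|v|$, the weaker Klainerman--Sobolev decay (no $\tau_+^{-n}$ estimate as in Theorem \ref{decayestimate}), the need to use weights from $\mathbf{k}_0$ instead of $v^0$-weights, and a genuine loss of decay near the light cone in dimension $n=4$. I would fix a maximal time $T \in (0,T^*[$ on which the following bootstrap assumptions hold for every $1 \leq k \leq K$: the velocity support property
$$ \mathrm{supp}(f_k(s,\cdot,\cdot)) \subset \{(x,v) : |v| \geq R/2\}, $$
the electromagnetic bounds $\mathcal{E}_N[F](t) \leq 2C\epsilon\chi(t)(1+t)^\eta$ and $\mathcal{E}_{N-2n}[F](t) \leq 2C\epsilon\chi(t)$, together with $\widetilde{\mathcal{E}}_N[A](t) \lesssim \epsilon\chi(t)(1+t)^\eta$ and its lower-order counterpart, plus the Vlasov energies $\mathbb{E}_N[f_k](t) \leq 4\epsilon \log^*(3+t)$ and $\mathbb{E}_{N-n,1}[f_k](t) \leq 4\epsilon$. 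By local existence $T>0$.

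The first step would be to propagate the velocity support. Along a characteristic curve $(X(s),V(s))$ of the transport equation, $\dot V^j = \frac{V^\mu}{|V|} F_\mu{}^{\,j}$, so using antisymmetry of $F$,
$$ \frac{d}{ds}|V|^2 = 2 V_j \dot V^j = 2 V_j F^{j}{}_{\!0}(s,X(s)), $$
whence $\bigl||V(t)|^2 - |V(0)|^2\bigr| \leq 2\int_0^t |V(s)|\,|F(s,X(s))|\,ds$. Applying Lemma \ref{Gronwall} together with the pointwise bound $\|F\|_{L^\infty_x}(s) \lesssim \sqrt{\epsilon}\,(1+s)^{-3/2}$, which follows from Proposition \ref{decayFpointwise} and the bootstrap on $\mathcal{E}_{N-2n}[F]$ using $n \geq 4$, gives $||V(t)|-|V(0)|| \leq C\sqrt{\epsilon}$ uniformly in $t$, so the support property is propagated for $\epsilon$ small. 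This crucial output means that henceforth $v^0 = |v| \geq R/2$ wherever $f_k \neq 0$, restoring a lower bound that partly compensates for the failure of Theorem \ref{decayestimate}. Next, from Proposition \ref{decayFpointwise} and its variant \eqref{eq:underalpha2} (the latter being essential in $n=4$ to avoid the $\chi(t)(1+t)^\eta$-loss on $\underline{\alpha}$), I obtain the pointwise bounds on the null decomposition of $\mathcal{L}_{Z^\beta}(F)$. For the velocity averages I use Corollary \ref{KS3} with weights in $\mathbf{k}_0$, yielding $\int_v |z\widehat{Z}^\beta f_k|dv \lesssim \epsilon/(\tau_+^{n-1}\tau_-)$ for $|\beta| \leq N-2n$.

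The third step is to improve the Vlasov energy estimates via Propositions \ref{energyf} and \ref{energypoids}, closely following the scheme of Section \ref{step2energy}. The null decomposition of $\mathcal{L}_{Z^\gamma}(F)(v, \nabla_v \widehat{Z}^\delta f_k)$ produces good terms involving $\alpha,\rho,\sigma$ and bad terms involving $\underline{\alpha}$; the good terms absorb a $\tau_+$-loss thanks to the enhanced decay, while the bad terms are treated by using Proposition \ref{nullresume} which transforms $v^{\underline{L}} (\nabla_v f)^B$ into a derivative weighted by $\tau_-$ and elements of $\mathbf{k}_1 \subset \mathbf{k}_0$, paired with the flux term $\int_{C_u(t)} v^{\underline{L}}/v^0\,|\widehat{Z}^\beta f_k|dv\,dC_u(t) \leq \mathbb{E}_N[f_k](t)$. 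For the range $|\gamma| > N - (n+2)/2$ where the electromagnetic field cannot be estimated pointwise, I invoke Cauchy--Schwarz together with the $L^2$ estimate on $\int_v |\widehat{Z}^\beta f_k|dv$ produced in the next step. In dimension $n=4$ the bad term $\tau_+ |\underline{\alpha}|$ only gives $\sqrt{\epsilon \chi(t)}$ with $\chi(t) = 1+t$, which is exactly why $\mathbb{E}_N[f_k]$ only closes up to a $\log$-factor there; in higher dimensions the loss disappears.

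The fourth step is the $L^2_x$ estimate for velocity averages. Here I split $\widehat{Z}^\beta f_k$ with $M \leq |\beta| \leq N$ into a homogeneous transport part $H$ and an inhomogeneous part $G=KY$, exactly as in Section \ref{L2system}, with $Y$ containing lower-order derivatives satisfying the pointwise decay $\int_v |Y|dv \lesssim \epsilon/(\tau_+^{n-1}\tau_-)$. The auxiliary quantity $|KKY|_\infty$ is bounded via Proposition \ref{energyfsimple} using the same null-structure splitting; then Cauchy--Schwarz in $v$ combined with $|KY|^2 \leq |Y| \cdot |KKY|_\infty$ yields $\|\tau_+ \int_v |G|dv\|_{L^2(\Sigma_t)} \lesssim \epsilon/(1+t)^{(n-1-\eta)/2}$. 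The final step closes the Maxwell energies: Proposition \ref{energypotential} bounds $\widetilde{\mathcal{E}}_N[A]$ by the time-integrated $L^2_x$ norm of the source $\int_v (v_\mu/v^0)\widehat{Z}^\gamma f_k dv$, which in dimension $n=4$ produces the unavoidable $(1+t)^\eta$-loss since $(1+s)^{-(n-1-\eta)/2} = (1+s)^{-(3-\eta)/2}$ is borderline non-integrable; Proposition \ref{MoraF} then closes $\mathcal{E}_N[F]$, and Proposition \ref{decayalphapointwise} refines the $\alpha$-decay. The hardest part will be the bookkeeping in $n=4$: ensuring that the $(1+t)^\eta$-losses incurred at high derivatives remain compatible with the sharper bound $\mathcal{E}_{N-2n}[F] \lesssim \epsilon\chi(t)$ required to close the velocity support propagation and the pointwise bounds, which forces the hierarchy of derivative orders $N, N-n, N-2n$ to be carefully stratified, using the extra decay lost at each level only where it is actually needed.
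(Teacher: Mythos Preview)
Your overall architecture matches the paper's, but there are two concrete gaps that would prevent the argument from closing.

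First, your bootstrap is incomplete: the paper also propagates $\mathcal{E}^0_N[F]$ and $\mathcal{E}^S_N[F]$ (as well as $\mathcal{E}^S_{N-2n}[F]$). The energy $\mathcal{E}^S$ is precisely what feeds into the alternative estimate \eqref{eq:underalpha2} and gives the loss-free bound $|\underline{\alpha}(\mathcal{L}_{Z^\beta}F)| \lesssim \sqrt{\epsilon}\,\tau_+^{-(n-1)/2}\tau_-^{-1}$ that you invoke; without bootstrapping $\mathcal{E}^S$ you cannot use it. The flat energy $\mathcal{E}^0_N[F]$ is used in the high-derivative Vlasov step (case $|\beta_1|>N-n$) to get $\int_{C_u(t)}|\zeta|^2 dC_u \lesssim \epsilon$ for $\zeta \in \{\alpha,\rho,\sigma\}$ and $\|\underline{\alpha}\|_{L^2(\Sigma_s)}^2 \lesssim \epsilon$, both of which are indispensable.

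Second, and more substantively, you do not articulate the mechanism that replaces Theorem \ref{decayestimate} in the massless case. The paper relies on the improved decay for null components of the current coming from Proposition \ref{extradecay1}, namely
$$ \int_v \frac{v^{\underline{L}}}{v^0}|\widehat{Z}^\beta f_k|\,dv \lesssim \frac{\epsilon}{\tau_+^{n}\tau_-}, \qquad \int_v \frac{v^{L}}{v^0}|\widehat{Z}^\beta f_k|\,dv \lesssim \frac{\epsilon}{\tau_+^{n-1}\tau_-^{2}}, \qquad \int_v \frac{|v^B|}{v^0}|\widehat{Z}^\beta f_k|\,dv \lesssim \frac{\epsilon}{\tau_+^{n}\tau_-}, $$
valid for $|\beta| \leq N-2n$ thanks to the weighted energy $\mathbb{E}_{N-n,1}[f_k]$. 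These are what make the high-$|\beta_1|$ case in Step 3 close (the threshold is $N-n$, not $N-(n+2)/2$ as you wrote, and one uses the pointwise decay of velocity averages, not the $L^2$ estimate of the next step, which would be circular). They are also what allows the refined $L^2$ estimates on $\int_v (v^{\underline L}/v^0)|\widehat{Z}^\beta f_k|\,dv$ etc.\ (Proposition \ref{L2nullesti} in the paper), which are in turn necessary to close $\mathcal{E}^S_N[F]$ in Step 5, since the term $\tau_+\rho J^{\underline L}$ would otherwise not be integrable in $n=4$. Your sketch treats the null structure of $F$ correctly but omits the null structure of $v$, which in the massless case carries equal weight.
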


\subsection{Structure and beginning of the proof}

Let $(f_0,F_0)$ be an initial data set satisfying the assumptions of Theorem \ref{Theomassless}. By a standard local well-posedness argument, there exists a unique maximal solution $(f,F)$ of the massless Vlasov-Maxwell system defined on $[0,T^*[$, with $T^* \in \R_+^* \cup \{+ \infty \}$. 

We consider the following bootstrap assumptions. Let $T$ be the largest time such that, $\forall$ $1 \leq k \leq K$ and $\forall \hspace{1mm} t \in [0,T]$,

\begin{equation}\label{bootFbis}
 \hspace{-1.1cm} \mathcal{E}_N[F](t) \leq 2C\epsilon\chi(t)(1+t)^{\eta}, \hspace{8mm} \mathcal{E}_{N-2n}[F](t) \leq 2C\epsilon \chi(t),
 \end{equation}
 \begin{equation}\label{bootFS}
  \mathcal{E}^0_N[F](t) \leq 4\epsilon, \hspace{5mm} \mathcal{E}_N^{S}[F](t) \leq 2\overline{C} \epsilon(1+t)^{\eta}, \hspace{5mm} \mathcal{E}_{N-2n}^{S}[F](t) \leq 2\overline{C},
\end{equation}
\begin{equation}\label{bootA}
\hspace{-1.2cm}  \widetilde{\mathcal{E}}_N[A](t) \leq 2C\epsilon\chi(t)(1+t)^{\eta}, \hspace{8mm} \widetilde{\mathcal{E}}_{N-2n}[A](t) \leq 2C\epsilon\chi(t), 
\end{equation}
\begin{equation}\label{bootfbis}
\hspace{-2cm} \mathbb{E}_{N}[f_k](t) \leq 4\log^*(3+t), \hspace{3mm} \text{and} \hspace{3mm} \mathbb{E}_{N-n,1}[f_k](t) \leq 4\epsilon, 
\end{equation}

where $C$ and $\overline{C}$ are positive constants which will be specified during the proof. Note that by continuity, $T>0$. We now present our strategy to improve these bootstrap assumptions.

\begin{enumerate}
\item First, using the bootstrap assumptions, we obtain decay estimates for the null decomposition of $F$ (and its Lie derivatives) and for velocity averages of derivatives of $f_k$.

\item Then, we prove that $0$ is not in the closure of the Vlasov fields $v$-support. This follows from the study of the characteristics of the transport equation.

\item Next, we improve the bounds on the Vlasov fields energies by means of the energy estimates proved in Propositions \ref{energyf} and \ref{energypoids}. To bound the right hand side in these energy estimates, we make fundamental use of the null structure of the system and the pointwise decay estimates on $\rho$, $\sigma$, $\alpha$, $\underline{\alpha}$ and $\int_{\mathbb{R}^n \setminus \{ 0 \} } |z \widehat{Z}^{\beta} f_k | dv$.

\item In order to improve the estimates on the electromagnetic field energies, we establish an $L^2_x$ estimate for the velocity averages of the Vlasov fields (and its derivatives). For this purpose, we follow \cite{FJS} and we rewrite all the transport equations as an inhomogeneous system of transport equations. The velocity averages of the homogeneous part of the solution verify strong pointwise decay (we use particularly the control that we have at our disposal on the initial data of $f_k$, for derivatives of order $N+n$ or less). The inhomegeneous part is decomposed into a product of an integrable function and a pointwise decaying function which gives us the expected estimate.

\item Finally, we improve the estimates on the energies of the electromagnetic potential and the electromagnetic field, with the energy estimate for the Maxwell equations (using in particular Propositions \ref{MoraF} and \ref{scalF}). We use the null decomposition of $J(\widehat{Z}^{\gamma} f_k)^{\mu}\mathcal{L}_{Z^{\beta}}(F)_{\mu \nu} \overline{K}_0^{\nu}$, which, combined with $L^2_x$ estimates on quantities such as $ \int_{\mathbb{R}^n  } | \widehat{Z}^{\gamma} f_k | dv $, gives us the improvement.

\end{enumerate}

\subsection{Step 1: Decay estimates}\label{step1massless}

By the Klainerman-Sobolev inequality of Theorem \ref{KS1} and the bootstrap assumption \eqref{bootfbis}, we  have $\forall$ $|\beta| \leq N-n$, $(t,x) \in [0,T[ \times \R^n$, $1 \leq k \leq K$,
\begin{equation}\label{decayf1bis}
 \int_v |\widehat{Z}^{\beta} f_k| dv \lesssim \frac{\mathbb{E}_{N}[f_k](t)}{\tau_+^{n-1}\tau_-}  \lesssim \frac{\epsilon \log^*(3+t)}{\tau_+^{n-1} \tau_-}.
\end{equation}

In the same spirit\footnote{Note that the pointwise decay estimate \eqref{decayf2bis} implies \eqref{decayf1bis} for the lower order derivatives, taking $z=1$. }, using Corollary \ref{KS3}, we have $\forall$ $|\beta| \leq N-2n$, $z  \in \mathbf{k}_0$, $(t,x) \in [0,T] \times \R^n$,

\begin{equation}\label{decayf2bis}
 \int_v \left| z\widehat{Z}^{\beta} f_k \right| dv \lesssim \frac{\mathbb{E}_{N-n,1}[f_k](t)}{\tau_+^{n-1} \tau_-} \lesssim \frac{\epsilon}{\tau_+^{n-1} \tau_-}.
\end{equation}

We have improved decay estimates for the null components of the current $M_{\mu}:= \int_v \frac{v_{\mu}}{v^0} \widehat{Z}^{\beta} f_kdv$. For all $|\beta| \leq N-2n$, we have

\begin{equation}\label{extradecayL+}
\int_v \frac{v^{\underline{L}}}{v^0}| \widehat{Z}^{\beta} f_k| dv \lesssim \frac{\epsilon}{\tau_+^n \tau_-},
\end{equation}
\begin{equation}\label{extradecayL-}
\int_v \frac{v^L}{v^0}| \widehat{Z}^{\beta} f_k| dv \lesssim \frac{\epsilon}{\tau_+^{n-1} \tau_-^{2}}
\end{equation}
and
\begin{equation}\label{extradecayA}
\int_v \frac{|v^B|}{v^0}| \widehat{Z}^{\beta} f_k| dv \lesssim \frac{\epsilon}{\tau_+^n \tau_-}.
\end{equation}

This results from (see Proposition \ref{extradecay1})
$$\frac{v^{\underline{L}}}{v^0} \lesssim \frac{1}{\tau_+}\sum_{z \in \mathbf{k}_0}|z|, \hspace{5mm} \frac{v^{L}}{v^0} \lesssim \frac{1}{\tau_-}\sum_{z \in \mathbf{k}_0}|z| \hspace{5mm}  \text{and} \hspace{5mm} \left|\frac{v^B}{v^0}\right| \lesssim  \frac{1}{\tau_+}\sum_{z \in \mathbf{k}_0}|z|.$$

Using the bootstrap assumptions \eqref{bootFbis}, \eqref{bootFS}, \eqref{bootA}, Propositions \ref{decayFpointwise}, \ref{decayalphapointwise} and the pointwise decay estimate \eqref{decayf1bis}, we obtain.

\begin{Pro}\label{decayFmassless}

For all $t \in [0,T]$, $|\beta| \leq N- n$, we have
\begin{flalign*}
& \hspace{1cm} |\alpha ( \mathcal{L}_{Z^{\beta}} F)| \lesssim \frac{\sqrt{\epsilon\chi(t)(1+t)^{\eta}}}{ \tau_+^{\frac{n+2}{2}}}, \hspace{9mm} |\underline{\alpha} ( \mathcal{L}_{Z^{\beta}} F)| \lesssim \frac{\sqrt{\epsilon\chi(t)(1+t)^{\eta}}}{ \tau_+^{\frac{n-1}{2}}\tau_-^{\frac{3}{2}}}, & \\ 
& \hspace{1cm} |\rho (\mathcal{L}_{Z^{\beta}} F)| \lesssim \frac{\sqrt{\epsilon\chi(t)(1+t)^{\eta}}}{ \tau_+^{\frac{n+1}{2}}\tau_-^{\frac{1}{2}}}, \hspace{9mm} |\sigma (\mathcal{L}_{Z^{\beta}} F)| \lesssim \frac{\sqrt{\epsilon\chi(t)(1+t)^{\eta}}}{ \tau_+^{\frac{n+1}{2}}\tau_-^{\frac{1}{2}}} &
\end{flalign*}

and
$$|\underline{\alpha} ( \mathcal{L}_{Z^{\gamma}} F)| \lesssim \frac{\sqrt{\epsilon(1+t)^{\eta}}}{ \tau_+^{\frac{n-1}{2}}\tau_-}. $$

For all $t \in [0,T]$, $|\beta| \leq N-\frac{5n+2}{2}$, we have
\begin{flalign*}
& \hspace{1cm} |\underline{\alpha} ( \mathcal{L}_{Z^{\gamma}} F)| \lesssim \sqrt{\epsilon} \tau_+^{-\frac{n-1}{2}}\tau_-^{-1}, \hspace{12mm} |\underline{\alpha} ( \mathcal{L}_{Z^{\beta}} F)| \lesssim \sqrt{\epsilon\chi(t)} \tau_+^{-\frac{n-1}{2}}\tau_-^{-\frac{3}{2}}, & \\ 
& \hspace{1cm} |\rho (\mathcal{L}_{Z^{\beta}} F)| \lesssim \sqrt{\epsilon\chi(t)} \tau_+^{-\frac{n+1}{2}}\tau_-^{-\frac{1}{2}}, \hspace{5mm} |\sigma (\mathcal{L}_{Z^{\beta}} F)| \lesssim \sqrt{\epsilon\chi(t)} \tau_+^{-\frac{n+1}{2}}\tau_-^{-\frac{1}{2}}. &
\end{flalign*}

Finally, for all $t \in [0,T]$, $|\beta| \leq N-\frac{5n+4}{2}$,
$$|\alpha ( \mathcal{L}_{Z^{\gamma}} F)| \lesssim \sqrt{\epsilon\chi(t)} \tau_+^{-\frac{n+2}{2}}. $$

\end{Pro}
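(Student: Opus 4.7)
The plan is to reduce the proposition to applications of Propositions~\ref{decayFpointwise} and \ref{decayalphapointwise} to the commuted fields $\mathcal{L}_{Z^{\beta}} F$ and their Lorenz-gauge potentials $\mathcal{L}_{Z^{\beta}} A$, and then to substitute in the bootstrap energy bounds \eqref{bootFbis}--\eqref{bootA} together with the pointwise source bound \eqref{decayf1bis}. The commutation formula of Proposition~\ref{Commuelec} and Lemma~\ref{derivpotential} legitimise this: $\mathcal{L}_{Z^{\beta}} F$ solves a Maxwell system whose source is a linear combination of currents $J(\widehat{Z}^{\gamma} f_k)$ with $|\gamma| \leq |\beta|$, and $\mathcal{L}_{Z^{\beta}} A$ remains a potential for $\mathcal{L}_{Z^{\beta}} F$ satisfying the Lorenz gauge.

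For the first regime $|\beta| \leq N-n$, the order requirement $|\beta|+\tfrac{n+2}{2}\leq N$ (which holds since $n\geq 4$) allows us to apply Proposition~\ref{decayFpointwise} with the $\tfrac{n+2}{2}$-order energy bounded by $\mathcal{E}_N[F]\lesssim \epsilon\chi(t)(1+t)^{\eta}$; this directly yields the stated bounds on $\rho$, $\sigma$ and the first (weaker) $\tau_-^{-3/2}$ bound on $\underline{\alpha}$. The improved bound $|\underline{\alpha}|\lesssim \sqrt{\epsilon(1+t)^{\eta}}\,\tau_+^{-(n-1)/2}\tau_-^{-1}$ follows from estimate \eqref{eq:underalpha2} of the same proposition together with $\mathcal{E}_N^{S}[F]\lesssim \epsilon(1+t)^{\eta}$, the other contribution $\sqrt{\mathcal{E}_N[F]/(1+t)}$ being harmless because $\chi(t)/(1+t)\lesssim 1$ in every dimension $n\geq 4$. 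For the $\alpha$-estimate we invoke Proposition~\ref{decayalphapointwise} with $\mathcal{E}(t)=\widetilde{\mathcal{E}}_N[A](t)$ and source exponent $\theta(t)=\epsilon\log^{*}(3+t)$ supplied by \eqref{decayf1bis}; the order condition $|\beta|+\tfrac{n+4}{2}\leq N$ is satisfied because $\tfrac{n+4}{2}\leq n$ for $n\geq 4$.

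For the sharper regime $|\beta| \leq N-\tfrac{5n+2}{2}$ we repeat the scheme verbatim, but with $\mathcal{E}_N$ and $\mathcal{E}_N^S$ replaced by $\mathcal{E}_{N-2n}$ and $\mathcal{E}_{N-2n}^S$; by the bootstrap assumptions these carry no $(1+t)^{\eta}$ loss, which removes the $(1+t)^{\eta/2}$ factor from the target bounds on $\rho$, $\sigma$ and $\underline{\alpha}$. The order condition $|\beta|+\tfrac{n+2}{2}\leq N-2n$ reads precisely $|\beta|\leq N-\tfrac{5n+2}{2}$. For the sharpest $\alpha$-bound with $|\beta|\leq N-\tfrac{5n+4}{2}$, Proposition~\ref{decayalphapointwise} is applied with $\mathcal{E}(t)=\widetilde{\mathcal{E}}_{N-2n}[A](t)\lesssim\epsilon\chi(t)$, whose matching order condition is $|\beta|+\tfrac{n+4}{2}\leq N-2n$.

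The main technical point will be to absorb the logarithmic source contribution $\theta(t)\log(\tau_-)/\tau_+^{n-1}$ produced by Proposition~\ref{decayalphapointwise} into the principal term $\sqrt{\mathcal{E}(t)}/\tau_+^{(n+2)/2}$. Outside the wave zone, where $\tau_-\gtrsim \tau_+$, the elementary estimate from Proposition~\ref{decayFpointwise} already gives $\alpha$ at the target rate $\tau_+^{-(n+2)/2}$; inside the wave zone one has $\tau_-\lesssim 1+t$, so $\log(\tau_-)\lesssim \log^{*}(3+t)$, and the absorption reduces in dimension $4$ to $\sqrt{\epsilon}(\log(3+t))^2\lesssim \sqrt{1+t}$, valid for $\epsilon$ small, and is trivial in higher dimensions because of the extra $\tau_+^{-(n-4)/2}$ decay and because $\log^{*}\equiv 1$ there.
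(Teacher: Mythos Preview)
Your proposal is correct and follows exactly the approach the paper indicates: the paper's own proof is a single sentence invoking the bootstrap assumptions \eqref{bootFbis}--\eqref{bootA}, Propositions~\ref{decayFpointwise} and~\ref{decayalphapointwise}, and the pointwise source estimate \eqref{decayf1bis}. You have simply unpacked the order-counting conditions and the absorption of the $\theta(t)\log(\tau_-)\tau_+^{-(n-1)}$ term, all of which check out as you describe.
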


\begin{Rq}
The alternative estimate on $\underline{\alpha}$ is useful to avoid a $\tau_+$-loss when $n \leq 5$ and is particularly used in Section \ref{homsystembis}.
\end{Rq}

\begin{Rq}
We also have pointwise decay estimates if $|\beta| \leq N-\frac{n+2}{2}$ but the one on $\alpha$ is worse near the light cone (see Proposition \ref{decayFpointwise}).
\end{Rq}

\subsection{Step 2: the Vlasov fields vanishes for small velocities}

We recall that 
$$\forall \hspace{0.5mm} 1 \leq i \leq n, \hspace{2mm} E^i = F_{0i} ,$$
and that the transports equation of the Vlasov-Maxwell system can be rewritten 
$$v^{\mu} \partial_{\mu} f_k +  v^0E^i\partial_{v^i} f_k+v^j{F_{j}}^i \partial_{v^i} f_k =0.$$
We now fix $1 \leq k \leq K$ and we prove, under the bootstrap assumption, that if $f_k(t,x,v) \neq 0$, with $(t,x,v) \in [0,T] \times \mathbb{R}^n \times \mathbb{R}^n \setminus \{ 0 \} $, then $|v| \geq \frac{R}{2}$. During the argument, we will use various constants and we will all call them $C$ for simplicity. These constants will not depend on $\epsilon$ or on $T$.

Let $x \in \mathbb{R}^n$ and $|v| \geq R$. Let $(X,V)$ be the characteristics of the transport equation such that $(X(0),V(0))=(x,v)$. In particular

$$\forall \hspace{1mm}1 \leq i \leq n, \hspace{3mm} \frac{dV^i}{ds}=E^i(s,X)+\frac{V^j}{V^0} F_{ji}(s,X).$$

It follows that

$$\frac{d(|V|^2)}{ds}= 2\left<E(s,X),V \right>.$$

So,

\begin{equation}\label{eq:vlassupp}
|V(t) |^2=|v|^2+2\int_0^t \left<E(s,X(s)),V(s) \right> ds.
\end{equation}

We denote $|V(s)|^2$ by $g(s)$. By the Cauchy-Schwarz inequality, we have

$$g(t) \leq |v|^2+2\int_0^t |E(s,X(s)) | \sqrt{g(s)} ds.$$

We now use a Grönwall inequality (Lemma \ref{Gronwall}) and $|E(s,X(s))| \leq \frac{C \sqrt{\epsilon}}{(1+s)^{\frac{n-1}{2}}} $ (which come from Proposition \ref{decayFmassless}) to obtain $$ g(s) \leq \left(|v|+\int_0^t \frac{C \sqrt{\epsilon} ds}{(1+s)^{\frac{3}{2}}}\right)^2.$$

Thus,
$$|V(s)| \leq |v| + C \sqrt{\epsilon}.$$

Returning to \eqref{eq:vlassupp}, we obtain

$$|V(s) |^2 \geq |v|^2-2\int_0^t |E(s,X(s)) | |V(s) | ds.$$

Therefore, using again the pointwise estimate on $E$,

$$|V(s) |^2 \geq |v|^2-2C\sqrt{\epsilon}(|v|+C\sqrt{\epsilon}).$$

Finally, $$|V(s)|^2 \geq |v|(|v|-C\sqrt{\epsilon})-C\epsilon \geq \frac{1}{4}|v |^2,$$
if $\epsilon$ is sufficiently small so that $C \epsilon \leq \frac{R}{4}$ and $C \sqrt{\epsilon} \leq \frac{R}{2}$.

Then, if $(x,v)$ is such that $|v| \geq R$, $(X,V)$ is well defined on $[0,T]$ ($X$ is also bounded since $\left| \frac{d X}{ds} \right| =1$) and $|V| \geq \frac{R}{2}$. Consequently, we obtain.

\begin{Lem}\label{suppmassless}
$$\mathrm{supp}(f_{k \scriptscriptstyle{\vert [0,T]}}) \subset \{ (t,x,v) \in [0,T] \times \mathbb{R}^n \times \mathbb{R}^n \setminus \{ 0 \} \hspace{1mm} / \hspace{1mm} |v| \geq \frac{R}{2} \} .$$
\end{Lem}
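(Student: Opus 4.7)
The plan is to exploit the fact that $f_k$ is transported along the characteristics of the Vlasov equation, combined with the quantitative lower bound on $|V(s)|$ that has essentially been derived in the paragraphs preceding the lemma. The Vlasov equation $T_F(f_k)=0$ implies that $f_k$ is constant along the characteristics $s\mapsto (X(s),V(s))$ of the system
\[
\dot X^i = \frac{V^i}{V^0},\qquad \dot V^i = E^i(s,X) + \frac{V^j}{V^0} {F_j}^i(s,X).
\]
So it suffices to show: if $(x,v)$ satisfies $|v|\geq R$, then the characteristic issued from $(x,v)$ at $s=0$ is defined on all of $[0,T]$ and satisfies $|V(s)|\geq R/2$ there. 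Running this backward from an arbitrary point $(t,x_0,v_0)$ with $f_k(t,x_0,v_0)\neq 0$ — whose pre-image at $s=0$ lies in $\mathrm{supp}(f_{0k})$, hence has $|V(0)|\geq R$ — then yields the claim.

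To make this rigorous, I would first record the key algebraic identity: the antisymmetry of $F$ gives $\frac{V^j V^i}{V^0}F_{ji}=0$, so along a characteristic
\[
\frac{d}{ds}|V|^2 = 2\langle E(s,X(s)),V(s)\rangle,
\]
and hence
\[
|V(t)|^2 = |v|^2 + 2\int_0^t \langle E(s,X(s)),V(s)\rangle\, ds.
\]
Next, the bootstrap assumption \eqref{bootFbis} together with Proposition \ref{decayFmassless} gives the pointwise bound $|E(s,\cdot)|\lesssim \sqrt{\epsilon}\,(1+s)^{-(n-1)/2}$, which since $n\geq 4$ is time-integrable on $\mathbb{R}_+$. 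An upper bound on $|V|$ then follows from the variant of Grönwall's inequality (Lemma \ref{Gronwall}) applied to $g(s):=|V(s)|^2$, yielding
\[
|V(s)|\leq |v| + C\sqrt{\epsilon}
\]
on the interval of existence. Reinserting this upper bound into the integral identity and using the $L^1_s$ bound on $|E|$ once more produces
\[
|V(s)|^2 \geq |v|^2 - 2C\sqrt{\epsilon}\bigl(|v|+C\sqrt{\epsilon}\bigr) \geq |v|(|v|-C\sqrt{\epsilon})-C\epsilon,
\]
and for $\epsilon$ small enough (depending on $R$) the right-hand side is $\geq |v|^2/4\geq R^2/4$. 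In particular $V(s)$ never reaches $0$, so the characteristic does not blow up in finite time; together with $|\dot X|=1$ this gives that $(X,V)$ is well defined on all of $[0,T]$.

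The main (mild) subtlety is the local well-posedness of the characteristic system: the ODE for $V$ is singular where $V=0$ because of the factor $V^j/V^0=V^j/|V|$. However, the lower bound $|V(s)|\geq R/2$ is preserved on any subinterval where the characteristic is defined, so a standard continuity argument — starting from $|V(0)|\geq R> R/2$, using the quantitative estimate above to show the set $\{t\in [0,T]:|V(s)|\geq R/2\text{ on }[0,t]\}$ is open and closed — extends $(X,V)$ to the whole of $[0,T]$ while preserving the bound. Finally, the conclusion of the lemma follows by transporting the support condition on $f_{0k}$ forward along these characteristics.
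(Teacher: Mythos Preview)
Your proof is correct and follows essentially the same approach as the paper: the paper computes $\frac{d}{ds}|V|^2 = 2\langle E,V\rangle$, applies Lemma \ref{Gronwall} together with the integrable pointwise decay of $E$ from Proposition \ref{decayFmassless} to get the upper bound $|V(s)|\leq |v|+C\sqrt{\epsilon}$, and then reinserts this into the integral identity to obtain $|V(s)|^2\geq \frac{1}{4}|v|^2$ for $\epsilon$ small enough. Your additional remark about the open--closed continuity argument for extending the characteristic past the singularity at $V=0$ makes the argument slightly more explicit than the paper's version, but the substance is identical.
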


In the remainder, we will then be able to use inequalities like
$$\frac{1}{v^0}|f_k(t,x,v)| \lesssim |f_k(t,x,v)| .$$
Sometimes, we will abusively use inequalities such that $$\frac{1}{v^0} \sum_{z \in \mathbf{k}_0} |z| \lesssim \sum_{z \in \mathbf{k}_0} |z|$$
because these quantities are always multiplied by $\widehat{Z}^{\beta} f_k$.

\subsection{Step 3: Improving the Energy estimates for the transport equations}

We fix for all this section $1 \leq k \leq K$. According to Proposition \ref{energyf}, $\mathbb{E}_{N}[f_k] \leq 3\epsilon \log^* (3+t) $ on $[0,T]$, for $\epsilon$ small enough, follows from

  $$ \int_0^t \int_{\Sigma_s} \int_v \left|\mathcal{L}_{Z^{\beta_1}}(F)\left(\frac{v}{v^0},\nabla_v \widehat{Z}^{\beta_2}f_k \right)\right| dv dx ds \lesssim \epsilon^{\frac{3}{2}}\log^* (3+t),$$
for all $|\beta_1|+|\beta_2| \leq N$, with $|\beta_2 | \leq N-1$.

Similarly, according to Proposition \ref{energypoids}, $\mathbb{E}_{N-n,1}[f_k] \leq 3 \epsilon$ on $[0,T]$, for $\epsilon$ small enough, follows from

  $$ \int_0^t \int_{\Sigma_s} \int_v |z|\left|\mathcal{L}_{Z^{\beta_1}}(F)\left(\frac{v}{v^0},\nabla_v \widehat{Z}^{\beta_2}f_k \right) \right| dv dx ds \lesssim \epsilon^{\frac{3}{2}},$$ and
$$\int_0^t \int_{\Sigma_s} \int_v \left| F\left(\frac{v}{v^0},\nabla_v z \right)\widehat{Z}^{\beta}f_k\right| dv dx ds \lesssim \epsilon^{\frac{3}{2}},$$
for all $z \in \mathbf{k}_0$, $|\beta_1|+|\beta_2| \leq N-n$ (with $|\beta_2 | \leq N-n-1$) and $|\beta| \leq N-n$.

To unify the study of $\mathbb{E}_{N}[f_k]$ and $\mathbb{E}_{N-n,1}[f_k]$, we consider $b$, which could be equal to $0$ or to $1$, $N_0=N$ and $N_{1}=N-n$. Now, we fix $z \in \mathbf{k}_0$, $|\beta_1|+|\beta_2| \leq N_b$ (with $|\beta_2| \leq N_b-1$) and $|\beta| \leq N-n$. We denote $\rho(\mathcal{L}_{Z^{\beta_1}}(F))$, $\sigma(\mathcal{L}_{Z^{\beta_1}}(F))$, $\alpha(\mathcal{L}_{Z^{\beta_1}}(F))$ and $\underline{\alpha}(\mathcal{L}_{Z^{\beta_1}}(F))$ by $\rho$, $\sigma$, $\alpha$ and $\underline{\alpha}$ (respesctively). We also denote $\widehat{Z}^{\beta_2}f_k$ by $g$ and $\widehat{Z}^{\beta}f_k$ by $h$. The null decomposition of $\mathcal{L}_{Z^{\beta_1}}(F)(v,\nabla_v g)$ or $F(v,\nabla_v z)$ brings us to control the integral of the following terms, with $z_0=1$ and $z_1=z$.

The terms involving $L$ or $\underline{L}$ components of $\nabla_v g$ or $\nabla_v z$ 

\begin{equation}\label{nonlin1bis}
 \left|z_b\frac{v^{\underline{L}}}{v^0} \rho \left( \nabla_v g \right)^L \right| , \hspace{5mm}  \left|h\frac{v^{\underline{L}}}{v^0} \rho(F) \left( \nabla_v z \right)^L\right|,
\end{equation}

\begin{equation}\label{nonlin2bis}
 \left|z_b\frac{v^{{L}}}{v^0} \rho \left( \nabla_v g \right)^{\underline{L}}\right| , \hspace{5mm} \left|h \frac{v^{{L}}}{v^0} \rho(F) \left( \nabla_v z \right)^{\underline{L}}\right|,
\end{equation}

\begin{equation}\label{nonlin3bis}
 \left|z_b\frac{v^B}{v^0} \alpha_B \left( \nabla_v g \right)^{L}\right| , \hspace{5mm} \left|h \frac{v^B}{v^0} \alpha_B(F) \left( \nabla_v z \right)^L\right|,
\end{equation}

\begin{equation}\label{nonlin4bis}
 \left|z_b\frac{v^B}{v^0} \underline{\alpha}_B \left( \nabla_v g \right)^{\underline{L}} \right|, \hspace{5mm}\left| h \frac{v^B}{v^0} \underline{\alpha}_B(F) \left( \nabla_v z \right)^{\underline{L}} \right|.
\end{equation}

The terms involving angular components of $ \nabla_v g $ or $\nabla_v z $

\begin{equation}\label{nonlin5bis}
\left| z_b\frac{v^{{L}}}{v^0} \alpha_B \left( \nabla_v g \right)^B \right|, \hspace{5mm} \left| h \frac{v^{{L}}}{v^0} \alpha_B(F) \left( \nabla_v z \right)^B \right|,
\end{equation}

\begin{equation}\label{nonlin6bis}
 \left|z_b\frac{v^B}{v^0} \sigma_{BD} \left( \nabla_v g \right)^D \right| , \hspace{5mm} \left|h \frac{v^B}{v^0} \sigma(F)_{BD} \left( \nabla_v z \right)^D \right|,
\end{equation}

\begin{equation}\label{nonlin7bis}
 \left|z_b\frac{v^{\underline{L}}}{v^0} \underline{\alpha}_B \left( \nabla_v g \right)^B \right|, \hspace{5mm} \left| h \frac{v^{\underline{L}}}{v^0} \underline{\alpha}_B(F) \left( \nabla_v z \right)^B \right|.
\end{equation}

The study of $\mathbb{E}_N[f_k]$ corresponds to $b=0$. In this case, we only have to estimate the spacetime integral of each of the first terms of \eqref{nonlin1bis}-\eqref{nonlin7bis}, but we need to consider two cases. When $|\beta_1| \leq N-n$ we can use the pointwise decay estimates on the electromagnetic field given by Proposition \ref{decayFmassless}. When $|\beta_1| > N-n$, $|\beta_2| \leq N-2n$ (since $N \geq 6n+2$), and we can then use the pointwise decay estimates on the velocity averages of the Vlasov field given in Section \ref{step1massless}.

In the study of $\mathbb{E}_{N-n,1}[f_k]$ (which corresponds to $b=1$ and where $z$ can be any weights of $\mathbf{k}_0$), we can always use a pointwise estimate on the electromagnetic field (as $|\beta_1| \leq N-n$), but we need to estimate the spacetime integral of all the terms of \eqref{nonlin1bis}-\eqref{nonlin7bis}.

\begin{Rq}

To simplify the argument we will sometimes denote $\mathbb{E}_{N}[f_k]$ by $\mathbb{E}_{N_0,0}[f_k]$ and $\mathbb{E}_{N-n,1}[f_k]$ by $\mathbb{E}_{N_1,1}[f_k]$.

\end{Rq}

\subsubsection{Estimating the $v$ derivatives}

In order to eliminate the $v$ derivatives, we use, as in Section \ref{sectionvderiv},
\begin{equation}\label{eq:vderivbasicbis}
 \left| \nabla_v w \right| \lesssim \frac{\tau_+}{v^0}\sum_{\widehat{Z} \in \K} |\widehat{Z} w|
\end{equation}
and

\begin{equation}\label{eq:vderivLbis}
\left| \left( \nabla_v w \right)^{L} \right|, \hspace{1mm} \left| \left( \nabla_v w \right)^{\underline{L}} \right| \lesssim \frac{\tau_-}{v^0}\sum_{\widehat{Z} \in \K}|\widehat{Z}w|.
\end{equation}

\subsubsection{If $|\beta_1| \leq N-n$}\label{pointwiseFbis}

We start by the terms involving $L$ or $\underline{L}$ components of $\nabla_v g$ or $\nabla_v z$. We use $\zeta$ to denote $\alpha$, $\underline{\alpha}$ or $\rho$. Thus, by Proposition \ref{decayFmassless},
$$|\zeta| \lesssim \frac{\sqrt{\epsilon}(1+t)^{\frac{\eta}{2}}}{\tau_+^{\frac{3}{2}}\tau_-}.$$
The integral on $[0,t] \times \mathbb{R}^n_x \times \left( \mathbb{R}^n_v \setminus \{0 \} \right)$ of each of the first terms of \eqref{nonlin1bis}-\eqref{nonlin4bis} are bounded by

$$\sum_{\widehat{Z} \in \widehat{\mathbb{P}}_0} \int_0^t \int_{\Sigma_s} \tau_- |\zeta| \int_v |z_b\widehat{Z}g| dv dx ds,$$
where we use in particular \eqref{eq:vderivLbis} and the fact that $\frac{1}{v^0} \lesssim 1$ on the support of $g$.
Using the bootstrap assumption \eqref{bootfbis}, we obtain
 $$\int_0^t \int_{\Sigma_s} \tau_- |\zeta| \int_v |z_b\widehat{Z}g| dv dx ds \lesssim \int_0^t \sqrt{\epsilon}(1+s)^{-\frac{3-\eta}{2}} \mathbb{E}_{N_b,b}[f_k](s) ds \lesssim \epsilon^{\frac{3}{2}}.$$

Similarly, the integrals of each of the second terms of \eqref{nonlin1bis}-\eqref{nonlin4bis} are bounded by

$$\sum_{\widehat{Z} \in \widehat{\mathbb{P}}_0} \int_0^t \int_{\Sigma_s} \tau_- |\zeta(F)| \int_v |h\widehat{Z}(|z|)| \frac{dv}{v^0} dx ds.$$
Using again the bootstrap assumption \eqref{bootfbis} and $\frac{1}{v^0} \lesssim 1$ on the support of $h$, one has
$$\int_0^t \int_{\Sigma_s} \tau_- |\zeta(F)| \int_v |h\widehat{Z}(|z|)| \frac{dv}{v^0} dx ds \lesssim \int_0^t \sqrt{\epsilon}(1+s)^{-\frac{3-\eta}{2}} \mathbb{E}_{N_1,1}[f_k](s) ds \lesssim \epsilon^{\frac{3}{2}},$$
since $|\widehat{Z}(|z|) | \lesssim \sum_{w \in \mathbf{k}_0} |w|$ by Lemma \ref{vectorweight}.

We now study the remaining terms. Using \eqref{eq:vderivbasicbis}, the pointwise decay estimates of Proposition \ref{decayFmassless}, that $\frac{1}{v^0} \lesssim 1$ on the support of $g$ and the bootstrap assumption \eqref{bootfbis}, we have
\begin{eqnarray}
\nonumber \int_0^t\int_{\Sigma_s} \int_v  \left|z_b \frac{v^L}{v^0} \alpha_B \left( \nabla_v g \right)^B\right| dvdxds & \lesssim & \sum_{\widehat{Z} \in \K}\int_0^t \int_{\Sigma_s} \tau_+ |\alpha| \int_v  |z_b\widehat{Z}  g | dv dx ds \\ \nonumber
& \lesssim & \sqrt{\epsilon} \int_0^t (1+s)^{-\frac{3-\eta}{2}} \mathbb{E}_{N_b,b}[f_k](s)ds \\ \nonumber
& \lesssim & \epsilon^{\frac{3}{2}}.
\end{eqnarray}

The second term of \eqref{nonlin5bis} can be treated similarly. For the second term of \eqref{nonlin6bis} (as the first one can be treated in a similar way), we have, using \eqref{eq:vderivbasicbis}, Lemma \ref{vectorweight} and $|v^B| \lesssim \sqrt{v^Lv^{\underline{L}}}$ (which comes from Proposition \ref{extradecay1}),

\begin{flalign*}
& \hspace{0mm} \int_0^t \int_{\Sigma_s} \int_v \left|h \frac{v^B}{v^0}\sigma(F)_{BD}\left( \nabla_v |z| \right)^D \right| dv dx ds  \lesssim & 
\end{flalign*}
$$ \hspace{5.1cm} \sum_{z' \in \mathbf{k}_0} \int_0^t \int_{\Sigma_s} \tau_+ |\sigma(F)|\int_v \frac{\sqrt{v^Lv^{\underline{L}}}}{(v^0)^2} |z'| |h| dv dx ds . $$
Since, by Proposition \ref{decayFmassless}, $|\sigma(F)| \lesssim \sqrt{\epsilon}\tau_+^{-2}\tau_-^{-\frac{1}{2}}(1+t)^{\frac{\eta}{2}}$, one has, using the Cauchy-Schwarz inequality (in $(s,x,v)$), that the right hand side of the previous inequality is bounded by the product of
$$\hspace{-5mm} \sum_{z' \in \mathbf{k}_0}\left( \sqrt{\epsilon} \int_0^t (1+s)^{-\frac{3-\eta}{2}} \int_{\Sigma_s} \int_v \frac{v^L}{(v^0)^3}|z'| |h| dvdxds \right)^{\frac{1}{2}}$$ with $$\sum_{z' \in \mathbf{k}_0} \left(  \sqrt{\epsilon} \int_{u=-\infty}^{t} \tau_-^{-\frac{3-\eta}{2}} \int_{C_u(t)} \int_v \frac{v^{\underline{L}}}{v^0}|z'| |h| dvdC_u(t)du \right)^{\frac{1}{2}}.$$
The first factor is bounded by $\epsilon^{\frac{3}{4}}$ since $\frac{v^L}{(v^0)^3} \lesssim 1$ on the support of $h$ and $\int_{\Sigma_s} \int_v|z'| |h| dvdx \leq 4 \epsilon$ by the bootstrap assumption \ref{bootfbis}. The same is true for the second factor since $\int_{C_u(t)} \int_v \frac{v^{\underline{L}}}{v^0}|z'| |h| dvdC_u(t) \leq \mathbb{E}_{N-n,1}[f_k](t) \leq 4 \epsilon$, still by the bootstrap assumption \eqref{bootfbis}.

Finally, let us treat, for instance, the first term of \eqref{nonlin7bis}. Using the same ingredients as before, namely \eqref{eq:vderivbasicbis}, that $\frac{1}{v^0} \lesssim 1$ on the support of $g$ and the bootstrap assumption \eqref{bootfbis}, we have,

\begin{eqnarray}
\nonumber  \int_0^t \hspace{-0.5mm} \int_{\Sigma_s} \int_v \left|z_b\frac{v^{\underline{L}}}{v^0} \underline{\alpha}_B \left( \nabla_v g \right)^B\right|dsdxdv  \hspace{-2mm} & \lesssim & \hspace{-2mm} \sum_{\widehat{Z} \in \widehat{\mathbb{P}}_0} \int_0^t \hspace{-0.5mm} \int_{\Sigma_s} \hspace{-1mm} \tau_+ |\underline{\alpha}| \int_v \frac{v^{\underline{L}}}{(v^0)^2}|z_b \widehat{Z} g| dvdxds \\ \nonumber  
 &  \lesssim  & \hspace{-2mm} \int_{u=-\infty}^{t} \|\tau_+|\underline{\alpha}| \|_{L^{\infty}(C_u(t))} \mathbb{E}_{N_b,b}[f_k](t) du  \\ \nonumber
 & \lesssim & \hspace{-2mm} \epsilon^{\frac{3}{2}} \int_{-\infty}^{+\infty} \hspace{-1mm} \frac{1}{\tau_-^{\frac{3-\eta}{2}}} du(\delta_{1,b} \hspace{-0.5mm}+ \hspace{-0.5mm}\delta_{0,b}\log^* (3+t)). 
 \end{eqnarray}

\begin{Rq}

If we used \eqref{eq:vderivbasicbis} instead of \eqref{eq:vderivLbis} to estimate \eqref{nonlin2bis} and \eqref{nonlin4bis}, it would give us a $(1+t)^{\eta}$-loss on the energies (as in the proof of Lemma \ref{L2hombis} below). The weight $v^B$ could be used to avoid this loss in \eqref{nonlin4bis}.

\end{Rq}

\subsubsection{If $|\beta_1| > N-n$}

We study again the integrals of the first terms of \eqref{nonlin1bis}-\eqref{nonlin7bis}, but this time when $|\beta_1| > N- n$, so that $|\beta_2| \leq N-2n$, and $z_b=1$. We then use the pointwise estimate on the velocity averages of the Vlasov fields. This time, we study the terms involving $\alpha$, $\rho$ and $\sigma$ together\footnote{Note that except for \eqref{nonlin5bis}, we could bound all this terms without the $\log^* (1+t)$-loss.} and we finish with the two terms involving $\underline{\alpha}$. Note that as we use the extra decay given by $v^L$, $v^{\underline{L}}$ and $v^B$, we cannot close the estimate for $\mathbb{E}_{N,1}[f_k]$ with our method.

Let us denote this time $\alpha$, $\rho$ or $\sigma$ by $\zeta$. Then, by the bootstrap assumption \eqref{bootFbis},

$$\forall \hspace{1mm} t \in [0,T], \hspace{3mm} \int_{C_u(t)} |\zeta|^2 dx  \lesssim \epsilon.$$

All first terms of \eqref{nonlin1bis}-\eqref{nonlin7bis} involving $\alpha$, $\rho$ or $\sigma$ have their integral on $[0,t] \times \R^n_x \times \R^n_v$ bounded by

$$M:=\sum_{\widehat{Z} \in \K} \int_0^t \int_{\Sigma_s}  |\zeta| \tau_+\int_v \left|\frac{\widetilde{v}}{v^0} \widehat{Z} g\right| dv dx ds,$$
where we used \eqref{eq:vderivbasicbis}, that $\frac{1}{v^0} \lesssim 1$ on the support of $g$ and where $\widetilde{v}$ denotes either $v^L$, $v^{\underline{L}}$ or $v^B$.

Using the pointwise decay estimate on $\int_v \left|\frac{\widetilde{v}}{v^0} \widehat{Z}g\right| dv$, given by \eqref{extradecayL+}, \eqref{extradecayL-} or \eqref{extradecayA}, and the Cauchy-Schwarz inequality (on the $u=constant$ integrals), we have

\begin{eqnarray}
\nonumber M & \lesssim & \int_{u=-\infty}^{t} \left( \int_{C_u(t)} |\zeta|^2 dx \right)^{\frac{1}{2}} \left( \int_{C_u(t)} \frac{\epsilon^2}{\tau_+^{2n-4} \tau_-^4} d C_u(t) \right)^{\frac{1}{2}} du \\ \nonumber
& \lesssim & \int_{u=-\infty}^{t} \frac{\epsilon^{\frac{3}{2}}}{\tau_-^2} \left| \int_{\underline{u}=0}^{2t-u} \frac{r^{n-1}}{\tau_+^{2n-4} } d \underline{u} \right|^{\frac{1}{2}} du \\ \nonumber
& \lesssim & \epsilon^{\frac{3}{2}} \int_{u=-\infty}^t \frac{\log^* (1+2t-u)}{\tau_-^{2}} du \\ \nonumber
& \lesssim & \epsilon^{\frac{3}{2}} \log^*(1+t) \int_{u=-\infty}^{+\infty} \tau_-^{-\frac{3}{2}} du.
\end{eqnarray}

We now study the two remaining terms, which involve $\underline{\alpha}$. We start by \eqref{nonlin4bis}. Using \eqref{eq:vderivLbis}, we obtain that $\int_0^t \int_{\Sigma_s} \int_v |\frac{v^B}{v^0} \underline{\alpha}_B \left( \nabla_v g \right)^{\underline{L}}|dvdxds$ is bounded by
 $$ \sum_{\widehat{Z} \in \widehat{\mathbb{P}}_0} \int_{0}^{+ \infty}  \| \tau_- \underline{\alpha}\|_{L^2(\Sigma_s)}  \left\| \int_v \left|\frac{v^B}{(v^0)^2}\widehat{Z} g \right| dv \right\|_{L^2(\Sigma_s)} ds .$$
 Using \eqref{extradecayA}, Lemma \ref{intesti} and that $\frac{1}{v^0} \lesssim 1$ on the support of $\widehat{Z}g$, we have $$\left\| \int_v \left|\frac{v^B}{(v^0)^2}\widehat{Z} g \right| dv \right\|_{L^2(\Sigma_s)} \lesssim \frac{\epsilon}{(1+s)^{\frac{n+1}{2}}}.$$ 
 By the bootstrap assumption \ref{bootFbis}, $\| \tau_- \underline{\alpha}\|_{L^2(\Sigma_s)} \lesssim \sqrt{\epsilon \chi(t)(1+t)^{\eta} }$, so
$$\int_0^t \int_{\Sigma_s} \int_v \left|\frac{v^B}{v^0} \underline{\alpha}_B \left( \nabla_v g \right)^{\underline{L}}\right|dvdxds \lesssim \epsilon^{\frac{3}{2}}.$$

Finally,
\begin{flalign*}
& \hspace{0cm} \int_0^t \int_{\Sigma_s} \int_v \left|\frac{v^{\underline{L}}}{v^0} \underline{\alpha}_B \left( \nabla_v g \right)^B \right|dvdxds  \lesssim & 
\end{flalign*}
$$ \hspace{3cm} \sum_{\widehat{Z} \in \widehat{\mathbb{P}}_0} \int_{0}^{ t}  \| \underline{\alpha} \|_{L^2(\Sigma_s)}  \left\| \tau_+ \int_v  \left|\frac{v^{\underline{L}}}{(v^0)^2}\widehat{Z}g \right| dv \right\|_{L^2(\Sigma_s)}ds .$$ 
Now, using the pointwise estimates \eqref{extradecayL+} and Lemma \ref{intesti}, we have
$$\left\| \tau_+ \int_v \frac{v^{\underline{L}}}{(v^0)^2}|\widehat{Z}g| dv \right\|^2_{L^2(\Sigma_s)} \lesssim \epsilon^2 \int_0^{+\infty} \frac{r^{n-1}}{\tau_+^{2n-2}\tau_-^2}dr \lesssim \epsilon (1+s)^{-(n-1)}.$$
As, by the bootstrap assumption \eqref{bootFS}, $\|  \underline{\alpha} \|^2_{L^2(\Sigma_s)} \lesssim \epsilon$,
$$ \int_0^t \int_{\Sigma_s} \int_v \left|\frac{v^{\underline{L}}}{v^0} \underline{\alpha}_B \left( \nabla_v g \right)^B \right|dvdxds \lesssim \epsilon^{\frac{3}{2}}.$$
This concludes the improvement of the bootstrap assumption \eqref{bootfbis}.

\subsection{Step 4: $L^2$ estimates for the velocity averages}\label{L2systembis}

As for the massive case, to close the energy estimates on the electromagnetic field, we need enough decay on quantities such as $\| \int_{v } |\widehat{Z}^{\beta} f_k | dv \|_{L^2_x}$ for all $|\beta| \leq N$. If $|\beta| \leq N-2n$, strong $L^2$ decay estimates can already be obtained on $\int_v \frac{v^{\underline{L}}}{v^0}|\widehat{Z}^{\beta} f_k|dv$, for instance,  combining \eqref{extradecayL+} and Lemma \ref{intesti}.

We fix, for the remaining of this section, $1 \leq k \leq K$. Following the strategy of \cite{FJS} (see Section $4.5.7$), for a similar problem, we introduce $M \in \mathbb{N}$ such that $ \frac{7n+4}{2} \leq M \leq N-\frac{5}{2}n$. Let $I_1$ and $I_2$ be defined as $$I_1= \{ \beta \hspace{1mm} \text{multi-index} \hspace{0.5mm} / \hspace{0.5mm} M \leq |\beta| \leq N \} \hspace{1mm} \text{and} \hspace{1mm} I_2= \{ \beta \hspace{1mm} \text{multi-index} \hspace{0.5mm} / \hspace{0.5mm} |\beta| \leq M-1 \}.$$
We consider an ordering on $I_i$, for $1 \leq i \leq 2$, so that $I_i=\{ \beta_{i,1},...,\beta_{i,|I_i|} \}$ and two vector valued fields $X$ and $Y$, of respective length $|I_1|$ and $|I_2|$, such that
$$X^j=\widehat{Z}^{\beta_{1,j}}f_k \hspace{3mm} \text{and} \hspace{3mm} Y^j=\widehat{Z}^{\beta_{2,j}} f_k.$$ 

\begin{Lem}\label{bilanL2bis}
There exists three matrices valued functions $A_1 :[0,T] \times \R^n \rightarrow \mathfrak M_{|I_1|}(\R)$, $A_2 :[0,T] \times \R^n \rightarrow \in \mathfrak M_{|I_2|}(\R)$ and $B :[0,T] \times \R^n \rightarrow \in \mathfrak M_{|I_1|,|I_2|}(\R)$ such that
$$T_F(X)+A_1X=B Y , \hspace{3mm} \text{and} \hspace{3mm} T_F(Y)=A_2Y.$$
If $1 \leq j \leq I_1$, $A_1$ and $B$ are such that $T_F(X^j)$ is a linear combination of $$\frac{v^{\mu}}{v^0}\mathcal{L}_{Z^{\gamma_1}}(F)_{\mu m}X^{\beta_{1,q}}, \hspace{2mm} t\frac{v^{\mu}}{v^0}\mathcal{L}_{Z^{\gamma_1}}(F)_{\mu m}X^{\beta_{1,q}}, \hspace{2mm} \frac{v^{\mu}}{v^0}\mathcal{L}_{Z^{\gamma_1}}(F)_{\mu i}x^iX^{\beta_{1,q}},$$
$$\frac{v^{\mu}}{v^0}\mathcal{L}_{Z^{\gamma_2}}(F)_{\mu m}Y^{\beta_{2,l}}, \hspace{2mm} t\frac{v^{\mu}}{v^0}\mathcal{L}_{Z^{\gamma_2}}(F)_{\mu m}Y^{\beta_{2,l}} \hspace{2mm} \text{and} \hspace{2mm} \frac{v^{\mu}}{v^0}\mathcal{L}_{Z^{\gamma_2}}(F)_{\mu i}x^iY^{\beta_{2,l}},$$
with $|\gamma_1| \leq N-\frac{7n+2}{2}$, $|\gamma_2| \leq N$, $1 \leq m \leq n$, $1 \leq q \leq |I_1|$ and $1 \leq l \leq |I_2|$.
Similarly, if $1 \leq j \leq I_2$, $A_2$ is such that $T_F(Y^j)$ is a linear combination of
$$\frac{v^{\mu}}{v^0}\mathcal{L}_{Z^{\gamma}}(F)_{\mu m}Y^{\beta_{2,l}}, \hspace{2mm} t\frac{v^{\mu}}{v^0}\mathcal{L}_{Z^{\gamma}}(F)_{\mu m}Y^{\beta_{2,l}} \hspace{2mm} \text{and} \hspace{2mm} \frac{v^{\mu}}{v^0}\mathcal{L}_{Z^{\gamma}}(F)_{\mu i}x^iY^{\beta_{2,l}},$$
with $|\gamma| \leq N-\frac{5n+2}{2}$, $1 \leq m \leq n$ and $1 \leq l \leq |I_2|$. Moreover,
$$ \forall \hspace{0.5mm} z \in \mathbf{k}_0, \hspace{3mm} \int_v |z||Y|_{\infty} dv \lesssim \frac{\epsilon}{\tau_+^{n-1}\tau_-}.$$ 
\end{Lem}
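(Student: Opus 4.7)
The plan is to adapt the proof of Lemma~\ref{bilanL2} in a straightforward way, the only real changes being the altered derivative thresholds needed because $M$ now sits at $\tfrac{7n+4}{2}$ rather than $\tfrac{3n+4}{2}$, and the use of the massless pointwise decay estimate \eqref{decayf2bis} in place of \eqref{decayf2}.

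First, I would fix $1 \le j \le |I_1|$ and apply the commutation formula of Corollary~\ref{Commuf} to $X^j=\widehat{Z}^{\beta_{1,j}} f_k$, which expresses $T_F(X^j)$ as a linear combination of terms
$$\mathcal{L}_{Z^{\gamma}}(F)(v,\nabla_v \widehat{Z}^{\delta} f_k), \qquad |\gamma|+|\delta|\le |\beta_{1,j}|\le N, \qquad |\delta|\le |\beta_{1,j}|-1.$$
Then I would use identity \eqref{transformpartialv}, namely $v^0 \partial_{v^i} = \widehat{\Omega}_{0i} - t\partial_i - x^i \partial_t$, to eliminate the $v$-derivative in favor of a $\widehat{\mathbb{P}}_0$-derivative hitting $\widehat{Z}^{\delta} f_k$, together with coefficients $1$, $t$, or $x^i$. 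Each resulting $\widehat{Z}^{\delta+1} f_k$ is of the form $X^q$ (if the total order is $\ge M$) or $Y^l$ (if it is $\le M-1$); assigning them accordingly fills in the entries of $A_1$ and $B$.

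Next I would verify the derivative thresholds. For the $A_1 X$ part we need $|\delta|+1\ge M$, so $|\gamma|\le |\beta_{1,j}|-|\delta|\le N-M+1\le N-\tfrac{7n+4}{2}+1 \le N-\tfrac{7n+2}{2}$, matching the bound on $|\gamma_1|$. For the $BY$ part the accompanying $|\gamma|$ is simply bounded by $N$, as required for $|\gamma_2|$. Repeating the same computation for $Y^j$, with $|\beta_{2,j}|\le M-1$, yields the claimed bound $|\gamma|\le M-1\le N-\tfrac{5n+2}{2}$ on the symbols appearing in $A_2$, since $M\le N-\tfrac{5n}{2}$ by hypothesis.

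Finally, the pointwise estimate on $\int_v |z||Y|_\infty dv$ is immediate: every component of $Y$ is $\widehat{Z}^{\beta_{2,j}} f_k$ with $|\beta_{2,j}|\le M-1 \le N-\tfrac{5n+2}{2}-1 \le N-2n$ (using $n\ge 4$), so \eqref{decayf2bis} applies directly and delivers the bound $\epsilon\, \tau_+^{-(n-1)}\tau_-^{-1}$. There is no genuine obstacle here — the entire lemma is a repackaging of the commutation identities already established in Section~\ref{section2}; the only mildly delicate point is the combinatorial check that the index budget $M \le N-\tfrac{5n}{2}$ is exactly what is needed for the two derivative thresholds to line up as stated.
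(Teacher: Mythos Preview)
Your proposal is correct and follows essentially the same approach as the paper: apply Corollary~\ref{Commuf}, rewrite each $\partial_{v^i}\widehat{Z}^{\delta}f_k$ via \eqref{transformpartialv}, sort the resulting derivatives into $X$ or $Y$ according to their order, and invoke \eqref{decayf2bis} for the pointwise bound. The paper's own proof is a terse two-line sketch of exactly this; your write-up simply makes the index bookkeeping explicit (note a harmless arithmetic slip: $M-1 \le N-\tfrac{5n}{2}-1 = N-\tfrac{5n+2}{2}$, not $N-\tfrac{5n+2}{2}-1$, but the conclusion $M-1\le N-2n$ still holds for $n\ge 4$).
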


\begin{proof}

Let $|\beta| \leq N$. According to commutation formula of Lemma \ref{Commuf}, $T_F(\widehat{Z}^{\beta} f_k)$ is a linear combination of terms such as $\mathcal{L}_{Z^{\gamma}}(F)(v,\nabla_v \widehat{Z}^{\delta}(f_k))$, with $|\gamma| + |\delta| \leq |\beta|$ and $|\delta| \leq |\beta|-1$. Replacing each $\partial_{v^i}\widehat{Z}^{\delta} f_k$ by $\frac{1}{v^0}(\widehat{\Omega}_{0i}\widehat{Z}^{\beta} f_k-t\partial_i\widehat{Z}^{\beta} f_k-x^i\partial_t\widehat{Z}^{\beta} f_k )$, the matrices naturally appear. The decay estimates ensue from the definition of $Y$ and \eqref{decayf2bis}.

\end{proof}

Now, we split $X$ in $G+H$ where $G$ is the solution of the homogeneous system and $H$ is the solution to the inhomogeneous system,
$$\left\{
    \begin{array}{ll}
         T_F(H)+A H=0 \hspace{2mm}, \hspace{2mm} H(0,.,.)=X(0,.,.),\\
        T_F(G)+AG=BY \hspace{2mm}, \hspace{2mm} G(0,.,.)=0.
    \end{array}
\right.$$
We will prove below that $G=KY$ (with $K$ a well chosen matrix), which implies, in view of the velocity support of $X$ and $Y$, that $H$ and $G$ vanish if $|v| \leq \frac{R}{2}$.

The goal now is to prove $L^2$ estimates on the velocity averages of $H$ and $G$.

\subsubsection{The homogeneous part}\label{homsystembis}

As for the massive case, we have the following commutation formula.

\begin{Lem}\label{Commusystbis}

Let $1   \leq i \leq |I_1| $ and consider $\widehat{Z}^{\delta} \in \widehat{\mathbb{P}}_0^{|\delta|}$, with $|\delta| \leq n$. Then, $T_F(\widehat{Z}^{\delta}H^i)$ can be written as a linear combination of terms of the form
$$\mathcal{L}_{Z^{\gamma}}(F)(v,W),$$
where $W$ is such that 
$$\forall \hspace{0.5mm} 0 \leq \mu \leq n, \hspace{2mm} |W^{\mu}| \lesssim \frac{\tau_+}{v^0} \sum_{|\theta| \leq n} \sum_{ q=1}^{|I_1|} |\widehat{Z}^{\theta}H^q|,$$
and where $|\gamma| \leq N-\frac{5n+2}{2}$, so that we can use the sharpest estimates of Proposition \ref{decayFmassless}, except for $\alpha$.
\end{Lem}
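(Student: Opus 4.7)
The plan is to start from the identity $T_F(H^i) = -(A_1)^j_i H^j$, which follows from the definition of $H$ and from the structure of $A_1$ given in Lemma \ref{bilanL2bis}, and then commute $\widehat{Z}^{\delta}$ (with $|\delta|\leq n$) through $T_F$ via Corollary \ref{Commuf} together with Remark \ref{remcommuf} applied to the source $G = -(A_1)^j_i H^j$. This will decompose $T_F(\widehat{Z}^{\delta}H^i)$ into two kinds of contributions: (i) commutator terms $\mathcal{L}_{Z^{\gamma'}}(F)(v, \nabla_v \widehat{Z}^{\delta'} H^i)$ with $|\gamma'|+|\delta'|\leq|\delta|$ and $|\delta'| \leq |\delta|-1$; and (ii) source contributions $\widehat{Z}^{\delta''}\bigl((A_1)^j_i H^j\bigr)$ with $|\delta''|\leq|\delta|$.

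Terms of type (i) will already be in the desired form: we will have $|\gamma'|\leq n \leq N-\frac{5n+2}{2}$ (since $N\geq 6n+2$), and converting $\nabla_v$ to $\K$-derivatives via $\partial_{v^k}=\frac{1}{v^0}(\widehat{\Omega}_{0k}-t\partial_k-x^k\partial_t)$ will express the $v$-derivative as $\frac{1}{v^0}$ times a combination of $\K$ vector fields with $(t,x)$-coefficients of size $\lesssim\tau_+$. For terms of type (ii), I will expand via Leibniz as $\sum \widehat{Z}^{\delta_1}((A_1)^j_i)\,\widehat{Z}^{\delta_2}(H^j)$, with $|\delta_1|+|\delta_2|\leq|\delta''|\leq n$. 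By Lemma \ref{bilanL2bis}, each entry of $A_1$ is a linear combination of expressions of the shape $P(t,x)\cdot\frac{v^{\mu}}{v^0}\mathcal{L}_{Z^{\gamma_1}}(F)_{\mu m}$, with $P$ polynomial of degree $\leq 1$ in $(t,x)$ and $|\gamma_1|\leq N-\frac{7n+2}{2}$. Distributing $\widehat{Z}^{\delta_1}$ keeps the polynomial of degree $\leq 1$ (hence of size $\lesssim \tau_+$), keeps the velocity factor a bounded combination of $v^{\nu}/v^0$ (as follows from the explicit formulas in Lemma \ref{complift}, since each $\widehat{Z}\in\K$ maps $v^\alpha$ to a constant-coefficient combination of $v^\beta$'s), and raises the Lie-derivative order of $F$ by at most $|\delta_1|\leq n$; thus the resulting order satisfies $|\gamma|\leq|\gamma_1|+n \leq N-\frac{5n+2}{2}$.

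Combining the two cases, every term arising in $T_F(\widehat{Z}^{\delta}H^i)$ can be written (using the $v$-contraction already carried by $(A_1)^j_i$ and, for type (i), by $F(v,\nabla_v\cdot)$) as $\mathcal{L}_{Z^{\gamma}}(F)(v,W)$ with $|W^{\mu}|\lesssim \frac{\tau_+}{v^0}\sum_{|\theta|\leq n}\sum_q|\widehat{Z}^{\theta}H^q|$, which is the claimed form. The main technical work will be the careful bookkeeping of two simultaneous index bounds: checking that $\widehat{Z}$-derivatives of the rational function $v^{\mu}/v^0$ remain uniformly bounded (so that differentiating $A_1$ does not worsen its $\tau_+$-size), and confirming that the $n$ extra commutations raise the Lie-derivative order of $F$ by at most $n$, which yields the final arithmetic $N-\frac{7n+2}{2}+n=N-\frac{5n+2}{2}$.
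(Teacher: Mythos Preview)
Your overall strategy is correct and matches the paper's approach: the paper's own proof of this lemma (and of its massive analogue) simply says ``the proof is similar to the ones of Lemma~\ref{Commufsimple} and Corollary~\ref{Commuf}'', which is exactly the commutation-plus-source decomposition you describe via Corollary~\ref{Commuf} and Remark~\ref{remcommuf}. The arithmetic $|\gamma|\leq (N-\tfrac{7n+2}{2})+n=N-\tfrac{5n+2}{2}$ is correct, and the type~(i) commutator terms are handled exactly as you say.

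There is one imprecision in your treatment of the type~(ii) source terms. When you apply Leibniz to the three factors $P(t,x)$, $\tfrac{v^{\mu}}{v^0}$, and $\mathcal{L}_{Z^{\gamma_1}}(F)_{\mu m}$ separately, the piece coming from $\widehat{Z}\bigl(\tfrac{v^{\mu}}{v^0}\bigr)\,\mathcal{L}_{Z^{\gamma_1}}(F)_{\mu m}$ produces, for instance with $\widehat{Z}=\widehat{\Omega}_{0k}$, the term $\mathcal{L}_{Z^{\gamma_1}}(F)_{km}$, which has no $v$-contraction in the first slot and is therefore not manifestly of the form $\mathcal{L}_{Z^{\gamma}}(F)(v,W)$. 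The fix is to treat $v^{\mu}\,G_{\mu m}$ as a single unit and use the identity established in the proof of Lemma~\ref{Commufsimple}: for $\widehat{Z}\in\widehat{\mathbb{P}}$ one has $\widehat{Z}\bigl(G(v,W)\bigr)=\mathcal{L}_Z(G)(v,W)+G(v,\,[Z,W]+Z_v(W))$, the potentially dangerous term $G([Z,v]+Z_v(v),W)$ vanishing by the cancellation $[Z,v]=-Z_v(v)$. Writing each entry of $A_1H$ as $\mathcal{L}_{Z^{\gamma_1}}(F)(v,W)$ with $W^m=\tfrac{P(t,x)}{v^0}H^q$ and iterating this identity then preserves the $v$-contraction while raising the Lie-derivative order of $F$ by at most one per commutation, and the new $W'$ satisfies $|W'^{\mu}|\lesssim\tfrac{\tau_+}{v^0}\sum_{|\theta|\leq 1}|\widehat{Z}^{\theta}H^q|$. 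This is precisely why the paper points back to Lemma~\ref{Commufsimple}.
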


We introduce the energy $\widetilde{\mathbb{E}}_1[H]$ defined by
$$\widetilde{\mathbb{E}}_1[H]=\sum_{ q=1}^{|I_1|} \mathbb{E}_{n,1}[H^q].$$
Note that for $\epsilon$ small enough,
$$\widetilde{\mathbb{E}}_1[H](0) \leq 2\mathbb{E}_{N+n,1}[f](0) \leq 2 \epsilon. $$

\begin{Lem}\label{L2hombis}
If $\epsilon$ is small enough, we have
$$\forall \hspace{0.5mm} t \in [0,T], \hspace{3mm} \widetilde{E}_1[H](t) \leq 6\epsilon(1+t)^{\frac{\eta}{2}}.$$
Moreover, 
$$\forall \hspace{0.5mm} 1 \leq i \leq |I_1|, \hspace{1mm} z \in \mathbf{k}_0, \hspace{1mm} (t,x) \in [0,T]\times \R^n, \hspace{3mm} \int_v |zH^i| dv \lesssim \epsilon \frac{(1+t)^{\frac{\eta}{2}}}{\tau_+^{n-1}\tau_-}.$$

\end{Lem}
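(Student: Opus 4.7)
The strategy mirrors the massive analogue (Lemma \ref{L2hom}) but takes into account the slight energy loss $(1+t)^{\eta/2}$, which is forced by the fact that in low dimensions the component $\underline{\alpha}$ only satisfies the weaker bound $|\underline{\alpha}(\mathcal{L}_{Z^\beta}F)| \lesssim \sqrt{\epsilon(1+t)^{\eta}}\,\tau_+^{-(n-1)/2}\tau_-^{-1}$ of Proposition \ref{decayFmassless}. I set up a continuity argument: since $\widetilde{\mathbb{E}}_1[H](0) \leq 2\epsilon$, there is a maximal $\widetilde{T} \in (0,T]$ on which the bootstrap bound $\widetilde{\mathbb{E}}_1[H](t) \leq 12\epsilon (1+t)^{\eta/2}$ holds, and I aim to improve it to $6\epsilon(1+t)^{\eta/2}$ for $\epsilon$ small.

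The key input is the commutation identity of Lemma \ref{Commusystbis}: for every $|\delta|\leq n$ and every $1\leq i\leq |I_1|$, $T_F(\widehat{Z}^\delta H^i)$ is a linear combination of terms of the form $\mathcal{L}_{Z^\gamma}(F)(v,W)$ with $|\gamma|\leq N-\tfrac{5n+2}{2}$ and $|W^\mu|\lesssim \tau_+(v^0)^{-1}\sum_{|\theta|\leq n,\,q}|\widehat{Z}^\theta H^q|$. Crucially, the bound on $|\gamma|$ puts us in the regime where the pointwise decay estimates of Proposition \ref{decayFmassless} apply to every Lie derivative of $F$ appearing in the source. I apply the energy estimate of Proposition \ref{energypoids} to $z\widehat{Z}^\delta H^i$ for $|\delta|\leq n$ and $z\in\mathbf{k}_0$, which reduces the problem to bounding spacetime integrals of exactly the same seven schematic types \eqref{nonlin1bis}--\eqref{nonlin7bis} as in Section \ref{pointwiseFbis}, with $g$ replaced by $\widehat{Z}^\theta H^q$ (and the factor $h\,\nabla_v z$ replaced analogously using Lemma \ref{vectorweight}).

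Each of these integrals is then controlled exactly as in Section \ref{pointwiseFbis}. For the good terms involving $\alpha,\rho,\sigma$ or involving the $L,\underline{L}$ components of $\nabla_v(\widehat{Z}^\theta H^q)$, the inequality \eqref{eq:vderivLbis} together with the pointwise bound $|\zeta|\lesssim \sqrt{\epsilon}(1+t)^{\eta/2}\tau_+^{-3/2}\tau_-^{-1}$ and the bootstrap assumption on $\widetilde{\mathbb{E}}_1[H]$ gives contributions bounded by
\[
 C\sqrt{\epsilon}\int_0^t (1+s)^{-3/2+\eta/2}\widetilde{\mathbb{E}}_1[H](s)\,ds \lesssim \epsilon^{3/2}(1+t)^{\eta/2}.
\]
For the bad term with $\tfrac{v^{\underline{L}}}{v^0}\underline{\alpha}_B(\nabla_v H)^B$, I foliate by the cones $C_u(t)$ and use $\|\tau_+\underline{\alpha}\|_{L^\infty(C_u(t))}\lesssim \sqrt{\epsilon}(1+t)^{\eta/2}\tau_-^{-1}$ (from Proposition \ref{decayFmassless}) together with the definition of $\mathbb{E}_{n,1}$, so that the $du$-integral on $\tau_-^{-1-(2-\eta)/2}$ converges. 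The remaining bad term with angular $\nabla_v$ is handled by the weight $v^B$ via $|v^B|\lesssim\sqrt{v^Lv^{\underline{L}}}$ from Proposition \ref{extradecay1}, combined with Cauchy--Schwarz in $v$ and the cone energy, giving the same $\epsilon^{3/2}(1+t)^{\eta/2}$ bound. Summing, the energy estimate of Proposition \ref{energypoids} yields
\[
 \widetilde{\mathbb{E}}_1[H](t)\leq 2\widetilde{\mathbb{E}}_1[H](0) + C\epsilon^{3/2}(1+t)^{\eta/2} \leq 6\epsilon(1+t)^{\eta/2}
\]
for $\epsilon$ small, improving the bootstrap and hence establishing it on all of $[0,T]$.

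The main obstacle is the careful handling of the bad $\underline{\alpha}$ term, because a naive use of \eqref{eq:vderivbasicbis} would produce a $\tau_+/\tau_-$ ratio which is not integrable; the weight $v^{\underline{L}}/v^0$ (absorbed into the cone energy $\mathbb{E}_{n,1}$) together with the $\tau_-^{-1}$ decay on $\tau_+\underline{\alpha}$ is what makes the argument close. Once the energy bound is secured, the pointwise estimate on $\int_v|zH^i|dv$ follows immediately from the Klainerman--Sobolev inequality of Corollary \ref{KS3} applied to $H^i$ (noting $M\geq n$ so enough derivatives of $H$ are controlled), which gives
\[
 \int_v |zH^i|\,dv \lesssim \frac{\widetilde{\mathbb{E}}_1[H](t)}{\tau_+^{n-1}\tau_-} \lesssim \epsilon\,\frac{(1+t)^{\eta/2}}{\tau_+^{n-1}\tau_-},
\]
completing the proof.
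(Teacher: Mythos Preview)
Your argument has a genuine gap: you invoke inequality \eqref{eq:vderivLbis} to control the $L$ and $\underline{L}$ components of $\nabla_v(\widehat{Z}^\theta H^q)$, but this inequality is \emph{not available} for the homogeneous part $H$. The identity behind Lemma \ref{vderivL} (and hence \eqref{eq:vderivLbis}) depends on algebraic relations among the $\widehat{\mathbb{P}}_0$-derivatives of a single scalar function, such as $x^\mu\partial_\mu g = Sg$; these relations are destroyed by the splitting $X=H+G$, since the components $H^i$ are no longer derivatives of a common function. This is exactly why Lemma \ref{Commusystbis} only asserts the basic bound $|W^\mu|\lesssim \tfrac{\tau_+}{v^0}\sum|\widehat{Z}^\theta H^q|$, with no improved $\tau_-$ bound on the radial components. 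The paper flags this explicitly.

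Consequently the terms \eqref{nonlin1bis}--\eqref{nonlin4bis} must be re-examined using only \eqref{eq:vderivbasicbis}, and this is the true source of the $(1+t)^{\eta/2}$ loss. For the analogue of \eqref{nonlin2bis} one bounds $\tau_+|\rho|\lesssim \sqrt{\epsilon}(1+s)^{-(n-2)/2}$ and the resulting time integral $\int_0^t (1+s)^{-(n-2)/2}\widetilde{\mathbb{E}}_1[H](s)\,ds$ diverges logarithmically when $n=4$, producing the $(1+t)^{\eta/2}$ growth. For the analogue of \eqref{nonlin4bis} one uses $|v^B|\lesssim\sqrt{v^Lv^{\underline{L}}}$ and Cauchy--Schwarz in $(s,x,v)$, splitting into a factor controlled on slices via $\tau_+|\underline{\alpha}|\lesssim\sqrt{\epsilon}\,\tau_+^{-(n-3)/2}\tau_-^{-1}$ and a factor controlled by the cone part of $\widetilde{\mathbb{E}}_1[H]$; each factor contributes a $(1+t)^{\eta/4}$ loss. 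Note that if \eqref{eq:vderivLbis} were genuinely applicable here, your own computation would give a uniform bound $\widetilde{\mathbb{E}}_1[H]\lesssim\epsilon$ with no growth, which is strictly stronger than what the lemma asserts --- this internal inconsistency is a signal that something has been overclaimed.
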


\begin{proof}
 We use again the continuity method. Since, for $\epsilon$ small enough, $\widetilde{\mathbb{E}}_1[H](0) \leq 2\epsilon$, there exists a larger time $0<\widetilde{T} \leq T$ such that 
$$\forall \hspace{0.5mm} t \in [0, \widetilde{T}], \hspace{3mm} \widetilde{\mathbb{E}}_1[H](t) \leq 6\epsilon(1+t)^{\frac{\eta}{2}} .$$

Following the argument of Section \ref{pointwiseFbis}, we almost get that for $\epsilon$ small enough, $\widetilde{\mathbb{E}}_1[H] \leq 5 \epsilon(1+t)^{\eta} $ on $[0,\widetilde{T}]$. In fact, using Lemma \ref{Commusystbis}, we have that $T_F(H^{\beta})$ is a linear combination of terms like $\mathcal{L}_{Z^{\gamma}}(F)(v,W)$, with $|\gamma| \leq N-\frac{5n+2}{2}$. Thus we can use the null decomposition of the velocity vector and the electromagnetic field (and use its pointwises estimates) and then make similar computations as in Section \ref{pointwiseFbis}. As we cannot use \eqref{eq:vderivLbis} (the algebraic relations between $S \widehat{Z}^{\beta} f$ and $\partial_{\mu} \widehat{Z}^{\beta} f$ ($\mu \in \llbracket 0, n \rrbracket$), for instance, are not necessarily conserved by the decomposition $X=H+G$), we need to reexamine the terms corresponding to \eqref{nonlin1bis}-\eqref{nonlin4bis}. For instance, for the terms analogous to one of \eqref{nonlin4bis}, we have to prove, for $z \in \mathbf{k}_0$,
\begin{equation}\label{eq:badtermbis}
\int_0^t \int_{\Sigma_s} \int_{v } \tau_+ | \underline{\alpha}| \left| z\frac{v^B}{(v^0)^2} \widehat{Z}^{\theta} H^{q} \right| dv dx ds \lesssim \epsilon^{\frac{3}{2}}(1+t)^{\frac{\eta}{2}}.
\end{equation}
As $|v^B| \lesssim \sqrt{|v^L v^{\underline{L}}}$ by Proposition \ref{extradecay1} and as $\tau_+ |\underline{\alpha}| \lesssim \frac{\sqrt{\epsilon}}{\tau_+^{\frac{n-3}{2}}\tau_-}$, we have, by the Cauchy-Schwarz inequality (in $(s,x,v)$), that \eqref{eq:badtermbis} is bounded by the product of
$$\hspace{-2.4cm} \left(\int_0^t \int_{\Sigma_s} \frac{\epsilon}{\tau_+^{n-3}} \int_{v }  \left|z \widehat{Z}^{\theta} H^q \right| dv dx ds \right)^{\frac{1}{2}}$$ with $$\left(\int_{u=-\infty}^{t}  \frac{1}{\tau_-^2} \int_{C_u(t)} \int_{v}  \frac{v^Lv^{\underline{L}}}{(v^0)^4}  \left|z \widehat{Z}^{\theta} H^q \right| dv d C_u(t) du \right)^{\frac{1}{2}}.$$

The first factor is bounded by 
$$ \left( \int_0^t \frac{\epsilon}{1+s} \widetilde{\mathbb{E}}_1[H](s) ds \right)^{\frac{1}{2}} \lesssim \epsilon(1+t)^{\frac{\eta}{4}} ,$$
and the other one, since $\frac{v^L}{(v^0)^3} \lesssim 1$ on the support of $H$, by
$$ \sqrt{\widetilde{\mathbb{E}}_1[H](t)} \left(\int_{u=-\infty}^{+ \infty}  \frac{1}{\tau_-^2}  du\right)^{\frac{1}{2}} \lesssim \sqrt{\epsilon} (1+t)^{\frac{\eta}{4}} .$$

The other terms are easier to bound. Let us study also the terms analogous to one of \eqref{nonlin2bis}, as there are also the cause of the $(1+t)^{\frac{\eta}{2}}$-loss\footnote{Note that we could use that $\sqrt{\tau_+\tau_-}|v^B| \lesssim v^0\sum_{z \in \mathbf{k}_0} |z|$ in \eqref{nonlin4bis} to obtain a better bound in \eqref{eq:badtermbis} for an other energy of $H$. On the other hand, the loss coming from \eqref{nonlin2bis} could not be avoided with such techniques.}.
\begin{eqnarray}
\nonumber \int_0^t \int_{\Sigma_s} \int_v \tau_+|\rho| \left|z\frac{v^L}{(v^0)^2}\widehat{Z}^{\theta} H^{\xi} \right| dv dx ds & \lesssim & \sqrt{\epsilon}\int_0^t (1+s)^{-\frac{n-2}{2}}  \widetilde{\mathbb{E}}_1[H](s)ds \\ \nonumber
& \lesssim & \epsilon^{\frac{3}{2}}(1+t)^{\frac{\eta}{2}}.
\end{eqnarray}
The pointwise estimate on $\int_v |z||H^i|dv$ then ensues from the Klainerman-Sobolev inequality of Corollary \ref{KS3}.

\end{proof}

\subsubsection{The inhomogeneous part}\label{inhomsystembis}

As in the massive case, let us introduce $K$, the solution of $T_F(K)+A_1K+KA_2=B$ which verifies $K(0,.,.)=0$, and the function

$$|KKY|_{\infty} = \sum_{\begin{subarray}{l} \hspace{1mm} 1 \leq i \leq |I_1| \\ 1 \leq j,p \leq |I_2| \end{subarray}} | K^{j}_{i}|^2 |Y_{p}|.$$

$KY$ and $G$ are solutions of the same system,
\begin{eqnarray}
\nonumber T_F(KY)=T_F(K)Y+KT_F(Y)& = & BY-A_1KY-KA_2Y+KA_2Y \\ \nonumber
& = & BY-A_1KY.
\end{eqnarray}
As $KY(0,.,.)=0$ and $G(0,.,.)=0$, it comes that $KY=G$. For $1 \leq i \leq |I_1|$ and $1 \leq j,p \leq |I_2|$, $| K^{j}_{i}|^2 Y_p$ sastifies the equation
$$T_F\left( |K^j_i|^2 Y_p\right) = |K^j_i |^2(A_2)^q_pY_q-2\left((A_1)^q_i K^j_q +K^q_i (A_2)^j_q \right) K^j_i Y_p+2B^j_iK^j_iY_p,$$

which will allow us to estimate $$\mathbb{E}[|KKY|_{\infty}]:=\mathbb{E}_{0,1}[|KKY|_{\infty}].$$ 

We will then be able to prove $L^2$ estimates for  $\int_{v \in \mathbb{R}^n} |G| dv $ thanks to the estimates on $\int_{v \in \mathbb{R}^n} |Y| dv$ and on $\mathbb{E}[|KKY|_{\infty}]$.

\begin{Lem}\label{L2inhombis}
We have, if $\epsilon$ is small enough,
$$\forall \hspace{0.5mm} t \in [0,T], \hspace{3mm} \mathbb{E}[|KKY|_{\infty}] \leq  \epsilon(1+t)^{\eta}.$$
\end{Lem}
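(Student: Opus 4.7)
The plan is to mimic the massive analogue (the lemma preceding Lemma \ref{L2inhom}) via a continuity argument, replacing the uniform bound $\epsilon$ by $\epsilon(1+t)^{\eta}$. Let $T_0$ be the largest time in $[0,T]$ on which the bootstrap assumption $\mathbb{E}[|KKY|_\infty](t) \leq 2\epsilon(1+t)^{\eta}$ holds; by continuity and the vanishing of $KY$ at $t=0$, $T_0>0$. We apply the basic energy estimate of Proposition \ref{energyfsimple} to each scalar $|K^j_i|^2 Y_p$, whose transport equation is
$$T_F\bigl(|K^j_i|^2 Y_p\bigr) = |K^j_i|^2 (A_2)^q_p Y_q - 2\bigl((A_1)^q_i K^j_q + K^q_i (A_2)^j_q\bigr) K^j_i Y_p + 2 B^j_i K^j_i Y_p.$$
The first two types of terms only involve matrices $A_1$ and $A_2$, whose entries (Lemma \ref{bilanL2bis}) contain Lie derivatives $\mathcal{L}_{Z^{\gamma}}(F)$ with $|\gamma|\leq N-\tfrac{7n+2}{2}$ or $|\gamma|\leq N-\tfrac{5n+2}{2}$; hence, as in the massive case, we may use the pointwise decay of Proposition \ref{decayFmassless} on every component of $F$ and a straightforward repetition of the estimates of Section \ref{pointwiseFbis} (with $g$ replaced by $Y$ or by $K^j_q$, and with the bootstrap assumption on $\mathbb{E}[|KKY|_\infty]$ playing the role of $\mathbb{E}_{N_b,b}[f_k]$) yields a contribution bounded by $\epsilon^{3/2}(1+t)^{\eta}$.

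The main obstacle is the source term $2 B^j_i K^j_i Y_p$, since Lemma \ref{bilanL2bis} only gives $|\gamma_2|\leq N$ for the derivatives of $F$ appearing in $B$, so we cannot rely on pointwise bounds for $\mathcal{L}_{Z^{\gamma_2}}(F)$. Schematically the integrand is controlled by $\sum_{|\gamma|\leq N}\tau_+\bigl|\tfrac{v^\mu}{v^0}\mathcal{L}_{Z^\gamma}(F)_{\mu m}\bigr|\,|KY|$; performing the null decomposition of $\mathcal{L}_{Z^\gamma}(F)$ and of the velocity vector $v$ splits this into the good terms $\tau_+|\zeta|\,\tfrac{|KY|}{v^0}$ with $\zeta\in\{\alpha,\rho,\sigma\}$ and the bad terms $\tau_+\tfrac{v^{\underline{L}}}{(v^0)^2}|\underline{\alpha}|\,|KY|$ and $\tau_+\tfrac{|v^B|}{(v^0)^2}|\underline{\alpha}|\,|KY|$. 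For the good terms, Cauchy-Schwarz in $x$, the inequality $\bigl(\int_v |KY|dv\bigr)^2 \leq \int_v |Y|dv\cdot\int_v |KKY|_\infty dv$, the pointwise estimate $\int_v |Y|dv \lesssim \epsilon\tau_+^{-(n-1)}\tau_-^{-1}$ (from Lemma \ref{bilanL2bis}), the bound $\|\tau_+|\zeta|\|_{L^2(\Sigma_s)}\lesssim \sqrt{\epsilon\chi(s)(1+s)^{\eta}}$ coming from $\mathcal{E}_N[F]$, and the bootstrap assumption on $\mathbb{E}[|KKY|_\infty]$ give an integrand in $s$ of size $\epsilon^{3/2}\sqrt{\chi(s)}\,(1+s)^{\eta-(n-1)/2}$, whose time integral is bounded by $\epsilon^{3/2}(1+t)^{\eta}$ as soon as $n\geq 4$ (using $\chi(s)\lesssim(1+s)^{1}$ in dimension $4$ and $\eta<1/2$).

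For the bad terms, denote $\widetilde v\in\{v^{\underline{L}},v^B\}$ and split the spacetime integral by Cauchy-Schwarz in $(s,x)$ as
$$\Bigl(\int_0^t\!\!\int_{\Sigma_s}\tfrac{\tau_-^2|\underline{\alpha}|^2}{(1+s)^{3/2}}dxds\Bigr)^{\!1/2}\!\Bigl(\int_0^t\!\!\int_{\Sigma_s}\tfrac{\tau_+^2(1+s)^{3/2}}{\tau_-^2}\bigl(\int_v\tfrac{|\widetilde v|}{(v^0)^2}|KY|dv\bigr)^{\!2}dxds\Bigr)^{\!1/2}\!\!.$$
The first factor is bounded by $\sqrt{\epsilon}(1+t)^{\eta/2}$ using $\|\tau_-\underline{\alpha}\|_{L^2(\Sigma_s)}^2\lesssim \mathcal{E}_N[F](s)\lesssim \epsilon\chi(s)(1+s)^{\eta}$ together with $\chi(s)/(1+s)^{3/2}$ being integrable in every dimension $n\geq 4$ (with the sharp power $\eta<1/2$ in dimension $4$). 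For the second factor we apply Cauchy-Schwarz in $v$ to get $\bigl(\int_v\tfrac{|\widetilde v|}{(v^0)^2}|KY|dv\bigr)^{2} \leq \int_v|Y|dv\cdot \int_v \tfrac{|\widetilde v|^2}{(v^0)^4}|KKY|_\infty dv$; since $|v^B|^2\lesssim v^Lv^{\underline{L}}$ and $v^L/(v^0)^3\lesssim 1$ on the support of $Y$ (Lemma \ref{suppmassless}), we have $\tfrac{|\widetilde v|^2}{(v^0)^4}\lesssim \tfrac{v^{\underline{L}}}{v^0}$, and combining with the pointwise estimate on $\int_v|Y|dv$ and a foliation by the null cones $C_u(t)$, the second factor is bounded by
$$\Bigl(\epsilon\!\int_{u=-\infty}^{t}\!\tau_-^{-2}\!\!\int_{C_u(t)}\!\!\int_v\tfrac{v^{\underline{L}}}{v^0}|KKY|_\infty dv dC_u(t) du\Bigr)^{\!1/2}\!\lesssim \bigl(\epsilon\,\mathbb{E}[|KKY|_\infty](t)\bigr)^{1/2}\!\lesssim\epsilon(1+t)^{\eta/2},$$
using the bootstrap assumption. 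Multiplying the two factors yields a bound of order $\epsilon^{3/2}(1+t)^{\eta}$ for the spacetime integral of each bad term.

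Collecting all contributions, Proposition \ref{energyfsimple} gives $\mathbb{E}[|KKY|_\infty](t)\leq C\epsilon^{3/2}(1+t)^{\eta}$ on $[0,T_0]$, which improves the bootstrap assumption provided $\epsilon$ is small enough, so $T_0=T$ and the claim follows. The delicate point is the careful balance in the bad-term estimate in dimension $n=4$: the loss $\chi(s)=1+s$ on $\mathcal{E}_N[F]$ is exactly compensated by dividing by $(1+s)^{3/2}$ before using Cauchy-Schwarz, which is what forces the admissible range $0<\eta<1/2$.
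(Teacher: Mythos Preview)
Your overall strategy matches the paper's, but there are two genuine gaps.

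First, a bookkeeping issue: in the massless section $\mathbb{E}[|KKY|_\infty]$ is \emph{defined} as $\mathbb{E}_{0,1}[|KKY|_\infty]$, i.e.\ it carries the weights $z\in\mathbf{k}_0$. Your proof drops $z$ throughout. This is not purely cosmetic: it introduces the extra source term $F\bigl(\tfrac{v}{v^0},\nabla_v|z|\bigr)|KKY|_\infty$ (from $T_F(z)$), and the Cauchy--Schwarz step in $v$ must produce $\int_v|zY|\,dv$ and $\int_v|z|\,|KKY|_\infty\,dv$ rather than their unweighted analogues. The paper records the extra term explicitly and bounds it as in Lemma~\ref{L2hombis}.

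Second, and more seriously, your bad-term estimate fails in dimension $n=4$. You control the first factor via $\|\tau_-\underline{\alpha}\|_{L^2(\Sigma_s)}^2\lesssim\mathcal{E}_N[F](s)\lesssim\epsilon\,\chi(s)(1+s)^{\eta}$ and then assert that $\chi(s)/(1+s)^{3/2}$ is integrable. But in $n=4$ one has $\chi(s)=1+s$, so $\chi(s)/(1+s)^{3/2}=(1+s)^{-1/2}$, which is \emph{not} integrable; the first factor is in fact of size $\sqrt{\epsilon}\,(1+t)^{(\eta+1/2)/2}$. Likewise, in the second factor your reduction to $\epsilon\,\tau_-^{-2}$ is incorrect in $n=4$: after inserting $\int_v|Y|\,dv\lesssim\epsilon\,\tau_+^{-3}\tau_-^{-1}$ one is left with $\epsilon(1+s)^{3/2}/(\tau_+\tau_-^3)$, and the residual $(1+s)^{1/2}$ cannot be absorbed near the light cone. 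The net effect is a $(1+t)^{1/2}$ loss that prevents the bootstrap from closing.

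The paper avoids this precisely by appealing to the \emph{scaling} energy $\mathcal{E}_N^S[F]$ rather than $\mathcal{E}_N[F]$: bootstrap assumption~\eqref{bootFS} gives $\|\sqrt{\tau_-}\,\underline{\alpha}\|_{L^2(\Sigma_s)}^2\lesssim\epsilon(1+s)^{\eta}$ \emph{without} the $\chi(s)$ factor. Correspondingly, the Cauchy--Schwarz splitting uses the weights $(1+s)^{-(n-3)}$ and $\tau_-^{-1}$ (not $(1+s)^{-3/2}$ and $\tau_-^{-2}$), so that the first factor integrates to $\epsilon(1+t)^{\eta}$ while in the second factor one obtains $\epsilon(1+s)^{n-3}/(\tau_+^{n-3}\tau_-^2)\leq\epsilon\,\tau_-^{-2}$ cleanly before foliating by null cones. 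Your final remark about the ``careful balance in dimension $n=4$'' is exactly right in spirit, but the balance you wrote down is the one from the massive case, where $\chi(s)=\log^3(3+s)$ makes $(1+s)^{-3/2}\chi(s)$ integrable; in the massless case it does not close.
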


\begin{proof}
Let $\widetilde{T}>0$ be the largest time such that $\mathbb{E}[|KKY|_{\infty}](t)\leq 2\epsilon(1+t)^{\eta}$ for all $t \in [0,\widetilde{T}]$ and let us prove, with Proposition \ref{energyfsimple}, that for $\epsilon$ small enough, $\mathbb{E}[|KKY|_{\infty}](t) \leq  \epsilon(1+t)^{\eta} $ for all $t \in [0,\widetilde{T}]$. It will follow that $\widetilde{T}=T$.
As for the estimate of $\widetilde{\mathbb{E}}_1[H]$ in the proof of Lemma \ref{L2hombis},
 $$\hspace{-0.9cm} \int_0^t \int_{\Sigma_s} \int_v \left| |K^j_i |^2(A_2)^q_pY_q-2\left((A_1)^q_i K^j_q +K^q_i (A_2)^j_q \right) K^j_i Y_p \right|\frac{|z|}{v^0}dvdxds \lesssim $$
 $$ \hspace{11cm} \epsilon^{\frac{3}{2}}(1+t)^{\eta}$$
 and
 $$ \int_0^t \int_{\Sigma_s} \int_v \left| F \left( \frac{v}{v^0}, \nabla_v \left( |z| \right) \right) \right| |KKY|_{\infty} dv dx ds \lesssim \epsilon^{\frac{3}{2}}.$$
Next, we need to estimate the following integral,
\begin{equation}\label{eq:niverbis}
\int_0^t\int_{\Sigma_s} \int_v \frac{|z|}{v^0}|B^j_iK^j_iY_p| dv dx .
\end{equation}
Recall from Lemma \ref{bilanL2bis} that 
$$|B^j_iK^j_iY_p| \lesssim \sum_{m=1}^n\sum_{|\gamma| \leq N}\tau_+\left|\frac{v^{\mu}}{v^0} \mathcal{L}_{Z^{\gamma}}(F)_{\mu m}K^j_iY_p\right|.$$
The components of the matrix $B$ involve terms in which the electromagnetic field has too many derivatives to be estimated pointwise. We fix $|\gamma|$ and we denote the null decomposition of $\mathcal{L}_{Z^{\gamma}}(F)$ by $(\alpha, \underline{\alpha}, \rho, \sigma)$. In order to bound \eqref{eq:niverbis}, we bound the integral of the five following terms, given by the null decomposition of the velocity vector $v$ and $\mathcal{L}_{Z^{\gamma}}(F)$.
\begin{itemize}
\item The terms which do not involve $\underline{\alpha}$
$$\tau_+|\alpha||z|\frac{|KY|}{v^0}, \hspace{3mm} \tau_+|\rho||z|\frac{|KY|}{v^0} \hspace{3mm} \text{and} \hspace{3mm} \tau_+|\sigma||z|\frac{|KY|}{v^0}.$$
\item The terms involving $\underline{\alpha}$
$$\tau_+|\underline{\alpha}|\frac{v^{\underline{L}}}{(v^0)^2}|z||KY| \hspace{3mm} \text{and} \hspace{3mm} \tau_+ |\underline{\alpha}|\frac{|v^B|}{(v^0)^2}|z||KY|.$$
\end{itemize}

We start by bounding the integral on $\Sigma_s \times (\R^n_v \setminus \{ 0 \} )$ of the good terms. We use $\zeta$ to denote either $\alpha$, $\rho$ or $\sigma$. Using twice the Cauchy-Schwarz inequality (in $x$ and then in $v$) and that $\frac{1}{v^0} \lesssim 1$ on the support of $Y$, we have
\begin{eqnarray}
\nonumber \int_{\Sigma_s} \int_v \tau_+ |\zeta| \frac{|zKY|}{v^0} dvdx  & \lesssim &    \|\tau_+ |\zeta| \|_{L^2(\Sigma_s)}\left( \int_{\Sigma_s} \left(\int_v \left|zKY\right| dv \right)^2 dx \right)^{\frac{1}{2}} \\ \nonumber
& \lesssim &  \left| \mathcal{E}_N[F](s) \int_{\Sigma_s} \int_v \left|zY\right| dv \int_v \left|zKKY\right| dv dx \right|^{\frac{1}{2}} \\ \nonumber
& \lesssim & \left|\mathcal{E}_N[F](s) \left\| \int_v \left|zY\right| dv \right\|_{L^{ \infty }(\Sigma_s)}\mathbb{E}[|KKY|_{\infty}]\right|^{\frac{1}{2}}.
\end{eqnarray}

Using the bootstrap assumptions, on $\mathcal{E}_N[F]$ and $\mathbb{E}[|KKY|_{\infty}]$, and the pointwise decay estimate $\int_v \left|zY\right| dv \lesssim \epsilon \tau_+^{-n+1}\tau_-^{-1}$ given in Lemma \ref{bilanL2bis}, we obtain
$$\int_0^t \int_{\Sigma_s} \int_v \tau_+ |\zeta| \frac{|zKY|}{v^0} dvdx ds \lesssim \int_0^t \frac{\epsilon^{\frac{3}{2}} \sqrt{\chi(t)}}{(1+s)^{\frac{3}{2}-\eta}}ds \lesssim \epsilon^{\frac{3}{2}}(1+t)^{\eta}.$$

As in the massive case, to unify the study of the terms involving $\underline{\alpha}$, we use $\widetilde{v}$ to denote $v^{\underline{L}}$ or $v^B$. Using the Cauchy-Schwarz inequality (in $(s,x)$), we have
\begin{flalign*}
& \hspace{0mm} \int_0^t \int_{\Sigma_s} \tau_+ |\underline{\alpha}| \int_v \frac{|\widetilde{v}|}{(v^0)^2}|z||KY| dv dx ds \lesssim &
\end{flalign*}
\begin{equation}\label{eq:esti1bis}
\left| \int_0^t \int_{\Sigma_s} \frac{\tau_- |\underline{\alpha}|^2}{(1+s)^{n-3}}dxds  \int_0^t \int_{\Sigma_s} \frac{\tau_+^2 (1+s)^{n-3}}{\tau_-}\left|\int_v \left|\frac{\widetilde{v}z}{(v^0)^2}KY\right|dv \right|^2 dxds \right|^{\frac{1}{2}}.
\end{equation}
As, by the bootstrap assumption \ref{bootFS}, $\|\sqrt{\tau_-} |\underline{\alpha}| \|^2_{L^2(\Sigma_s)} \lesssim \epsilon (1+s)^{\eta}$, we have
$$\int_0^t \int_{\Sigma_s} \frac{\tau_- |\underline{\alpha}|^2}{(1+s)^{n-3}}dxds \lesssim \epsilon(1+t)^{\eta}.$$
For the second factor of the product in \eqref{eq:esti1bis}, we first note that, by the Cauchy-Schwarz inequality and that $\frac{1}{(v^0)^2} \lesssim 1$ on the support of $Y$, 
$$\left(\int_v \left|\frac{\widetilde{v}z}{(v^0)^2} KY\right|dv \right)^2 \leq \int_v \left|zY\right|dv \int_v \left|\frac{\widetilde{v}}{v^0} \right|^2|z||KKY|_{\infty} dv.$$
Now, recall from Proposition \ref{extradecay1} that $|v^B| \lesssim \sqrt{v^L v^{\underline{L}}}$ so that $\left|\frac{\widetilde{v}}{v^0} \right|^2 \lesssim \frac{v^{\underline{L}}}{v^0}$. Using the pointwise estimate $\int_v \left|zY\right|dv \lesssim \epsilon\tau_+^{-n+1}\tau_-^{-1}$, it comes
$$\left(\int_v \left|\frac{\widetilde{v}z}{(v^0)^2} KY\right|dv \right)^2 \lesssim \frac{\epsilon}{\tau_+^{n-1}\tau_-} \int_v \frac{v^{\underline{L}}}{v^0}|z||KKY|_{\infty} dv.$$
As $\int_{C_u(t)} \int_v \frac{v^{\underline{L}}}{v^0}|z| |KKY|_{\infty} dC_u(t) dv \leq \mathbb{E}[|KKY|_{\infty}](t) \leq 2\epsilon(1+t)^{\eta}$, we obtain
$$\int_0^t \int_{\Sigma_s} \frac{\tau_+^2 (1+s)^{n-3}}{\tau_-}\left|\int_v \left|\frac{\widetilde{v}z}{(v^0)^2} KY\right|dv \right|^2 dxds \lesssim \epsilon^2(1+t)^{\eta} \int_{u=-\infty}^{t} \tau_-^{-2} du .$$
Hence,
$$\int_0^t \int_{\Sigma_s} \int_v \tau_+ \frac{|\widetilde{v}|}{(v^0)^2}|\underline{\alpha}| |zKY| dv dx ds \lesssim \epsilon^2(1+t)^{\eta}$$
and the energy estimate of Proposition \ref{energyfsimple} gives that, for $\epsilon$ small enough, $\mathbb{E}[|KKY|_{\infty}] \leq \epsilon(1+t)^{\eta}$ on $[0,\widetilde{T}]$.
\end{proof}

\subsubsection{The $L^2$ estimates}

We start with the following proposition.

\begin{Pro}\label{basicL2bis}
We have,
$$ \forall \hspace{0.5mm} |\beta| \leq N, \hspace{0.5mm} t \in [0,T], \left\| \int_{v \in \mathbb{R}^n \setminus \{0 \} } | \widehat{Z}^{\beta} f_k | dv \right\|_{L^2(\Sigma_t)} \lesssim \frac{\epsilon}{(1+t)^{\frac{n-1-\eta}{2}}}$$
and
$$ \forall \hspace{0.5mm} |\beta| \leq N, \hspace{0.5mm} t \in [0,T], \left\|\tau_+ \int_{v \in \mathbb{R}^n \setminus \{0 \} } | \widehat{Z}^{\beta} f_k | dv \right\|_{L^2(\Sigma_t)} \lesssim \frac{\epsilon}{(1+t)^{\frac{n-3-\eta}{2}}}$$

We can remove the $(1+t)^{\frac{\eta}{2}}$-loss if $|\beta| \leq N-2n$.
\end{Pro}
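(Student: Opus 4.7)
The strategy mirrors the one used in the massive case (Proposition \ref{L2decay}): we split the range of multi-indices into a low-order part, where pointwise decay already gives the estimate, and a high-order part, which we handle via the system decomposition $X = H + G$ set up in Section \ref{L2systembis}.

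For the low-order range $|\beta| \leq M-1$ (and in particular $|\beta| \leq N - 2n$, since $M - 1 \leq N - \tfrac{5n}{2} - 1 \leq N - 2n$), I would directly invoke the pointwise bound \eqref{decayf2bis} with $z = 1$, giving $\int_v |\widehat{Z}^{\beta} f_k| dv \lesssim \epsilon \tau_+^{-(n-1)} \tau_-^{-1}$. Squaring and integrating in $x$ via Lemma \ref{intesti} produces
\[
\left\| \tau_+^a \int_v |\widehat{Z}^{\beta} f_k|\, dv \right\|_{L^2(\Sigma_t)}^2 \lesssim \epsilon^2 \int_0^{+\infty} \frac{\tau_+^{2a} r^{n-1}}{\tau_+^{2n-2} \tau_-^2}\, dr \lesssim \frac{\epsilon^2}{(1+t)^{n-1-2a}},
\]
for $a \in \{0, 1\}$, which is sharper than claimed and in particular carries no $(1+t)^{\eta/2}$-loss.

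For the high-order range $M \leq |\beta| \leq N$, write $\widehat{Z}^{\beta} f_k = H^i + G^i$ with $G^i = K^i_j Y^j$ as in Section \ref{L2systembis}. For $H^i$, Lemma \ref{L2hombis} provides the pointwise bound $\int_v |H^i| dv \lesssim \epsilon (1+t)^{\eta/2} \tau_+^{-(n-1)} \tau_-^{-1}$, and Lemma \ref{intesti} then yields the weighted $L^2$ decay $\|\tau_+ \int_v |H^i|\, dv\|_{L^2(\Sigma_t)} \lesssim \epsilon (1+t)^{(\eta - n + 3)/2}$. For $G^i$, I would apply Cauchy--Schwarz in $v$ to get
\[
\left(\tau_+ \int_v |G^i|\, dv \right)^2 \leq \tau_+^2 \int_v |Y^j| dv \cdot \int_v |(K^i_j)^2 Y^j| dv,
\]
then use the pointwise estimate $\int_v |Y^j| dv \lesssim \epsilon \tau_+^{-(n-1)} \tau_-^{-1}$ from Lemma \ref{bilanL2bis} to bound $\tau_+^2 \int_v |Y^j| dv$ uniformly on $\Sigma_t$ by $\epsilon (1+t)^{-(n-3)}$ (using $\tau_+ \geq 1+t$ and $\tau_- \geq 1$), and the energy bound $\mathbb{E}[|KKY|_\infty](t) \lesssim \epsilon (1+t)^\eta$ from Lemma \ref{L2inhombis} to control $\int_{\Sigma_t} \int_v |(K^i_j)^2 Y^j| dv dx$. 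This gives
\[
\left\| \tau_+ \int_v |G^i|\, dv \right\|_{L^2(\Sigma_t)}^2 \lesssim \frac{\epsilon^3 (1+t)^\eta}{(1+t)^{n-3}},
\]
which delivers exactly the $(n-3-\eta)/2$ decay rate. The unweighted estimate is treated identically, losing one power of $\tau_+$ in the $L^\infty$ factor.

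The one subtle point where care is required is the $L^\infty_x$-bound on $\tau_+^2 \int_v |Y^j| dv$: one must exploit both the $\tau_-^{-1}$ factor and the fact that $\tau_+^{3-n}$ is monotonically decreasing in $\tau_+$ for $n \geq 4$. Everything else reduces to arithmetic of exponents and the energy bounds already established in Lemmas \ref{L2hombis} and \ref{L2inhombis}, whose proofs (in Sections \ref{homsystembis} and \ref{inhomsystembis}) constitute the genuine analytical content behind this proposition.
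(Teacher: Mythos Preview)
Your proof is correct and follows essentially the same approach as the paper: split at low/high order, use the pointwise decay \eqref{decayf2bis} plus Lemma \ref{intesti} for the low part, and for the high part decompose $\widehat{Z}^\beta f_k = H^i + G^i$, bounding $H^i$ via Lemma \ref{L2hombis} and $G^i = KY$ via Cauchy--Schwarz in $v$, the $L^\infty$ bound on $\tau_+^2 \int_v |Y|_\infty dv$, and the energy control on $|KKY|_\infty$ from Lemma \ref{L2inhombis}. The only cosmetic discrepancy is that the paper makes the low/high split at $|\beta| = N - 2n$ rather than at $M - 1$; since your direct pointwise argument actually applies to the full range $|\beta| \leq N - 2n$ (this is exactly the range of validity of \eqref{decayf2bis}), you should state the split there to fully justify the final sentence of the proposition about removing the $(1+t)^{\eta/2}$-loss on that entire range.
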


\begin{proof}
Let $1 \leq k \leq K$. The first inequality ensues from the second one since $1+t \leq \tau_+$. If $|\beta| \leq N-2n$, we only have to use the pointwise estimate \eqref{decayf1bis} and Lemma \ref{intesti}. If $|\beta| > N-2n$, recall that there exists $1 \leq i \leq |I_1|$ such that $\widehat{Z}^{\beta} f_k=H^i+G^i$. For $1 \leq i \leq |I_1|$, Lemmas \ref{L2hombis} and \ref{intesti} imply 
$$\left\| \tau_+ \int_v | H^i| dv \right\|_{L^2(\Sigma_t)} \lesssim \frac{\epsilon}{(1+t)^{\frac{n-3-\eta}{2}}}.$$ 
Moreover, as $G=KY$, we have, by the Cauchy-Schwarz inequality (in $v$),
$$\left\|\tau_+ \int_v | G^i| dv \right\|_{L^2(\Sigma_t)} \leq \left\|\tau_+^2 \int_v |Y|_{\infty}dv \int_v |K^j_i|^2|Y_j|dv \right\|_{L^1(\Sigma_t)}^{\frac{1}{2}}.$$
As, by Lemmas \ref{bilanL2bis} and \ref{L2inhombis}, $$\left\|\tau_+^2\int_v |Y|_{\infty} dv \right\|_{L^{\infty}(\Sigma_t)} \lesssim \frac{\epsilon}{(1+t)^{n-3}} \hspace{1mm} \text{and} \hspace{1mm} \left\|\int_v |K^j_i|^2|Y_j|dv \right\|_{L^1(\Sigma_t)}^{\frac{1}{2}} \leq \epsilon^{\frac{1}{2}}(1+t)^{\frac{\eta}{2}},$$
we have
$$\left\|\tau_+ \int_v | G^i| dv \right\|_{L^2(\Sigma_t)} \lesssim \frac{\epsilon}{(1+t)^{\frac{n-3-\eta}{2}}}.$$

\end{proof}

These inequalities will not be sufficient to close the estimate on the energy $\mathcal{E}^{S}_{N-\frac{n+2}{2}}[F]$ in the next section. This is why we prove the following proposition.

\begin{Pro}\label{L2nullesti}
For all $|\beta| \leq N$ and all $t \in [0,T]$, we have :

\begin{eqnarray}
\nonumber \left\| \int_v \frac{v^{\underline{L}}}{v^0}|\widehat{Z}^{\beta}f_k| dv \right\|_{L^2(\Sigma_t)} & \lesssim & \frac{\epsilon}{(1+t)^{\frac{n+1-\eta}{2}}}, \\ \nonumber
\left\| \frac{\tau_-}{\tau_+} \int_v \frac{v^{{L}}}{v^0}|\widehat{Z}^{\beta}f_k| dv \right\|_{L^2(\Sigma_t)} & \lesssim & \frac{\epsilon}{(1+t)^{\frac{n+1-\eta}{2}}}, \\ \nonumber
\left\|  \int_v \left|\frac{v^B}{v^0}\widehat{Z}^{\beta}f_k\right| dv \right\|_{L^2(\Sigma_t)} & \lesssim & \frac{\epsilon}{(1+t)^{\frac{n+1-\eta}{2}}}.
\end{eqnarray}

We can remove the $(1+t)^{\frac{\eta}{2}}$-loss if $|\beta| \leq N-2n$.

\end{Pro}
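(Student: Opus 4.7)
The plan is to unify the three desired estimates by first observing the pointwise bounds
\[
\frac{v^{\underline{L}}}{v^0}, \qquad \frac{|v^B|}{v^0}, \qquad \frac{\tau_-}{\tau_+}\frac{v^L}{v^0} \;\lesssim\; \frac{1}{\tau_+}\sum_{z\in\mathbf{k}_0}|z|,
\]
which follow from Proposition \ref{extradecay1} combined with the trivial bounds $v^{\underline{L}}/v^0,\, v^L/v^0 \le 1$ and the fact that $1 = v^0/v^0 \in \mathbf{k}_1 \subset \mathbf{k}_0$ to cover the regime $\tau_+ \lesssim 1$. Consequently, all three estimates reduce to the single weighted $L^2$ bound
\[
\Bigl\|\int_v |z\, \widehat{Z}^\beta f_k|\, dv\Bigr\|_{L^2(\Sigma_t)} \;\lesssim\; \frac{\epsilon}{(1+t)^{(n-1-\eta)/2}}, \qquad z \in \mathbf{k}_0,\ |\beta|\le N,
\]
with no $\eta$-loss when $|\beta| \le N - 2n$; the missing $1/(1+t)$ factor is then supplied by $1/\tau_+$.

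For $|\beta| \le N-2n$, the pointwise decay \eqref{decayf2bis} combined with Lemma \ref{intesti} concludes immediately.

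For $N - 2n < |\beta| \le N$, I plan to mimic the proof of Proposition \ref{basicL2bis} via the splitting $\widehat{Z}^\beta f_k = H^i + G^i$ of Section \ref{L2systembis}. The homogeneous part is handled by Lemma \ref{L2hombis}, whose pointwise estimate $\int_v |z H^i|\, dv \lesssim \epsilon (1+t)^{\eta/2}/(\tau_+^{n-1}\tau_-)$ together with Lemma \ref{intesti} yields $L^2$-decay at the target rate. For the inhomogeneous part $G^i = K^j_i Y_j$, the key step will be a $v$-Cauchy--Schwarz keeping the weight on both factors,
\[
\Bigl(\int_v |z\, K^j_i Y_j|\, dv\Bigr)^2 \;\le\; \int_v |z|\, |Y|_\infty\, dv \,\cdot\, \int_v |z|\, |KKY|_\infty\, dv.
\]
Since $Y$ involves only derivatives of order $\le M-1 \le N-2n$, Lemma \ref{bilanL2bis} bounds the first factor pointwise by $\epsilon/(\tau_+^{n-1}\tau_-) \le \epsilon/(1+t)^{n-1}$ on $\Sigma_t$; Hölder's inequality applied as $L^\infty \cdot L^1$ over $\Sigma_t$, with the spatial $L^1$ norm of $\int_v |z|\,|KKY|_\infty\, dv$ controlled by $\mathbb{E}_{0,1}[|KKY|_\infty](t) \lesssim \epsilon (1+t)^\eta$ from Lemma \ref{L2inhombis}, then produces $\epsilon^2 /(1+t)^{n-1-\eta}$ after squaring.

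The only delicate point, and the step I expect to require care, is preserving the weight $|z|$ on \emph{both} sides of the $v$-Cauchy--Schwarz: only the $z$-weighted norm of $|KKY|_\infty$ is controlled by $\mathbb{E}_{0,1}[|KKY|_\infty]$, while the unweighted $L^1_{x,v}$ norm has not been separately estimated in Section \ref{inhomsystembis}. Once that subtlety is respected, the argument is a direct assembly of the estimates already obtained for $H$ and $G$ in Sections \ref{homsystembis} and \ref{inhomsystembis}, and no genuinely new obstacle appears.
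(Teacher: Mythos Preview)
Your proof is correct and follows essentially the same route as the paper. The only cosmetic difference is that you unify the three estimates at the outset via the single reduction $\frac{v^{\underline L}}{v^0},\ \frac{|v^B|}{v^0},\ \frac{\tau_-}{\tau_+}\frac{v^L}{v^0}\lesssim \tau_+^{-1}\sum_{z\in\mathbf{k}_0}|z|$ and then aim for $\|\int_v |z\widehat Z^\beta f_k|\,dv\|_{L^2}\lesssim\epsilon(1+t)^{-(n-1-\eta)/2}$, whereas the paper keeps the weight $v^{\underline L}/v^0$ (resp.\ $v^L/v^0$, $v^B/v^0$) through the Cauchy--Schwarz step on $G^i$ and only afterwards bounds it by $\tau_+^{-1}\sum_z|z|$ on each factor; both orderings produce the identical product $\tau_+^{-2}\int_v|z||Y|_\infty\,dv\cdot\int_v|z||KKY|_\infty\,dv$ and the same final rate.
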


\begin{proof}
If $|\beta| \leq N-2n$, these inequalities are implied by the pointwise estimates \eqref{extradecayL+}, \eqref{extradecayL-} and \eqref{extradecayA} and Lemma \ref{intesti}. 

If $|\beta| > N-2n$, we prove in the same way that these inequalities are true if we replace $\widehat{Z}^{\beta} f_k$ by $H^i$, with $1 \leq i \leq |I_1|$ such that $\widehat{Z}^{\beta} f_k=H^i+G^i$. It then only remains to consider $G^i$. Recall that by Proposition \ref{extradecay1} and Lemma \ref{bilanL2bis},
$$\frac{v^{\underline{L}}}{v^0} \leq \tau_+^{-1} \sum_{z \in \mathbf{k}_0} |z|, \hspace{3mm}  \frac{v^L}{v^0} \leq \tau_-^{-1} \sum_{z \in \mathbf{k}_0} |z| \hspace{3mm} \text{and} \hspace{3mm} \int_v |z||Y|_{\infty} dv \lesssim \frac{\epsilon}{\tau_+^{n-1}\tau_-}.$$ Hence, using also $G=KY$, the Cauchy-Schwarz inequality (in $v$) and \newline $\mathbb{E}[|KKY|_{\infty}](t) \leq 2\epsilon (1+t)^{\eta}$, we have

\begin{eqnarray}
\nonumber  \left\| \int_v \frac{v^{\underline{L}}}{v^0}|G^i| dv \right\|_{L^2(\Sigma_t)} & \lesssim &  \left\| \int_v \frac{v^{\underline{L}}}{v^0} |Y|_{\infty} dv \int_v \frac{v^{\underline{L}}}{v^0}|(K_i^j)^2 Y_j| dv \right\|_{L^1(\Sigma_t)}^{\frac{1}{2}} \\ \nonumber
 & \lesssim & \sum_{z \in \mathbf{k}_0 } \left\| \tau_+^{-2}\int_v  |z||Y|_{\infty} dv \int_v |z| |KKY|_{\infty} dv \right\|_{L^1(\Sigma_t)}^{\frac{1}{2}} 
\\ \nonumber & \lesssim & \epsilon(1+t)^{-\frac{n+1-\eta}{2}},
\end{eqnarray}
\begin{eqnarray}
\nonumber  \left\| \frac{\tau_-}{\tau_+} \int_v \frac{v^L}{v^0}|G^i| dv \right\|_{L^2(\Sigma_t)} & \lesssim &  \left\| \frac{\tau_-^2}{\tau_+^2}\int_v  \frac{v^L}{v^0}|Y|_{\infty} dv \int_v \frac{v^L}{v^0}|(K_i^j)^2 Y_j| dv \right\|_{L^1(\Sigma_t)}^{\frac{1}{2}} \\ \nonumber 
& \lesssim & \sum_{z \in \mathbf{k}_0 } \left\| \tau_+^{-2}\int_v  |z||Y|_{\infty} dv \int_v |z| |KKY|_{\infty} dv \right\|_{L^1}^{\frac{1}{2}} 
\\ \nonumber & \lesssim & \epsilon(1+t)^{-\frac{n+1-\eta}{2}}.
\end{eqnarray}

The remaining estimate can be proved in a similar way, using $|v^B| \lesssim \frac{v^0}{\tau_+} \sum_{z \in \mathbf{k}_0} |z|$ (see Proposition \ref{extradecay1}).

\end{proof}

\subsection{Step 5: Improvement of the electromagnetic field estimates}
\subsubsection{Improvement of the energies estimates for the potential}

According to the energy estimate of Proposition \ref{energypotential} and the commutation formula of Proposition \ref{CommuA}, one has, for all $t \in [0,T]$,

$$\sqrt{\widetilde{\mathcal{E}}_N[A]}(t) \lesssim \sqrt{\widetilde{\mathcal{E}}_N[A]}(0)+\sum_{|\gamma| \leq N} \int_0^t \left\| \tau_+ e^k\int_{\R^n} |\widehat{Z}^{\gamma} f_k| dv\right\|_{L^2(\R^n)} ds.$$

As $ \widetilde{\mathcal{E}}_N[A](0) \leq \epsilon$ and $\left\| \tau_+ e^k\int_{\R^n} |\widehat{Z}^{\gamma} f_k| dv \right\|_{L^2(\R^n)} \lesssim \epsilon (1+t)^{-\frac{n-3-\eta}{2}}$ (see Proposition \ref{basicL2bis}), we have, for $\epsilon$ small enough and if the constant $C$ is large enough,
$$\forall \hspace{1mm} t \in [0,T], \hspace{3mm} \widetilde{\mathcal{E}}_N[A](t) \leq \frac{C}{2(n-3)} \epsilon\chi(t)(1+t)^{\eta},$$
with $\chi$ such that
$$\chi ( s )= 1+s \hspace{2mm} \text{if} \hspace{2mm} n=4, \hspace{2mm} \chi ( s )= \log^2{(3+s)} \hspace{2mm} \text{if} \hspace{2mm} n=5 \hspace{2mm} \text{and} \hspace{2mm} \chi(s) = 1 \hspace{2mm} \text{if} \hspace{2mm} n \geq 6.$$ 

Similarly, using \eqref{decayf1bis} and Lemma \ref{intesti}, we obtain
$$\forall \hspace{1mm} t \in [0,T], \hspace{3mm} \widetilde{\mathcal{E}}_{N-2n}[A](t) \leq \frac{C}{2(n-3)} \epsilon\chi(t).$$
This concludes the improvement of the bootstrap assumption \eqref{bootA}.

\subsubsection{Improvement of the estimate on $\mathcal{E}^0_N[F]$}

Recall from Proposition \ref{energyt} that, for all $t \in [0,T]$,
$$\mathcal{E}^0_N[F](t) - 2\mathcal{E}^0_N[F](0) \lesssim  \sum_{|\beta|,|\gamma| \leq N} |e^k|\int_0^t \int_{ \Sigma_s } | \mathcal{L}_{Z^{\beta}} (F)_{0 \nu } J(\widehat{Z}^{\gamma} f_k)^{\nu}| dx ds.$$
As, by the Cauchy-Schwarz inequality, the bootstrap assumption \eqref{bootFS} and the $L^2$ estimates of Proposition \ref{basicL2bis},
\begin{eqnarray}
\nonumber \int_0^t \int_{ \Sigma_s } | \mathcal{L}_{Z^{\beta}} (F)_{0 \nu } J(\widehat{Z}^{\gamma} f_k)^{\nu}| dx ds \hspace{-1mm} & \lesssim & \int_0^t \|\mathcal{L}_{Z^{\beta}} (F)\|_{L^2(\Sigma_s)} \|J(\widehat{Z}^{\gamma} f_k) \|_{L^2(\Sigma_s)} ds \\ \nonumber
& \lesssim & \int_0^t \sqrt{\mathcal{E}^0_N[F](s)} \left\| \int_v |\widehat{Z} f_k dv \right\|_{L^2(\Sigma_s)} ds \\ \nonumber
& \lesssim & \int_0^t \epsilon^{\frac{1}{2}} \epsilon(1+s)^{\frac{n-1-\eta}{2}} ds \\ \nonumber & \lesssim  & \epsilon^{\frac{3}{2}},
\end{eqnarray}
we have, for $\epsilon$ small enough, $\mathcal{E}^0_N[F] \leq 3\epsilon$ on $[0,T]$.

\subsubsection{Improvement of the estimates on $\mathcal{E}_N[F]$ and $\mathcal{E}_{N-2n}[F]$}

Recall from Proposition \ref{MoraF} that
$$\mathcal{E}_N[F](t) \leq \mathcal{E}_N[F](0)+(n-3)\widetilde{\mathcal{E}}_N[A](t)+\varphi(t), $$
where $\varphi(t)$ is a linear combination of terms such that
\begin{equation}\label{eq:energyFbis}
 \int_0^t \int_{ \Sigma_s } | \overline{K}_0^{\nu} \mathcal{L}_{Z^{\beta}} F_{\mu \nu } J(\widehat{Z}^{\gamma} f_k)^{\mu}| dx ds \hspace{2mm} \text{and} \hspace{2mm} \int_0^t \int_{\Sigma_s} s |\mathcal{L}_{Z^{\delta}} A_{\mu} \square \mathcal{L}_{Z^{\delta}} A^{\mu} | dx ds,
 \end{equation}
 with $|\beta|$, $|\gamma|$, $|\delta| \leq N$ and $ 1 \leq k \leq K$. Then, if we could prove that each integrals of \eqref{eq:energyFbis} is bounded by $\epsilon^{\frac{3}{2}}\chi(t)(1+t)^{\eta}$, we would have, for $\epsilon$ small enough and $C$ large enough, $\mathcal{E}_N [F] \leq C \epsilon\chi(t)(1+t)^{\eta}$ on $[0,T]$ since $\mathbb{E}_N[F](0) \leq \epsilon$ and $(n-3)\widetilde{\mathcal{E}}_N[A](t) \leq \frac{C}{2}\epsilon \chi(t)(1+t)^{\eta}$.

\begin{Rq}
We could estimate the integrals of \eqref{eq:energyFbis} with a better bound (the computations are similar to those done below in Section \ref{alphabarre1}, but this would not give us a better estimate on $\mathcal{E}_N[F]$ because of the $\chi(t)(1+t)^{\eta}$-loss on $\widetilde{\mathcal{E}}_N[A]$. 
\end{Rq}

We start by bounding the integrals involving the potential. Using Proposition \ref{CommuA} and the Cauchy-Schwarz inequality, we have, for $|\delta| \leq N$,

\begin{flalign*}
& \hspace{0cm} \int_0^t \int_{\Sigma_s} s |\mathcal{L}_{Z^{\delta}} A_{\mu} \square \mathcal{L}_{Z^{\delta}} A^{\mu} | dx ds \lesssim &
\end{flalign*}
 $$ \hspace{4.2cm} \sum_{k=1}^K \sum_{|\gamma| \leq |\delta|} \int_0^t \sqrt{\widetilde{\mathcal{E}}_N[A](s)} \left\|\tau_+\int_v |\widehat{Z}^{\gamma} f_k| dv \right\|_{L^2(\Sigma_s)} ds.$$
Using the $L^2$ estimates of Proposition \ref{basicL2bis} and that $\widetilde{\mathcal{E}}_N[A](s) \lesssim \epsilon \chi(s)(1+t)^{\eta}$, it comes
\begin{eqnarray}
 \nonumber
\sum_{|\delta| \leq N} \int_0^t \int_{\Sigma_s} s |\mathcal{L}_{Z^{\delta}} A_{\mu} \square \mathcal{L}_{Z^{\delta}} A^{\mu} | dx ds & \lesssim & \epsilon^{\frac{3}{2}}\int_0^t \frac{\sqrt{\chi(s)}}{(1+s)^{\frac{n-3}{2}-\eta}} ds \\ \nonumber
& \lesssim & \epsilon^{\frac{3}{2}}\chi(t)(1+t)^{\eta}.
\end{eqnarray}

In order to estimate the remaining integrals of \eqref{eq:energyFbis}, we express \newline $\overline{K}_0^{\nu} \mathcal{L}_{Z^{\beta}} (F)_{\mu \nu } J(\widehat{Z}^{\gamma} f_k)^{\mu}$ in null coordinates. Dropping the dependance in $\mathcal{L}_{Z^{\beta}}(F)$ or $\widehat{Z}^{\gamma} f_k$, this gives us the four following terms :

\begin{equation}\label{nulltermsbis}
\tau_+^2 \rho J^{\underline{L}}, \hspace{3mm} \tau_-^2 \rho J^{{L}}, \hspace{3mm} \tau_+^2 \alpha_B J^B,  \hspace{3mm} \text{and} \hspace{3mm} \tau_-^2 \underline{\alpha}_B J^B. 
\end{equation}

As $$J^{\underline{L}}=\int_v \frac{v^{\underline{L}}}{v^0} \widehat{Z}^{\gamma} f_k dv, \hspace{3mm} J^{{L}}=\int_v \frac{v^{{L}}}{v^0} \widehat{Z}^{\gamma} f_k dv \hspace{3mm} \text{and} \hspace{3mm} J^B=\int_v \frac{v^B}{v^0} \widehat{Z}^{\gamma} f_k dv,$$
we have,
$$|J^L|, \hspace{1mm} |J^{\underline{L}}|, \hspace{1mm} |J^B| \lesssim \int_{v} |\widehat{Z}^{\gamma} f_k| dv.$$
The integrals (on $[0,T] \times \mathbb{R}^n_x \times \left( \mathbb{R}^n_v \setminus \{0\} \right)$) of each of the four terms of \eqref{nulltermsbis} are then bounded, using the Cauchy-Schwarz inequality (in $x$), by

$$\int_0^t \sqrt{\mathcal{E}_N[F]}(s) \left\| \tau_+ \int_v |\widehat{Z}^{\gamma} f_k| dv \right\|_{L^2(\Sigma_s)} ds.$$
By Proposition \ref{basicL2bis} and the bootstrap assumption \eqref{bootFbis},
\begin{eqnarray}
 \nonumber
\int_0^t \sqrt{\mathcal{E}_N[F]}(s) \left\| \tau_+ \int_v |\widehat{Z}^{\gamma} f_k| dv \right\|_{L^2(\Sigma_s)} ds & \lesssim & \int_0^t \sqrt{\epsilon \chi(s)}\frac{\epsilon}{(1+s)^{ \frac{n-3}{2}-\eta }} ds \\ \nonumber
& \lesssim & \epsilon^{\frac{3}{2}}\chi(t)(1+t)^{\eta}.
\end{eqnarray}

Hence, $\mathcal{E}_N[F](t) \leq C\epsilon\chi(t)(1+t)^{\eta}$ for all $t \in [0,T]$ if $\epsilon$ is small enough. We can prove exactly in the same way that $\mathcal{E}_{N-2n}[F](t) \leq C\epsilon\chi(t)$ for all $t \in [0,T]$ if $\epsilon$ is small enough.

We then improve the bootstrap assumption \eqref{bootFbis}.

\subsubsection{Improvement of the estimates on $\mathcal{E}^S_N[F]$ and $\mathcal{E}^S_{N-2n}[F]$}\label{alphabarre1}

Recall from Propositions \ref{scalF} and \ref{CommuA} that, for all $t \in [0,T]$,
$$\mathcal{E}^S_N[F](t) \leq \mathcal{E}_N[F](0)+C_n\left(\widetilde{\mathcal{E}}_N[A](0)+\frac{\widetilde{\mathcal{E}}_N[A](t)}{1+t}+\frac{\mathcal{E}_N[F](t)}{1+t}\right)+\psi(t),$$
where $C_n$ is a positive constant and where $\psi(t)$ is a linear combination of terms such as
\begin{equation}\label{machin1516}
\int_0^t \int_{\Sigma_s} |\mathcal{L}_{Z^{\beta}}(F)_{0 \mu} J^{\mu}(\widehat{Z}^{\gamma}f_k) |+|S^{\nu} \mathcal{L}_{Z^{\beta}}(F)_{\nu \mu} J^{\mu}(\widehat{Z}^{\gamma}f_k) | dx ds ,
\end{equation}
with $|\beta|$, $|\gamma| \leq N$ and $1 \leq k \leq K$, and 
\begin{equation}\label{machin1515}
\int_0^t \int_{\Sigma_s} \left|\mathcal{L}_{Z^{\beta}}(A)_{\mu} \int_v \frac{v^{\mu}}{v^0}\widehat{Z}^{\gamma} f_k \right|dx ds,
\end{equation}
with $|\beta|$, $|\gamma| \leq N$ and $1 \leq k \leq K$.

Let $|\beta|+|\gamma| \leq N$ and $1 \leq k \leq K$. We denote the null decomposition of $ \mathcal{L}_{Z^{\beta}}(F)$ by $(\alpha, \underline{\alpha}, \rho, \sigma)$, $\widehat{Z}^{\gamma}f_k$ by $g$ and $J(\widehat{Z}^{\gamma}f_k)$ by $J$. Expressing $\mathcal{L}_{Z^{\beta}}(F)_{0 \mu} J^{\mu}(g)$ and $S^{\nu} \mathcal{L}_{Z^{\beta}}(F)_{\nu \mu} J^{\mu}(g)$ in null components, \eqref{machin1516} would be bounded by $\epsilon^{\frac{3}{2}}$ if

\begin{flalign*}
& \hspace{6mm} \int_0^t\int_{\Sigma_s} |\tau_+ \rho J^{\underline{L}}|dxds \lesssim \epsilon^{\frac{3}{2}}, \hspace{18mm} \int_0^t\int_{\Sigma_s} |\tau_- \rho J^L|dxds \lesssim \epsilon^{\frac{3}{2}}, \\
& \hspace{6mm} \int_0^t\int_{\Sigma_s} |\tau_+ \alpha J^B|dxds \lesssim \epsilon^{\frac{3}{2}} \hspace{6mm} \text{and} \hspace{6mm} \int_0^t\int_{\Sigma_s} |\tau_+ \underline{\alpha} J^B|dxds \lesssim \epsilon^{\frac{3}{2}}. 
\end{flalign*}
By the Cauchy-Schwarz inequality,

$$\int_0^t \int_{\Sigma_s} \tau_+ |\rho J^{\underline{L}}| dx ds \lesssim \int_{0}^{t} \|\tau_+ \rho \|_{L^2(\Sigma_s)} \left\| \int_v \frac{v^{\underline{L}}}{v^0}|g| dv \right\|_{L^2(\Sigma_s)} ds .$$

Since, by the bootstrap assumption \eqref{bootFbis},  $\|\tau_+ \rho \|^2_{L^2(\Sigma_s)} \lesssim\epsilon\chi(s)(1+s)^{\eta}$ and, according to Proposition \ref{L2nullesti}, $\left\|  \int_v \frac{v^{\underline{L}}}{v^0}|g| dv \right\|_{L^2(\Sigma_s)} \lesssim \epsilon (1+s)^{-\frac{n+1-\eta}{2}}$, it comes that
 $$\int_0^t \int_{\Sigma_s} \tau_+^2 |\rho J^{\underline{L}}| dx ds \lesssim \epsilon^{\frac{3}{2}} \int_0^t \frac{\sqrt{\chi(t)}}{(1+s)^{\frac{n+1}{2}-\eta}} ds \lesssim \epsilon^{\frac{3}{2}}.$$
The other terms are treated similarly.
$$\int_0^t \int_{\Sigma_s} \tau_- |\rho J^{{L}}| dx ds \lesssim \int_{0}^{t} \| \tau_+ \rho \|_{L^2(\Sigma_s)} \left\| \frac{\tau_-}{\tau_+} \int_v \frac{v^{{L}}}{v^0}|g| dv \right\|_{L^2(\Sigma_s)} ds \lesssim \epsilon^{\frac{3}{2}} ,$$

$$\int_0^t \int_{\Sigma_s} \tau_+ |\alpha_B J^B| dx ds \lesssim \int_{0}^{t} \|\tau_+ \alpha \|_{L^2(\Sigma_s)} \left\| \int_v \frac{v^B}{v^0}|g| dv \right\|_{L^2(\Sigma_s)} ds \lesssim \epsilon^{\frac{3}{2}},$$

$$\int_0^t \int_{\Sigma_s} \tau_- |\underline{\alpha}_B J^B| dx ds \lesssim \int_{0}^{t} \|\tau_- \underline{\alpha} \|_{L^2(\Sigma_s)} \left\| \int_v \frac{v^B}{v^0}|g| dv \right\|_{L^2(\Sigma_s)} ds \lesssim \epsilon^{\frac{3}{2}}.$$

Denoting $\mathcal{L}_{Z^{\beta}}(A)$ by $B$, \eqref{machin1515} would be bounded by $\epsilon^{\frac{3}{2}}(1+t)^{\eta}$ if we prove that

\begin{equation}\label{eq5}
\int_0^t \int_{\Sigma_s} |B_L J^L| dx ds \lesssim \epsilon^{\frac{3}{2}} (1+t)^{\eta},
\end{equation}

\begin{equation}\label{eq6}
\int_0^t \int_{\Sigma_s} |B_{\underline{L}} J^{\underline{L}}| dx ds \lesssim \epsilon^{\frac{3}{2}} \hspace{3mm} \text{and} \hspace{3mm} \int_0^t \int_{\Sigma_s} |B_{D} J^{D}| dx ds \lesssim \epsilon^{\frac{3}{2}}.
\end{equation}

Let us show \eqref{eq6} first. Using Proposition \ref{L2nullesti} and the bound on $\widetilde{\mathcal{E}}_N[A]$, we have $$\|B\|_{L^2(\Sigma_s)} \lesssim \sqrt{\epsilon \chi(s)(1+t)^{\eta}} \hspace{3mm} \text{and} \hspace{3mm} \|J^D\|_{L^2(\Sigma_s)}+\|J^{\underline{L}}\|_{L^2(\Sigma_s)} \lesssim \frac{\epsilon}{(1+s)^{\frac{n-\eta}{2}}}.$$ Hence, by the Cauchy-Schwarz inequality,

$$\int_0^t \int_{\Sigma_s} |B_{\underline{L}} J^{\underline{L}}|+|B_DJ^D| dx ds \lesssim \epsilon^{\frac{3}{2}}\int_0^t \frac{\sqrt{\chi(s)}}{ (1+s)^{2-\eta}} ds \lesssim \epsilon^{\frac{3}{2}}.$$

For \eqref{eq5}, we have

\begin{eqnarray}
\nonumber \int_0^t \int_{\Sigma_s} |B_L J^L| dx ds & \lesssim & \int_0^t \|B_L\|_{L^2(\Sigma_s)} \left\| \int_v |g| dv\right\|_{L^2(\Sigma_s)} ds \\ \nonumber
& \lesssim & \epsilon^{\frac{3}{2}} \int_0^t \frac{\sqrt{\chi(s)}}{(1+s)^{\frac{n-1}{2}-\eta}}ds. \\ \nonumber
& \lesssim & \epsilon^{\frac{3}{2}}(1+t)^{\eta}.
\end{eqnarray}

Hence, if $\epsilon$ is small enough and $\overline{C}$ large enough, we have $\mathcal{E}^S_N[F] \leq \overline{C} \epsilon (1+t)^{\eta}$ for all $t \in [0,T]$.

In view of the above, $\mathcal{E}^S_{N-2n}[F] \leq \overline{C}\epsilon$ on $[0,T]$, for $\epsilon$ small enough, would follow if we improve the bound in \eqref{eq5} from $\epsilon^{\frac{3}{2}}(1+t)^{\eta}$ to $\epsilon^{\frac{3}{2}}$, when $|\beta| \leq N-\frac{n+2}{2}$. To do this, we use a pointwise estimate on $B_L$ and we keep $J^L$ in $L^1$-norm. By Lemma \ref{lorenznull}, we have

$$|B_L(t,x)| \lesssim \frac{\sqrt{\epsilon \chi(t)(1+t)^{\eta}}}{\tau_+^{\frac{n}{2}}},$$
which implies 
\begin{eqnarray}
\nonumber \int_0^t \int_{\Sigma_s} |B_L J^L| dx ds & \lesssim & \int_0^t \|B_L\|_{L^{\infty}(\Sigma_s)} \|g\|_{L^1(\Sigma_s)}ds \\ \nonumber & \lesssim & \epsilon^{\frac{3}{2}} \int_0^t \frac{\sqrt{\chi(s)}\log^*(3+s)}{(1+s)^{\frac{n-\eta}{2}}}ds \lesssim \epsilon^{\frac{3}{2}}.
\end{eqnarray}

This concludes the improvement of the bootstrap assumption \eqref{bootFS}.

\section{Non existence}\label{section8}

We show in this chapter the following proposition. Let us denote $(1,...,1)$ by $\vec{u}$ and we recall that $E^i=F_{0i}$.

\begin{Pro}\label{nolocal}

Let the dimension $n$ be such that $n \geq 2$ and let $\chi : \R \rightarrow \R_+$ be a function of class $C^{\infty}$ such that $\chi=1$ on $]-\infty,1]$ and $\chi=0$ on $[3,+\infty[$. We suppose also that $\chi$ is decreasing on $[1,3]$. Let also $M \in \R_+$ such that $M^{-1}= \int_{v \in \R^n} \chi(|v|^2) dv$.

The Vlasov-Maxwell system \eqref{syst1}-\eqref{syst3}, with two species ($K=2$), $e_1=1$, $e_2=-1$, $m_1 =0$, $m_2 \in \R_+$ and the initial data 

$$E_0:x \mapsto 10\chi(2)^{-1}\chi\left(2\frac{r^2}{n}\right)\vec{u}, \hspace{3mm} {F_0}_{ij}=0 \hspace{3mm} \text{for all} \hspace{3mm} 1 \leq i,j\leq n,$$
$$f_{01}=M\Big(div(E_0)+\|div(E_0)\|_{L^{\infty}(\R^n)}\Big)\chi\left( \frac{2r^2}{3n} \right) \chi(|v|^2),$$ and $$f_{02}=M \|div(E_0)\|_{L^{\infty}(\R^n)} \chi\left( \frac{2r^2}{3n} \right) \chi(|v|^2),$$

do not admit a $C^1$ local solution, provided\footnote{Note that such a function $\chi$ exists. Recall for instance the classical construction of cut-off functions} $w \mapsto w \chi'(2w^2)$ is not constant on a neighborhood of $1$.

\end{Pro}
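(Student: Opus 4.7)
The plan is to extract an algebraic compatibility condition at $v=0$ that every $C^1$ solution must satisfy, and then to show that the hypothesis on $\chi$ is precisely what is needed to make this condition fail for the given initial data. Assuming for contradiction a $C^1$ local solution $(f_1, f_2, F)$ on some $[0,T]$, I begin from the massless Vlasov equation $|v|\partial_t f_1 + v^i\partial_i f_1 + v^\mu F_\mu{}^j \partial_{v^j} f_1 = 0$. For $v \neq 0$ I divide by $|v|$ to obtain a classical transport equation whose left-hand side is identically zero on the punctured velocity space. Evaluating the divided equation at $v=\epsilon\xi$ for a unit vector $\xi$ and letting $\epsilon \to 0^+$, using the continuity of $\partial_t f_1, \nabla_x f_1, \nabla_v f_1$ at $v=0$, I obtain
\begin{equation*}
  \bigl[\partial_t f_1 + E^j \partial_{v^j} f_1\bigr]_{v=0} + \xi^i\bigl[\partial_i f_1 + F_{ij}\partial_{v^j}f_1\bigr]_{v=0} = 0 \quad \text{for every } \xi \in \mathbb{S}^{n-1}.
\end{equation*}
Since this must vanish for every direction $\xi$, the constant and linear-in-$\xi$ parts must vanish separately, giving in particular the spatial compatibility relation $\partial_i f_1(t,x,0) + F_{ij}(t,x)\,\partial_{v^j}f_1(t,x,0)=0$ for all $(t,x,i)$.

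I will then evaluate this relation at $t=0$. Since $F_0$ is purely electric ($F_{0,ij}=0$) and $\partial_{v^j}f_{01}(x,0) = 2MA(x) v^j\chi'(|v|^2)\big|_{v=0}=0$ (because $\chi'(0)=0$), the condition collapses to $M\,\partial_i A(x) = 0$, where $A(x) = (\mathrm{div}(E_0)(x)+C)\chi(2r^2/(3n))$ and $C := \|\mathrm{div}(E_0)\|_{L^\infty}$. Restricting to the diagonal ray $x=w\vec u$ for $w$ in a neighborhood of $1$, the cutoff $\chi(2r^2/(3n))$ is identically $1$ (since there $2w^2/3 \le 1$), so $\partial_i A(w\vec u) = \partial_i \mathrm{div}(E_0)(w\vec u)$. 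Differentiating the explicit expression $\mathrm{div}(E_0)(x) = (40\chi(2)^{-1}/n)\chi'(2r^2/n)(\vec u\cdot x)$ gives
\begin{equation*}
  \partial_i\mathrm{div}(E_0)(w\vec u) = \frac{40\chi(2)^{-1}}{n}\bigl[\chi'(2w^2)+4w^2\chi''(2w^2)\bigr] = \frac{40\chi(2)^{-1}}{n}\,h'(w),
\end{equation*}
with $h(w) := w\chi'(2w^2)$. The hypothesis that $h$ is not constant on any neighborhood of $1$ then yields, via the mean value theorem, a $w_0$ arbitrarily close to $1$ with $h'(w_0) \neq 0$, so that $\partial_i f_{01}(w_0\vec u,0)\neq 0$, contradicting the derived constraint. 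Along the way I verify that the data is admissible: the normalization $M^{-1}=\int\chi(|v|^2)dv$ together with $\chi(2r^2/(3n))=1$ on the support of $\mathrm{div}(E_0)$ ensures the Maxwell constraint $\mathrm{div}(E_0)=\int f_{01}dv-\int f_{02}dv$, and the shift by $C$ makes both $f_{0k} \geq 0$; moreover $f_{01}(0,0) = MC > 0$ so that the data indeed does not vanish for small velocities.

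The principal difficulty lies in the first step: justifying that the limits of $\partial_t f_1, \nabla_x f_1, \nabla_v f_1$ at $v=0$ exist, so that the limiting identity can be read off along every $\xi$. The natural notion of a $C^1$ classical solution in this context requires first derivatives to extend continuously up to $v=0$ (the phase space being $\mathbb{R}^n_v\setminus\{0\}$), which is consistent with the smoothness of $f_{01}$ at $v=0$ and with matching this data as $t\to 0^+$. Once this regularity is established, the rest of the argument is an explicit closed-form calculation that isolates $h'(w)$ and allows the hypothesis on $\chi$ to be invoked directly.
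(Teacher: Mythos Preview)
Your argument hinges on the claim that a ``$C^1$ local solution'' must have $\partial_t f_1$, $\nabla_x f_1$, $\nabla_v f_1$ extending continuously to $v=0$. This is the gap, and you have essentially admitted it in your last paragraph without resolving it. The phase space for the massless species is $\mathbb{R}^n_v\setminus\{0\}$ (see the paper's conventions in the introduction), and the proposition is asserting non-existence of a solution that is $C^1$ on that open set, with no regularity demanded at $v=0$. Your redefinition of ``natural $C^1$'' to include continuous extension of the derivatives is not justified; indeed it begs the question, since the Vlasov equation itself shows that $\partial_t f_1(t,x,\epsilon\xi)$ depends on the direction $\xi$ as $\epsilon\to 0$, so a continuous extension exists \emph{only if} the linear-in-$\xi$ part already vanishes --- which is precisely what you are trying to prove. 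The argument is circular.

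The paper's proof is engineered to avoid ever evaluating $f_1$ or its derivatives at $v=0$. It fixes $\eta>0$ and studies the characteristic through $(\vec u,\eta\vec u)$; using a symmetry lemma for the system, these characteristics stay on the diagonal and their velocity part $V_\eta(s,t)$ reaches zero at a finite time $t-\tau_\eta(t)$. The characteristic is then continued past that time by approximating with nearby characteristics carrying a small transverse velocity $v_\epsilon$ (which never vanish), yielding $f_1(t,\vec u,\eta\vec u)=f_{01}(X_\eta(0,t),V_\eta(0,t))$ for all $t$. Differentiating this identity in $t$ on either side of the critical time $T_\eta$ and matching the one-sided limits produces the constraint $\sum_i\partial_i f_{01}\big((1-T_\eta/\sqrt n)\vec u,0\big)=0$; letting $\eta\to 0$ (so $T_\eta\to 0$) forces this to hold for $w=1-T_\eta/\sqrt n$ arbitrarily close to $1$, and the hypothesis on $w\mapsto w\chi'(2w^2)$ delivers the contradiction. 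The endpoint computation is the same as yours, but the route --- characteristics, the symmetry reduction, the transverse-perturbation continuation, and the implicit-function analysis of $T_\eta$ --- is exactly what replaces your unproved regularity assumption.
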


\begin{Rq}
Note that the initial data satisfy the constaint equations. Indeed, 
$$\int_v f_{01}-f_{02}dv=div(E_0) \chi\left(\frac{2r^2}{3n}\right)$$and $x \mapsto \chi\left(\frac{2r^2}{3n}\right)$ is equal to $1$ on the support of $E_0$. The other ones, $\nabla_{[i} {F_0}_{jk]}=0$, are obvious to check.
\end{Rq}

\begin{Rq}

There is uniqueness for a such Cauchy problem in the class of the local $C^1$ functions. Indeed, let $(f_1,f_2,F)$ and $(g_1,g_2,G)$ be two such solutions on $[0,T]$. As $f_i$ and $g_i$ are the unique $C^1$ solution of $T_{(-1)^{i+1}F}(h)=0$ and $T_{(-1)^{i+1}G}(h)=0$ on $[0,T]$, respectively, we obtain with the method of characteristics that they both vanish for $|x| \geq \frac{3}{\sqrt{2}}\sqrt{n}+T$. In view of the wave equations \eqref{waveelec} and \eqref{wavemagn}, the same is true for $F$ and $G$. All the integrals considered below will then be finite. We have
\begin{eqnarray}
\nonumber T_F(f_q-g_q) & = & (G-F)(v,\nabla_v g_q), \\ \nonumber
\nabla^{\mu} (F-G)_{\mu \nu} & = & e^q J(f_q-g_q), \\ \nonumber
\nabla^{\mu} {}^* \! (F-G)_{\mu \alpha_1 ... \alpha_{n-2}} & = & 0.
\end{eqnarray}
Using Propositions \ref{energyt} and \ref{energyfsimple}, we obtain
\begin{eqnarray}
\nonumber h(t) & := & \sum_{q=1}^2 \mathbb{E}_0[f_q-g_q](t)+\sqrt{\mathcal{E}^0_0[F-G](t)} \\ \nonumber
& \lesssim & \int_0^t h(s)\left(1+ \left\|\int_v |e^k\nabla_v g_k|dv \right\|_{L^2(\Sigma_s)} \right)ds.
\end{eqnarray}
The Grönwall lemma gives us that $h=0$ on $[0,T]$, implying $(f,F)=(g,G)$.
\end{Rq}

The strategy of the proof of Proposition \ref{nolocal} is to construct, for all $T_0>0$, a characteristic of the system such that its velocity part vanish in a time less than $T_0$. For this, we make crucial use of the colinearity of $y \mapsto E(t,y\vec{u})$ and $\vec{u}$ which is a corollary of the following subsection.

\subsection{A symmetry property for the Vlasov-Maxwell system}

To lighten the notations, we use $x_{(ij)}$, if $i \neq j$, to denote \newline $(x^1,...,x^{i-1},x^j,x^{i+1},...,x^{j-1},x^i,x^{j+1},...,x^n)$.

\begin{Pro}\label{sym}

We consider the $n$ dimensional Vlasov-Maxwell system, with $K$ species,

\begin{eqnarray}
  \nonumber  T_{m_q}(f_q)  +e_qv^0E^i\partial_{v^i}f_q+e_qv^i {F_i}^j \partial_{v^j} f_q & = & 0 ,\\ \nonumber \nabla^{\mu} F_{\mu \nu} & = & e^q J(f_q)_{\nu}, \\ \nonumber
    \nabla^{\mu} {}^* \! F_{\mu \lambda_1 ... \lambda_{n-2}} & = & 0  , 
\end{eqnarray}

with the initial smooth data $f_q(0,.,.)=f_{0q}$, $F(0,.)=F_0$. We suppose that the initial data satisfy the following symmetry relations
\begin{eqnarray}
 \nonumber f_{q0}(x_{(ik)},v_{(ik)}) & = & f_{q0}(x,v), \hspace{2mm} i \neq j, \\ \nonumber E^i_0(x_{(ik)}) & = & E^k_0(x), \hspace{2mm} i \neq k, \\ \nonumber
\nonumber E^i_0(x_{(kl)}) & = & E^i_0(x), \hspace{2mm} l \neq i, \hspace{2mm} k \neq i. \\ \nonumber (F_{kl})_0(x_{(kl)}) & = & -(F_{kl})_0(x), \\ \nonumber
(F_{kl})_0(x_{(ik)}) & = & (F_{il})_0(x), \hspace{2mm} l \neq k,i,  \\ \nonumber (F_{kl})_0(x_{(ij)}) & = & (F_{kl})_0(x), \hspace{2mm} i \neq k,l, \hspace{2mm} j \neq k,l.
\end{eqnarray}

If there is a unique classical solution $(f_1,...,f_K,F)$ on $[0,T[$, then \newline $(f_1(t,.,.),...,f_K(t,.,.),F(t,.))$ satisfies also these symmetries.
\end{Pro}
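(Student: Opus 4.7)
The approach is to recognize the listed symmetry relations as saying that the initial data set $(f_{01},\dots,f_{0K},F_0)$ is invariant under the pullback by any transposition of spatial coordinates, the transposition being lifted to the $v$-variable for the Vlasov fields and acting as a $2$-form pullback on $F$. Since such a transposition is an isometry of Minkowski spacetime and the Vlasov-Maxwell system is tensorial, the same pullback applied to the solution yields another classical solution on $[0,T[$ with the same initial data; uniqueness then forces the solution to satisfy the symmetries at every $t \in [0,T[$.

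More concretely, fix $1 \leq i < k \leq n$, let $\sigma$ denote the transposition $(ik)$ extended by $\sigma(0)=0$ and $\sigma(\mu)=\mu$ for $\mu \notin \{i,k\}$, and define $\Phi_\sigma(t,x,v) = (t, x_{(ik)}, v_{(ik)})$. Set
\[
\tilde f_q(t,x,v) := f_q \circ \Phi_\sigma(t,x,v), \qquad \tilde F_{\mu\nu}(t,x) := F_{\sigma(\mu)\sigma(\nu)}(t, x_{(ik)}).
\]
Since $v^0 = \sqrt{m_q^2+|v|^2}$ is invariant under $v \mapsto v_{(ik)}$, the chain rule gives $T_{m_q}(\tilde f_q) = (T_{m_q} f_q)\circ \Phi_\sigma$, and a direct term-by-term check yields $\tilde F(\tilde v, \nabla_v \tilde f_q) = [F(v,\nabla_v f_q)]\circ \Phi_\sigma$ (the key point being that both $v^\mu {F_\mu}^j$ and $\partial_{v^j}$ transform compatibly under the $\sigma$-relabeling of spatial indices). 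So $\tilde f_q$ solves the transport equation with $\tilde F$ in place of $F$. Analogously, the Maxwell equations $\nabla^\mu F_{\mu\nu} = e^q J(f_q)_\nu$ and $\nabla^\mu {}^*\! F_{\mu\alpha_1\dots\alpha_{n-2}} = 0$ pull back to the same equations for $\tilde F$, using for the source the change of variable $v \mapsto v_{(ik)}$ in the integral defining $J(\tilde f_q)_\nu$.

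One then checks that the hypotheses listed in the proposition are exactly the requirements $\tilde f_q(0,\cdot,\cdot) = f_{0q}$ and $\tilde F(0,\cdot) = F_0$. Unpacking $\tilde F_{\mu\nu}(0,x) = F_0{}_{\sigma(\mu)\sigma(\nu)}(x_{(ik)})$ recovers the sign-flip relation when both indices lie in $\{i,k\}$, the index-swap relations when exactly one does, and invariance when neither does; taking $\mu=0$ yields the stated conditions on $E_0^i = (F_0)_{0i}$. By the hypothesis of uniqueness of the classical solution on $[0,T[$, it follows that $(\tilde f_q, \tilde F) = (f_q, F)$, which is exactly the invariance of $(f_q(t,\cdot,\cdot),F(t,\cdot))$ under $(ik)$-transposition for every $t \in [0,T[$. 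Doing this for each transposition $(ik)$ yields all the relations of the proposition.

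The only real obstacle here is bookkeeping: one must carefully extend the spatial transposition to act simultaneously on the velocity coordinates (via $v \mapsto v_{(ik)}$) and on tensor indices (via the pullback formula above), and then verify that each structure appearing in the system, namely the transport operator $T_{m_q}$, the Lorentz force $v^\mu {F_\mu}^j \partial_{v^j}$, the Maxwell source $J(f_q)_\nu$, and the Bianchi equation on ${}^*\! F$, is genuinely covariant under this lifted action. No new analytic input beyond the already-available uniqueness of classical solutions is required.
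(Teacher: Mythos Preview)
Your proposal is correct and follows essentially the same strategy as the paper: define the transposed fields, verify they again solve the Vlasov--Maxwell system, and invoke uniqueness to conclude they coincide with the original solution. The paper carries this out by an explicit component-by-component check for the transposition $\tau=(12)$ (writing out each entry of $G$ and verifying the Maxwell and Vlasov equations directly), whereas you phrase the same argument more compactly as covariance of the system under the pullback by a spatial isometry; your formula $\tilde F_{\mu\nu}(t,x)=F_{\sigma(\mu)\sigma(\nu)}(t,x_{(ik)})$ reproduces exactly the paper's explicit definitions of $G$.
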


\begin{proof}

To simplify the notation, we suppose that $K=1$, $e_q=1$ and we consider the transposition $\tau=(12)$. We denote $(y^2,y^1,y^3,...,y^n)$ by $y_{\tau}$, $m_1$ by $m$ and $f_1$ by $f$. Let $g$ and $G$ be defined by 
\begin{eqnarray}
\nonumber g(t,x,v) & := & f(t,x_{\tau},v_{\tau}), \\ \nonumber
G_{02}(t,x) & := & E^1(t,x_{\tau}), \\ \nonumber
G_{01}(t,x) & := & E^2(t,x_{\tau}),\\ \nonumber
G_{0k}(t,x) & := & E^k(t,x_{\tau}), \hspace{2mm} k \geq 3, \\ \nonumber
G_{12}(t,x) & := & -F_{12}(t,x_{\tau}), \\ \nonumber
G_{1k}(t,x) & := & F_{2k}(t,x_{\tau}), \hspace{2mm} k \neq 1, \hspace{2mm} k \neq 2, \\ \nonumber
G_{2k}(t,x) & := & F_{1k}(t,x_{\tau}), \hspace{2mm} k \neq 1, \hspace{2mm} k \neq 2, \\ \nonumber
G_{kl}(t,x) & := & F_{kl}(t,x_{\tau}), \hspace{2mm} k,l \geq 3
\end{eqnarray}
and let $D^k=G_{0k}$. We want to prove that $(g,G)=(f,F)$. By assumption, this is true for $t=0$ and, by uniqueness, it will be true for $t<T$ if we can prove that $(g,G)$ is solution to the same system as $(f,F)$.

\vspace{2mm}
\textbf{Propagation of symmetry for the Maxwell equations} 
\vspace{2mm}

Let us prove first that $\nabla^{\mu} G_{\mu \nu}= J(g)_{\nu}$. As $J(h)^{\nu}= \int_v \frac{v^{\nu}}{v^0}hdv$, we have, by the change of variables $v'=v_{\tau}$,
$$J(g)^{1}(t,x)=J(f)^{2}(t,x_{\tau}), \hspace{3mm} J(g)^{2}(t,x)=J(f)^{1}(t,x_{\tau}) $$ and $$ J(g)^{\nu}(t,x)=J(f)^{\nu}(t,x_{\tau}) \hspace{3mm} \text{if } \hspace{3mm} \nu \neq 1,2.$$
The equation $\nabla^i G_{i0}=J(g)_0$ then comes from
$$\partial_1 D^1(t,x)=\partial_2 E^2(t,x_{\tau}), \hspace{3mm} \partial_2 D^2(t,x)=\partial_1 E^1(t,x_{\tau}) \hspace{3mm} \text{and} \hspace{3mm} \nabla_i E^i=J(f)_0.$$
As 
\begin{eqnarray}
\nonumber \nabla^{\mu} G_{\mu 1}(t,x) & = & -\partial_t E^2(t,x_{\tau})-\partial_2\left(F_{21}(t,x_{\tau})\right)+\sum_{i=3}^n\nabla^i\left(F_{i 2}(t,x_{\tau})\right) \\ \nonumber
& = & -\partial_t E^2(t,x_{\tau})-\partial_1 F_{21}(t,x_{\tau})+\sum_{i=3}^n\nabla^iF_{i 2}(t,x_{\tau}) \\ \nonumber
& = & \nabla^{\mu} F_{\mu 2}(t,x_{\tau}),
\end{eqnarray}
we have $\nabla^{\mu} G_{\mu 1}= J(g)_1.$ The equation $\nabla^{\mu} G_{\mu 2}(t,x)= J(g)_2$ can be obtained similarly. The remaining equations are obtained from
\begin{eqnarray}
\nonumber \nabla^{j} G_{j k}(t,x) & = & \partial_1\left(F_{2k}(t,x_{\tau})\right)+\partial_2\left(F_{1k}(t,x_{\tau})\right)+\sum_{i=3}^n\nabla^i\left(F_{i k}(t,x_{\tau})\right) \\ \nonumber
& = &  \partial_2 F_{2k}(t,x_{\tau})+\partial_1 F_{1k}(t,x_{\tau})+\sum_{i=3}^n\nabla^iF_{i 2}(t,x_{\tau}) \\ \nonumber
& = & \nabla^{j} F_{j k}(t,x_{\tau})
\end{eqnarray}
and $\partial_t D^k(t,x)=\partial_t E^k(t,x_{\tau})$, for $k \geq 3$. For the other part of the Maxwell equations, recall from Proposition \ref{equivalsyst} that it is equivalent to prove 
$$ \nabla_{[ \lambda} G_{\mu \nu ]}=0.$$
We have
\begin{eqnarray}
\nonumber \nabla_{[ 1} G_{2 3 ]}(t,x) & = & \partial_1 \left( F_{13}(t,x_{\tau})\right)+\partial_2 \left( F_{32}(t,x_{\tau})\right)-\partial_3 \left( F_{12}(t,x_{\tau})\right) \\ \nonumber & = & \partial_2  F_{13}(t,x_{\tau})+\partial_1  F_{32}(t,x_{\tau})+\partial_3 F_{21}(t,x_{\tau}) \\ \nonumber
& = &  \nabla_{[ 2} F_{1 3 ]}(t,x_{\tau}) \\ \nonumber
&=& 0.
\end{eqnarray}
The other equations can be obtained in the same way.

\vspace{2mm}
\textbf{Propagation of symmetry for the Vlasov equation}
\vspace{2mm}

We have
\begin{eqnarray}
\nonumber T_m(g)(t,x,v) & = & v^1 \partial_2 f(t,x_{\tau},v_{\tau})+v^2 \partial_1 f(t,x_{\tau},v_{\tau}) + \sum_{\begin{subarray}{l} \hspace{1mm} \mu=0 \\  \mu \neq 1 ,2  \end{subarray}}^nv^{\mu} \partial_{\mu} f(t,x_{\tau},v_{\tau}) \\ \nonumber
& = & T_m(f)(t,x_{\tau},v_{\tau} ) .
\end{eqnarray}

Moreover, as
$$\partial_{v^1}g (t,x,v)= \partial_{v^2}f(t,x_{\tau},v_{\tau}) \hspace{3mm} \text{and} \hspace{3mm} \partial_{v^2}g (t,x,v)= \partial_{v^1}f(t,x_{\tau},v_{\tau}),$$ $$ D^i(t,x)\partial_{v^i} g(t,x,v)=E^i(t,x_{\tau})\partial_{v^i} f(t,x_{\tau},v_{\tau}).$$ Finally,
\begin{eqnarray}
\nonumber \left(v^kG_{k1} \partial_{v^1} g\right)(t,x,v) & = & \left(-v^2 F_{21}(t,x_{\tau})+\sum_{k=3}^n v^k F_{k2}(t,x_{\tau}) \right) \partial_{v^2} f(t,x_{\tau},v_{\tau}) \\ \nonumber
& = & \left(v^k F_{k2} \partial_{v^2} f \right)(t,x_{\tau},v_{\tau}),
\end{eqnarray}
and more generally
$$\left(v^kG_{kj} \partial_{v^j} g\right)(t,x,v)= \left(v^k F_{k \tau(j)} \partial_{v^{\tau(j)}} f \right)(t,x_{\tau},v_{\tau}).$$

We then deduce, $$T_G(g)=0, \hspace{3mm} \text{as} \hspace{3mm} T_F(f)(t,x_{\tau},v_{\tau})=0.$$

\vspace{2mm}
\textbf{The symmetries are propagated over time}
\vspace{2mm} 

We then proved that $(g,G)$ satisfies the same system as $(f,F)$. As $(f,F)=(g,G)$ at $t=0$, we have, by the uniqueness of the solution, that $(f,F)=(g,G)$ for all $t \in [0,T[$. 

\end{proof}

\begin{Rq}
More generally, from the above proof, $(f,F) \mapsto (g,G)$ maps $C^1$ solutions of the Vlasov-Maxwell system to $C^1$ solutions of the Vlasov-Maxwell system.
\end{Rq}

\subsection{Proof of Proposition \ref{nolocal}}

Let us suppose that the system admits a local $C^1$ solution on $[0,T]$, with $T>0$, which is then necessarily unique. We will reduce $T$ later if necessary, but we already assume that $T \leq 1$.

\subsubsection{Some informations on the electromagnetic fields around $\vec{u}$}

We start by the study of the solution around $\vec{u}$. Let us introduce $M_0:=20 \chi \left( 2 \right)^{-1}$ and $(B_{ij})_{1 \leq i,j\leq n}$ the $2$-form defined by $B_{ij}=F_{ij}$.

\begin{Pro}\label{basicprop}

Reducing $T$ if necessary, we have the following properties.

\begin{enumerate}
\item Local bounds on the field: $\forall \hspace{0.5mm} t\in [0,T]$,
$$\forall \hspace{0.5mm}  |x| \leq \sqrt{n}+2T, \hspace{1mm}  1\leq i \leq 3, \hspace{2mm} 5 \leq E^i(t,x) \leq M_0 , \hspace{2mm} |\partial_t E(t,x)| \leq 1$$
and
\begin{equation}\label{boundmagnB}
\forall \hspace{0.5mm} t\in [0,T], \hspace{1mm} |x-\vec{u}| \leq 2T, \hspace{2mm} |B(t,x)| \leq \frac{1}{4}.
\end{equation}
\item The field is locally-Lipschitz: $\exists \hspace{0.5mm} L>0$, $\forall \hspace{0.5mm} t\in [0,T], \hspace{0.5mm} \hspace{0.5mm} |x|, \hspace{0.5mm} |y| \leq \sqrt{n}+2T$,
\begin{equation}\label{LipschitzboundL}
 |E(t,x)-E(t,y)|+|B(t,x)-B(t,y)| \leq L|x-y|.
\end{equation}
\item Specific behaviour along the $\vec{u}$-direction:
$$\forall \hspace{0.5mm}  y \in \R, \hspace{1mm}t \in [0,T], \hspace{3mm} E(t,y\vec{u}) =E^1(t,y\vec{u})\vec{u} \hspace{3mm} \text{and} \hspace{3mm} B(t,y\vec{u})=0.$$ 
\end{enumerate}

\end{Pro}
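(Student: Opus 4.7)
The proof splits naturally into three essentially independent parts matching the three claims.

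\medskip

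\noindent\textbf{Parts (1) and (2): continuity and compactness.} At $t=0$ the explicit formulae give $E^i_0(x) = 10\chi(2)^{-1}\chi(2r^2/n)$ (independent of $i$), $B(0,x)\equiv 0$, and $\partial_t E(0,x)$ is read off from the Maxwell equation $\partial_t E = \nabla\times B - J$, which involves only $B_0\equiv 0$ and the compactly-supported currents $J(f_{0,k})$. All the strict inequalities of (1) therefore hold at $t=0$ on the closed set $\{|x|\le\sqrt n + 2\}$, with strict margin depending only on $\chi$. Since by hypothesis $(f,F)\in C^1$, the maps $E$, $B$, $\partial_t E$ and $\nabla_x F$ are jointly continuous on the compact cylinder $[0,T]\times\{|x|\le\sqrt n + 2T\}$, hence uniformly bounded and uniformly continuous there. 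Shrinking $T$ preserves the strict inequalities of (1). The Lipschitz bound (2) is then the standard mean-value inequality with $L := \sup|\nabla_x F|$ on the same cylinder.

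\medskip

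\noindent\textbf{Part (3): propagation of discrete symmetries.} This is where I would invoke Proposition \ref{sym}. Although it is stated for the transposition $(12)$, the argument is symmetric in the indices and yields the same conclusion for any transposition $\tau=(ij)$ provided the initial data satisfy the corresponding relations. I would verify these relations for every pair $i\neq j$: (a) $E_0$ is a radial scalar times $\vec{u}$, so $E^k_0(x_{(ij)}) = E^k_0(x)$ for $k\notin\{i,j\}$ and $E^i_0(x_{(ij)}) = E^j_0(x)$ (both equal $10\chi(2)^{-1}\chi(2|x|^2/n)$); (b) $B_0\equiv 0$ trivially satisfies the antisymmetry $F_{ij}(x_{(ij)}) = -F_{ij}(x)$ together with the other identities; (c) each $f_{0,q}(x,v)$ depends only on $|x|^2$ and $|v|^2$, so $f_{0,q}(x_{(ij)},v_{(ij)}) = f_{0,q}(x,v)$. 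Uniqueness of the $C^1$ solution combined with Proposition \ref{sym} forces the solution to inherit every such transposition symmetry on $[0,T]$.

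\medskip

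Along the diagonal axis $x=y\vec{u}$ one has $x_{(ij)}=x$ for every $i\neq j$, so the propagated identities read $E^i(t,y\vec{u}) = E^j(t,y\vec{u})$ for all $i,j$ — giving $E(t,y\vec{u}) = E^1(t,y\vec{u})\vec{u}$ — and $F_{ij}(t,y\vec{u}) = -F_{ij}(t,y\vec{u})$, whence $B(t,y\vec{u})=0$. The main point requiring care is Part (3): confirming that the symmetry hypotheses of Proposition \ref{sym}, formulated there for a single fixed transposition, genuinely hold for every coordinate swap on this specific data so that the propagation may be iterated over the full symmetric group action. Parts (1) and (2) are then routine continuity arguments on a compact set.
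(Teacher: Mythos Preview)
Your approach is essentially the same as the paper's, and in fact spells out Part (3) in considerably more detail than the paper does (the paper simply says ``point 3 follows from Proposition \ref{sym}''). One small slip: in your verification (c) you claim that each $f_{0,q}$ depends only on $|x|^2$ and $|v|^2$. This is true for $f_{02}$, but $f_{01}$ contains the factor $\operatorname{div}(E_0)(x)$, which equals a radial function times $\sum_i x^i$ and is therefore \emph{not} a function of $|x|^2$ alone. The symmetry $f_{01}(x_{(ij)},v_{(ij)})=f_{01}(x,v)$ still holds, since $\sum_i x^i$ is invariant under any coordinate swap, so your conclusion stands; just correct the justification.
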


\begin{proof}
In view of the initial data, we have $B(0,\vec{u})=0$ and
$$\forall \hspace{0.5mm} |y| \leq \sqrt{n}, \hspace{1mm} 1 \leq i \leq n, \hspace{3mm} 10 \leq E^i(0,y) \leq \frac{M_0}{2}, \hspace{3mm} \partial_t E(0,y)=0.$$
The point $1$ then ensues, taking $T$ smaller if necessary, from the uniform continuity of the electromagnetic field on every compact subset of $[0,T] \times \R^n$. The point $2$ comes from the mean value theorem, as $E$ and $B$ are $C^1$ and the point $3$ follows from Proposition \ref{sym}.

\end{proof}

\subsubsection{The method of charateristics fails}

Let us denote by $(X(s,t,x,v),V(s,t,x,v))$ the value at $s$ of the characteristic, for the transport equation \eqref{syst1} satisfied by $f_1$, which was equal to $(x,v)$ at $t$. Let $\eta \in ]0,T[$ and
$$X_{\eta}:(s,t) \mapsto X(s,t,\vec{u}, \eta \vec{u}), \hspace{3mm}  V_{\eta}:(s,t) \mapsto V(s,t,\vec{u}, \eta \vec{u}).$$
We now fix $t \in [0,T[$. $(X_{\eta}(.,t),V_{\eta}(.,t))$ is well defined on a neighborhood of $t$ and we have, denoting $\frac{v}{|v|}$ by $\widehat{v}$,

\begin{eqnarray}\label{characsyst1}
   \frac{d X_{\eta}(.,t)}{ds}(s) & = & \widehat{V_{\eta}}(s), \\ \label{characsyst2}
    \frac{dV^j_{\eta}(.,t)}{ds}(s) & = & E^j(s,X_{\eta}(s,t))+\widehat{V_{\eta}}^i(s)  {F_i}^j(s,X_{\eta}(s,t)).     
\end{eqnarray}

\begin{Lem}\label{staycoli}

$X_{\eta}(.,t)$, $V_{\eta}(.,t)$ and $E$ (along $X_{\eta}(.,t)$) stay collinear to $\vec{u}$. We have, as long as $V_{\eta}$ stay positive, $X_{\eta}(s,t)=\left(1+\frac{1}{\sqrt{n}}(s-t)\right) \vec{u}$ and$$ V_{\eta}(s,t)=\eta \vec{u}+\int_t^s E^1\left(s',\left(1+\frac{1}{\sqrt{n}}(s-t)\right) \vec{u} \right) ds' \vec{u}.$$
\end{Lem}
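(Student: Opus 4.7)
The plan is to argue by uniqueness for the characteristic ODE system \eqref{characsyst1}-\eqref{characsyst2}. Thanks to the Lipschitz bound \eqref{LipschitzboundL} on $E$ and $B$ (Proposition \ref{basicprop}) and the smoothness of $v \mapsto v/|v|$ away from $0$, the right-hand side of this system is locally Lipschitz in $(X,V)$ as long as $V$ stays bounded away from the origin; hence the Cauchy problem with datum $(X,V)(t) = (\vec{u}, \eta\vec{u})$ has a unique maximal $C^1$ solution, and this solution extends up to the first time at which $|V|$ reaches $0$. So it suffices to exhibit one explicit solution proportional to $\vec{u}$ and match initial data.

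First I would introduce the candidate curve
\[
\widetilde{X}(s,t) = \Bigl(1+\tfrac{s-t}{\sqrt{n}}\Bigr)\vec{u}, \qquad \widetilde{V}(s,t) = \Bigl(\eta + \int_t^s E^1\bigl(s', \widetilde{X}(s',t)\bigr)\, ds'\Bigr)\vec{u},
\]
defined for $s$ in the maximal interval on which the scalar factor of $\widetilde{V}$ remains strictly positive. At $s=t$ we clearly have $(\widetilde{X}, \widetilde{V})(t,t) = (\vec{u},\eta\vec{u})$. Since $\widetilde{V}$ is a positive multiple of $\vec{u}$, one has $\widehat{\widetilde{V}} = \vec{u}/\sqrt{n}$, whence $d\widetilde{X}/ds = \vec{u}/\sqrt{n}$ matches the definition of $\widetilde{X}$, so \eqref{characsyst1} holds.

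Next I would check \eqref{characsyst2}. Because $\widetilde{X}(s,t) \in \mathbb{R}\vec{u}$ for every $s$, Proposition \ref{basicprop} point $3$ yields $E(s,\widetilde{X}) = E^1(s,\widetilde{X})\vec{u}$ (so $E^j(s,\widetilde{X}) = E^1(s,\widetilde{X})$ for each $j$) and $B(s,\widetilde{X}) = 0$, hence $F_{ij}(s,\widetilde{X}) = 0$ for all spatial indices $i,j$. Since ${F_i}^j = F_{ij}$ for spatial $i,j$, the magnetic contribution $\widehat{\widetilde{V}}^i {F_i}^j(s,\widetilde{X})$ vanishes. Therefore
\[
\frac{d\widetilde{V}^j}{ds}(s) = E^1\bigl(s,\widetilde{X}(s,t)\bigr) = E^j\bigl(s,\widetilde{X}(s,t)\bigr) + \widehat{\widetilde{V}}^i\, {F_i}^j\bigl(s,\widetilde{X}(s,t)\bigr),
\]
so \eqref{characsyst2} is also satisfied.

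Applying uniqueness to \eqref{characsyst1}-\eqref{characsyst2}, $(X_\eta, V_\eta)(\cdot,t)$ must coincide with $(\widetilde{X}, \widetilde{V})$ on any open interval containing $t$ where $V_\eta$ stays positive (in particular, away from $0$), giving the stated formulas. The only mild subtlety is the singularity of the vector field at $V = 0$; but the lemma only claims the formulas \emph{while $V_\eta$ stays positive}, so the argument operates entirely inside the region where the flow is well-defined and the ansatz $\widetilde{V} \in \mathbb{R}_+\vec{u}$ is self-consistent.
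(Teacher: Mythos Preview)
Your proof is correct and follows essentially the same strategy as the paper: exhibit an explicit solution of the characteristic system with the right initial data and invoke uniqueness. The only cosmetic difference is that the paper first rotates to an orthonormal frame $(u_i)$ with $u_1=\vec{u}/\sqrt{n}$ and verifies that the scalar pair $(r,w)$ together with zeros in the transverse directions solves the system, whereas you work directly in the original Cartesian coordinates and check the candidate $(\widetilde X,\widetilde V)$ componentwise; both routes rest on exactly the same use of Proposition~\ref{basicprop} point~3 and the same uniqueness argument.
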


\begin{proof}
We start by a change of coordinates. We consider an orthonormal system $(u_i)_{1 \leq i \leq n}$ such that $u_1=\frac{1}{\sqrt{n}}\vec{u}$ and we denote by $\widetilde{X}^i$ and $\widetilde{V}^i$ the coordinates of $X_{\eta}(.,t)$ and $V_{\eta}(.,t)$ in this basis. Then, for all $1 \leq i \leq n$,
$$\frac{d\widetilde{X}^i}{ds}(s) = \frac{\widetilde{V}^i(s)}{|V|(s)}$$
and, for $i \geq 2$, $\widetilde{X}^i(0)=0$ and $\widetilde{V}^i(0)=0$. We remark, using Proposition \ref{basicprop}, that if $\widetilde{X}^i=0$ for $i \geq 2$, then $E(s,X_{\eta}(s,t))=E^1(s,X_{\eta}(s,t))\vec{u}$ and $B(s,X_{\eta}(s,t))=0$. Consider now the solution of the following system
\begin{eqnarray}
\nonumber \frac{d r}{ds} & = & \frac{w}{|w|}, \\ \nonumber
\frac{d w}{ds} & = & \sqrt{n}E^1\left(s,\frac{r}{\sqrt{n}},...,\frac{r}{\sqrt{n}}\right), 
\end{eqnarray}
with the initial data $r(t)=\sqrt{n}$ and $w(t)=\eta \sqrt{n}$.  The solution exists as long long as $w \neq 0$ and we have $$\frac{r(s)}{\sqrt{n}}=1-\frac{t-s}{\sqrt{n}} \hspace{2mm} \text{and} \hspace{2mm} \frac{w(s)}{\sqrt{n}} = \eta +\int_t^s E^1\left(s',1-\frac{t-s'}{\sqrt{n}},...,1-\frac{t-s'}{\sqrt{n}})\right) ds'.$$ 
By uniqueness of the solution of the system \eqref{characsyst1}-\eqref{characsyst2}, we have $$(\widetilde{X}^1,..., \widetilde{X}^n, \widetilde{V}^1,...,\widetilde{V}^n)=(r,0,...,0,w,,0,...,0),$$
which implies the result.
\end{proof}

We now try to estimate the time when $V_{\eta}$ vanishes.

\begin{Pro}\label{characzero}
The exists $0 < \eta_0 < T$ such that for all $\eta \in ]0,\eta_0[$, there exists $T_{\eta}$ such that if $t<T_{\eta}$, $(X_{\eta}(.,t),V_{\eta}(.,t))$ is well defined on $[0,t]$ and if $T_{\eta} \leq t <T$ there exists $\tau_{\eta}(t) \leq t$ such that $(X_{\eta}(.,t),V_{\eta}(.,t))$ is well defined on $]t-\tau_{\eta}(t),t]$ and
$$\lim_{s \rightarrow \left(t-\tau_{\eta}(t)\right)^+} V_{\eta}(s,t)=0, \hspace{5mm} \lim_{s \rightarrow \left(t-\tau_{\eta}(t)\right)^+} X_{\eta}(s,t)= \left(1-\frac{\tau_{\eta}(t)}{\sqrt{n}} \right) \vec{u}.$$
Moreover, $t \mapsto t-\tau_{\eta}(t)$ is in $C^0([T_{\eta},T[) \cap C^1(]T_{\eta},T[)$, vanishes at $T_{\eta}$, and such that
$$ \forall \hspace{0.5mm} t \in ]T_{\eta},T[, \hspace{3mm}  \frac{4}{M_0}  \leq \frac{\partial (t-\tau_{\eta})}{\partial t}(t) \leq  \frac{M_0+1}{5}  .$$
\end{Pro}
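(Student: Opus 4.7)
The plan is to exploit the explicit formula from Lemma \ref{staycoli}. Define
\[
G(s,t) := \int_s^t E^1\Bigl(s', \bigl(1+\tfrac{s'-t}{\sqrt{n}}\bigr)\vec{u}\Bigr)\, ds', \qquad 0 \leq s \leq t < T,
\]
and set $x(s', t) := (1+\tfrac{s'-t}{\sqrt{n}})\vec{u}$. By Lemma \ref{staycoli}, so long as $V_\eta$ stays positive, each component of $V_\eta(s,t)$ equals $\eta - G(s,t)$; hence the characteristic vanishes exactly when $G(s,t) = \eta$. Proposition \ref{basicprop} gives $5 \leq E^1 \leq M_0$ on the region crossed by $x(\cdot,t)$, so $G$ is $C^1$, strictly decreasing in $s$ with $\partial_s G(s,t) = -E^1(s, x(s,t))$, and satisfies $5(t-s) \leq G(s,t) \leq M_0(t-s)$.

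Reducing $T$ once more if necessary so that $LT \leq 1$ (with $L$ from \eqref{LipschitzboundL}), a Leibniz computation using that $-\tfrac{1}{\sqrt n}\vec u$ is a unit vector yields
\[
\tfrac{d}{dt} G(0,t) = E^1(t,\vec u) + \int_0^t \nabla_x E^1(s',x(s',t)) \cdot \bigl(-\tfrac{1}{\sqrt n}\vec u\bigr)\, ds' \geq 5 - Lt \geq 4 > 0,
\]
so $t \mapsto G(0,t)$ is strictly increasing. Fix $\eta_0 \in ]0, 5T[$ with $L\eta_0/5 \leq 1$. For $\eta \in ]0, \eta_0[$, define $T_\eta$ as the unique solution in $]0, T[$ of $G(0,T_\eta) = \eta$; then $\eta/M_0 \leq T_\eta \leq \eta/5$. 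For $t < T_\eta$, we have $G(s,t) \leq G(0,t) < \eta$ for every $s \in [0,t]$, so $V_\eta$ never vanishes and $(X_\eta,V_\eta)(\cdot,t)$ extends to $s=0$. For $t \in [T_\eta, T[$, the strict $s$-monotonicity of $G(\cdot,t)$ produces a unique $s_\eta(t) \in [0,t]$ with $G(s_\eta(t),t) = \eta$; set $\tau_\eta(t) := t - s_\eta(t) \leq \eta/5$. The stated limits of $V_\eta$ and $X_\eta$ as $s \to s_\eta(t)^+$ are immediate from Lemma \ref{staycoli} and continuity; at $t = T_\eta$ one has $s_\eta(T_\eta) = 0$, so $t - \tau_\eta(t) = 0$ as claimed.

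For the regularity on $]T_\eta, T[$, since $G \in C^1$ and $\partial_s G(s_\eta(t),t) \neq 0$, the implicit function theorem gives $s_\eta \in C^1(]T_\eta, T[)$ with
\[
\frac{\partial (t-\tau_\eta)}{\partial t}(t) = \frac{\partial s_\eta}{\partial t}(t) = \frac{\partial_t G(s_\eta(t), t)}{E^1(s_\eta(t), x(s_\eta(t),t))}.
\]
The analogous Leibniz computation gives
\[
\partial_t G(s,t) = E^1(t,\vec u) + \int_s^t \nabla_x E^1(s',x(s',t)) \cdot \bigl(-\tfrac{1}{\sqrt n}\vec u\bigr)\, ds',
\]
whose integral part is bounded in absolute value by $L\tau_\eta(t) \leq L\eta_0/5 \leq 1$. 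Combined with $5 \leq E^1 \leq M_0$, this produces
\[
\frac{\partial (t-\tau_\eta)}{\partial t}(t) \in \Bigl[\frac{5 - L\tau_\eta(t)}{M_0},\, \frac{M_0 + L\tau_\eta(t)}{5}\Bigr] \subset \Bigl[\frac{4}{M_0},\, \frac{M_0+1}{5}\Bigr].
\]
The only real technical point is the Lipschitz control \eqref{LipschitzboundL} of the correction term in $\partial_t G$; everything else is a direct consequence of the closed-form characteristic from Lemma \ref{staycoli} together with the pointwise bounds on $E^1$ from Proposition \ref{basicprop}.
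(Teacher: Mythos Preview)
Your proof is correct and follows essentially the same route as the paper: both define the vanishing condition for $V_\eta$ via the function $G(s,t)=\int_s^t E^1(s',x(s',t))\,ds'$ (the paper's $g_\eta=\eta-G$), obtain $T_\eta$ and $\tau_\eta(t)$ from monotonicity plus the implicit function theorem, and read off the derivative bounds. The only difference is cosmetic: when computing $\partial_t G$, the paper first substitutes $s'\mapsto t-s'$ so that the integrand depends on $t$ only through the time variable of $E^1$ and then invokes the bound $|\partial_t E|\le 1$ already recorded in Proposition~\ref{basicprop}, whereas you differentiate directly and control the resulting spatial gradient via the Lipschitz bound~\eqref{LipschitzboundL}, at the small cost of an extra reduction of $T$ (or of $\eta_0$).
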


\begin{proof}

We fix $\eta \in ]0,T[$. Noting, by \eqref{characsyst1}, that $$|X_{\eta}(s,t)-\vec{u}| \leq |t-s| \leq T,$$ we obtain by Proposition \ref{basicprop}, as $X_{\eta}$ and $\vec{u}$ are collinear, that $E(s,X_{\eta}(s,t))= E^1(s,X_{\eta}(s,t)) \vec{u}$. Hence, if $t \in [0,T[$, only two situations can occur. Either $(X_{\eta}(.,t),V_{\eta}(.,t)$ is well defined on $[0,t]$, or there exists $\tau_{\eta}(t)<t$ such that
$$\lim_{s \rightarrow \left(t-\tau_{\eta}(t) \right)^+} V_{\eta}(s,t)=0,$$
and the characteristic is well defined on $]t-\tau_{\eta}(t),t]$.
Now, consider $$g_{\eta} : (s,t) \mapsto \eta+\int_t^{s} E^1\left(s',\left(1-\frac{t-s'}{\sqrt{n}} \right) \vec{u} \right) ds'$$
so that, by Lemma \ref{staycoli}, if $t \in ]0,T[$ and $s$ is near to $t$, $g_{\eta}(s,t)$ is equal to $V^i_{\eta}(s,t)$ (for all $1 \leq i \leq n$). For all $t \in ]0,T[$, $g_{\eta}(.,t)$ stricly increases on $[0,t]$, as $E^1>0$ by Proposition \ref{basicprop}. As
$$\int_t^{s} E^1\left(s',\left(1-\frac{t-s'}{\sqrt{n}} \right) \vec{u} \right) ds'=-\int_0^{t-s} E^1\left(t-s',\left(1-\frac{s'}{\sqrt{n}} \right) \vec{u} \right) ds'$$
and
$$\frac{\partial g_{\eta}}{\partial t}(s,t)=-E^1\left(s,\left(1-\frac{t-s}{\sqrt{n}} \right) \vec{u} \right)-\int_0^{t-s} \partial_t E^1\left(t-s',\left(1-\frac{s'}{\sqrt{n}} \right) \vec{u} \right) ds',$$
we have
$$\frac{\partial g_{\eta}}{\partial t}(s,t) \leq -4,$$
so that $g_{\eta}(s,.)$ is strictly decreasing on $[s,T[$. Moreover, by the bounds given on $E^1$ in Proposition \ref{basicprop}, if $t<\frac{\eta}{M_0}$, $g_{\eta}(.,t)$ does not vanish on $[0,t]$ and vanishes exactly one time, in $t-\tau_{\eta}(t)$, if $t \geq \frac{\eta}{5}$. Then, if $\eta$ is small enough, there exists $t \in ]0,T[$ such that $g_{\eta}(.,t)$ vanishes in $t-\tau_{\eta}(t)$. Let $t_1$ be a such time and let $t_2>t_1$. We have
$$0=g_{\eta}(t_1-\tau_{\eta}(t_1),t_1)>g_{\eta}(t_1-\tau_{\eta}(t_1),t_2),$$
implying the existence of $t_2-\tau_{\eta}(t_2)$ and $t_1-\tau_{\eta}(t_1)<t_2-\tau_{\eta}(t_2)$, since
$$g_{\eta}(t_1-\tau_{\eta}(t_1),t_2)<0=g_{\eta}(t_2-\tau_{\eta}(t_2),t_2). $$ Hence, $T_{\eta}$ exists\footnote{More precisely, $T_{\eta} = \sup \{t \in ]0,T[ \hspace{1mm} / \hspace{1mm} g_{\eta}(.,t) >0 \hspace{2mm} \text{on} \hspace{2mm} [0,t] \}$.} and $t \mapsto t-\tau_{\eta}(t)$ strictly increases on $[T_{\eta},T[$, vanishes in $T_{\eta}$ and tends to zero as $t \rightarrow T_{\eta}$. The fact that $t \mapsto t- \tau_{\eta}(t)$ is in $C^1(]T_{\eta},T[)$ follows from the implicit function theorem, as $g_{\eta}(t-\tau_{\eta}(t),t)=0$ and $\frac{\partial g_{\eta}}{\partial s}(s,t) \geq 5$. Furthermore, dropping the dependance in $t$ of $\tau_{\eta}$,
$$\frac{\partial (t-\tau_{\eta})}{\partial t}(t)=\frac{E^1\left(t-\tau_{\eta},\left(1-\frac{\tau_{\eta}}{\sqrt{n}} \right) \vec{u} \right)+\int_0^{\tau_{\eta}} \partial_t E^1\left(t-s',\left(1-\frac{s'}{\sqrt{n}} \right) \vec{u} \right) ds'}{E^1\left(t-\tau_{\eta},\left(1-\frac{\tau_{\eta}}{\sqrt{n}} \right) \vec{u} \right)},$$
which, by Proposition \ref{basicprop}, implies the last statement.

\end{proof}

\begin{Rq}\label{rq:Teta}
Note that, if $0 < \eta < \eta_0$, $\tau_{\eta}(T_{\eta})=T_{\eta}$ and then $g_{\eta}(0,T_{\eta})=0$. 

Later, we will use again that $g_{\eta}(0,.)$ is strictly decreasing on $[0,T[$.

\end{Rq}

\subsubsection{The contradiction}

We fix again $\eta \in ]0,\eta_0[$. As $V_{\eta}(.,t)$ is not defined on $[0,t-\tau_{\eta}(t)]$ if $t > T_{\eta}$, we cannot directly express $f_1(t,\vec{u},\eta \vec{u})$ in terms of $f_{01}$ by the method of the characteristics.

If $t \geq T_{\eta}$, we extend $X_{\eta}(.,t)$ and  $V_{\eta}(.,t)$ on $[0,t-\tau_{\eta}(t)]$ by
$$ X_{\eta}(s,t) = \left(1+\frac{t-s-2\tau_{\eta}(t)}{\sqrt{n}}\right)\vec{u} \hspace{2mm} \text{and} \hspace{1.5mm} V_{\eta}(s,t) = \eta \vec{u}+\int_t^s E(s',X_{\eta}(s',t)) ds'.$$

\begin{Rq}
If $t > t - \tau_{\eta}(t)$, $\frac{dX_{\eta}}{ds}(s,t)=\frac{\vec{u}}{\sqrt{n}}.$ We extend $X_{\eta}(.,t)$ on $[0,t-\tau_{\eta}(t)]$ in order that
$$\frac{dX_{\eta}}{ds}(s,t)=-\frac{\vec{u}}{\sqrt{n}}.$$
We then extend $V_{\eta}(.,t)$ such that \eqref{characsyst2} remains true on $[0,t-\tau_{\eta}(t)]$.
\end{Rq}

We have the following result.
\begin{Lem}
\begin{equation}\label{prolongcharac}
\forall \hspace{0.5mm} t \in [0,T[, \hspace{3mm} f_1(t,\vec{u},\eta \vec{u})=f_{01}(X_{\eta}(0,t),V_{\eta}(0,t)).
\end{equation}
\end{Lem}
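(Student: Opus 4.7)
The plan is to treat separately the case $t<T_\eta$, where the backward trajectory is a genuine classical characteristic of \eqref{syst1}, and the case $t\geq T_\eta$, where one must argue around the singular point $s_0:=t-\tau_\eta(t)$ at which $V_\eta$ vanishes. In both cases the strategy is to identify intervals on which $(X_\eta(\cdot,t),V_\eta(\cdot,t))$ solves the characteristic system \eqref{characsyst1}-\eqref{characsyst2} classically, then chain together the resulting constancy of $s\mapsto f_1(s,X_\eta(s,t),V_\eta(s,t))$ across $s_0$ by continuity of $f_1$.

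If $t<T_\eta$, Proposition \ref{characzero} guarantees that the trajectory is $C^1$ on all of $[0,t]$ and that $V_\eta$ never vanishes there. Since $f_1$ is a $C^1$ solution of \eqref{syst1}, the map $s\mapsto f_1(s,X_\eta(s,t),V_\eta(s,t))$ has vanishing derivative, and evaluating at $s=0$ and $s=t$ gives \eqref{prolongcharac} immediately.

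Now assume $t\geq T_\eta$. On $(s_0,t]$ the curve is unchanged from Proposition \ref{characzero}, is $C^1$, and has $V_\eta\neq 0$, so the same chain-rule argument yields $f_1(t,\vec{u},\eta\vec{u})=f_1(s,X_\eta(s,t),V_\eta(s,t))$ for every $s\in(s_0,t]$. The crucial claim is that on $[0,s_0)$ the \emph{extended} curve is still a classical solution of \eqref{characsyst1}-\eqref{characsyst2}. This rests on two observations. First, from the definition of the extended $V_\eta$ as an integral, combined with Remark \ref{rq:Teta}, one checks that on $(0,s_0)$ every Cartesian component of $V_\eta(s,t)$ is strictly negative, so $V_\eta$ is colinear with $-\vec{u}$ and $V_\eta/|V_\eta|=-\vec{u}/\sqrt{n}=dX_\eta/ds$. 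Second, since $X_\eta(s,t)$ remains on the $\vec{u}$-axis throughout, Proposition \ref{basicprop} gives $F_{ij}(s,X_\eta(s,t))=0$, so the magnetic contribution $\widehat{V_\eta}^{i}{F_{i}}^{j}$ in \eqref{characsyst2} vanishes and that equation reduces to $dV_\eta/ds=E(s,X_\eta(s,t))$, which is exactly the relation used to define $V_\eta$ on the extended interval. The chain-rule argument therefore applies on $[0,s_0)$ as well, giving $f_{01}(X_\eta(0,t),V_\eta(0,t))=f_1(s,X_\eta(s,t),V_\eta(s,t))$ for all $s\in[0,s_0)$; note that $V_\eta(0,t)\neq 0$ when $t>T_\eta$, again by Remark \ref{rq:Teta}.

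It remains to connect the two constants across the singular point. Since $f_1$ is assumed to be a $C^1$ local solution, it is in particular continuous at the junction point $(s_0,(1-\tau_\eta(t)/\sqrt{n})\vec{u},0)$; letting $s\to s_0^{\pm}$ along each classical segment shows that both constants equal $f_1(s_0,(1-\tau_\eta(t)/\sqrt{n})\vec{u},0)$, identifying them and yielding \eqref{prolongcharac}. The only nontrivial step in the whole argument is the verification that the extended portion is a genuine characteristic — the rest is a routine application of the method of characteristics — and this step is made possible precisely by the symmetry-driven vanishing of $B$ along the $\vec{u}$-axis established in Proposition \ref{basicprop}.
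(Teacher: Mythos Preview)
Your approach differs from the paper's, and the direct argument hinges on a step that does not follow from the hypotheses. You invoke continuity of $f_1$ at the junction point $\bigl(s_0,(1-\tau_\eta(t)/\sqrt{n})\vec{u},0\bigr)$, but since species~$1$ is massless, the paper's conventions place $f_1$ on the domain $[0,T]\times\mathbb{R}^n_x\times(\mathbb{R}^n_v\setminus\{0\})$; the assumed $C^1$ solution is not required to extend, even continuously, to $v=0$. This is not a technicality one can wave away: the phenomenon driving the non-existence result is precisely the breakdown of the characteristic flow as $V\to 0$, and there is no a priori reason characteristics approaching $v=0$ from different directions should yield the same limiting value of $f_1$.

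The paper circumvents this by an $\epsilon$-perturbation: it launches a genuine characteristic $(X_{\eta,t}^\epsilon,V_{\eta,t}^\epsilon)$ from the point $\bigl(t-\tau_\eta(t),\,X_\eta(t-\tau_\eta(t),t),\,(0,\dots,0,\epsilon)\bigr)$, proves a lower bound $|V_{\eta,t}^\epsilon(w)|\geq a\epsilon+b|s_0-w|$ with $b>\tfrac{5}{2}$ so that this perturbed velocity never vanishes, applies the method of characteristics along the whole curve to get $f_1(t,X_{\eta,t}^\epsilon(t),V_{\eta,t}^\epsilon(t))=f_{01}(X_{\eta,t}^\epsilon(0),V_{\eta,t}^\epsilon(0))$, and then establishes uniform convergence of $(X_{\eta,t}^\epsilon,V_{\eta,t}^\epsilon)$ to the extended $(X_\eta,V_\eta)$ as $\epsilon\to 0$ via a Gr\"onwall argument (the condition $b>\tfrac{5}{2}$ is exactly what makes the singular integral $\int |V_{\eta,t}^\epsilon|^{-1}$ controllable). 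The limit is taken only at the endpoints $s=0$ and $s=t$, where $V_\eta\neq 0$ for $t>T_\eta$, so continuity of $f_1$ is invoked only away from $v=0$. Your observation that the extended curve is itself a classical characteristic on $[0,s_0)$ is correct and pleasant, but connecting the two segments across $v=0$ still requires the perturbation argument or an equivalent device.
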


\begin{proof}
If $t<T_{\eta}$, this follows from the method of characteristics. In order to prove the result for $t \geq T_{\eta}$, we consider $\epsilon > 0$, $v_{\epsilon}=(0,...,0,\epsilon)$,
$$X_{\eta,t}^{\epsilon} : s \mapsto X(s,t-\tau_{\eta}(t),X_{\eta}(t-\tau_{\eta}(t),t),v_{\epsilon})$$ and $$ V_{\eta,t}^{\epsilon} : t \mapsto V(s,t-\tau_{\eta}(t),X_{\eta}(t-\tau_{\eta}(t),t),v_{\epsilon}).$$
Proposition \ref{basicprop} gives us that $X_{\eta,t}^{\epsilon}$ and $V_{\eta,t}^{\epsilon}$ are well defined on $[0,T[$. Indeed, as, by \eqref{characsyst1}, 
$$ |X_{\eta,t}^{\epsilon}(s)-\vec{u}| \leq |X_{\eta,t}^{\epsilon}(s)-X_{\eta}(t-\tau_{\eta}(t),t)| + |X_{\eta}(t-\tau_{\eta}(t),t)-\vec{u}| \leq 2T,$$
it comes
$$\forall \hspace{0.5mm} 1 \leq i \leq n, \hspace{3mm} 5 \leq E^i(s,X_{\eta,t}^{\epsilon}(s)) \leq M_0 \hspace{3mm} \text{and} \hspace{3mm} |B(s,X_{\eta,t}^{\epsilon}(s))| \leq \frac{1}{4},$$
so that $V_{\eta,t}^{\epsilon}$ cannot vanish. Now, the method of characteristics gives us, for all $t \in [0,T[$,
$$f_1(t,X_{\eta,t}^{\epsilon}(t),V_{\eta,t}^{\epsilon}(t))=f_{01}(X_{\eta,t}^{\epsilon}(0),V_{\eta,t}^{\epsilon}(0)).$$
Then, the result, for $t \geq T_{\eta}$, follows from the continuity of $f_1$ and the following proposition.
\end{proof}
\begin{Pro}

We have
$$\lim_{\epsilon \rightarrow 0} \|X_{\eta}(.,t)-X_{\eta,t}^{\epsilon} \|_{L^{\infty}([0,t])}+\|V_{\eta}(.,t)-V_{\eta,t}^{\epsilon} \|_{L^{\infty}([0,t])}=0.$$

\end{Pro}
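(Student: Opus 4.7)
The strategy is to compare $(X^\epsilon_{\eta,t}, V^\epsilon_{\eta,t})$ with the generalized characteristic $(X_\eta(.,t), V_\eta(.,t))$, exploiting the observation that the extension of $(X_\eta, V_\eta)$ to $[0, s_0]$ (with $s_0 := t - \tau_\eta(t)$) is itself a solution of the characteristic ODE system $\dot{X} = \widehat{V}$, $\dot{V} = E(s,X) + \widehat{V}^i F_i{}^j(s,X)$ on $[0,t] \setminus \{s_0\}$. Indeed, since $(X_\eta, V_\eta)$ lives on the $\vec{u}$-axis where $B \equiv 0$ by Proposition \ref{basicprop}(3), the magnetic term vanishes identically, and one checks from the extension formula that $\dot{X}_\eta = -\vec{u}/\sqrt{n} = \widehat{V}_\eta$ on $[0, s_0)$ (where $V_\eta$ has negative $\vec{u}$-component). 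The only obstruction to the standard continuous-dependence argument is the non-Lipschitz character of $v \mapsto v/|v|$ at the single point $V = 0$ attained at $s = s_0$. I split $[0,t]$ into a boundary layer $I_\delta := [s_0 - \delta, s_0 + \delta] \cap [0, t]$ and the outer intervals $I^\pm$, with $\delta = \delta(\epsilon)$ to be chosen.

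\textbf{Boundary layer.} On $I_\delta$, both velocities are small: using $|\dot V^\epsilon| \leq |E| + |B| \leq M_0 \sqrt{n} + \tfrac{1}{4}$ from Proposition \ref{basicprop} and integrating from $s_0$, one has $|V^\epsilon_{\eta,t}(s)| \leq \epsilon + C\delta$ and $|V_\eta(s,t)| \leq C \delta$. Since $|\dot X^\epsilon| = 1$ and $|\dot X_\eta| = 1/\sqrt{n}$, both positions stay within $\delta$ of $X_\eta(s_0, t)$. Combining, $|X^\epsilon_{\eta,t}(s) - X_\eta(s, t)| + |V^\epsilon_{\eta,t}(s) - V_\eta(s, t)| \lesssim \epsilon + \delta$ uniformly on $I_\delta$.

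\textbf{Outer interval, Grönwall estimate.} Consider $[s_0 + \delta, t]$ (the treatment of $[0, s_0 - \delta]$ is symmetric). Projecting the $V$-equation onto $\vec u$ and using $E\cdot \vec u = \sum E^i \geq 5n$ together with the bound on $B$, one obtains $\tfrac{d}{ds}(V^\epsilon \cdot \vec u) \geq 4\sqrt{n}$, so $|V^\epsilon_{\eta,t}(s)| \geq 4(s - s_0) - \epsilon/\sqrt{n}$, and similarly $|V_\eta(s,t)| \geq 4(s-s_0)$. In particular both velocities satisfy $|V| \gtrsim (s - s_0)$ whenever $\delta \geq \epsilon$. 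Setting $Y := X^\epsilon_{\eta,t} - X_\eta(.,t)$ and $W := V^\epsilon_{\eta,t} - V_\eta(.,t)$, the Lipschitz bound $|\widehat{v_1} - \widehat{v_2}| \leq 2|v_1 - v_2|/\min(|v_1|,|v_2|)$, the Lipschitz property \eqref{LipschitzboundL} of $E$ and $B$, and the crucial observation that $|B(s, X^\epsilon_{\eta,t}(s))| \lesssim |Y(s)|$ (since $B$ vanishes on the $\vec u$-axis on which $X_\eta(.,t)$ lives) give
\[ \frac{d}{ds}(|Y| + |W|) \lesssim (|Y| + |W|) + \frac{|Y| + |W|}{s - s_0}. \]
Grönwall's inequality then yields $(|Y| + |W|)(s) \leq C (|Y| + |W|)(s_0 + \delta) \cdot \bigl((s - s_0)/\delta\bigr)^{K}$ for some $K > 0$.

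\textbf{Conclusion.} Combining with the boundary layer estimate gives $\|Y\|_\infty + \|W\|_\infty \lesssim (\epsilon + \delta) \delta^{-K}$ on $[s_0 + \delta, t]$, and symmetrically on $[0, s_0 - \delta]$. Choosing $\delta = \epsilon^{1/(K+1)}$ produces a right-hand side $\lesssim \epsilon^{1/(K+1)} \to 0$, which together with the $O(\epsilon + \delta)$ bound on $I_\delta$ yields the desired uniform convergence on $[0, t]$. \emph{Main obstacle:} the singularity of $\widehat{V}$ at $V = 0$ is exactly what produces the factor $1/(s - s_0)$ in the Grönwall loop; it is absorbed only because $B$ vanishes on the $\vec u$-axis, providing the crucial extra factor $|Y|$ in the magnetic contribution. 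The balance between the boundary layer width and the Grönwall blow-up is what forces a polynomial rate $\epsilon^{1/(K+1)}$ instead of the naive $O(\epsilon)$.
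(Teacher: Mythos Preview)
Your overall strategy---a Gr\"onwall estimate with a coefficient that is singular like $1/(s-s_0)$---is exactly the paper's, but your final balance contains a genuine error. With $\delta = \epsilon^{1/(K+1)}$ one has $(\epsilon + \delta)\delta^{-K} \sim \delta^{1-K} = \epsilon^{(1-K)/(K+1)}$, because the dominant contribution is $\delta \cdot \delta^{-K}$, not $\epsilon \cdot \delta^{-K}$; this tends to $0$ only if $K < 1$. You never verify that: you present $K$ as ``some $K > 0$'' coming from an implicit-constant Gr\"onwall. In fact $K < 1$ \emph{does} hold here---your own lower bound $|V^{\epsilon}_{\eta,t}(s)| \geq 4(s-s_0)$ (and similarly $|V_\eta| \geq 5\sqrt{n}(s-s_0)$), combined with $|\widehat{v}-\widehat{w}| \leq 2|v-w|/|w|$, gives a coefficient $2/4 = 1/2$ in front of $(s-s_0)^{-1}$ from the $\dot Y$ equation, and your observation that $B$ vanishes on the $\vec u$-axis removes the singular term from $\dot W$ entirely---so the argument can be completed, but the constants must actually be tracked, not hidden under $\lesssim$.

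The paper proceeds in the same spirit but avoids the boundary-layer split: it applies Gr\"onwall directly from $s_0$ using the two-sided lower bound $|V^{\epsilon}_{\eta,t}(w)| \geq a\epsilon + b|s_0 - w|$, which is valid even at $w = s_0$. This yields $(|Y| + |W|)(s) \leq \epsilon \exp\bigl(\int_{s_0}^{s} [L + \tfrac{5}{2}|V^{\epsilon}_{\eta,t}|^{-1}]\, dw\bigr) \lesssim \epsilon^{\,1 - 5/(2b)}$, and the paper then carefully verifies $b = \tfrac{8}{3}\sqrt{n-1} > \tfrac{5}{2}$ via an explicit case analysis of $V^\epsilon$ on both sides of $s_0$. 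That computation is precisely the analogue of your missing check $K < 1$; it is the whole point of the proof and cannot be skipped. Your observation about $B$ on the $\vec u$-axis is a genuine improvement over what the paper uses (it only invokes $|B| \leq \tfrac14$) and would lower the Gr\"onwall exponent further, but it does not by itself remove the need to check that the exponent is below $1$. A minor side remark: $|\dot X_\eta| = |\vec u|/\sqrt n = 1$, not $1/\sqrt n$, though this does not affect the argument.
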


\begin{proof}

On the one hand, as 
\begin{equation}\label{eq:vwvw}
\forall \hspace{0.5mm} v, \hspace{1mm} w \in \R^n \setminus \{0 \}, \hspace{3mm} \left| \frac{v}{|v|}-\frac{w}{|w|} \right| \leq \frac{2}{|w|}|v-w|,
\end{equation}
we have
$$|X_{\eta}(s,t)-X_{\eta,t}^{\epsilon}(s)| \leq  \left| \int_{t-\tau_{\eta}(t)}^s \frac{2}{|V_{\eta,t}^{\epsilon}(w)|}|V_{\eta}(w,t)-V_{\eta,t}^{\epsilon}(w)| dw \right|.$$

On the other hand, note first that for $s < T$ and $|x|$, $|y| \leq \sqrt{n}+2T$, we have, by the local Lipschitz property of the electromagnetic field \eqref{LipschitzboundL},
$$|E(s,x)-E(s,y)+\widehat{v}^i {B_i}(s,x)-\widehat{w}^i{B_i}(s,y)| \leq L|x-y|+|\widehat{v}-\widehat{w}||B(s,x)|.$$
Then, using \eqref{characsyst2}, \eqref{eq:vwvw} and the bound \eqref{boundmagnB} on the magnetic field,
\begin{flalign*}
& \hspace{0mm} |V_{\eta}(s,t)-V_{\eta,t}^{\epsilon}(s)| \leq &
\end{flalign*}
$$\hspace{1cm} \epsilon + \left| \int_{t-\tau_{\eta}(t)}^s L|X_{\eta}(w,t)-X_{\eta,t}^{\epsilon}(w)|+\frac{1}{2|V_{\eta,t}^{\epsilon}(w)|}|V_{\eta}(w,t)-V_{\eta,t}^{\epsilon}(w)| dw \right| .$$

Hence, by the Grönwall lemma, for all $s \in [0,T[$,
\begin{equation}\label{eq:unif}
|X_{\eta}(s,t)-X_{\eta,t}^{\epsilon}(s)|+|V_{\eta}(s,t)-V_{\eta,t}^{\epsilon}(s)| \leq \epsilon \exp{\left( \left| \int_{t-\tau_{\eta}(t)}^s L+\frac{5}{2|V_{\eta,t}^{\epsilon}(w)|} dw \right| \right) }.
\end{equation}

We now prove that, $\exists$ $a>0$, $b>\frac{5}{2}$ such that $\forall \hspace{0.5mm} w \in [0,T[$,
\begin{equation}\label{eq:characVesti}
 |V_{\eta,t}^{\epsilon}(w)| \geq a \epsilon+b|t-\tau_{\eta}(t)-w|.
\end{equation}

Recall that $5 \leq E^j \leq M_0$ and $ |B| \leq 1$ around $\vec{u}$ (see Proposition \ref{basicprop}) and
$$V_{\eta,t}^{\epsilon,j}(w)=v^j_{\epsilon}+\int^w_{t-\tau_{\eta}(t)} E^j(s,X_{\eta,t}^{\epsilon} (s))+\widehat{V_{\eta,t}^{\epsilon}}^i(s){B_i}^j(s,X_{\eta,t}^{\epsilon} (s))ds.$$
Hence, we have.
\begin{itemize}

\item If $ w \geq t-\tau_{\eta}(t)$,
$$ V_{\eta,t}^{\epsilon,j}(w) \geq \delta_{j,n} \epsilon+(5-1)(w-t+\tau_{\eta}(t))$$ so that
 $$|V_{\eta,t}^{\epsilon}(w)|^2 \geq \epsilon^2+n(5-1)^2(w-t+\tau_{\eta}(t))^2 \geq \frac{1}{2}(\epsilon+4\sqrt{n}|w-t+\tau_{\eta}(t)|)^2.$$

\item If $ t-\tau_{\eta}(t)-\frac{\epsilon}{2(M_0+1)} \leq w \leq t-\tau_{\eta}(t)$,
$$ V_{\eta,t}^{\epsilon,j}(w) \leq -(5-1)(t-\tau_{\eta}(t)-w) \hspace{2mm}
\text{for} \hspace{2mm} 1 \leq j \leq n-1 \hspace{2mm} \text{and} \hspace{2mm} V_{\eta,t}^{\epsilon,n}(w) \geq \frac{\epsilon}{2},$$
so
  \begin{eqnarray}
  \nonumber  |V_{\eta,t}^{\epsilon}(w)|^2 & \geq & \frac{\epsilon^2}{4}+(n-1)(5-1)^2(t-\tau_{\eta}(t)-w)^2  \\ \nonumber
  & \geq & \frac{1}{2}\left( \frac{\epsilon}{2}+4\sqrt{n-1}|t-\tau_{\eta}(t)-w| \right)^2
  \end{eqnarray}
\item If $w \leq t-\tau_{\eta}(t)-\frac{\epsilon}{2(M_0+1)}$, then, for $1 \leq j \leq n-1$,
$$ V_{\eta,t}^{\epsilon,j}(w) \leq -(5-1)(t-\tau_{\eta}(t)-w) \leq -\frac{4}{3}\left|\frac{\epsilon}{2(M_0+1)}+2(t-\tau_{\eta}(t)-w)\right|.$$
It comes,
 $$ |V_{\eta,t}^{\epsilon}(w)|^2  \geq \frac{16}{9}(n-1)\left( \frac{\epsilon}{2(M_0+1)}+2|t-\tau_{\eta}(t)-w| \right)^2.$$
\end{itemize}

Inequality \eqref{eq:characVesti} then holds with $$a=\min{\left(\frac{1}{2\sqrt{2}},\frac{2\sqrt{n-1}}{3(M_0+1)} \right)} \hspace{3mm} \text{and} \hspace{3mm} b=\frac{8}{3}\sqrt{n-1} .$$

We now prove that the right hand side of \eqref{eq:unif} tends uniformly to zero in $s$, on $[0,t]$. As, by \eqref{eq:characVesti},
$$\left|\int_{t-\tau_{\eta}(t)}^s \frac{5}{2|V_{\eta,t}^{\epsilon}(w)|} dw \right| \leq  \frac{5}{2b} \log \left( 1+\frac{b\max(t-\tau_{\eta}(t),\tau_{\eta}(t))}{a\epsilon} \right) ,$$
we have, since $\max(t-\tau_{\eta}(t),\tau_{\eta}(t)) \leq t$,
$$\epsilon \exp{\left( \left| \int_{t-\tau_{\eta}(t)}^s \frac{5}{2|V_{\eta}^{\epsilon}(w)|} dw \right| \right) } \leq \exp \left( {\frac{2b-5}{2b}\log ( \epsilon )+\frac{5}{2b}\log \left(\epsilon+\frac{bt}{a} \right) } \right).$$
We then deduce, as $2b>5$, that
$$\lim_{\epsilon \rightarrow 0} \max_{s \in [0,t]} \epsilon \exp{\left( \left| \int_{t-\tau_{\eta}(t)}^s L+ \frac{5}{2|V_{\eta}^{\epsilon}(w)|} dw \right| \right) } =0,$$
which implies the result.

\end{proof}

Differenciating \eqref{prolongcharac} in $t$ for $t<T_{\eta}$ gives us
$$\partial_t f_1(t,\vec{u},\eta \vec{u})=\sum_{i=1}^n -\frac{1}{\sqrt{n}}\partial_i f_{01}\left( \left(1-\frac{t}{\sqrt{n}}\right) \vec{u},V_{\eta}(0,t) \right)$$ 
\begin{flalign*}
& \hspace{4.3cm}  +\frac{dV^i_{\eta}}{dt}(0,t) \partial_{v^i} f_{01}\left( \left(1-\frac{t}{\sqrt{n}}\right) \vec{u},V_{\eta}(0,t) \right). &
 \end{flalign*}
Doing the same for $t>T_{\eta}$ gives

\begin{flalign*}
& \partial_t f_1(t,\vec{u},\eta \vec{u}) =\sum_{i=1}^n \frac{dV^i_{\eta}}{dt}(0,t) \partial_{v^i} f_{01}\left( \left(1+\frac{t-2\tau_{\eta}(t)}{\sqrt{n}}\right) \vec{u},V_{\eta}(0,t) \right) &
\end{flalign*}

$$ \hspace{1.9cm} +\frac{1}{\sqrt{n}}\left( 2\frac{\partial (t-\tau_{\eta})}{\partial t}(t)-1 \right)\partial_i f_{01}\left( \left(1+\frac{t-2\tau_{\eta}(t)}{\sqrt{n}}\right) \vec{u},V_{\eta}(0,t) \right). $$

Recall from Proposition \ref{characzero} that $t \mapsto \frac{\partial (t-\tau_{\eta})}{\partial t}(t)$ is defined on $]T_{\eta},T[$ and takes its values in $[\frac{4}{M_0},\frac{M_0+1}{5}]$. Hence, there exists a sequence $(t_n)$, with $t_n \rightarrow T_{\eta}$, such that, $$ \exists C>0, \hspace{3mm} \lim_{t_n \rightarrow T_{\eta}} \frac{\partial (t-\tau_{\eta})}{\partial t}(t_n) = C.$$
Using that $f_1$ and $f_{01}$ are $C^1$ and taking the limit $t_n \rightarrow T_{\eta}$ in the two last equations, we obtain
$$2C \sum_{i=1}^n \partial_i f_{01}\left( \left(1-\frac{T_{\eta}}{\sqrt{n}}\right) \vec{u},0 \right)=0$$
and thus
\begin{equation}\label{eq:lastone}
\sum_{i=1}^n \partial_i f_{01}\left( \left(1-\frac{T_{\eta}}{\sqrt{n}}\right) \vec{u},0 \right)=0.
\end{equation}
Finally, we need the following proposition.

\begin{Pro}
The function $\eta \mapsto T_{\eta}$ is defined on $]0,\eta_0[$, strictly increasing, continuous and such that
$$\lim_{\eta \rightarrow 0} T_{\eta} =0.$$
\end{Pro}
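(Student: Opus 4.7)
The plan is to reduce everything to the defining equation for $T_\eta$. By Remark \ref{rq:Teta}, $T_\eta$ is the unique value in $]0,T[$ for which $g_\eta(0,T_\eta)=0$, which, unfolding the definition of $g_\eta$, reads
\[
\eta \;=\; \int_0^{T_\eta} E^1\!\left(s',\Bigl(1-\tfrac{T_\eta-s'}{\sqrt{n}}\Bigr)\vec{u}\right) ds'.
\]
This is the master relation; the three claims will each be read off from it.

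First, I would establish that $\eta\mapsto T_\eta$ is defined on $]0,\eta_0[$ (already built into Proposition \ref{characzero}) and derive the two-sided bound
\[
\frac{\eta}{M_0}\;\leq\; T_\eta \;\leq\; \frac{\eta}{5},
\]
which is immediate from the pointwise bounds $5\leq E^1\leq M_0$ on the relevant compact region provided by Proposition \ref{basicprop}. The right inequality gives $T_\eta\to 0$ as $\eta\to 0$, settling the last assertion.

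Next, for strict monotonicity, I would fix $0<\eta_1<\eta_2<\eta_0$ and use that $g_\eta(0,t)$ depends affinely on $\eta$: $g_{\eta_2}(0,t)-g_{\eta_1}(0,t)=\eta_2-\eta_1>0$ for every $t$. In particular $g_{\eta_2}(0,T_{\eta_1})=\eta_2-\eta_1>0=g_{\eta_2}(0,T_{\eta_2})$. Since, as recalled in Remark \ref{rq:Teta}, $t\mapsto g_{\eta_2}(0,t)$ is strictly decreasing on $[0,T[$, this forces $T_{\eta_1}<T_{\eta_2}$.

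Finally, continuity will follow from the implicit function theorem applied to $\Phi(\eta,t):=g_\eta(0,t)$. The map $\Phi$ is $C^1$ because $E$ is $C^1$ and the integrand depends smoothly on $(t,s')$; moreover
\[
\partial_t \Phi(\eta,t) \;=\; -E^1(t,\vec{u}) - \frac{1}{\sqrt{n}}\int_0^t \sum_{i=1}^n \partial_i E^1\!\left(s',\Bigl(1-\tfrac{t-s'}{\sqrt{n}}\Bigr)\vec{u}\right) ds',
\]
which, thanks to $E^1\geq 5$ and the Lipschitz bound $|\nabla E|\lesssim L$ from Proposition \ref{basicprop}, is $\leq -5+tL\sqrt{n}<0$ after possibly shrinking $T$ (or $\eta_0$) once more. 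Hence $\partial_t\Phi\neq 0$ at every solution point, and the implicit function theorem yields $\eta\mapsto T_\eta\in C^1(]0,\eta_0[)$, in particular continuous. The only mildly delicate point is ensuring that the shrinking of $T$ needed to make $\partial_t\Phi$ non-degenerate is compatible with the earlier reductions of $T$ in Proposition \ref{basicprop}; but since $T_\eta\leq \eta/5$, working on $\eta<\min(\eta_0, 5T)$ is enough and costs nothing.
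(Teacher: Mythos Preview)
Your proof is correct and follows essentially the same approach as the paper: both use the defining implicit relation $g_\eta(0,T_\eta)=0$, obtain strict monotonicity from the affine dependence of $g_\eta$ on $\eta$ together with the strict monotonicity of $t\mapsto g_\eta(0,t)$, deduce $T_\eta\to 0$ from the lower bound $E^1\geq 5$, and invoke the implicit function theorem for continuity. Two minor remarks: the sign of the integral term in your formula for $\partial_t\Phi$ is wrong (it should be $+\tfrac{1}{\sqrt{n}}\int\cdots$), though this is harmless since your subsequent bound only uses $|\nabla E|\leq L$; and the extra shrinking of $T$ is unnecessary, since the paper already established $\partial_t g_\eta(s,t)\leq -4$ in the proof of Proposition~\ref{characzero} using the bound $|\partial_t E|\leq 1$ from Proposition~\ref{basicprop}.
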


\begin{proof}

We recall (see Remark \eqref{rq:Teta}) that $T_{\eta}$ is defined by the implicit equation
$$ g_{\eta}(0,T_{\eta})=\eta -\int_0^{T_{\eta}} E^1\left( w, \left( 1-\frac{T_{\eta}-w}{\sqrt{n}} \right) \vec{u} \right) dw=0.$$
Let $0<\eta_1<\eta_2<T$. We have
$$g_{\eta_1}(0,T_{\eta_2})<g_{\eta_2}(0,T_{\eta_2})=0,$$
so
$$g_{\eta_1}(0,T_{\eta_2})<g_{\eta_1}(0,T_{\eta_1})=0.$$
Since $g_{\eta_1}(0,.)$ strictly decreases (see again Remark \eqref{rq:Teta}, $T_{\eta_2} > T_{\eta_1}$, which means that $\eta \mapsto T_{\eta}$ is strictly increasing. As $E^1$ is bounded away from $0$ on the domain of integration,  $T_{\eta}$ tends to $0$ as $\eta \rightarrow 0$. The continuity ensues from the implicit function theorem.

\end{proof}

Using Equation \eqref{eq:lastone} and the last proposition, we can find $T^*>0$ such that $w \mapsto f_{01}((1-w)\vec{u},0)$ is constant on $]0,T^*[$. However, there exists $C_0 >0$ and $C_1 >0$ such that
$$f_{01}((1-w)\vec{u},0)=C_0+C_1(1-w)\chi'\left(2(1-w)^2\right)$$
for all $0<w<T^*$, and $w \mapsto (1-w)\chi'\left(2(1-w)^2\right)$ is not constant around $0$.

\renewcommand{\refname}{References}
\bibliographystyle{abbrv}
\bibliography{biblio}

\end{document}